\theoremstyle{plain}
\newtheorem{theorem}{Theorem}[section]
\newtheorem{lem}[theorem]{Lemma}
\newtheorem{prop}[theorem]{Proposition}
\newtheorem{coro}[theorem]{Corollary}
\theoremstyle{definition}
\newtheorem{defn}[theorem]{Definition}
\newtheorem{ex}[theorem]{Example}
\newtheorem{rem}[theorem]{Remark}
\numberwithin{figure}{section}
\numberwithin{equation}{section}
\newcommand\freefootnote[1]{%
	\let\thefootnote\relax%
	\footnotetext{#1}%
	\let\thefootnote\svthefootnote%
}
\newcommand{\bL}{\mathbb{L}}
\newcommand{\bX}{\mathbb{X}}
\newcommand{\la}{\langle}
\newcommand{\ra}{\rangle}
\newcommand{\bR}{\mathbb{R}}
\newcommand{\bC}{\mathbb{C}}
\newcommand{\bA}{\mathbb{A}}
\newcommand{\bS}{\mathbb{S}}
\newcommand{\bB}{\mathbb{B}}
\newcommand{\bE}{\mathbb{E}}
\newcommand{\bN}{\mathbb{N}}
\newcommand{\pa}{\partial}
\newcommand{\rt}{\rightarrow}
\newcommand{\dr}{d_\mathrm{rel}}
\newcommand{\mancac}{\mathbf{Man}^{\mathbf{c,ac}}}
\newcommand{\ssubset}{\subset\joinrel\subset}
\newcommand{\tu}{\tilde{u}}
\newcommand{\ep}{\varepsilon}
\newcommand{\tpsil}{\tilde{\Psi}_\bL}
\newcommand{\tul}{\tilde{U}_\bL}
\newcommand{\tU}{\widetilde{\Upsilon}}
\newcommand{\tPhi}{\widetilde{\Phi}}
\newcommand{\tL}{\widetilde{L}}
\newcommand{\cL}{\mathcal{L}}
\newcommand{\rel}{\mathrm{rel}}
\newcommand{\oE}{{\overline{E}}}
\newcommand{\chL}{\check{L}}
\newcommand{\wtnabla}{\widetilde{\nabla}
}
\newcommand{\ff}{\mathrm{ff}}
\newcommand{\botf}{\textrm{bf}}
\newcommand{\cC}{\mathcal{C}}
\newcommand{\cD}{\mathcal{D}}
\newcommand{\cF}{\mathcal{F}}
\newcommand{\bu}{\bar{u}}
\newcommand{\tw}{\tilde{w}}
\newcommand{\cK}{\mathcal{K}}
\newcommand{\cI}{\mathcal{I}}
\newcommand{\loc}{\mathrm{loc}}
\newcommand{\chC}{\check{C}}
\newcommand{\Bl}{\mathrm{Bl}(\mathbb{C}^m\times[0,\infty); (\mathbf{0},0))}
\newcommand{\tth}{\widetilde{\theta}}
\newcommand{\m}{\mathrm{m}}
\date{}
\author{Spandan Ghosh 
	\footnote{ Mathematical Institute, University of Oxford, Oxford OX2 6GG. }
}
\title{
Short-time existence of Lagrangian mean curvature flow}
\begin{document}
\maketitle
\begin{abstract}
In his paper `Conjectures on Bridgeland Stability' \cite{Joy15}, Joyce asked if one can desingularise the transverse intersection point of an immersed Lagrangian using JLT expanders such that one gets a Lagrangian mean curvature flow via the desingularisations. Begley and Moore \cite{BM16} answered this in the affirmative by constructing a family of desingularisations and showing that a certain limit along their flows satisfies LMCF along with convergence to the immersed Lagrangian in the sense of varifolds. We prove that there exists a solution with convergence in a stronger sense, using the notion of \textit{manifolds with corners and a-corners} as introduced by Joyce \cite{Joy16}. Our methods are a direct P.D.E.\ based approach, along the lines of the proof of short-time existence for network flow by Lira, Mazzeo, Pluda and Saez \cite{LMPS23}.
\end{abstract}
\freefootnote{E-mail: {\tt ghoshs@maths.ox.ac.uk}}
\tableofcontents

\section{Introduction}
Lagrangian mean curvature flow (LMCF) is a way to deform a Lagrangian submanifold inside a Calabi--Yau manifold $X$, according to the negative gradient of the area functional. A major point of interest for studying this flow is that its stationary points are given by special Lagrangian submanifolds. However, one often runs into singularities when one tries to evolve a Lagrangian by LMCF. Motivated by string theory, Thomas and Yau \cite{TY02} proposed the Thomas--Yau conjecture, which roughly states that the question of whether a Lagrangian can be deformed to a special Lagrangian is related to an algebraic notion, namely that of stability. They conjectured that given a Lagrangian $L\subset X$ inside a Calabi--Yau manifold $X$ that is stable in the sense of Thomas--Yau, the mean curvature flow of $L$ exists for all time and converges to a special Lagrangian. A vast extension of this conjecture was proposed by Joyce \cite{Joy15}, giving a detailed conjectural picture of how singularities form in LMCF and how one might flow past them via surgery. 

In particular, one important point in the conjectural picture of \cite{Joy15} for evolving a Lagrangian via LMCF with surgery is that the Floer homology of the Lagrangian brane should remain unobstructed at all times. However, as described in \S 3.4 of \cite{Joy15}, the Floer homology can go from unobstructed to obstructed at a time $T$ due to a transverse intersection point of the immersed Lagrangian. As a way of resolving this, it is proposed to evolve the Lagrangian by doing a surgery at time $t=T$ by gluing in an expander, so that the flow can continue and the Floer homology of the new evolving Lagrangian remains unobstructed for time $t>T$. We formulate this as a general analytical problem, following Problem 3.14 of \cite{Joy15}.  

\paragraph*{\textbf{Overview of the problem.}} Let $(X,\omega,\Omega, J)$ be a given ambient Calabi--Yau manifold. Let $\iota: L \rightarrow X$ be a compact, oriented, conically singular Lagrangian embedding, with a finite number of conical singularities $\{x_i\}_{i=1}^k$, such that the corresponding smooth tangent cones $\{C_i\}_{i=1}^k$ at each singular point are a union of special Lagrangian cones. Suppose that for each cone $C_i$, there is an asymptotically conical expander $E_i$ that is asymptotic to the cone. Then, show that there is a family of Lagrangians $(L_t)_{t\in (0,\ep)}$ evolving by LMCF, such that $L_t$ converges to $L$ as $T\downarrow 0$ in a suitable sense, and we have $L_t\approx \sqrt{2t}E_i$ near each conical singularity $x_i$, and $L_t\approx L+tH_L$ away from the singularities, for $H_L$ being the mean curvature vector of $L$. Note that our formulation is more general than the previous problem as stated in \cite{Joy15}, which arises in the case the conical singularities are transverse intersection points of an immersed Lagrangian.

\paragraph*{\textbf{Outline of proof}} We approach this problem by first constructing an approximate solution. Following a similar construction to Joyce \cite{Joy04}, we construct a family of desingularizations of $L$ by gluing in the expanders at a scale of approximately $\sqrt{2t}$ to the singularities of $L$. Then, we perturb this approximate solution to an exact one. Since we are working with Lagrangians, we can invoke Lagrangian neighbourhood theorems as in \cite{Joy04}, which allows us to write the quasilinear P.D.E.\ for mean curvature flow at the level of $1$-forms, and then integrate it to obtain a fully nonlinear scalar P.D.E. We solve this nonlinear P.D.E.\ by the standard method of reformulating it as a fixed point problem between small balls in certain Banach spaces and invoking the Banach fixed point theorem. The main difficulty in this approach is to construct appropriate Sobolev spaces between which the linearised parabolic operator is an isomorphism. In particular, since the approximate solution we constructed degenerates at the singular point at the starting time, one cannot apply standard theory about solving parabolic P.D.E.s on space-time $M\times[0,\ep)$. Instead, we need to do our analysis on a modified space, on which the geometry of the approximate solution and the linearised P.D.E.\ becomes appropriately nonsingular. This is achieved by the notion of a \textit{parabolic blow-up} at the singular point in space-time, which turns out to be the natural space on which to do our analysis. In order to define these spaces globally, we introduce the notion of \textit{manifolds with corners}.

\paragraph*{\textbf{Manifolds with corners and a-corners}}
A major conceptual tool we use in this problem is the notion of manifolds with corners and a-corners, as introduced by Joyce in \cite{Joy16}, following earlier work by Melrose \cite{Mel90,Mel93,Mel96}. The main idea is that instead of thinking of the evolving Lagrangian as starting from a singularity, we can do a real oriented blow-up (as manifolds with corners) of both the domain and range. This allows us to write the evolving Lagrangian as well as the corresponding P.D.E.\ in a non-singular way on the blown-up spaces. The language of manifolds with corners and a-corners and $b$-geometry is a convenient way to package the correct notions of boundary behaviour that one needs to work with in these blown-up spaces. Our approach of using manifolds with corners in parabolic P.D.E.\ problems is also heavily inspired by the recent work of Lira, Mazzeo, Pluda and Saez \cite{LMPS23}, which we now briefly describe:

\paragraph*{\textbf{Short time existence for the network flow}} The problem of short-time existence for the network flow asks to show that given any \textit{planar network}, i.e.\ a planar graph with finitely many nodes and edges in a bounded domain in $\bR^2$, there exists a \textit{network flow} for a short time interval, i.e.\ a family of \textit{regular networks} evolving by curve shortening flow, such that as $t\downarrow 0$, the networks converge to the initial network in some sense (at least in $C^0_\mathrm{loc}$). One can further ask the set of possible networks that can flow out of a given one (in some sense of convergence). This question in the case of starting with a general irregular network was first answered by Ilmanen-Neves-Schulze \cite{INS19}, where they construct a family of regular networks by gluing in expanding solutions at the irregular vertices, then take a subsequential limit along the flows generated by this family to obtain a solution to the network flow. A new proof of their result, giving much more detailed information about how the flow `resolves' the irregular network appeared in the paper \cite{LMPS23}, where they stated and solved the problem using the language of manifolds with corners. In particular, they first construct a \textit{parabolic blow-up} $X$ of the range (which is space-time $\bR^2\times [0,\ep)$), by doing a real oriented blow up at the vertices of the initial network (c.f.\ \S 4. of this paper, where we blow up at the conical singularities).
They prove that, considering a solution to network flow as a manifold with corners $M$ along with a smooth map to $X$ satisfying the curve shortening equation, there always exists such a solution for short time. Moreover, the possible flow-outs are classified by the possible topological types of self-expanders for the curve shortening flow that are asymptotic to the tangent rays at the vertices of the initial network. In contrast to their result, we do not obtain a complete classification of flow-outs (we shouldn't expect so anyway, since the classification of possible self-expanders for higher dimensions is highly non-trivial). However, modulo the classification of self-expanders, our result is slightly stronger, since we show uniqueness as a manifold with corners and a-corners, which is a weaker smooth structure than a manifold with corners. 

\paragraph*{Isomorphism of linearised operator}
Given our approximate solution as a family of Lagrangian embeddings $\{L_t\}_{t\in(0,\ep)}$, the linearised operator can be written as \[
\pa_v-\Delta_{L_t},
\]
where $v$ is a `time-like' vector field on the family, i.e.\ it is the unique vector field that is orthogonal to the time-slices and whose flow generates the time-evolution of the family, and $\Delta_{L_t}$ is the Laplacian on $L_t$ in the induced metric from $X$. Once we compactify the approximate solution in the parabolic blow-up, a rescaling of the above P.D.E.\ becomes approximately translation-invariant near the \textit{front faces} of the compactification (note that front faces roughly correspond to asymptotically cylindrical ends in the natural \textit{$b$-metric}). Due to this, we can follow a general strategy to study Fredholmness of this operator as in Lockhart-McOwen \cite{LM85}, by introducing \textit{weighted Sobolev spaces} which are weighted by powers of the boundary-defining function for the front face. We first prove \textit{a-priori estimates} for these Sobolev spaces, by adding up \textit{interior estimates} over local models that approximate our operator with standard parabolic operators on Euclidean space. Then, we prove Fredholm results for a model space approximating the region near the front face away from some \textit{critical rates}, and glue this estimate with the interior region which is compact. The model operator on the model space is parabolic in a non-standard way, so we cannot directly apply the theory of Lockhart-McOwen to this setting. However, the general method we use to prove the result is similar, see for example the work of Agmon-Nirenberg \cite{AN63}. Note that the model space is the compactified family of expanders $\{\sqrt{2t}E\}_{t\in(0,\infty)}$, and the model operator is the same as above, i.e.\ $\pa_v-\Delta_{\sqrt{2t}E}$. So, our Fredholm result on the model space can be seen as part of a \textit{parabolic Fredholm theory} for expanders, an extension of the elliptic Fredholm theory for expanders as developed by Lotay--Neves \cite{LN13}. Indeed, we use their result as a key starting point in our proof.

\paragraph*{Main results} The main result we prove (Theorems \ref{thm_main}, \ref{thm_asymptotics}) can be stated as follows: Let $L$ be a compact, embedded conically singular Lagrangian with a finite number of singular points $x_j\in X$, such that each of its asymptotic cones is a union of special Lagrangian cones $C_j=\bigcup_{i=1}^n C_{ij}$ (which may have different phases). For each $j$, suppose that $E_j$ is a Lagrangian expander which is asymptotic to the cone $C_j$. Then, there exists a smooth family of embedded Lagrangians $\{L_t\}_{t\in (0,\ep)}$ evolving by Lagrangian mean curvature flow, such that $L_t\downarrow L$ in a strong sense to be described below, such that $L_t\approx \sqrt{2t}E_j$ near each singular point $x_j$. Moreover, the family is locally unique in the $C^2$-norm, and unique up to $t=0$ in a strong sense. Further, polyhomogeneous conormal asymptotics of the initial Lagrangian at its singularities leads to controlled polyhomogeneous conormal asymptotics of the solution at $t=0$.

Along the way to the proof and as the main analytical ingredient, we develop a parabolic Fredholm theory for asymptotically conical Lagrangian expanders in \S\ref{sec_parabolicfredholmexpanders}, which is an extension of the elliptic Fredholm theory for expanders as developed by Lotay--Neves \cite{LN13}, and may be of independent interest.

\paragraph*{Organisation of paper} We begin with recalling some background on Lagrangians, Lagrangian expanders, and neighbourhood theorems in section \ref{sec_background}. Section \ref{sec_mancac} recalls the basic definitions of manifolds with corners and a-corners we will be using in the paper, and standard estimates on parabolic Sobolev spaces. Section \ref{sec_approxsol} constructs the approximate solution to Lagrangian mean curvature flow, and computes the linearisation of the corresponding scalar P.D.E.\; Section \ref{sec_parabolicfredholmexpanders} proves a key Fredholm result for the linearised parabolic operator on asymptotically conical Lagrangian expanders. Section \ref{sec_linearproblem} proves isomorphism of the linearised operator for our problem. Section \ref{sec_nonlinearproblem} solves the full nonlinear P.D.E.\, proving our main theorem. Finally, subsection \ref{sec_polyhomogeneouscon} shows that the existence of a polyhomogeneous conormal asymptotic expansion of the initial Lagrangian implies a corresponding expansion for the solution.

\paragraph*{\textbf{Acknowledgements}} The author is grateful to his supervisor Dominic Joyce for his guidance throughout the project, and for introducing the problem as well as the main conceptual ideas. He thanks Thibault Langlais and Jason Lotay for helpful discussions, and Jason Lotay and Felix Schulze for comments on an earlier draft of this paper. This work was funded by a Clarendon Scholarship from the University of Oxford.

\subsubsection*{Notation} We collect here some of the non-standard notations used in this paper for easy reference.

\begin{table}[!h]
	\centering 
	\begin{tabular}{ c l }
		
		$\llbracket0,\infty)$ & the topological space $[0,\infty)$ endowed with an a-smooth structure
		\\
		
		$\bR^s_k$ & the model space $[0,\infty)^k\times \bR^{s-k}$  
		\\
		
		$\bR^{k,s}$ & the model space $\llbracket0,\infty)^k\times \bR^{s-k}$
	
		\\
		$\bR^{k,s}_l$ & the model space $\llbracket0,\infty)^k\times [0,\infty)^l\times \bR^{s-k-l}$
		\\
		$\mancac$ & the category of manifolds with corners and a-corners 
		\\
		$\Bl$ & the parabolic blow-up of the space $\bC^m\times[0,\infty)$ at the origin $(\mathbf{0},0)$
		\\
		{$\rho$} & \makecell[l]{the boundary-defining function for the front face of \\ $\Bl$, given by $\sqrt{|z|^2+t}$}
		
		\\
		$s$ & \makecell[l]{the boundary-defining function for the bottom face of \\ $\Bl$, given by $\frac{t}{|z|^2+t}$ }
		
		\\
		$\bX$ & \makecell[l]{the parabolic blow-up of the range $X\times[0,\ep)$ at the singularities $(x_i,0)$ \\ of $L$ at time $0$}
		\\
		$\bL$ & \makecell[l]{the compactification of the approximate solution starting from $L$ as a \\ manifold with corners and a-corners}
		\\
		
		$\bE$ & \makecell[l]{the compactification of the family of expanders $\{\sqrt{2t}E\}_{t\in(0,\infty)}$ in the \\ 
		blow-up $\Bl$ }
		
		\\
		$L^p_{k+2r,r}$ & \makecell[l]{the parabolic $L^p$-Sobolev space with $k+2r$ space derivatives and $r$ time \\derivatives}
		\\
		$C^{k+2r+\delta,r+\delta/2}$ & \makecell[l]{the parabolic H\"{o}lder space with $k+2r$ space derivatives, $r$ time \\derivatives and H\"{o}lder exponent $\delta$}
		\\
		$L^p_{k+2r,r,\lambda}$ & \makecell[l]{the parabolic Sobolev space consisting of $u\in L^p_{\loc}$ with $\rho^{-\lambda}u \in L^p_{k+2r,r}$}
		
		\\
		$C^{k+2r+\delta,r+\delta/2,\lambda}$ & \makecell[l]{the parabolic H\"{o}lder space consisting of $u\in C^{k+2r+\delta,r+\delta/2,\loc}$ with \\ $\rho^{-\lambda}u\in C^{k+2r+\delta,r+\delta/2}$}
		\\

		$^b\nabla_\rel$ & \makecell[l]{the relative Levi-Civita connection for a relative $b$-metric over a \\ $b$-fibration}
		\\
		$^{\m}\nabla$ & \makecell[l]{the m-Levi-Civita connection for the m-metric on $\bL$}
		\\
		$\pa_v$ & the time-like vector field on $\bL$
		\\
		$^bT_\rel\bL,{^bT^*}_\rel \bL$ & \makecell[l]{the relative tangent and cotangent bundles of $\bL$ over the $b$-fibration \\ $t: \bL\rt [0,\ep)$}
		\\
		$^{\m}T\bL,{^{\m}T^*\bL}$ & the mixed tangent and cotangent bundles of $\bL$
		
		\\		
		$\widetilde{L_0}$ & the radial blow-up of the initial singular Lagrangian at its singularities
		\\
		$L^p_{k,\lambda}(\widetilde{L_0})$ & \makecell[l]{the $L^p_{k}$-Sobolev space for the $b$-metric on $\widetilde{L_0}$, weighted by $\rho^{-\lambda}$}
		
	\end{tabular}
\end{table}

\section{Calabi--Yau and Lagrangian Geometry}\label{sec_background}
\subsection{Calabi--Yau geometry}
In this subsection, we recall some standard definitions from Calabi--Yau geometry. Some references for this subsection include the book by Gross, Huybrechts and Joyce \cite{GHJ03}.

\begin{defn}
	A \textit{Calabi--Yau manifold} is a five-tuple $(M,J,g,\omega,\Omega)$ where $M$ is a complex manifold of complex dimension $m$ with complex structure $J$, $\omega$ is a symplectic form on $M$ and $g$ is a Riemannian metric on $M$ such that $(J,g,\omega)$ are compatible with each other, and $\Omega$ is a nowhere-vanishing holomorphic $(m,0)$-form on $M$ satisfying \[
\frac{\omega^m}{m!} = (-1)^{m(m-1)/2}\left(\frac{i}{2}\right)^m \Omega\wedge\overline{\Omega}.	
\]
\end{defn} 

\begin{defn}
	Given a Calabi--Yau manifold $X$, 
	a \textit{Lagrangian} submanifold $L\subset X$ is a real $m$-dimensional submanifold such that $\omega|_{L}\equiv 0$. 
\end{defn}
\begin{rem}
	We can define Lagrangians more generally for symplectic manifolds.
\end{rem}
\begin{defn}
	Given a Calabi--Yau manifold $X$, a \textit{special Lagrangian} in $X$ is a Lagrangian submanifold $L\subset X$ such that $\Omega|_{L} = e^{i\theta}\textrm{vol}_L$ for a real constant $\theta$ called the \textit{phase} of $L$.
\end{defn}

\subsection{Lagrangian mean curvature flow}
In this subsection, we introduce the basic notions of Lagrangian mean curvature flow. Throughout, we assume we have a given Calabi--Yau manifold $X$. Some references for this subsection include the book by Mantegazza \cite{Man11} for general mean curvature flow, and the paper by Thomas--Yau \cite{TY02} and the survey by Smoczyk \cite{Smo12} for Lagrangian mean curvature flow.

\begin{defn}
	Given an immersed Lagrangian submanifold $L\subset X$, the \textit{mean curvature flow} of $L$ for time $t\in[0,\ep)$ is a smooth mapping $F: L\times [0,\varepsilon) \rt X $ such that it is an immersion on the time-slices, i.e.\ $F(\cdot,t)$ is an immersion,  and it satisfies \[
	\left(\frac{d}{dt}F(x,t)\right)^\perp = H_{F(\cdot,t)}(x),
	\]
	where $H$ is the mean curvature vector of the immersed submanifold, and $(\cdot)^\perp$ denotes the projection onto the tangent bundle of the immersed submanifold.
\end{defn} 

We have the following foundational result due to Smoczyk \cite{Smo96}, which shows that the evolution is in fact through Lagrangians:
\begin{theorem}
	Given a compact Lagrangian submanifold $L\subset X$, the mean curvature flow of $L$ exists for a positive time $\varepsilon>0$ and is unique. Further, the time-slices $F(\cdot,t)$ are Lagrangian submanifolds.
\end{theorem}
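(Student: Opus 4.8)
The plan is to prove the two assertions separately: first the short-time existence and uniqueness of the flow, viewed as a quasilinear parabolic system for the immersion $F$; and then the fact that each time-slice $F(\cdot,t)$ is Lagrangian, which is a consequence of the Calabi--Yau (in particular Ricci-flat Kähler) structure on $X$ rather than of the parabolic theory.

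For existence and uniqueness, I would work in a fixed coordinate chart and write the mean curvature flow equation $\partial_t F = H_{F(\cdot,t)}$ explicitly; since $H_{F(\cdot,t)} = \big(g^{ij}(\partial_i\partial_j F - \Gamma^k_{ij}\partial_k F)\big)^\perp$ where $g_{ij}=g(\partial_i F,\partial_j F)$ is the induced metric, the principal symbol is that of the induced Laplacian, which is strictly elliptic. The system fails to be strictly parabolic only because of the tangential reparametrisation freedom, and this is handled in the standard way, e.g.\ by a DeTurck-type modification, or directly by checking that the linearisation of the flow operator is an isomorphism between suitable parabolic Hölder spaces $C^{2+\delta,1+\delta/2}$ on $L\times[0,\varepsilon)$ and invoking the inverse function theorem. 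Compactness of $L$ then yields a smooth solution on a maximal time interval $[0,\varepsilon)$, unique among smooth flows with the prescribed initial immersion.

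For the preservation of the Lagrangian condition, normalise so that $\partial_t F = H$ and set $\omega_t := F(\cdot,t)^*\omega \in \Omega^2(L)$, so $\omega_0 = 0$ by hypothesis. Cartan's formula together with $d\omega=0$ gives
\[
\partial_t \omega_t = F_t^*\big(\mathcal{L}_H\omega\big) = F_t^*\big(d(\iota_H\omega)\big) = d\big(F_t^*(\iota_H\omega)\big).
\]
The crux is to rewrite $F_t^*(\iota_H\omega)$ modulo $\omega_t$ and its derivatives: on a Lagrangian, $\iota_H\omega|_L$ is the mean curvature $1$-form, whose exterior derivative on a Kähler--Einstein manifold is the restriction of the Ricci form, hence zero for a Calabi--Yau $X$. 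Carrying out the corresponding computation in the regime where $L_t$ is merely an immersion produces an evolution equation of Hamilton type,
\[
\partial_t \omega_t = \Delta_{g_t}\omega_t + A_t * \nabla\omega_t + B_t * \omega_t,
\]
with $g_t$ the induced metric and $A_t, B_t$ smooth time-dependent coefficients built from the second fundamental form and the ambient curvature of $X$; closedness of $\omega$ and Ricci-flatness of $X$ are exactly what is needed to kill any inhomogeneous term. This is a linear, strictly parabolic system for $\omega_t$ on the compact manifold $L$, so differentiating $\int_L |\omega_t|_{g_t}^2\, d\mathrm{vol}_{g_t}$ in $t$, integrating by parts, absorbing the first-order term by Young's inequality and applying Grönwall forces $\omega_t \equiv 0$ from $\omega_0=0$; equivalently one may invoke uniqueness for linear parabolic systems. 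Hence every time-slice is Lagrangian.

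The main obstacle is the second part, and specifically the derivation of the evolution equation for $\omega_t$ away from the Lagrangian locus: one must verify that the Calabi--Yau condition removes the zeroth-order inhomogeneity so that vanishing initial data propagates. The parabolic short-time existence and uniqueness in the first part is, by contrast, routine once the gauge is fixed and $L$ is compact.
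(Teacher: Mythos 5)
The paper does not prove this theorem; it states it as a foundational result and attributes it to Smoczyk \cite{Smo96}, so there is no ``paper's proof'' to compare against. Your outline is the standard argument and is essentially correct in structure: short-time existence and uniqueness for a compact immersed submanifold follows from fixing the reparametrisation gauge (DeTurck, or restricting to normal graphs) and applying quasilinear parabolic theory; preservation of the Lagrangian condition follows by showing that $\omega_t := F_t^*\omega$ satisfies a homogeneous linear strictly parabolic system and then invoking uniqueness (equivalently, Gr\"onwall on $\int_L |\omega_t|^2$, or --- closest to what Smoczyk actually does --- a scalar maximum principle applied to $|\omega_t|^2$, which avoids some of the tensor bookkeeping).

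The one genuine gap, which you yourself flag, is also the heart of the matter: the passage from $\partial_t \omega_t = d\bigl(F_t^*(\iota_H\omega)\bigr)$ to an equation of the form $\partial_t\omega_t = \Delta_{g_t}\omega_t + A_t * \nabla\omega_t + B_t * \omega_t$ with no inhomogeneous term is not a formal consequence of the Lagrangian identity $\iota_H\omega|_L = d\theta$, since that identity is only available precisely on the locus where $\omega_t = 0$. Away from the Lagrangian locus one must carry out the Gauss--Codazzi computation for $d\sigma_H$ on a general immersion in a K\"ahler--Einstein manifold and verify both that the principal part is elliptic in $\omega_t$ (parabolicity of the system for $\omega_t$ is not inherited trivially from the degenerate parabolicity of the flow of $F$) and that the K\"ahler--Einstein hypothesis kills the inhomogeneity (the term one would otherwise get from $F_t^*\rho$, $\rho$ the Ricci form, becomes a zeroth-order term in $\omega_t$ rather than a source). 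This is Smoczyk's lemma, and as written your proposal defers exactly it; with that lemma supplied the argument is complete.
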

\begin{rem}
	The above result also holds if $X$ is K\"{a}hler-Einstein.
\end{rem}
Due to this result, we denote the flow of a Lagrangian via mean curvature flow as \textit{Lagrangian mean curvature flow}, abbreviated to LMCF.

\begin{rem}
    There is also a version of LMCF for \textit{almost Calabi--Yau} manifolds, c.f.\ the thesis of Behrndt \cite{Beh11}.
\end{rem}

\subsection{Cohomological invariants of LMCF}
In this section, we define some cohomology classes associated to a given immersed Lagrangian $L\subset  X$, and study how they change under LMCF. Some references for this section include the paper \cite{TY02}; see also the book by McDuff and Salamon \cite{MS98} for the relevant symplectic geometry. First, we recall the Lagrangian angle and the Maslov class.
\begin{defn}
	Let $L\subset X$ be an oriented Lagrangian. There is a \textit{grading}, $\theta: L \rightarrow \mathbb{S}^1$, defined using the formula $e^{i\theta}\mathrm{vol}_L= \Omega|_L$. We can also write it as a locally-defined real-valued function, well-defined up to multiples of $2\pi$, called the \textit{Lagrangian angle}. Corresponding to this, we have a globally-defined $1$-form $d\theta$ on $L$. The \textit{Maslov class} is its representative $[d\theta]$ in $H^1_{\mathrm{dR}}(L)$.
\end{defn}
\begin{defn}
    Let $L\subset X$ be an oriented Lagrangian as before. We say $L$ is \textit{zero-Maslov} if $[d\theta]$ vanishes in $H^1_{\mathrm{dR}}(L)$, or equivalently, the Lagrangian angle is a globally defined function $\theta: L\rt \bR$. We say $L$ is \textit{almost calibrated} if it is zero-Maslov and the Lagrangian angle satisfies $(\sup_{x\in L} \theta -\inf_{x\in L}\theta) < \pi$. 
\end{defn}

Let $U_L\subset T^*L$ be an open subset of the zero section of the cotangent bundle of $L$. 
Suppose we are given a Lagrangian embedding $\Psi: U_L \rightarrow X$. Then, we can identify Lagrangians in $X$ that are `nearby' to $L$ via small $1$-forms on $L$. More precisely, we have:  
\begin{theorem}
	There is a $1$-$1$ correspondence between small (in the $C^{1,\alpha}$-norm) closed $1$-forms $\alpha$ on $L$ and Lagrangians $L_\alpha$ nearby (in the $C^{1,\alpha}$-norm) to $L$, given by \[
	\alpha \mapsto L_\alpha:=\Psi(\Gamma_\alpha),
	\] 
	where $\Gamma_\alpha:=\{(x,\alpha(x) : x\in L)\}$ is the graph of the $1$-form $\alpha$.
	Moreover, two nearby Lagrangians $L_\alpha,L_\beta$ are Hamiltonian isotopic if and only if $\alpha-\beta$ is an exact $1$-form. Therefore, the Hamiltonian isotopy classes of nearby Lagrangians are in bijective correspondence with $C^1$-small forms in $H_{\mathrm{dR}}^1(L)$. In particular, the dimension of the `deformation space' of Lagrangians up to Hamiltonian isotopy is given by $b^1(L)$. 
\end{theorem}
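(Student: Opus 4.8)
The statement is really a package of classical facts from the symplectic geometry of cotangent bundles --- Weinstein's Lagrangian neighbourhood theorem and its parametric refinement --- so the plan is to reduce everything to the local model $T^*L$ and use the pullback of the canonical symplectic form. First I would observe that since $\Psi: U_L \to X$ is a symplectomorphism onto its image (this is part of the data of a Lagrangian embedding of an open neighbourhood of the zero section, normalised so that $\Psi^*\omega$ is the canonical symplectic form $\omega_{\mathrm{can}}$ on $T^*L$), the whole problem transfers to $U_L\subset T^*L$: a submanifold $L'\subset X$ lying in $\Psi(U_L)$ is Lagrangian if and only if $\Psi^{-1}(L')$ is Lagrangian in $(T^*L,\omega_{\mathrm{can}})$. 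So it suffices to prove the correspondence for graphs $\Gamma_\alpha$ of $1$-forms in $T^*L$.

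Next I would recall the standard computation that, viewing a $1$-form $\alpha$ as a section $\alpha: L\to T^*L$, one has $\alpha^*\lambda_{\mathrm{can}} = \alpha$ where $\lambda_{\mathrm{can}}$ is the tautological $1$-form, hence $\alpha^*\omega_{\mathrm{can}} = \alpha^*(-d\lambda_{\mathrm{can}}) = -d\alpha$. Therefore $\Gamma_\alpha$ is Lagrangian if and only if $d\alpha = 0$, which gives the bijection between closed $1$-forms and nearby Lagrangians; the $C^{1,\alpha}$-smallness conditions match up because $\Psi$ is a diffeomorphism onto its image and a Lagrangian $C^{1,\alpha}$-close to $L$ is, after applying $\Psi^{-1}$, a $C^{1,\alpha}$-small Lagrangian section of $T^*L$, which is necessarily a graph $\Gamma_\alpha$ for small $\alpha$ (this is the only mildly non-formal point: a Lagrangian sufficiently $C^1$-close to the zero section is transverse to the fibres, hence a graph). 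For the Hamiltonian isotopy statement, I would argue both directions: if $\alpha - \beta = df$ then the family $\alpha_t = \beta + t\,df = \beta + d(tf)$ consists of closed $1$-forms, hence $\Gamma_{\alpha_t}$ is a path of Lagrangians whose generating variation is exact, so by the standard correspondence between paths of Lagrangians and (possibly time-dependent) Hamiltonians --- the normal variation of $\Gamma_{\alpha_t}$ is $\partial_t\alpha_t = df$, which is exact --- the isotopy is Hamiltonian; conversely, a Hamiltonian isotopy from $L_\beta$ to $L_\alpha$ (staying inside $\Psi(U_L)$, which we may assume by smallness) pulls back to one in $T^*L$, and tracking the flux / the induced path in $H^1_{\mathrm{dR}}$ shows it is constant, so $[\alpha] = [\beta]$ in $H^1_{\mathrm{dR}}(L)$, i.e.\ $\alpha-\beta$ is exact. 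The final assertions are then immediate: quotienting closed $1$-forms by exact ones gives $H^1_{\mathrm{dR}}(L)$, and the $C^1$-small part of a finite-dimensional vector space is a neighbourhood of $0$ of dimension $b^1(L)$.

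The main obstacle --- really the only place where care is needed --- is the converse direction of the Hamiltonian isotopy claim and the precise meaning of ``nearby'': one must ensure that the Hamiltonian isotopy can be taken to remain inside the Weinstein neighbourhood $\Psi(U_L)$ so that it descends to $T^*L$, and one must correctly identify the obstruction to a Lagrangian isotopy being Hamiltonian as the flux homomorphism valued in $H^1_{\mathrm{dR}}(L)$. Everything else is the formal computation $\alpha^*\omega_{\mathrm{can}} = -d\alpha$ together with Weinstein's theorem, both of which are standard and can be cited (e.g.\ from McDuff--Salamon \cite{MS98}).
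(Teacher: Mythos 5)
The paper states this theorem without proof, treating it as a standard fact from the symplectic geometry of cotangent bundles and citing McDuff--Salamon \cite{MS98} as a general reference for the subsection. Your sketch is a correct account of that standard argument --- reduction to the Weinstein model $T^*L$, the computation $\alpha^*\omega_{\mathrm{can}}=-d\alpha$, the transversality/graph argument for $C^1$-small Lagrangians, and the flux homomorphism characterisation of Hamiltonian isotopies --- so there is nothing in the paper to contrast it with; you have essentially supplied the proof the paper delegates to the literature.
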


In particular, we have that the Lagrangian given by the graph of an exact $1$-form over a Lagrangian is Hamiltonian isotopic to it. Now, we can consider the LMCF of the Lagrangian $L$, and write it using $1$-forms via the above correspondence for a short time:
\begin{theorem}\label{thm_mcfpot}
	Suppose that the family of Lagrangian immersions $\{L_t\}_{t\in(-\varepsilon,\varepsilon)}$ with $L_0=L$ satisfies LMCF. As before, let us choose a Lagrangian neighbourhood and embedding for $L$, and let the Lagrangians $L_t$ be the graphs of $1$-forms $\alpha_t$ over $L$. Then, the corresponding $1$-forms $\alpha_t$ satisfy \begin{equation}\label{class_evolve}
		\frac{d}{dt}\alpha_t\Big|_{t=0}  = d\theta,
	\end{equation}
	where $\theta$ is the Lagrangian angle of $L$. Conversely, if we have a family of Lagrangians $\{L_t\}_{t\in(-\varepsilon,\varepsilon)}$ such that we can write, for each $t\in(-\varepsilon,\varepsilon)$, $L_{t+\delta}$ as the graph of a $1$-form $\alpha_{\delta}$ over $L_t$ satisfying equation (\ref{class_evolve}) at $\delta=0$, then the family evolves by LMCF.
\end{theorem}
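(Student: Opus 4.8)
The plan is to compute the first variation of the area under deformation by a normal vector field, translate the resulting statement about normal vector fields into a statement about the $1$-forms that parametrise nearby Lagrangians, and identify the mean curvature one-form with $d\theta$. The key bookkeeping device is the correspondence of the previous theorem: if $L_t$ is the graph of $\alpha_t$ over $L$, then $\tfrac{d}{dt}\alpha_t|_{t=0}$ should be read as a $1$-form on $L$, while the velocity vector $\tfrac{d}{dt}F(\cdot,t)|_{t=0}$ is a section of the normal bundle. Along $L$ itself (where $\alpha_0=0$) the Lagrangian neighbourhood embedding $\Psi: U_L\to X$ identifies $T^*L$ with a tubular neighbourhood of $L$ so that the normal bundle $NL$ is identified with $T^*L$ via $g$ and $\omega$; under this identification a normal velocity field $V$ corresponds to the $1$-form $\omega(V,\cdot)|_L$. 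So the first thing I would do is record this identification carefully and note that, by construction of $\Psi$, the derivative of the graph map at the zero section sends $\alpha$ to the normal field dual to $\alpha$.

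Next I would invoke the standard identity from Lagrangian geometry that the mean curvature one-form of a Lagrangian $L$ in a Calabi--Yau manifold equals $d\theta$, where $\theta$ is the Lagrangian angle; that is, $\iota_{H_L}\omega|_L = d\theta$. This is where the Calabi--Yau structure enters: it is precisely the content that $H_L$ is, up to the musical isomorphism, the differential of the grading defined by $e^{i\theta}\mathrm{vol}_L = \Omega|_L$. Combining this with the previous paragraph, the LMCF equation $\big(\tfrac{d}{dt}F\big)^\perp = H_{F(\cdot,t)}$, evaluated at $t=0$, becomes $\tfrac{d}{dt}\alpha_t|_{t=0} = d\theta$, which is \eqref{class_evolve}. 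One subtlety to address: $\tfrac{d}{dt}F$ need not be purely normal, it may have a tangential component coming from reparametrisation, but the graph correspondence is insensitive to tangential reparametrisation (graphs are defined up to diffeomorphism of $L$), so only the normal projection contributes to $\tfrac{d}{dt}\alpha_t$, which is exactly what the MCF equation prescribes.

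For the converse, I would run the same computation in reverse but relative to $L_t$ rather than $L_0$: given a family with $L_{t+\delta}$ the graph of $\alpha_\delta$ over $L_t$ and $\tfrac{d}{d\delta}\alpha_\delta|_{\delta=0} = d\theta_t$ (the Lagrangian angle of $L_t$), the identification at the zero section of $T^*L_t$ shows that the normal velocity of the family at time $t$ is the vector dual to $d\theta_t$, which is $H_{L_t}$; hence the family solves LMCF (up to tangential reparametrisation, which does not affect the flow as a family of submanifolds). I expect the main obstacle to be purely notational rather than conceptual: one must be scrupulous about what "$\tfrac{d}{dt}\alpha_t$" means when the reference Lagrangian is itself moving, and about the three distinct identifications in play — the metric dual $NL\cong T^*L$, the symplectic dual $NL\cong T^*L$, and the differential of the graph embedding — which agree only because $\Psi$ is a Lagrangian (not merely symplectic) neighbourhood, so that $\omega$ pulls back to the canonical form on $T^*L$ and the zero-section normal is computed by $\omega$ exactly. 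Once these are pinned down, both directions are a one-line consequence of $\iota_{H_L}\omega|_L = d\theta$.
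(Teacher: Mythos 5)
Your proposal follows essentially the same route as the paper: it uses the Lagrangian-neighbourhood identification of $NL$ with $T^*L$ via $\omega$ (showing the graph-derivative and symplectic dual agree), invokes $H=J\nabla\theta$ (equivalently $\iota_H\omega|_L=d\theta$), and observes that the velocity of the graph family at the zero section is $\tfrac{d}{dt}\alpha_t|_{t=0}$; the paper's Lemma 2.15 records precisely this correspondence and the theorem's proof is then the same one-line computation. The remark about tangential reparametrisation being absorbed by the graph correspondence is a sound (if unstated in the paper) observation, and the ``first variation of area'' framing in your opening sentence is unnecessary — the MCF equation is taken as the definition, not derived from a variational principle — but this does not affect the argument.
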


To prove this, we need a lemma about the geometry of the Lagrangian embedding:
\begin{lem}
	Suppose we have a Lagrangian embedding $\Psi$ for $L$ as before. Then, there is a bijection between the normal vector space to $L$ at a point $\iota(x)$ and the fibre of the cotangent bundle to $L$ at $x$, given by 
	\begin{equation}\label{eq_normalisomorphism}
		\alpha \mapsto v := (\Psi_*\alpha)^\perp.
	\end{equation}
	Note that $v$ is the unique vector in the normal bundle of $L$ such that $\omega(v,\iota_*w)=\alpha(w)$ for $w\in TL$. Under this bijection, we have that $J\nabla\theta$ corresponds to $d\theta$.
\end{lem}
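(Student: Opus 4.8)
The plan is to unwind the definitions using two standard facts: that $\Psi$ restricts to a symplectomorphism from $(U_L,\omega_{\mathrm{can}})$ onto its image in $(X,\omega)$, where $\omega_{\mathrm{can}}$ is the canonical symplectic form on $T^*L$, and that the symplectic orthogonal of a Lagrangian subspace is the subspace itself. First I would record the form of $\omega_{\mathrm{can}}$ along the zero section. At a point $x$ of the zero section there is a canonical splitting $T_x(T^*L)\cong T_xL\oplus T^*_xL$ into the horizontal part tangent to the zero section and the vertical part tangent to the fibre; since the tautological $1$-form $\lambda$ vanishes identically on the zero section, a one-line computation in a local trivialisation gives $\omega_{\mathrm{can}}\big((w_1,\alpha_1),(w_2,\alpha_2)\big)=\alpha_1(w_2)-\alpha_2(w_1)$, so a vertical vector $\alpha$ and a horizontal vector $w$ pair to $\alpha(w)$.

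Because $\Psi_*$ carries the horizontal space onto $\iota_*T_xL$ and, by the identification used to define the map, carries the vertical vector $\alpha$ to $\Psi_*\alpha\in T_{\iota(x)}X$, and because $\Psi^*\omega=\omega_{\mathrm{can}}$, we get $\omega(\Psi_*\alpha,\iota_*w)=\alpha(w)$ for all $w\in T_xL$. Decomposing $\Psi_*\alpha=(\Psi_*\alpha)^\top+(\Psi_*\alpha)^\perp=(\Psi_*\alpha)^\top+v$ and using that $\omega$ vanishes on $T_{\iota(x)}L$ (as $L$ is Lagrangian) annihilates the tangential term, leaving $\omega(v,\iota_*w)=\alpha(w)$, which is the stated characterisation. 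For the bijectivity: the assignment $\alpha\mapsto v$ is clearly linear, and uniqueness of $v$ as well as injectivity both follow from the Lagrangian condition. Indeed, if $v'\in N_xL$ also satisfies $\omega(v',\iota_*w)=\alpha(w)$ for all $w$, then $v-v'$ is $\omega$-orthogonal to $\iota_*T_xL$, hence lies in $(T_{\iota(x)}L)^\omega=T_{\iota(x)}L$, hence in $T_{\iota(x)}L\cap N_xL=\{0\}$; taking $\alpha=0$ gives injectivity, and since $\dim T^*_xL=\dim N_xL=m$ this upgrades to bijectivity.

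Finally, for the last assertion I would simply check that $J\nabla\theta$ is a normal vector enjoying the defining property of the image of $d\theta$. It lies in $N_xL$ because $J$ maps the Lagrangian subspace $T_{\iota(x)}L$ onto its metric orthogonal complement, and the compatibility of $(J,g,\omega)$ together with $J^2=-\mathrm{Id}$ and the $g$-orthogonality of $J$ gives $\omega(J\nabla\theta,\iota_*w)=g(\nabla\theta,w)=d\theta(w)$. By the uniqueness just established, $J\nabla\theta=(\Psi_*d\theta)^\perp$, i.e.\ $d\theta$ corresponds to $J\nabla\theta$ under the bijection.

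The only mildly delicate point is the bookkeeping of sign and ordering conventions — in the definition of $\omega_{\mathrm{can}}$, in the identity $\Psi^*\omega=\omega_{\mathrm{can}}$, and in the Kähler relation between $\omega$, $g$, and $J$ — which must be fixed once and for all so that the pairing on $T^*L$, the symplectomorphism property, and the Kähler identity are mutually consistent; with the conventions of the earlier definitions the three pieces fit together as above. No genuine analytic difficulty arises: the content is fibrewise linear symplectic algebra over $L$.
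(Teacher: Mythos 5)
Your proposal is correct and follows essentially the same route as the paper's proof: both use the symplectomorphism property of $\Psi$ and the vanishing of $\omega$ on $\iota_*TL$ to get $\omega(\Psi_*\alpha^\perp,\iota_*w)=\hat\omega(\alpha,w)=\alpha(w)$, and both verify $\omega(J\nabla\theta,\iota_*w)=d\theta(w)$ for the last claim. The paper compresses this to a few lines while you spell out the canonical splitting, the $(T_{\iota(x)}L)^\omega=T_{\iota(x)}L$ uniqueness argument, and the dimension count for bijectivity, but there is no substantive difference.
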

\begin{proof}
	The map is well-defined due to $\omega$ being non-degenerate, and $\omega|_{\iota(L)}=0$. To see the correspondence, note that $\omega(J\nabla\theta,\iota_*w) = d\theta(w)$. To see the other characterisation of $v$, note that $\omega(\Psi_*\alpha^\perp,\iota_*w) = \omega(\Psi_*\alpha,\iota_*w) = \widehat{\omega}(\alpha,w) = \alpha(w)$, with the last equality arising from writing out $\widehat{\omega}$ in coordinates.
\end{proof}

Now, we can prove Theorem \ref{thm_mcfpot}:	

\begin{proof}
	Note that in LMCF, the Lagrangian deforms by the mean curvature vector, which is orthogonal to the Lagrangian and is given by $H=J\nabla \theta$ (for e.g.\ see Lemma 2.1 in \cite{TY02}). Therefore, making use of the isomorphism (\ref{eq_normalisomorphism}) and the above lemma, the variational field generated by $d\theta$ at $t=0$ gives rise to the same flow as $H$. Since the variational field generated by the family of graphs $\Gamma(\alpha_t)$ at time $t=0$ is given by $\frac{d}{dt}\alpha_t$, we are done.
\end{proof}

Taking the de Rham cohomology classes in (\ref{class_evolve}), we get
\begin{coro}\label{cor_classevolve}
    Suppose that the family of immersed Lagrangians satisfies LMCF. Then, we have that 
    \begin{equation}\label{eq_mcfpotcoh}
      \frac{d}{dt}[\alpha_t]=[d\theta] \; \textrm{ in } H^1_{\textrm{dR}}(L),
    \end{equation}
    for all times $t\in(-\varepsilon,\varepsilon)$. Note that the Maslov class is preserved since $\{L_t\}$ is a Lagrangian isotopy.
\end{coro}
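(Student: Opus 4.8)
The plan is to obtain the statement directly from Theorem~\ref{thm_mcfpot} by passing to de Rham cohomology. The only two points of substance are: re-basing the evolution equation at an arbitrary time $t_0$ rather than at $t=0$ (Theorem~\ref{thm_mcfpot} is stated with $L_0=L$ as the base), and identifying $H^1_{\mathrm{dR}}(L_{t_0})$ with $H^1_{\mathrm{dR}}(L)$ compatibly with the Maslov class.

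First, observe that each $\alpha_t$ is a closed $1$-form on $L$, since $L_t=\Psi(\Gamma_{\alpha_t})$ is Lagrangian and the correspondence of the preceding theorem sends closed forms to nearby Lagrangians; hence $t\mapsto[\alpha_t]\in H^1_{\mathrm{dR}}(L)$ is well-defined. Because passing from a closed $1$-form to its de Rham class is linear, it commutes with $\tfrac{d}{dt}$, so at $t=0$ the identity $\tfrac{d}{dt}\alpha_t|_{t=0}=d\theta$ of \eqref{class_evolve} immediately gives $\tfrac{d}{dt}[\alpha_t]|_{t=0}=[d\theta]$. For a general $t_0\in(-\ep,\ep)$, pick a Lagrangian neighbourhood and embedding $\Psi_{t_0}$ for $L_{t_0}$ and apply the converse half of Theorem~\ref{thm_mcfpot} at $L_{t_0}$: writing $L_{t_0+\delta}=\Psi_{t_0}(\Gamma_{\beta_\delta})$ one gets $\tfrac{d}{d\delta}\beta_\delta|_{\delta=0}=d\theta_{t_0}$ with $\theta_{t_0}$ the Lagrangian angle of $L_{t_0}$, and taking classes, $\tfrac{d}{d\delta}[\beta_\delta]|_{\delta=0}=[d\theta_{t_0}]$ in $H^1_{\mathrm{dR}}(L_{t_0})$. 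The graph and neighbourhood maps give a diffeomorphism $\phi_{t_0}\colon L\xrightarrow{\ \sim\ }\Gamma_{\alpha_{t_0}}\xrightarrow{\ \Psi\ }L_{t_0}$, and since $\{L_t\}$ is a Lagrangian isotopy the Maslov class is preserved, i.e.\ $\phi_{t_0}^*[d\theta_{t_0}]=[d\theta]$. Finally, the families $\{\alpha_{t_0+\delta}\}$ (graphs over $L$) and $\{\beta_\delta\}$ (graphs over $L_{t_0}$) describe the same family of Lagrangians through two base identifications differing only by the fixed map $\phi_{t_0}$ and a fixed change of Lagrangian neighbourhood; differentiating at $\delta=0$, and using that a tangential reparametrisation contributes an exact $1$-form, one gets $\tfrac{d}{dt}[\alpha_t]|_{t=t_0}=\phi_{t_0}^*\tfrac{d}{d\delta}[\beta_\delta]|_{\delta=0}=\phi_{t_0}^*[d\theta_{t_0}]=[d\theta]$, which is \eqref{eq_mcfpotcoh}.

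The step I expect to require the most care is this last compatibility, namely that the first-order variation of $\alpha_t$ at $t_0$, read in $H^1_{\mathrm{dR}}(L)$, is the $\phi_{t_0}$-pullback of the first-order variation of $\beta_\delta$ at $0$. This reduces to the observation that the normal-bundle identification of the Lemma preceding Theorem~\ref{thm_mcfpot} is canonical up to the choice of Lagrangian neighbourhood, and two such choices differ by a symplectomorphism fixing the zero section; its derivative along the zero section is a bundle automorphism of $T^*L$ covering $\mathrm{id}_L$, which acts trivially on de Rham cohomology. Granting this, the corollary follows, and the remark that the Maslov class is preserved is precisely the identity $\phi_{t_0}^*[d\theta_{t_0}]=[d\theta]$ used above.
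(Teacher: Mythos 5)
Your proof is correct and follows the paper's approach: take de Rham cohomology classes in the evolution equation \eqref{class_evolve} of Theorem~\ref{thm_mcfpot}, re-base at an arbitrary time $t_0$, and use preservation of the Maslov class under Lagrangian isotopy to identify the right-hand side with $[d\theta]$ in $H^1_{\mathrm{dR}}(L)$; the paper states this as immediate, and you make the re-basing and independence of $\tfrac{d}{dt}[\alpha_t]$ from the choice of Lagrangian neighbourhood explicit. (One small slip in wording: to get the evolution equation at base time $t_0$ you should apply the \emph{forward} direction of Theorem~\ref{thm_mcfpot} with $L_{t_0}$ as the base Lagrangian, not the ``converse half,'' which runs in the opposite logical direction.)
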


\subsection{Asymptotically conical Lagrangian expanders}

First, we recall the definition of an expander of LMCF:
\begin{defn}
	An \textit{expander} $E\subset \mathbb{C}^m$ of LMCF is an embedded Lagrangian submanifold satisfying the elliptic P.D.E.
	\[
	(\overrightarrow{x})^\perp =  H(x),
	\]
	for $x\in E$, where $H(x)$ denotes the mean curvature vector at $x$. As a result, $E$ evolves under LMCF as $E_t = \sqrt{2(t+\frac{1}{2})}E$ for $t\in(-\frac{1}{2},\infty)$.
\end{defn}
Also, we recall the definition of asymptotically conical Lagrangians:
\begin{defn}\label{def_asymptoticallyconical}
    Let $C\subset \bC^m$ be a cone, with smooth link $\Sigma := C\cap \bS^{2m-1}$. A Lagrangian submanifold $E \subset \mathbb{C}^m$ is said to be \textit{asymptotically conical} with cone $C$ and rate $\gamma <2$, if there is a compact subset $K$ of $E$ and a diffeomorphism $\phi_E : \Sigma\times(R,\infty) \rightarrow E\backslash K$ such that \[
	|\nabla^k(\phi_E - \iota_C)| = O(r^{\gamma-1-k}),
	\]
	as $r\rightarrow \infty$, for all $k\geq 0$. Here, $\iota_C$ is the embedding of the cone $C$ in $\mathbb{C}^m$ given by $\Sigma\times(0,\infty) \ni (\sigma,r)\mapsto \sigma r$, and the norm is with respect to the induced metric on $\Sigma\times (0,R)$ from $\iota_C$.
\end{defn}

\subsubsection{Joyce--Lee--Tsui expanders}
Given an $m$-tuple of angles $\{\phi_1,\dots,\phi_m\}$, consider the oriented plane in $\bC^m$ spanned by the vectors $\{e^{i\phi_j}\overrightarrow{e_j}\}_{i=1}^m$, with orientation arising from this ordering. Note that $\Pi_0$ represents the plane $\{(z_1,\dots,z_m) : \mathrm{Im}(z_i)=0 \textrm{ for } i=1,\dots,m \}.$
Given a pair of oriented planes $\Pi_0,\Pi_\phi$, the oriented union $\Pi_\phi - \Pi_0$ is non-area-minimising if and only if the characterising angles $\{\tilde{\phi}_i\}_{i=1}^m$ between them satisfy $\phi = \sum_{i=1}^m\tilde{\phi}_i< \pi$, as shown by Lawlor \cite{Law89} (for a pair as above, the characterising angles $\{\tilde{\phi}_i\}_{i=1}^m$ are the angles $\{\phi_i\}_{i=1}^m$ in ascending order, hence $\phi = \sum_{i=1}^m\phi_i$). Given such a pair of non-area-minimising planes with a phase difference $\pi+\phi $ between them, there is an expander that is asymptotically conical with cone $\Pi_0\sqcup \Pi_\phi$, as constructed by Joyce, Lee and Tsui \cite{JLT10}, which we now describe:

\begin{theorem}
	Let $(\phi_1,\dots,\phi_m) \in (0,\pi)$ be the $m$-tuple of angles between the planes $\Pi_\phi$ and $\Pi_0$, such that $\phi=\sum{\phi_i} < \pi$. Then, there is an $m$-tuple of real constants $a_1,\dots,a_m> 0 $ such that we have \[
	\frac{1}{2}\phi_j = \int_0^\infty\frac{dt}{(\frac{1}{a_j}+t^2)\sqrt{P(t)}} \;\;\textrm{ for } i= 1,\dots,m,
	\]
	where
	$P(t) = \frac{1}{t^2}(\Pi_{k=1}^n (1+ a_k t^2)e^{t^2}-1)$. For $i=1,\dots,m$, define $w_j(y)=e^{iq_j(y)}r_j(y)$, where $r_j(y) = \sqrt{\frac{1}{a_j}+y^2}$ and $q_j(y) = \int_0^y\frac{dt}{(\frac{1}{a_j}+t^2)\sqrt{P(t)}} - \frac{1}{2}{\phi}_j$. Then, we have that \[
	L:= \{(x_1w_1(y),\dots,x_mw_m(y)) : (x_1,\dots,x_m)\in \mathbb{S}^{m-1}, y\in \bR \}
	\]
	is a closed, embedded Lagrangian expander diffeomorphic to $\bS^{m-1}\times\bR$ which is asymptotic to the two planes $\Pi_\phi$ and $\Pi_0$.
	
\end{theorem}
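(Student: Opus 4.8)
The plan is to verify directly that the given parametrised submanifold $L$ is Lagrangian, is an expander, and has the claimed asymptotics, following the strategy of Joyce--Lee--Tsui \cite{JLT10}. First I would set up the map $\Phi: \bS^{m-1}\times\bR \to \bC^m$, $\Phi(x,y) = (x_1 w_1(y),\dots,x_m w_m(y))$, and check that it is an embedding: injectivity and immersivity follow because each $|w_j(y)| = r_j(y) = \sqrt{1/a_j + y^2}$ is a strictly convex even function of $y$ with distinct minima (since the $a_j$ are pairwise distinct when the $\phi_j$ are), so the ``radii profile'' $y\mapsto (r_1(y),\dots,r_m(y))$ is an injective proper curve, and the $\bS^{m-1}$ directions are recovered from the arguments. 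Since each $|w_j(y)|\to\infty$ as $|y|\to\infty$, properness is clear, giving a closed embedded submanifold diffeomorphic to $\bS^{m-1}\times\bR$.

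Next I would check the Lagrangian condition. Writing $\omega = \sum_j \frac{i}{2} dz_j\wedge d\bar z_j$ and pulling back along $\Phi$, one gets $\Phi^*\omega = \sum_j \frac{i}{2} d(x_j w_j)\wedge d(x_j \bar w_j)$. Expanding $d(x_j w_j) = w_j\, dx_j + x_j w_j'\, dy$ and using $x_j w_j'\bar w_j - x_j w_j\bar w_j' = x_j(w_j'\bar w_j - w_j\bar w_j') = x_j(-2i)\,\mathrm{Im}(w_j\bar w_j') = 2i x_j\,\mathrm{Im}(\bar w_j w_j')$, the cross terms assemble into a multiple of $\big(\sum_j x_j\,\mathrm{Im}(\bar w_j w_j')\big)\,dy\wedge(\text{something})$; meanwhile $\sum_j \frac{i}{2}|w_j|^2\, dx_j\wedge d\bar x_j$ vanishes because $\{x_j\}$ are real. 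The key computation is that $\mathrm{Im}(\bar w_j w_j')$ is the same for all $j$: indeed $\bar w_j w_j' = (r_j - i\,\text{stuff})\cdot(\dots)$, and a direct calculation with $w_j = e^{iq_j}r_j$ gives $\bar w_j w_j' = r_j r_j' + i r_j^2 q_j'$, so $\mathrm{Im}(\bar w_j w_j') = r_j^2 q_j' = (1/a_j + y^2)\cdot \frac{1}{(1/a_j + y^2)\sqrt{P(y)}} = \frac{1}{\sqrt{P(y)}}$, independent of $j$. Since $\sum_j x_j\, dx_j = \tfrac12 d(\sum x_j^2) = \tfrac12 d(1) = 0$ on $\bS^{m-1}$, the form $\big(\sum_j x_j\cdot\frac{1}{\sqrt{P}}\big) dy\wedge(\dots)$ vanishes. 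Hence $\Phi^*\omega = 0$ and $L$ is Lagrangian.

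Then I would verify the expander equation $(\vec x)^\perp = H(x)$. Rather than compute $H$ from scratch, I would use the Lagrangian angle: for a graded Lagrangian, $H = J\nabla\theta$, so it suffices to identify $\theta$ along $L$ and show that $J\nabla\theta$ agrees with the normal projection of the position vector field. One computes $\Omega|_L = dz_1\wedge\cdots\wedge dz_m$ pulled back; since $z_j = x_j w_j(y)$ and $\prod_j w_j(y) = e^{i\sum_j q_j(y)}\prod_j r_j(y)$, the Lagrangian angle works out to $\theta = \sum_{j=1}^m q_j(y) + (\text{const depending on }m\text{ and the }\phi_j)$ (the $\bS^{m-1}$ part contributes only a real volume factor). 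Then $\nabla\theta$ is a multiple of $\partial_y$, and matching $J\nabla\theta$ against the tangential-complement of $\vec x = \sum_j x_j w_j(y)\,\partial_{z_j}$ reduces, via the ODE defining $q_j$ and $r_j$, to the scalar identity $q_j'(y) = \frac{1}{(1/a_j+y^2)\sqrt{P(y)}}$ together with $r_j r_j' = y$ — exactly the relations built into the definitions of $r_j, q_j$ and the choice of $P$. This is the computational heart and the main obstacle: organising the first- and second-order derivatives of $\Phi$ so that the expander PDE collapses cleanly onto these ODEs; it is essentially the content of \cite{JLT10} and I would cite their Proposition establishing it, filling in the normalisation of the constants $a_j$ via the phase integral $\tfrac12\phi_j = \int_0^\infty \frac{dt}{(1/a_j + t^2)\sqrt{P(t)}}$, which is precisely what makes $q_j(\pm\infty) = \pm\tfrac12\phi_j - \tfrac12\phi_j$, i.e. $q_j(+\infty)=0$ and $q_j(-\infty) = -\phi_j$.

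Finally, the asymptotics: as $y\to+\infty$, $q_j(y)\to 0$ and $r_j(y)\sim |y|$, so $w_j(y)\sim y$ and $L$ is asymptotic to $\{(x_1 y,\dots,x_m y): x\in\bS^{m-1}, y>0\} = \Pi_0$; as $y\to-\infty$, $q_j(y)\to -\phi_j$ and $w_j(y) \sim e^{-i\phi_j}|y|$, so $L$ is asymptotic to the plane spanned by $\{e^{-i\phi_j}\vec e_j\}$, which is $\Pi_\phi$ up to orientation. The rate of convergence is governed by $|q_j(y) - q_j(\pm\infty)| = \big|\int_{|y|}^\infty \frac{dt}{(1/a_j+t^2)\sqrt{P(t)}}\big|$; since $P(t)$ grows like $t^{-2}e^{t^2}\prod(1+a_k t^2)$, hence super-exponentially, this tail is $O(e^{-y^2/2})$ or faster, and similarly $r_j(y) - |y| = O(1/|y|)$, comfortably giving asymptotically conical convergence with any rate $\gamma < 2$ (indeed with exponentially small error in the angular part), which is far stronger than Definition \ref{def_asymptoticallyconical} requires. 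I would conclude by noting that the phase difference between the two asymptotic planes is $\pi + \phi$, as the orientation reversal on $\Pi_\phi$ contributes $\pi$ and the angle shift contributes $\sum_j\phi_j = \phi$.
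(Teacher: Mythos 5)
This theorem is not proved in the paper; it is quoted verbatim from Joyce--Lee--Tsui \cite{JLT10}, together with the immediately following uniqueness statement from \cite{LN13,IJO16}, so there is no in-paper argument to compare against. Your proposal is a reasonable sketch of the verification one would carry out if proving it from scratch, and the $\Phi^*\omega$ computation is correct: the identity $\mathrm{Im}(\bar w_j w_j') = r_j^2 q_j' = 1/\sqrt{P(y)}$ (independent of $j$), combined with $\sum_j x_j\,dx_j = 0$ on $\mathbb{S}^{m-1}$, is exactly the right calculation, and it also cleanly recovers why the specific $P(t)$ in the statement appears. That part reads like a genuine proof.

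However, a few steps as written would not survive scrutiny. First, the embeddedness argument is wrong: each $r_j(y)=\sqrt{1/a_j+y^2}$ is \emph{even} in $y$, so the map $y\mapsto(r_1(y),\dots,r_m(y))$ cannot be an injective curve; and the $\phi_j$ (hence $a_j$) need not be pairwise distinct. Injectivity must instead use the arguments $q_j$, which are strictly increasing in $y$ because $q_j'>0$; $y$ is then recovered from any one phase, and the $\mathbb{S}^{m-1}$-coordinate from the moduli. Second, the asymptotic plane you find as $y\to-\infty$ is $\mathrm{span}_\mathbb{R}\{e^{-i\phi_j}e_j\}$, which is the complex conjugate of $\Pi_\phi:=\mathrm{span}_\mathbb{R}\{e^{i\phi_j}e_j\}$, not merely ``$\Pi_\phi$ up to orientation''; whether this is harmless depends on the sign conventions in \cite{JLT10}, and you should either fix the sign in $q_j$ or flag the convention explicitly rather than waving at orientation. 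Third, the decay-rate remark is off: $r_j(y)-|y|=O(1/|y|)$ would by itself give only rate $\gamma=0$ in Definition~\ref{def_asymptoticallyconical}; the super-polynomial decay (Proposition~\ref{prop_expanderdecay}) requires choosing the comparison diffeomorphism $\phi_E$ so that the radial part is matched exactly and only the exponentially small phase discrepancy $q_j(y)$ remains, which is Lotay--Neves' Theorem 3.1 and is not automatic from your estimate. Finally, the actual expander equation --- the ``computational heart,'' as you rightly call it --- is entirely deferred to \cite{JLT10}; that is acceptable for a cited theorem but means your proposal is a verification outline rather than a self-contained proof.
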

Moreover, this is the unique closed, exact expander asymptotic to those planes, as shown by Lotay and Neves \cite{LN13} for $m=2$, and Imagi, Joyce and Oliveira dos Santos \cite{IJO16} for $m\geq 3$.

\subsubsection{General asymptotic cones}
Consider an asymptotic cone $C=\bigcup_{i=1}^n C_i$ which is a union of special Lagrangian cones $C_i$ (which may have different phases). Let $E$ be a Lagrangian expander which is asymptotic to the cone $C$ at a rate $\gamma <2$. Due to $C$ being a union of special Lagrangian cones, we have the following from \cite{LN13}, Theorem 3.1 (Lotay--Neves prove it for $C=\Pi_\phi\sqcup \Pi_0$ being the union of two non-area-minimising planes, but the same argument works for a general special Lagrangian cone):
\begin{prop}\label{prop_expanderdecay}
	The expander decays with rate $O(e^{-\alpha r^2})$ to the cone $C$. As a result, it decays at rate  $\gamma= -\infty$ to $C$ in Definition \ref{def_asymptoticallyconical}.
\end{prop}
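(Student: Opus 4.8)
The plan is to show that an asymptotically conical Lagrangian expander $E$ asymptotic to a union $C$ of special Lagrangian cones actually converges to $C$ at an exponential rate of order $e^{-\alpha r^2}$, and therefore at rate $\gamma = -\infty$ in the sense of Definition \ref{def_asymptoticallyconical}. The strategy follows the argument of Lotay--Neves \cite{LN13}, Theorem 3.1, which I would adapt from the case $C = \Pi_\phi \sqcup \Pi_0$ to a general special Lagrangian cone. First I would observe that because $E$ is asymptotically conical with some finite polynomial rate $\gamma < 2$, outside a compact set $E$ is a graph of a small $1$-form over $C$; writing the Lagrangian condition in terms of this $1$-form $\alpha = df$ for a potential function $f$ (using exactness of small closed $1$-forms near the cone, which holds because the link components $\Sigma_i$ of a special Lagrangian cone are connected and $H^1$ can be handled on the cone), the expander equation $(\vec{x})^\perp = H$ becomes a second-order nonlinear elliptic PDE for $f$ on $C$.

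The key step is to exploit the special structure: since each $C_i$ is special Lagrangian, its Lagrangian angle is constant and its mean curvature vanishes, so the linearisation of the expander operator at the cone has the schematic form $\Delta_C f + \tfrac{1}{2} r \partial_r f + (\text{lower order}) = (\text{nonlinear error quadratic in } f, \nabla f, \nabla^2 f)$, where $\Delta_C$ is the cone Laplacian and the first-order term $\tfrac12 r\partial_r f$ comes from the dilation vector field $\vec{x} = r\partial_r$ in the expander equation. Separating variables on $C = \Sigma \times (0,\infty)$ using the eigenfunctions of the link Laplacian reduces the homogeneous model to ODEs of the form $f'' + \tfrac{2m-1}{r} f' + \tfrac{1}{2} r f' - \tfrac{\mu}{r^2} f = 0$; the relevant decaying solutions behave like $e^{-r^2/4}$ times polynomial corrections, which already gives a Gaussian decay rate for the linear model. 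I would then run a standard barrier / maximum principle argument, or an iteration bootstrapping from the known polynomial decay, to upgrade the a priori polynomial bound on $f$ and its derivatives to the Gaussian bound $|\nabla^k (f)| = O(e^{-\alpha r^2})$ for some $\alpha > 0$, absorbing the nonlinear terms since they are higher order in a quantity that is already decaying.

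The main obstacle I anticipate is handling the nonlinear error terms and the first-order transport-type term $\tfrac12 r\partial_r f$ simultaneously in the decay estimate: the coefficient of the first-order term grows linearly in $r$, so the standard elliptic weighted-space machinery on conical ends (which is tuned to polynomial weights) does not directly apply, and one must instead build explicit super-solutions of the form $e^{-\alpha r^2}$ and check that for $\alpha$ small enough they dominate both the linearised operator's action and the nonlinearity on the region $\{r > R\}$. A secondary technical point is justifying that the finite-rate asymptotic decay assumed in Definition \ref{def_asymptoticallyconical} is enough to get the iteration started — i.e.\ that $f$ and enough of its derivatives are genuinely small in a weighted sense on the end — which follows from interior Schauder estimates applied to the rescaled equation on dyadic annuli. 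Once the Gaussian bound on the potential $f$ is established, translating it back into the bound $|\nabla^k(\phi_E - \iota_C)| = O(e^{-\alpha r^2})$ on the embedding is routine, and since $e^{-\alpha r^2} = O(r^{\gamma - 1 - k})$ for every $\gamma$, we conclude the expander decays at rate $\gamma = -\infty$.
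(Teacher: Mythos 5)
Your proposal takes essentially the same approach as the paper, which is simply to cite Lotay--Neves \cite{LN13}, Theorem 3.1 and observe that their argument (given there for a union of two non-area-minimising planes) carries over verbatim to a general union of special Lagrangian cones, since the only feature of the planes that enters is that they are special Lagrangian, i.e.\ their Lagrangian angle is constant and their mean curvature vanishes. Your sketch of the underlying mechanism — graph the expander over $C$, observe that the special Lagrangian condition kills the zeroth-order error, identify the drift-type elliptic operator coming from the $\vec{x}^\perp$ term, and upgrade the assumed polynomial decay to Gaussian decay via barriers/bootstrap — is the right picture, though the details differ slightly from Lotay--Neves, who run the argument at the level of the Lagrangian angle $\theta_E$ and the second fundamental form rather than the graph potential $f$, and whose model operator carries a zeroth-order term $-2f$ (from the scaling weight) that your separated ODE omits; that term, not the $r$-coefficient of $f'$ alone, is what forces the decaying solution to be genuinely Gaussian rather than merely bounded.
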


\begin{rem}
	This is an important geometric fact about these expanders that we will be using in our analysis. 
\end{rem}

\subsection{Conically singular Lagrangians}
In this section, we recall the definition of conically singular Lagrangians, following Definition 3.4 of \cite{Joy04}. Note that there are weaker notions of conical singularity possible, which may be relevant from the viewpoint of flowing past singularities. However, we do not consider them here.

\begin{defn}
	Let $\iota:L\rt X$ be a Lagrangian embedding, such that there exists a point $x\in X$ with Darboux chart $\Upsilon: B_R\rt X$, such that $\phi: \Upsilon^{-1}(\iota (L)) \rt B_R \subset \bC^m$ is a Lagrangian embedding.  Let $C\subset \bC^m$ be a Lagrangian cone with smooth link $\Sigma:= C\cap \bS^{2m-1}$, such that given the natural embedding $\iota_C: \Sigma \times (0,\infty) \rt \bC^m$ of $C$, there exists a diffeomorphism $\phi: \Sigma\times (0,R')\rt \Upsilon^{-1}(\iota(L))$ such that $|\nabla^k(\phi-\iota_C)|=O(r^{\mu-1-k})$ for $\mu \in (2,3)$, where the norm is computed with respect to the induced metric on $C$ from $\iota_C$. Then, we say that $L$ is a \textit{conically singular} Lagrangian, with tangent cone $C$ at $x$ and rate of decay $\mu$. 
\end{defn} 

\subsection{An overview of Begley--Moore}

Begley and Moore \cite{BM16} proved the following:
\begin{theorem}
	Let $L\subset X$ be a compact immersed Lagrangian submanifold in a Calabi--Yau manifold, with a finite number of singularities, such that each of them is asymptotic to a pair of transversely intersecting non-area-minimising planes. Then, there exists $T>0$ and an embedded Lagrangian mean curvature flow $\{L_t\}_{t\in (0,T)}$ such that as $t\downarrow T$, $ L_t\rt L$ as varifolds and in $C^\infty_{\mathrm{loc}}$ away from the singularities.
\end{theorem}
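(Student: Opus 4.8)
The plan is a gluing-and-compactness argument in the spirit of Ilmanen--Neves--Schulze \cite{INS19}. First I would desingularise $L$ by hand: near each transverse self-intersection point $x_i$, whose tangent cone is the pair of non-area-minimising planes $\Pi_0^i\sqcup\Pi_{\phi_i}^i$, fix a Darboux chart and a Weinstein neighbourhood and glue in the rescaled Joyce--Lee--Tsui expander $\sigma E_i$ using a cutoff at an intermediate scale $r\sim\sqrt\sigma$, interpolating between the exact $1$-forms that define $\sigma E_i$ and the two planes over the common cone. This is legitimate within the Lagrangian category because both $L$ near $x_i$ and $\sigma E_i$ are graphs of exact $1$-forms over the cone, with errors controlled respectively by the rate-$\mu$ decay of the conical singularity and the $O(e^{-\alpha r^2})$ decay of the expander from Proposition \ref{prop_expanderdecay}. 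The upshot is, for each small $\sigma>0$, a smooth compact \emph{embedded} Lagrangian $L^\sigma$ with $L^\sigma\approx\sigma E_i$ near $x_i$ and $L^\sigma\rt L$ as $\sigma\downarrow0$ both as varifolds and in $C^\infty_{\loc}(X\setminus\{x_i\})$.

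By Smoczyk's theorem each $L^\sigma$ evolves by LMCF on a maximal interval $[0,T_\sigma)$, and the heart of the argument is a \emph{uniform} lower bound $T_\sigma\geq T>0$ together with uniform local curvature estimates. Away from the singular points this follows from pseudolocality / White's local regularity theorem, since there $L^\sigma$ is locally graphical with $\sigma$-independent bounds. Near $x_i$ I would rescale parabolically by $\sigma^{-1}$ in space: the rescaled initial data converge in $C^\infty_{\loc}$ to the exact expander $E_i$, whose LMCF is the smooth self-expanding solution $\sqrt{2t}\,E_i$; a continuity argument using Huisken's monotonicity formula, the Gaussian density bound coming from $E_i$, and White's theorem then shows the rescaled flows stay smoothly close to $\sqrt{2t}\,E_i$ on compact time intervals, so $L^\sigma_t\approx\sqrt{2t}\,E_i$ near $x_i$ with curvature bounds on each parabolic annulus uniform in $\sigma$. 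Patching the two regimes yields uniform curvature control on $(X\setminus\{x_i\})\times(0,T)$ and on the necks.

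With these estimates I would pass to a subsequential limit $\sigma\downarrow0$: Brakke/varifold compactness produces an integral Brakke flow $\{L_t\}_{t\in(0,T)}$, and the uniform curvature bounds upgrade the convergence to $C^\infty_{\loc}$ away from $\{x_i\}$, so $\{L_t\}$ is a genuine smooth LMCF there with $L_t\rt L$ in $C^\infty_{\loc}(X\setminus\{x_i\})$ as $t\downarrow0$; since $\sqrt{2t}\,E_i$ converges to the cone $C_i=\Pi_0^i\sqcup\Pi_{\phi_i}^i$ as varifolds, one also gets $L_t\rt L$ globally as varifolds. To see that each $L_t$, $t\in(0,T)$, is a smooth \emph{embedded} Lagrangian rather than merely a Brakke flow, I would invoke the structure theory for singularities of (almost-calibrated) Lagrangian MCF: the JLT expanders themselves are almost calibrated (Lagrangian-angle oscillation $\sum\phi_i<\pi$), so any putative singularity near a neck would have a tangent flow that is a special Lagrangian cone or a self-expander, and the near-neck model plus control of the Lagrangian angle rule these out after possibly shrinking $T$; away from the necks smoothness is already established.

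The main obstacle is exactly the uniform neck estimate --- showing that the flow emanating from the glued region genuinely tracks $\sqrt{2t}\,E_i$ all the way down to scale $\sigma$ and does not spawn new small-scale singularities before time $T$. This is where the rigidity of the JLT expanders is indispensable: their uniqueness among closed exact expanders asymptotic to the given planes (Lotay--Neves \cite{LN13}, Imagi--Joyce--Oliveira dos Santos \cite{IJO16}) pins down the blow-up limit and feeds the continuity argument, while the $O(e^{-\alpha r^2})$ decay keeps the gluing errors small enough to survive parabolic rescaling. A secondary technical point is carrying out the interpolation of exact $1$-forms inside the Weinstein neighbourhood so that $L^\sigma$ is genuinely Lagrangian with controlled cutoff errors; the heavier alternative --- the direct PDE construction and perturbation of an exact approximate solution developed in the body of this paper --- also yields the statement, but with far more machinery than the varifold conclusion requires.
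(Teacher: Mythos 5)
Your proposal is essentially the argument of Begley--Moore \cite{BM16}, which is exactly what the paper cites for this theorem: glue scaled JLT expanders into the conical singularities to get a family of smooth embedded Lagrangians $L^\sigma$, flow each by LMCF, establish uniform Gaussian density and curvature estimates (hence a uniform lower bound on the time of existence), and pass to a subsequential limit in the Brakke/varifold sense, upgrading to $C^\infty_{\loc}$ away from the singular points. The paper does not reprove this statement; it describes this strategy and instead devotes its own machinery (parabolic blow-up, manifolds with corners and a-corners, weighted Sobolev spaces, fixed-point argument) to a strictly stronger existence and uniqueness statement --- which you correctly flag at the end as the heavier alternative.

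One small caution: your step ruling out new small-scale singularities by invoking the classification of tangent flows for \emph{almost-calibrated} Lagrangian MCF quietly assumes the initial $L$ is almost calibrated, which is not part of the hypothesis. Begley--Moore instead get smoothness of the limit directly from the uniform curvature bounds obtained via monotonicity and the parabolic rescaling argument near the necks, without appealing to a global zero-Maslov assumption; you would need to either add such a hypothesis or proceed as they do.
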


In order to prove this, they follow a similar strategy as in \cite{INS19}: they first construct an approximate family of initial Lagrangians $\{L^s\}$  to the flow via gluing in evolving expanders that are asymptotic to the tangent planes. Then, they allow these Lagrangians to flow for a short time, to get a family of flows $\{L^s_t\}_{t\in(0,\delta_s)}$. They get the desired flow $\{L_t\}_{t\in(0,\varepsilon)}$ by showing that there is a uniform lower bound for the time of existence $\delta_s$ for $s$ sufficiently small, and then taking a limit along the flows $\{L^s_t\}$. In order to show these bounds, they prove uniform estimates on the Gaussian density ratios and uniform curvature estimates for the evolving family. We remark that although Begley--Moore prove convergence as varifolds, their methods can directly lead to further strong convergence of parabolic rescalings in backward time of the flow to the expander.

\subsection{Lagrangian neighbourhood theorems}

In this subsection, we consider $\iota:L\rt X$ an immersed conically singular Lagrangian with a single conical singularity $x\in X$ with cone $C$. Let $\Sigma := C \cap \bS^{2m-1} \subset \mathbb{C}^m$ be the link of the cone, which is a smooth submanifold. We have the following neighbourhood theorem for the cone $C$, following Theorem 3.6 of \cite{Joy04}:
\begin{theorem}\label{thm1}
	There exists a Lagrangian neighbourhood of the zero section $U_C\subset T^*(\Sigma \times (0,\infty)) $ and a Lagrangian embedding $\Phi_C : U_C \rightarrow \mathbb{C}^m\backslash \{0\}$ such that $U_C$ is invariant under the $\mathbb{R}_+$-action given by 
	\begin{equation}\label{eq_epsilonscalingcone}
		\varepsilon \cdot (\sigma,r,\tau,s) = (\sigma, \varepsilon r , \varepsilon^2 \tau, \varepsilon s),
	\end{equation}
	and $\Phi_C$ is equivariant with respect to it, 
	\begin{equation}\label{eq_epsilonscalingequivariance}
		\Phi_C(\varepsilon \cdot (\sigma, r, \tau, s)) = \varepsilon \cdot \Phi_C(\sigma,r,\tau,s), 
	\end{equation}
	and 
	\begin{equation}\label{eq_epsilonscalingsymplectic}
		\varepsilon^*(\hat{\omega}) = \varepsilon^2\hat{\omega}
	\end{equation}
	(where $r,s$ are the coordinates $(\sigma,r,\tau,s)$ of $T^*(\Sigma\times(0,\infty))$, and $\hat{\omega}$ is the standard symplectic form).
\end{theorem}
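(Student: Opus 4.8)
The plan is to prove this by an equivariant version of Weinstein's Lagrangian neighbourhood theorem, run relative to the dilation symmetry of the cone. Write $\delta_\varepsilon\colon\bC^m\rt\bC^m$ for the dilation $z\mapsto\varepsilon z$, which preserves $C\setminus\{0\}$ and satisfies $\delta_\varepsilon^*\omega=\varepsilon^2\omega$, and write $\rho_\varepsilon$ for the $\bR_+$-action (\ref{eq_epsilonscalingcone}) on $T^*(\Sigma\times(0,\infty))$. Writing $\tau$ for the $\Sigma$-cotangent coordinate and $s$ for the coordinate dual to $r$, the canonical $1$-form $\tau\,d\sigma+s\,dr$ satisfies $\rho_\varepsilon^*(\tau\,d\sigma+s\,dr)=\varepsilon^2(\tau\,d\sigma+s\,dr)$, so $\rho_\varepsilon^*\hat\omega=\varepsilon^2\hat\omega$; this already gives (\ref{eq_epsilonscalingsymplectic}), so the real content is to produce a symplectic embedding $\Phi_C$ restricting to $\iota_C$ on the zero section $Z$ and intertwining $\rho_\varepsilon$ with $\delta_\varepsilon$.

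First I would fix the identification of the normal bundle. Since $\iota_C(\Sigma\times(0,\infty))=C\setminus\{0\}$ is Lagrangian, its $g$-orthogonal normal bundle $\nu(C\setminus\{0\})\subset T\bC^m|_{C\setminus\{0\}}$ equals $J\bigl(T(C\setminus\{0\})\bigr)$, hence is itself Lagrangian and is $\delta_\varepsilon$-invariant (as $J$ and $T(C\setminus\{0\})$ are). The map $v\mapsto\omega(v,\iota_{C*}\cdot)$ is then a bundle isomorphism $\nu(C\setminus\{0\})\cong T^*(\Sigma\times(0,\infty))$, and the exponents in (\ref{eq_epsilonscalingcone}) are dictated precisely by requiring this isomorphism to intertwine $\rho_\varepsilon$ with the dilation action induced by $\delta_\varepsilon$ on $\nu(C\setminus\{0\})$ (the weight $\varepsilon^2$ on the $\Sigma$-cotangent directions being forced by combining the geometric weight $1$ of a normal vector under $d\delta_\varepsilon$ with the $\varepsilon^2$ in $\delta_\varepsilon^*\omega=\varepsilon^2\omega$). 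Using the affine structure of $\bC^m$, define $\Psi_0(\sigma,r,\alpha)=\iota_C(\sigma,r)+N(\sigma,r,\alpha)$ where $N(\sigma,r,\alpha)\in\nu_{\iota_C(\sigma,r)}(C\setminus\{0\})$ is the normal vector corresponding to $\alpha$. On a sufficiently small neighbourhood of $Z$ this is a diffeomorphism onto an open neighbourhood of $C\setminus\{0\}$ in $\bC^m\setminus\{0\}$, it restricts to $\iota_C$ on $Z$, it is $\rho_\varepsilon$-to-$\delta_\varepsilon$ equivariant by construction, and — since the normal bundle is Lagrangian — $\Psi_0^*\omega$ and $\hat\omega$ agree along $Z$.

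Next comes the Moser argument, carried out equivariantly. Set $\omega_0=\hat\omega$, $\omega_1=\Psi_0^*\omega$, and $\omega_t=(1-t)\omega_0+t\omega_1$; these are closed, nondegenerate near $Z$, agree on $Z$, and satisfy $\rho_\varepsilon^*\omega_i=\varepsilon^2\omega_i$ (for $\omega_1$ because $\Psi_0$ is equivariant and $\delta_\varepsilon^*\omega=\varepsilon^2\omega$). Letting $R$ generate $\rho_\varepsilon$, so $\mathcal{L}_R(\omega_1-\omega_0)=2(\omega_1-\omega_0)$, the form $\beta:=\tfrac12\,\iota_R(\omega_1-\omega_0)$ is a global primitive of $\omega_1-\omega_0$ on the neighbourhood (by Cartan's formula, since $\omega_1-\omega_0$ is closed) — here the homogeneity makes the Poincaré step canonical — it vanishes on $Z$, and $\rho_\varepsilon^*\beta=\varepsilon^2\beta$. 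Define $X_t$ by $\iota_{X_t}\omega_t=-\beta$; then $X_t$ vanishes on $Z$, and applying $\rho_\varepsilon^*$ to its defining equation together with $\rho_\varepsilon^*\omega_t=\varepsilon^2\omega_t$, $\rho_\varepsilon^*\beta=\varepsilon^2\beta$ and nondegeneracy forces $(\rho_\varepsilon)_*X_t=X_t$. Hence the flow $\psi_t$ of $X_t$ commutes with every $\rho_\varepsilon$ and fixes $Z$ pointwise. Passing to a $\rho_\varepsilon$-invariant neighbourhood $U_C$ of $Z$ on which $\psi_t$ exists for $t\in[0,1]$ and on which $\Psi_0$ avoids the origin, set $\Phi_C:=\Psi_0\circ\psi_1$. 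Then $\Phi_C^*\omega=\psi_1^*\omega_1=\omega_0=\hat\omega$, so $\Phi_C$ is a Lagrangian embedding; it restricts to $\iota_C$ on $Z$; and $\Phi_C\circ\rho_\varepsilon=\Psi_0\circ\psi_1\circ\rho_\varepsilon=\Psi_0\circ\rho_\varepsilon\circ\psi_1=\delta_\varepsilon\circ\Phi_C$, which is (\ref{eq_epsilonscalingequivariance}).

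The one genuinely non-routine point — what makes this the cone version rather than the compact Weinstein theorem — is that $\Sigma\times(0,\infty)$ is non-compact, so the various ``sufficiently small neighbourhood'' choices (where $\Psi_0$ is a diffeomorphism avoiding $0$, where $\omega_t$ is nondegenerate, where the Moser flow runs to time $1$) must be made uniformly in $r$. The resolution is that $\rho_\varepsilon$ has compact fundamental domain $\Sigma\times[1,2]$: one meets all the smallness requirements on a $\rho_\varepsilon$-slab neighbourhood of the compact slice $\Sigma\times\{1\}$ by compactness, and then spreads the whole construction over $(0,\infty)$ by equivariance, which is consistent precisely because every ingredient above (normal bundle identification, $\Psi_0$, $\beta$, $X_t$, $\psi_t$) was already $\rho_\varepsilon$-equivariant where defined. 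I expect that carefully writing this uniformity is the only real work; everything else is Weinstein's proof with the dilation symmetry tracked through.
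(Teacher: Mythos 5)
Your proof is correct. The paper does not actually prove this statement itself — it states it as a quotation of Theorem 3.6 of Joyce \cite{Joy04} — so there is no internal proof to compare against, but your equivariant Weinstein/Moser argument is the standard way to establish it and is the argument underlying Joyce's result: the weights in \eqref{eq_epsilonscalingcone} forced by the $\omega$-identification of $\nu(C\setminus\{0\})\cong J\,T(C\setminus\{0\})$ with the cotangent bundle, the canonical homogeneity-degree-$2$ primitive $\beta=\tfrac12\iota_R(\omega_1-\omega_0)$ that makes the Moser vector field automatically $\rho_\varepsilon$-invariant, and the use of the compact fundamental domain $\Sigma\times[1,2]$ to get the uniform-in-$r$ smallness needed on the non-compact cone are exactly the right ingredients.
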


Then, we have a Darboux neighbourhood theorem for the singularities of $L$, following Theorem 3.8 of \cite{Joy04}:
\begin{theorem}\label{thm3}
	Let $B_R(0) \subset \mathbb{C}^m$ denote the ball of radius $R$ in $\mathbb{C}^m$. Given an isolated conical singularity $x\in X$ of the Lagrangian $L$, there is an embedding $\Upsilon : B_R(0)\rightarrow X$, such that 
	\begin{enumerate}
		\item $\Upsilon(0)=x, \Upsilon^*\omega = \omega_{\mathbb{C}^m}, \Upsilon^*\Omega|_0=\Omega_{\mathbb{C}^m}$, $\Upsilon^*g_X = g_{\mathbb{C}^m} + O(r^2)$, where $\omega_{\bC^m}$ is the canonical symplectic form and $\Omega_{\bC^m}$ is the canonical holomorphic volume form on $\bC^m$. 
		\item $\Upsilon^{-1}(L)$ can be written as the graph of a function $\Gamma(df)$ over the cone $C\subset \bC^m$ in the image of the Lagrangian neighbourhood $\Phi_C(U_C)$, such that $|\nabla^kf|= O(r^{\mu-k})$ for $\mu \in (2,3)$, where the norm is computed with respect to the induced metric on $C$ from $\iota_C$.
	\end{enumerate}
\end{theorem}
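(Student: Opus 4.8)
The plan is to obtain $\Upsilon$ in two movements: first upgrade the Darboux chart that already comes with the conical singularity so that, at the origin, the holomorphic volume form and the metric are standard to the orders demanded in (1); then use the cone neighbourhood theorem, Theorem \ref{thm1}, to exhibit $\Upsilon^{-1}(L)$ as a graph over $C$.

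\textbf{The chart.} I would start from the embedding $\Upsilon_0:B_{R_0}\rt X$ furnished by the definition of a conically singular Lagrangian, which already satisfies $\Upsilon_0(0)=x$, $\Upsilon_0^*\omega=\omega_{\bC^m}$ and has $\Upsilon_0^{-1}(\iota(L))$ asymptotic to $C$ with rate $\mu$, and modify it by a chain of diffeomorphisms each fixing $0$, so that the $\bC^m$-side asymptotics survive --- possibly after replacing $C$ by a unitary image, still a union of special Lagrangian cones. (i) Replace the chart by one holomorphic for $J_X$ whose differential at $0$ is a complex-linear isometry; compatibility of $(\omega,g,J)$ then makes all three standard at $0$, and the Calabi--Yau normalisation fixes $|\Omega|_0|$, leaving a phase $e^{i\theta_0}$ that the unitary rotation $z\mapsto e^{-i\theta_0/m}z$ removes. (ii) Writing $\omega=\tfrac{i}{2}\pa\bar\pa\phi$, normalise $\phi$ (using its pluriharmonic ambiguity) so its $2$-jet at $0$ is $|z|^2$, then apply a holomorphic change $z\mapsto z+q(z)$ with $q$ homogeneous quadratic to cancel the mixed cubic part of $\phi$; this leaves $\phi=|z|^2+O(|z|^4)$, hence $\omega=\omega_{\bC^m}+O(|z|^2)$ and $g_X=g_{\bC^m}+O(|z|^2)$, and the normalisations at $0$ are preserved since $z\mapsto z+q(z)$ is tangent to the identity there. (iii) Apply Moser's trick on a small ball along $\omega_t=\omega_{\bC^m}+t(\omega-\omega_{\bC^m})$; writing $\omega-\omega_{\bC^m}=d\beta$ with $\beta=O(|z|^3)$, the generating vector field is $O(|z|^3)$, so the correcting isotopy is $\mathrm{id}+O(|z|^3)$, which makes $\omega$ \emph{exactly} standard while preserving $\Omega|_0=\Omega_{\bC^m}$ and $g_X=g_{\bC^m}+O(|z|^2)$. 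Restricting the composite to a small ball $B_R$ gives (1).

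\textbf{The graph.} Since $\Upsilon$ is symplectic, $L':=\Upsilon^{-1}(\iota(L))\cap B_R$ is Lagrangian in $\bC^m$ and still asymptotic to $C$ with rate $\mu$; as $\mu>2$, its $\bC^m$-distance from $C$ at radius $r$ is $O(r^{\mu-1})$, small compared with the scale-invariant thickness of $U_C$ in Theorem \ref{thm1}. Hence near the cone point $L'\subset\Phi_C(U_C)$ is transverse to the cotangent fibres, so $L'=\Phi_C(\Gamma_\alpha)$ for a $1$-form $\alpha$ on $\Sigma\times(0,R')$, closed because $L'$ is Lagrangian and $\Phi_C$ symplectic. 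Transporting the defining estimate on the asymptotic parametrisation through the $\bR_+$-equivariant map $\Phi_C$ --- using \eqref{eq_epsilonscalingequivariance} and \eqref{eq_epsilonscalingsymplectic} to pin down the scale-invariant norm in which ambient distances are read off from $\alpha$ --- gives $|\nabla^k\alpha|_{g_C}=O(r^{\mu-1-k})$ in the cone metric $g_C=\iota_C^*g_{\bC^m}$. To integrate $\alpha$: for any loop $\gamma_0\subset\Sigma$ the period $\oint_{\gamma_0\times\{r\}}\alpha$ is $r$-independent, while $\mathrm{length}_{g_C}(\gamma_0\times\{r\})=O(r)$ and $|\alpha|_{g_C}=O(r^{\mu-1})$ force it to be $O(r^\mu)\rt0$; so $[\alpha]=0$ in $H^1_{\mathrm{dR}}(\Sigma\times(0,R'))\cong H^1_{\mathrm{dR}}(\Sigma)$ and $\alpha=df$ for $f$ unique up to a constant on each component of $\Sigma$. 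Since $|\alpha(\pa_r)|=O(r^{\mu-1})$ the radial limit of $f$ exists, and since $|\pa_\sigma f|=O(r^\mu)$ it is constant on each component; normalising that constant to $0$ gives $|f|=O(r^\mu)$, and then $|\nabla^kf|=|\nabla^{k-1}\alpha|=O(r^{\mu-k})$ for $k\geq1$ as well, which is (2).

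\textbf{Main obstacle.} The delicate point is making $\omega$ \emph{exactly} standard without losing the second-order matching of $g_X$: a Moser argument applied directly to $\Upsilon_0^*\omega$ yields only an $\mathrm{id}+O(|z|^2)$ correction, reintroducing an $O(|z|)$ error in the metric; the remedy is the prior normalisation of the K\"ahler potential to $|z|^2+O(|z|^4)$, after which the Moser correction is $\mathrm{id}+O(|z|^3)$ and harmless. A secondary, bookkeeping-type difficulty is tracking the anisotropic scaling weights of Theorem \ref{thm1} --- the fibre coordinate $s$ scaling linearly but $\tau$ quadratically --- when converting the ambient decay of the parametrisation into decay of the scalar potential $f$; the rest is routine.
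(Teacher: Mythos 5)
The paper does not itself prove this statement --- it is quoted from Theorem 3.8 of Joyce's paper \cite{Joy04} --- so there is no in-text proof to compare against; your reconstruction is consistent with Joyce's argument and is, as far as I can tell, correct. Your two movements are the right ones: first make the K\"{a}hler potential equal to $|z|^2 + O(|z|^4)$ in holomorphic coordinates so that $\omega - \omega_{\bC^m} = O(|z|^2)$, then apply Moser with a primitive $\beta = O(|z|^3)$ so that the correcting isotopy is $\mathrm{id} + O(|z|^3)$ and therefore preserves $\Upsilon^*\Omega|_0 = \Omega_{\bC^m}$ and $\Upsilon^*g_X = g_{\bC^m} + O(r^2)$ while restoring $\Upsilon^*\omega = \omega_{\bC^m}$ exactly; then push the rate-$\mu$ conical decay through the scale-equivariant map $\Phi_C$ of Theorem \ref{thm1} to get $\Upsilon^{-1}(L) = \Phi_C(\Gamma_\alpha)$ with $|\nabla^k\alpha|_{g_C} = O(r^{\mu-1-k})$, kill the periods $\oint_{\gamma_0\times\{r\}}\alpha$ since they are $r$-independent yet $O(r^\mu)$, and normalise the additive constant on each component of $\Sigma$ to force $|f| = O(r^\mu)$ as well as $|\nabla^k f| = O(r^{\mu - k})$ for $k \geq 1$. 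You correctly flag the two genuinely delicate points (the order of the Moser correction, and the anisotropic scaling weights in $\Phi_C$), and you also correctly note that step (i) conjugates the tangent cone by a unitary, which is immaterial since the cone and chart are fixed together and unitaries preserve the class of unions of special Lagrangian cones.
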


Let $E$ be an asymptotically conical Lagrangian submanifold, with cone $C\cong\Sigma\times (0,\infty)$ as in Definition \ref{def_asymptoticallyconical}. Using Theorem \ref{thm1}, we can write $E\backslash K$ (where $K$ is a compact subset of $E$) as the graph of a closed $1$-form on $\Sigma \times (R,\infty)$ (making $R$ larger if necessary). So, there is a $1$-form $e$ on $\Sigma\times (R,\infty)$ such that $\phi(\sigma,r) = \Phi_C (\sigma, r, e_1(\sigma, r), e_2(\sigma, r))$, where $e_2 = e(\frac{\partial}{\partial r}), e_1 = e - e_1dr$.
\\

Now, we have a Lagrangian neighbourhood theorem for AC Lagrangians, following Theorem 4.5 and Theorem 4.7 of \cite{Joy04}:

\begin{theorem}\label{thm2}
	Let $E\subset \mathbb{C}^m$ be an asymptotically conical Lagrangian with a cone $C$ with link $\Sigma$, and rate of decay $\gamma<0$ as in Definition \ref{def_asymptoticallyconical}. Then, making $K,R$ larger if necessary, there is a Lagrangian neighbourhood of the zero section $U_E$, a Lagrangian embedding $\Phi_E: U_E \rightarrow \mathbb{C}^m$ and a $1$-form $e$ such that 
	
	\begin{enumerate}
		
		\item On $({\phi_E}_*)(U_E)\subset T^*(\Sigma\times (R,\infty))$, we have  \[
		\Phi_E \circ {\phi_E}_*(\sigma, r, \zeta, s) = \Phi_C (\sigma, r, \zeta + e_1(\sigma,r), s + e_2(\sigma, r)).
		\]
		(Note that ${\phi_{E}}_*  : T^*(\Sigma\times(R,\infty)) \rightarrow T^*(E\backslash K)$.) Therefore, taking the zero section, we have that $\Phi_E(E\backslash K) = \Phi_C(\sigma,r,e_1,e_2)$, i.e. $E\backslash K = \Phi_C(\Gamma(e))$.
		\item The $1$-form $e$ is exact, hence there is a potential function $\beta_{E^i}$ on each connected component $E_i$ of $E\backslash K$ such that $d\beta_{E^i} = e$, and $|\nabla^l \beta_{E^i}| = O(r^{\gamma-l})$ as $r\rightarrow \infty$, for every $l\geq 0$. In particular, we have $\beta_{E^i} \rightarrow 0$ as $r\rightarrow \infty$.
		
	\end{enumerate}
\end{theorem}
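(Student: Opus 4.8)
The plan is to follow the strategy of \cite{Joy04} (Theorems 4.5 and 4.7): first build $\Phi_E$ explicitly on the conical end out of the cone neighbourhood $\Phi_C$ of Theorem \ref{thm1}, then extend it over the compact core of $E$ by a relative Moser deformation, and finally produce the potentials $\beta_{E^i}$ by integrating $e$ in the radial direction. Throughout, write $e = e_1 + e_2\, dr$ on $\Sigma\times(R,\infty)$, with $e_2 = e(\pa_r)$ and $e_1$ the component in the $\Sigma$-directions; recall from the discussion preceding the theorem that $e$ is closed with $E\setminus K = \Phi_C(\Gamma(e))$, and that the rate $\gamma < 0$ gives $|\nabla^l e| = O(r^{\gamma - 1 - l})$.

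For the conical end, I would take the fibrewise translation $T_e(\sigma, r, \zeta, s) = (\sigma, r, \zeta + e_1(\sigma,r), s + e_2(\sigma,r))$ on $T^*(\Sigma\times(R,\infty))$. Since $e$ is closed one checks $T_e^*\hat\omega = \hat\omega$, so $T_e$ is a symplectomorphism taking a neighbourhood of the zero section onto a neighbourhood of the graph $\Gamma(e)$; because $e\to 0$ as $r\to\infty$, enlarging $R$ ensures this neighbourhood lies in $U_C$. Defining $\Phi_E$ over $E\setminus K$ by the requirement $\Phi_E\circ(\phi_E)_* = \Phi_C\circ T_e$ then gives a Lagrangian embedding of a neighbourhood of the zero section over $E\setminus K$ satisfying the identity in part (1), hence $\Phi_E(E\setminus K) = \Phi_C(\Gamma(e))$.

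To extend over the core, enlarge $K$ to a compact $K_1$ and extend the tubular-neighbourhood diffeomorphism of the end (a routine gluing of tubular neighbourhoods) to a diffeomorphism $\psi$ from a neighbourhood $V$ of the zero section in $T^*E$ onto a neighbourhood of $E$ in $\bC^m$, with $\psi|_E = \iota_E$, agreeing with the end construction over $E\setminus K_1$, and arranged as in Weinstein's argument so that $\psi^*\omega_{\bC^m}$ and $\hat\omega$ agree along $E$. Then $\psi^*\omega_{\bC^m} - \hat\omega$ is a closed $2$-form on $V$ vanishing along $E$ and over $E\setminus K_1$, so a relative Poincar\'{e} lemma on $V$ (which retracts onto $E$, the retraction chosen to respect the product structure of the end) writes it as $d\lambda$ with $\lambda$ vanishing on $E$ and over $E\setminus K_1$. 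A Moser argument --- solving $\iota_{Y_\tau}(\hat\omega + \tau\, d\lambda) = -\lambda$ and integrating $Y_\tau$ --- then gives, after shrinking $V$, an isotopy $\varphi_\tau$ with $\varphi_1^*(\psi^*\omega_{\bC^m}) = \hat\omega$, $\varphi_\tau|_E = \mathrm{id}$, and $\varphi_\tau = \mathrm{id}$ over $E\setminus K_1$; setting $\Phi_E := \psi\circ\varphi_1$, $U_E := \varphi_1^{-1}(V)$ and $K := K_1$ then completes part (1).

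For part (2), on a connected component $E_i \cong \Sigma_i\times(R,\infty)$ of $E\setminus K$ I would expand $de = 0$ in the coordinates $(\sigma, r)$ to get $d_\Sigma e_1 = 0$ and $\pa_r e_1 = d_\Sigma e_2$, and set $\beta_{E^i}(\sigma, r) := -\int_r^\infty e_2(\sigma, t)\, dt$, which converges since $\gamma < 0$. Then $\pa_r \beta_{E^i} = e_2$ and, using $e_1 \to 0$ as $r\to\infty$, $d_\Sigma\beta_{E^i} = -\int_r^\infty \pa_t e_1\, dt = e_1$, so $d\beta_{E^i} = e$ on $E_i$; in particular $e|_{E_i}$ is exact. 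Differentiating under the integral and using $|\nabla^l e| = O(r^{\gamma - 1 - l})$ gives $|\nabla^l\beta_{E^i}| = O(r^{\gamma - l})$ for all $l\ge 0$, so $\beta_{E^i}\to 0$. (In the setting of this paper the cone is a union of special Lagrangian cones, so Proposition \ref{prop_expanderdecay} upgrades the decay of $e$, and hence of $\beta_{E^i}$, to exponential.) I expect the main obstacle to be the middle step: carrying out the Moser deformation \emph{relative} to the conical end, i.e.\ guaranteeing that the correction $\varphi_1$ is the identity there, which amounts to the primitive $\lambda$ --- equivalently the cohomology class of $\psi^*\omega_{\bC^m} - \hat\omega$ --- being supported away from the end. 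The end computation and the radial integration are routine given the asymptotically conical estimates.
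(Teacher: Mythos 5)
Your proposal is correct and takes the intended approach: the paper states this theorem ``following Theorem 4.5 and Theorem 4.7 of \cite{Joy04}'' without giving its own proof, and your plan --- fibre translation by the closed $1$-form $e$ on the conical end, a relative Moser deformation to extend over the compact core, and radial integration to produce the potentials --- is precisely Joyce's argument. The decay rate $\gamma<0$ is what makes $\beta_{E^i}(\sigma,r)=-\int_r^\infty e_2(\sigma,t)\,dt$ converge and forces $e_1\to 0$ at infinity, so exactness of $e$ on each end component holds automatically with no cohomological hypothesis, as you correctly exploit.
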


Since we will need to glue in scaled AC Lagrangians, we consider the scaling of $E$ by $\epsilon$, given as $\iota_{\epsilon E} := \epsilon \iota_E$, which is AC with the same cone and same rate. We have the following result as Corollary 2.8 of the paper by Su, Tsai and Wood \cite{STW24}, which describes the effect of scaling:

\begin{lem}\label{lem_scaling}
	Let $\Phi_E,U_E$ be as above. For $\epsilon >0$, let $f_\epsilon : T^*E \rightarrow T^*E$ be the diffeomorphism defined as $f_\epsilon (x,\alpha) := (x, \epsilon^{-2}\alpha)$. Then, the open neighbourhood $U_{\epsilon E} := f_\epsilon ^{-1}(U_E)\subset T^*E$ and the embedding \[
	\Phi_{\epsilon E} = \epsilon \Phi_E\circ f_\epsilon : U_{\epsilon E} \rightarrow \mathbb{C}^m
	\] 
	gives a Lagrangian neighbourhood of $\iota_{\epsilon E}$. Moreover, we have $\beta_{\epsilon E^i}(\sigma,r) = \epsilon^2\beta_{E^i}(\sigma, \epsilon^{-1}r)$, where $\beta$ denotes the potential defined in Theorem \ref{thm2}.
\end{lem}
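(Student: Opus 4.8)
The plan is to verify directly that $(U_{\epsilon E},\Phi_{\epsilon E})$ satisfies the defining properties of a Lagrangian neighbourhood of $\iota_{\epsilon E}$, by exploiting that the three scaling actions in play --- dilation on $\mathbb{C}^m$, fibrewise dilation on $T^*E$, and the weighted $\mathbb{R}_+$-action of Theorem \ref{thm1} on $T^*(\Sigma\times(0,\infty))$ --- have been arranged to be mutually compatible. First I would record the two elementary identities that drive everything. If $m_\epsilon:\mathbb{C}^m\rt\mathbb{C}^m$ denotes $z\mapsto\epsilon z$, then $m_\epsilon^*\omega_{\mathbb{C}^m}=\epsilon^2\omega_{\mathbb{C}^m}$. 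If $\lambda$ denotes the tautological $1$-form on $T^*E$, so that $\hat\omega=d\lambda$, then the fibrewise dilation $f_\epsilon(x,\alpha)=(x,\epsilon^{-2}\alpha)$ satisfies $f_\epsilon^*\lambda=\epsilon^{-2}\lambda$ (this is immediate from $\pi\circ f_\epsilon=\pi$ and linearity of $\lambda$ along the fibres), hence $f_\epsilon^*\hat\omega=\epsilon^{-2}\hat\omega$.

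Given these, the symplectic condition is immediate: on $U_{\epsilon E}=f_\epsilon^{-1}(U_E)$ we have $\Phi_{\epsilon E}=m_\epsilon\circ\Phi_E\circ f_\epsilon$, so
\[
\Phi_{\epsilon E}^*\omega_{\mathbb{C}^m}=f_\epsilon^*\Phi_E^* m_\epsilon^*\omega_{\mathbb{C}^m}=\epsilon^2 f_\epsilon^*\bigl(\Phi_E^*\omega_{\mathbb{C}^m}\bigr)=\epsilon^2 f_\epsilon^*\hat\omega=\epsilon^2\cdot\epsilon^{-2}\hat\omega=\hat\omega.
\]
Moreover $f_\epsilon$ is a diffeomorphism of $T^*E$ fixing the zero section pointwise, so $U_{\epsilon E}$ is again an open neighbourhood of the zero section, and $\Phi_{\epsilon E}$ restricted to the zero section equals $m_\epsilon\circ\Phi_E|_{0_E}=\epsilon\,\iota_E=\iota_{\epsilon E}$; finally $\Phi_{\epsilon E}$ is a composition of a diffeomorphism, the embedding $\Phi_E$, and a diffeomorphism, hence is an embedding. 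Together these say that $(U_{\epsilon E},\Phi_{\epsilon E})$ is a Lagrangian neighbourhood of $\iota_{\epsilon E}$.

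For the asymptotic normal form and the potential, I would reparametrise the end. Writing $g_\epsilon(\sigma,r)=(\sigma,\epsilon^{-1}r)$, the map $\phi_{\epsilon E}:=\phi_E\circ g_\epsilon$ identifies $\Sigma\times(\epsilon R,\infty)$ with the end of $\epsilon E$ and is asymptotic to $\iota_C$, because $m_\epsilon\circ\iota_C\circ g_\epsilon=\iota_C$ --- this is exactly the homogeneity of $\iota_C$. Now $f_\epsilon$ commutes with the cotangent lift $(\phi_E)_*$ (both are fibrewise dilations and $(\phi_E)_*$ is fibrewise linear), and a direct check gives $(g_\epsilon)_*(\sigma,r,\zeta,s)=(\sigma,\epsilon^{-1}r,\zeta,\epsilon s)$. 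Feeding these, together with the equivariance $\Phi_C(\epsilon\cdot(\sigma,r,\tau,s))=\epsilon\cdot\Phi_C(\sigma,r,\tau,s)$ with weights $(1,\epsilon,\epsilon^2,\epsilon)$ from Theorem \ref{thm1}, into the normal form of Theorem \ref{thm2}, one obtains
\[
\Phi_{\epsilon E}\circ(\phi_{\epsilon E})_*(\sigma,r,\zeta,s)=\Phi_C\bigl(\sigma,\,r,\,\zeta+\epsilon^2 e_1(\sigma,\epsilon^{-1}r),\,s+\epsilon\,e_2(\sigma,\epsilon^{-1}r)\bigr),
\]
so the defining $1$-form for $\epsilon E$ is $e^{(\epsilon)}=\epsilon^2\,g_\epsilon^*e$. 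Since $d\beta_{E^i}=e$, we get $d\bigl(\epsilon^2\,\beta_{E^i}\circ g_\epsilon\bigr)=\epsilon^2 g_\epsilon^*d\beta_{E^i}=e^{(\epsilon)}$ on each component, and the additive constant is forced to be zero by the decay $\beta_{E^i}\rt0$; hence $\beta_{\epsilon E^i}(\sigma,r)=\epsilon^2\beta_{E^i}(\sigma,\epsilon^{-1}r)$. The decay bounds for $\beta_{\epsilon E^i}$ then follow from $|\nabla^l\beta_{E^i}|=O(r^{\gamma-l})$ together with the homogeneity of the cone metric.

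I expect the only real subtlety to be bookkeeping: keeping straight the three distinct dilation weights ($\epsilon$ on $\mathbb{C}^m$, $\epsilon^{-2}$ on the cotangent fibres of $T^*E$, and the mixed $(1,\epsilon,\epsilon^2,\epsilon)$ on $T^*(\Sigma\times(0,\infty))$), and checking that the cotangent lift $(g_\epsilon)_*$ transports these weights so that the powers of $\epsilon$ appearing in $e^{(\epsilon)}$ and in $\beta_{\epsilon E^i}$ come out consistent with each other. Conceptually there is nothing to prove: the $\mathbb{R}_+$-action of Theorem \ref{thm1} was constructed precisely so that rescaling an asymptotically conical Lagrangian is compatible with its Lagrangian neighbourhood data.
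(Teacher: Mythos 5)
Your proof is correct. Note that the paper does not actually prove this lemma---it cites it as Corollary 2.8 of Su, Tsai and Wood \cite{STW24}---so there is no internal argument to compare against; your direct verification supplies a complete one. The symplectic and zero-section conditions follow exactly as you say: $m_\epsilon^*\omega_{\mathbb{C}^m}=\epsilon^2\omega_{\mathbb{C}^m}$ and $f_\epsilon^*\hat\omega=\epsilon^{-2}\hat\omega$ cancel to give $\Phi_{\epsilon E}^*\omega_{\mathbb{C}^m}=\hat\omega$, and $f_\epsilon$ fixing the zero section gives $\Phi_{\epsilon E}|_{0_E}=\iota_{\epsilon E}$. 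For the potential, I checked that $(g_\epsilon)_*(\sigma,r,\zeta,s)=(\sigma,\epsilon^{-1}r,\zeta,\epsilon s)$, that $f_\epsilon$ commutes past the fibrewise-linear lift $(\phi_E)_*$, and that feeding these through the normal form of Theorem \ref{thm2} against the $(1,\epsilon,\epsilon^2,\epsilon)$-equivariance of $\Phi_C$ yields $e_1^{(\epsilon)}=\epsilon^2\,e_1\circ g_\epsilon$ and $e_2^{(\epsilon)}=\epsilon\,e_2\circ g_\epsilon$, hence $e^{(\epsilon)}=\epsilon^2 g_\epsilon^*e$ as a $1$-form; integrating and killing the constant by the decay of $\beta_{E^i}$ then gives $\beta_{\epsilon E^i}=\epsilon^2\,\beta_{E^i}\circ g_\epsilon$ as claimed.

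One minor notational slip, which does not affect the argument: $\phi_{\epsilon E}:=\phi_E\circ g_\epsilon$ has image the end of $E$ as an abstract manifold (the same manifold underlies both $E$ and $\epsilon E$), not ``the end of $\epsilon E$''. It is the composite $\iota_{\epsilon E}\circ\phi_{\epsilon E}=m_\epsilon\circ\iota_E\circ\phi_E\circ g_\epsilon$ that lands in $\epsilon E\subset\mathbb{C}^m$, and your identity $m_\epsilon\circ\iota_C\circ g_\epsilon=\iota_C$ is precisely what shows this composite is asymptotic to $\iota_C$ at the same rate $\gamma$ (with the implied constant scaled by a factor of $\epsilon^{2-\gamma}$). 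That is in fact what your subsequent computation uses, so the argument goes through unchanged.
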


\subsubsection{Compatibility of Lagrangian neighbourhoods}
In this subsection, we study how the Lagrangian neighbourhoods of the earlier section can be made compatible with a Lagrangian neighbourhood for the immersion $\iota: L\rt X$. We recall Theorem 3.9 of \cite{Joy04}.
\begin{theorem}\label{thm_compatible}
	There is a Lagrangian embedding $\Phi_L : U_L \rightarrow X$, where $U_L \subset T^*L$ is a neighbourhood of the zero section, such that if we write $\Upsilon^{-1}(L) \cap B_R$ as the graph of an exact $1$-form $dA$ over $\iota_C(\Sigma\times (0,\infty)) \equiv C$ in $B_R$ (as in Theorem \ref{thm1}), then we have that the image of the graph of a (sufficiently small) $1$-form $\alpha$ in $T^*L$ under $\Phi_L$ is given by the image of the graph of $(\iota^{-1}\circ \Upsilon\circ \phi)^*\alpha+dA$ over $C$ under the map $\Upsilon$. More precisely, we have that \[
	\Upsilon\circ \Phi_C( \sigma, r, \zeta + dA_1, s + dA_2) = \Phi_L (\sigma, r, \zeta, s),
	\]
	where $dA_2 = dA(\frac{d}{dr}),$ and $dA_1 = dA - dA_2dr$. 
\end{theorem}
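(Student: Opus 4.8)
The plan is a relative version of Weinstein's Lagrangian neighbourhood theorem: we write down the embedding we \emph{want} near the singularity, extend it arbitrarily to a global tubular neighbourhood of $\iota(L)$, and then run a Moser isotopy constrained to be the identity near the singularity so that the prescribed formula survives. Let $\psi:\Sigma\times(0,R')\to L$ parametrise a punctured neighbourhood of the singular point of $L$, chosen so that $\iota(\psi(\sigma,r)) = \Upsilon(\Phi_C(\sigma,r,dA_1,dA_2))$ (this is the content of Theorem~\ref{thm3}), and let $\widehat\psi : T^*L\to T^*(\Sigma\times(0,R'))$ be its cotangent lift, a symplectomorphism for the canonical symplectic forms. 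Fibrewise translation $\tau_{dA}(\sigma,r,\zeta,s) = (\sigma,r,\zeta+dA_1,s+dA_2)$ by the \emph{closed} $1$-form $dA$ is also a symplectomorphism of $(T^*(\Sigma\times(0,\infty)),\hat\omega)$, $\Phi_C$ is a Lagrangian embedding, and $\Upsilon^*\omega = \omega_{\bC^m}$. Hence
\[
\Psi \;:=\; \Upsilon\circ\Phi_C\circ\tau_{dA}\circ\widehat\psi
\]
is a symplectic embedding of a neighbourhood of the zero section of $T^*L$ over $\psi(\Sigma\times(0,R'))$ into $X$, equal to $\iota$ on the zero section (since $\Upsilon^{-1}(\iota(L))\cap B_R$ is exactly the graph of $dA$ over $C$). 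By construction $\Psi$ realises the identity the theorem asks for.

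\textbf{Step 2: a global smooth model agreeing with $\Psi$ near the singularity.} Choose any tubular-neighbourhood embedding of $\iota(L)$ in $X$ and, using $\omega$ to identify the normal bundle with $T^*L$, turn it into a smooth embedding $\Phi^0_L : U^0_L\to X$ restricting to $\iota$ on the zero section; the usual adjustment along the zero section arranges that $(\Phi^0_L)^*\omega$ and the canonical form $\hat\omega$ agree as $2$-forms at every point of the zero section. Then, using a cut-off in the variable $r$ of the chart $\psi$, glue $\Phi^0_L$ (for $r$ bounded below) to $\Psi$ (for $r$ small) into a single smooth embedding, still denoted $\Phi^0_L$, which equals $\Psi$ on a smaller punctured neighbourhood of the singularity, restricts to $\iota$ on and has the stated normal form along the zero section, and — because $\Psi$ is symplectic — satisfies $(\Phi^0_L)^*\omega = \hat\omega$ exactly near the singularity.

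\textbf{Step 3: constrained Moser trick.} Set $\omega_0 := (\Phi^0_L)^*\omega$ and $\omega_t := \hat\omega + t(\omega_0-\hat\omega)$ for $t\in[0,1]$; these are closed, and, after shrinking the neighbourhood, symplectic since $\omega_0=\hat\omega$ along the zero section. The fibrewise-radial homotopy operator applied to $\omega_0-\hat\omega$ produces a primitive $\lambda$ that vanishes along the zero section and vanishes wherever $\omega_0-\hat\omega$ already does, in particular near the singularity. Let $Y_t$ be the time-dependent vector field with $\omega_t(Y_t,\cdot) = -\lambda$; it vanishes along the zero section and near the singularity, so its flow $\rho_t$ is defined on a neighbourhood of the zero section, fixes the zero section pointwise, is the identity near the singularity, and satisfies $\rho_1^*\omega_0 = \hat\omega$. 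Then $\Phi_L := \Phi^0_L\circ\rho_1$, defined on $U_L := \rho_1^{-1}(U^0_L)$, is a Lagrangian embedding extending $\iota$ that coincides with $\Psi$ near the singularity, i.e.\ $\Upsilon\circ\Phi_C(\sigma,r,\zeta+dA_1,s+dA_2) = \Phi_L(\sigma,r,\zeta,s)$ there, as required.

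\textbf{Immersed case and the main obstacle.} If $\iota$ is merely an immersion the whole construction is carried out on the domain manifold $L$ — each branch through the conical point is embedded, and $\iota$ is a local embedding elsewhere — so $T^*L$, the tubular neighbourhood and the Moser flow are all meaningful and the statement is unchanged. The delicate point is Step~2: the interpolation between the two candidate neighbourhoods must be confined to the compact region where $r\sim R'$ and must preserve the normal form along the zero section, so that the Moser correction in Step~3 is supported away from the singularity; this is exactly what keeps $U_L$ from being forced to shrink to the empty set as $r\to0$ (near the singularity $\Phi_L$ literally equals $\Psi$, whose domain is as large as the $\mathbb{R}_+$-invariant $U_C$ of Theorem~\ref{thm1} allows). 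With that in hand, the remainder is the standard relative Weinstein/Moser argument.
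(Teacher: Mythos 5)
The paper does not prove this theorem itself; it simply recalls it as Theorem 3.9 of \cite{Joy04}, so there is no in-paper proof to compare against. Your argument is a correct reconstruction of the relative Weinstein/Moser strategy that Joyce uses there: build the prescribed symplectic embedding $\Psi = \Upsilon\circ\Phi_C\circ\tau_{dA}\circ\widehat\psi$ near the singularity, extend it to a global model agreeing with $\iota$ on the zero section, and run a constrained Moser isotopy supported away from the singularity so that the prescribed formula is preserved. One small point worth tightening if you write this out in full: in Step 2 the interpolation between $\Phi^0_L$ and $\Psi$ must be performed at the level of maps into $X$ (both restricting to $\iota$ on the zero section) via a common local identification, e.g.\ a tubular neighbourhood or exponential map, so that the glued map still agrees with $\iota$ and has $(\Phi^0_L)^*\omega = \hat\omega$ pointwise along the zero section; naive convex interpolation of maps to $X$ is not well-defined. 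With that fixed, the Moser correction is supported in the compact annulus $r\sim R'$, exactly as you indicate, and the proof is complete.
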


 As a corollary of this theorem, we also have that the fibres of $T^*L$ are equivariant under the embeddings $\Phi_L, \Upsilon$ and $\Phi_C$.

\section{Analysis on manifolds with corners and a-corners}\label{sec_mancac}

\subsection{Manifolds with corners and a-corners}
In this section, we define some notions of differential geometry for manifolds with corners and a-corners, mainly following \cite{Joy16}, where more details can be found. These notions are based on previous work by Melrose, see for example \cite{Mel93,Mel96}.

\subsubsection{Manifolds with corners}
A manifold with corners is modelled on the space $\mathbb{R}^s_k:=[0,\infty)^k\times\bR^{s-k}$, with coordinates $(x_1,\dots,x_s)$ with $x_1,\dots,x_k$ in $[0,\infty)$ and $x_{k+1},\dots,x_s$ in $\bR$. This space has its smooth structure endowed from the inclusion $\bR^s_k\subset \bR^s$.

\begin{defn}
	A continuous map $f:U\rightarrow V$ between open subsets $U\subset \mathbb{R}^s_k, V\subset \mathbb{R}^n_l$ is called \textit{smooth} if all the derivatives of $f$ exist (we consider one-sided derivatives at the boundary) and are continuous.  
\end{defn}
A diffeomorphism between $U,V$ as above is defined in the usual way, via the existence of an inverse map. Now, we define a manifold with corners:
\begin{defn}
	A \textit{manifold with corners} is a Hausdorff, second countable topological space with charts to $\mathbb{R}^m_k$ such that the transition charts are diffeomorphisms, i.e.\ they are smooth and have smooth inverses.
\end{defn}

For a manifold with corners, we can define the notions of tangent bundle, cotangent bundle, metric, connections, etc. in a similar way as in ordinary manifolds. All these structures can be essentially obtained by restricting them from an ordinary smooth manifold.

\subsubsection{Manifolds with a-corners}
A \textit{manifold with a-corners} is modelled on the space $\bR^{k,s}:=\llbracket 0,\infty)^k\times\bR^{s-k}$, with coordinates $(x_1,\dots,x_s)$ with $x_1,\dots,x_k$ in $\llbracket0,\infty)$ and $x_{k+1},\dots,x_s$ in $\bR$. Here, the symbol $\llbracket 0,\infty)$ denotes a space homeomorphic to $[0,\infty)$, but endowed with an \textit{a-smooth structure} instead of a smooth structure.

The simplest example of such a space is $\Omega\times \llbracket 0,\ep)$, where $\Omega \subset \bR^{n-1}$ is a domain. Although this space is homeomorphic to $\Omega\times[0,\ep)$, it has a different smooth structure. For example, given coordinates $(x_1,\dots,x_n)$ on this space, the function $x_n^\alpha$ is not smooth for $\alpha\not \in \bN_0$, however it is a-smooth for all $\alpha\in \bC$ with $\Re(\alpha)>0$. A natural way to think about this space is to consider the boundary `at infinity', so that its smooth structure is similar to that of $\Omega \times (-\infty, \log(\ep))$ via the change of coordinate $x_n\mapsto \log(x_n)$, with suitable decay conditions near infinity.

For brevity, we shall not define manifolds with a-corners, but rather define \textit{manifolds with corners and a-corners}, which includes the case of manifolds with a-corners.

\subsubsection{Manifolds with corners and a-corners}

A \textit{manifold with corners and a-corners} is modelled on the space $\mathbb{R}^{k,s}_l = \llbracket0,\infty)^k\times [0,\infty)^l\times \mathbb{R}^{s-k-l}$, with coordinates $(x_1,\dots,x_s)$ with $x_1,\dots,x_k$ in $\llbracket 0,\infty)$, $x_{k+1},\dots,x_{k+l}$ in $[0,\infty)$, and $x_{k+l+1},\dots, x_{s}$ in $\mathbb{R}$. Here, the coordinates $(x_1,\dots,x_k)$ are boundary-defining functions for the \textit{a-boundaries}, while the coordinates $(x_{k+1},\dots, x_{k+l})$ are boundary-defining functions for the \textit{ordinary boundaries}. Near the a-boundaries, the smooth structure resembles that of a manifold with a-corners, while near the ordinary boundaries, the smooth structure resembles that of a manifold with corners. First, we define the notion of an a-smooth mapping, following Definition 3.18 of \cite{Joy16}.

\begin{defn}
	Let $U\subset \mathbb{R}^{k,s}_l$ be an open set and $f: U\rightarrow \mathbb{R}$ be a continuous map. It is said to have a \textit{mixed derivative} or \textit{$\m$-derivative}, denoted $^{\m}\partial f$, if the function $^{\m}\partial f:U\rightarrow \mathbb{R}^s$, written as $^{\m}\partial f= (^{\m}\partial_1 f,\dots ,^{\m}\partial f_s)$ for $^{\m}\partial_if : U\rightarrow\mathbb{R}$ defined by
	\[
	^{\m}\partial_i f (x_1,\dots, x_s)= \begin{cases}
		0, & x_i=0, \: i=1,\dots,k \\
		x_i\partial_{x_i}f &  x_i>0,\: i=1,\dots,k \\
		\partial_{x_i}f &  i = k+1,\dots, s
		
	\end{cases}
	\]
	(where we take one-sided derivatives for $x_i=0, i = k+1,\dots, k+l$) is a continuous function on $U$ for each $i=1,\dots, s$. We can iterate m-derivatives, if they exist, to get maps $^{\m}\pa^lf: U \rt \otimes^l\bR^s$ by taking m-derivatives of the components. Then,
	\begin{itemize}
		\item we say that $f$ is \textit{roughly differentiable} or \textit{$r$-differentiable} if $^{\m}\pa f$ exists, and \textit{$r$-smooth} if all the m-derivatives exist for $l=0,1,\dots$.
		\item we say that $f$ is \textit{a-differentiable}, if it is r-differentiable and we have that for any compact $S\subseteq U$ and $i=1,\dots, k$ , there are positive constants $C,\alpha$ such that \[
		|^{\m}\pa_if(x)| \leq Cx_i^\alpha \; \;\:\; \textrm{for all} \: (x_1,\dots,x_s)\in S.
		\]
		\item we say that $f$ is \textit{a-smooth} if the m-derivatives $^{\m}\pa ^lf$ are all a-differentiable for $l=0,1,\dots$.
	\end{itemize}	
\end{defn}

We note that a-smooth functions are closed under addition and multiplication, and the quotient of an a-smooth function by a non-vanishing a-smooth function is a-smooth. Also, we note that there exist functions which are r-smooth but not a-smooth, for example $f: \llbracket0,1/2) \rt \bR$ given by $f(x)= \log(x)^{-1}$, since $|^{\m}\pa ^lf| = O(|\log(x)|^{-{l-1}})$ which does not decay fast enough.

 Now, we define a-smooth maps between subsets of the model spaces, similar to Definition 3.19 in \cite{Joy16} (however, there are some key differences regarding the exponents that one allows in parts (a),(c) below -- this is to ensure that a-smooth maps are closed under composition):

\begin{defn}
	Let $U\subseteq \mathbb{R}^{k,s}_l$, $V\subseteq \mathbb{R}^{p,n}_q$ be open subsets, and $f=(f_1,\dots, f_n): U\rt V$ be a continuous map, so that $f_i:U\rt \llbracket 0,\infty)$ for $i=1,\dots p$, $f_i : U\rt [0,\infty)$ for $i = p+1,\dots, p+q$, and $f_i : U \rt \bR$ for $i=p+q+1,\dots, n$. Then,
	\begin{enumerate}
		\setcounter{enumi}{0}
		\item we say $f$ is \textit{$r$-smooth} if $f_j$ is r-smooth for $j=p+q+1,\dots, n$, and every $u = (x_1,\dots, x_s)\in U$ has an open neighbourhood $\tilde{U}\subseteq U$ such that for each $j=1,\dots, p$, we have either:
		\begin{enumerate}
			\item we can uniquely write \[f_j(\tilde{x}_1,\dots,\tilde{x}_s) = F_j(\tilde{x}_1,\dots,\tilde{x}_s)\tilde{x}_1^{a_{1,j}}\cdots \tilde{x}_{k}^{a_{k,j}} \]
			for all $\tilde{x}_1,\dots,\tilde{x}_s\in \tilde{U}$, where $\tilde{F}_j$ is a positive r-smooth function, and $a_{1,j},\dots, a_{k,j} \in [0,\infty)$ with $a_{i,j}= 0$ if $x_i\neq 0$; or
			\item $f_j|_{\tilde{U}}=0$,
		\end{enumerate}  
		and for each $j=p+1,\dots, p+q$, we have either:
		\begin{enumerate}
			\setcounter{enumii}{2}
			\item we can uniquely write \[f_j(\tilde{x}_1,\dots,\tilde{x}_s) = F_j(\tilde{x}_1,\dots,\tilde{x}_s)\tilde{x}_{1}^{b_{1,j}}\cdots \tilde{x}_{k}^{b_{k,j}}\cdot\tilde{x}_{k+1}^{b_{k+1,j}}\cdots \tilde{x}_{k+l}^{b_{k+l,j}} \]
			for all $\tilde{x}_1,\dots,\tilde{x}_s\in \tilde{U}$, where $F_j$ is a positive r-smooth function, $b_{1,j},\dots,b_{k,j} \in [0,\infty)$, and $b_{k+1,j},\dots, b_{k+l,j} \in \mathbb{N}$ with $b_{i,j}= 0$ if $x_i\neq 0$; or
			\item $f_j|_{\tilde{U}}=0$,
		\end{enumerate}  
		\item we say $f$ is \textit{a-smooth}, if the above definition holds with the functions $f_j$ for $j=p+q+1,\dots,n$ and $F_j$ for $j=1,\dots,p+q$ as above being a-smooth rather than r-smooth.
		\item we say $f$ is \textit{interior} if it is a-smooth and cases (b) or (d) in the definition do not occur.
		\item we say $f$ is \textit{$b$-normal} if it is interior, and in cases (a) and (c) in the definition, for each $i=1,\dots, k$ we have $a_{i,
			j}>0$ or $b_{i,j}>0$ for at most one $j=1,\dots, p$.
		\item we say $f$ is an \textit{a-diffeomorphism} if it is a-smooth with an a-smooth inverse.
	\end{enumerate} 
\end{defn}
We note that a-smooth functions defined as above contain identities and are closed under composition. Now, we can define manifolds with corners and a-corners:

\begin{defn}
	A \textit{manifold with corners and a-corners} is defined to be a second-countable Hausdorff topological space, with charts to the model spaces $\bR^{k,m}_l$, such that the transition maps are a-diffeomorphisms. We denote the category of manifolds with corners and a-corners, with morphisms a-smooth maps, by $\mancac$. 
\end{defn}

\begin{rem}
	It is possible to make a manifold with corners into a manifold with corners and a-corners, by defining some of its boundaries to be a-boundaries. However, not every manifold with corners and a-corners arises this way, as in Example 4.17 of \cite{Joy16}.
\end{rem}

Now, we recall the notion of ordinary boundaries and a-boundaries of a manifold with corners and a-corners, following Definition 4.1 of \cite{Joy16}:

\begin{defn}
Let $X$ be a manifold with corners and a-corners. Then, it has a natural stratification, $X= \sqcup_{k=0}^{\dim X} S^k(X)$, where $S^k(X)$ are open manifolds without corners of codimension $k$. A \textit{local $k$-corner component $\gamma$ of $X$ at a point $x$} is a choice of connected component $\gamma \in S^k(X)$ such that $x$ is contained in the closure of $\gamma$. The case $k=1$ corresponds to a \textit{local boundary component}. As sets, we define the \textit{boundary} and \textit{codimension $k$ corners} of $X$ by:
\[
\pa X := \{ (x,\beta): \beta \textrm{ is a local boundary component of $X$ at $x$}\},
\]
\[
C^k(X):= \{(x,\beta): \beta \textrm{ is a local $k$-corner component of $X$ at $x$}\}.
\]
The boundary $\pa X$ of $X$ admits a natural partition, $\pa X = \pa^{\textrm{o}}X \sqcup \pa^{\textrm{a}}X$, where $\pa^{\textrm{o}}$ is the \textit{ordinary boundary} of $X$ and $\pa^{\textrm{a}}X$ is the \textit{a-boundary} of $X$. These are defined as follows: in a chart $(U,\phi)$ for $X$ with $U\subset \bR^{k,s}_l$, the interior of $\pa X$ consists of points of the form $\phi(x_1,\dots,x_s)$ with exactly one of $\{x_1,\dots,x_{k+l}\}$ being zero, while the interior of $\pa^\textrm{o}$ consists of $\phi(x_1,\dots,x_s)$ with exactly one of $\{x_{k+1},\dots,x_{k+l}\}$ being zero, and the interior of $\pa^{\textrm{a}}X$ consists of $\phi(x_1,\dots,x_s)$ with exactly one of $\{x_1,\dots,x_k\}$ being zero.  
\end{defn}

\begin{ex}
	Consider the example of a manifold with corners and a-corners given by $X = [-1,1]\times \llbracket-1,1\rrbracket$. This has boundary the disjoint union of four pieces, given by $\pa X = (\{-1\}\times \llbracket-1,1\rrbracket) \sqcup (\{1\}\times\llbracket-1,1\rrbracket) \sqcup ([-1,1]\times \{-1\} )\sqcup ([-1,1]\times \{1\})$. The a-boundary is given by $([-1,1]\times \{-1\} )\sqcup ([-1,1]\times \{1\})$, while the ordinary boundary is given by $(\{-1\}\times \llbracket-1,1\rrbracket) \sqcup (\{1\}\times\llbracket-1,1\rrbracket)$.
\end{ex}

\subsubsection{Vector bundles, connections and metrics}
We define the notion of a vector bundle on elements of $\mancac$, as in Definition 4.4 of \cite{Joy16}:
\begin{defn}
	Let $X$ be a an element of $\mancac$. A \textit{vector bundle of rank $k$} over $X$ is an object $E\in \mancac$ with an a-smooth map $\pi : E\rt X$ such that each fibre $E_x:=\pi^{-1}(x)$ has the structure of a vector space, and there is a trivialising cover $\{U_i\}_{i\in I}$ of $X$ such that $\pi^{-1}(U_i) \cong U_i\otimes \bR^k$ and $\pi|_{\pi^{-1}(U_i)}$ is the projection $ U_i\otimes \bR^k \rt U_i$. A \textit{section} of $E$ is an a-smooth map $s: X\rt E$ such that $\pi\circ s = \mathrm{id}_X$, and we denote the vector space of a-smooth sections by $\Gamma^\infty(E)$.
\end{defn}
Now, we define the notion of the $b$-tangent bundle and the mixed tangent bundle, as in Definition 4.5$.$ of \cite{Joy16}:
\begin{defn}
	Let $X$ be an $s$-dimensional manifold with corners and a-corners. We define the \textit{mixed tangent bundle} or \textit{${\m}$-tangent bundle} of $X$ as follows: it is a rank $s$ vector bundle over $X$, whose sections on a local chart $U\subset \bR^{k,s}_l$ are spanned by $\{\{x_i\frac{\partial}{\partial x_i}\}_{i=1}^{k}\sqcup \{\frac{\partial}{\partial x_n}\}_{i=k+1}^s\}$, for $\{x_i\}_{i=1}^k$ being the boundary-defining coordinates for the a-boundary. We denote it by $^{\m}TX$.
\end{defn}
\begin{defn}
	 Let $X$ be an $s$-dimensional manifold with corners and a-corners. We define the \textit{$b$-tangent bundle} of $X$ as follows: it is a rank $s$ vector bundle over $X$, whose sections on a local chart $U\subset \bR^{k,s}_l$ are spanned by $\{\{x_i\frac{\pa}{\pa x_i}\}_{i=1}^{k+l} \sqcup \{\frac{\pa}{\pa x_i}\}_{i=k+l+1}^s\}$, for $\{x_i\}_{i=1}^k$ being the boundary-defining functions for the a-boundaries and $\{x_i\}_{i=k+1}^{k+l}$ being the boundary-defining functions for the ordinary boundaries. We denote it by $^bTX$. Note that there is a natural inclusion $^bTX \hookrightarrow {^{\m}TX}$.
\end{defn}
\begin{rem}
	Suppose that $X$ is a manifold with corners and a-corners, that also has the underlying structure of a manifold with corners, $\tilde{X}$. Then, we can think of the m-tangent bundle of $X$ to be a sub-bundle of the tangent bundle of $\tilde{X}$, spanned by the vector fields which are tangent to the a-boundaries and whose derivatives satisfy certain decay conditions at the boundary (which are satisfied if the vector field is smooth). We can think of the $b$-tangent bundle on $X$ in the same way, where the vector fields are tangent to both the ordinary boundaries and the a-boundaries.
\end{rem}

We can define the $b$-cotangent bundle and m-cotangent bundle of $X$, which are duals to the corresponding tangent bundles. As in the case of ordinary vector bundles, we can form new bundles by taking duals, tensor products, and direct sums. Now, we define the notion of an m-metric on $X$:
\begin{defn}
	An \textit{${\m}$-metric} $g$ on $X$ is a section of $\mathrm{Sym}^2(^{\m}T^*X)$, such that it is positive-definite at each point of $X$.
\end{defn}
\begin{rem}
	Note that if we consider the interior of $X$ as an ordinary manifold $X^\circ$, the m-metric restricts to an ordinary Riemannian metric, such that the a-boundaries are `at infinity' in $X^\circ$, while the ordinary boundaries are at a `finite distance'.
\end{rem}
We define the notion of an m-connection on a vector bundle, following Definition 4.14$.$ of \cite{Joy16}:
\begin{defn}
	An \textit{${\m}$-connection} on a vector bundle $E\rt X$ is an $\bR$-linear map $^{\m}\nabla : \Gamma^\infty(E)\rt\Gamma^\infty(E\otimes {^{\m}T}^*X)$, satisfying \[
	^{\m}\nabla(c\cdot e) = c\cdot {^{\m}\nabla e} + e\otimes {^{\m}\pa c},
	\]
	for $e\in \Gamma^\infty(E)$ and $c\in C^\infty(X)$.
\end{defn}
Given an m-metric, we can define a natural m-connection on $^{\m}TX$ analogously to the Levi-Civita connection, as in Proposition 5.2$.$ of \cite{Joy16}:
\begin{prop}
	Given an ${\m}$-metric $g$ on $X$, there is a natural ${\m}$-connection on $^{\m}TX$, the Levi-Civita connection, such that it restricts to the ordinary Levi-Civita connection on $X^\circ$ with respect to the Riemannian metric on $X^\circ$ given by the restriction of $g$ on $X^\circ$.
\end{prop}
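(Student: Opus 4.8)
The plan is to transcribe the classical existence‑and‑uniqueness proof of the Levi‑Civita connection into the category $\mancac$: I will replace vector fields by sections of $^{\m}TX$, directional derivatives by m‑derivatives, and the Lie bracket of vector fields by a bracket on sections of $^{\m}TX$. The real content of the proposition is then twofold: that every ingredient of the classical argument stays inside the a‑smooth category, and that the resulting m‑connection degenerates to the usual Levi‑Civita connection on the interior $X^\circ$. The first preliminary step is to observe that $^{\m}TX$ carries a canonical Lie algebroid structure, i.e.\ the bracket of two sections of $^{\m}TX$, computed on $X^\circ$ as ordinary vector fields, extends to an a‑smooth section of $^{\m}TX$ over all of $X$. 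This is local: in a chart $U\subset\bR^{k,s}_l$ the bundle $^{\m}TX$ is spanned over $C^\infty(X)$ by the commuting frame $\{x_i\pa_{x_i}\}_{i=1}^k\cup\{\pa_{x_j}\}_{j=k+1}^s$, so for a‑smooth functions $f,g$ and frame fields $A,B$ one has $[fA,gB]=f(Ag)B-g(Bf)A$, and $Ag,Bf$ are again a‑smooth because a‑smooth functions are closed under m‑derivatives; hence the bracket of m‑vector fields is an m‑vector field.

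Next I would define $^{\m}\nabla$ by the Koszul formula
\[
2g\big({^{\m}\nabla}_XY,Z\big)=X\big(g(Y,Z)\big)+Y\big(g(X,Z)\big)-Z\big(g(X,Y)\big)+g([X,Y],Z)-g([X,Z],Y)-g([Y,Z],X),
\]
for $X,Y,Z\in\Gamma^\infty({^{\m}TX})$, where $X(\,\cdot\,)$ denotes the m‑derivative along $X$ and $[\,\cdot\,,\,\cdot\,]$ is the algebroid bracket. Exactly as in the smooth case the right‑hand side is $C^\infty(X)$‑linear in $Z$; since $g$ is a fibrewise inner product on $^{\m}TX$ it is represented in any local m‑frame by a symmetric matrix of a‑smooth functions with nowhere‑vanishing determinant, hence has an a‑smooth inverse (a‑smooth functions being closed under division by non‑vanishing a‑smooth functions), so the formula determines a unique section $^{\m}\nabla_XY$ of $^{\m}TX$. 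This section is a‑smooth because pairings of $g$ with m‑vector fields, m‑derivatives of a‑smooth functions, and brackets of m‑vector fields are all a‑smooth. The usual formal manipulations show that $^{\m}\nabla_XY$ is $C^\infty(X)$‑linear (indeed m‑tensorial) in $X$ and satisfies the Leibniz rule ${^{\m}\nabla}_X(cY)=c\,{^{\m}\nabla}_XY+({^{\m}\pa}c)(X)\,Y$ for $c\in C^\infty(X)$, so $^{\m}\nabla$ is an m‑connection on $^{\m}TX$ in the sense of the definition above.

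Antisymmetrising the Koszul identity in $X\leftrightarrow Y$ gives ${^{\m}\nabla}_XY-{^{\m}\nabla}_YX=[X,Y]$, so $^{\m}\nabla$ is torsion‑free (torsion being defined via the algebroid bracket), and symmetrising in $Y\leftrightarrow Z$ gives $X\big(g(Y,Z)\big)=g({^{\m}\nabla}_XY,Z)+g(Y,{^{\m}\nabla}_XZ)$, so $^{\m}\nabla$ is compatible with $g$. Uniqueness among torsion‑free metric m‑connections follows from the standard difference‑tensor argument, which again only uses that the relevant objects are a‑smooth. For the restriction property, note that on $X^\circ$ the coordinates $x_1,\dots,x_k$ are positive, so $^{\m}TX|_{X^\circ}$ is canonically identified with $TX^\circ$, the m‑metric restricts to the Riemannian metric $g|_{X^\circ}$, the m‑derivative along a section of $^{\m}TX$ restricts to the ordinary directional derivative, and the algebroid bracket restricts to the ordinary Lie bracket of vector fields; hence the Koszul formula for $^{\m}\nabla$ becomes verbatim the Koszul formula for the Levi‑Civita connection of $g|_{X^\circ}$, and the two connections agree on $X^\circ$.

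The hard part — though it is bookkeeping rather than anything deep — will be keeping everything a‑smooth near the a‑boundaries: one must check that pairing $g$ with frame vector fields, m‑differentiating, bracketing, and inverting $g$ all preserve a‑smoothness, using the closure properties recalled earlier (sums, products, m‑derivatives, and quotients by non‑vanishing a‑smooth functions), and in particular that the pathology exhibited by functions such as $\log(x)^{-1}$, which are r‑smooth but not a‑smooth, never arises from these operations applied to a‑smooth data. I expect this verification to be the main obstacle; the rest is the classical argument carried through unchanged.
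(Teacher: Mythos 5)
Your proof is correct, and is the natural transcription of the classical Koszul-formula argument into the m-category: the two ingredients you isolate, namely that the commuting m-frame makes $\Gamma^\infty({^{\m}TX})$ into a Lie algebroid whose bracket preserves a-smoothness, and that a-smooth functions are closed under sums, products, m-derivatives and division by non-vanishing a-smooth functions, are precisely what make each step of the classical argument carry over. Note that the paper does not supply its own proof of this proposition but defers to Proposition 5.2 of Joyce \cite{Joy16}, whose argument is the one you reconstruct; there is no divergence to report.
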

Now, we recall the notion of a $b$-derivative, following definition 4.6 of \cite{Joy16}:
\begin{defn}
	Given an interior a-smooth map $f:X\rt Y$, it induces a well-defined mapping $^bdf: {^bTX}\rt {f^*(^bTY)}$, called the \textit{$b$-derivative}.
\end{defn}
\begin{rem}
	Note that for a general interior a-smooth mapping, there is no natural notion of derivative between the mixed tangent bundles, for e.g.\ consider the mapping $\llbracket 0,\ep)\rt [0,\ep)$ given by $x\mapsto x$. This is why we define the $b$-derivative only on the $b$-tangent bundles.
\end{rem}

Now, we define the following notions describing a-smooth maps, as in Definition 4.8$.$ of \cite{Joy16}:
\begin{defn}
	Let $f:X\rightarrow Y$ be an a-smooth, interior map between manifolds with corners and a-corners. It is called a \textit{$b$-submersion} if the induced $b$-derivative is surjective, and a \textit{$b$-immersion} if the induced $b$-derivative is injective. It is called is a \textit{$b$-fibration} if it is a $b$-submersion and $b$-normal. It is called a \textit{$b$-immersion} if the induced $b$-derivative is injective.
\end{defn}

\subsubsection{Embedded submanifolds}

\begin{defn}
	Let $f:X\rt Y$ be a $b$-immersion of manifolds with corners and a-corners. We call it a \textit{$b$-embedding} if $f$ is also $b$-normal.
\end{defn}
 \begin{rem}
 	This rules out $b$-immersions like $[0,\infty)\rt [0,\infty)^2$ mapping $x\mapsto (x,x)$.
 \end{rem} 
 
\subsubsection{Relative bundles over a $b$-fibration}

Let us work with a manifold with corners and a-corners $X$, along with an m-metric $g$, and a $b$-fibration of the form $\pi: X\rt [ 0,\varepsilon)$, representing a time variable. Some examples of $b$-fibrations are the mapping $f:\bR^{n-1}\times[0,\ep)\rt [ 0,\ep)$ given by $f(x_1,\dots,x_n)=x_n$, and the mapping $g: \bR^{n-2}\times\llbracket0,\ep)\times [ 0,\ep)\rt [0,\ep)$ given by $g(x_1,\dots,x_{n-1},x_n)=x_{n-1}x_n$. In this subsection, we will define the notions of the relative tangent bundle, the relative cotangent bundle, relative metric, and the relative Levi-Civita connection on $X$ with respect to the relative metric.
\begin{defn}
	The \textit{relative tangent bundle} of $X$, denoted $^bT_\mathrm{rel}X$, is defined as the sub-bundle of $^bTX$ given by $\mathrm{ker}(^bd\pi)$. This is an a-smooth sub-bundle of $^bTX$.
\end{defn}
Due to this, there are natural inclusions $^bT_\mathrm{rel}X\hookrightarrow {^bTX} \hookrightarrow {^{\m}TX}$. Now, we define the \textit{relative cotangent bundle} of $X$, $^bT^*_\mathrm{rel}X$, as the dual to the relative tangent bundle. Taking duals of the inclusion of $^bT_\mathrm{rel}X \hookrightarrow {^{\m}TX}$, we get a map $^{\m}T^*X \rt {^bT^*_\mathrm{rel}}X$ which is surjective on the fibres. This allows us to represent a-smooth sections of $ {^bT^*_\mathrm{rel}}X$ by using a-smooth sections of $^{\m}T^*X$.

\begin{defn}
	We define the \textit{relative $b$-differential} as a map \[
	^bd_\mathrm{rel} : C^\infty(X) \rt \Gamma^{\infty}(^bT^*_\mathrm{rel}X),
	\]
	by composing the $b$-differential $^bd$ with the inclusion map $^bT_\rel X\rt {^bT}X$.
\end{defn}
Now, we can similarly define the notion of a relative $b$-metric over a $b$-fibration, and use this to define a relative Levi-Civita connection.
\begin{defn}
	A \textit{relative $b$-metric} on $^bT_\rel X$ is a section of $\textrm{Sym}^2(^bT_\rel X)$ that is positive definite at each point. We define the \textit{relative Levi-Civita connection} associated to a relative $b$-metric $g$ as an $\bR$-linear map \[
	^b\nabla_\mathrm{rel}: \Gamma^\infty(^bT_\mathrm{rel}X) \rt  \Gamma^\infty(^bT_{\mathrm{rel}}X \otimes {^b}T^*_{\mathrm{rel}}X),
	\]
	defined as the ordinary Levi-Civita connection on each of the fibres.
\end{defn}
\begin{rem}
The restriction of an m-metric on $^{\m}TX$ to $^bT_\rel X$ gives us a relative $b$-metric. 	
\end{rem}

\begin{rem}
	Note that the embedding $X_t:=\pi^{-1}(\{t\})\hookrightarrow X$ gives us an isomorphism $TX_t \cong {^bT_\mathrm{rel}X}.$ Taking duals, we get an isomorphism of the cotangent bundles. 
\end{rem}
Now, given a $k$-differentiable function $u$, we can consider $^b\nabla^k_\mathrm{rel}u$ as a section of $^bT^{*\otimes k}_\mathrm{rel}X$. We have an embedding $^bT^*_\mathrm{rel}X\hookrightarrow {^{\m}T}^*X$, arising from the m-metric. Therefore, we can pushforward $^b\nabla^k_\mathrm{rel}u$ under this embedding, and take its further derivatives using the m-Levi-Civita connection, to obtain $^{\m}\nabla^l({^b\nabla_{\mathrm{rel}}^k}u)$ as a section of $^{\m}T^{*\otimes (k+l)}X$. This allows us to take the norm of $^{\m}\nabla^l({^b\nabla_{\mathrm{rel}}^k}u)$. Essentially, this is analogous to taking $l$ derivatives in time and $(l+k)$ derivatives in space.

\subsection{Interior estimates and Sobolev embeddings}
In this section, we recall some parabolic interior estimates and embedding theorems we will need, mainly following Krylov's books \cite{Kry08,Kry96}. 
\subsubsection{Parabolic Sobolev spaces}
We consider our model space as $\bR^{n+1}_0:=\{(x,t):x\in\bR^{n}, t\in [0,\infty)\}$, where $x$ represents the space variable and $t$ represents the time variable. 
\begin{defn}
	For a differentiable function $u$ having continuous derivatives of the form $\pa^k_{{i_1\dots i_k}}u$ for $(i_1,\dots, i_k)\in \{1,\dots, n\}$, we denote by $\nabla^ku$ the tensor given by \[
	\nabla^ku:= \sum_{1\leq i_1\leq \dots i_k\leq n} (\pa^k_{i_1\dots i_k}u)dx^{i_1}\dots dx^{i_k}.\]
	Similarly, for a function $u$ having continuous derivatives of the form $\pa_t^l\pa^k_{i_1\dots i_k}u$, we denote by $\pa_t^l\nabla^ku$ the tensor given by \[
	\pa_t^l\nabla^ku:=\sum_{1\leq i_1\leq \dots i_k\leq n} (\pa_t^l\pa^k_{i_1\dots i_k}u)dx^{i_1}\dots dx^{i_k}.
	\]
\end{defn}
Now, we define the $L^p_{k+2r,r}$-norms on our model space.
\begin{defn}
	Let $k,r$ be non-negative integers. For a locally integrable function $u$ having weak derivatives of the form $\pa_t^l\pa^m_{i_1\dots i_m}u$ for all $l\leq r$ and $m+2l\leq k+2r$, we define \[
	\|u\|_{L^p_{k+2r,r}}:= \sum_{i\leq l}\sum_{j\leq k+2r-2i}\|\pa_t^i\nabla^ju\|_{L^p}.
	\]
\end{defn}

We define the parabolic Sobolev spaces on $\bR^{n+1}_{0}$ as follows:
\begin{defn}
	The Sobolev space $L^p_{k+2r,r}$ is defined to be the space of locally integrable functions $u$ having weak derivatives of the form $\pa_t^l\pa^m_{i_1\dots i_m}u$ for all $l\leq r$ and $m+2l\leq k+2r$ and having a finite $L^p_{k+2r,r}$-norm.
\end{defn}
\subsubsection{Parabolic H\"{o}lder spaces}
 Let us denote the parabolic distance function $\rho(z_1,z_2)$ between two points $z_1= (x_1,t_1)$ and $z_2 = (x_2,t_2)$ as $\rho(z_1,z_2):= \sqrt{|x_1-x_2|^2+|t_1-t_2|}$. Then, we define the parabolic H\"{o}lder spaces:
\begin{defn}
	Let $Q\subset \bR^{n+1}_0$ be a domain. Then, we denote the norm \[
	[u]_{\delta,\delta/2;Q} := \sup_{z_1\neq z_2, \; z_i\in Q} \frac{|u(z_1)-u(z_2)|}{\rho^\delta(z_1,z_2)}.
	\] 
	We define the H\"{o}lder norm, for non-negative integers $k,r\geq 0$ as \[
	|u|_{C^{k+2r+\delta, r+\delta/2}(Q)} := |u|_{C^{k+2r,r}(Q)}+\sum_{2\alpha+|\beta|=k+2r, \alpha \leq r} [\pa_t^\alpha\nabla^\beta u]_{\delta,\delta/2; Q}.
	\]
\end{defn}

\subsubsection{Interior Sobolev estimates}
Let $Lu := a^{ij}\partial^2_{ij}u + b^i \partial_i u +cu$ be a second order, elliptic partial differential operator defined on a domain $\Omega \subset \mathbb{R}^n$ such that \[
\kappa |\xi|^2 \geq \sum_{i,j}a^{ij} \xi_i\xi_j \geq \kappa^{-1}|\xi|^2
\]  
for a fixed constant $\kappa$ for all $\xi \in \mathbb{R}^n$, and the $C^{k+2r-2,r-1,\alpha}$-norms of $a^{ij}, b^{i},c$ are bounded in $\Omega$ by a constant $\Lambda$. We will require two kinds of domains for our interior estimates, one near the boundary and one away from the boundary:
\begin{enumerate}
	\item Let $B_1 \ssubset B_2$ be two concentric Euclidean balls in $\Omega$ with radii $r_2>r_1>0$, and $I_i:= (-\ep_i,0)$ for $i=1,2$ be two open intervals, such that $\ep_1<\ep_2$. We denote $Q_i:=B_{i}\times I_i$ for $i=1,2$, so that $Q_1\subset Q_2$. These will be used for estimates away from the boundary $t=0$.
	\item Let $B_1\ssubset B_2$ be two open balls as before. Let $I:=[0,\varepsilon)$ be a half-open interval. We denote $P_i:=B_{i}\times I$ for $i=1,2$, such that $P_1\subset P_2$. These will be used for estimates near the boundary $t=0$.
\end{enumerate} 

\begin{theorem}\label{thm_ffestimate}
	Let $u \in L^p_{k+2r,r}(Q_2)$ and let us denote $h:= (\pa_t - L)u$. Then, we have that
	\begin{equation}\label{eq_ffestimate}
		\|u|_{Q_1}\|_{L^p_{k+2r,r} } \leq C\|h|_{Q_2}\|_{L^p_{k+2r-2,r-1}} + C'\|u|_{Q_2}\|_{L^p},
	\end{equation}
	for constants $C = C(\kappa,\Lambda)$ and $C' = C'(\kappa,\Lambda,B_1,B_2,\ep_1,\ep_2)$. 
\end{theorem}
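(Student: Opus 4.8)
The plan is to reduce the interior $L^p_{k+2r,r}$ estimate to the standard constant-coefficient parabolic Calderón--Zygmund estimate via a freezing-of-coefficients argument combined with a cutoff and an induction on $k$. First I would fix a smooth cutoff function $\chi$ which equals $1$ on $Q_1$ and is supported in $Q_2$, with $\chi$ depending only on $t$ near the bottom and compactly supported in space away from $\pa B_2$; set $w := \chi u$. Then $w$ has compact support in the spatial slice and satisfies $(\pa_t - L)w = \chi h + [\,\pa_t - L,\chi\,]u =: \tilde h$, where the commutator term $[\,\pa_t-L,\chi\,]u$ involves only derivatives of $u$ of order at most one in space (and zero in time) multiplied by bounded coefficients (derivatives of $\chi$, and the coefficients $a^{ij},b^i$), hence is controlled in $L^p_{k+2r-2,r-1}(Q_2)$ by $C' \|u|_{Q_2}\|_{L^p_{k+2r-1,r-1}}$ — a norm with strictly fewer derivatives than the left-hand side, which is why the induction closes.

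Next I would freeze the leading coefficients at the centre $z_0$ of $Q_1$, writing $\pa_t w - a^{ij}(z_0)\pa^2_{ij} w = \tilde h + (a^{ij}(z_0) - a^{ij})\pa^2_{ij} w - b^i\pa_i w - cw$. For the frozen operator with constant coefficients satisfying the uniform ellipticity bound with constant $\kappa$, the global parabolic $L^p$ estimate (the parabolic analogue of Calderón--Zygmund, see Krylov \cite{Kry08}) gives $\|w\|_{L^p_{k+2r,r}} \le C(\kappa)\,\|\pa_t w - a^{ij}(z_0)\pa^2_{ij}w\|_{L^p_{k+2r-2,r-1}}$, with $C$ depending only on $\kappa$, $n$, $p$ and the order $k+2r$ (not on the domain, because $w$ has compact support). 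The term $(a^{ij}(z_0)-a^{ij})\pa^2_{ij}w$ is the dangerous one: by continuity of $a^{ij}$ (quantified through the $C^{k+2r-2,r-1,\alpha}$ bound $\Lambda$), choosing the radii $r_1,r_2$ — and hence the diameter of $Q_2$ — small compared to the modulus of continuity of $a^{ij}$ makes $\|(a^{ij}(z_0)-a^{ij})\pa^2_{ij}w\|_{L^p_{k+2r-2,r-1}}$ absorbable into the left-hand side with a small constant, by the usual covering-and-summing argument when $Q_2$ is not a priori small (partition $Q_2$ into finitely many pieces each small enough for the freezing to work, and sum). The lower-order terms $b^i\pa_i w + cw$ and the commutator are absorbed into $C'\|u|_{Q_2}\|_{L^p_{k+2r-1,r-1}}$, which by interpolation between $L^p_{k+2r,r}$ and $L^p$ (with the top norm on $Q_2$ again absorbed after a further covering argument, or handled by a standard iteration lemma on nested domains) reduces to $C'\|u|_{Q_2}\|_{L^p}$.

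The induction is on $k$ (and, nested inside, on $r$): the base case $k=0$ is exactly the classical second-order parabolic $L^p$ estimate, and the inductive step applies the base estimate to spatial difference quotients or derivatives $\pa_x u$ and to $\pa_t u$, using that differentiating the equation $(\pa_t-L)u = h$ produces $(\pa_t - L)(\pa u) = \pa h + (\pa a^{ij})\pa^2 u + \dots$ whose right-hand side lies in $L^p_{k+2r-3,r-1}$ with norm bounded via $\Lambda$ by $\|h\|_{L^p_{k+2r-2,r-1}} + \|u\|_{L^p_{k+2r-1,r-1}}$, closing the loop. The one genuine subtlety — the step I expect to be the main obstacle — is the uniformity and shape of the constants: ensuring that $C$ depends only on $(\kappa,\Lambda)$ and not on the domains requires that the freezing scale be dictated purely by the modulus of continuity controlled by $\Lambda$ and $\kappa$, while $C'$ is allowed to blow up as $r_2 - r_1 \to 0$ or $\ep_2-\ep_1\to 0$, which is exactly what the statement permits; carefully bookkeeping the covering argument so that the number of small subcubes (hence the accumulated constant) depends on $\kappa,\Lambda,B_1,B_2,\ep_1,\ep_2$ but the absorbed small constant depends only on $\kappa,\Lambda$ is where the care is needed. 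Since the bottom face $t=0$ is handled by half-open intervals $I$ and one-sided time derivatives, and there is no spatial boundary inside $Q_2$, no boundary estimates are required here — the cutoff reduces everything to the whole-space case.
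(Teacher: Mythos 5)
Your proposal is correct in outline, but it takes a substantially lower-level route than the paper. Where the paper simply invokes Krylov's variable-coefficient interior parabolic $L^p$ estimate (\cite{Kry08}, Theorem 5.2.5) as the engine for the base case $r=1$ and then handles the reduction from intermediate norms to $\|u\|_{L^p}$ via an iteration over nested domains and, for $r>1$, an appeal to compactness (Ehrling-type absorption), you rebuild the variable-coefficient interior estimate from scratch: constant-coefficient Calder\'on--Zygmund, cutoff, freezing of coefficients, covering, then interpolation and the Simon nested-domains lemma to push down to $\|u\|_{L^p}$. What you gain is self-containment and a transparent explanation of \emph{why} the split $C=C(\kappa,\Lambda)$ versus $C'=C'(\kappa,\Lambda,B_1,B_2,\ep_1,\ep_2)$ is achievable --- the $C$-absorption comes entirely from the domain-independent constant-coefficient estimate, while all cutoff-derivative and covering-count dependence is funneled into $C'$ --- a point the paper's terse proof does not make explicit. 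What you lose is brevity: the paper's black-box citation lets it dispense with the whole cutoff/freezing/covering machinery in a single sentence. One small note on your $r$-induction: when you differentiate $r-1$ times in time the coefficients acquire time derivatives $\pa_t^j a^{ij}$, and it is exactly the assumed $C^{k+2r-2,r-1,\alpha}$-bound $\Lambda$ that makes these admissible; you gesture at this but it is the point at which the regularity hypothesis on the coefficients is used in full, so it deserves a sentence.
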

\begin{proof}
	The case $r=1$ follows from the interior estimate in \cite{Kry08} Theorem 5.2.5, along with an estimate over an intermediate ball $Q_{R'}$. The case $r>1$ follows by induction, and compactness of mappings from a Sobolev space with higher derivatives to one with fewer derivatives.
\end{proof}

\begin{theorem}\label{thm_bfestimate}
	Let $u \in L^p_{k+2r,r}(P_2)$ and let us denote $h:= (\pa_t - L)u$. Let $v\in L^p_{k+2r,r}(P_2)$ such that $u-v\in \mathring{L}^p_{k+2r,r}(P_2)$ Then, we have that
	\begin{equation}\label{eq_bfestimate}
		\|u|_{P_1}\|_{L^p_{k+2r,r}} \leq C(\|h|_{P_2}\|_{L^p_{k+2r-2,r-1}} + \|v|_{ P_2}\|_{L^p_{k+2r,r}}) + C'\|u|_{P_2}\|_{L^p},
	\end{equation}
	for a constant $C = C(\kappa,\Lambda)$ and $C' = C'(\kappa,\Lambda,B_1,B_2,\ep)$. Moreover, the constant $C'$ is monotonically increasing in $\ep$. 
	
\end{theorem}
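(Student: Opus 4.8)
The plan is to reduce the boundary estimate \eqref{eq_bfestimate} to the interior estimate \eqref{eq_ffestimate} of Theorem~\ref{thm_ffestimate} together with a standard reflection/extension argument across the initial time slice $t=0$. First I would treat the model case $r=1$. The function $w := u-v$ lies in $\mathring{L}^p_{k+2,1}(P_2)$, i.e.\ it vanishes to the appropriate order on the parabolic boundary portion $\{t=0\}$ (and on the lateral boundary after cutting off), and it satisfies $(\pa_t-L)w = h - (\pa_t-L)v =: \tilde h$ with $\|\tilde h|_{P_2}\|_{L^p_{k+2r-2,r-1}} \le \|h|_{P_2}\|_{L^p_{k+2r-2,r-1}} + C(\kappa,\Lambda)\|v|_{P_2}\|_{L^p_{k+2r,r}}$. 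Since $w$ vanishes at $t=0$, I would extend $w$ by zero to negative times, obtaining $\bar w$ on $B_2\times(-\ep,\ep)$; because $w\in\mathring L^p$ the extension has weak derivatives of the required orders, and it solves a parabolic equation with the coefficients of $L$ extended (e.g.\ evenly) in $t$, keeping the same ellipticity constant $\kappa$ and a controlled bound $\Lambda$ on the coefficient norms. Applying Theorem~\ref{thm_ffestimate} on the pair $Q_1\subset Q_2$ built from $B_1\subset B_2$ and the intervals $(-\ep,0]$ ``doubled'' to $(-\ep,\ep)$ gives $\|\bar w|_{Q_1}\|_{L^p_{k+2,1}} \le C(\kappa,\Lambda)\|\tilde h\|_{L^p_{k,0}} + C'(\kappa,\Lambda,B_1,B_2,\ep)\|\bar w\|_{L^p}$, and restricting back to $t\ge 0$ and using $u = w+v$ together with the triangle inequality yields \eqref{eq_bfestimate}.

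The case $r>1$ I would handle by the same induction-plus-compactness scheme used in the proof of Theorem~\ref{thm_ffestimate}: differentiate the equation in $t$, noting that $\pa_t^j w$ still vanishes at $t=0$ in the $\mathring L^p$ sense for $j<r$, so the zero-extension argument still applies, and absorb the lower-order terms arising from $\pa_t$ hitting the coefficients of $L$ into the right-hand side using the already-established lower-order estimates; the terms involving $v$ accumulate into $\|v|_{P_2}\|_{L^p_{k+2r,r}}$. The monotonicity of $C'$ in $\ep$ is automatic from the monotonicity in Theorem~\ref{thm_ffestimate} (a larger $\ep$ means a larger intermediate cylinder, hence a larger Poincaré-type constant), so no extra work is needed there beyond remarking it.

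The main obstacle I anticipate is the justification that the zero-extension of $w$ across $t=0$ genuinely lies in the parabolic Sobolev space with the stated derivatives and that it satisfies the extended PDE in the weak sense without producing a distributional term supported on $\{t=0\}$. This is exactly where the hypothesis $u-v\in\mathring L^p_{k+2r,r}(P_2)$ is used: it encodes the vanishing of the relevant time-traces so that no ``jump'' term appears when integrating by parts against test functions that do not vanish at $t=0$. I would make this precise by approximating $w$ by functions in $C^\infty$ that are compactly supported away from $\{t=0\}\cup\pa B_2$, for which the extension-by-zero and the interior estimate are classical, and then pass to the limit using the $L^p_{k+2r,r}$-closedness of both sides; the one-sided-derivative subtleties at $t=0$ in the definition of the parabolic Hölder/Sobolev spaces are then never touched directly. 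The lateral boundary $\pa B_2$ causes no trouble because $B_1\ssubset B_2$ leaves room for an interior cutoff, which is precisely why the constant $C'$ is allowed to depend on $B_1,B_2$.
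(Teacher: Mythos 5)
Your proposal is correct in substance but takes a genuinely different route from the paper. The paper's proof of the $r=1$ case is a single-step reduction: it invokes Krylov's a priori estimate for the Cauchy problem (Theorem~5.2.10 of \cite{Kry08}), which is a boundary estimate at $t=0$ proved directly via potential-theoretic/Fourier methods, and then localizes in space by a cutoff; the $r>1$ case is handled by induction and compactness. Your proof instead reduces the boundary case to the \emph{interior} estimate (Theorem~\ref{thm_ffestimate}, which rests on Krylov's Theorem~5.2.5) by extending $w=u-v$ by zero across $\{t=0\}$ — a standard and perfectly valid alternative. What your approach buys is economy: one never needs a separate black-box initial-time estimate, only the interior one. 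What it costs is two technical points you have implicitly committed to. First, for the zero extension $\bar w$ to lie in $L^p_{k+2,1}$ and solve the extended P.D.E.\ with no distributional layer on $\{t=0\}$, you need to interpret $\mathring{L}^p_{k+2r,r}$ as the closure of functions supported away from $t=0$ (so that all relevant time-traces vanish, not just the zeroth one); this is the natural reading and you state it, but for $r>1$ it is essential, since the trace-zero condition on $w$ alone does not make $\pa_t^j w|_{t=0}$ vanish for $j\geq 1$. Second, the coefficients of $L$ must be extended to $t<0$ with controlled regularity; the even (or constant-in-$t$) extension preserves the $C^0$/Hölder bounds needed at each level $r=1$ of your induction, but one should not try to extend them once in $C^{r-1}$ regularity for large $r$ — your induction on $r$ correctly sidesteps this. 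On the monotonicity of $C'$ in $\ep$: you assert it follows from monotonicity in Theorem~\ref{thm_ffestimate}, but that theorem does not actually state monotonicity; the cleaner observation (valid for both your route and the paper's) is that Krylov's constants are uniform over time intervals $(0,T)$ for $T$ bounded above, so $C'$ can in fact be taken $\ep$-independent for $\ep$ small, which trivially yields monotonicity.
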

\begin{proof}
	The case $r=1$ follows from an a-priori estimate from \cite{Kry08} Theorem 5.2.10, along with using a cutoff function to get an interior estimate. The case $r>1$ follows by induction and compactness as before.
\end{proof}

\subsubsection{Parabolic Sobolev Embedding}

We recall some Sobolev embedding theorems for parabolic Sobolev spaces. The first two embeddings in the following result are as in Theorem 10.4 of the book by Besov, Il'in and Nikol'skiĭ \cite{BIN78}. 
\begin{theorem}\label{thm_sobolevembeddingparabolic}
	Define the quantity 
	\begin{equation}\label{eq_parabolicembeddingparameter}
	\xi:= \frac{s+(1/p)-(1/q)}{r}+n\cdot \frac{l+2s+(1/p)-(1/q)}{k+2r}.
	\end{equation}
	Let $\xi\leq 1$, and suppose that for $\xi=1$, either
	\[
	1<p\leq q <\infty\; \mathrm{ or }\; 1=p<q=\infty.
	\]
	Then, we have the embedding 
	\begin{equation}\label{eq_parabolicsobolevembedding}
	L^p_{k+2r,r}(\bR^{n+1}_0) \hookrightarrow L^q_{l+2s,s}(\bR^{n+1}_0).
	\end{equation}
	Now, suppose we define $q=\infty$ and $\xi$ as before. Then, we have the embedding 
	\begin{equation}\label{eq_parabolicckembedding}
	L^p_{k+2r,r}(\bR^{n+1}_0)\hookrightarrow C^{l+2s,s}(\bR^{n+1}_0).
	\end{equation}
	Moreover, if $\xi <1$, we have the embedding
	\begin{equation}\label{eq_parabolicholderembedding}
	L^p_{k+2r,r}(\bR^{n+1}_0)\hookrightarrow C^{l+2s+\delta,s+\delta/2}(\bR^{n+1}_0),
	\end{equation}
	for some $\delta\in(0,1)$.
	
\end{theorem}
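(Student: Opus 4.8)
The plan is to recognise $L^p_{k+2r,r}(\bR^{n+1}_0)$ as an \emph{anisotropic} (parabolic) Sobolev space, in which the time direction carries weight $2$ and each space direction weight $1$, so that $\bR^{n+1}_0$ has effective homogeneous dimension $n+2$ and the quantity $\xi$ of (\ref{eq_parabolicembeddingparameter}) is precisely the scaling defect governing whether $L^p_{k+2r,r}$ controls $L^q_{l+2s,s}$. All three embeddings are then facets of the anisotropic Sobolev embedding theorem, i.e.\ of \cite{BIN78}. First I reduce to the full space $\bR^{n+1}$ by a bounded extension operator across $\{t=0\}$ (a Seeley-type reflection in $t$, matching the finitely many one-sided time derivatives occurring in the norm), or one may work on the half-space directly as in \cite{BIN78}.

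For (\ref{eq_parabolicsobolevembedding}) and (\ref{eq_parabolicckembedding}) I would either quote \cite{BIN78} after checking that its parameter specialises to $\xi$, or argue by potential theory as follows. For $1\le p<\infty$ with $1<p$, the Mikhlin--H\"ormander multiplier theorem applied to the parabolic symbol $1+|\zeta|^2+i\tau$ shows that $\|u\|_{L^p_{k+2r,r}}$ is equivalent, modulo lower-order terms, to $\|f\|_{L^p}$ where $u=G_{k+2r}\ast f$ and $G_\sigma$ is the parabolic Bessel kernel with symbol $(1+|\zeta|^2+i\tau)^{-\sigma/2}$. Since each mixed derivative $\pa_t^i\nabla^j(G_{k+2r}\ast f)$ is, up to a Mikhlin correction, the convolution of $f$ with a kernel behaving near the origin like the anisotropic Riesz kernel of order $k+2r-(2i+j)$, the anisotropic Hardy--Littlewood--Sobolev inequality yields the $L^q$ bounds of (\ref{eq_parabolicsobolevembedding}) when $\xi\le1$ (with the stated endpoint restrictions at $\xi=1$), and an anisotropic Morrey estimate yields the bounded-continuity statement (\ref{eq_parabolicckembedding}); the case $p=1$ follows from the kernel bounds directly.

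For the H\"older embedding (\ref{eq_parabolicholderembedding}), the point is that when $\xi<1$ these kernels gain H\"older regularity rather than mere continuity: for the top output orders $2i+j=l+2s$ the relevant kernel has order $k+2r-(l+2s)$, and the excess $1-\xi>0$ supplies a positive H\"older exponent $\delta$ — the same $\delta$ for all such $(i,j)$, since they share the value $2i+j=l+2s$. Equivalently, running the embedding theorem with the fractional target $C^{l+2s+\delta,\,s+\delta/2}$ replaces $\xi$ by
\[
\xi_\delta \;=\; \xi+\delta\left(\tfrac{1}{2r}+\tfrac{n}{k+2r}\right),
\]
so $\xi<1$ lets us fix $\delta\in(0,1)$ with $\xi_\delta<1$; summing the resulting seminorm bounds over the finitely many $(i,j)$ with $2i+j=l+2s$ and $i\le s$ gives (\ref{eq_parabolicholderembedding}).

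The step I expect to be the main obstacle is the second one: establishing the sharp mapping properties of the \emph{anisotropic} Bessel/Riesz potentials across the range of exponents $p$, together with the homogeneity bookkeeping — in particular the interaction of the capped number $r$ of time derivatives with the weighting of the kernels — that produces exactly the parameter $\xi$. Once this anisotropic Hardy--Littlewood--Sobolev/Morrey machinery is in place (or simply imported from \cite{BIN78}), the half-space reduction and the passage to the strict-inequality H\"older statement are routine.
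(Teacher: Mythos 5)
Your proposal is correct but travels a genuinely different route from the paper's. The paper invokes \cite{BIN78} (Theorem 10.4) and sketches the underlying \emph{integral-representation} method: mollify $u$, write a first-order derivative of $u_\ep$ pointwise as an integral over a parabolic horn involving $\nabla^2 u_\ep$ and $\pa_t u_\ep$, and then estimate with Minkowski's and H\"older's inequalities; crucially, this works directly on the half-space $\bR^{n+1}_0$ because it satisfies BIN78's ``weak $l$-horn condition,'' so no extension operator is ever needed. You instead reduce to the full space by a Seeley reflection in $t$ and proceed by Fourier-side potential theory: parabolic Bessel kernels with symbol $(1+|\zeta|^2+i\tau)^{-\sigma/2}$, the Mikhlin--H\"ormander multiplier theorem to identify $L^p_{k+2r,r}$ with a Bessel-potential space, and then anisotropic Hardy--Littlewood--Sobolev for the $L^q$ range and an anisotropic Morrey estimate for the H\"older range. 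Both methods hinge on the same parabolic scaling (homogeneous dimension $n+2$, time weighted by $2$), and your computation of $\xi_\delta = \xi + \delta\bigl(\tfrac{1}{2r}+\tfrac{n}{k+2r}\bigr)$ for the strict-inequality H\"older case is correct and gives exactly what is needed. The trade-off: the horn/mollification route stays real-variable, handles $p=1$ and the half-space uniformly, and matches the cited source's framework; your route modularises the result into standard harmonic-analysis black boxes (multiplier theorem, HLS, Morrey) and makes the role of $\xi$ as a scaling defect transparent, at the cost of needing the extension step, the full anisotropic Calder\'on--Zygmund/Mikhlin theory in the parabolic metric, and a separate kernel argument to recover the $p=1$ endpoint (where Mikhlin--H\"ormander gives nothing). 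Both approaches are sound; yours is a legitimate alternative proof rather than a reconstruction of the paper's.
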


\begin{proof}
	The main idea of the proof is define mollifications $u_\ep:= u *\phi_\ep$, and then to write $\pa_iu_\ep$ pointwise as an integral involving $\nabla^2u_\ep$ and $\pa_t u_\ep$. Then, we estimate the pointwise norms using Minkowski's and H\"{o}lder's inequalities. 
\end{proof}

\begin{rem}
	In \cite{BIN78}, the theorem is stated more generally for open subsets of $\bR^n$ satisfying a `weak $l$-horn condition'. The half-space $\bR^{n+1}_0$ satisfies this condition.
\end{rem}

\subsubsection{Traces in parabolic Sobolev spaces}

Consider the space $\bR^{n+1}_{\geq0}$. We note that there exists a bounded linear mapping, \[
T: L^p_1(\bR^{n+1}_{\geq0}) \rt L^p(\bR^n), 
\]
such that for $u\in L^p_1\cap C^0$, we have $T(u)=u|_{t=0}$. This mapping is called the trace, see for example \S 5.5 of Evans' book \cite{Eva10}. We can extend the trace map to parabolic Sobolev spaces for $p=2$ as in \S 2.5 of \cite{Kry08}, to define a bounded linear map\[
T: L^2_{k+2r,r}(\bR^{n+1}_{\geq 0})\rt L^2_{k+2r-1}(\bR^n).
\]
We also obtain a surjectivity result, following Krylov:
\begin{lem}
	Let $u\in L^2_1(\bR^n)$. Then, there exists a function $v\in L^2_{2,1}(\bR^{n+1}_{\geq0})$ such that $\|v\|_{L^2_{2,1}}\leq C\|u\|_{L^2_1}$ for a constant $C$ independent of $u$, and $T(u)=v$. 
\end{lem}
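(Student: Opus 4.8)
The plan is to build the extension explicitly from a \emph{damped} heat semigroup and to estimate it by Plancherel's theorem, as in \S2.5 of \cite{Kry08}. Given $u\in L^2_1(\bR^n)$, define $v:\bR^{n+1}_{\geq0}\rt\bR$ through its spatial Fourier transform
\[
\widehat{v}(\xi,t) = e^{-t(1+|\xi|^2)}\,\widehat{u}(\xi),
\]
that is, $v(\cdot,t)=e^{t(\Delta-1)}u$ is the solution of $\pa_t v=(\Delta-1)v$ with $v(\cdot,0)=u$. The extra factor $e^{-t}$ (as opposed to the plain heat extension $e^{t\Delta}u$) is precisely what makes the low-frequency part of $v$ square-integrable over space-time, and it costs nothing for the higher-order terms.

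Next I would estimate the $L^2_{2,1}$-norm of $v$. The norm involves $v,\nabla v,\nabla^2 v$ and $\pa_t v$, i.e.\ the terms $\pa_t^l\nabla^{|\beta|}v$ with $2l+|\beta|\leq 2$. For such an $(l,\beta)$ the spatial Fourier transform of $\pa_t^l\pa^\beta v$ is $(-(1+|\xi|^2))^l(i\xi)^\beta e^{-t(1+|\xi|^2)}\widehat{u}(\xi)$, so by Plancherel and $\int_0^\infty e^{-2ta}\,dt=\tfrac1{2a}$,
\[
\|\pa_t^l\pa^\beta v\|_{L^2(\bR^{n+1}_{\geq0})}^2 = \int_{\bR^n}\!\!\int_0^\infty (1+|\xi|^2)^{2l}|\xi|^{2|\beta|}e^{-2t(1+|\xi|^2)}|\widehat{u}(\xi)|^2\,dt\,d\xi = \tfrac12\int_{\bR^n} (1+|\xi|^2)^{2l-1}|\xi|^{2|\beta|}|\widehat{u}(\xi)|^2\,d\xi.
\]
For $2l+|\beta|\leq2$ one checks directly that $(1+|\xi|^2)^{2l-1}|\xi|^{2|\beta|}\leq 1+|\xi|^2$ (the four cases $(l,|\beta|)\in\{(0,0),(0,1),(0,2),(1,0)\}$ are immediate), hence each term is at most $\tfrac12\int_{\bR^n}(1+|\xi|^2)|\widehat u(\xi)|^2\,d\xi=\tfrac12\|u\|_{L^2_1}^2$. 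Summing the finitely many terms gives $\|v\|_{L^2_{2,1}}\leq C\|u\|_{L^2_1}$ with $C$ absolute.

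It remains to check that the trace of $v$ equals $u$. Since $\widehat v(\xi,t)\rt\widehat u(\xi)$ pointwise as $t\downarrow0$ with $|\widehat v(\xi,t)|\leq|\widehat u(\xi)|\in L^2$, dominated convergence and Plancherel give $v(\cdot,t)\rt u$ in $L^2(\bR^n)$; combined with $v\in L^2_{2,1}$ this yields $v\in C([0,\infty);L^2(\bR^n))$ with $v(\cdot,0)=u$. Approximating $u$ by Schwartz functions $u_j\rt u$ in $L^2_1$, the corresponding extensions $v_j$ are smooth with $v_j|_{t=0}=u_j$, so using boundedness of $T:L^2_{2,1}\rt L^2_1$ and the estimate just proved, $T(v)=\lim_j T(v_j)=\lim_j u_j=u$, as required (the stated identity $T(u)=v$ being of course $T(v)=u$).

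The only genuinely delicate point is the zeroth-order bound on $v$ itself: the undamped extension $e^{t\Delta}u$ fails here, because $\int_0^\infty e^{-2t|\xi|^2}\,dt=\tfrac1{2|\xi|^2}$ is not integrable against $|\widehat u|^2$ near $\xi=0$ under $L^2_1$-control; inserting $1+|\xi|^2$ in place of $|\xi|^2$ in the exponent fixes exactly this low-frequency obstruction while leaving the derivative estimates unchanged. Everything else is a routine Plancherel computation.
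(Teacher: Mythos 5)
Your proposal is correct and uses exactly the same construction as the paper: $v$ is defined as the solution of the damped heat equation $(\pa_t-\Delta+1)v=0$ with initial data $u$, i.e.\ $\widehat v(\xi,t)=e^{-t(1+|\xi|^2)}\widehat u(\xi)$, and the $L^2_{2,1}$-bound follows from Plancherel. The paper only carries out the $\nabla_x^2 v$ estimate explicitly and leaves the remaining terms as ``similarly''; your version spells out all four cases, verifies the trace, and explains why the damping term is needed for the zeroth-order bound --- all sound, and a useful expansion of what the paper leaves implicit (you are also right that the statement's ``$T(u)=v$'' is a typo for $T(v)=u$).
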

\begin{proof}
	We define $v$ to be the solution to the heat equation \[
	(\pa_t-\Delta+1)v=0, \; v_{t=0}=u.
	\]
	Then, taking Fourier transforms in space, we get for each $\xi\in \bR^n$,\[
	(\pa_t+|\xi|^2+1)\hat{v}(\xi)(t)=0,\; \hat{v}(\xi)(0)=\hat{u}(\xi).  
	\]
	This has unique solution $\hat{v}(\xi)(t)= e^{-(1+|\xi|^2)t}\hat{u}(\xi)$, from which we get 
	\begin{equation*}\begin{split}
			\|\nabla_x^2v\|_{L^2} &= \||\xi|^2e^{-(1+|\xi|^2)t}\hat{u}(\xi)\|_{L^2} = \int_{\bR^n}|\xi|^4\hat{u}^2(\xi) \int_0^\infty e^{-2(1+|\xi|^2)t}dtd\xi \\
			&= \int_{\bR^n}\frac{|\xi|^4}{2(1+|\xi|^2)}\hat{u}^2(\xi)d\xi \leq \frac{1}{2}\|\nabla_x u\|^2_{L^2}.
		\end{split}
	\end{equation*}
	Similarly, we can show that $\|v\|_{L^2_{2,1}}\leq C\|u\|_{L^2_1}$.
\end{proof}
Now, we can extend the above to domains of the form $\Omega\times[0,\ep)$, to get:
\begin{lem}
	Let $\Omega \subset \bR^n$ be a bounded domain with smooth boundary. Let $u\in L^2_1(\Omega)$. Then, there exists a function $v\in L^2_{2,1}(\Omega\times[0,\ep))$ such that $\|v\|_{L^2_{2,1}}\leq C\|u\|_{L^2_{1}}$, for a constant $C$ depending only on $\Omega$.
\end{lem}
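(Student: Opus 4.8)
The plan is to reduce to the half-space case established in the preceding lemma by composing with a Sobolev extension operator for $\Omega$, and then restricting back. Since $\Omega$ is a bounded domain with smooth boundary, the standard Sobolev extension theorem (e.g.\ Stein's extension theorem, or \S 5.4 of \cite{Eva10}) provides a bounded linear operator $E: L^2_1(\Omega) \rt L^2_1(\bR^n)$ with $(Eu)|_\Omega = u$ and $\|Eu\|_{L^2_1(\bR^n)} \leq C_\Omega \|u\|_{L^2_1(\Omega)}$, where $C_\Omega$ depends only on $\Omega$.

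First I would apply the preceding lemma to the extended function $Eu \in L^2_1(\bR^n)$, obtaining $w \in L^2_{2,1}(\bR^{n+1}_{\geq 0})$ with $T(w) = Eu$ and $\|w\|_{L^2_{2,1}(\bR^{n+1}_{\geq 0})} \leq C \|Eu\|_{L^2_1(\bR^n)}$ (concretely, $w$ is the solution of $(\pa_t - \Delta + 1)w = 0$ with $w|_{t=0} = Eu$, produced by the Fourier argument there). Then I would set $v := w|_{\Omega \times [0,\ep)}$. Restriction to a subdomain does not increase the $L^2_{2,1}$-norm, and the trace operator is compatible with restriction in the spatial variables since both simply amount to evaluation at $t=0$; hence $T(v) = (Eu)|_\Omega = u$ and
\[
\|v\|_{L^2_{2,1}(\Omega\times[0,\ep))} \leq \|w\|_{L^2_{2,1}(\bR^{n+1}_{\geq 0})} \leq C\|Eu\|_{L^2_1(\bR^n)} \leq C\,C_\Omega \|u\|_{L^2_1(\Omega)},
\]
with $C\,C_\Omega$ depending only on $\Omega$ (and the fixed dimension), as claimed.

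There is no substantial obstacle here; the step deserving the most care is merely the bookkeeping around the time interval. If one wants $v$ to be supported away from $t=\ep$ (rather than just restricted there), one can instead take $v := \chi(t)\, w|_{\Omega\times[0,\ep)}$ for a fixed smooth cutoff $\chi$ with $\chi\equiv 1$ near $t=0$ and $\mathrm{supp}\,\chi \subset [0,\ep)$; this leaves the trace at $t=0$ unchanged and only enlarges the constant by a factor depending on $\chi$ (i.e.\ on $\ep$), since differentiating the cutoff costs at most finitely many $L^\infty$-norms of $\chi$ and its derivatives. An alternative route — solving the heat equation $(\pa_t - \Delta + 1)v = 0$ on $\Omega\times[0,\ep)$ directly with, say, Neumann boundary data on $\pa\Omega \times [0,\ep)$ — also works but requires parabolic regularity up to the spatial boundary $\pa\Omega$, so the extension argument above is preferable.
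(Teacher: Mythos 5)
Your proposal matches the paper's proof: extend $u$ to $\bR^n$ via Stein's extension theorem, apply the preceding lemma to obtain $w$ on the half-space, then restrict to $\Omega\times[0,\ep)$. The extra remarks about cutoffs and the alternative Neumann-boundary route are correct but not needed.
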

\begin{proof}
	By Stein's extension theorem, we can first extend $u$ to a function $\tu$ on all of $\bR^n$. Then, we can use the previous lemma to get a function $v$ on $\bR^{n+1}_{\geq0}$, then restrict it to $\Omega\times[0,\ep)$.
\end{proof}

Therefore, we obtain:
\begin{lem}\label{lem_extensionoftrace}
	Let $\Omega \subset \bR^n$ be a bounded domain with smooth boundary. Let $u\in L^2_{k+2r-1}(\Omega)$. Then, there exists $v\in L^2_{k+2r,r}(\Omega\times[0,\ep))$ and a constant $C$ depending only on $\Omega$ such that \[
	\|v\|_{L^2_{k+2r,r}}\leq C\|u\|_{L^2_{k+2r-1}},\; v|_{t=0}=u.
	\]
\end{lem}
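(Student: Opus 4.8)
The plan is to follow the same strategy as in the two preceding lemmas: extend $u$ off $\Omega$ by a universal extension operator, solve a heat equation with the extended datum on all of $\bR^{n+1}_{\geq 0}$, estimate the solution via Plancherel in the spatial variable, and finally restrict the result to $\Omega\times[0,\ep)$.

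Concretely, by Stein's extension theorem I would choose $\tu\in L^2_{k+2r-1}(\bR^n)$ with $\tu|_\Omega=u$ and $\|\tu\|_{L^2_{k+2r-1}(\bR^n)}\leq C_\Omega\|u\|_{L^2_{k+2r-1}(\Omega)}$, the constant $C_\Omega$ depending only on $\Omega$. Then let $v$ be the solution of $(\pa_t-\Delta+1)v=0$ on $\bR^{n+1}_{\geq0}$ with $v|_{t=0}=\tu$; taking the Fourier transform in $x$ exactly as in the first of the two trace lemmas above gives $\hat v(\xi,t)=e^{-(1+|\xi|^2)t}\,\hat{\tu}(\xi)$.

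It then remains to bound $\|v\|_{L^2_{k+2r,r}(\bR^{n+1}_{\geq0})}$. For $i\leq r$ and $j+2i\leq k+2r$, by Plancherel the quantity $\|\pa_t^i\nabla^j v(\cdot,t)\|_{L^2(\bR^n)}^2$ is, up to a constant depending only on $n,i,j$, comparable to $\int_{\bR^n}|\xi|^{2j}(1+|\xi|^2)^{2i}e^{-2(1+|\xi|^2)t}|\hat{\tu}(\xi)|^2\,d\xi$, so integrating in $t$ and using Tonelli,
\begin{align*}
\|\pa_t^i\nabla^j v\|_{L^2(\bR^{n+1}_{\geq0})}^2
&\lesssim \int_{\bR^n}|\xi|^{2j}(1+|\xi|^2)^{2i}\Big(\int_0^\infty e^{-2(1+|\xi|^2)t}\,dt\Big)|\hat{\tu}(\xi)|^2\,d\xi\\
&= \tfrac12\int_{\bR^n}|\xi|^{2j}(1+|\xi|^2)^{2i-1}\,|\hat{\tu}(\xi)|^2\,d\xi .
\end{align*}
Since $|\xi|^{2j}(1+|\xi|^2)^{2i-1}\leq(1+|\xi|^2)^{j+2i-1}\leq(1+|\xi|^2)^{k+2r-1}$, the last integral is $\lesssim\|\tu\|_{L^2_{k+2r-1}(\bR^n)}^2$. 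Summing over the finitely many admissible pairs $(i,j)$ gives $\|v\|_{L^2_{k+2r,r}(\bR^{n+1}_{\geq0})}\leq C\|\tu\|_{L^2_{k+2r-1}(\bR^n)}$. Restricting $v$ to $\Omega\times[0,\ep)$ does not increase the norm, and since $v\in L^2_{k+2r,r}$ with $k+2r\geq1$ admits a trace at $t=0$ in the sense recalled earlier, which equals $\tu$ (approximate $\tu$ in $L^2_1(\bR^n)$ by smooth functions and pass to the limit), we get $v|_{t=0}=\tu|_\Omega=u$. Combining with the Stein bound yields the lemma with a constant depending only on $\Omega$.

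There is essentially no conceptual obstacle — this is a routine Fourier-multiplier estimate generalising the case $r=1$, $k+2r=2$ already treated in the previous lemma. The one point deserving care is the bookkeeping of exponents: each factor $\pa_t$ multiplies the heat symbol by $(1+|\xi|^2)$, i.e.\ costs two orders of spatial regularity, while the time integration $\int_0^\infty e^{-2(1+|\xi|^2)t}\,dt$ returns one order, so that the parabolic constraint $j+2i\leq k+2r$ is precisely what makes the symbol land back in $L^2_{k+2r-1}$ in the space variable; and one should check that the trace of $v$ really is $u$, and not merely that $v$ has the correct parabolic regularity.
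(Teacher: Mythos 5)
Your proposal is correct and is essentially the same argument as the paper's: both construct $v$ by solving the heat equation $(\pa_t - \Delta + 1)v = 0$ with initial data the Stein extension of $u$, and derive the bound from the decay of $e^{-(1+|\xi|^2)t}$ in Fourier. The paper phrases the estimate by noting that each spatial derivative $\nabla^m_x v$ solves the same heat equation with initial data $\nabla^m_x u$ and invoking the base case; your direct Plancherel computation is the same mechanism made explicit (and handles the higher time derivatives $\pa_t^i$ a bit more transparently).
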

\begin{proof}
	Let us construct $v$ as a solution to the heat equation as before. Then, the estimates follow since the derivatives $\nabla^{k+2r-j}_xv$ for $2\leq j\leq k+2r$ satisfy the same heat equation, with initial data $\nabla^{k+2r-j}_xu$. 
\end{proof}

\section{Constructing the approximate solution}\label{sec_approxsol}
 
 Recall that we have a given ambient Calabi--Yau manifold $(X,\omega,\Omega, J)$, and a compact, oriented, conically singular Lagrangian embedding $\iota: L \rightarrow X$ with a finite number of conical singularities $\{x_i\}_{i=0}^{k-1}$. For simplicity of notation, we work with a single conical singularity $x_0$ -- the general case follows in the same way with only notational changes. Consider a Darboux neighbourhood and embedding $\Upsilon$ as constructed in Theorem \ref{thm3}. Suppose that the corresponding tangent cone $C$ to $\Upsilon^{-1}(\iota(L))$ at the origin is a union of special Lagrangian cones and there is an asymptotically conical expander $E$ that is asymptotic to $C$. In this section, we construct the approximate solution to LMCF starting at $L_0:=L$, by gluing in the expander $E$ at a scale of $\sqrt{2t}$ to a first order perturbation of the initial Lagrangian. First, we construct the approximate solution as a family of embedded Lagrangians in $X$. Then, we will parabolically blow-up the range, and construct the approximate solution as an embedded submanifold with corners and a-corners in the blow-up.
 \subsection{The approximate solution as a family of embeddings}\label{subsec_approxsolutionfamily}
 
 \subsubsection{Gluing the expanders}
 Recall that the expander $E$ has asymptotic cone $C=\bigcup_{j=1}^k C_j$, where the $C_j$ are special Lagrangian, and we denote the link of $C_j$ by $\Sigma_j$. 
 Let us denote the scale of gluing $\zeta(t):=\sqrt{2t}$. Choose two radii $R_1,R_2$ such that $R_1\gg0$ and $1\gg R_2>0$. For $t$ sufficiently small, we have that ${\zeta(t)}R_1<R_2$. Now, we choose two time-dependent cutoff functions $\chi_1(t,r)$ and $\chi_2(t,r)$, to be determined later (see Lemma \ref{lem_cutofffunctions}). They are assumed to satisfy the property that \begin{equation}\label{eq_cutoff}
 \chi_i(t,r) = 0 \textrm{ for } r\in[0,\zeta(t)R_1) \textrm{ and } \chi_i(t,r)=1 \textrm{ for }r\in (R_2,\infty). 
 \end{equation}
  We will construct our embedding in three pieces:
 \begin{enumerate}
 	\item The graph of a $1$-form $td\theta$ over $L\backslash \iota^{-1}(\Upsilon(B_{R_2}))$ under the embedding $\Phi_L$ as in Theorem \ref{thm_compatible}.
 	\item Consider the annulus $\Sigma_j \times ({\zeta(t)}R_1,R_2)\subset C_j$. Over these annuli, we take the graphs of 1-forms \[
 	d[ (1-\chi_1(t,r)){\zeta(t)}^2\beta_j(\sigma,{\zeta(t)}^{-1}r)+\chi_2(t,r)(A_j(\sigma,r)+t(\theta-\theta_j))]
 	\] under the embedding $\Upsilon\circ \Phi_C$ for $\Phi_C$ as in Theorem \ref{thm1}, where $\theta_j$ is the phase of the cone $C_j$ and $\theta$ is a lift of the Lagrangian angle of $L$ such that $|\theta-\theta_j|\rt 0$ at the singularity. 
 	
 	\item The scaled expander ${\zeta(t)}E \cap B_{\zeta(t)R_1}$ under the embedding $\Upsilon$.

 \end{enumerate}
 
 \begin{lem}
 	This construction defines an embedded Lagrangian submanifold in $X$ for a suitable choice of constants.
 \end{lem}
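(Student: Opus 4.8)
The plan is to check, in order, that (i) each of the three pieces is an embedded Lagrangian, (ii) consecutive pieces agree \emph{as submanifolds} on the annular overlap regions, where the cut-off functions are locally constant, and (iii) the glued object is globally embedded once the constants $R_1,R_2$, the cut-offs and $t$ are chosen suitably. For (i): piece 1 is the graph of the exact form $t\,d\theta$ over $L\setminus\iota^{-1}(\Upsilon(B_{R_2}))$, which is Lagrangian since $\Phi_L$ is a Lagrangian embedding (Theorem~\ref{thm_compatible}) and graphs of closed $1$-forms in $T^*L$ are Lagrangian; for $t$ small the form $t\,d\theta$ is $C^{1,\alpha}$-small and so lies in $U_L$. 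Piece 3 is $\zeta(t)E\cap B_{\zeta(t)R_1}$ pushed forward by $\Upsilon$: scaling preserves the Lagrangian condition in $\bC^m$ and $\Upsilon^*\omega=\omega_{\bC^m}$, so the image is Lagrangian. Piece 2 is the graph of an exact $1$-form over $C_j$ under $\Upsilon\circ\Phi_C$, hence Lagrangian by Theorem~\ref{thm1}; that the bracketed $1$-form lands in $U_C$ on the annulus $\Sigma_j\times(\zeta(t)R_1,R_2)$ follows from the smallness of each summand: $\zeta(t)^2\beta_j(\sigma,\zeta(t)^{-1}r)=\beta_{\zeta(t)E^j}$ is small by Lemma~\ref{lem_scaling} together with the Gaussian decay of Proposition~\ref{prop_expanderdecay}, $A_j=O(r^\mu)$ with $\mu\in(2,3)$ is small near the origin, and $t(\theta-\theta_j)$ is small for $t$ small.

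The heart of the argument is (ii). On the \emph{inner} overlap, where $r$ is near $\zeta(t)R_1$, we have $\chi_1=\chi_2=0$ by (\ref{eq_cutoff}), so piece 2 is the graph of $d\big[\zeta(t)^2\beta_j(\sigma,\zeta(t)^{-1}r)\big]$ over $C_j$ under $\Upsilon\circ\Phi_C$. Applying Theorem~\ref{thm2} to the scaled expander $\zeta(t)E$ via Lemma~\ref{lem_scaling}, whose potential is exactly $\beta_{\zeta(t)E^j}(\sigma,r)=\zeta(t)^2\beta_{E^j}(\sigma,\zeta(t)^{-1}r)$, we get $\zeta(t)E\setminus(\zeta(t)K)=\Phi_C\big(\Gamma(d\beta_{\zeta(t)E})\big)$; choosing $R_1$ large enough that $\Sigma_j\times(\zeta(t)R_1,R_2)$ lies in $\zeta(t)(E\setminus K)$, this is precisely piece 3 in the chart $\Upsilon\circ\Phi_C$, so pieces 2 and 3 coincide on the inner annulus. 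On the \emph{outer} overlap, where $r$ is near $R_2$, we have $\chi_1=\chi_2=1$, so piece 2 is the graph of $dA_j+t\,d\theta$ over $C_j$ under $\Upsilon\circ\Phi_C$. By Theorem~\ref{thm_compatible}, the image under $\Phi_L$ of the graph of $\alpha=t\,d\theta$ over $L$ equals the image under $\Upsilon\circ\Phi_C$ of the graph of $(\iota^{-1}\circ\Upsilon\circ\phi)^*(t\,d\theta)+dA$ over $C$; since $\theta_j$ is constant, $(\iota^{-1}\circ\Upsilon\circ\phi)^*(d\theta)$ restricted to $C_j$ is $d(\theta-\theta_j)$, and $dA=\sum_j dA_j$, so pieces 1 and 2 coincide on the outer annulus (for $R_2$ small enough that $\Upsilon^{-1}(\iota(L))\cap B_{R_2}$ is graphical over $C$, as provided by Theorem~\ref{thm3}).

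For (iii), each piece is embedded by (i), and on the two annular overlaps consecutive pieces literally agree by (ii), so the union is a smooth embedded submanifold once the three images are otherwise disjoint; this holds for $t$ small because $\zeta(t)R_1\to0$, so piece 3 is confined to a small ball about $x_0$ disjoint from $\Upsilon(B_{R_2})$, and the resulting manifold is diffeomorphic to the radial blow-up $\widetilde{L_0}$. The ``suitable choice of constants'' is then: $R_1$ large, $R_2$ small, $t$ small, and cut-offs $\chi_i(t,r)$ with controlled derivatives as in Lemma~\ref{lem_cutofffunctions}, chosen so that the defining $1$-forms are $C^{1,\alpha}$-small — which makes the graph/neighbourhood correspondences of Theorems~\ref{thm1}, \ref{thm2} and \ref{thm_compatible} applicable — using the decay $A_j=O(r^\mu)$, $\mu>2$, near the origin and the Gaussian decay of the expander potential from Proposition~\ref{prop_expanderdecay}.

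I expect the main obstacle to be the bookkeeping in step (ii): reconciling the three distinct Lagrangian-neighbourhood parametrizations $\Phi_L$, $\Upsilon\circ\Phi_C$ and $\Upsilon$, and verifying that where the cut-offs vanish or equal $1$ the formulas coincide \emph{exactly}, not merely up to Hamiltonian isotopy. In particular the scaling identity for the expander potential (Lemma~\ref{lem_scaling}) and the phase-shift term $t(\theta-\theta_j)$ must be tracked precisely; everything else is a smallness estimate that follows from the stated decay rates.
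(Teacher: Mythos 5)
Your proposal is correct and takes the same approach as the paper, which simply notes that the pieces are graphs of closed $1$-forms under Lagrangian embeddings, that they must lie in the Lagrangian neighbourhoods, and that they agree on overlaps, deferring the details to \S 6.1 of \cite{Joy04}. Your write-up carries out exactly those "routine" verifications — the scaling identity for $\beta_{\zeta(t)E}$ on the inner overlap, the compatibility Theorem \ref{thm_compatible} on the outer overlap, and the decay estimates ensuring the forms lie in the neighbourhoods — so it is a faithful expansion of the same argument.
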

 \begin{proof}
 	Note that since we are working with graphs of closed $1$-forms under Lagrangian embeddings, the three pieces are Lagrangian submanifolds. Now, we need to check two things -- that the graphs lie in the Lagrangian neighbourhoods for embedding, and that the graphs agree on overlap. It is routine to check that these claims follow due to the choice of interpolating functions, similarly as in \S 6.1 of \cite{Joy04}. 
 \end{proof}
 We will denote the desingularisation at time $t$ by $\tL_t$.
 
 \subsubsection{Construction of Lagrangian neighbourhoods for the desingularisations}
 
 We define the Lagrangian neighbourhoods and embedding of the family of desingularisations $\{\tilde{L}_t\}_{t\in(0,\ep)}$, by describing them over the three pieces of the Lagrangian:
 \begin{itemize}
 	\item For the graph over $\iota(L_0)\backslash \Upsilon(B_{R_2})$, the Lagrangian embedding is given by $\Psi_t : U_L \rightarrow X$ such that $\Psi_t (x,v) = \Phi_L(x,v+td\theta)$.
 	\item For the embedding of the graph over $\Sigma_j\times (\zeta(t)R_1,R_2)$, we take the Lagrangian embedding $\Psi_t: U_L \rightarrow X$ such that $\Psi_t(x,v) = \Upsilon\circ\Phi_C(x,v + dp_t)$. 
 	
 	\item For the graph over the expander, we take the Lagrangian embedding $\Psi_t := \Upsilon\circ\Phi_{\zeta(t)E}: U_{\zeta(t)E}\rightarrow X$ as defined earlier in Lemma \ref{lem_scaling}, such that $\Psi_t(x,v) = \Upsilon \circ \zeta(t)\Phi_{E}(x, \zeta(t)^{-2} v).$
 \end{itemize}
 
 Note that this gives us a family of Lagrangian neighbourhoods $U_t \subset T^*(\tilde{L}_t)$ due to the compatibility Theorem \ref{thm_compatible}, and embeddings $\Psi_t : U_t \rightarrow X$. 
 
 \begin{rem} In our construction of approximate solutions and Lagrangian embeddings, the construction near the scaled expander does not extend smoothly at $t=0$, due to shrinking the expander. However, we will show that they can be extended a-smoothly over $t=0$ on doing the parabolic blow-up.
 \end{rem}
 
 \subsubsection{Hamiltonian isotopy class of desingularisations}
We will show that the constructed Lagrangians lie in the correct Hamiltonian isotopy class for LMCF, i.e.\ they satisfy \eqref{eq_mcfpotcoh} by writing $\tL_{t}$ as the graph of a $1$-form $\alpha_t$ over $\tL_{t_0}$. 

 First, we consider the scaled expander $\zeta(t)E\subset \bC^m$ as the graph of a 1-form $\Gamma(\alpha_{E_t})$ over $\zeta(t_0) E$. Then, we have that \[
 \alpha_{E_t} = (t-t_0)d\theta_E.
 \] 
 Therefore, we can write $\pa_t\alpha_t = d\theta_E$ on the scaled expander. Now, making use of the compatibility of Lagrangian neighbourhoods, we can write the form $\alpha_t$ on the gluing annulus as \[
 \alpha_t = d[(1-\chi_1(t,r)){\zeta(t)}^2\beta_j(\sigma,{\zeta(t)}^{-1}r)+\chi_2(t,r)(A_j(\sigma,r)+t(\theta-\theta_j))] - \alpha_{t_0}.
 \]
 Therefore, over the annulus we can write
 \[
 \pa_t\alpha_t = d[(1-\chi_1(t,r))(\theta_E-\theta_{C_k}) + \chi_2(t,r)(\theta_L-\theta_{C_k})],
 \]
 where $\theta_{C_k}$ denotes the choice of phase for the cone $C_k$, and $\theta_E,\theta_L$ are lifts of the Lagrangian angles so that $\theta_E-\theta_{C_k}$ goes to zero as $r\rt \infty$, and $\theta_L-\theta_{C_k}$ goes to zero as $r\rt 0$. Outside the ball $B_{R_2}$, we can write $\pa_t\alpha_t = d\theta_L$. Therefore, we can write $\pa_t\alpha_t = d\tth_t$, where $\tth_t$ is a locally-defined function which is globally well-defined up to multiples of $2\pi$, defined as:
 \begin{equation*}
 	\tth_t:=
 	\begin{cases} 
 		\theta_E & \textrm{on the scaled expander} \\
 		\theta_{C_k}+(1-\chi_1(t,r))(\theta_E-\theta_{C_k}) + \chi_2(t,r)(\theta_L-\theta_{C_k}) & \textrm{on the gluing annulus} \\
 		\theta_L & \textrm{outside the ball $B_{2R}$}
 	\end{cases}
 \end{equation*}
 
  Now, we need to show that \begin{equation}\label{eq_cohomologyclass}
 [d\tth_t]=[d\theta_t].
 \end{equation}

 Note that the cohomology class $[d\tth_t]$ is independent of $t$; this is essentially because of the decay properties of $\theta_E-\theta_{C_k}$ and $\theta_L-\theta_{C_k}$. The cohomology class $[d\theta_t]$ is also independent of $t$, since the family $\{\tL_t\}$ is a Lagrangian isotopy. Now, due to the construction of the family $\{\tL_t\}$, one can show similarly as in Proposition 6.4 of \cite{Joy04} that the difference $\tth_t-\theta_t$ modulo constant multiples of $2\pi$ has $C^0$-norm going to zero as $t\downarrow 0$. Therefore, the equality \eqref{eq_cohomologyclass} holds as $t\downarrow 0$, and due to the cohomology classes being constant in time, the equality holds at all times, i.e.\ we have shown that:
 \begin{prop}\label{prop_hamiltonisotopy1}
 	The constructed family of Lagrangians lies in the correct Hamiltonian isotopy class for LMCF. 
 \end{prop}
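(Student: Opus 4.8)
The plan is to reduce the statement to the cohomological criterion of Corollary \ref{cor_classevolve}. Writing $\tL_t$ as the graph of a closed $1$-form $\alpha_t$ over a fixed reference slice $\tL_{t_0}$ (using the compatible Lagrangian neighbourhoods $\Psi_t$ constructed above), it suffices to show $\frac{d}{dt}[\alpha_t]=[d\theta_t]$ in $H^1_{\mathrm{dR}}(\tL_{t_0})$, where $\theta_t$ is the Lagrangian angle of $\tL_t$. Both de Rham classes are constant in $t$: the right-hand side because $\{\tL_t\}$ is a Lagrangian isotopy and the Maslov class is an isotopy invariant, and the left-hand side because the interpolation terms in $\tth_t$ are built from $\theta_E-\theta_{C_k}$ and $\theta_L-\theta_{C_k}$, which decay (at the expander end and the conical end respectively), so that differentiating the cutoffs $\chi_i$ in time produces only exact $1$-forms. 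Hence it is enough to verify the identity in the limit $t\downarrow 0$.

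First I would compute $\pa_t\alpha_t$ on each of the three pieces. On the outer region, $\Psi_t(x,v)=\Phi_L(x,v+t\,d\theta)$ gives $\pa_t\alpha_t=d\theta_L$ immediately; on the scaled expander, the scaling law of Lemma \ref{lem_scaling} together with $\alpha_{E_t}=(t-t_0)d\theta_E$ gives $\pa_t\alpha_t=d\theta_E$; and on the gluing annulus $\Sigma_j\times(\zeta(t)R_1,R_2)$, the compatibility Theorem \ref{thm_compatible} lets one differentiate the interpolated primitive in $t$, producing $\pa_t\alpha_t=d\tth_t$ with $\tth_t$ the piecewise function displayed above. This identifies $\frac{d}{dt}[\alpha_t]$ with $[d\tth_t]$, so the task becomes showing $[d\tth_t]=[d\theta_t]$ as $t\downarrow 0$.

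The heart of the argument, and the step I expect to be the main obstacle, is to show $\tth_t\to\theta_t$ in $C^0$ modulo $2\pi\bZ$ as $t\downarrow 0$, mimicking Proposition 6.4 of \cite{Joy04}. On the outer region and on the part of the annulus near $\tL$, $\tL_t$ is the graph of a $C^1$-small $1$-form uniformly as $t\to 0$, so its Lagrangian angle is $C^0$-close to $\theta_L$, up to the $O(r^2)$ error from the ambient metric failing to be flat (Theorem \ref{thm3}(1)); on $B_{\zeta(t)R_1}$ one has $\tL_t\approx\zeta(t)E$ with Lagrangian angle close to $\theta_E$, the curvature error now being $O(\zeta(t)^2r^2)=O(t)$. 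The delicate point is uniformity across the gluing annulus, where one interpolates between the rescaled expander primitive $\zeta(t)^2\beta_j(\sigma,\zeta(t)^{-1}r)$ and the conical data $A_j(\sigma,r)+t(\theta-\theta_j)$: one must check that the cutoff derivatives $\pa_r\chi_i$ contribute only $C^0$-errors that vanish as $t\to 0$, using the exponential decay of $\beta_j$ from Proposition \ref{prop_expanderdecay} at one end, the $O(r^{\mu})$ decay of $A_j$ from Theorem \ref{thm3}(2) at the other, and the vanishing of $\theta_E-\theta_{C_k}$ and $\theta_L-\theta_{C_k}$ at the respective ends. Granting this, $[d\tth_t]=[d\theta_t]$ in the limit; combined with the $t$-independence of both classes this gives \eqref{eq_cohomologyclass} for all $t\in(0,\ep)$, which is precisely the condition of Theorem \ref{thm_mcfpot} placing $\{\tL_t\}$ in the correct Hamiltonian isotopy class for LMCF.
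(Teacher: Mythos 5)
Your proposal follows essentially the same route as the paper's proof: write $\pa_t\alpha_t = d\tth_t$ with $\tth_t$ the piecewise-interpolated angle, observe that $[d\tth_t]$ and $[d\theta_t]$ are both $t$-independent (the former via the decay of $\theta_E-\theta_{C_k}$ and $\theta_L-\theta_{C_k}$, the latter since $\{\tL_t\}$ is a Lagrangian isotopy), and conclude from the $C^0$-closeness $\tth_t - \theta_t \to 0 \pmod{2\pi\bZ}$ as $t\downarrow 0$, in the style of Proposition 6.4 of \cite{Joy04}. You supply somewhat more detail on the $C^0$-estimate step (the curvature-error and cutoff-derivative bookkeeping on the gluing annulus) where the paper simply cites Joyce, but the structure, decomposition, and key lemmas are the same.
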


\subsection{The parabolic blow-up of the domain}
In this section, we define the target space of our immersion as a manifold with corners and a-corners. In particular, we will construct a parabolic blow-up of the space $X\times [0,\ep)$ at the point $(x_0,0)$ (c.f.\ \S 3 of \cite{LMPS23}).

\begin{figure}[!htb]
	\fontsize{9}{9}\selectfont
	\centering
	\def\svgwidth{\columnwidth}
\begingroup%
  \makeatletter%
  \providecommand\color[2][]{%
    \errmessage{(Inkscape) Color is used for the text in Inkscape, but the package 'color.sty' is not loaded}%
    \renewcommand\color[2][]{}%
  }%
  \providecommand\transparent[1]{%
    \errmessage{(Inkscape) Transparency is used (non-zero) for the text in Inkscape, but the package 'transparent.sty' is not loaded}%
    \renewcommand\transparent[1]{}%
  }%
  \providecommand\rotatebox[2]{#2}%
  \newcommand*\fsize{\dimexpr\f@size pt\relax}%
  \newcommand*\lineheight[1]{\fontsize{\fsize}{#1\fsize}\selectfont}%
  \ifx\svgwidth\undefined%
    \setlength{\unitlength}{492.41319551bp}%
    \ifx\svgscale\undefined%
      \relax%
    \else%
      \setlength{\unitlength}{\unitlength * \real{\svgscale}}%
    \fi%
  \else%
    \setlength{\unitlength}{\svgwidth}%
  \fi%
  \global\let\svgwidth\undefined%
  \global\let\svgscale\undefined%
  \makeatother%
  \begin{picture}(1,0.27031948)%
    \lineheight{1}%
    \setlength\tabcolsep{0pt}%
    \put(0,0){\includegraphics[width=\unitlength,page=1]{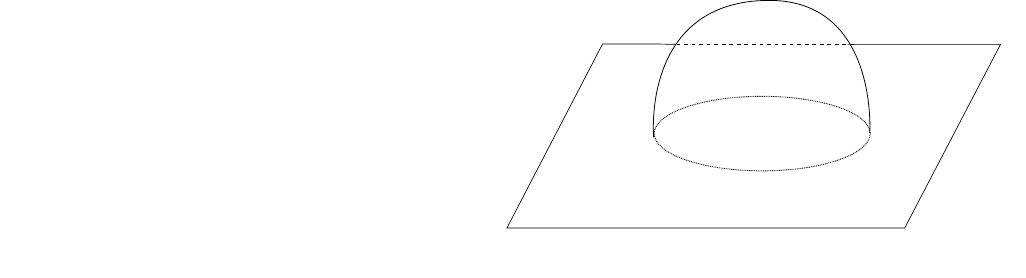}}%
    \put(0.95270202,0.16207806){\color[rgb]{0,0,0}\makebox(0,0)[lt]{\lineheight{1.25}\smash{\begin{tabular}[t]{l}$t$\end{tabular}}}}%
    \put(0.57789468,0.00898442){\color[rgb]{0,0,0}\makebox(0,0)[lt]{\lineheight{1.25}\smash{\begin{tabular}[t]{l}$\Bl$\end{tabular}}}}%
    \put(0,0){\includegraphics[width=\unitlength,page=2]{Parabolic_blow-up.pdf}}%
    \put(0.21089704,0.11753274){\color[rgb]{0,0,0}\makebox(0,0)[lt]{\lineheight{1.25}\smash{\begin{tabular}[t]{l}$(\mathbf{0},0)$\end{tabular}}}}%
    \put(0.12486961,0.01075109){\color[rgb]{0,0,0}\makebox(0,0)[lt]{\lineheight{1.25}\smash{\begin{tabular}[t]{l}$\mathbb{C}^m\times[0,\infty)$\end{tabular}}}}%
    \put(0,0){\includegraphics[width=\unitlength,page=3]{Parabolic_blow-up.pdf}}%
    \put(0.21451402,0.20606094){\color[rgb]{0,0,0}\makebox(0,0)[lt]{\lineheight{1.25}\smash{\begin{tabular}[t]{l}$t$\end{tabular}}}}%
  \end{picture}%
\endgroup%

	\caption{Parabolic blow-up of $\bC^m\times[0,\infty)$}
	\label{fig2}
\end{figure}
\subsubsection{Blow-up of $\bC^m\times[0,\infty)$} Let us write each point $((z_1,\dots,z_m),t)\in \mathbb{C}^m\times[0,\infty)$ with respect to parameters $\rho\geq0, \sigma\in \mathbb{S}^{2m}_{\geq0}$, such that we have a blow-down map $\pi: \mathbb{S}^{2m}_{\geq0}\times [0,\infty) \rightarrow \mathbb{C}^m\times[0,\infty)$, sending \[((\sigma_1,\sigma_2,..,\sigma_{2m},\sigma_{2m+1}),\rho) \mapsto ((\rho\sigma_1,\rho\sigma_2,\dots,\rho\sigma_{2m}),\rho^2\sigma_{2m+1}^2).\]
We denote this blow-up by $\Bl$. Now, we endow the blow-up with the natural a-smooth structure on $\mathbb{S}^{2m}_{\geq0}\times \llbracket 0,\infty)$, with a single a-boundary at $\rho=0$, a single ordinary boundary at $\sigma_{2m+1}=0$, and a codimension $2$ corner which is the intersection of these two boundaries. This defines the parabolic blow-up of $\mathbb{C}^m\times[0,\infty)$ at the point $(\mathbf{0},0)$, as an object in $\mancac$.

\begin{defn}The exceptional divisor, or the \textit{front face}, is the submanifold $\pi^{-1}(\mathbf{0}\times 0) \cong \mathbb{S}^{2m}_{\geq0}\times\{0\}$. The \textit{bottom face} is the submanifold $\overline{\pi^{-1}((\mathbb{C}^m\backslash \mathbf{0})\times\{0\})}$. 
\end{defn}
	
	We can write down projective coordinates on the interior of the front face and the intersection of front and bottom faces:
\begin{itemize}
    \item The front face: away from the corner, this has projective coordinates in a neighbourhood given by $(w^1,\dots,w^{2m},\tau)\in \mathbb{C}^m\times \llbracket0,\epsilon)$, such that $(w^1,\dots,w^{2m},\tau)\mapsto (w^1\tau,\dots,w^{2m}\tau,\frac{1}{2}\tau^2)$ in $\mathbb{C}^m\times[0,\infty)$ along the blow-down map.
    \item The intersection of the bottom and front faces: this has a neighbourhood with local coordinates $(\sigma_1,\dots,\sigma_{2m},r,s) \in \mathbb{S}^{2m-1}\times \llbracket0,\epsilon)\times [0,\epsilon)$, such that $(\sigma_1,\dots,\sigma_{2m}),r,s)\mapsto (\sigma_1r,\dots,$ $\sigma_{2m}r, r^2s)$ in $\mathbb{C}^m\times[0,\infty)$ along the blow-down map.
\end{itemize}

Note that these projective coordinates also locally define the smooth structure on our blow-up, i.e a function is smooth if and only if it is smooth in these projective coordinates locally.
\begin{rem}
    Note that $\tau$ is not an a-smooth function on the blow-up, since near the corner, it is given by $\tau=r (s/2)^{1/2}$, which is not a-smooth in $s$.
\end{rem}

\begin{defn}
	The front face has a natural boundary-defining function $\rho$, given by the lift of the function $\sqrt{|z|^2+t}$. The bottom face has a boundary-defining function $s=\frac{t}{|z|^2+t}$. Both functions are a-smooth on the blow-up.
\end{defn}

\begin{figure}[!htb]
	\centering
	\fontsize{9}{9}\selectfont\def\svgwidth{0.6\columnwidth}
\begingroup%
  \makeatletter%
  \providecommand\color[2][]{%
    \errmessage{(Inkscape) Color is used for the text in Inkscape, but the package 'color.sty' is not loaded}%
    \renewcommand\color[2][]{}%
  }%
  \providecommand\transparent[1]{%
    \errmessage{(Inkscape) Transparency is used (non-zero) for the text in Inkscape, but the package 'transparent.sty' is not loaded}%
    \renewcommand\transparent[1]{}%
  }%
  \providecommand\rotatebox[2]{#2}%
  \newcommand*\fsize{\dimexpr\f@size pt\relax}%
  \newcommand*\lineheight[1]{\fontsize{\fsize}{#1\fsize}\selectfont}%
  \ifx\svgwidth\undefined%
    \setlength{\unitlength}{272.8390575bp}%
    \ifx\svgscale\undefined%
      \relax%
    \else%
      \setlength{\unitlength}{\unitlength * \real{\svgscale}}%
    \fi%
  \else%
    \setlength{\unitlength}{\svgwidth}%
  \fi%
  \global\let\svgwidth\undefined%
  \global\let\svgscale\undefined%
  \makeatother%
  \begin{picture}(1,0.44409409)%
    \lineheight{1}%
    \setlength\tabcolsep{0pt}%
    \put(0,0){\includegraphics[width=\unitlength,page=1]{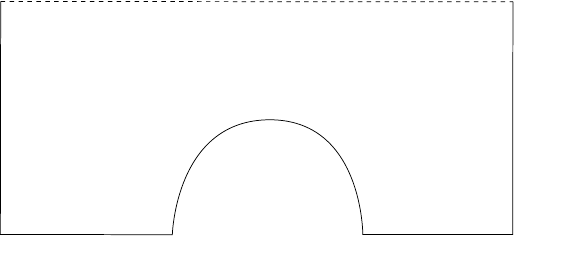}}%
    \put(0.07816814,0.00284734){\color[rgb]{0,0,0}\makebox(0,0)[lt]{\lineheight{1.25}\smash{\begin{tabular}[t]{l}$\textrm{bottom face}$\end{tabular}}}}%
    \put(0.39296462,0.14987279){\color[rgb]{0,0,0}\makebox(0,0)[lt]{\lineheight{1.25}\smash{\begin{tabular}[t]{l}$\textrm{front face}$\end{tabular}}}}%
    \put(0.92153862,0.03014943){\color[rgb]{0,0,0}\makebox(0,0)[lt]{\lineheight{1.25}\smash{\begin{tabular}[t]{l}$t=0$\end{tabular}}}}%
    \put(0.91776085,0.42626163){\color[rgb]{0,0,0}\makebox(0,0)[lt]{\lineheight{1.25}\smash{\begin{tabular}[t]{l}$t=\ep$\end{tabular}}}}%
    \put(0,0){\includegraphics[width=\unitlength,page=2]{boundary_faces.pdf}}%
    \put(0.33148052,0.29449781){\color[rgb]{0,0,0}\makebox(0,0)[lt]{\lineheight{1.25}\smash{\begin{tabular}[t]{l}$\rho$\end{tabular}}}}%
    \put(0.14959578,0.12240514){\color[rgb]{0,0,0}\makebox(0,0)[lt]{\lineheight{1.25}\smash{\begin{tabular}[t]{l}$\mathrm{s}$\end{tabular}}}}%
  \end{picture}%
\endgroup%

	\caption{Boundary-defining functions on the blow-up}
\end{figure}

\subsubsection{The m-metric on $\Bl$}
We define an m-metric on $\Bl$ as:
\[
g:= \frac{g_{\bC^m}}{\rho^2}+\frac{dt^2}{\rho^4}.
\]

In order to show this is a well-defined m-metric, we compute in local coordinates near the front face and near the corner (it is clearly an ordinary metric away from the a-boundary where $\rho >0$). Note that in $(w^1,..,w^{2m},\tau)$ coordinates, we have \[
g_{\mathbb{C}^m} = \sum {dz^i} ^2 = \tau^2 \sum {dw^i}^2 + \sum \tau^2 w^i(\tau^{-1}d\tau \otimes dw^i+dw^i\otimes \tau^{-1}d\tau)+ \tau^2\cdot\left(\sum {w^i}^2\right)(\tau^{-1}d\tau)^2,
\]
\[
dt^2 = \tau^4(\tau^{-1}d\tau)^2.
\]
And, we also have that $\rho(x,t) = \sqrt{t+|z|^2} = \tau (\frac{1}{2} + |w|^2)^{1/2}$. So, we have \[
g = s \left[ \sum {dw^i}^2 + \sum  w^i(\tau^{-1}d\tau \otimes dw^i+dw^i\otimes \tau^{-1}d\tau)+ \left(s+\sum {w^i}^2\right)(\tau^{-1}d\tau)^2\right],
\]
where $s= \frac{t}{\rho^2}=(\frac{1}{2}+|w|^2)^{-1}$.
Then, this is a positive-definite form on the front face, since the determinant of the associated matrix is $(\frac{1}{2}+|w|^2)^{-2m-2}$. Now, we can compute at the codimension-2 corner in $(\sigma,r,s)\in \bS^{2m-1}\times\llbracket 0 ,\epsilon)\times [0,\epsilon)$ coordinates with $t=r^2s$, $\rho=r$, and write the metric there to be\[
g_{\bC^m}=r^2g_{\bS^{2m-1}}+dr^2,\; \; dt^2= 4r^2s^2dr^2 + r^4ds^2+2r^2s(dr\otimes ds+ds\otimes dr),
\]
so that we have \[
g= g_{\bS^{2m-1} } + (1+4s^2)(r^{-1}dr)^2+ds^2+2s(r^{-1}dr\otimes ds + ds\otimes r^{-1}dr).
\]
This is also a positive-definite $2$-form. Therefore, $g$ is an m-metric.

\subsubsection{Defining the target space}

Consider the point of transverse intersection of $L_0=L$ in the target manifold, $x_0\in X$. We will define a parabolic blow-up $\mathbb{X}$ of $X\times [0,\ep)$ at $(x_0,0)$. Consider the Darboux map as earlier, $\Upsilon: B_R\rightarrow X$. This extends to a map $\Upsilon\times \textrm{id} : B_R(\mathbf{0})\times [0,\ep)\rightarrow X\times [0,\ep)$. \\

Since the blow-down mapping $\pi: \Bl \rightarrow \mathbb{C}^m\times[0,\infty)$ is a diffeomorphism away from $\pi^{-1}(\mathbf{0},0)$,  we have a diffeomorphism onto an open set given by
\[ 
(\Upsilon\times \textrm{id}) \circ \pi : \pi^{-1}((B_R(\mathbf{0})\times [0,\ep))\backslash (\mathbf{0},0)) \rightarrow (X\times[0,\ep)) \backslash (x_0,0).
\]

\begin{defn}\label{def_ballblowup}
	We denote $\bB:= \pi^{-1}((B_R(\mathbf{0})\times [0,\ep)))$, i.e.\ it is the parabolic blow-up of the ball $B_R\times[0,\ep)\subset \bC^m\times[0,\ep)$ at the point $(0,0)$.
\end{defn} Gluing the pieces $(X\times[0,\ep)) \backslash (x_0,0)$ and $\bB$ along $(\Upsilon\times \textrm{id}) \circ \pi$, we have defined our blown-up space $\mathbb{X}$.

Let $\pi_\mathbb{X} : \mathbb{X} \rightarrow X$ be the blow-down mappinng. We note that $\mathbb{X}$ has $\overline{\pi_\mathbb{X}^{-1}(X\backslash \{x_0\}\times \{0\})}$ as the ordinary boundary, and $\pi_\mathbb{X}^{-1}(x_0,0)$ as the a-boundary. 

\subsubsection{Choice of m-metric on $\mathbb{X}$}
We define an m-metric on $\mathbb{X}$ as the pullback of a certain metric on $(X\times[0,\ep)) \backslash (x_0,0)$, given by 
\begin{equation}\label{eq_bmetric}
g := \frac{1}{\rho (x,t)^2}g_X + \frac{1}{\rho(x,t)^4}dt^2,
\end{equation}
where $g_X$ is the Calabi--Yau metric on $X$. We define $\rho(x,t)$ by extending the earlier boundary defining function on $\Bl$:
\begin{defn}
    The boundary defining function $\rho$ for the front face of $\bX$ is defined to be
    \[
	\rho(x,t)= \begin{cases}
		\sqrt{|x|^2+t}, & (x,t)\in \pi^{-1}(\Upsilon\times \mathrm{id} (B_{R/2}(\mathbf{0})\times[0,\ep))), \\
		1, & (x,t) \not\in \pi^{-1}(\Upsilon\times \mathrm{id} (B_{R}(\mathbf{0})\times[0,\ep))),
	\end{cases}
	\]
 and $\rho$ is smoothly extended to a positive the remaining region.
\end{defn}

\subsubsection{The Symplectic Form on $\bX$}
We have a relative symplectic form on $\bX$, given by $\omega_\bX$. However, it is a degenerate form, since near the front face it is given by $\tau^2 \sum dw^i\wedge d\bar{w}^i$ in $(\tau,w^i)$-coordinates, and thus it vanishes on the front face to order $\rho^2$.

\subsection{The approximate solution in $\mancac$}\label{subsec_manac}
We will show how to compactify the approximate solution $(\tL_t)_{t\in(0,\varepsilon)}$ constructed in \S \ref{subsec_approxsolutionfamily} into a submanifold with corners $\mathbb{L}\subset\mathbb{X}$. Following the construction of the family $\{\tL_t\}$, we will construct $\mathbb{L}$ by considering the a-smooth embeddings of three pieces into $\bX$, and noting that the pieces glue together on overlaps to give a manifold with corners and a-corners embedded in $\bX$. The main upshot of this construction is stated in Proposition \ref{prop_constructionacorners}.

Due to the Darboux neighbourhood $\Upsilon$ as constructed in Theorem \ref{thm3}, there exists a smooth embedding, $\tU : \bB\rt\bX$ (where $\bB$ is as in Definition \ref{def_ballblowup}), which is a diffeomorphism onto its image and is given by $\Upsilon$ on the time-slices for $t\in(0,\ep)$.

Now, we will construct a portion of $\bL$ as an a-smooth submanifold in $\bB$, and compose it with $\tU$ to get an embedded portion of $\bL$ in $\bX$, which will glue with another part in order to give the whole embedded submanifold $\bL$.

\subsubsection{Construction in $\bB$}\label{subsubsec_cornersmooth}
Note that $\bB$ has one a-boundary, one ordinary boundary, and one codimension 2 corner. We will use coordinates on $\bB$ similarly as defined earlier on $\Bl$.

\begin{itemize}
	\item 
	Near the front face: Here,consider the scaled expander $\zeta(t)E$ inside the ball $B_{\zeta(t)R}\times\{t\}$. We can write it in projective coordinates $(w^i,\tau)$ as $(\frac{\zeta(t)}{\sqrt{2t}} E, \tau)$ inside the ball $B_{\frac{\zeta(t)}{\sqrt{2t}}R}\times \{\tau\}$ near the front face. 
	Since we have that $\zeta(t)=\sqrt{2t}$, the balls $B_{\frac{\zeta(t)}{\sqrt{2t}}R}\times \{\tau\}=B_{R}\times \{\tau\}$ compactify to a region $\{\tau \in \llbracket0,\ep), |w^i|<R\}$ near the front face, and $(\zeta(t)E\cap B_{\zeta(t)R})\times\{t\} = (E\cap B_{R})\times\{\tau\}$ for $\tau\in(0,\ep)$ also compactifies to a smoothly embedded submanifold in $(w^i,\tau)$-coordinates by extending to $(E\cap B_R) \times \{0\}$ at $\tau=0$. Therefore, the scaled expanders compactify to an a-smooth submanifold near the front face, which is diffeomorphic to $(E\cap B_R) \times \llbracket 0, \ep)$.
	 
    \item Near the corner: 
    First, let us note that a neighbourhood of the codimension $2$ corner in $\bB$ is given by a union of annuli $(B_{R_2}\backslash B_{\sqrt{2t}R_1})\times \{\tau\}$ for $\tau \in (0,\ep)$, which corresponds to $\{(\sigma,r,s):\sigma\in \bS^{2m-1}, s\in [0,\epsilon), r\in \llbracket0,\epsilon)\}$ in $(\sigma,r,s)$-coordinates introduced earlier. Now in the annulus, we defined $\tL_t$ as a graph over the cone $C \cong \Sigma\times(0,\infty)$. So, we first note that the cones $(C\cap (B_{R_2}\backslash B_{\sqrt{2t}R_1})) \times \{t\}$ compactify to an a-smooth submanifold in $\bB$ near the corner, that is a-diffeomorphic to $\mathbb{A} := \{ \sigma^i \in \Sigma, s\in [0,\varepsilon), r \in \llbracket 0,R_2)\}$ in projective coordinates.
    
    First, we need to study how the relative cotangent bundle of $\bA$ (over the $b$-fibration $t:\bA\rt [0,\ep)$) embeds into $\bB$ under the Lagrangian embedding for the cone $\Phi_C$. Let us write a-smooth coordinates on $^bT^*_\mathrm{rel}\bA$. We parametrize it as \[
    ^bT^*_\mathrm{rel}\bA = \{(\sigma,r,s,\nu,\mu): \sigma\in \Sigma, r\in \llbracket0,\ep), s\in [0,\ep), \nu\in T^*\Sigma, \mu \in \bR\},
    \]
    such that $(\sigma,r,s,\nu,\mu)$ denotes the relative $1$-form $\nu + \mu (r^{-1}dr)\in {^b}T^*_{\mathrm{rel}(\sigma,r,s)}\bA$ (note that we are implicitly using the surjection $^bT^*\bA \rt {^bT^*_\rel}\bA$), where we are considering $\nu\in T^*\Sigma$ under the mapping $\Sigma \mapsto \Sigma\times\{(r,s)\}$.
    Now, the action of $\ep$ (c.f.\ \eqref{eq_epsilonscalingcone}) in projective coordinates is given by \[
    \ep: (\sigma,r,s)\mapsto (\sigma,\ep\cdot r, \ep^{-2}s).
    \]
    Therefore, we must have from the equivariance \eqref{eq_epsilonscalingequivariance} that \[
    \ep \cdot \Phi_{C}(\sigma,r,s,\nu,\mu) = \Phi_{C}(\sigma, \ep\cdot r, \ep^{-2}s, \ep^2\nu,\ep^2\cdot \mu)
    \]
    (we get $\ep^2$ in the last term since $r^{-1}$ gains a factor of $\ep^{-1}$).
    
    Hence, we can put together the embeddings $\Phi_{C}$ for $t\in[0,\ep)$ to get a mapping, $\widetilde{\Phi}_{C} : U_\bA \subset {^bT}^*_\mathrm{rel}\bA\rt \bB$ such that we have
    \begin{equation}\label{eq_embeddingB}
    \tPhi_C(\sigma,r,s,\nu,\mu) = \left(\frac{\tPhi_C(\sigma,1,r^2s,r^{-2}\nu,r^{-2}\mu)}{\|\tPhi_C(\sigma,1,r^2s,r^{-2}\nu,r^{-2}\mu)\|}, r\cdot \|\tPhi_C(\sigma,1,r^2s,r^{-2}\nu,r^{-2}\mu)\|,s\right) \in \bB.
    \end{equation}
   	Clearly, this mapping is smooth in $(\sigma,r,s,\tau,\mu)$ away from $r=0$. However, it cannot be extended to an a-smooth function at $r=0$, due to the terms $r^{-2}\nu,r^{-2}\mu$. So, we pre-compose $\tPhi_C$ with multiplication by $r^{2}$ on each of the fibres of $^bT^*_\mathrm{rel}\bA$, to get an a-smooth embedding $\tPhi_C\circ r^2:\tilde{U}_\bA\rt\bB$. Now, given a function $u:\bA \rt \bR$ over the compactified annuli, the graphs $\Gamma(d u_t)$ over the Lagrangian embeddings $\Phi_C$ corresponds to the embedding of the graph $\Gamma(r^{-2}d_\mathrm{rel}u)$ over $^bT^*_\rel\bA$ under $\widetilde{\Phi}_C$. Therefore, we can write the family $(\tL_t)_{t\in(0,\ep)}$ near the corner as the embedding of a graph $\Gamma(r^{-2}d_\rel f)$ under $\Phi_C$, of the function $f : \mathbb{A}\rightarrow \mathbb{R}$ given in projective coordinates near the corner by \begin{equation}\label{eq_cornerdefiningfunction}
	f(\sigma, r, s) = (1-\chi_1(r,s))2r^2s\beta_{E_t}(\sigma,s^{-1/2})+\chi_2(r,s)(A_j(\sigma,r)+r^2s(\theta-\theta_j)),
	\end{equation}
	for $\beta_{E_t}$ being a smooth function that vanishes to all orders as $s\downarrow 0$. Here, $\chi_1,\chi_2$ are the cutoff functions from earlier which we will define now, as follows:
	\begin{lem}\label{lem_cutofffunctions}
		There exist smooth functions $\chi_1,\chi_2$ defined over a sufficiently small time interval $t\in [0,\ep)$, such that they satisfy \eqref{eq_cutoff}:
		 \[
		\chi_i(t,r) = 0 \textrm{ for } r\in[0,\zeta(t)R_1) \textrm{ and } \chi_i(t,r)=1 \textrm{ for }r\in (R_2,\infty),
		\] and
		\begin{equation}\label{eq_chi12}
			\chi_1(r,s) \equiv 0 \textrm{ for } r=0,\; \chi_2(r,s)\equiv 1 \textrm{ for } s=0,
		\end{equation}
		 and the corresponding function $r^{-2}f(\sigma,r,s)$ from \eqref{eq_cornerdefiningfunction} is a-smooth.
	\end{lem}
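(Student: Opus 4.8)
The plan is to construct $\chi_1,\chi_2$ explicitly as products of one-variable cutoff functions --- one in the radial and one in the ``time'' direction --- and then to reduce the a-smoothness of $r^{-2}f$ to the a-smoothness of its three constituent summands, each of which is controlled by decay estimates already recorded.

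First I translate the conditions into coordinates. Near the corner the physical radius is $r$ and $t=r^2s$, so $\zeta(t)=\sqrt{2t}=\sqrt{2s}\,r$ and the inner region $\{r<\zeta(t)R_1\}$ is precisely $\{s>\tfrac{1}{2R_1^2}\}$; near the interior of the front face, in coordinates $(w^i,\tau)$, the physical radius is $|w|\tau$ and $\zeta(t)=\tau$, so the inner region is $\{|w|<R_1\}$. I fix $R_1\gg 0$ and $R_2\in(0,1)$ and choose $\ep\le R_2^2/(8R_1^2)$, so that $\sqrt{2\ep}\,R_1\le R_2/2$; this is exactly the inequality making the scaled-expander piece and the outermost piece disjoint for every $t\in[0,\ep)$, hence the prescribed boundary values mutually consistent. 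Writing $\tilde r$ for the physical radial variable $|z|=\rho\sqrt{1-s}$ (which near the corner is just the a-boundary-defining coordinate $r$), I set $\chi_1:=\phi_1(\tilde r)\,\psi_1(s)$ and $\chi_2:=1-g(s)\,h(\tilde r)$, where $\phi_1\in C^\infty$ is $0$ on $[0,\delta]$ and $1$ on $[R_2/2,\infty)$; $\psi_1\in C^\infty$ is $1$ on $[0,\tfrac{1}{4R_1^2}]$ and $0$ on $[\tfrac{1}{2R_1^2},\infty)$; $g\in C^\infty$ is $0$ on $[0,\tfrac{1}{4R_1^2}]$ and $1$ on $[\tfrac{1}{2R_1^2},\infty)$; $h\in C^\infty$ is $1$ on $[0,R_2/2]$ and $0$ on $[3R_2/4,\infty)$; and $\delta>0$ is small. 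Away from the singularity $\rho\equiv 1$ and $s$ is uniformly small, so $\chi_i\equiv 1$ there, and the analogous formulas in $(w^i,\tau)$-coordinates glue the two descriptions (the vanishing of $\phi_1$ near $0$ absorbs the failure of $\tilde r$ to be smooth on the expander axis). Checking against the profiles: if $\tilde r<\sqrt{2t}R_1$ then $s>\tfrac{1}{2R_1^2}$, so $\psi_1(s)=0$, $g(s)=1$, and $h(\tilde r)=1$ (as $\tilde r\le\sqrt{2\ep}R_1\le R_2/2$), giving $\chi_1=\chi_2=0$; if $\tilde r>R_2$ then $\phi_1(\tilde r)=\psi_1(s)=1$ and $h(\tilde r)=0$, giving $\chi_1=\chi_2=1$ --- so \eqref{eq_cutoff} holds. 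And $\phi_1(0)=0$ forces $\chi_1|_{r=0}=0$ while $g(0)=0$ forces $\chi_2|_{s=0}=1$ --- so \eqref{eq_chi12} holds. Since on $\mathbb{A}$ the functions $\tilde r=r$ and $s$ are coordinates, both $\chi_i$ are smooth there, hence a-smooth.

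It remains to verify that $r^{-2}f$ is a-smooth. Dividing \eqref{eq_cornerdefiningfunction} by $r^2$ gives
\[
r^{-2}f=(1-\chi_1)\cdot 2s\,\beta_{E_t}(\sigma,s^{-1/2})+\chi_2\cdot\big(r^{-2}A_j(\sigma,r)+s(\theta-\theta_j)\big),
\]
so, since a-smooth functions are closed under sums and products, it is enough that each factor be a-smooth. The factor $2s\,\beta_{E_t}(\sigma,s^{-1/2})$ is smooth in $(\sigma,s)$ and vanishes to infinite order as $s\downarrow 0$ by Proposition \ref{prop_expanderdecay} (the expander decays like $e^{-\alpha\tilde r^2}$ and here $\tilde r=s^{-1/2}$), hence a-smooth as a function independent of $r$. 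For $r^{-2}A_j(\sigma,r)$, converting the cone derivatives in $|\nabla^kA_j|=O(r^{\mu-k})$ from Theorem \ref{thm3} into the m-derivatives $r\partial_r,\partial_\sigma$ yields ${}^{\m}\partial^l(r^{-2}A_j)=O(r^{\mu-2})$ for all $l$, and since $\mu-2\in(0,1)$ this is precisely a-differentiability at every order --- this is the one point where the hypothesis $\mu>2$ on the conical singularity is used. The factor $s(\theta-\theta_j)$ is handled identically, using the graph formula for the Lagrangian angle over the special Lagrangian cone $C_j$ to get $\theta-\theta_j=O(r^{\mu-2})$ conormally, and that $s$ is an a-smooth boundary-defining function.

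The main obstacle is this last verification: showing that the purely conormal decay estimates for $A_j$ (and for the Lagrangian angle), with no polyhomogeneity assumed, survive division by $r^2$ and produce bounds of the exact shape $|{}^{\m}\partial^l u|\le Cr^{\alpha}$ demanded by a-differentiability at every derivative order. By contrast, the construction of the cutoffs and the mutual consistency of their boundary values are routine, hinging only on the inequality $\sqrt{2\ep}\,R_1<R_2$, which must hold anyway for the glued pieces to be disjoint.
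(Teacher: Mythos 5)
Your proof is correct and reaches the paper's conclusions, but takes a more elaborate route for the cutoff construction while adding a verification the paper omits. The paper's own proof is shorter: it takes $\chi_1$ to be a cutoff function of $r$ alone (vanishing near $r=0$, equal to $1$ for $r\ge R_2$) and $\chi_2$ a cutoff function of $s$ alone (equal to $1$ near $s=0$, vanishing for $s\ge(2R_1^2)^{-1}$); then \eqref{eq_chi12} is automatic, and \eqref{eq_cutoff} is enforced by shrinking $\ep$ exactly as in your inequality $\sqrt{2\ep}\,R_1<R_2$. Your product forms $\chi_1=\phi_1(\tilde r)\psi_1(s)$ and $\chi_2=1-g(s)h(\tilde r)$ achieve the same thing with more moving parts; there is no gain in using them, though nothing is wrong. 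The genuine added content in your write-up is the a-smoothness verification of $r^{-2}f$: the paper states this as part of the lemma but does not derive it in the proof, implicitly relying on the reader to combine the conormal decay $|\nabla^k A_j|=O(r^{\mu-k})$ from Theorem~\ref{thm3}, the exponential decay of the expander potential (Proposition~\ref{prop_expanderdecay}), and the induced decay of the Lagrangian angle. Your translation of the cone estimates into m-derivative bounds $|{}^{\m}\partial^l(r^{-2}A_j)|=O(r^{\mu-2})$ and the observation that $\mu-2>0$ is precisely what a-differentiability demands at every order are both correct, and correctly identify where the hypothesis $\mu>2$ enters. You are also right, though you don't say it explicitly, that this a-smoothness argument is decoupled from \eqref{eq_chi12}: it needs only that $\chi_1,\chi_2$ be a-smooth functions on $\bA$, which your (and the paper's) constructions both give.
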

	\begin{proof}
		The first condition \eqref{eq_cutoff} translates in the coordinates $(r,s)$ near the corner to \[
		\chi_i(r,s)=0 \textrm{ for } s> (2R_1^2)^{-1}, \; \chi_i(r,s) =1 \textrm{ for } r >R_2.
		\]
		To choose such functions, we first take $\chi_1$ as a cutoff function in $r$, and $\chi_2$ as a cutoff function in $s$, so that they satisfy \eqref{eq_chi12}. Then, we reduce the time interval of existence to ensure \eqref{eq_cutoff} holds.
	\end{proof}
	
	 Now, by the following Lemma, it follows that $\Upsilon_t^{-1}(\tilde{L}_t)$ near the corner, given by the embedding of $\Gamma(r^{-2}d_\mathrm{rel}f)$ under $\tPhi_c\circ r^2$, is an a-smooth embedded submanifold in $\bB$. 
    \begin{lem}
        The relative $1$-form $r^{-2}d_\mathrm{rel}f$ extends to an a-smooth  section of $^bT^*_\mathrm{rel}\bA$, if $r^{-2}f$ is an a-smooth function on $\bA$.
    \end{lem}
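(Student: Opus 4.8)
The plan is to reduce the statement to a short computation in the projective coordinates $(\sigma,r,s)$ near the corner, carried out in the a-smooth frame for $^bT^*_\mathrm{rel}\bA$ recorded above; here $r$ is the a-boundary-defining function, $s$ is the ordinary boundary-defining function, and the $b$-fibration is $t=r^2s$. Set $g:=r^{-2}f$, which is a-smooth by hypothesis, so that $f=r^2g$.

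First I would identify the relative tangent and cotangent bundles over $t$. Writing $dt=d(r^2s)=t\,(2\,r^{-1}dr+s^{-1}ds)$, i.e.\ $^bdt=t\,(2\,r^{-1}dr+s^{-1}ds)$, one sees that $^bT_\mathrm{rel}\bA=\mathrm{ker}(^bdt)$ is spanned by the tangential fields $\pa_{\sigma^i}$ together with $Y:=r\pa_r-2\,s\pa_s$ (indeed $(2\,r^{-1}dr+s^{-1}ds)(r\pa_r)=2$ and $(2\,r^{-1}dr+s^{-1}ds)(s\pa_s)=1$). Dualising, the image of $r^{-1}dr$ under the surjection $^bT^*\bA\rt{}^bT^*_\mathrm{rel}\bA$ is dual to $Y$ and annihilates the $\pa_{\sigma^i}$, so $\{d\sigma^i\}\cup\{r^{-1}dr\}$ is precisely the a-smooth frame implicit in the parametrisation $(\sigma,r,s,\nu,\mu)\leftrightarrow\nu+\mu\,(r^{-1}dr)$ above; hence a relative $1$-form is an a-smooth section of $^bT^*_\mathrm{rel}\bA$ exactly when its $\nu$- and $\mu$-components are a-smooth functions of $(\sigma,r,s)$.

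Next I would compute $d_\mathrm{rel}f$ in this frame. By the definition of the relative $b$-differential as the restriction of $^bdf$ to $^bT_\mathrm{rel}\bA$, its $\nu$-component is $d_\Sigma f$ (differentiation along $\Sigma$) and its $\mu$-component is $^bdf(Y)=r\pa_r f-2\,s\pa_s f$; substituting $f=r^2g$ and using the product rule gives
\[
d_\mathrm{rel}f \;=\; r^2\,d_\Sigma g \;+\; r^2\bigl(2g+r\pa_r g-2\,s\pa_s g\bigr)\,(r^{-1}dr),
\]
so $r^{-2}d_\mathrm{rel}f$ has $\nu$-component $d_\Sigma g$ and $\mu$-component $2g+r\pa_r g-2\,s\pa_s g$. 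Since $\pa_{\sigma^i}$, $r\pa_r$ and $\pa_s$ are all m-derivatives on $\bA$ and $g$ is a-smooth, the functions $d_\Sigma g$, $r\pa_r g$ and $\pa_s g$ are again a-smooth (a-smoothness being closed under m-derivatives, directly from the definition); as $s$ is a-smooth and a-smooth functions are closed under addition and multiplication, the $\mu$-component $2g+r\pa_r g-2\,s\pa_s g$ and each component of $d_\Sigma g$ are a-smooth, whence $r^{-2}d_\mathrm{rel}f$ extends to an a-smooth section of $^bT^*_\mathrm{rel}\bA$.

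The only point requiring real care --- and the one I would double-check most carefully --- is the first step: near the corner $t$ vanishes on two boundary faces simultaneously and $^bdt$ itself degenerates, so one must read off $\mathrm{ker}(^bdt)$ from the logarithmic differential $^bdt/t=2\,r^{-1}dr+s^{-1}ds$ rather than from $^bdt$, carrying the coefficient $2$ (coming from the exponent in $t=r^2s$) correctly through the product rule. Once the frame is pinned down this way, the remaining steps are routine.
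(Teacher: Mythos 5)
Your proof is correct and follows essentially the same approach as the paper's (one-line) proof, which simply observes that $d_\rel(r^{-2}) = O(r^{-2})$ in local coordinates; combined with the Leibniz rule $r^{-2}d_\rel f = d_\rel(r^{-2}f) - (r^2\cdot r^{-2}f)\,d_\rel(r^{-2})$, this gives a-smoothness immediately. Your version fills in the local-coordinate computation in detail — pinning down the frame $\{d\sigma^i,\,r^{-1}dr\}$ for $^bT^*_\mathrm{rel}\bA$ via $\ker(^bdt)$ and carrying out the product rule with $f=r^2g$ explicitly — so it is a faithful expansion of the same argument rather than a different route.
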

    \begin{proof}
        This follows since $d_\mathrm{rel}(r^{-2}) = O(r^{-2})$, as can be seen by computing in local coordinates.
    \end{proof}
    Therefore, the embeddings $\Upsilon_t^{-1}(\widetilde{L}_t)$ near the corner compactify to an a-smooth embedding $\bA\rt\bB$. Since this overlaps with the scaled expander near the front face as considered in the previous point, we will get a gluing between $\mathbb{A}$ and $(E_0\cap B_R)\times \llbracket 0,\ep)$, which is a-smooth. Composing this with $\tU$ gives us the a-smooth embedding of $\bL$ in a neighbourhood of the front face in $\bX$.

	\item Finally, we provide charts for $\mathbb{L}$ away from the front face; This is the region outside $B_{R}$. The charts here are given simply by mapping $L_0\times[0,\ep)\rt \bX$, $(x,t)\mapsto \Gamma(td\theta(x))$. There are smooth transition maps to the image of $\mathbb{A}$.
\end{itemize}
This defines the structure of a manifold with corners $\mathbb{L}$, by gluing up the images of $(E\cap B_R)\times\llbracket0,\ep), \bA$ and $\tilde{L}\times[0,\ep)$, along with an a-smooth embedding $\iota: \mathbb{L}\rightarrow \mathbb{X}$ such that $\iota(\mathbb{L}\backslash \partial\mathbb{L}) = \sqcup_{t\in(0,\varepsilon)}\widetilde{L}_t$. 

\begin{defn}
    The \textit{bottom face} of $\bL$ is its ordinary boundary, which is closure of the radial blowup of $L= L_0$ at $x_0$, and the \textit{front face} of $\bL$ is its a-boundary, which is the closure of the expander $E$. We define an m-metric on $\bL$ via the pullback of the m-metric on $\mathbb{X}$ along $\iota$.
\end{defn}
\begin{rem} The main reason why we do not define the bottom face of $\bL$ as an a-boundary is that near the bottom face, the linearised operator is a multiple of the standard parabolic operator $\pa_t-\Delta$ -- if the bottom face were an a-boundary, we would expect the operator near it to be of the form $t\pa_t-\Delta$.
\end{rem}

\begin{rem}
	By abuse of notation, we will continue to denote the boundary-defining function for the bottom face as $s$ and the boundary-defining function for the front face as $\rho$.
\end{rem}

\subsubsection{The symplectic form and Lagrangian embedding}

Since we have the $b$-fibration $t:\bL\rt [0,\ep)$, we can define $^bT^*_\mathrm{rel}\bL$ as a vector bundle over $\bL$, and it is itself a manifold with corners and a-corners along with a $b$-fibration $t: {^bT^*_\rel\bL\rt[0,\ep)}$. 
\begin{defn}There is a canonical relative symplectic form $\hat{\omega}$, i.e a non-degenerate element of $\Lambda^2(^bT_\mathrm{rel}^*(^bT^*_\mathrm{rel}\bL))$. Given local coordinates $(x^i,y^i,t)$ on $^bT^*_\mathrm{rel}\bL$, we can write it as $\hat{\omega}=\sum_{i=1}^mdx^i\wedge dy^i$.
\end{defn}

Under the embedding $T^*L_t\hookrightarrow {^bT^*_\mathrm{rel}}\bL$, this symplectic form pulls back to the standard symplectic form on $T^*L_t$. Also, note that it is non-degenerate, i.e$.$ we have that $\hat{\omega}(v,\cdot) \not \equiv 0$ for $v\in {^bT_\mathrm{rel}}({^bT^*_\mathrm{rel}\bL})$ and $v\not = 0$.

The Lagrangian embeddings $\Psi_t$ on the time-slices for $t\in(0,\ep)$ combine to give us an embedding of a neighbourhood of $^bT^*_\mathrm{rel}\bL$ over the interior of $\bL$, as $\Psi_{\bL^\circ}: U_{\bL^\circ}\rt\bX$. However, this does not extend to an a-smooth embedding over the a-boundary, because of the scaling by $\rho^{-2}$ near the corner (as explained in the second point of the previous subsection \S \eqref{subsubsec_cornersmooth}). In order to make it a-smooth, we need to pre-compose this map with multiplying each fibre by $\rho^{2}$. 

\begin{lem}
	Consider the map \[
	\rho^{2} : T^*_\mathrm{rel}\bL \rt T^*_\mathrm{rel}\bL, 
	\]
	given by multiplying each fibre by $\rho(x)^{2}$, i.e.\ it sends $(x,\beta)\mapsto(x,\rho(x)^2\beta)$. Then, we have that $\tilde{U}_\bL:=\overline{(\rho^{2})^{-1}(U_{\bL^\circ})}$ is an open neighbourhood of the zero section of $T^*_\mathrm{rel}\bL$, and $\tilde{\Psi}_\bL:= \Psi_{\bL^\circ}\circ \rho^{2}$ extends to an a-smooth embedding of $\tilde{U}_\bL$ into $\bX$.
\end{lem}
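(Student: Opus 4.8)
The plan is to reduce the statement to the three local models for $\bL$ constructed in \S\ref{subsubsec_cornersmooth}---away from the front face, near the codimension-$2$ corner, and near the interior of the front face---and to note that on each piece the global front-face defining function $\rho$ agrees, up to an a-smooth positive factor, with the local boundary-defining coordinate in which that piece was built. Fibrewise multiplication by $\rho^2$ then cancels, on each piece, exactly the singular rescaling that was inserted into the family $\{\Psi_t\}_{t\in(0,\ep)}$, so that $\Psi_{\bL^\circ}\circ\rho^2$---a priori defined only over the interior $\bL^\circ$---acquires an a-smooth extension across the boundary faces. After establishing this locally I would glue the three extensions, check injectivity and $b$-normality globally, and finally deduce the neighbourhood-of-the-zero-section claim from uniform lower bounds for the sizes of the time-slice neighbourhoods.

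Away from the front face $\rho\equiv1$, so $\tilde{\Psi}_\bL=\Psi_{\bL^\circ}$ is the map $(x,v,t)\mapsto\Phi_L(x,v+td\theta)$ of Theorem \ref{thm_compatible}; this is smooth up to $t=0$ and is an embedding because $\Phi_L$ is, and $(\rho^2)^{-1}(U_{\bL^\circ})=U_L$ is a neighbourhood of the zero section. Near the corner $\rho=r$, and I would use \eqref{eq_embeddingB}: precomposing $\widetilde{\Phi}_C$ with the fibrewise $\times r^2$ yields an a-smooth embedding $\widetilde{\Phi}_C\circ r^2$, and near the corner $\tilde{\Psi}_\bL$ is $\widetilde{\Upsilon}\circ(\widetilde{\Phi}_C\circ r^2)$ applied to the graph of the relative $1$-form $r^{-2}d_{\mathrm{rel}}f$, where $f$ is the function \eqref{eq_cornerdefiningfunction}; since $r^{-2}f$ is a-smooth by Lemma \ref{lem_cutofffunctions} and $d_{\mathrm{rel}}(r^{-2})=O(r^{-2})$, the section $r^{-2}d_{\mathrm{rel}}f$ of ${}^bT^*_{\mathrm{rel}}\bA$ is a-smooth, so $\tilde{\Psi}_\bL$ is a-smooth there, while injectivity and $b$-normality are inherited from those of $\widetilde{\Phi}_C$ (using $r^2>0$ on the interior and that the extension stays fibrewise injective on the boundary faces, as one reads off \eqref{eq_embeddingB}). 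The equivariance relations \eqref{eq_epsilonscalingequivariance}--\eqref{eq_epsilonscalingsymplectic} show $U_C$ is invariant under the $\ep$-action \eqref{eq_epsilonscalingcone}, so $(\rho^2)^{-1}(U_{\bL^\circ})$ restricted to time-slices has fibre-size bounded below near the corner, and its closure is open there.

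Near the interior of the front face, in projective coordinates $(w,\tau)$ the scaled expander sits at $\{w\in E\cap B_R\}$ independently of $\tau$, while $\zeta(t)=\tau$ and $\rho^2=\tau^2(\tfrac12+|w|^2)$. Substituting $v\mapsto\rho^2 v$ into $\Psi_t(x,v)=\Upsilon\circ\zeta(t)\Phi_E(x,\zeta(t)^{-2}v)$ and dividing the $\bC^m$-factor by $\tau$, the $\tau^{-2}$ from $\zeta(t)^{-2}$ cancels the $\tau^2$ in $\rho^2$, leaving $(x,v,\tau)\mapsto(\Phi_E(x,(\tfrac12+|w|^2)v),\tau)$, which is smooth and fibrewise injective up to $\tau=0$; the identity $U_{\zeta(t)E}=f_{\zeta(t)}^{-1}(U_E)$ of Lemma \ref{lem_scaling} shows the rescaled neighbourhood likewise has fibre-size bounded below as $\tau\downarrow0$. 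The three local extensions agree on overlaps by the interpolation argument already used to build $\bL$ and to glue along its a-diffeomorphic transition maps, so they assemble into a single a-smooth map; since it carries the front face of $\bL$ into the front face of $\bX$ and the bottom face into the bottom face it is globally injective, and being a $b$-normal $b$-immersion that is a homeomorphism onto its image it is an embedding. The union of the three rescaled local neighbourhoods is $\tilde{U}_\bL$, which is therefore an open neighbourhood of the zero section, establishing the first assertion.

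The hard part is conceptual rather than computational and has in effect already been handled in \S\ref{subsubsec_cornersmooth}: reconciling the fibrewise rescaling by $\rho^2$ with the $\rho^{-2}$-blow-up of the cone Lagrangian neighbourhood $\Phi_C$ occurring near the corner, which is exactly the content of \eqref{eq_embeddingB} and of the a-smoothness of $r^{-2}f$. What remains is bookkeeping---matching the three local expressions on overlaps, and checking that $\overline{(\rho^2)^{-1}(U_{\bL^\circ})}$ is genuinely open rather than collapsing onto the zero section over the boundary faces---and it is for this last point that the uniform lower bounds on the time-slice neighbourhood sizes (from $\ep$-equivariance near the corner and from Lemma \ref{lem_scaling} near the front face) are needed.
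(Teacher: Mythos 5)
Your proof is correct and follows essentially the paper's argument: the paper's proof is precisely to work in the local coordinates near the front face of $\bL$, note that precomposing with $\rho^{2}$ cancels the singular fibrewise rescaling built into $\Psi_{\bL^\circ}$ (via \eqref{eq_embeddingB} near the corner and via Lemma \ref{lem_scaling} near the interior of the front face, as set up in \S\ref{subsubsec_cornersmooth}), and conclude a-smoothness; you have simply carried out these computations explicitly in each of the three local charts and added the routine checks of injectivity, $b$-normality, and openness of $\tilde{U}_\bL$.
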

\begin{proof}
	This follows in the same way as we argued in \S \ref{subsubsec_cornersmooth}. One needs to work in local coordinates near the front face of $\bL$, and note the form of the embedding $\Psi_\bL$ near the front face, as given in \eqref{eq_embeddingB}, to conclude that $\tpsil$ is a-smooth.
\end{proof}
Note that the graph of a function $\Gamma(d_\mathrm{rel}u)\subset U_\bL$ corresponds to the graph $\Gamma(\rho^{-2}d_\mathrm{rel}u)\subset \widetilde{U}_\bL$.

\begin{rem}
	We can write the modified map $\tilde{\Psi}_\bL$ in a cleaner way by considering it as a map from $^bT^*_\mathrm{rel}\bL\otimes L_2 $ to $\bX$, where $L_2$ is the weight $2$ line bundle on $\bL$, as defined in \S 4.4 of \cite{Joy16}. $L_2$ has a flat connection $\nabla$ and a flat section $s$, such that if we identify $L_2$ with the trivial line bundle $\bR$ over $\bL$, then $s$ corresponds to a section with a zero of order $2$ at the front face. Then, we can define a map,\[
	^bT^*_\mathrm{rel}\bL \otimes L_2\rt {^bT}^*_\mathrm{rel}\bL,
	\]
	as $\mathrm{id}\otimes (s\mapsto \rho^2)$.  Composing this map with the Lagrangian embeddings on the time-slices $\Psi_t$, we get a map $(\mathrm{id}\otimes (s\mapsto \rho^2))\circ \Psi$, which gives us an a-smooth embedding of a neighbourhood of the zero-section of $^bT^*_\mathrm{rel}\bL\otimes L_2$.
\end{rem}

Therefore, we have the following:
\begin{prop}\label{prop_constructionacorners}
	Let $\iota: L \rt X$ be a given embedded conically singular Lagrangian, with a single conical singularity at $x_0\in X$ such that the asymptotic cone is a union of special Lagrangian cones, and let $E$ be an asymptotically conical expander with the same asymptotic cone. We can construct a manifold with corners and a-corners $\bL$, along with an embedding $\bL \hookrightarrow \bX$, such that $\bL|_{\botf} = \overline{\widetilde{L_0}}$, and $\bL|_{\ff} = \overline{E}$. This induces a natural $b$-fibration $t:\bL\rt[0,\ep)$. Moreover, there is a Lagrangian neighbourhood $\widetilde{U}_\bL$ of the zero section of $^bT^*_\rel \bL$, along with an a-smooth embedding $\tpsil: \widetilde{U}_{\bL} \rt \bX$, which is Lagrangian up to a factor of $\rho^2$, i.e.\ such that $\tpsil^*(\omega_\bX) = \rho^{2}\hat{\omega}$. 
\end{prop}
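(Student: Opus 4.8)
The plan is to package the piecewise construction of \S\ref{subsec_manac}: I would (i) assemble $\bL$ as a manifold with corners and a-corners from the three local models, (ii) glue the time-slice Lagrangian embeddings $\Psi_t$ into a single a-smooth map $\tpsil$ after the fibrewise rescaling by $\rho^2$, and (iii) read off the symplectic distortion factor $\rho^2$ from the equivariance of the Lagrangian neighbourhood theorems. For (i), cover $\bL$ by three charts: the region near the front face, modelled on $(E\cap B_R)\times\llbracket0,\ep)$ in projective coordinates $(w^i,\tau)$; the region near the corner, modelled on $\bA=\{\sigma\in\Sigma,\ r\in\llbracket0,\ep),\ s\in[0,\ep)\}$ in coordinates $(\sigma,r,s)$ with $t=r^2s$; and the region away from the front face, modelled on $\widetilde{L_0}\times[0,\ep)$. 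One then checks that the transition maps on the two overlaps (scaled expander meeting the gluing annulus, and gluing annulus meeting $L_0\backslash\iota^{-1}(\Upsilon(B_{R_2}))$) are a-diffeomorphisms: away from $\rho=0$ this is immediate since the blow-down $\pi$ is a diffeomorphism there, while along $\rho=0$ it uses that $\zeta(t)=\sqrt{2t}$ exactly, so the rescaled balls are $\tau$-independent and the expander and cone compactify with no $\tau$-distortion, together with the a-smoothness of $r^{-2}f$ from the cutoff choices of Lemma \ref{lem_cutofffunctions}. This yields $\bL$ with ordinary boundary $\overline{\widetilde{L_0}}$ and a-boundary $\overline{E}$; in each chart $t$ is the projection to $[0,\ep)$ or $r^2s$ times an a-smooth positive factor, so $t:\bL\rt[0,\ep)$ is a $b$-fibration.

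For (ii), over $\bL^\circ$ the embeddings $\Psi_t$ of \S\ref{subsec_approxsolutionfamily} glue to an a-smooth embedding $\Psi_{\bL^\circ}$ of a neighbourhood $U_{\bL^\circ}$ of the zero section of $^bT^*_\rel\bL$; this fails to extend over the front face because near the corner it carries the factor $\rho^{-2}$ visible in \eqref{eq_embeddingB}. Precomposing with fibrewise multiplication by $\rho^2$, set $\widetilde{U}_\bL:=\overline{(\rho^2)^{-1}(U_{\bL^\circ})}$ and $\tpsil:=\Psi_{\bL^\circ}\circ\rho^2$, and argue chart by chart that $\tpsil$ extends a-smoothly: near the front face it is smooth in $(w^i,\tau)$ since the expander is AC with exponentially decaying potential (Proposition \ref{prop_expanderdecay}); near the corner the equivariance \eqref{eq_epsilonscalingequivariance} of $\Phi_C$ under the scaling \eqref{eq_epsilonscalingcone} shows $\widetilde{\Phi}_C\circ r^2$ is a-smooth, and Theorem \ref{thm_compatible} patches it with the graph region; away from the front face it is a smooth product. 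Equivalently one may phrase $\tpsil$ as $\Psi$ composed with $\mathrm{id}\otimes(s\mapsto\rho^2):\ ^bT^*_\rel\bL\otimes L_2\rt {^bT}^*_\rel\bL$ using the weight-$2$ line bundle $L_2$. Under $\tpsil$ the graph $\Gamma(d_\rel u)\subset U_\bL$ then corresponds to $\Gamma(\rho^{-2}d_\rel u)$.

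For (iii), on the dense interior the time-slices carry the standard symplectic form and $\omega_\bX$ — the pushforward of the ambient $\omega$ — equals $\tau^2\sum dw^i\wedge d\bar{w}^i$ near the front face, hence vanishes to order $\rho^2$, while precomposing with $\rho^2$ on the fibres leaves the base point and so the distortion factor unchanged. Near the corner the identity $\tpsil^*\omega_\bX=\rho^2\hat{\omega}$ is precisely \eqref{eq_epsilonscalingsymplectic}, $\ep^*\hat{\omega}=\ep^2\hat{\omega}$, transported through the blow-down with the scaling parameter equal to $\rho$. Since both sides are a-smooth $2$-forms agreeing on the dense set $\bL^\circ$, they agree throughout, giving $\tpsil^*(\omega_\bX)=\rho^2\hat{\omega}$.

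The step I expect to be the main obstacle is the a-smoothness of $\tpsil$ along the corner: one must verify that the fibrewise $\rho^2$ exactly cancels the $\rho^{-2}$ of \eqref{eq_embeddingB}, that the cutoffs of Lemma \ref{lem_cutofffunctions} keep $r^{-2}f$ a-smooth, and that the three charts glue a-diffeomorphically rather than merely topologically. Once these local checks are in place, the remainder is routine bookkeeping with the neighbourhood theorems of \S\ref{sec_background}.
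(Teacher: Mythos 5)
Your proposal reproduces the paper's construction essentially verbatim: the same three-chart decomposition of $\bL$ (front-face region in $(w^i,\tau)$-coordinates, corner region $\bA$ in $(\sigma,r,s)$-coordinates, and the graph region over $\widetilde{L_0}\times[0,\ep)$), the same observation that $\zeta(t)=\sqrt{2t}$ is what makes the rescaled balls and expanders $\tau$-independent, the same fibrewise rescaling $\tpsil=\Psi_{\bL^\circ}\circ\rho^2$ (and the $L_2$-twist reformulation), and the same reading of the $\rho^2$-factor off the equivariance of $\Phi_C$. This is the paper's proof.

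One small caution on step (iii): the sentence ``precomposing with $\rho^2$ on the fibres leaves the base point and so the distortion factor unchanged'' glosses over the fact that the fibrewise map $(x,\beta)\mapsto(x,\rho(x)^2\beta)$ pulls $\hat{\omega}=\sum dx^i\wedge dy^i$ back to $\rho^2\hat{\omega}+\sum y^i\,dx^i\wedge d_\rel(\rho^2)$, since $\rho$ varies along the time-slices. The cross term vanishes only on the zero section, and the clean way to kill it globally is exactly the twist by the weight-$2$ line bundle $L_2$ with its \emph{flat} connection (so that the section $s\mapsto\rho^2$ has $\nabla s=0$), as in the paper's remark. The paper's own statement of the proposition also does not belabour this, so it is not a gap you introduced, but the ``agreeing on the dense set $\bL^\circ$'' step in your write-up is where this needs to be invoked rather than glossed.
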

\begin{figure}[!htb]
	\centering
	\fontsize{9}{9}\selectfont\def\svgwidth{0.7\columnwidth}
\begingroup%
  \makeatletter%
  \providecommand\color[2][]{%
    \errmessage{(Inkscape) Color is used for the text in Inkscape, but the package 'color.sty' is not loaded}%
    \renewcommand\color[2][]{}%
  }%
  \providecommand\transparent[1]{%
    \errmessage{(Inkscape) Transparency is used (non-zero) for the text in Inkscape, but the package 'transparent.sty' is not loaded}%
    \renewcommand\transparent[1]{}%
  }%
  \providecommand\rotatebox[2]{#2}%
  \newcommand*\fsize{\dimexpr\f@size pt\relax}%
  \newcommand*\lineheight[1]{\fontsize{\fsize}{#1\fsize}\selectfont}%
  \ifx\svgwidth\undefined%
    \setlength{\unitlength}{321.56554215bp}%
    \ifx\svgscale\undefined%
      \relax%
    \else%
      \setlength{\unitlength}{\unitlength * \real{\svgscale}}%
    \fi%
  \else%
    \setlength{\unitlength}{\svgwidth}%
  \fi%
  \global\let\svgwidth\undefined%
  \global\let\svgscale\undefined%
  \makeatother%
  \begin{picture}(1,0.58271795)%
    \lineheight{1}%
    \setlength\tabcolsep{0pt}%
    \put(0,0){\includegraphics[width=\unitlength,page=1]{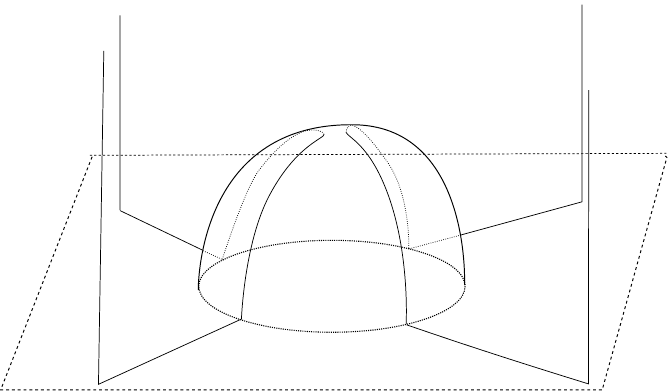}}%
    \put(0.89277133,0.56884727){\color[rgb]{0,0,0}\makebox(0,0)[lt]{\lineheight{1.25}\smash{\begin{tabular}[t]{l}$t=\ep$\end{tabular}}}}%
    \put(0.95435985,0.14847742){\color[rgb]{0,0,0}\makebox(0,0)[lt]{\lineheight{1.25}\smash{\begin{tabular}[t]{l}$t=0$\end{tabular}}}}%
    \put(0,0){\includegraphics[width=\unitlength,page=2]{compactification.pdf}}%
  \end{picture}%
\endgroup%

	\caption{The compactification $\bL \subset \bX$. This has one a-boundary and one ordinary boundary.}

\end{figure}

\subsection{The Geometry of $t:\bL \rt [0,\ep)$}
We have that the time variable on $\bL$ defines an a-smooth mapping, $t: \bL\rt [0,\ep)$, which is a $b$-fibration. We can study the geometry of this fibration via its `local models'.

\subsubsection{The local models for $\bL$}\label{subsubsec_localmodels}
In this subsection, we will study what our metric looks like locally on $\bL$. More precisely, we construct `model neighbourhoods' of $\bL$, where the metric is of $\bL$ is close to a collection of model metrics for manifolds with corners and a-corners. First, we define the local model spaces:
\begin{enumerate}
	\item $U\times(a,b)$, where $U\subset \bR^m$ is an open ball, we have a time variable $t\in(a,b)$, and we have a metric given by $g_0 + dt^2$.
		\item  $U\times[0,\delta)$, where $U\subset \bR^m$ is an open ball and $U\times \{0\}$ is an ordinary boundary, we have a time variable $t\in[0,\delta)$, and we have a metric given by $g_0 + dt^2$.
	\item $U\times\llbracket0,\delta)$, where $U\subset \bR^m$ is an open ball and $U\times \{0\}$ is an a-boundary, we have a time variable $t\in \llbracket 0,\delta)$, and we have a $b$-metric given by $g_0 + t^{-2}dt^2$.
	\item $V\times \llbracket0,\delta)\times [0,\delta)$, parametrized by $(\sigma,r,s)$, where $V\subset \bR^{m-1}$ is an open ball, $r=0$ is an a-boundary, $s=0$ is an ordinary boundary, we have a time variable $t=r^2s$, and we have an m-metric $g_0+ds^2+r^{-2}dr^2$.

\end{enumerate}
Now, we claim the following: 
\begin{lem}\label{lem_modelneighbourhood}
	Let $\varepsilon_1>0$ be a given positive constant. Then, there is a finite collection of local models $\{(M_i,g_i):i\in I\}$, along with maps $\Psi_i: M_i\rt \bL$ from local models such that their images are open, they are a-diffeomorphisms onto their images, $\bigcup_{i\in I}\Psi_i(M_i)$ gives a covering of $\bL$, the time variable of $M_i$ matches with the time variable of $\bL$, and $\|\Psi_i^*g_\bL-g_i\|_{C^0} < \varepsilon_1$. Moreover, we have that $\|\nabla^k(\Psi_i^* g_\bL-g_i)\|_{C^0} < C_k$ for $k\geq 1$ for uniform constants $C_k$, where the connection and norm is with respect to the metric $g_i$.
\end{lem}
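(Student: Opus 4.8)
The plan is to cover $\bL$ by finitely many charts, one around each point, of the type dictated by which stratum the point lies in -- the interior, the interior of the bottom face, the interior of the front face, or the codimension-$2$ corner -- and to verify the two estimates chart by chart. After replacing $[0,\ep)$ by $[0,\ep')$ with $\ep'<\ep$ (the slab $\{t>\ep'/2\}$ being harmlessly covered by model-$1$ charts), $\bL$ is compact, so the finiteness of the collection follows once the local statement is proved, and the constants $C_k$ are then automatically uniform. In each case the metric computation is a direct generalisation of the ones carried out above for the m-metric on $\Bl$, combined with the fact that, by construction, $g_\bL$ is the restriction to a graph of a relative $1$-form (over the expander, the cone, or $L_0$) of $\rho^{-2}g_X+\rho^{-4}\,dt^2$.

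For the interior and bottom-face charts one has $\rho$ bounded below, so $g_\bL$ is the restriction of $\rho^{-2}g_X+\rho^{-4}\,dt^2$ to the graph $\Gamma(d_\rel u_t)$ of a small relative $1$-form over a piece of $L_0$, where $u_t$ is controlled by the conical decay of Theorem~\ref{thm3}. Hence $g_\bL$ agrees up to $O(\|u_t\|_{C^1})$ with the metric induced by $g_X+dt^2$ on $L_0\times[0,\ep')$, and passing to $g_X|_{L_0}$-normal coordinates on a sufficiently small ball brings this within $\varepsilon_1$ of $g_0+dt^2$: a model-$1$ chart if the ball avoids $\{t=0\}$, a model-$2$ chart otherwise. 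Finitely many such balls cover the complement in $\widetilde{L_0}$ of a neighbourhood of its boundary, which is compact, times $[0,\ep']$.

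For the front-face charts, near a point $p$ of the front-face interior $\bL$ is the $\tpsil$-image of the graph of a relative $1$-form $\rho^{-2}d_\rel f$ over $\oE\times\llbracket0,\ep)$ (or over $\bA$, near the asymptotic link); by the decay estimates of Theorem~\ref{thm2} and Proposition~\ref{prop_expanderdecay}, together with the construction of the cutoffs in Lemmas~\ref{lem_scaling} and~\ref{lem_cutofffunctions}, this $1$-form is uniformly $b$-bounded near the front face, so $g_\bL$ is again an a-smooth m-metric there. Computing it in projective coordinates -- as was done above for the m-metric near the front face of $\Bl$, and using $g_X=g_{\bC^m}+O(\rho^2)$ -- shows it is translation-invariant in $\log\rho$ modulo an error vanishing at the front face. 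Completing the square in the $\rho^{-1}d\rho$ direction and rescaling the front-face defining function by a positive power yields a-smooth coordinates in which $g_\bL$ equals $g_0+t^{-2}dt^2$ at $p$ and, since its coefficients are independent of $\log\rho$ to leading order, is $\varepsilon_1$-close to $g_0+t^{-2}dt^2$ on a small enough box -- a model-$3$ chart. A model-$4$ chart at a corner point is produced the same way, now starting from the m-metric computed at the codimension-$2$ corner of $\Bl$ above, whose non-leading terms vanish at $s=0$: one takes the box small in the link direction (using compactness of $\Sigma$) and small in $s$, and completes the square to reach $g_0+ds^2+r^{-2}dr^2$. Finitely many such charts cover $\oE\times\llbracket0,\delta)$ and a neighbourhood of the corner, since $\oE$ and $\Sigma$ are compact.

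Finally, the higher-derivative bounds hold because $\Psi_i^*g_\bL$ is an a-smooth section of $\mathrm{Sym}^2({}^{\m}T^*M_i)$, being the pullback of the a-smooth m-metric on $\bX$ from Proposition~\ref{prop_constructionacorners}, while $g_i$ is a fixed a-smooth m-metric on the model space, so $\Psi_i^*g_\bL-g_i$ is a-smooth; its iterated m-covariant derivatives with respect to the Levi-Civita connection of $g_i$ are again a-smooth, hence continuous, hence bounded on the compact closure of each chart, and there are only finitely many charts. I expect the main obstacle to be the front-face and corner cases: one must carry out the coordinate change so that the $\log$-cylindrical (respectively cone-like-in-$s$) m-metric is brought into the split model form while preserving both the time fibration $t:\bL\to[0,\ep)$ and the a-smooth structure, and one must check that the perturbing relative $1$-form $\rho^{-2}d_\rel f$ stays uniformly $b$-bounded right up to these faces -- which is exactly where the super-exponential decay of the expander (Proposition~\ref{prop_expanderdecay}) and the all-orders vanishing built into the cutoffs enter.
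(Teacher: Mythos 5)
Your proposal takes essentially the same approach as the paper's proof: cover $\bL$ stratum by stratum (corner, front face, bottom face, interior), use compactness of the relevant pieces to obtain a finite collection, and appeal to a-smoothness of the metric together with compactness of each chart's closure for the uniform higher-derivative bounds. A few small points of comparison. You achieve compactness by truncating to $[0,\ep']$ and covering the remaining slab with interior charts; the paper instead works on the compactification $\overline{\bL}$ obtained by adjoining the $t=\ep$ slice -- both routes are acceptable, though your phrasing of what slab is left over is a little loose. The paper constructs the front-face and corner charts abstractly via the coordinate maps $(\sigma,r,s)\mapsto(\sigma r, r^2s)$ and $(w,\tau)\mapsto(\tau w,\tfrac12\tau^2)$ and via flowing along the time-like vector field, asking only that each chart be an isometry at a reference point and then invoking a-smooth perturbation of the compactified expander; you instead carry out the coordinate normalisation explicitly (completing the square, rescaling the boundary-defining function), which is more computational but amounts to the same thing. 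One imprecision worth flagging in your interior/bottom-face case: passing to $g_X|_{L_0}$-normal coordinates alone does not bring $\rho^{-2}g_X+\rho^{-4}dt^2$ near $g_0+dt^2$ when $\rho$ is far from $1$; at a reference point with $\rho(x_0)=\rho_0$ one must also rescale the spatial coordinates by $\rho_0^{-1}$ and the time coordinate by $\rho_0^{-2}$, so that the chart is genuinely an isometry at $(x_0,0)$ -- this is what is hidden in the paper's phrase "isometry at a point" and in the flow along the time-like vector field. Since $\rho$ is bounded below away from the front face, this is a bookkeeping detail rather than a gap, but it must be done for the stated estimate to hold with uniform $\varepsilon_1$.
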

\begin{proof}
	We use an inductive process to choose the local models. To choose a finite cover, we work with the compactification $\overline{\bL}$ of $\bL$ given by adding the time-slice at $t=\ep$.
	\begin{enumerate}
		\item Let $\{U_i\}_{i=1}^n$ be a finite covering of the codimension-two corner $\bS^{m-1}\sqcup\bS^{m-1} $ of $\bL$. Then, we can find charts $\Psi_i: U_i\times\llbracket0,\delta)\times [0,\delta) \rt \bL$, mapping $(\sigma,r,s)\mapsto (\sigma r, r^2s)$ such that $\Psi_i$ is an isometry at a point $\{\sigma_0,0,0\}$. This is because we can find such charts for the compactification of the expander near the corner, and $\bL$ is an a-smooth perturbation of it. Then, it follows that $|\Psi_i^*g_\bL-{^bg}_0|<\varepsilon$, $|^b\nabla^k(\Psi_i^*g_\bL-{^bg}_0)|<C$, and the time variable matches.
		\item Let $\{V_i\}_{i=1}^m$ be a finite cover of the front face minus the previous covering, such that we have charts $\Phi_i : V_i\times\llbracket0,\delta) \rt \bL$, mapping $(w^i,\tau)\mapsto (\tau w^i,\frac{1}{2}\tau^2)$, such that it is an isometry at a point $(x_0,0)$. We can pre-compose it with the a-diffeomorphism $t\mapsto t^{\frac{1}{2}}=\tau$ so that the time variable matches.
		\item Let $\{W_i\}_{i=1}^p$ be a finite  cover of the bottom face minus the first covering, so we have charts $\Phi_i : W_i \times [0,\delta) \rt \bL$ by flowing along the time-like vector field such that it is an isometry at a point $(x_0,0)$, and the time variable matches.
		\item The rest of the manifold can now be covered by finitely many balls of the form $U\times(a,b)$ (since $\overline{\bL}$ is compact) such that the metric is close in them, and the time variable matches.
	\end{enumerate}
	This gives us a finite covering of $\bL$ by model neighbourhoods.
\end{proof}
\begin{rem}
	The above proof can be carried out for more general $b$-fibrations, by considering local models with an arbitrary number of ordinary boundaries and a-boundaries.
\end{rem}

Having these local models, statements on $\bL$ can be proven by proving corresponding statements for the local models.

\subsubsection{The time-like vector field}
\begin{defn}We define the \textit{time-like vector field} over the interior $\mathbb{\mathring{L}}$ as a vector field $v\in \Gamma(T\mathring{\bL})$ such that $\pi_*(v)=\pa_t$, and $v$ is orthogonal to the time-slices in the m-metric.
\end{defn}
 This is not an a-smooth vector field on $\bL$, since it does not vanish at the front face. However, we can make it a-smooth by multiplying with a weight:
\begin{prop}
	 The vector field $\rho^2v$ is a non-vanishing a-smooth section of $^{\m}T\bL$. 
\end{prop}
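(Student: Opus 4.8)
The idea is to use the metric description of the time-like vector field. On $\mathring{\bL}$ the m-metric restricts to an ordinary Riemannian metric, and the unique vector field orthogonal to the time-slices with $\pi_*(v)=\pa_t$ is $v=\nabla_g t/|\nabla_g t|_g^2$, where $\nabla_g t$ is the Riemannian gradient of $t$ and $|\cdot|_g$ the associated norm. Rewriting this globally, set $\nabla_g t:=g^{-1}(dt)\in\Gamma^\infty({^{\m}T\bL})$, where $g=g_\bL$ and $g^{-1}:{^{\m}T^*\bL}\to{^{\m}T\bL}$ is the bundle isomorphism induced by the m-metric, and $|\nabla_g t|_g^2:=g(\nabla_g t,\nabla_g t)$; over the interior this agrees with the classical expression, so it still defines $v$ there. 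Since $g_\bL$ is an a-smooth m-metric, $g^{-1}$ is a-smooth (in an a-smooth frame its matrix is the inverse of a positive-definite matrix of a-smooth functions, hence a-smooth by closure of a-smooth functions under products and division by a non-vanishing a-smooth function), and $dt$ is an a-smooth section of ${^{\m}T^*\bL}$. So $\nabla_g t$ is an a-smooth section of ${^{\m}T\bL}$, and the only obstruction to a-smoothness of $v$ is the scalar $|\nabla_g t|_g^2$, which vanishes precisely along the front face; the proposition asserts that multiplication by $\rho^2$ exactly removes this zero. As a-smoothness and non-vanishing are local, it suffices to check the claim on each model neighbourhood of Lemma \ref{lem_modelneighbourhood}.

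On the two ``finite-distance'' models, $U\times(a,b)$ and the bottom-face model $U\times[0,\delta)$, the function $\rho$ is smooth and bounded below away from $0$, the m-tangent bundle is the ordinary tangent bundle, and in the frame $\{\pa_t,\pa_{x^i}\}$ the entries of $g_\bL$ and $g_\bL^{-1}$ are smooth with $|\nabla_g t|_g^2$ bounded below; hence $v$ and $\rho^2 v$ are smooth, and $\rho^2 v$ is non-vanishing because $\pi_*(\rho^2 v)=\rho^2\pa_t\neq 0$.

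On the front-face model, in projective coordinates $(w,\tau)$ the time function is $t=\tfrac12\tau^2$, so $dt=\tau^2(\tau^{-1}d\tau)$; thus $dt=\tau^2\alpha$ with $\alpha:=\tau^{-1}d\tau$ an a-smooth and nowhere-vanishing section of ${^{\m}T^*\bL}$. On the corner model, where $t=r^2 s$, one gets $dt=r^2\alpha$ with $\alpha:=2s\,(r^{-1}dr)+ds$, again a-smooth and nowhere vanishing (its $ds$-component is identically $1$). Writing $y$ for the a-boundary coordinate ($\tau$, resp.\ $r$), we therefore have $\nabla_g t=y^2\,g^{-1}(\alpha)$ with $g^{-1}(\alpha)$ a-smooth and nowhere vanishing, hence $|\nabla_g t|_g^2=y^4\,g\big(g^{-1}(\alpha),g^{-1}(\alpha)\big)$, and the last factor equals $|\alpha|^2_{g^{-1}}>0$ everywhere on the chart. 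Therefore
\[
\rho^2 v \;=\; \frac{\rho^2}{y^2}\cdot\frac{g^{-1}(\alpha)}{g\big(g^{-1}(\alpha),g^{-1}(\alpha)\big)}.
\]
Since $\rho$ equals $y$ times a positive a-smooth factor on these charts --- explicitly $\rho=\tau(|w|^2+\tfrac12)^{1/2}$ near the front face and $\rho=r(1+s)^{1/2}$ near the corner, up to the a-smooth change of coordinates identifying $\bL$ with the model --- the ratio $\rho^2/y^2$ is a positive a-smooth function, so $\rho^2 v$ is a-smooth and nowhere vanishing on these charts too. Patching the charts together yields the statement.

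I expect the only genuinely delicate point to be the last bookkeeping step: verifying that on $\bL$ itself (and not merely for the model metrics, which are only $C^0$-close to $g_\bL$) the time function $t$ equals $\tfrac12\tau^2$, resp.\ $r^2 s$, up to a positive a-smooth factor, and that $\rho$ agrees with $\tau$, resp.\ $r$, up to a positive a-smooth factor. Both follow from the construction of $\bL\hookrightarrow\bX$ in \S\ref{subsubsec_cornersmooth}: near the front face $\bL$ is an a-smooth perturbation of $(E\cap B_R)\times\llbracket0,\ep)$ and near the corner it is an a-smooth graph over $\bA$, and in both cases $t$ and the radial coordinate are inherited from those on $\bX$, where the stated identities are immediate from the blow-down map and the definition of $\rho$. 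Everything else is the linear algebra of an a-smooth m-metric, which by the closure properties recalled in \S\ref{sec_mancac} behaves exactly as in the smooth case.
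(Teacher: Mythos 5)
Your proof is correct, and while it lands on the same reduction as the paper (check a-smoothness in the local models near the front face and near the corner), it packages the computation differently. The paper simply writes out $v$ and $\rho$ explicitly for the model metrics $g_0+t^{-2}dt^2$ and $g_0+ds^2+r^{-2}dr^2$ and observes that $\rho^2 v$ is a-smooth and nonvanishing. You instead start from the invariant formula $v=\nabla_g t/|\nabla_g t|_g^2$, note that $\nabla_g t$ is automatically a-smooth because $g_\bL$ is an a-smooth m-metric and $dt$ is an a-smooth $1$-form, and then isolate the degeneracy in the scalar $|\nabla_g t|_g^2$ by factoring $dt=y^2\alpha$ with $\alpha$ a-smooth and nowhere vanishing; the boundary-defining function enters only through the ratio $\rho^2/y^2$, which is positive and a-smooth. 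This has two small advantages over the paper's presentation: it works directly with $g_\bL$ rather than with the model metric (which is only $C^0$-close to $g_\bL$, so the paper's proof is implicitly relying on the model perturbation being a-smooth), and it makes transparent that the factor $\rho^2$ is exactly the order of vanishing of $dt$ at the front face, which would generalise to other $b$-fibrations. One minor caveat, which you already flag: you should note that the $L^2$-unit-normal formula gives the correct $v$ because $T(\{t=c\})=\ker dt$, so the orthogonal complement is spanned by $\nabla_g t$ and the normalisation $dt(v)=1$ fixes the scale — this one-line justification is worth including so the invariant description of $v$ is not taken on faith.
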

\begin{proof}
	To prove this, it is enough to prove the statement for the corresponding local models near the front face, since the vector fields are clearly smooth away from the front face. Then, we have:
	\begin{enumerate}
		\item For the model $U\times\llbracket 0,\delta)$, $v = \pa_t, \rho = C\sqrt{t}$ for a non-vanishing a-smooth function $C$, hence $\rho^2\pa_v= Ct\pa_t$, which is a-smooth and non-vanishing.
		\item For the model $U\times \llbracket0,\delta)\times[0,\delta)$, the time-like vector field is $(r^{-2}\pa_s+2sr^{-1}\pa_r)(1+4s^2)^{-1}$, and $\rho = Cr$. Therefore, $\rho^2\pa_v=C(1+4s^2)^{-1}(\pa_s-2sr\pa_r)$, which is a-smooth and non-vanishing.
	\end{enumerate}
\end{proof}
\begin{rem}
	The fact that $\rho^2\pa_v$ is non-vanishing as an a-smooth vector field is relevant to the linearised operator being strongly parabolic.
\end{rem}
Now, considering the embedding $\{\tL_t\}_{t\in(0,\ep)}\subset X\times(0,\ep)$ we can decompose 
\begin{equation}\label{eq_vdecomposition}
v=\pa_t+v^\perp,
\end{equation}
such that $v^\perp$ lies in the normal bundle of $\tL_t$. 

\subsection{Sobolev and H\"{o}lder spaces on $\bL$}\label{subsec_sobholspaces}
In this section, we define some natural Sobolev and H\"{o}lder spaces on $\bL$ over the $b$-fibration $t:\bL\rt [0,\ep)$ using the a-smooth structure on $\bL$ in which $\rho=0$ is the a-boundary, with the corresponding m-metric on $\bL$. As before, we can consider the relative tangent bundle of $\bL$, given by $\mathrm{ker}(^bd\pi)$, which is a sub-bundle of $^{\m}T\bL$. There is a natural relative metric on $^bT_\rel\bL$ induced from the m-metric on $\bL$, and corresponding natural Levi-Civita connections $^{\m}\nabla$ on $^{\m}T\bL$ and $^b\nabla_\mathrm{rel}$ on $^bT_\mathrm{rel}\bL$.

\begin{prop}
	We have the following about the relative constructions on $\bL$:
	\begin{enumerate}
		\item The relative tangent bundle $^bT_\rel\bL$ is an a-smooth vector sub-bundle of $^{\m}T\bL$, and is therefore itself an a-smooth vector bundle on $\bL$. As a corollary, so are its duals and tensor products.
		\item The relative $b$-metric is an a-smooth section of $\textrm{Sym}^2(^bT^*_\rel\bL)$. 
	\end{enumerate}
\end{prop}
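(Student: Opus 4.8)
The plan is to establish both claims locally, by pulling back along the finite family of charts $\Psi_i : M_i \to \bL$ from local models furnished by Lemma~\ref{lem_modelneighbourhood}: being an a-smooth vector sub-bundle of an a-smooth vector bundle, and being an a-smooth section, are local conditions, and each $\Psi_i$ is an a-diffeomorphism onto an open set whose time variable matches that of $\bL$. So it suffices to check both statements on the four model types of \S\ref{subsubsec_localmodels}.

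For (1), recall that $t:\bL\to[0,\ep)$ is a b-fibration; in particular $^bd\pi:{^bT\bL}\to\pi^*({^bT[0,\ep)})$ is a surjective a-smooth morphism of vector bundles onto a line bundle, so $^bT_\rel\bL=\ker({^bd\pi})$ is everywhere of rank $\dim\bL-1$. What remains is to produce an a-smooth local frame of this kernel in each model. In models 1--3, $^bd\pi$ annihilates exactly the spatial directions, so $\{\pa_{x_1},\dots,\pa_{x_m}\}$ is the desired frame. In the corner model $V\times\llbracket 0,\delta)\times[0,\delta)$, writing $^bT\bL=\langle\pa_{\sigma_1},\dots,\pa_{\sigma_{m-1}},r\pa_r,s\pa_s\rangle$ and computing $^bd\pi(r\pa_r)=2(t\pa_t)$ and $^bd\pi(s\pa_s)=t\pa_t$ from $t=r^2s$, one sees that $\{\pa_{\sigma_1},\dots,\pa_{\sigma_{m-1}},\,r\pa_r-2s\pa_s\}$ spans $^bT_\rel\bL$. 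These frames have constant (in particular a-smooth) coefficients, the transition maps between them are a-smooth, and the kernel sits locally as the span of part of an a-smooth frame of $^bT\bL$; hence $^bT_\rel\bL$ is an a-smooth vector sub-bundle of $^bT\bL$, and therefore an a-smooth vector bundle over $\bL$ in $\mancac$. Its duals and tensor products are then a-smooth because these functorial constructions preserve a-smoothness of vector bundles.

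For (2), by construction the relative $b$-metric is the restriction of the m-metric $g_\bL\in\Gamma^\infty(\mathrm{Sym}^2({^{\m}T^*\bL}))$ along the a-smooth inclusion $^bT_\rel\bL\hookrightarrow{^bT\bL}\hookrightarrow{^{\m}T\bL}$ from part (1). Pulling back an a-smooth section along an a-smooth bundle morphism again yields an a-smooth section, so the relative $b$-metric is an a-smooth section of $\mathrm{Sym}^2({^bT^*_\rel\bL})$, and it is positive definite because $g_\bL$ is positive definite on all of $^{\m}T\bL$. As a check that this really is a non-degenerate, finite relative metric, in the corner model one computes, with $g_\bL=g_0+ds^2+r^{-2}\,dr^2$ and the frame above, $g_\bL(r\pa_r-2s\pa_s,\,r\pa_r-2s\pa_s)=1+4s^2>0$.

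The only genuine work, and essentially all of the conceptual content, sits in the corner model of type 4: over the underlying manifold-with-corners structure of $\bL$ the kernel of $d(r^2s)$ has jumping rank along the front face $\{r=0\}$, so passing to the $b$-tangent bundle --- equivalently, using that $t$ is a b-fibration --- is essential for $^bT_\rel\bL$ to be a vector bundle at all. Everything else is routine bookkeeping with a-smooth frames.
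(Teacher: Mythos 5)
Your proof is correct and follows essentially the same route as the paper's: reduce to the finite collection of local models from Lemma~\ref{lem_modelneighbourhood} and exhibit explicit a-smooth frames there, with $\{\pa_{\sigma_i},\, r\pa_r-2s\pa_s\}$ spanning $^bT_\rel\bL$ at the corner. The additional observations you supply — that $^bd\pi$ being surjective onto a line bundle gives constant rank of the kernel, and that passing to $^bT$ is forced because $\ker\bigl(d(r^2s)\bigr)$ jumps rank along $\{r=0\}$ — are correct and make explicit what the paper's terse proof leaves implicit.
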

\begin{proof}
	As before, it suffices to check a-smoothness on the local models for $\bL$ near the front face.
	\begin{enumerate}
		\item In the local model $U\times \llbracket0,\ep)$, the relative tangent bundle is the sub-bundle spanned by $\pa_{x_i}$, which are a-smooth sections. The relative $b$-metric is given by $\sum \pa_{x_i}^2$.
		\item In the local model $V\times\llbracket0,\ep)\times[0,\ep)$, the relative tangent bundle is spanned by $\pa_{x_i}, (\rho\pa_\rho-2s\pa_s)$. The relative $b$-metric is given by $\sum d{x_i}^2+(\rho^{-1}d\rho-sds)^2$.  These sections are a-smooth.
 \end{enumerate}
\end{proof}
\begin{coro}
	As a corollary to the relative metric and tangent bundle being a-smooth, we get that the relative Levi-Civita connection is an a-smooth $b$-connection.
\end{coro}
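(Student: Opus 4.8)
The plan is to reduce the whole statement to the finite collection of local models for $\bL$ provided by Lemma~\ref{lem_modelneighbourhood}, exactly as in the proof of the preceding proposition. Away from the front face all the structures in sight are those of an ordinary smooth manifold, and there the assertion is just the classical smooth dependence of the Levi-Civita connection on the coefficients of the metric; so the only real work is near the front face and the codimension-two corner. There the relative tangent bundle $^bT_\rel\bL$ is spanned by an explicit commuting a-smooth $b$-frame $X_1,\dots,X_m$ (by $\pa_{x_1},\dots,\pa_{x_m}$ in the front-face model, and by $\pa_{x_1},\dots,\pa_{x_{m-1}},\,\rho\pa_\rho-2s\pa_s$ in the corner model), and by the preceding proposition the components $g_{ab}:=g_\rel(X_a,X_b)$ of the relative $b$-metric in this frame are a-smooth functions.

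The next step is simply to run the Koszul formula in this frame. Since by definition $^b\nabla_\rel$ is the fibrewise Levi-Civita connection $\nabla$ of $g_\rel$ and the Koszul identity
\[
2\,g_\rel(\nabla_X Y,Z)=X g_\rel(Y,Z)+Y g_\rel(X,Z)-Z g_\rel(X,Y)+g_\rel([X,Y],Z)-g_\rel([X,Z],Y)-g_\rel([Y,Z],X)
\]
only differentiates along the time-slices, it suffices to verify three closure properties of the a-smooth $b$-frame: (i) applying any frame vector field to an a-smooth function produces an a-smooth function; (ii) Lie brackets of frame vector fields are a-smooth sections of $^bT_\rel\bL$ — in all the models above the frame is in fact commuting, so the bracket terms drop out entirely; and (iii) the Gram matrix $(g_{ab})$ is invertible with a-smooth inverse $(g^{ab})$, which holds because its determinant is a non-vanishing a-smooth function and a-smooth functions are closed under multiplication and under division by non-vanishing a-smooth functions. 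Granting these, the Christoffel symbols $\Gamma^c_{ab}=\tfrac12\sum_d g^{cd}\bigl(X_a g_{bd}+X_b g_{ad}-X_d g_{ab}\bigr)$ are a-smooth, so $^b\nabla_\rel$ carries a-smooth sections of $^bT_\rel\bL$ to a-smooth sections of $^bT_\rel\bL\otimes{^bT^*_\rel}\bL$; and the relative $b$-Leibniz rule $^b\nabla_\rel(f e)=f\,{^b\nabla_\rel}e+e\otimes{^bd_\rel f}$ for $f\in C^\infty(\bL)$ comes for free, since it already holds on each time-slice under the identification $TX_t\cong{^bT_\rel}\bL|_{X_t}$, with $^bd_\rel f$ restricting to the ordinary differential of $f|_{X_t}$.

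The one step that genuinely needs care is closure property (i), namely that the $b$-frame preserves a-smoothness. For the coordinate fields $\pa_{x_i}$ transverse to the a-boundary this is immediate, and the term $s\pa_s$ occurring in the corner frame is harmless because $s=0$ is an \emph{ordinary} boundary, where a-smooth simply means smooth. The delicate direction is $\rho\pa_\rho$: here one uses that $\rho\pa_\rho f$ is precisely the $\m$-derivative ${}^{\m}\pa f$ in the $\rho$-direction, together with the fact that the definition of a-smoothness is designed so that iterated $\m$-derivatives of an a-smooth function are again a-smooth. Thus the main obstacle is not anything analytically deep but the bookkeeping around the distinction between the a-boundary direction (where one must work with $\m$-derivatives) and the ordinary-boundary direction; once this is organised, the conclusion follows at once from the preceding proposition.
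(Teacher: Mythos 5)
Your proposal is correct, and it is essentially the argument the paper is implicitly appealing to: the paper states the result as a bare corollary with no proof, on the strength of the preceding proposition having already exhibited a-smooth $b$-frames for $^bT_\rel\bL$ and an a-smooth relative $b$-metric in each local model. Running the Koszul formula in those explicit (and indeed commuting) frames, noting that the $b$-frame preserves a-smoothness and that the Gram matrix has a-smooth inverse, is exactly what is needed, and you have identified the only non-trivial point — that $\rho\pa_\rho$ carries a-smooth functions to a-smooth functions — and correctly resolved it by observing this is a defining property of a-smoothness (iterated $\m$-derivatives of an a-smooth function remain a-smooth, and since $s=0$ is an ordinary boundary the $s\pa_s$ piece is harmless).
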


 For $1\leq p \leq \infty$, and integers $k,r\geq 0$, we define our Sobolev spaces on $\bL$ as $L^p_{k+2r,r}(\bL)$, using the norm: \[
\|u\|_{L^p_{k+2r,r}}:= \left( \sum_{\beta\leq r} \left[  \sum_{|\alpha|\leq k-2\beta}\int_{\mathbb{L}} |{^{\m}\nabla}^\beta{^b\nabla}_{\textrm{rel}}^\alpha u|^p dV_{\mathbb{L}} \right] \right)^{1/p} = \left(\int_0^\ep \int_{\tL_t} |{^{\m}\nabla}^\beta{^b\nabla}_{\textrm{rel}}^\alpha u|^p \rho^{-m}dV_{\tL_t}dt \right)^{1/p},
\]
where $dV_{\bL}$ is the m-volume form induced on $\bL$ from the m-metric as a section of $\Lambda^{n}({^{\m}T^*\bL})$, and $dV_{\tL_t}$ denotes the volume form on $\tL_t$ induced from its embedding into $X$. Note that we have the following H\"{o}lder's inequality for our spaces:\[
\|fg\|_{L^r_{k+2r,r}} \leq \|f\|_{L^p_{k+2r,r}}\|g\|_{L^q_{k+2r,r}}
\]
for $\frac{1}{r}=\frac{1}{p}+\frac{1}{q}$. 

\begin{rem}
	When $k=r=0$, we will often drop the subscripts $k,r$ and denote our space by $L^p(\bL)$. 
\end{rem}

We will define the H\"{o}lder distance between points $x,y\in \bL$ for $d_\bL(x,y)$ sufficiently small as follows:\[
d(x,y) := \sqrt{d_{\bL}(x,y)^2+\int_0^l|\la\gamma'(s),\rho^2\pa_v(\gamma(s))\ra|ds },
\] 
where $d_{\bL}$ is the distance function for the $b$-metric, and $\gamma: [0,l]\rt \bL$ is the geodesic in the $b$-metric joining the points $x,y$. The key point about this definition is that it localises correctly to the usual notion of H\"{o}lder distances on the model space $\bR^{n+1}_{\geq 0}$. Then, we can also define H\"{o}lder spaces on $\mathbb{L}$, given by $C^{k+2r+\delta,r+\delta/2}(\mathbb{L})$, with norm\[
\|u\|_{C^{k+2r+\delta,r+\delta/2}(\mathbb{L})} := \sup_{x\in \mathbb{L}, \alpha + 2\beta \leq k+2r, \beta \leq r}|^{\m}\nabla^\beta{^b\nabla}_{\textrm{rel}}^\alpha u| + [^{\m}\nabla^r{^b\nabla}_{\textrm{rel}}^k u]_{\delta/2;\delta},
\]
where $[\cdot]_{\delta/2;\delta}$ is the norm given by \[
[v]_{\delta/2,\delta}:= \sup_{d_\bL(x,y)<\textrm{inj}_\bL, \; x \neq y}\frac{|v(x)-v(y)|}{d(x,y)^\delta}.
\]

\subsubsection{Weighted spaces}\label{subsubsec_weightedordinaryspaces}
Now, for a weight $\lambda \in \mathbb{R}$, define the weighted Banach spaces $L^p_{k+2r,r,\lambda}(\mathbb{L})$ using the norm: \[
\|u\|_{L^p_{k+2r,r,\lambda}}:= \|\rho^{-\lambda}u\|_{L^p_{k+2r,r}}.
\]

Note that we have the following H\"{o}lder's inequality for our spaces:\[
\|fg\|_{L^r_{k+2r,r,\lambda}} \leq \|f\|_{L^p_{k+2r,r,\lambda'}}\|g\|_{L^q_{k+2r,r,\lambda''}}
\]
for $\frac{1}{r}=\frac{1}{p}+\frac{1}{q}$ and $\lambda = \lambda'+\lambda''$. 

Similarly, we define weighted H\"{o}lder spaces on $\mathbb{L}$, given by $C^{k+2r+\delta,r+\delta/2,\lambda}(\mathbb{L})$, with norm\[
\|u\|_{C^{k+2r+\delta,r+\delta/2,\lambda}(\mathbb{L})} := \sup_{x\in \mathbb{L}, \alpha + 2\beta \leq k+2r, \beta \leq r}|\rho^{-\lambda}\cdot{^{\m}\nabla}^\beta{^b\nabla}_{\textrm{rel}}^\alpha u| + [\rho^{-\lambda}\cdot{^{\m}}\nabla^r{^b}\nabla_{\textrm{rel}}^k u]_\delta.
\]
\begin{rem}
	\begin{enumerate}
		\item We have that $^{\m}\nabla^k(\rho^{-\lambda}u) = \rho^{-\lambda} {^{\m}\nabla^k}u + \sum C_j {^{\m}\nabla^j}(\rho^{-\lambda}){^{\m}\nabla^{k-j}}u $, in which $|{^{\m}\nabla^j}\rho^{-\lambda}| = O(\rho^{-\lambda})$ and $C_j$ are bounded a-smooth functions (similarly for $b$-derivatives). Therefore, we can equivalently define the weighted spaces using norms \[
		\|u\|_{L^p_{k+2r,r,\lambda}}:= \left( \sum_{\beta\leq r} \left[  \sum_{|\alpha|\leq k-2\beta}\int_{\mathbb{L}} |\rho^{-\lambda}\cdot{^{\m}\nabla}^\beta{^b\nabla}_{\textrm{rel}}^\alpha u|^p dV_{\mathbb{L}} \right] \right)^{1/p}.
		\]
		\item We have that $\rho^{\lambda}\in L^p_{0,0,0}$ if and only if $\lambda > 0$. This is because by computation of $\int_{\mathbb{L}}\rho^\lambda dV_\bL$, near the front face we get a term of the form $\int_0^\varepsilon\rho^{\lambda}\rho^{-1}d\rho$, which is finite if and only if $\lambda >0$.
		\item 	When $k=r=0$, we will often drop the subscripts $k,r$ and denote our weighted spaces by $L^p_{\lambda}(\bL)$. 
	\end{enumerate}
\end{rem}

We also note another useful result about our Sobolev spaces:
\begin{lem}
	Let $C^\infty_{\ff}$ be the subset of compactly supported smooth functions on $\mathbb{L}$ supported away from the front face. Then, we have that it is dense in $L^p_{k+2r,r,\lambda}$ for $1\leq p<\infty$.
\end{lem}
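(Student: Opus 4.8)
The goal is to show that $C^\infty_{\ff}$, the compactly supported smooth functions on $\bL$ that vanish near the front face, is dense in $L^p_{k+2r,r,\lambda}(\bL)$ for $1 \leq p < \infty$. The plan is to argue in two stages: first approximate a general element of $L^p_{k+2r,r,\lambda}$ by a function supported away from the front face (cutting off near $\rho = 0$), then approximate that by a smooth compactly supported one (standard mollification on a manifold with corners away from the a-boundary). The second stage is completely routine once the first is done: away from the front face, $\bL$ is an ordinary manifold with corners with the parabolic structure of $\S\ref{sec_mancac}$, the m-metric restricts to an honest metric, the weight $\rho^{-\lambda}$ is a smooth bounded factor, and density of smooth compactly supported functions in parabolic Sobolev spaces on such a domain is classical (one mollifies in space and time, using that weak derivatives up to the relevant parabolic order exist; extension across the bottom face $s=0$ is handled as in the trace lemmas of $\S\ref{sec_mancac}$). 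So the substance is the cutoff step.

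For the cutoff step, fix $u \in L^p_{k+2r,r,\lambda}$ and choose a smooth function $\psi: [0,\infty) \to [0,1]$ with $\psi \equiv 0$ on $[0,1]$ and $\psi \equiv 1$ on $[2,\infty)$, and set $\psi_\ep := \psi(\rho/\ep)$. I would show $\psi_\ep u \to u$ in $L^p_{k+2r,r,\lambda}$ as $\ep \downarrow 0$. Writing out the norm via the first remark after the definition of the weighted spaces (using $\rho^{-\lambda}{^{\m}\nabla}^\beta{^b\nabla}_\rel^\alpha$ directly, absorbing the commutator terms which are $O(\rho^{-\lambda})$), each derivative ${^{\m}\nabla}^\beta{^b\nabla}_\rel^\alpha(\psi_\ep u)$ expands by Leibniz into $\psi_\ep \cdot {^{\m}\nabla}^\beta{^b\nabla}_\rel^\alpha u$ plus terms where at least one derivative hits $\psi_\ep$. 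The key point is that $\psi_\ep$ is built from $\rho$, which is a bounded a-smooth function with ${^{\m}\nabla}^j \rho = O(\rho)$ and, crucially, the m-derivatives and $b$-derivatives of $\psi_\ep = \psi(\rho/\ep)$ are \emph{uniformly bounded in $\ep$}: indeed ${^{\m}\partial}(\rho/\ep) = \rho\,\partial_\rho(\rho/\ep)/\ep \cdot(\text{a-smooth}) $ combined with $\psi'(\rho/\ep)$ being supported on $\rho \in [\ep,2\ep]$ gives $|{^{\m}\nabla}^j \psi_\ep| \leq C_j$ independent of $\ep$ (this is exactly the feature that makes the a-smooth/$b$-geometry natural here — a $b$-derivative of a function of $\rho$ does not blow up as $\rho \to 0$). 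Hence each Leibniz term involving a derivative of $\psi_\ep$ is bounded by $C\,\mathbf{1}_{\{\rho \leq 2\ep\}} \cdot |\rho^{-\lambda}{^{\m}\nabla}^{\beta'}{^b\nabla}_\rel^{\alpha'} u|$ with fewer derivatives on $u$, and this tends to $0$ in $L^p$ by dominated convergence since $\{\rho \leq 2\ep\}$ shrinks to the front face, which has measure zero, and the dominating function $|\rho^{-\lambda}{^{\m}\nabla}^{\beta'}{^b\nabla}_\rel^{\alpha'} u| \in L^p$. The remaining term $\psi_\ep \cdot {^{\m}\nabla}^\beta{^b\nabla}_\rel^\alpha u \to {^{\m}\nabla}^\beta{^b\nabla}_\rel^\alpha u$ in $L^p$ again by dominated convergence. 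After cutting off near $\rho=0$ one may further multiply by a fixed spatial cutoff (the total space minus a neighbourhood of the front face is already "compact in the space directions" once we also add the slice $t=\ep$ as in $\S\ref{subsubsec_localmodels}$, so this is automatic or needs only a trivial truncation), leaving a function supported away from the front face, to which stage two applies.

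The main obstacle is precisely the uniform-in-$\ep$ bound on the (m- and $b$-) derivatives of the cutoff $\psi_\ep$ near the a-boundary: in the unweighted, ordinary-smooth world a cutoff at scale $\ep$ has derivatives of size $\ep^{-1}$ and the argument fails, and it is only because the relevant derivatives here are the m-derivatives $x_i\partial_{x_i}$ (equivalently, because $\rho$ behaves like $e^{-\text{(linear coordinate)}}$ at the a-boundary, so a cutoff in $\rho$ is a cutoff at \emph{unit} scale in the $b$-geometry) that the derivatives of $\psi_\ep$ stay bounded. I would verify this carefully in each of the four local models of $\S\ref{subsubsec_localmodels}$ — in particular the corner model $V \times \llbracket 0,\delta) \times [0,\delta)$ with $\rho = Cr$, where ${^{\m}\nabla}$ and ${^b\nabla}_\rel$ involve $r\partial_r$ and $\psi(\rho/\ep) = \psi(Cr/\ep)$ has $r\partial_r$-derivatives bounded uniformly in $\ep$ — so that the dominated-convergence estimate above is legitimate. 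Everything else (Leibniz bookkeeping, the commutator estimates for moving $\rho^{-\lambda}$ past derivatives, and the final mollification) is standard.
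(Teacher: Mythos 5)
Your proposal is correct and follows essentially the same line as the paper's proof: the key point in both is that a cutoff $\psi(\rho/\varepsilon)$ in the boundary-defining function for the a-boundary has m- and $b$-derivatives uniformly bounded in $\varepsilon$ (precisely because $\rho\partial_\rho\psi(\rho/\varepsilon)$ is bounded independently of $\varepsilon$), so multiplication by $\psi_\varepsilon$ converges to the identity on $L^p_{k+2r,r,\lambda}$ by dominated convergence. You spell out the Leibniz/dominated-convergence bookkeeping and the final mollification stage, both of which the paper leaves implicit, but the underlying idea is identical.
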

\begin{proof}
	The key point is that since we use a natural m-metric and m-volume form on $\bL$, we can construct cutoff functions which interpolate to $0$ in a neighbourhood of the front face, $1$ outside of a neighbourhood of the front face, and such that all its derivatives are uniformly bounded and supported in an arbitrarily small neighbourhood of the front face. Multiplying a given function with such cutoff function proves the claim.  
\end{proof}

\subsection{The LMCF equation on potentials and linearisation}
Let $u  : \bL\rt\bR$ be a function such that $\|u\|_{C^{2,0,2}}$ is bounded by a constant that ensures the graphs $\Gamma(d_{\mathrm{rel}}u_t)\subset T^*\tL_t$ are contained in the Lagrangian neighbourhoods for $\tL_t$. In this subsection, we will write the LMCF equation as a P.D.E. in terms of $u$, such that the embeddings of the graphs of $u_t$ under the Lagrangian embeddings, i.e.\ $\Psi_t(\Gamma(d_\rel u_t))$, is a family of Lagrangians evolving by LMCF. We think of this as a perturbation of the constructed approximate solution $\bL$ to an exact solution. We also compute the linearisation of this P.D.E.\ at the approximate solution $\bL$.

\subsubsection{The P.D.E.\ for LMCF}

 Let $u$ be a function as before, with $\|u\|_{C^{2,0,2}}$ sufficiently small. There is a natural diffeomorphism $\sigma_{d_\mathrm{rel}u}: {^bT}^*_\mathrm{rel}\mathbb{L} \rightarrow {^bT}^*_\mathrm{rel}\mathbb{L}$, given by $(x,\alpha(x)) \mapsto (x,\alpha(x) +d_{\mathrm{rel}}u(x))$, preserving the relative $2$-form $\hat{\omega}$. In particular, this gives a map $\sigma_{\dr u} : \bL^\circ = \Gamma(0)\rt\Gamma(\dr u)$. Now, we have the Lagrangian embedding over the interior, $\Psi : T^*_{\mathrm{rel}}\bL^\circ \rt \bX$. Consider the pushforward along the diffeomorphism $\Psi\circ \sigma_{\dr u}$ of the time-like vector field $v$ on $\mathbb{L}$, given by $(\Psi\circ \sigma_{\dr u})_*v$. Then, $(\Psi\circ \sigma_{\dr u})_*v$ is a variational vector field in $\bX$ for the Lagrangian embedding of $\Gamma(\dr u)$. So, $(\Psi\circ \sigma_{\dr u})_*v - \pa_t $ is a relative vector field on $\bX$, such that under the projection $\pi : \bX\rt X$ away from the front face, it gives a variational vector field for the perturbed Lagrangians. Hence, the perturbations satisfy LMCF if and only if we have\[
d(\theta_{\Gamma(\dr u)}) = -(\Psi\circ \sigma_{\dr u})^*\omega_\bX((\Psi\circ \sigma_{\dr u})_*v - \partial_t, \rule{0.25cm}{0.05mm}),
\] 
where $\theta(\Gamma(\dr u)) : \bL^\circ \rt \bR$ is the Lagrangian angle corresponding to the perturbation $\Gamma(\dr u)$. Now, due to $(\Psi\circ \sigma_{\dr u})$ being a symplectomorphism with respect to the relative symplectic forms, we can write \[
(\Psi\circ \sigma_{\dr u})^*\omega_\bX((\Psi\circ \sigma_{\dr u})_*v - \partial_t, \rule{0.25cm}{0.05mm})= \hat{\omega}(v - (\Psi\circ \sigma_{\dr u})^*\partial_t, \rule{0.25cm}{0.05mm}).
\] 
Putting these together, we get the P.D.E.
\begin{equation}\label{1}
	d_\mathrm{rel}(\theta_{\Gamma(du)}) = -\hat{\omega}(v- {(\Psi\circ \sigma_{\dr u})^*(\partial_t)}, \rule{0.25cm}{0.05mm} )|_{\bL},
\end{equation}
which relates relative $1$-forms on $\mathbb{L}$. We will compute the decomposition of this P.D.E.\ into zeroth order, linear and higher order nonlinear terms. First, we introduce the following:
\begin{defn}
	For a function $u:\bL\rt\bR$, define a function $\tu$ on $^bT^*_\rel\bL$, as $\tu(x,\beta):=u(x)$. Via the Lagrangian embedding, this extends to a function on a neighbourhood $\tu:\tpsil(U_{\bL})\rt \bR$.
\end{defn}

The main result we will prove is the following:
\begin{prop}
	We have the following decompositions into zeroth order, linear and nonlinear parts:
	\begin{enumerate}
		\item the Lagrangian angle decomposes as  \begin{equation}\label{eq_decomptheta}
			d_\rel(\theta_{\Gamma(du)})= d_{\rel}\theta_0+d_{\rel}\left(\Delta_{L_t} u + H\tu \right) + d_\rel(Q_1[\rho^{-2}d_\rel u, \rho^{-2}\nabla^2_\rel u]),\end{equation}
		
		\item the symplectic form decomposes as\begin{equation}
			-\hat{\omega}(v- {(\Psi\circ \sigma_{\dr u})^*(\partial_t)}, \rule{0.25cm}{0.05mm} )|_{\bL} = \hat{\omega}(v^\perp,\_)|_{\bL} + d_\rel(\pa_vu+v^\perp \tu) +d_\rel(Q_2[\rho^{-2}d_\rel u]),
		\end{equation}
	\end{enumerate}

	where $H$ is the mean curvature vector, $v^\perp$ is the component of $v$ as defined in $\eqref{eq_vdecomposition}$, and $Q_1,Q_2$ are nonlinear a-smooth functions without any zeroth order or linear terms.
\end{prop}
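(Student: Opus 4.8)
Both sides of \eqref{1} are, at each point of $\bL^\circ$, functions of the $1$-jet of the relative $1$-form $d_\rel u$, and the plan is to read this dependence off the \emph{a}-smooth Lagrangian embedding $\tpsil$ and then Taylor-expand to first order about $u\equiv 0$. Recall from \S\ref{subsubsec_cornersmooth} that the graph $\Gamma(d_\rel u)\subset U_\bL$ corresponds to $\Gamma(\rho^{-2}d_\rel u)\subset\widetilde{U}_\bL$, where $\tpsil$ is \emph{a}-smooth, and that $\tpsil(\Gamma(\rho^{-2}d_\rel u))=\Psi_t(\Gamma(d_\rel u_t))$ is a genuine Lagrangian submanifold of a time-slice. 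Consequently every geometric quantity attached to this perturbed Lagrangian --- its induced metric and second fundamental form, the restriction of the holomorphic volume form, hence its Lagrangian angle, and the transported vector field $(\Psi\circ\sigma_{\dr u})^*\partial_t$ --- is an \emph{a}-smooth function of $\bigl(\rho^{-2}d_\rel u,\rho^{-2}\nabla^2_\rel u\bigr)$. Here one uses that the pulled-back Calabi--Yau data on $\bX$ is \emph{a}-smooth, that $\tpsil$ is \emph{a}-smooth (Proposition \ref{prop_constructionacorners}), and that \emph{a}-smooth functions are closed under composition and under division by a non-vanishing \emph{a}-smooth function (needed to extract the argument of the nowhere-zero ratio $\Omega_\bX|/\mathrm{vol}$).

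\textbf{Part (1): the Lagrangian angle.} Write $\theta_{\Gamma(d_\rel u)}=\mathcal{F}\bigl(\rho^{-2}d_\rel u,\rho^{-2}\nabla^2_\rel u\bigr)$ for the \emph{a}-smooth fibrewise function just described. Its value at $u\equiv 0$ is the Lagrangian angle of $\tpsil(\Gamma(0))=\tL_t$, i.e.\ $\theta_0$. For the linear part, differentiate in $u$ at $0$: the family $s\mapsto\Psi_t(\Gamma(s\,d_\rel u_t))$ is a Hamiltonian isotopy of $\tL_t$ whose generating Hamiltonian at $s=0$ equals $u_t$ up to a constant, so the first-variation formula for the Lagrangian angle under Hamiltonian isotopies contributes $\Delta_{L_t}u$; the remaining first-order contribution comes from the discrepancy between the Hamiltonian-flow parametrisation used in that formula and the graph parametrisation $\Psi_t\circ\sigma_{d_\rel u_t}$ appearing in \eqref{1}, and a direct computation identifies it --- using $H=J\nabla\theta$ and the definition of the ambient extension $\tu$ --- as $H\tu$. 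The Taylor remainder, $Q_1[\rho^{-2}d_\rel u,\rho^{-2}\nabla^2_\rel u]=\int_0^1(1-s)\,D^2\mathcal{F}\bigl(s\rho^{-2}d_\rel u,s\rho^{-2}\nabla^2_\rel u\bigr)\bigl[(\rho^{-2}d_\rel u,\rho^{-2}\nabla^2_\rel u)^{\otimes 2}\bigr]\,ds$, is \emph{a}-smooth, being an \emph{a}-smooth family integrated over $s\in[0,1]$, and vanishes to second order in its arguments, so it contains no zeroth-order or linear term. Applying $d_\rel$ gives \eqref{eq_decomptheta}.

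\textbf{Part (2): the symplectic term.} The quantity $(\Psi\circ\sigma_{\dr u})^*\partial_t$ is obtained from $\partial_t$ by transport along $\sigma_{\dr u}$ and $\Psi$, so it is \emph{a}-smooth in the $1$-jet of $d_\rel u$; moreover, once paired with $\hat\omega$ and restricted to $\bL$, only the base point of the graph $\Gamma(d_\rel u)$ and not its tangent plane survives --- because $\hat\omega$ vanishes on $\Gamma(d_\rel u)$ --- so the restricted $1$-form depends \emph{a}-smoothly on $\rho^{-2}d_\rel u$ alone. At $u\equiv 0$, $\sigma_0$ is the inclusion of the zero section, on which $\tpsil$ restricts to $\iota$; combined with the decomposition $v=\partial_t+v^\perp$ of \eqref{eq_vdecomposition}, the term $-\hat\omega\bigl(v-(\Psi\circ\sigma_0)^*\partial_t,\_\bigr)|_\bL$ collapses to $\hat\omega(v^\perp,\_)|_\bL$. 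For the first-order part in $u$: varying $\sigma_{\dr u}$ shears the base point by the \emph{exact} relative form $d_\rel u$, so, $\hat\omega$ being the canonical form on $^bT^*_\rel\bL$, the corresponding variation of the paired $1$-form is itself exact, and the change-of-parametrisation computation of Part (1) identifies its primitive as $\pa_v u+v^\perp\tu$. The remainder $Q_2[\rho^{-2}d_\rel u]$ is, as in Part (1), an \emph{a}-smooth Taylor remainder vanishing to second order; it is exact, being the difference of the (exact) $u$-dependent part of the left-hand side and the already-exact linear term. This gives the second decomposition.

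\textbf{Main obstacle.} The genuinely delicate point is not the Taylor expansion but the bookkeeping of powers of $\rho$: one must check that, after inserting the factors $\rho^{-2}$ forced by $\tpsil=\Psi_{\bL^\circ}\circ\rho^{2}$, the composite expressions for $\mathcal{F}$, $Q_1$, $Q_2$ and the linear terms are genuinely \emph{a}-smooth on all of $\bL$ --- in particular across the front face and the codimension-$2$ corner --- with no residual negative powers of $\rho$. This is exactly the balance that the $\rho^{2}$-twisting of the Lagrangian embedding in Proposition \ref{prop_constructionacorners} was designed to achieve, and verifying it means working in the local coordinates of \S\ref{subsubsec_cornersmooth} (cf.\ the explicit form \eqref{eq_embeddingB}) and using the stability of \emph{a}-smoothness under products, quotients by non-vanishing \emph{a}-smooth functions, and the relation $^{\m}\nabla(\rho^{-2}\cdot)=\rho^{-2}\bigl({}^{\m}\nabla+O(1)\bigr)$. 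The only other place requiring care is the extraction of the lower-order linear terms $H\tu$ and $v^\perp\tu$ from the change of parametrisation between the Hamiltonian flow and the graph map.
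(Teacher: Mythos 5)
Your Part (1) takes a genuinely different route from the paper's, and it is a reasonable one: where the paper cites Behrndt's Proposition 9.10 for the linearization $\Delta_{L_t}u - \langle\nabla\theta, V(du)\rangle$ and then rewrites the second term via $H=J\nabla\theta$, you instead invoke the first variation of the Lagrangian angle under Hamiltonian isotopies and account for the discrepancy between the Hamiltonian-flow and graph parametrisations. Both routes produce $\Delta_{L_t}u+H\tu$; the paper's delegates the key identity to a citation, yours pushes it into "a direct computation", which you would need to actually carry out, but the idea is sound. The overall framework (defining an a-smooth fibrewise function on $\mathrm{Sym}^2(^bT^*_\rel\bL)\oplus\tul$ via the a-smooth splitting of $^{\m}T(^bT^*_\rel\bL)$ and Taylor-expanding) matches the paper.

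Part (2), however, contains a genuine error. You claim that after pairing with $\hat\omega$ and restricting to $\bL$, ``only the base point of the graph $\Gamma(d_\rel u)$ and not its tangent plane survives --- because $\hat\omega$ vanishes on $\Gamma(d_\rel u)$'', and conclude that the restricted $1$-form depends a-smoothly on $\rho^{-2}d_\rel u$ alone. This is false, and in fact backwards. In Darboux coordinates $(x^i,y^i,t)$ on $^bT^*_\rel\bL$ with $\hat\omega = \sum dx^i\wedge dy^i$, pairing with $\hat\omega$ and restricting to the zero section kills the $\pa_{x^i}$-components of a vector (since $dy^i|_{\bL}=0$) but retains the $\pa_{y^i}$-components; and it is exactly the $\pa_{y^i}$-component of $v - (\Psi\circ\sigma_{\dr u})^*\pa_t$, through the Jacobian of $\sigma_{\dr u}$, that carries the $\nabla^2_\rel u$ and $\nabla_\rel\pa_v u$ dependence. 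So the restricted $1$-form genuinely involves second derivatives of $u$, as indeed it must, since the right-hand side contains $d_\rel(\pa_v u)$ and $d_\rel Q_2[\rho^{-2}d_\rel u]$. What you actually need --- and what your argument does not establish --- is that the primitive under $d_\rel$ depends only on $d_\rel u$. The paper gets this by a different mechanism: writing $\Psi^*\pa_t = \pa_t + a^i\pa_{x^i}+b^i\pa_{y^i}$ and observing that, $\Psi$ being a family of Lagrangian embeddings, the relative $1$-form $\hat\omega(a^i\pa_{x^i}+b^i\pa_{y^i},\_)$ is closed on each time-slice, hence locally $=d_\rel f$ for a function $f$ on $U_\bL\subset{^bT^*_\rel\bL}$; a short coordinate computation then collapses the whole left-hand side to $d_\rel(\pa_v u + f(d_\rel u))$, and expanding $f$ yields the zeroth, linear and $Q_2$ terms. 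Your subsequent ``the remainder is exact, being the difference of exacts'' step implicitly relies on the closedness fact you did not prove, so as written Part (2) has a gap precisely at the point where the real content lies.
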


By our construction of approximate solution, we have the following corollary to Proposition \ref{prop_hamiltonisotopy1}:
\begin{coro}
	The relative $1$-form $-d_\rel\theta_0+ \hat{\omega}(v^\perp,\_)|_{\bL}$ is exact.  
\end{coro}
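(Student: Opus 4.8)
The plan is to recognise $-d_\rel\theta_0 + \hat{\omega}(v^\perp,\_)|_{\bL}$ as $d_\rel$ of the ``Lagrangian angle defect'' of the approximate solution, and then read off exactness from Proposition \ref{prop_hamiltonisotopy1}. Informally, evaluating the LMCF equation \eqref{1} at $u=0$ and using the decompositions just proved turns it into $d_\rel\theta_0 = \hat{\omega}(v^\perp,\_)|_{\bL}$, so this $1$-form is exactly the obstruction to $\bL$ being an exact solution; that it be $d_\rel$ of a function is precisely what will let us integrate the perturbed equation to a scalar P.D.E.\ later.

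First I would unwind the two terms on each time-slice $\tL_t$. By definition $\theta_0$ restricts to the genuine Lagrangian angle $\theta_t$ of $\tL_t$, so $d_\rel\theta_0|_{\tL_t} = d\theta_t$. For the symplectic term, recall from \eqref{eq_vdecomposition} that $v = \pa_t + v^\perp$ with $v^\perp$ normal to $\tL_t$, so $v^\perp$ is the variational vector field of the family $\{\tL_t\}$ after projecting out the time direction. Under the normal-bundle isomorphism \eqref{eq_normalisomorphism} for the embedding $\Psi_t$, this field corresponds to the $1$-form $w\mapsto\hat{\omega}(v^\perp,w)$, i.e.\ to $\hat{\omega}(v^\perp,\_)|_{\tL_t}$, while the variational field of a family of graphs $\Gamma(\alpha_t)$ corresponds to $\pa_t\alpha_t$ (as used in the proof of Theorem \ref{thm_mcfpot}). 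Hence $\hat{\omega}(v^\perp,\_)|_{\tL_t} = \pa_t\alpha_t$, and in the construction of \S\ref{subsec_approxsolutionfamily} (the discussion of the Hamiltonian isotopy class of the desingularisations) we computed $\pa_t\alpha_t = d\tth_t$ for the explicit approximate Lagrangian angle $\tth_t$. Therefore
\[
\big(-d_\rel\theta_0 + \hat{\omega}(v^\perp,\_)\big)\big|_{\tL_t} \;=\; d\tth_t - d\theta_t \;=\; d(\tth_t-\theta_t).
\]

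Next I would take de Rham classes slice by slice. This $1$-form is closed, and $[d\tth_t] = [d\theta_t]$ in $H^1_{\mathrm{dR}}(\tL_t)$ is exactly identity \eqref{eq_cohomologyclass} established in Proposition \ref{prop_hamiltonisotopy1}. Hence $[d(\tth_t-\theta_t)] = 0$, so $d(\tth_t-\theta_t) = dG_t$ for a genuine $\bR$-valued function $G_t$ on $\tL_t$ (which coincides with $\tth_t-\theta_t$ up to a locally constant function); in particular the $1$-form is exact on every time-slice. To promote this to exactness of the relative $1$-form on $\bL$ it remains to check that the $G_t$ assemble into an a-smooth function $G$ on $\bL$, so that $-d_\rel\theta_0 + \hat{\omega}(v^\perp,\_)|_{\bL} = d_\rel G$; this follows from the a-smoothness of the construction of $\bL$ in \S\ref{subsec_manac}, together with the $O(e^{-\alpha r^2})$ decay of $E$ to its asymptotic cone (Proposition \ref{prop_expanderdecay}) and the decay estimates of Theorem \ref{thm2}, which are exactly what make $\tth_t$, hence $\tth_t-\theta_t$, extend a-smoothly over the front face $\overline{E}$ of $\bL$.

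The main obstacle is the bookkeeping in the first step — matching $\hat{\omega}(v^\perp,\_)|_{\bL}$ with $\pa_t\alpha_t = d\tth_t$ through the normal-bundle isomorphism uniformly in $t$ — together with the a-smoothness of the primitive $G$ up to the front face, where $\{\tL_t\}$ degenerates. Away from the front face both are classical; near it one works in the local models of \S\ref{subsubsec_localmodels} and uses the exponential decay of the expander. Granting this, the purely cohomological input of Proposition \ref{prop_hamiltonisotopy1} closes the argument.
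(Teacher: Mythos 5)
Your proposal is correct and takes essentially the same route as the paper: the paper presents this corollary with no explicit proof beyond citing Proposition \ref{prop_hamiltonisotopy1}, and you correctly unwind that one-line reference by identifying $-d_\rel\theta_0 + \hat{\omega}(v^\perp,\_)|_{\bL}$ slice-by-slice with $d(\tth_t - \theta_t)$ and then reading off exactness from the cohomology identity \eqref{eq_cohomologyclass}. One small remark: the a-smoothness of the primitive (which you flag as the remaining ``bookkeeping'' obstacle) is in fact handled separately by the paper later, in Lemma \ref{lem_formregularity}, so it need not be folded into the proof of this corollary.
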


Now, we will prove the claimed decompositions.

\subsubsection{The m-connection on $^{\m}T(^bT^*_\rel\bL)$}

First, we introduce an a-smooth connection on the relative cotangent bundle, analogously to \S5.2 of \cite{Joy04}. Let $\overline{\nabla}$ be an a-smooth m-connection on $^bT^*_\rel \bL\downarrow \bL$. This allows us to define `horizontal subspaces' in $^{\m}T(^bT^*_\rel\bL)$, via parallel transport of relative $1$-forms. Using this, we have a canonical decomposition \begin{equation}\label{eq_decomposition}{^{\m}}T_{(x,\beta)}(^bT^*_\rel\bL) \cong {^{\m}T_x\bL} \oplus {^b}T^*_{x,\rel}\bL,
\end{equation}  
with the first term $^{\m}T_x\bL$ corresponding to the horizontal subspace. Now, we define a connection $\wtnabla$ on $^{\m}T(^bT^*_\rel\bL)$ by defining differentiation along the horizontal subspace using $\overline{\nabla}$ and $^b\nabla$ (which is the $b$-Levi-Civita connection on $^bT\bL$), and partial differentiation along the vertical subspace as in the vector space $^bT^*_{x,\rel}\bL$. This is well-defined, since $^bT^*_\rel\bL$ is naturally trivial along each fibre. Then, since we have defined this invariantly, and since the connections $\overline{\nabla},{^b\nabla}$ are a-smooth, we have:
\begin{lem}
	The above defines an a-smooth ${\m}$-connection $\wtnabla$ on $^{\m}T(^bT^*_\rel\bL)$.
\end{lem}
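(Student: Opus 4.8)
The plan is to reduce the assertion to a local statement on the model neighbourhoods of $\bL$ and to observe that every ingredient of the construction respects the a-smooth calculus. Write $P:={^bT^*_\rel\bL}$ for the total space, with projection $\pi_P:P\rt\bL$; recall from the text that $P$ is itself a manifold with corners and a-corners carrying a $b$-fibration $t:P\rt[0,\ep)$. The decomposition \eqref{eq_decomposition} is determined invariantly by $\overline{\nabla}$: the vertical subspace at $(x,\beta)$ is the tangent space to the fibre, canonically $^bT^*_{x,\rel}\bL$ because the fibre is a vector space, and the horizontal subspace is the horizontal lift determined by the m-connection $\overline{\nabla}$. Since $\overline{\nabla}$ is a-smooth, the resulting splitting ${^{\m}TP}\cong H\oplus V$ into sub-bundles $H\cong\pi_P^*({^{\m}T\bL})$ and $V\cong\pi_P^*({^bT^*_\rel\bL})$ is a-smooth, and hence any a-smooth section $s$ of ${^{\m}TP}$ decomposes a-smoothly as $s=s^h\oplus s^v$. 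One then defines $\wtnabla s$ by the usual formula: differentiate $s^h$ and $s^v$ in the horizontal directions using the pullbacks of $^b\nabla$ and $\overline{\nabla}$ respectively, differentiate in the vertical directions using the canonical flat connection along the fibres of $P$, and add the tensorial terms coupling the two pieces (coming from the curvature of $\overline{\nabla}$ and the $\beta$-dependence of the splitting).

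First I would check that $\wtnabla$ is a bona fide m-connection, i.e.\ that it is $\bR$-linear and satisfies $\wtnabla(c\cdot s)=c\cdot\wtnabla s+s\otimes {^{\m}\pa c}$ for $c\in C^\infty(P)$. This is the same formal verification as for the classical construction of a connection on the tangent bundle of the total space of a vector bundle out of connections on the base and on the bundle: $\bR$-linearity is immediate, and the Leibniz rule is checked separately on $s^h$ and $s^v$, using that $^b\nabla$ and $\overline{\nabla}$ are connections and that the fibrewise flat connection is an honest directional derivative on a vector space; all the extra coupling terms are tensorial in $s$ and so contribute nothing to the derivation part. Because every operation above uses only ${^{\m}\pa}$, the map is well defined as an $\bR$-linear map $\Gamma^\infty({^{\m}TP})\rt\Gamma^\infty({^{\m}TP}\otimes{^{\m}T^*P})$.

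The substantive point is a-smoothness of $\wtnabla$, which I would prove using the local models of Lemma \ref{lem_modelneighbourhood}. Away from the front face of $\bL$ everything is classically smooth, so it suffices to treat the front-face model $U\times\llbracket0,\delta)$ and the corner model $V\times\llbracket0,\delta)\times[0,\delta)$. In such a model, choose an a-smooth local frame for ${^{\m}T\bL}$ adapted to the boundary-defining functions (so $\{x_i\pa_{x_i}\}$ on the a-directions and $\pa_{x_j}$ on the rest) together with an a-smooth local coframe for $^bT^*_\rel\bL$; these exist because $^bT_\rel\bL$ and the relative $b$-metric are a-smooth. This equips $P$ with a-smooth fibre coordinates $y^i$, the connection symbols of $\overline{\nabla}$ are a-smooth by hypothesis, and those of $^b\nabla$ are a-smooth because the relative Levi-Civita connection is an a-smooth $b$-connection. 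Writing $\wtnabla$ in these frames produces coefficients that are finite sums of products of the connection symbols, their ${^{\m}\pa}$-derivatives, and the coordinates $y^i$; since a-smooth functions are closed under addition and multiplication and the $y^i$ are a-smooth on $P$, all these coefficients are a-smooth, and hence $\wtnabla s$ is a-smooth whenever $s$ is, which proves the lemma.

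The step I expect to demand the most care — though it is not a genuine obstruction — is the a-smoothness check in the corner model $V\times\llbracket0,\delta)\times[0,\delta)$, where $^bT_\rel\bL$ is spanned by $\pa_{x_i}$ and $\rho\pa_\rho-2s\pa_s$ and the weighting by powers of $\rho$ implicit in the construction of $\widetilde{U}_\bL$ appears. There one must verify that differentiating the adapted frame fields and their dual coframe, and forming the connection symbols of $\wtnabla$, never produces negative powers of $\rho$ beyond what the a-smooth calculus tolerates; this follows from the already-established a-smoothness of $^bT_\rel\bL$, the relative $b$-metric and the relative Levi-Civita connection on precisely these models, together with the closure of a-smoothness under the algebraic operations in play.
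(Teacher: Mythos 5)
Your proposal is correct and takes essentially the same approach as the paper, which merely asserts that the construction is invariant and the constituent connections $\overline{\nabla}$, $^b\nabla$ are a-smooth; you spell out the Leibniz-rule check and the local-model frame/coframe computation that the paper leaves implicit. The only small inaccuracy is the remark about ``adding tensorial terms coupling the two pieces'' — the connection is defined simply by differentiating componentwise with respect to the $H\oplus V$ splitting, with no extra coupling terms inserted by hand (curvature and torsion of $\wtnabla$ then arise automatically) — but this does not affect the validity of the argument.
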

\begin{rem}
	Note that this connection may have non-vanishing torsion.
	\end{rem}

Now, we will define the connection $\overline{\nabla}$, by `restricting' the Levi-Civita connection on $^bT^*\bL$ to the relative cotangent bundle:
\begin{defn}
	Define the connection $\overline{\nabla} : \Gamma^\infty(^bT^*_\rel\bL) \rt \Gamma^\infty(^bT^*_\rel\bL \otimes {^{\m}T^*\bL})$, by taking the connection $^b\nabla_{\rel}$ and composing it with ${^bT^*_\rel \bL} \rt {^{\m}T}^*\bL$ induced from projection against the m-metric. This is a well-defined a-smooth m-connection.
\end{defn}

\subsubsection{The Lagrangian angle}
Now, we prove that the Lagrangian angle decomposes as claimed. Instead of the embedding $\Psi$, we work with the embedding $\tpsil = \Psi \circ \rho^2$, since this is an a-smooth embedding of the relative cotangent bundle. As before, let $u:\bL\rt \bR$ be an a-smooth function such that $\Gamma(\rho^{-2}d_\mathrm{rel}u)$ lies in the neighbourhood $\tilde{U}_\bL$ and embeds into $\bX$ under $\tilde{\Psi}_\bL$.

\begin{prop}
	There is an underlying a-smooth function $F : \mathrm{Sym}^2(^bT^*_\mathrm{rel}\bL)\oplus \tul\rt \bR$ such that $\theta(\Gamma(du)) = \theta_0+ F(\rho^{-2}{^b\nabla^2_\mathrm{rel}u}, \rho^{-2}{^b\nabla}_\mathrm{rel}u)$, for $\|u\|_{C^{2,0,2}}$ being sufficiently small.
\end{prop}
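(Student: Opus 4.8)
The plan is to adapt the argument of \S5.2 of \cite{Joy04} to the category of manifolds with corners and a-corners, producing the function $F$ as a rescaling of the corresponding smooth function on the time-slices.

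\textbf{Step 1: the pointwise formula on the time-slices.} For $t\in(0,\ep)$ the submanifold $\tpsil(\Gamma(\rho^{-2}\dr u_t))$ is exactly $\Psi_t(\Gamma(\dr u_t))$, an honest Lagrangian graph over $\tL_t$, so its Lagrangian angle at a point $x$ is the argument of $\Omega_X$ divided by the induced volume form, evaluated on the tangent $m$-plane to the graph at $(x,\dr u_t(x))$. With respect to the splitting of ${}^{\m}T({}^bT^*_\rel\bL)$ induced by the connection $\ovnabla$, this tangent plane is the graph of the symmetric map ${}^b\nabla^2_\rel u_t(x)$ (a genuine Hessian, since $\dr u_t$ is closed). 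Hence there is a smooth function $\overline F$, defined near the zero section of $\mathrm{Sym}^2({}^bT^*_\rel\bL)\oplus{}^bT^*_\rel\bL$ over $\bL^\circ$ and built from $g_X$, $\Omega_X$ and $\Psi_{\bL^\circ}$, with $\overline F(0,0)=0$ and $\theta_{\Gamma(\dr u)}=\theta_0+\overline F({}^b\nabla^2_\rel u,\dr u)$ over the interior $\bL^\circ$.

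\textbf{Step 2: absorbing the $\rho^2$-rescaling.} Since $\tpsil=\Psi_{\bL^\circ}\circ\rho^2$ and, as noted after the construction of $\tpsil$, the graph $\Gamma(\dr u)$ corresponds to $\Gamma(\rho^{-2}\dr u)\subset\tul$, I would set $F(\gamma,\beta):=\overline F(\rho^2\gamma,\rho^2\beta)$ on $\mathrm{Sym}^2({}^bT^*_\rel\bL)\oplus\tul$; then $F(0,0)=0$ and $\theta_{\Gamma(\dr u)}=\theta_0+F(\rho^{-2}{}^b\nabla^2_\rel u,\rho^{-2}\dr u)$ on $\bL^\circ$. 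By the definition of the weighted spaces of \S\ref{subsec_sobholspaces}, $\rho^{-2}\dr u$ is a small section landing in $\tul$ and $\rho^{-2}{}^b\nabla^2_\rel u$ is a bounded section whenever $\|u\|_{C^{2,0,2}}$ is small, so the proposition reduces to showing that $F$ extends to an a-smooth function across the a-boundary of $\bL$ (the front face).

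\textbf{Step 3: a-smoothness of $F$.} Away from the front face $\rho\equiv1$ and $F=\overline F$ is smooth. Near the front face I would work in the local models of Lemma \ref{lem_modelneighbourhood}, in which $\tpsil$ is a-smooth with the explicit form \eqref{eq_embeddingB} near the corner. The ingredients of $\overline F$ rescale controllably: by the $\ep$-equivariance \eqref{eq_epsilonscalingcone}--\eqref{eq_epsilonscalingsymplectic} of $\Phi_C$ together with the Darboux normalisation of Theorem \ref{thm3}, the Calabi--Yau metric scales as $\ep^2$ and $\Omega_X$ as $\ep^m$ under the $\ep$-action, so on $\bX$ the m-metric $g_\bX$ of \eqref{eq_bmetric} and the rescaled form $\rho^{-m}\Omega_\bX$ are a-smooth, the latter nowhere vanishing. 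Writing the angle as $\arg$ of $e^{-i\theta_0}$ times the ratio of $\rho^{-m}\Omega_\bX$ and the induced relative volume form, both restricted to the perturbed graph, and substituting the a-smooth $\tpsil$ and the $\rho^{-2}$-weighted arguments, both terms of the ratio become a-smooth functions of $(x,\gamma,\beta)$, with non-vanishing denominator and the ratio close to $1$ for $u$ small; closure of a-smooth functions under sums, products, quotients by non-vanishing a-smooth functions and composition with smooth maps then yields a-smoothness of $F$. Here one also uses that $\theta_0$, and hence $e^{i\theta_0}$, is a-smooth on $\bL$.

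I expect Step 3 --- specifically the local-model computation near the codimension-two corner --- to be the main obstacle: one must track the powers of $\rho$ carefully and verify that the $\rho^{-2}$-weighting of the $2$-jet of $u$ together with the $\rho^2$-rescaling in $\tpsil$ exactly offsets the degeneration of the Calabi--Yau structure at the front face, so that the resulting function is genuinely a-smooth rather than merely bounded or $r$-smooth --- in particular that none of the logarithmic or fractional-power phenomena that obstruct a-smoothness (as in the examples following the definition of a-smooth maps) appear.
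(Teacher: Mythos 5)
Your proposal takes essentially the same route as the paper: define $F$ as the Lagrangian angle, under the rescaled embedding $\tpsil$, of an $m$-plane in $T_{\mathrm{rel}}(T^*_{\mathrm{rel}}\bL)$ determined by the $\rho^{-2}$-weighted $2$-jet, minus $\theta_0$. The paper's a-smoothness verification is a single sentence (that the plane $L'$ depends a-smoothly on its arguments), and your Step 3 usefully unpacks it, in particular pinpointing that the cancellation between the $\rho^{2}$-rescaling in $\tpsil$, the $\rho^{-2}$-weighting of the arguments, and the $\rho^{-m}$-degeneration of $\Omega$ at the front face is what makes $F$ genuinely a-smooth rather than merely bounded.
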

\begin{proof}
	Using the isomorphism \eqref{eq_decomposition} induced by $\widetilde{\nabla}$, we will associate to each point $(\alpha,\beta,x)\in \mathrm{Sym}^2(^bT^*_\mathrm{rel}\bL)\oplus \tul$, a Lagrangian $m$-plane $L'\subset T_{\mathrm{rel},(\alpha,x)}(T^*_\mathrm{rel}\bL)$. To define this, take a basis of vectors $\{e_i\}\in T_{\mathrm{rel},x}\bL$, and consider $\{\alpha(e_i)\}\in T^*_{\mathrm{rel},x}\bL$. The plane $L'$ is spanned by the vectors $\{e_i+\alpha(e_i)\}$ via the above isomorphism. Note that $L'$ is a Lagrangian plane due to $\alpha$ being symmetric.
	
	Now, we define $\theta(L')$ to be the Lagrangian angle of $L'$ under the embedding $\tpsil$ into $\bX$. Then, we define $F(\alpha,\beta,x)$ to be the difference $\theta(L')- \theta_0(x)$. In order to show $F$ is a-smooth, we note that the Lagrangian plane $L'$ depends a-smoothly on $(\alpha,\beta,x) $.
\end{proof}

Now, we can write $F$ on an open subset of each fiber of the vector bundle $\mathrm{Sym}^2(^bT^*_\mathrm{rel}\bL)\oplus {^bT^*}_\mathrm{rel}\bL$ as the sum of a linear term and a nonlinear, second order term:
\[
F(\alpha,\beta,x)= L(\alpha,\beta,x)+Q(\alpha,\beta,x),
\]
such that $Q(0,0,x)=\pa_\alpha Q(0,0,x)=\pa_\beta Q(0,0,x)=0$. Following Proposition 9.10 of \cite{Beh11}, the linearization is given by \[
\Delta_{L_t} u - \la \nabla \theta, V(du)\ra,
\]
where $\Delta_{L_t}$ is the Laplacian on $L_t$ for the induced metric from the embedding, and $V(du)$ is the `deformation field' for the embedding, i.e.\ it is the vector $\pa_s\Phi(x,sdu(x))|_{s=0}$ in $X$. By the identity $H=J\nabla\theta$, the claimed linearization \eqref{eq_decomptheta} follows.

\subsubsection{The symplectic form}
 Now, we will show an analogous theorem for the symplectic form:

\begin{prop}
	We have the following decomposition,\[
	-\hat{\omega}(v- {(\Psi\circ \sigma_{\dr u})^*(\partial_t)}, \rule{0.25cm}{0.05mm} )|_{\bL} = \hat{\omega}(v^\perp,\rule{0.25cm}{0.05mm})|_{\bL} + d_\rel(\pa_vu +v^\perp(\tu) +Q[\rho^{-2}d_\rel u]),
	\]
	for $Q$ being a nonlinear function with no linear or constant terms depending a-smoothly on $\rho^{-2}d_\rel u$.
\end{prop}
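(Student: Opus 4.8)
The plan is to compute the pullback $(\Psi\circ\sigma_{\dr u})^*(\pa_t)$ of the coordinate time vector field and to extract its relative component, in direct parallel with the computation of the Lagrangian angle above and with \S 5.2 of \cite{Joy04} and Proposition 9.10 of \cite{Beh11}. The starting observation is that $\Psi$ and $\sigma_{\dr u}$ both cover $\mathrm{id}_{(0,\ep)}$, so that even though neither $v$ nor $(\Psi\circ\sigma_{\dr u})^*\pa_t$ is itself a relative vector field, their difference pushes forward to $0$ on $(0,\ep)$ and hence is a genuine section of $^bT_\rel\bL^\circ$, which is exactly what makes $\hat\omega\big(v-(\Psi\circ\sigma_{\dr u})^*\pa_t,\rule{0.25cm}{0.05mm}\big)\big|_{\bL}$ a well-defined relative $1$-form. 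Decomposing $v=\pa_t+v^\perp$ as in \eqref{eq_vdecomposition}, the term $\hat\omega(v^\perp,\rule{0.25cm}{0.05mm})|_{\bL}$ is precisely the contribution of the normal part of the time flow — equivalently, it is the value of the whole expression when $u=0$, i.e.\ the velocity $1$-form of the approximate solution $\bL$ itself — while the entire $u$-dependent part of $-\hat\omega\big(v-(\Psi\circ\sigma_{\dr u})^*\pa_t,\rule{0.25cm}{0.05mm}\big)|_{\bL}$ is exact, and it is this exact part that I would identify with $d_\rel\big(\pa_v u+v^\perp(\tu)+Q[\rho^{-2}d_\rel u]\big)$.

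The core computation is again fiberwise. Using the horizontal splitting \eqref{eq_decomposition} of $^{\m}T(^bT^*_\rel\bL)$ induced by the connection $\ovnabla$, exactly as in the Lagrangian-angle reduction, I would compute the correction $(\Psi\circ\sigma_{\dr u})^*\pa_t-\pa_t$ in terms of $\sigma_{\dr u}$ and $\tpsil=\Psi\circ\rho^2$. Since $\sigma_{\dr u}$ is the fiberwise translation by $d_\rel u$ and preserves $\hat\omega$, and since the perturbed family is parametrised using the time-like field $v$, the combined effect of the translation and of evolving the zero section along $v$, after pairing with $\hat\omega$, is the exact relative $1$-form $d_\rel(\pa_v u)$, modulo lower-order terms coming from the $t$-dependence of the splitting $\ovnabla$. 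The only remaining contribution comes from $\tpsil$ not being fiberwise-trivial: displacing the basepoint of a relative Lagrangian plane along the normal part $v^\perp$ records the variation of the fiberwise-constant extension $\tu$, and contributes $d_\rel(v^\perp(\tu))$ — the same bookkeeping that produces the $H\tu$ correction in \eqref{eq_decomptheta}, the parabolic version of this being carried out in Proposition 9.10 of \cite{Beh11}. All higher-order terms are collected into $Q[\rho^{-2}d_\rel u]$, which is a-smooth in its argument and has vanishing constant and linear parts because the whole construction factors through the a-smooth embedding $\tpsil$ of the graph $\Gamma(\rho^{-2}d_\rel u)$ (cf.\ \eqref{eq_embeddingB}), exactly as for $Q_1$ in \eqref{eq_decomptheta}.

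The main obstacle — and essentially the only point that goes beyond \cite{Joy04,Beh11} — is verifying that every step is valid up to the front face in the a-smooth structure. The time-like field $v$ is not a-smooth (only $\rho^2 v$ is), and $\Psi$ is a-smooth only after the $\rho^2$-twist, so the correction $(\Psi\circ\sigma_{\dr u})^*\pa_t-\pa_t$ must be rewritten throughout in terms of the a-smooth data $\rho^2 v$, $\rho^{-2}d_\rel u$, and $\tpsil$, and one must then check that the resulting relative $1$-form — and in particular the remainder $Q[\rho^{-2}d_\rel u]$ — is an a-smooth section of $^bT^*_\rel\bL$ with the claimed structure. I would do this by passing to the local models of Lemma \ref{lem_modelneighbourhood} near the front face and near the corner, where $v$, $\hat\omega$ and $\tpsil$ have the explicit normal forms recorded in \S\ref{subsec_manac}, and then checking a-smoothness of the relevant compositions directly in the projective coordinates, in the same manner as the proof that $\tpsil$ itself extends a-smoothly over the a-boundary.
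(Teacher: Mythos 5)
Your decomposition of the claim into a $u$-independent piece $\hat\omega(v^\perp,\rule{0.25cm}{0.05mm})|_{\bL}$, a leading linear piece, and a higher-order remainder $Q$ tracks the statement of the proposition correctly, and the a-smoothness discussion in terms of $\rho^2 v$, $\rho^{-2}d_\rel u$ and $\tpsil$ in the local models near the front face is exactly what is needed. However, the crucial structural step is only asserted, not proven: you write that ``the entire $u$-dependent part \ldots is exact'' and then try to identify $d_\rel(\pa_v u)$ from $\sigma_{\dr u}$ and $d_\rel(v^\perp\tu)$ from the normal displacement, arguing by analogy with the $H\tu$ correction in the angle computation. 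But exactness does not follow from exactness of the $\sigma_{\dr u}$-contribution alone; you also need to control the contribution of $\Psi^*\pa_t-\pa_t$ evaluated along the graph $\Gamma(d_\rel u)$, and this is not automatic.

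The paper handles this directly: working in coordinates $(x^i,y^i,t)$ on $^bT^*_\rel\bL$ with $v=\pa_t$, it writes $\Psi^*\pa_t=\pa_t+a^i\pa_{x^i}+b^i\pa_{y^i}$ and then observes that because $\Psi$ is a \emph{family of Lagrangian embeddings}, the relative $1$-form $\hat\omega(a^i\pa_{x^i}+b^i\pa_{y^i},\rule{0.25cm}{0.05mm})$ is closed on each time-slice $U_{L_t}$ (it is the interior product of a locally Hamiltonian vector field with $\hat\omega$), and therefore admits a locally-defined potential $f$ on the \emph{whole} Lagrangian neighbourhood $U_\bL\subset {^bT^*_\rel\bL}$, not just on $\bL$. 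Plugging in $(x,\alpha)\mapsto(x,\alpha+d_\rel u)$ and collecting terms then gives $d_\rel(\pa_t u+f(d_\rel u))$ in one step, and the decomposition into $v^\perp\tu+Q$ comes from Taylor-expanding $f$ on the fibres. Your heuristic identification of $d_\rel(\pa_v u)$ and $d_\rel(v^\perp\tu)$ as separate contributions is consistent with this, but you should supply the closedness argument (or its equivalent), since that is what makes the whole $u$-dependent part exact rather than merely closed, and it is where the Lagrangian-isotopy hypothesis is actually used. Without it, the fibrewise splitting via $\ovnabla$ does not by itself establish that the remainder is $d_\rel$ of something.
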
 

\begin{proof}

	Let us  consider local coordinates on $^bT^*_\rel\bL$ given by $(x^i,y^i,t)$, such that $v=\pa_t$, $y^i=dx^i$. Then, the symplectic form is $\sum dx^i\wedge dy^i$. Now, let us write the pull back vector field $\Psi^*(\pa_t)$ as \[
	\Psi^*(\pa_t) = \pa_t + a^i\pa_{x^i}+b^i\pa_{y^i}, 
	\]
	for locally-defined functions $a^i,b^i$. Then, we note that due to $\Psi$ being a family of Lagrangian embeddings, the relative $1$-form given by $\hat{\omega}(a^i\pa_{x^i}+b^i\pa_{y^i}, \rule{0.25cm}{0.05mm})$ is closed on the time-slices $U_{L_t}$. This implies that there is a locally-defined function $f: {^bT^*_\rel\bL}\supset U_\bL \rt \bR$ such that $d_\rel f = \hat{\omega}(a^i\pa_{x^i}+b^i\pa_{y^i}, \rule{0.25cm}{0.05mm})$ on $U_\bL$ (note that we are taking the relative derivative on the whole cotangent bundle, rather than just on $\bL$).
	Now we note that in these coordinates,
	\begin{equation}
		\begin{split}
	-\hat{\omega}(v- {(\Psi\circ \sigma_{\dr u})^*(\partial_t)}, \rule{0.25cm}{0.05mm} ) &= \pa^2_{it}u\pa_{y^i}+a^i(d_\rel u)\pa^2_{ij}u\pa_{y^j}-a^i(d_\rel u)\pa_{x^i} - b^i(d_\rel u)\pa_{y^i}\\
	&= d_\rel(\pa_t u + f(d_\rel u)).
	\end{split}
	\end{equation}
	Then, we can compute the zeroth order and linear terms of $f$, to obtain the claimed decomposition. To prove a-smoothness, we work on the embedding $\tpsil$, which is a-smooth. Note that the factor $\rho^{-2}$ appears in the expression since we need to do this pull-back.
\end{proof}

\subsubsection{The linearised equation }

Now, we note the linearised form of the LMCF equation. By the above computations, and the $1$-form being exact by construction, we can write equality at the level of functions (note that we have multiplied the equation by $\rho^2$ in order to make the linear part nonsingular),
 \begin{equation}\label{eq_potentialequation}
\rho^2E + \rho^2\pa_vu - \rho^2\Delta_{L_t} + \rho^2(H-v^\perp)(\tu) +\rho^2Q(\rho^{-2}d_\mathrm{rel}u,\rho^{-2}{^b\nabla_\mathrm{rel}^2u}) = 0,
\end{equation}
for $E$ being the zeroth order error term, and $Q$ being the nonlinear term. Therefore, we have that: 
\begin{defn}
	The linearised LMCF operator is given by \begin{equation}\label{eq_mapping}
	u \mapsto \rho^2\pa_vu - \rho^2\Delta_{L_t}u+ \rho^2(H-v^\perp)(\tu).
	\end{equation}	
\end{defn}

Henceforth, we will often write the relative elliptic part of (\ref{eq_mapping}) as a uniformly elliptic operator $\mathcal{L}$.

\subsection{Interior and a-priori estimates}
In this section, we will construct a `good cover' of $\bL$ by taking balls inside the model neighbourhoods as constructed in \S \ref{subsubsec_localmodels}, such that our operator behaves like a standard parabolic operator and the metric in these balls is close to the Euclidean metric. Having such a cover will allow us to prove global a-priori estimates on $\bL$ by adding up local interior estimates.

\subsubsection{A good cover of $\bL$}
\begin{defn}\label{def_goodcover}
    Let $R_1,R_2,R_3,R_4,\varepsilon_1,\ep_2,\ep_3,\ep$ be fixed positive constants with $R_1<R_2, R_3<R_4, \ep_1<\ep_2$, and let $N$ be a fixed positive integer. Associated to this, we define two kinds of model pairs of open sets $(U,V)$ with $V\subset U$, as:
    \begin{itemize}
    	\item $U=B_{R_2}\times (-\ep_2,0)$, $V=B_{R_1}\times (-\ep_1,0)$.
    	\item $U=B_{R_4}\times[0,\ep_3)$, $V=B_{R_3}\times[0,\ep_3)$.
    \end{itemize}
    
    A countable collection of pairs of model open sets $\{(U_i,V_i)\}_{i\in I}$ with $V_i\subset U_i$ as above, along with a-diffeomorphisms to $\bL$ of the form $\Psi_i: U_i\rt \Psi_i(U_i)$, is called a \textit{good cover}, if these charts satisfy:
\begin{enumerate}
	\item The images $\Psi_i(U_i)$ are open in $\bL$ and $\bigcup_{i\in I} \Psi_i(V_i)= \bL\backslash \textrm{ff}$. 
	\item For each point $x\in \bL$, there are at most $N$ open sets $\Psi_i(U_i)$ covering $x$.
 \item The image of $B_R\times\{s\}$ under $\Psi_i$ is embedded in a time-slice in $\bL$, and $\pa_t\Psi_i^*(t)>0$ on each $U_i$ (i.e.\ the time-slices and the direction of time are preserved).
    \item The charts satisfy $\|\Psi_i^*g_\bL-g_0\|_{C^0} < \varepsilon$, and $\|\nabla^k(\Psi_i^* g_\bL-g_0)\|_{C^0} < C_k$ for $k \geq 1$, where the connection and norm is with respect to the metric $g_0$ on $U_i$. 
 \end{enumerate}
\end{defn}

Now, we prove the existence of a good cover on $\bL$:
\begin{lem}
	Let $\ep >0$ be a given constant. There exists parameters $R_1,R_2,R_3,R_4,\ep_1,\ep_2,\ep_3$ and a corresponding good cover of $\bL$ as described earlier, with the pullback metrics being $\ep$ close in the $C^0$-norm to the domain metrics.
\end{lem}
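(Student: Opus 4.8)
The plan is to refine the \emph{finite} cover of $\bL$ by model neighbourhoods supplied by Lemma~\ref{lem_modelneighbourhood} into a countable cover by the two prescribed types of Euclidean-ball-times-interval charts, handling the two models that meet the front face by passing to the cylindrical coordinate $\log t$ (resp.\ $\log r$), in which the $b$-metric becomes a genuine product metric.

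First I would apply Lemma~\ref{lem_modelneighbourhood} with an auxiliary small constant $\ep_1=\ep_1(\ep)$ to get charts $\Psi_j\colon M_j\to\bL$, $j$ in a finite index set $J$, from the four local model types, with $\|\Psi_j^*g_\bL-g_j\|_{C^0}<\ep_1$ and $\|\nabla^k(\Psi_j^*g_\bL-g_j)\|_{C^0}<C_k$ for the model metric $g_j$. It then suffices to subdivide each $M_j$ into good-cover charts on which $g_j$ itself is $C^0$-close to, and has bounded derivatives relative to, the flat metric $g_0$ of Definition~\ref{def_goodcover}: composing with $\Psi_j$ and the triangle inequality give property~(4), while property~(2) holds because $|J|<\infty$ and the multiplicity used inside a single $M_j$ will be bounded. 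For the two interior models $U\times(a,b)$ and $U\times[0,\delta)$, where $g_j=g_0+dt^2$ is already flat and the relevant regions are relatively compact in $\overline\bL$, a finite subcover by small concentric Euclidean balls crossed with a time interval --- $(-\ep_1,0)\subset(-\ep_2,0)$ in the first case after a translation of time, $[0,\ep_3)$ in the second --- gives good-cover charts of interior, resp.\ boundary, type; the chart's time coordinate is a translate of $t$, so properties~(1) and~(3) are immediate.

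The substance is in the two models meeting the front face, where the front face lies at infinite distance in the $b$-metric. For the type-(3) model $U\times\llbracket0,\delta)$, with $g_j=g_0(w)+t^{-2}\,dt^2$, the substitution $\tau'=\log t$ identifies $\{t>0\}$ with the product cylinder $\bigl(U\times(-\infty,\log\delta),\,g_0+d\tau'^2\bigr)$, the front face sitting at $\tau'=-\infty$. I cover this cylindrical end by small concentric balls $B_{R_1}\subset B_{R_2}$ crossed with overlapping intervals of fixed length in $\tau'$ indexed by $k\in\bN$ and marching off to $\tau'=-\infty$ --- a family of bounded multiplicity in the $\tau'$-direction --- and use on the $k$-th piece a translated coordinate $\tilde t$ running over a fixed interval $(-\ep_2,0)$, so that the pulled-back metric is exactly $g_0+d\tilde t^2$; since $\Psi_i^*t$ depends only on $\tilde t$, whole time-slices of $\bL$ go to time-slices and $\partial_{\tilde t}\Psi_i^*t>0$. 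For the type-(4) corner model $V\times\llbracket0,\delta)\times[0,\delta)$, with $t=r^2s$ and $g_j=g_0+ds^2+r^{-2}\,dr^2$, I set $\hat r=\log r$ and introduce, about a sequence of base radii $r_0=r_0^{(k)}\downarrow0$ chosen logarithmically finely, the coordinates $(\sigma,\xi,\hat t)$ with $\xi=\hat r-\log r_0^{(k)}$ and $\hat t=t/r_0^2$; then $s=e^{-2\xi}\hat t$ and a short computation gives $g_j=g_0+e^{-4\xi}(d\hat t-2\hat t\,d\xi)^2+d\xi^2$, which on the slab $\{|\xi|<\eta,\ 0\le\hat t<\ep_3\}$ (for $\eta$ small and $\ep_3=\delta e^{-2\eta}$) is $C^0$-close to $g_0+d\hat t^2+d\xi^2$; grouping $(\sigma,\xi)$ into a Euclidean ball gives a boundary-type chart in which $\Psi_i^*t=(r_0^{(k)})^2\hat t$ depends only on $\hat t$, and all estimates are independent of $k$ because $g_0+ds^2+r^{-2}\,dr^2$ is invariant under $r\mapsto\lambda r$.

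Finally I fix $\eta$, $\delta$ (hence $\ep_3$), the radii $R_1,\dots,R_4$ and $\ep_1$ small enough, all depending only on $\ep$, so that every pulled-back metric lies within $\ep$ of $g_0$ in $C^0$; the derivative bounds then hold with $k$-independent constants by the scaling and translation invariances used above, the multiplicity $N$ is finite, and the smaller sets $\Psi_i(V_i)$ cover $\bL\setminus\ff$ --- the type-(3) and type-(4) charts exhaust every punctured neighbourhood of $\ff$ as $k\to\infty$, the remainder of $\bL\setminus\ff$ being relatively compact in $\overline\bL$ and hence covered by finitely many of the others --- while no chart meets $\ff$ since on every type-(3), resp.\ type-(4), chart $t$ is bounded below by a positive multiple of $\delta e^{-k}$, resp.\ $r\ge r_0^{(k)}e^{-\eta}>0$. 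I expect the main obstacle to be exactly this front-face/corner step: one must choose coordinates near the front face that simultaneously flatten the $b$-metric to Euclidean on a fixed-size chart, respect the fibration $t\colon\bL\to[0,\ep)$ so that time-slices are carried to time-slices, and produce a model time interval of a shape independent of $k$ --- passing to $\log t$ on the a-boundary and parabolically rescaling the time variable near the corner is what makes all three compatible at once.
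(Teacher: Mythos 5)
Your proof is correct and takes essentially the same approach as the paper: reduce via Lemma~\ref{lem_modelneighbourhood} to the finitely many model neighbourhoods, then exploit the translation/dilation invariance of the model m-metrics (in $\log t$ for the a-boundary model, in $\log r$ for the corner model) to propagate a single fixed chart into a countable, bounded-multiplicity family with $k$-independent $C^0$ and derivative bounds. The only mechanical difference is at the corner model, where the paper constructs the chart by flowing a normal-coordinate chart on $\{s=0\}$ along the time-like vector field for time $t\in[0,\delta\rho_0^2)$, while you instead write the explicit coordinates $(\sigma,\xi,\hat t)=(\sigma,\log(r/r_0^{(k)}),t/(r_0^{(k)})^2)$ and compute the pulled-back metric $g_0+e^{-4\xi}(d\hat t-2\hat t\,d\xi)^2+d\xi^2$ directly --- both routes rest on the same $r\mapsto\lambda r$ invariance and yield equivalent estimates.
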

\begin{proof}
	
	Due to the presence of finitely many model neighbourhoods as in Lemma \ref{lem_modelneighbourhood}, it is sufficient to find a good cover in each of the model neighbourhoods and then compose them with the model neighbourhood charts. Clearly, the model neighbourhoods away from the front face and corner themselves satisfy the conditions above. Therefore, we need to find a good cover for the model neighbourhoods at the front face and at the corner. 
	
	First, let us consider a model neighbourhood at the front face $M = U\times\llbracket0,\varepsilon)$, where $U\subset \bR^m$ is an open ball and $U\times \{0\}$ is an a-boundary, and we have an m-metric given by $g_0 + t^{-2}dt^2$. In this case, we can consider charts $\Psi_{(x_0,t_0)}: B_R\times (-\delta,\delta)\rt M$, defined as $\Psi_{(x_0,t_0)}(x,t):=(x+x_0, t_0(1+t))$.
	
	Now, let us consider a model neighbourhood near the corner $M = V\times \llbracket0,\varepsilon)\times [0,\delta)$, parametrized by $(\sigma,r,s)$, where $V\subset \bR^{m-1}$ is an open ball, $r=0$ is an a-boundary, $s=0$ is an ordinary boundary, we have a time variable $t=r^2s$, and we have an m-metric $g_0+ds^2+r^{-2}dr^2$. Here, we consider the time-like vector field $\tilde{v}$ that is orthogonal to the time-slices -- note that this is $\ep$-close to the pullback of the time-like vector field from $\bL$. Note that in our coordinates, we have $\tilde{v}=\rho\pa_\rho-2s\pa_s$. Now, we consider a chart $\xi_{x_0}:B_R\rt V\times \llbracket 0,\ep) \times \{0\}\subset M$ that is in normal coordinates with respect to the m-metric and $\xi_{x_0}=(x_0,0)$, and then flow this chart along the time-like vector field $\tilde{v}$ for time $t\in [0,\delta \rho_0^2)$, where $\delta>0$ is a fixed constant. This gives us a chart, $\Xi_{(x_0,0)}: B_R\times [0,\delta)\rt M$, defined as $\Xi_{(x_0,t_0)}(x,t):= f_{\rho_0^2t}\circ \psi_{x_0}(x)$, where $f_t$ is the flow along $\tilde{v}$ at time $t$. 
	
	Now, we observe that both these charts above are invariant under the action of $\bR^+$, given by $t\mapsto ct$ in the first case and $r \mapsto cr$ in the second case, for a positive $c>0$ -- this is because both the m-metric and the time-like vector field $\tilde{v}$ are invariant under this transformation. Therefore, these charts have uniformly bounded derivatives of the metric. Finally, these charts are an isometry at the origin, so by choosing the domain sufficiently small, we can bound the $C^0$-norms of the pullback metric. 
	
	Now, we can cover the region $t\in [\ep/2,\ep]$ in the first case and $r\in[\ep/2,\ep]$ in the second case by finitely many charts as above. Then, translating these charts by the action of $c=2^{-k}$ for $k\geq 1$ as above, we get charts for the whole region $M$. Due to construction, each point in $M$ is covered by a uniformly bounded number of these charts. This constructs a good cover for $M$. 
\end{proof}

\subsubsection{Estimates for a good cover}

First, we note the following about comparison of norms of tensors:

\begin{lem}
	Let $g^1_{ij},g^2_{ij}$ be two norms such that $|g^1-g^2|_{g^1} < \delta<1$. Then, we have that:
	
	\begin{enumerate}
		\item For an $n$-tensor $\alpha$ there exists a positive constant $C_\delta$, such that  \[
		C_\delta^{-1}|\alpha|_{g^2}\leq |\alpha|_{g^1}\leq C_\delta|\alpha|_{g^2}.
		\]
		\item For a function $f$, and the volume measures $\mu_i = \sqrt{\det(g^i)}$, there exists a constant $C_\delta$, such that \[
		C_\delta^{-1}|f|_{L^1(\mu_1)} \leq |f|_{L^1(\mu_2)} \leq C_\delta|f|_{L^1(\mu_1)}.
		\]
	\end{enumerate} 
\end{lem}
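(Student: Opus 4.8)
\textit{Proof.}
The plan is to reduce both statements to a pointwise linear-algebra computation and then diagonalise. Fix a point $p$ and choose a basis of the relevant tangent space that is orthonormal for $g^1$; in this basis $g^1$ is the identity matrix and $g^2$ is represented by a symmetric positive-definite matrix $G$. The hypothesis $|g^1-g^2|_{g^1}<\delta$ bounds $\mathrm{Id}-G$ in the fixed (Hilbert–Schmidt type) norm that $g^1$ induces on $\mathrm{Sym}^2$; since such a norm dominates the operator norm up to a dimensional constant — and equals it up to such a constant for the natural conventions — every eigenvalue $\lambda_k$ of $G$ satisfies $|\lambda_k-1|<\delta$ (after harmlessly absorbing that dimensional constant into $\delta$, which only affects $C_\delta$). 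Diagonalising $G$ by a further $g^1$-orthogonal change of basis, we obtain a basis orthonormal for $g^1$ in which $g^2=\mathrm{diag}(\lambda_1,\dots,\lambda_m)$ with all $\lambda_k\in(1-\delta,1+\delta)$.

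For part (1), in this basis the squared norm of an $n$-tensor $\alpha$ with respect to either metric is a sum over multi-indices $I=(i_1,\dots,i_n)$ of $\bigl(\prod_{j=1}^n\lambda_{i_j}^{\epsilon_j}\bigr)|\alpha_I|^2$, where $\epsilon_j=-1$ if the $j$-th slot is contravariant (so that the corresponding index is raised with $g^{-1}$ in the pairing) and $\epsilon_j=+1$ if it is covariant; for $g^1$ every $\lambda$ equals $1$. Since each $\lambda_{i_j}^{\pm1}$ lies in $\bigl((1+\delta)^{-1},(1-\delta)^{-1}\bigr)$, each coefficient lies between $(1+\delta)^{-n}$ and $(1-\delta)^{-n}$, hence
\[
(1+\delta)^{-n}\,|\alpha|_{g^1}^2\ \le\ |\alpha|_{g^2}^2\ \le\ (1-\delta)^{-n}\,|\alpha|_{g^1}^2 .
\]
Taking $C_\delta:=(1-\delta)^{-n/2}$ and using $(1+\delta)^{-1}\ge 1-\delta$ gives the asserted two-sided bound, with $C_\delta$ depending only on $n$ and $\delta$ and in particular uniform in $\alpha$ and in the point $p$.

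For part (2), in the same basis $\det g^1=1$ while $\det g^2=\prod_k\lambda_k\in\bigl((1-\delta)^m,(1+\delta)^m\bigr)$, so pointwise $\mu_2=\sqrt{\det g^2}\,\mu_1$ with $\sqrt{\det g^2}\in\bigl((1-\delta)^{m/2},(1+\delta)^{m/2}\bigr)$. Integrating $|f|$ against $\mu_2=\sqrt{\det g^2}\,\mu_1$ and pulling these pointwise bounds out of the integral yields
\[
(1-\delta)^{m/2}\,|f|_{L^1(\mu_1)}\ \le\ |f|_{L^1(\mu_2)}\ \le\ (1+\delta)^{m/2}\,|f|_{L^1(\mu_1)} ,
\]
and since $(1+\delta)^{m/2}\le(1-\delta)^{-m/2}$ for $\delta\in(0,1)$, the claim follows with $C_\delta=(1-\delta)^{-m/2}$. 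There is no genuine obstacle here: the statement is elementary linear algebra made uniform, and the only points needing care are the consistent bookkeeping of raised versus lowered indices in the tensor norm (handled by the exponents $\epsilon_j$) and the innocuous dimensional constant relating $|\cdot|_{g^1}$ to the operator norm, which is absorbed into the choice of $\delta$. \qed
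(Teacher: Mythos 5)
The paper states this lemma without proof, treating it as a standard pointwise linear-algebra fact, so there is no paper proof to compare against; the only question is whether your argument is sound. It is, and pointwise diagonalisation in a $g^1$-orthonormal frame is exactly the natural route. Three small imprecisions are worth flagging, though none affects the final constants. First, your index bookkeeping is inverted: a covariant slot is raised with $g^{-1}$ and so contributes a factor $\lambda^{-1}$, while a contravariant slot is lowered with $g$ and contributes $\lambda$; this is the opposite of what you wrote, but since both $\lambda$ and $\lambda^{-1}$ obey the same two-sided bounds, the estimate is unaffected (and in the paper's applications the tensors are purely covariant anyway). Second, the intermediate claim that every $\lambda^{\pm1}$ lies in $\bigl((1+\delta)^{-1},(1-\delta)^{-1}\bigr)$ is not quite right: since $1-\delta\le(1+\delta)^{-1}$, a factor of $\lambda$ itself can dip below $(1+\delta)^{-1}$; the correct common interval is $\bigl(1-\delta,(1-\delta)^{-1}\bigr)$, which still yields $(1-\delta)^n\le|\alpha|_{g^2}^2/|\alpha|_{g^1}^2\le(1-\delta)^{-n}$ and hence your $C_\delta=(1-\delta)^{-n/2}$. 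Third, no dimensional constant is needed to pass from the Hilbert--Schmidt to the operator norm, since the Frobenius norm of a symmetric matrix already dominates its largest eigenvalue in absolute value, so the hedge about "absorbing into $\delta$" can simply be dropped. Part~(2) is correct as written: the pointwise density $\mu_2/\mu_1=\sqrt{\det G}\in\bigl((1-\delta)^{m/2},(1+\delta)^{m/2}\bigr)$ is basis-independent and integrates to the claimed bound.
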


The utility of a good cover of $\bL$ is explained in the following lemma: 
\begin{lem}\label{lem_goodcoverestimates}
    Let $\{(U_i,V_i),\Psi_i\}_{i\in I}$ be a good cover of $\bL$, satisfying the $C^0$-estimate on the pullback metric with a positive constant $\ep >0$. Then, there exist positive constants $\delta, C_{i,j}>0$ for $i,j\geq0$ (where $\delta$ can be made as small as we like by taking $\ep$ small), such that for $k,r\geq0$ and any function $u:\bL\rt\bR$, we have the pointwise estimate:
    \begin{equation}
    	\begin{split}  (1+\delta)^{-1}|\pa_t^r\nabla^{k}_x u| &\leq | {^{\m}\nabla^{r}}({^b\nabla^k_\rel} u)| + \sum_{j=0}^{k}\sum_{i=1}^{r}C_{i,j}|{^{\m}\nabla^{r-i}}({^b\nabla_\rel^{j+i}}u)
    	| \\
    	&\leq (1+\delta) |\pa_t^r\nabla^{k+r}_x u| + \sum_{j=0}^{k}\sum_{i=1}^{r}2C_{i,j}|{^{\m}\nabla^{r-i}}({^b\nabla_\rel^{j+i}}u)
    	|.\end{split}
    	\end{equation}
    Here, the norm on the covariant derivatives $\nabla_x$ and $\pa_t$ is the standard Euclidean norm, and the norm on $^b\nabla_\rel$ and $^{\m}\nabla$ is the ${\m}$-norm.
\end{lem}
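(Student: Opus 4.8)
The plan is to prove the two pointwise inequalities chart by chart over the good cover, where the relative and mixed tangent bundles become coordinate‑adapted, and then to observe that all constants produced are uniform over the (infinitely many) charts by condition (4) of Definition \ref{def_goodcover}. Fix a chart $\Psi_i\colon U_i\to\bL$ with $U_i=B\times I$ a model pair, and use the Euclidean coordinates $(x,t)$ on $U_i$; these are the coordinates in which $\nabla_x,\pa_t$ are the standard operators in the statement. By condition (3) of Definition \ref{def_goodcover} the time slices and the direction of time are preserved by $\Psi_i$, so $\Psi_i^{-1}$ identifies $^{\m}T\bL$ with the full coordinate tangent bundle of $U_i$ and $^bT_{\rel}\bL$ with $\ker dt$; hence $^{\m}\nabla$ pulls back to the Levi–Civita connection of $g:=\Psi_i^{*}g_{\bL}$ and $^b\nabla_{\rel}$ to its restriction to the slices. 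By condition (4), $\|g-g_0\|_{C^0}<\varepsilon$ and $\|\nabla^{l}(g-g_0)\|_{C^0}\le C_l$ on $U_i$ with $g_0$ the Euclidean metric and the $C_l$ independent of $i$, so the norm‑comparison lemma stated above provides $\delta=\delta(\varepsilon)\to 0$ with $(1+\delta)^{-1}|\cdot|_{g_0}\le|\cdot|_{g}\le(1+\delta)|\cdot|_{g_0}$ for all tensors. It is therefore enough to prove the inequalities with every $\m$‑norm replaced by the Euclidean norm, up to the factor $(1+\delta)$ --- precisely the factor appearing in the statement.

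\textbf{Expanding the iterated derivatives.} Write $^{\m}\nabla=\nabla^{g_0}+A$ and $^b\nabla_{\rel}=\nabla^{g_0}_{x}+A'$, where $\nabla^{g_0}$ is flat and $A,A'$, together with all their flat derivatives, are bounded by constants depending only on the good‑cover parameters (on the flat interior models $A=A'=0$). Expanding $|{}^{\m}\nabla^{r}({}^{b}\nabla^{k}_{\rel}u)|$ by Leibniz, the \emph{principal term} is the flat iterate $(\nabla^{g_0})^{r}((\nabla^{g_0}_{x})^{k}u)$, whose components in $(x,t)$‑coordinates are exactly the Euclidean jets $\pa_t^{j}\nabla_x^{k+r-j}u$ for $0\le j\le r$; every remaining term carries at least one factor of $A$ or $A'$ and is a contraction of a bounded tensor with an iterated flat derivative of $u$ of total order $\le k+r$ and time‑order $\le r-1$. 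Re‑expressing those flat derivatives back in terms of $^{\m}\nabla^{r'}({}^{b}\nabla^{k'}_{\rel}u)$ --- the inverse substitution, again with bounded coefficients --- each correction term is dominated by $\sum_{j=0}^{k}\sum_{i=1}^{r}C_{i,j}\,|{}^{\m}\nabla^{r-i}({}^{b}\nabla^{j+i}_{\rel}u)|$: at most $r-1$ time derivatives are traded for at most $k$ extra space derivatives, which is exactly the index pattern in the statement. This is an induction on $r$, the base case $r=0$ being the elliptic comparison $|\nabla_x^{k}u|\sim|{}^{b}\nabla^{k}_{\rel}u|$ handled in the same way.

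\textbf{Reading off the inequalities, and the main obstacle.} For the lower bound, $\pa_t^{r}\nabla_x^{k}u$ is one of the components of the principal term, so $|\pa_t^{r}\nabla_x^{k}u|_{g_0}$ is at most $|{}^{\m}\nabla^{r}({}^{b}\nabla^{k}_{\rel}u)|_{g_0}$ plus the correction sum; converting $g_0$‑norms into $\m$‑norms and moving the factor $(1+\delta)$ to the left gives the stated left‑hand inequality. For the upper bound, $|{}^{\m}\nabla^{r}({}^{b}\nabla^{k}_{\rel}u)|_{g_0}$ is at most a fixed multiple of the Euclidean parabolic jet of $u$ of the corresponding order; its top‑order piece is the term appearing on the right‑hand side of the statement, while every other piece has fewer than $r$ time derivatives and hence re‑expands, with bounded coefficients, into the same double sum, after which enlarging $C_{i,j}$ to $2C_{i,j}$ absorbs $(1+\delta)$. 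I expect the delicate point to be the bookkeeping in the expansion step --- checking that a single factor of $A$ or $A'$ genuinely loses a time derivative. This is transparent on the interior and front‑face models; on the corner model $V\times\llbracket 0,\delta)\times[0,\delta)$ it holds because in the good‑cover chart the time direction is the flow of the rescaled time‑like field $\rho\pa_\rho-2s\pa_s$, so $^{\m}\nabla$ differs from the Euclidean $\pa_t$ only by bounded zeroth‑order terms rather than by the singular factors $r^{-1}$ present before passing to the good cover. Uniformity of all constants over the cover needs nothing beyond condition (4) of Definition \ref{def_goodcover}, which was itself arranged using the $\bR^{+}$‑equivariance of the corner and front‑face charts.
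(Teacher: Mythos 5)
Your proposal follows essentially the same route as the paper's proof: work in a single good‑cover chart, pull back $^{\m}\nabla$ to the Levi--Civita connection of $\Psi_i^*g_\bL$ and $^b\nabla_\rel$ to its slice‑restriction, identify the principal flat‑iterate term in $(x,t)$‑coordinates, and absorb the remaining terms (which carry Christoffel symbols and derivatives of the metric) into the correction sum. The paper's own proof is much terser --- it only exhibits the $r=k=1$ case via the explicit embedding $dx_i \mapsto dx_i - g_{it}dt$ and says "computing similarly for higher derivatives" --- so your flat‑plus‑correction bookkeeping and the explicit remark about uniformity of constants over the (infinitely many) charts are genuine additions of clarity, not a divergent method.

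There is one point where I would push back on your resolution of what you correctly identify as "the delicate point." You claim every correction term "carries at least one factor of $A$ or $A'$ and is a contraction of a bounded tensor with an iterated flat derivative of $u$ of total order $\le k+r$ and time‑order $\le r-1$," and argue this follows because the time‑like direction in the good‑cover chart is tamed. But a factor of $A'$ coming from $^b\nabla_{\rel}$ does not, by itself, cost a time derivative: it costs a \emph{space} derivative. When $^{\m}\nabla^r$ is applied by Leibniz to $A' \cdot {^b\nabla_\rel^{k-1}}u$, the term $A'\cdot{^{\m}\nabla^r}({^b\nabla_\rel^{k-1}}u)$ appears, whose principal part has time‑order up to $r$ (not $r-1$), while the stated correction sum $\sum_{j\le k}\sum_{i\ge 1} C_{i,j}|{^{\m}\nabla^{r-i}}({^b\nabla_\rel^{j+i}}u)|$ only contains terms with at most $r-1$ m‑derivatives. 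So either one must argue these $A'$‑only terms cancel or are absorbed differently, or one must enlarge the correction sum to include $i=0$, $j<k$ (the paper's own statement appears to share this imprecision). This is an index‑bookkeeping gap rather than a wrong approach, but you should not dismiss it as settled by the boundedness of $A'$; it is exactly the step the paper waves past with "computing similarly."
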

\begin{proof}
	This follows from computations in the local models for $\bL$. Consider local coordinates $(x^i,t)$ on $\bL$, then we can write, \[
	^b\nabla_\rel u = \pa_{x_i}udx_i,
	\]
	as a section of the relative cotangent bundle. Now, note that the relative cotangent bundle embeds into the m-cotangent bundle as \[
	dx_i \mapsto dx_i - g_{it}dt,
	\]
	for $g_{it}={g}(\pa_{x_i},\pa_t)$ where $g$ is the m-metric. Then, we have\begin{equation*}
		\begin{split}
	^{\m}\nabla(^b\nabla_\rel u) = {^{\m}\nabla}(\pa_{x_i}u(dx_i-g_{it}dt)) &= \pa^2_{x_ix_j}udx_j\otimes(dx_i-g_{it}dt)+ \pa^2_{it}udt\otimes(dx_i-g_{it}dt)\\
	&+\textrm{terms involving lower order derivatives}.
	\end{split}
	\end{equation*}
	Computing similarly for higher derivatives, the claim follows.
\end{proof}

\begin{coro}\label{coro_compnorms}
	We obtain a local comparison of norms, \[
		C_{k,r}^{-1}(\|u|_{\Psi_i(V_i)}\|_{L^p_{k+2r,r}(\bL)} )\leq \|u\circ\Psi_i\|_{L^p_{k+2r,r}(U_i)} \leq C_{k,r}(\|u|_{\Psi_i(U_i)}\|_{L^p_{k+2r,r}(\bL)}).
	\]
\end{coro}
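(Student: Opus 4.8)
The plan is to deduce the two–sided bound from the pointwise comparison of Lemma~\ref{lem_goodcoverestimates}, integrated over each good–cover chart, together with the comparison of tensor norms and volume measures recorded just above that lemma; the whole point is that the conditions in Definition~\ref{def_goodcover} make every constant uniform in $i\in I$. First I would fix a chart $\Psi_i$ of the good cover and abbreviate $w:=u\circ\Psi_i$ on $U_i$. By condition~(4) of Definition~\ref{def_goodcover} we have $\|\Psi_i^*g_\bL-g_0\|_{C^0}<\varepsilon$ with higher derivatives bounded uniformly in $i$, so the comparison-of-tensors lemma applies with an $i$-independent constant: for any relative or mixed tensor field $T$ on $\Psi_i(U_i)$ one has
\[
C^{-1}\int_{U_i}|\Psi_i^*T|_{g_0}^p\, d\mu_{g_0}\ \le\ \int_{\Psi_i(U_i)}|T|_{\m}^p\, dV_\bL\ \le\ C\int_{U_i}|\Psi_i^*T|_{g_0}^p\, d\mu_{g_0},
\]
and similarly over $V_i$. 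Thus pulling back and measuring $L^p$-norms over the model chart is, up to a fixed constant, the same as integrating $\m$-norms over the image against $dV_\bL$.

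For the upper bound I would invoke the first inequality of Lemma~\ref{lem_goodcoverestimates}: pointwise on $U_i$, each Euclidean parabolic derivative $\pa_t^{r'}\nabla_x^{k'}w$ with $k'+2r'\le k+2r$ and $r'\le r$ is bounded by $(1+\delta)$ times the corresponding $|{^{\m}\nabla}^{r'}({^b\nabla}_\rel^{k'}u)|$ plus a finite sum of $\m$-derivatives of $u$ of strictly lower parabolic order and no more time derivatives — hence all of them among the summands defining $\|\,\cdot\,\|_{L^p_{k+2r,r}(\bL)}$. Raising to the $p$-th power, using $(x+y)^p\le 2^{p-1}(x^p+y^p)$, summing over the finitely many admissible $(k',r')$, integrating over $U_i$ and applying the measure comparison above yields $\|w\|_{L^p_{k+2r,r}(U_i)}\le C_{k,r}\,\|u|_{\Psi_i(U_i)}\|_{L^p_{k+2r,r}(\bL)}$, with $C_{k,r}$ depending only on $k,r,p,\varepsilon$ and the good-cover parameters.

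For the lower bound I would run an induction on $k+r$ to absorb the error terms. The base case $k=r=0$ is the identity $|u\circ\Psi_i|=|u|\circ\Psi_i$. For the inductive step I would use the second inequality of Lemma~\ref{lem_goodcoverestimates}: each top-order term $|{^{\m}\nabla}^{r}({^b\nabla}_\rel^{k}u)|$ is controlled pointwise by a constant times a Euclidean derivative of $w$ of parabolic order $\le k+2r$ with at most $r$ time derivatives, plus $\m$-derivatives of $u$ of parabolic order $\le k+2r-1$ (this follows from the explicit order $j+2(r-i)=j+2r-i$ of the terms), which by the inductive hypothesis are already dominated by $\|w\|_{L^p_{k+2r,r}(U_i)}$; integrating over $\Psi_i(V_i)\subset\Psi_i(U_i)$, converting to Euclidean measure on $V_i\subset U_i$, and summing gives $\|u|_{\Psi_i(V_i)}\|_{L^p_{k+2r,r}(\bL)}\le C_{k,r}\,\|w\|_{L^p_{k+2r,r}(U_i)}$. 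Combining the two bounds finishes the proof.

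The only genuinely delicate point is the uniformity of $C_{k,r}$ in $i$: this is not an accident but exactly the reason the good cover was built with uniform metric closeness and derivative bounds, and (for the subsequent global a priori estimate obtained by summing this corollary over $I$) with the bounded-multiplicity property. Everything else — the book-keeping of which lower-order terms appear and the induction that absorbs them — is routine once the orders in Lemma~\ref{lem_goodcoverestimates} are tracked.
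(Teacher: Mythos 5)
Your proof is correct and takes essentially the route the paper intends: the corollary is meant as an immediate consequence of Lemma~\ref{lem_goodcoverestimates} together with the tensor/volume comparison lemma, with the uniformity in $i$ coming from Definition~\ref{def_goodcover}; the paper gives no separate proof. The one thing to tidy is the choice of induction parameter in the lower-bound step. The error terms in Lemma~\ref{lem_goodcoverestimates} are $|{^{\m}\nabla^{r-i}}({^b\nabla_\rel^{j+i}}u)|$ with $j\le k$, $i\ge1$, i.e.\ the pair $(\alpha,\beta)=(j+i,\,r-i)$ with $\alpha+\beta=j+r$. When $j=k$ this equals $k+r$, so ``induction on $k+r$'' does not strictly decrease and would not terminate as stated. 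What does strictly decrease is either the number $\beta=r-i\le r-1$ of $\m$-derivatives, or the parabolic order $\alpha+2\beta=(j+i)+2(r-i)=j+2r-i\le k+2r-1$ (your intermediate expression $j+2(r-i)$ drops the $+i$ coming from $\alpha$, though the value $j+2r-i$ you record afterwards is the right one). Inducting on either of these, with base case $\beta=0$ where $|{^b\nabla_\rel^{\alpha}}u|$ agrees with $|\nabla_x^{\alpha}w|$ up to the metric constants, closes the argument; everything else in your write-up is sound.
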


\subsubsection{A global a-priori estimate}

Now, we have that:
\begin{lem}\label{lem_chartsoperator}
	Consider a chart of a good cover $\Psi_i: U_i \rt \bL$. Let us denote the pullback function $u^*:=u\circ\Psi_i$. Then, we have that \[
	\rho^2\pa_v u = c_1\pa_tu + c^j_2\pa_{x^j}u,
	\]
	for $c_1,c^i_2$ being uniformly bounded, and $c_1$ being uniformly positive away from $0$. Moreover, all the derivatives of $c_1,c^j_2$ are also uniformly bounded. Further, we have that \[
	^b\nabla_{\rel,j}u = c_3^i\pa_{x_j}u,
	\]
	for $c_3^j$ being uniformly bounded away from $0$. As a corollary, we have \[
	^b\nabla^2_{\rel,jk}u = c_3^jc_3^k\pa^2_{x_jx_k}u + c_3^j\pa_{x_j}c^k_3\pa_{x_k}u.
	\]
	Therefore, we get that for $a^{jk}{^b\nabla_{\rel,jk}}$ being a uniformly positive definite relative elliptic operator, its pullback is also a uniformly positive definite relative elliptic operator over the good cover, i.e.\ in each $U_i$.
\end{lem}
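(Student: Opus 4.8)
The plan is to reduce everything to the finitely many model neighbourhoods of Lemma \ref{lem_modelneighbourhood} and to check the three claims chart by chart, the only real content being uniformity of the constants as one runs through the (infinitely many) charts approaching the front face and the corner. By the construction of a good cover (Definition \ref{def_goodcover}), each $\Psi_i\colon U_i\rt\bL$ factors, after an a-diffeomorphism, through one of these models, so I would fix a representative chart of each model type and verify the stated forms there. Uniformity in $i$ then comes for free, because the front-face and corner model charts of the good cover were built so as to be invariant under the $\bR^+$-dilations $t\mapsto ct$ (resp.\ $r\mapsto cr$), and these dilations preserve the model m-metric, the time-like vector field, the boundary-defining function $\rho$ together with its natural weight, and the relative $b$-metric; hence the pullbacks of all these objects, and therefore all the coefficients below and all their derivatives, are literally scale-independent.

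For the term $\rho^2\pa_v$ I would argue as follows. Away from the front face and the corner — the interior models $U\times(a,b)$ and the bottom-face models $W\times[0,\delta)$ — the function $\rho$ is pinched between positive constants, and by \eqref{eq_vdecomposition} one can write $v=\pa_t+v^\perp$ with $v^\perp$ a uniformly bounded section of $^bT_\rel\bL$; thus $\rho^2\pa_v=\rho^2\pa_t+\rho^2 v^\perp$, with $c_1=\Psi_i^*(\rho^2)$ uniformly positive and $c_2^j=\Psi_i^*(\rho^2(v^\perp)^j)$ uniformly bounded, both with bounded derivatives. In the front-face model $U\times\llbracket0,\delta)$ one has $v=\pa_t$ and $\rho=C\sqrt t$ with $C$ a positive a-smooth function (as recorded in the proposition that $\rho^2v$ is a non-vanishing a-smooth section of $^{\m}T\bL$), while the good-cover chart is $\Psi(x,t)=(x+x_0,\,t_0(1+t))$, obtained by flowing along $v$; so $\Psi_*\pa_t=t_0v$ and $\Psi^*(\rho^2\pa_v)=(\Psi^*C^2)(1+t)\,\pa_t$, whence $c_1=(\Psi^*C^2)(1+t)$ is uniformly pinched between positive constants (since $\Psi^*C^2=C^2(\,\cdot+x_0)$ lives on a bounded set uniformly in $x_0$ and $|t|<\delta<1$) and $c_2^j\equiv0$. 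In the corner model the chart is a $g_\bL$-normal chart of the slice $\{s=0\}$ flowed along the model time-like field $\tilde v$ for rescaled time, so $\Xi_*\pa_\tau=\rho_0^2\tilde v=\rho_0^2 v+O(\ep)$ and $\Xi^*(\rho^2\pa_v)=\rho_0^{-2}(\Xi^*\rho^2)\,\pa_\tau+O(\ep)$; here I would check that $\rho/\rho_0$, and hence $c_1$, stays within a factor controlled by the chart size along the bounded flow, giving $c_1$ uniformly close to $1$ and $c_2^j=O(\ep)$, again with bounded derivatives by the dilation-invariance.

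For the relative derivatives, in each of these models the relative tangent bundle is $\ker({}^bd\pi)=\operatorname{span}\{\pa_{x^j}\}$ in the chart's space coordinates, so $^b\nabla_{\rel,j}u=\langle{}^b\nabla_\rel u,\pa_{x^j}\rangle=c_3^j\pa_{x^j}u$, where $c_3^j$ records the change of frame between $\{\pa_{x^j}\}$ and an a-smooth relative frame (one may in fact take the chart's space coordinates as the relative coordinates, making the $c_3^j$ equal to $1$); since the relative $b$-metric is an a-smooth — hence, in these bounded-geometry charts, uniformly Euclidean-comparable — section of $\operatorname{Sym}^2({}^bT^*_\rel\bL)$, the $c_3^j$ are in any case uniformly pinched away from $0$ and $\infty$, and one more differentiation gives the displayed formula for $^b\nabla^2_{\rel,jk}u$ with uniformly bounded first-order coefficient $c_3^j(\pa_{x_j}c_3^k)$. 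It then follows that if $a^{jk}{}^b\nabla_{\rel,jk}$ is uniformly positive definite relative to the relative $b$-metric, its pullback is $\widetilde a^{jk}\pa^2_{x_jx_k}u$ plus uniformly bounded first-order terms, with $\widetilde a^{jk}=c_3^jc_3^k\,\Psi_i^*a^{jk}$ a conjugation of a uniformly positive form by a uniformly invertible matrix, hence still uniformly positive definite in the Euclidean sense; combined with the previous paragraph, the pullback of $\rho^2\pa_v-\cL$ to each $U_i$ is uniformly parabolic.

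The main obstacle, and the only place where real work hides, is guaranteeing that all of these constants stay uniform \emph{all the way into} the front face and the corner, since each individual chart is smooth and harmless but the cover is infinite. This is exactly the purpose of the dilation-invariant construction of the good cover, so in practice the argument is a bookkeeping exercise layered on top of the explicit local models already computed for $\bL$, together with the standard comparison of tensor and $L^1$ norms under $C^0$-close metrics.
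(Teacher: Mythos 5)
The paper states Lemma \ref{lem_chartsoperator} without a proof, relying implicitly on the construction of the good cover (Definition \ref{def_goodcover}) and the explicit model computations recorded around Lemma \ref{lem_modelneighbourhood}. Your overall strategy — checking the formulas model-by-model and obtaining uniformity of the coefficients from the dilation-invariance built into the front-face and corner charts — is exactly what is expected, and the front-face computation with $c_1=(\Psi^*C^2)(1+t)$ (the $t_0$'s cancelling) is correct and the main content.

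One point of your argument is garbled, though it does not affect the conclusion. In the interior and bottom-face case you invoke \eqref{eq_vdecomposition} and assert that $v^\perp$ is ``a uniformly bounded section of $^bT_\rel\bL$,'' from which you extract $c_2^j=\Psi_i^*(\rho^2(v^\perp)^j)$. But in the paper's decomposition $v=\pa_t+v^\perp$, the field $v^\perp$ lies in the \emph{normal} bundle of $\tL_t\subset X$, not in $T\tL_t\cong{}^bT_\rel\bL$, and $(v^\perp)^j$ does not produce spatial components of $\Psi_i^{-1}_*v$. The nonzero $c_2^j$ in those models arise instead from the $\varepsilon$-discrepancy between the chart's coordinate $\pa_t$ and the vector field $v$ orthogonal to time-slices in the (non-flat) pullback metric $\Psi_i^*g_\bL$: property (3)--(4) of the good cover guarantee $\Psi_i^{-1}_*v=a\pa_t+b^j\pa_{x^j}$ with $b^j=O(\varepsilon)$, and since $\rho$ is bounded above and below in these regions, $c_1=\Psi_i^*(\rho^2)a$ and $c_2^j=\Psi_i^*(\rho^2)b^j$ are bounded together with all derivatives. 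The same clarification should be kept in mind near the corner, where $\rho^2 v=(1+4s^2)^{-1}(\pa_s+2sr\,\pa_r)$ has a nonvanishing (bounded, not $O(\varepsilon)$) radial component that the $c_2^j$ must absorb; this is still bounded and dilation-invariant, so your conclusion is fine, but it is not an $O(\varepsilon)$ correction as your corner paragraph suggests. The discussion of the $c_3^j$ and the pullback of the elliptic operator is correct.
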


Using the above, we get the following a-priori estimate for the linearised operator (\ref{eq_mapping}):
\begin{lem}\label{lem_globalestimate}
	Let $u\in L^p_{k+2,1}(\bL)$, and $v\in L^p_{k+2,1}(\bL)$ such that $u-v\in \mathring{L}^p_{k+2,1}(\bL)$ (i.e.\ the Sobolev space which is the completion of functions that are zero on the bottom face of $\bL$).  Then, we have that 
	\begin{equation}\label{eq_basicestimate}
		\|u\|_{L^p_{k+2,1}}
		\leq C_{k}\left(\|(\rho^2\pa_v-\cL)u\|_{L^p_{k,0}}+\|u\|_{L^p}+\|v\|_{L^p_{k+2,1}}\right).
	\end{equation}
 As a result, for $u$ satisfying $u|_{\textrm{bf}}\equiv 0$ (where we consider the trace of $u$ on the bottom face), we get \begin{equation}\label{eq_basicestimate2}
   	\|u\|_{L^p_{k+2,1}}
   	\leq C_{k}\left(\|(\rho^2\pa_v-\cL)u\|_{L^p_{k,0}}+\|u\|_{L^p}\right).
   \end{equation} 
\end{lem}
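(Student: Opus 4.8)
The plan is to prove~\eqref{eq_basicestimate} by localising to the good cover $\{(U_i,V_i),\Psi_i\}_{i\in I}$ of $\bL\setminus\ff$ and summing the Euclidean parabolic interior estimates of Theorems~\ref{thm_ffestimate} and~\ref{thm_bfestimate}. The crucial input is Lemma~\ref{lem_chartsoperator}: pulling back along any chart $\Psi_i$ of the good cover, the operator $\rho^2\pa_v-\cL$ becomes $c_1(\pa_t-L_i)$, where $c_1$ is bounded above, bounded below away from zero, with all derivatives uniformly bounded over the cover, and $L_i$ is a second-order operator in the space variables, uniformly elliptic with uniformly bounded smooth coefficients (the lower-order terms of $\cL$ being harmless). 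Here one uses that, by construction of the good cover, in the rescaled chart coordinates the $m$-metric is uniformly $C^0$-close to the Euclidean metric with uniformly bounded derivatives, so the ellipticity constant $\kappa$ and coefficient bound $\Lambda$ in Theorems~\ref{thm_ffestimate},~\ref{thm_bfestimate} may be taken independent of $i$. Setting $h_i:=c_1^{-1}\big((\rho^2\pa_v-\cL)u\big)\circ\Psi_i=(\pa_t-L_i)(u\circ\Psi_i)$, the uniform bounds on $c_1^{-1}$ and the product rule give $\|h_i\|_{L^p_{k,0}(U_i)}\le C\,\|(\rho^2\pa_v-\cL)u\|_{L^p_{k,0}(\Psi_i(U_i))}$ with $C$ independent of $i$.

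Next I would split into the two kinds of model pairs from Definition~\ref{def_goodcover}. For a pair $U=B_{R_2}\times(-\ep_2,0)$, $V=B_{R_1}\times(-\ep_1,0)$ lying away from the bottom face I apply~\eqref{eq_ffestimate}; for a pair $U=B_{R_4}\times[0,\ep_3)$, $V=B_{R_3}\times[0,\ep_3)$ meeting the bottom face $\{t=0\}$ I note that $u-v\in\mathring L^p_{k+2,1}(\bL)$ restricts to a function with vanishing trace at $t=0$, so~\eqref{eq_bfestimate} applies; in the latter case the time length $\ep_3$ is the same for all such charts (this is exactly why, near the corner, the good cover is built by flowing for time proportional to $\rho_0^2$, so that the chart time variable always ranges over the fixed interval $[0,\ep_3)$), and since the constant $C'$ in Theorem~\ref{thm_bfestimate} is monotone in that length, $C'$ is uniform. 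In each case this yields a local estimate bounding $\|u\circ\Psi_i\|_{L^p_{k+2,1}(V_i)}$ by $C\,\|h_i\|_{L^p_{k,0}(U_i)}+C\,\|v\circ\Psi_i\|_{L^p_{k+2,1}(U_i)}+C'\,\|u\circ\Psi_i\|_{L^p(U_i)}$ (the middle term present only for the bottom-face charts), with $C,C'$ independent of $i$. Raising to the $p$-th power and summing over $i\in I$, then using Lemma~\ref{lem_goodcoverestimates} and Corollary~\ref{coro_compnorms} to pass between the local Euclidean norms and the $m$-Sobolev norms on $\bL$, together with $\sum_i\mathbf 1_{V_i}\ge\mathbf 1_{\bL\setminus\ff}$ on the left and the bounded-overlap property $\sum_i\mathbf 1_{U_i}\le N$ on the right, I obtain
\[
\|u\|_{L^p_{k+2,1}(\bL)}\le C_k\Big(\|(\rho^2\pa_v-\cL)u\|_{L^p_{k,0}}+\|v\|_{L^p_{k+2,1}}+\|u\|_{L^p}\Big),
\]
which is~\eqref{eq_basicestimate}. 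Finally,~\eqref{eq_basicestimate2} follows at once: if $u|_{\botf}\equiv0$ then $u\in\mathring L^p_{k+2,1}(\bL)$, so one may take $v=0$ in~\eqref{eq_basicestimate}.

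The routine parts are the product-rule bookkeeping for $h_i$ and the summation. The step requiring the most care --- and the one the good-cover construction is designed to make work --- is ensuring that \emph{every} constant entering the local estimates ($\kappa$, $\Lambda$, the ball radii, and above all the length of the time interval, hence $C$ and $C'$) can be chosen independently of the chart; near the corner the naive charts have time intervals shrinking like $\rho_0^2$, and it is only after passing to the $\bR^+$-invariant rescaled coordinates, in which the $m$-metric and $\rho^2\pa_v$ are uniformly nondegenerate, that these bounds become uniform.
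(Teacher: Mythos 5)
Your proof is correct and follows essentially the same route as the paper's: pull back via the good cover, invoke Lemma~\ref{lem_chartsoperator} for uniform ellipticity and coefficient bounds, apply the local interior estimates of Theorems~\ref{thm_ffestimate} and~\ref{thm_bfestimate} (the latter for bottom-face charts using the vanishing trace of $u-v$), raise to the $p$-th power and sum using the bounded-overlap property and the norm comparison of Lemma~\ref{lem_goodcoverestimates}/Corollary~\ref{coro_compnorms}. You supply several points the paper leaves implicit — the $\bR^+$-invariant rescaling that keeps the corner-chart time interval of fixed length, the monotonicity of $C'$ in that length, and the observation that the second inequality follows by taking $v=0$ — so this is a faithful, somewhat more detailed rendering of the same argument.
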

\begin{proof}
    Consider a good cover of $\bL$ as before, given by $\{(U_i,V_i),\Psi_i\}_{i\in I}$. By Lemma \ref{lem_chartsoperator} the pullback of the operator $\mathcal{L}$ under $\Psi_i$ will be an elliptic operator with uniformly bounded norms of coefficients and constants of ellipticity over the good cover. By the estimates (\ref{eq_ffestimate}) and (\ref{eq_bfestimate}), we get local interior estimates for the standard parabolic operator on each open set $U_i$. We transfer these estimates to the pullback of $\rho^2\pa_v-\mathcal{L}$ by using Lemma \ref{lem_goodcoverestimates} for the good cover while taking $\ep$ sufficiently small, and transfer these estimates from $U_i$ to local estimates on $\bL$. Now, we add them up as follows: raising the above estimates to the $p^{\mathrm{th}}$ power, we have an $L^p$-estimate of the form \[
\|u|_{\Psi_i(V_i)}\|^p_{L^p_{k+2,1}} \leq C\left(\|(\rho^2\partial_v - {\mathcal{L}})u|_{\Psi_i(U^i)}\|^p_{L^p_{k,0}} + \|u|_{\Psi_i(U_i)}\|^p_{L^p_{0,0}} + \|v|_{\Psi_i(U_i)}\|^p_{L^p_{k+2,1}}\right) .
\]

Summing up all these estimates over the cover $\{\Psi_i(U_i)\}$, and using the fact that each point of $\bL$ is covered by at most $N$ open sets of the good cover, we would get that the RHS of the above is bounded above by \[
CN(\|(\rho^2\partial_v - {\mathcal{L}})u\|^p_{L^p_{k,0}}+ \|u\|^p_{L^p}+\|v\|^p_{L^p_{k+2,1}}),
\]
while the LHS is bounded below by $\|u\|^p_{L^p_{k+2,1}}$. Therefore, taking $p^{\mathrm{th}}$ roots, we get a global estimate of the form (\ref{eq_basicestimate}).

\end{proof}

\begin{rem}\label{rem_modelvsgood}
	An equivalent, but conceptually simpler, way to get the above result is to define parabolic operators on the model neighbourhoods themselves, and prove the parabolic interior estimates by considering good covers of the model neighbourhoods. Then, the above estimate follows by adding up finitely many estimates on $\bL$ arising from the model neighbourhoods.
\end{rem}

In the case $p=2$, we have the following lemma regarding the trace function on $\bL$, which follows easily from Lemma \ref{lem_extensionoftrace}:
\begin{lem}
	There exists a constant $C$, such that for $u\in L^2_{k+2,1}(\bL)$, there exists a function $v\in L^2_{k+2,1}(\bL)$ such that  $u|_{t=0}=v|_{t=0}$ in $L^2_{k+1}(\widetilde{L_0})$, and \[
	\|v\|_{L^2_{k+2,1}(\bL)} \leq C\|u|_{t=0}\|_{L^2_{k+1}(\widetilde{L_0})}.
	\]
\end{lem}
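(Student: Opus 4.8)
The statement is equivalent to the assertion that the trace map onto the bottom face, $u\mapsto u|_{t=0}$ from $L^2_{k+2,1}(\bL)$ to $L^2_{k+1}(\widetilde{L_0})$ (well-defined as in the flat case of \S\ref{sec_mancac}, patched over a good cover), admits a bounded linear right inverse. Granting such a right inverse, one simply lets $v$ be the image of $u|_{t=0}$, so that $v|_{t=0}=u|_{t=0}$ and $\|v\|_{L^2_{k+2,1}(\bL)}\le C\|u|_{t=0}\|_{L^2_{k+1}(\widetilde{L_0})}$ automatically. So the plan is: given $w\in L^2_{k+1}(\widetilde{L_0})$, to construct an extension $v\in L^2_{k+2,1}(\bL)$ with $v|_{t=0}=w$ and $\|v\|_{L^2_{k+2,1}(\bL)}\le C\|w\|_{L^2_{k+1}(\widetilde{L_0})}$, by patching together local extensions furnished by Lemma \ref{lem_extensionoftrace}.

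First I would cover an open neighbourhood of the bottom face $\widetilde{L_0}=\bL|_{\botf}$ in $\bL$ by finitely many of the model-neighbourhood charts $\Psi_j\colon M_j\to\bL$ of Lemma \ref{lem_modelneighbourhood}. By that construction, the only models meeting $\widetilde{L_0}$ are bottom-face models $U_j\times[0,\delta_j)$ (ordinary boundary $U_j\times\{0\}$, m-metric $C^0$-close to $g_0+dt^2$) together with the single corner model $V_0\times\llbracket0,\delta_0)\times[0,\delta_0)$ in coordinates $(\sigma,r,s)$, with time $t=r^2s$ and m-metric close to $g_0+ds^2+r^{-2}dr^2$. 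Then I would fix a subordinate a-smooth partition of unity $\{\eta_j\}$ of $\widetilde{L_0}$ with uniformly bounded $^b\nabla_\rel$- and $^{\m}\nabla$-derivatives, and a-smooth cutoffs $\chi_j\colon\bL\to[0,1]$ equal to $1$ on a neighbourhood of $\mathrm{supp}\,\eta_j$ and supported in $\Psi_j(M_j)\cap\{t<\delta_j\}$, and set $w:=u|_{t=0}$.

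On each bottom-face chart the metric $g_0+dt^2$ has bounded geometry and the time variable is $t$ itself, so the proof of Lemma \ref{lem_extensionoftrace} applies verbatim (the Euclidean metric there being replaced by $g_0$): it extends $(\eta_jw)\circ\Psi_j\in L^2_{k+1}(U_j)$ to $v_j\in L^2_{k+2,1}(M_j)$ with $v_j|_{t=0}=(\eta_jw)\circ\Psi_j$ and $\|v_j\|_{L^2_{k+2,1}(M_j)}\le C_j\|\eta_jw\|_{L^2_{k+1}}$. On the corner chart I would first substitute $\xi=-\log r$, which sends the front face $r=0$ to $\xi=\infty$, turns the m-metric into a bounded-geometry perturbation of the Euclidean product metric $g_0+ds^2+d\xi^2$ on $V_0\times[0,\delta_0)\times(\Xi_0,\infty)$, and makes the bottom face $\{s=0\}$; crucially, by the computation already carried out for this local model, $\rho^2\pa_v$ is an a-smooth, non-vanishing vector field transverse to $\{s=0\}$, and the relative $b$-metric on the time-slices is uniformly elliptic. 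Consequently, solving a uniformly parabolic initial value problem $\rho^2\pa_v v_0-\mathcal{A}v_0+v_0=0$ with $v_0|_{s=0}=(\eta_0w)\circ\Psi_0$ — for $\mathcal{A}$ any fixed second-order operator that is elliptic in the directions tangent to $\{s=0\}$ with bounded coefficients — and differentiating the solution, exactly as in the proof of Lemma \ref{lem_extensionoftrace} (the argument applying equally on the half-infinite cylinder $V_0\times(\Xi_0,\infty)$), produces $v_0\in L^2_{k+2,1}$ with $\|v_0\|_{L^2_{k+2,1}}\le C_0\|\eta_0w\|_{L^2_{k+1}}$.

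Finally I would glue, setting $v:=\sum_j\chi_j\cdot\Psi_{j*}(v_j)$, each summand extended by zero off $\Psi_j(M_j)$. Since $\chi_j\equiv1$ on $\mathrm{supp}\,\eta_j$, one has $v|_{t=0}=\sum_j\eta_jw=w=u|_{t=0}$; and since pushforward under the a-diffeomorphisms $\Psi_j$ and multiplication by the bounded a-smooth cutoffs $\chi_j$ are bounded operations on $L^2_{k+2,1}(\bL)$ (Corollary \ref{coro_compnorms} together with the Leibniz rule for $^{\m}\nabla$ and $^b\nabla_\rel$ and the boundedness of the derivatives of $\chi_j$), and there are only finitely many charts, one gets $\|v\|_{L^2_{k+2,1}(\bL)}\le C\sum_j\|\eta_jw\|_{L^2_{k+1}}\le C'\|w\|_{L^2_{k+1}(\widetilde{L_0})}$, the last step using the bounded derivatives of the $\eta_j$. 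The one point that goes beyond the routine partition-of-unity patching already used throughout \S\ref{sec_approxsol} is the corner model: one must verify that after the substitution $\xi=-\log r$ the induced parabolic structure on $\bL$ is uniformly equivalent to a standard bounded-geometry parabolic one, so that Lemma \ref{lem_extensionoftrace} genuinely applies there — and this is precisely where the a-smoothness and non-vanishing of $\rho^2\pa_v$, i.e.\ the strong parabolicity of the linearised operator, is used.
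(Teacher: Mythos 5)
Your proof is correct and is exactly the argument the paper leaves implicit: the paper's proof of this lemma is simply the remark that it ``follows easily from Lemma~\ref{lem_extensionoftrace},'' and the intended mechanism is precisely the one you spell out — localize to the finitely many model neighbourhoods near the bottom face using Lemma~\ref{lem_modelneighbourhood} (or the good cover of Definition~\ref{def_goodcover}), apply the Euclidean heat-flow extension on each chart, and patch with a partition of unity. Your careful treatment of the corner chart (substituting $\xi=-\log r$ to exhibit the m-metric as a bounded-geometry perturbation of the Euclidean product and using that $\rho^2\pa_v$ is a non-vanishing a-smooth vector field transverse to $\{s=0\}$) is a useful clarification of a detail the paper glosses over, but it does not constitute a different route: it is the same local-to-global strategy, applied as one must near the codimension-two corner.
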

As a result, we can state a slightly improved a-priori estimate for $p=2$ which does not require an implicit function:
\begin{prop}
	Let $u\in L^2_{k+2,1}(\bL)$, and $\cL$ be a uniformly elliptic second-order partial differential operator. We have that \begin{equation}\label{eq_basicestimate3}
		\|u\|_{L^2_{k+2,1}}
		\leq C_{k}\left(\|(\rho^2\pa_v-\cL)u\|_{L^2_{k,0}}+\|u\|_{L^2}+\|u|_{\mathrm{bf}}\|_{L^2_{k+1}}\right),
	\end{equation}
	for a constant $C_{k}>0$.
\end{prop}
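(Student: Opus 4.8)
The plan is to deduce \eqref{eq_basicestimate3} from the a-priori estimate of Lemma~\ref{lem_globalestimate} (which is valid for $p=2$), the only point being to replace the auxiliary function $v$ appearing there by one controlled purely in terms of the trace of $u$. Given $u\in L^2_{k+2,1}(\bL)$, I first take its trace $u|_{\mathrm{bf}}=u|_{t=0}$ on the bottom face, which is a well-defined element of $L^2_{k+1}(\widetilde{L_0})$ by the trace theory developed in \S\ref{sec_mancac}, transported to $\bL$ through the finitely many model neighbourhoods near the bottom face (where the m-metric has the product form $g_0+dt^2$ and $\rho^2\pa_v-\cL$ is uniformly parabolic, by Lemma~\ref{lem_chartsoperator}). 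The lemma immediately preceding the present statement then produces $v\in L^2_{k+2,1}(\bL)$ with $v|_{t=0}=u|_{t=0}$ in $L^2_{k+1}(\widetilde{L_0})$ and $\|v\|_{L^2_{k+2,1}(\bL)}\le C\|u|_{\mathrm{bf}}\|_{L^2_{k+1}(\widetilde{L_0})}$.

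With this $v$ fixed, set $w:=u-v$, so that $w|_{\mathrm{bf}}=0$. The step that requires care — and the only genuinely delicate one — is to verify that $w\in\mathring{L}^2_{k+2,1}(\bL)$, i.e.\ that $w$ lies in the closure of the smooth functions vanishing near the bottom face; once this is known, Lemma~\ref{lem_globalestimate} applies verbatim to the pair $(u,v)$. For this one uses that the bottom face is an ordinary boundary of $\bL$ — at finite distance in the m-metric, with the operator uniformly parabolic there, as is precisely why the bottom face was \emph{not} declared an a-boundary (cf.\ the remark in \S\ref{subsec_manac}) — and is compact, being the closure of $\widetilde{L_0}$. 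Hence near it $\bL$ looks like a finite union of half-cylinders $\Omega\times[0,\delta)$, and the classical fact that an $L^2_{k+2,1}$ function on such a region with vanishing order-zero trace at $t=0$ (the only trace prescribed on the parabolic boundary $\{t=0\}$) can be approximated in $L^2_{k+2,1}$ by smooth functions supported in $\{t\ge\delta'\}$ — mollify, then cut off in the shrinking collar $\{t\le\delta'\}$ — patches up via a partition of unity subordinate to the model neighbourhoods to give $w\in\mathring{L}^2_{k+2,1}(\bL)$.

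Finally, applying Lemma~\ref{lem_globalestimate} to $u$ with this choice of $v$ yields
\[
\|u\|_{L^2_{k+2,1}}\le C_k\bigl(\|(\rho^2\pa_v-\cL)u\|_{L^2_{k,0}}+\|u\|_{L^2}+\|v\|_{L^2_{k+2,1}}\bigr),
\]
and substituting the bound $\|v\|_{L^2_{k+2,1}}\le C\|u|_{\mathrm{bf}}\|_{L^2_{k+1}}$ and absorbing $C$ into $C_k$ gives exactly \eqref{eq_basicestimate3}. Everything apart from the density statement is pure bookkeeping; the main obstacle is precisely that identification of the vanishing-trace subspace with $\mathring{L}^2_{k+2,1}(\bL)$, which is where the choice to treat the bottom face as an ordinary boundary is essential.
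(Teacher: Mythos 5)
Your proof is correct and follows exactly the route the paper intends: use the preceding trace-extension lemma to produce $v$ with $v|_{t=0}=u|_{t=0}$ and $\|v\|_{L^2_{k+2,1}}\le C\|u|_{\mathrm{bf}}\|_{L^2_{k+1}}$, then feed the pair $(u,v)$ into Lemma~\ref{lem_globalestimate}. The paper leaves the verification that $u-v\in\mathring{L}^2_{k+2,1}(\bL)$ implicit; your paragraph justifying it via the ordinary-boundary collar structure and a mollify-then-cut-off approximation is a useful and correct filling-in of that step, not a deviation.
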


Then, we can take higher derivatives to show that:
\begin{prop}
	For integers $k,r\geq1$, let $u\in L^2_{k+2r,r}(\bL)$, and $\cL$ be a uniformly elliptic second-order partial differential operator. We have that \begin{equation}\label{eq_basicestimate4}
		\|u\|_{L^2_{k+2r,r}}
		\leq C_{k,r}\left(\|(\rho^2\pa_v-\cL)u\|_{L^2_{k+2r-2,r-1}}+\|u\|_{L^2}+\|u|_{\mathrm{bf}}\|_{L^2_{k+2r-1}}\right),
	\end{equation}
	for a constant $C_{k,r}$.
\end{prop}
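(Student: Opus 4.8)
The plan is to reduce the estimate for general $r$ to the case $r=1$, i.e.\ to \eqref{eq_basicestimate3}, by using the equation itself to trade each time derivative for two spatial derivatives. If $r=1$ the assertion is precisely \eqref{eq_basicestimate3}, so I would assume $r\ge 2$ and write $h:=(\rho^2\pa_v-\cL)u$. First I would apply \eqref{eq_basicestimate3} with $k$ replaced by $k+2r-2$ and the same uniformly elliptic operator $\cL$, which gives
\[
\|u\|_{L^2_{k+2r,1}}\le C\big(\|h\|_{L^2_{k+2r-2,0}}+\|u\|_{L^2}+\|u|_{\mathrm{bf}}\|_{L^2_{k+2r-1}}\big).
\]
In particular this already controls all spatial derivatives of $u$ up to order $k+2r$, and it produces the boundary trace term appearing in \eqref{eq_basicestimate4}.

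It remains to control $\pa_t^\beta\nabla^\alpha u$ for $2\le\beta\le r$ and $|\alpha|+2\beta\le k+2r$. For this I would fix a good cover $\{(U_i,V_i),\Psi_i\}_{i\in I}$ of $\bL$ as in Definition \ref{def_goodcover}. By Lemma \ref{lem_chartsoperator}, on each chart the pullback $\Psi_i^*(\rho^2\pa_v-\cL)$ has the form $c_1\pa_t-\widetilde{\cL}$, where $\widetilde{\cL}$ is a purely spatial, uniformly elliptic second-order operator and $c_1$ is uniformly bounded below away from zero, with all coefficients having uniformly bounded derivatives over the cover. Hence on each chart the equation reads $\pa_t u=c_1^{-1}(\widetilde{\cL}u+h)$; differentiating this identity repeatedly in $t$ and substituting back, one obtains for $1\le j\le r$ an expression of the shape $\pa_t^j u=P_j u+\sum_{0\le l\le j-1}Q_{j,l}\,\pa_t^l h$, with $P_j$ a spatial differential operator of order at most $2j$, each $Q_{j,l}$ spatial of order at most $2(j-1-l)$, and all coefficients uniformly bounded. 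Applying further spatial derivatives and tracking the parabolic degrees, one gets the pointwise bound, valid on each $U_i$ for all $\beta\le r$ with $|\alpha|+2\beta\le k+2r$,
\[
\big|\pa_t^\beta\nabla^\alpha(u\circ\Psi_i)\big|\le C\Big(\textstyle\sum_{m\le k+2r}|\nabla^m(u\circ\Psi_i)|+\sum_{l\le r-1,\;m+2l\le k+2r-2}|\pa_t^l\nabla^m(h\circ\Psi_i)|\Big).
\]
Squaring, integrating over $U_i$, summing over the cover (using the uniform multiplicity bound $N$), and comparing the Euclidean and m-norms via Lemma \ref{lem_goodcoverestimates} and Corollary \ref{coro_compnorms}, this yields $\|u\|_{L^2_{k+2r,r}}\le C\big(\|u\|_{L^2_{k+2r,0}}+\|h\|_{L^2_{k+2r-2,r-1}}\big)$. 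Combining with the first display and noting $\|u\|_{L^2_{k+2r,0}}\le\|u\|_{L^2_{k+2r,1}}$ and $\|h\|_{L^2_{k+2r-2,0}}\le\|h\|_{L^2_{k+2r-2,r-1}}$ gives \eqref{eq_basicestimate4}.

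The hard part is the uniform control, over the entire good cover, of the coefficients of the iterated operators $P_j$ and $Q_{j,l}$: these involve derivatives up to order roughly $2r$ of $c_1^{-1}$ and of the pulled-back m-metric, and their uniform boundedness is exactly what the good-cover construction provides — the relevant model charts near the front face and the corner are invariant under the scalings $t\mapsto ct$ and $r\mapsto cr$, which, together with a-smoothness, forces all these derivatives to be bounded independently of the chart. The remaining point is the parabolic bookkeeping: since each time derivative carries the weight of two spatial derivatives, one must check that the degrees produced by the substitution never exceed those available in the norms of $u$ and of $h$, which they do not because $L^2_{k+2r-2,r-1}$ is exactly one parabolic order below $L^2_{k+2r,r}$. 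I would emphasise that, unlike the alternative of differentiating the equation and invoking the $(r-1)$-case inductively, the argument above is purely algebraic in the interior of each chart and so introduces no new boundary traces; this is why only the single trace $\|u|_{\mathrm{bf}}\|_{L^2_{k+2r-1}}$ from the $r=1$ estimate appears in \eqref{eq_basicestimate4}.
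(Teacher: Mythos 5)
Your proof is correct and fills in what the paper leaves as a one-line remark (``we can take higher derivatives to show that''). The core of your argument — apply the $r=1$ estimate \eqref{eq_basicestimate3} at spatial order $k+2r-2$, then on each good-cover chart solve the equation algebraically for $\partial_t u$ and iterate to express $\partial_t^\beta\nabla^\alpha u$ in terms of purely spatial derivatives of $u$ of order $\leq k+2r$ and derivatives of $h$ of parabolic order $\leq k+2r-2$ — is exactly the standard reduction, and the parabolic degree count checks out: each time derivative of $u$ costs two spatial derivatives, and each $\partial_t^l$ landing on $h$ is compensated by the loss of $2(l+1)$ spatial orders, so the $h$-terms never exceed $L^2_{k+2r-2,r-1}$. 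The uniform bounds on the iterated coefficients follow from Lemma \ref{lem_chartsoperator} together with the scale-invariance of the model charts in Lemma \ref{lem_modelneighbourhood}, as you say.

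One point worth highlighting, since it is the real content of the remark: the more naive route — differentiate the equation in $t$ and apply the $(k,r-1)$ case inductively to $\partial_t u$ — would introduce the boundary trace $\|\partial_t u|_{\mathrm{bf}}\|_{L^2_{k+2r-3}}$, which one must then re-express via the equation at $t=0$ (and hence via $\|u|_{\mathrm{bf}}\|_{L^2_{k+2r-1}}$ and a trace of $h$). That route also works, but your algebraic substitution inside each chart is pointwise and therefore produces no new traces at all, which is why the estimate \eqref{eq_basicestimate4} contains only the single trace term. This is the cleaner formulation of what the paper intends.
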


\subsubsection{Sobolev embedding}\label{subsubsec_Sobolevembedding}

As a corollary to the above construction of model neighbourhoods and a good cover, we can get a global version of Theorem \ref{thm_sobolevembeddingparabolic}:
\begin{theorem}
	Consider the weighted Sobolev spaces defined in \S \ref{subsubsec_weightedordinaryspaces}. Given parameters $k,r,l,s,p$ and taking $q=\infty$, let $\xi$ as defined in \eqref{eq_parabolicembeddingparameter} satisfy $\xi<1$. Then, we have a continuous embedding $L^p_{k+2r,r,\lambda}(\bL) \hookrightarrow C^{l+2s+\delta,s+\delta/2,\lambda}(\bL)$.
\end{theorem}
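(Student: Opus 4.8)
The plan is to deduce the statement from the Euclidean parabolic Sobolev embedding of Theorem~\ref{thm_sobolevembeddingparabolic} by a chart-by-chart argument, in the same spirit as the interior and a-priori estimates of Lemma~\ref{lem_globalestimate}. First I would reduce to the unweighted case $\lambda=0$. By definition, and up to the equivalence of norms noted in the Remark after \S\ref{subsubsec_weightedordinaryspaces} (which uses $|{^{\m}\nabla}^j\rho^{-\lambda}|=O(\rho^{-\lambda})$ together with the analogous bound for $b$-derivatives), one has $u\in L^p_{k+2r,r,\lambda}(\bL)$ iff $\rho^{-\lambda}u\in L^p_{k+2r,r}(\bL)$, and $u\in C^{l+2s+\delta,s+\delta/2,\lambda}(\bL)$ iff $\rho^{-\lambda}u\in C^{l+2s+\delta,s+\delta/2}(\bL)$; hence it suffices to prove $L^p_{k+2r,r}(\bL)\hookrightarrow C^{l+2s+\delta,s+\delta/2}(\bL)$.

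Next I would localise. Following Remark~\ref{rem_modelvsgood}, it is enough to establish the embedding on each of the finitely many model neighbourhoods of Lemma~\ref{lem_modelneighbourhood} and then add the resulting estimates, since $\bL$ is covered by finitely many of them. On a single model neighbourhood $M$ (for instance $U\times\llbracket 0,\delta)$ with $b$-metric $g_0+t^{-2}dt^2$, or $V\times\llbracket0,\delta)\times[0,\delta)$ near the corner), take a good cover by the scale-invariant charts used in the proof of existence of a good cover: near the front face the maps $(x,t)\mapsto(x+x_0,t_0(1+t))$, and near the corner the flows of the time-like vector field out of normal-coordinate charts on the front face, all of which are $\bR^+$-rescalings of finitely many fixed model charts. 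By Corollary~\ref{coro_compnorms} and Lemma~\ref{lem_goodcoverestimates}, the pullback $u\circ\Psi_i$ satisfies $\|u\circ\Psi_i\|_{L^p_{k+2r,r}(U_i)}\le C\|u|_{\Psi_i(U_i)}\|_{L^p_{k+2r,r}(M)}$ with $C$ uniform in $i$ (the scaling invariance of the model metrics and of $\partial_v$ gives uniform bounds on all rescaled derivatives), and, because the Hölder distance $d(\cdot,\cdot)$ localises in each chart to the standard parabolic distance on $\bR^{n+1}_{\geq 0}$, the Hölder norm of $u\circ\Psi_i$ on $V_i$ controls $\|u|_{\Psi_i(V_i)}\|_{C^{l+2s+\delta,s+\delta/2}(M)}$, again uniformly.

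Then I would apply the Euclidean embedding and sum. Each $U_i$ is a box $B_R\times(-\varepsilon,0)$ or $B_R\times[0,\varepsilon)$, which satisfies the weak horn condition (alternatively, extend by Stein's theorem to $\bR^{n+1}_{\geq0}$), so since $\xi<1$ Theorem~\ref{thm_sobolevembeddingparabolic} gives $\|u\circ\Psi_i\|_{C^{l+2s+\delta,s+\delta/2}(V_i)}\le C'\|u\circ\Psi_i\|_{L^p_{k+2r,r}(U_i)}$ for some $\delta\in(0,1)$ and $C'$ uniform in $i$ (the boxes arise from finitely many models, so the Sobolev constant is scale-uniform). Chaining the three inequalities, taking the supremum over $i$ for the pointwise part of the norm and, for the Hölder seminorm, using that any pair $x\ne y$ with $d_\bL(x,y)<\mathrm{inj}_\bL$ lies in a common chart $\Psi_i(U_i)$ (a Lebesgue-number argument, valid since the good cover has uniform geometry), yields the bound on $M\setminus\ff$, and hence on all of $M$ by continuity of $u$ up to $\ff$. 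Adding up over the finitely many model neighbourhoods finishes the argument.

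The main obstacle I anticipate is purely one of uniformity: ensuring that every constant is controlled uniformly across the scale-invariant family of charts near the front face and corner — which is precisely why the model-neighbourhood formulation of Remark~\ref{rem_modelvsgood} is the convenient one to use — and checking that the parabolic rescaling built into the Hölder distance $d(\cdot,\cdot)$ near the front face is compatible with the parabolic scaling of the Euclidean Hölder estimate, so that the local estimates assemble into a single global one without loss. Everything else is a routine transcription of the Euclidean theorem through the good cover.
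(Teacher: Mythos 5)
Your proposal is correct and follows essentially the same route as the paper: reduce to $\lambda=0$ by conjugating with $\rho^{-\lambda}$, localise to a good cover of uniformly geometric charts, invoke the Euclidean parabolic Sobolev embedding (Theorem \ref{thm_sobolevembeddingparabolic}) on the pulled-back functions, and reassemble via Lemma \ref{lem_goodcoverestimates} and Corollary \ref{coro_compnorms}. The only (cosmetic) difference is that you apply the embedding directly on the bounded model domains via the weak horn condition of \cite{BIN78}, whereas the paper multiplies $\Psi_i^*u$ by a cutoff $\psi$ supported compactly in $U_i$ with $\psi\equiv 1$ on $V_i$ and then applies the embedding on $\bR^{n+1}_0$ -- the cutoff also makes the patching of the Hölder seminorm between overlapping charts automatic, sidestepping the Lebesgue-number argument you sketch.
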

\begin{proof}
	Note that it suffices to prove the embedding with weight $\lambda=0$, since we can obtain the weighted estimates by applying the unweighted estimates to $\rho^{-\lambda}u$. Now, due to the hypotheses on the parameters, we have an embedding \begin{equation}\label{eq_embedding}
	L^p_{k+2r,r,\lambda}(\bR^{n+1}_0) \hookrightarrow C^{l+2s+\delta,s+\delta/2,\lambda}(\bR^{n+1}_0),
	\end{equation}
	 by Theorem \ref{thm_sobolevembeddingparabolic}. We know that there exists a good cover of $\bL$ with model pairs $\{(V_i\subset U_i)\}_{i\in I}$ and maps $\Psi_i : U_i \rt \bL$ satisfying the properties outlined in Definition \ref{def_goodcover}. Now, we can slightly modify the argument and obtain a good cover with model pairs $\{(V_i\subset U_i)\}_{i\in I}$ which are of the form \begin{itemize}
	 	\item $U=B_{R_2}\times (-\ep_2,0)$, $V=B_{R_1}\times (-\ep_1,0)$.
	 	\item $U=B_{R_4}\times[0,\ep_3)$, $V=B_{R_3}\times[0,\ep_4)$,
	 \end{itemize}
	 where $\ep_4<\ep_3$ is a new parameter. Then, in each case we can write down a cutoff function $\psi$ such that in each of the above model pairs, $\psi|_{V}\equiv 1$, $\psi|_{U\backslash V'}\equiv 0$ for a fixed open set $V' \supset  \overline{V}$, and $|\nabla^k\psi|<C_k$ for $k\geq 0$. Using these, we can apply the embedding \eqref{eq_embedding} for the functions $\{\psi\cdot\Psi_i^*u\}_{i\in I}$ and use Lemma \ref{lem_goodcoverestimates} and Corollary \ref{coro_compnorms} to obtain the claim. 
\end{proof}

\subsubsection{Estimates with shortened time interval}
In this subsection, we will formulate estimates such that we can shorten the time interval over which the solution exists without changing the constants in the estimates. These will be useful later in proving uniqueness of solutions to the nonlinear problem. Henceforth, we will consider a positive $\eta \in (0,\ep)$, and take all functions and norms over the shortened time interval $[0,\eta)$, i.e.\ we prove estimates for operators on the space $\bL|_{\pi^{-1}([0,\eta))}$. 

\begin{lem}\label{lem_apriorishort}
	Let $f = (\rho^2\pa_v-\cL)u$. Given any $\eta \in (0,\ep)$, 
	we have the following a-priori estimate, 
	\begin{equation}\label{eq_aprioridecreasing}
	\|u\|_{L^2_{k+2r,r,\lambda}}\leq C\left(\|f\|_{L^2_{k+2r-2,r-1,\lambda}}+\sum_{i=1}^{r} \|\pa^i_sf|_{s=0}\|_{L^2_{k+2(r-i)-1,\lambda}}+\|u\|_{L^2_\lambda}\right),
	\end{equation}  and the constant is independent of $\eta$. 
\end{lem}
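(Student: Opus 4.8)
The plan is to derive \eqref{eq_aprioridecreasing} from the weighted a-priori estimate \eqref{eq_basicestimate4}, now read over the truncated space $\bL|_{\pi^{-1}([0,\eta))}$; two points must be added to what is already there. The constant must be made uniform in $\eta\in(0,\ep)$, and the bottom-face trace term $\|u|_{\botf}\|_{L^2_{k+2r-1,\lambda}}$ of \eqref{eq_basicestimate4} must be exchanged for the data $\pa^i_sf|_{s=0}$ together with the lower-order term $\|u\|_{L^2_\lambda}$. First I would reduce to $\lambda=0$: conjugating by $\rho^{-\lambda}$ carries $\rho^2\pa_v-\cL$ to $\rho^2\pa_v-\cL_\lambda$, where $\cL_\lambda$ is again a relative, uniformly elliptic second-order operator with a-smooth, uniformly bounded coefficients, differing from $\cL$ only by lower-order terms built from the bounded quantities $\rho^2\pa_v(\log\rho)$ and its relative derivatives, and $\rho^{-\lambda}f=(\rho^2\pa_v-\cL_\lambda)(\rho^{-\lambda}u)$. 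Since $\|\rho^{-\lambda}w\|_{L^2_{j+2s,s}}$ is comparable to $\|w\|_{L^2_{j+2s,s,\lambda}}$ and multiplication by $\rho^{-\lambda}$ commutes with the relevant traces up to bounded error, it suffices to treat the unweighted estimate.

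For the $\eta$-uniformity I would unwind the proof of \eqref{eq_basicestimate4}: it is assembled from the local interior estimates of Theorems \ref{thm_ffestimate}--\ref{thm_bfestimate}, summed over a good cover of $\bL$, together with the trace-extension lemma (Lemma \ref{lem_extensionoftrace}) on $\widetilde{L_0}$. Restricting the time variable to $[0,\eta)$, the same good cover restricts to a good cover of $\bL|_{\pi^{-1}([0,\eta))}$ with the same uniform overlap multiplicity $N$; by Theorem \ref{thm_bfestimate} the constant $C'$ on the half-open boxes is monotone increasing in the length of the time interval, hence bounded by its value at $\ep$, while the remaining constants depend only on $(\kappa,\Lambda)$ and the extension constant only on $\widetilde{L_0}$. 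Hence \eqref{eq_basicestimate4} persists over $\bL|_{\pi^{-1}([0,\eta))}$ with a constant bounded uniformly in $\eta$.

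The substantive step is the exchange of the trace term. Near the bottom face, which is an ordinary boundary, the operator is $\rho^2\pa_v-\cL=c\,\pa_t-\cL_0$ with $c>0$ bounded and a-smooth and $\cL_0$ a uniformly elliptic relative operator whose restriction to $\{t=0\}$ is a $b$-elliptic operator on $\widetilde{L_0}$. Restricting the equation and its first $r$ time derivatives to $\{t=0\}$ and solving recursively expresses $\cL_0^{\,j}(u|_{\botf})$, for $0\le j\le r$, as a bounded-coefficient combination of the traces $\pa_t^{\,l}u|_{\botf}$ with $l\le j$ and of the data $\pa^i_sf|_{s=0}$ with $i<j$, the commutators $[\cL_0,\pa_t]$ and the derivatives of $c$ contributing only terms of strictly lower order in the $\cL_0$-count. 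Summing local interior elliptic estimates over a good cover of $\widetilde{L_0}$ (the elliptic analogue of the construction used on $\bL$) and weighting by $\rho^{-\lambda}$ gives, for $w=u|_{\botf}$, the regularity bound $\|w\|_{L^2_{k+2r-1,\lambda}(\widetilde{L_0})}\le C\big(\|\cL_0^{\,r}w\|_{L^2_{k-1,\lambda}(\widetilde{L_0})}+\|w\|_{L^2_\lambda(\widetilde{L_0})}\big)$. Substituting the recursion, controlling the auxiliary traces $\pa_t^{\,l}u|_{\botf}$ and $\|u|_{\botf}\|_{L^2_\lambda}$ by the parabolic trace theorem and interpolation against $\|u\|_{L^2_{k+2r,r,\lambda}}$ and $\|u\|_{L^2_\lambda}$, and absorbing the small multiples of $\|u\|_{L^2_{k+2r,r,\lambda}}$ so produced into the left side of the truncated \eqref{eq_basicestimate4}, yields \eqref{eq_aprioridecreasing}.

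The main obstacle, as should be clear, is this last step. Keeping the recursion on $\{t=0\}$ honest — bookkeeping the lower-order contributions of $[\cL_0,\pa_t]$ and of $c$ and its derivatives, and the exact weighted orders of each term — and, since the entire purpose of the lemma is $\eta$-uniformity, carrying out the absorption and interpolation of the leftover boundary traces without a constant that degenerates as $\eta\downarrow0$, is where the real care lies; the monotonicity recorded in Theorem \ref{thm_bfestimate} and the parabolic interpolation inequalities are exactly what allow the leftover terms to be absorbed with $\eta$-independent constants, while the reduction to $\lambda=0$ and the reassembly over the good cover are routine.
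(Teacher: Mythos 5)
The reduction to $\lambda=0$ by conjugation and the analysis of the good cover (showing the constants in Theorems~\ref{thm_ffestimate}, \ref{thm_bfestimate} and Lemma~\ref{lem_extensionoftrace} are bounded uniformly as $\eta\downarrow0$) match what the paper does, and that part of your argument is sound. The problem is in the last, ``substantive'' step, the exchange of the trace term for the data on $f$, and I think the route you chose there cannot be made to work.

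Your plan is to bound $\|u|_{\botf}\|_{L^2_{k+2r-1,\lambda}}$ by an elliptic estimate for $\cL_0^r$, express $\cL_0^r(u|_{\botf})$ through the equation in terms of $\pa_t^l u|_{\botf}$ and $\pa_s^i f|_{s=0}$, and then control the residual traces $\pa_t^l u|_{\botf}$ ``by the parabolic trace theorem and interpolation'' so that the small multiples of $\|u\|_{L^2_{k+2r,r,\lambda}}$ can be absorbed on the left. That absorption step is not valid. First, the top trace $\pa_t^r u|_{\botf}$ sits in $L^2_{k-1}$ and its norm there is genuinely comparable to, not a small multiple of, $\|u\|_{L^2_{k+2r,r}}$; the heat-semigroup example $u(x,t)=e^{-N^2t}\sin(Nx)$ shows $\|\pa_t^r u|_{t=0}\|_{L^2_{k-1}}\sim\|u\|_{L^2_{k+2r,r}}$ as $N\to\infty$, so there is no $\delta\|u\|_{L^2_{k+2r,r}}+C_\delta\|u\|_{L^2}$ inequality with small $\delta$. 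Second, and more to the point given the purpose of the lemma, any trace constant relating $\|u|_{t=0}\|$ to $\|u\|_{L^2_{k+2r,r}([0,\eta))}$ blows up like $\eta^{-1/2}$ as $\eta\downarrow0$ (take $u$ constant in time), so the interpolation inequality you invoke is precisely the kind of statement that \emph{cannot} be $\eta$-uniform. Whatever else happens, the leftover traces of $u$ cannot be absorbed in an $\eta$-independent way.

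The underlying issue is that the estimate~\eqref{eq_aprioridecreasing} carries an implicit hypothesis: $u$ is assumed to vanish on the bottom face (indeed, the lemma is only invoked for $u$ equal to a difference of two solutions with identical $t=0$ data, which vanishes to all orders there). With $u|_{\botf}\equiv0$ your recursion closes without any leftover $u$-traces at all: $\pa_s u|_{s=0}=cf|_{s=0}$, and inductively $\pa_s^j u|_{s=0}$ is a bounded-coefficient combination of $\cL_0^a\pa_s^bf|_{s=0}$ with $a+b\leq j-1$. One then builds the extension $v$ in Theorem~\ref{thm_bfestimate} (via the obvious higher-order variant of Lemma~\ref{lem_extensionoftrace}) to match these computed traces of $\pa_s^ju$ up to $j\le r-1$, with $\|v\|_{L^2_{k+2r,r}}$ controlled directly by the $\pa_s^i f|_{s=0}$ norms and $\eta$-uniform constants — no interpolation or absorption of $u$-traces is needed. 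That is essentially what the paper's terse proof is compressing into ``apply Theorem~\ref{thm_bfestimate} and Lemma~\ref{lem_extensionoftrace}, \dots along with similar estimates for higher derivatives.'' Your recursion is the right tool; the flaw is trying to close the argument with interpolation rather than using the vanishing of $u|_{\botf}$ to eliminate the $u$-traces outright.
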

\begin{proof}
	In order to obtain this estimate, we need to choose a good cover for the restriction $\bL|_{[0,\eta)}$, and then obtain estimates for the model neighbourhoods that have constant independent of $\eta$. First, we note that when shortening the time interval, we can choose a good cover as in Definition \ref{def_goodcover}, such that the only parameter that needs to be changed with $\eta$ is $\ep_3$ -- we need to choose $\ep_3 = O(\eta)$. The rest of the parameters of the good cover do not need to be changed, since we can cover the region $\{s\geq \frac{1}{2}\eta\}\cap \bL$ with uniformly sized neighbourhoods. Therefore, we only need to obtain estimates on model neighbourhoods of the form $U=B_{R_4}\times [0,\ep_3), V = B_{R_3}\times[0,\ep_3)$, with constant independent of $\ep_3 \downarrow 0$. We can obtain such estimates by applying Theorem \ref{thm_bfestimate} and Lemma \ref{lem_extensionoftrace}, noting that in both of these estimates the constant is non-increasing on decreasing $\ep_3$. We add up these estimates, along with similar estimates for higher derivatives, to get the estimate \eqref{eq_aprioridecreasing} on the local models with constant independent of $\eta$, then add over $\bL$.
\end{proof}

\begin{lem}\label{prop_sobolevtime}
	Let us assume we are given $\eta \in (0,\ep)$. Let $u\in C^\infty_\loc$ be a function such that $\pa_s^ku|_{s=0} \equiv 0$ for $k\geq 0$, i.e.\ $u$ vanishes to all orders at the bottom face. Then,  we have the Sobolev embedding \[
	\|u\|_{C^{l+2s+\alpha,s+\alpha/2}} \leq  C (\|u\|_{L^{2}_{k+2r,r}}),
	\]
	and the constant is independent of $\eta$.
\end{lem}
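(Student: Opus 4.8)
The plan is to run the same argument as the proof of the global parabolic Sobolev embedding in \S\ref{subsubsec_Sobolevembedding}, but carried out over $\bL|_{\pi^{-1}([0,\eta))}$ with the good cover chosen as in the proof of Lemma~\ref{lem_apriorishort}: every parameter of the cover is held fixed except for the time-extent $\ep_3 = O(\eta)$ of the model pairs $U = B_{R_4}\times[0,\ep_3)$, $V = B_{R_3}\times[0,\ep_3)$ that abut the bottom face. By Corollary~\ref{coro_compnorms} together with the uniform bound $N$ on the number of charts covering any point of $\bL$, it is enough to prove a local embedding of the shape $\|u\circ\Psi_i\|_{C^{l+2s+\alpha,s+\alpha/2}(V_i)} \le C\,\|u\circ\Psi_i\|_{L^2_{k+2r,r}(U_i)}$ on each chart of the cover, with a constant $C$ independent of $\eta$, and then to sum these estimates. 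Here I assume, as in \S\ref{subsubsec_Sobolevembedding}, that the parameters $k,r,l,s$ are such that the flat parabolic embedding of Theorem~\ref{thm_sobolevembeddingparabolic} holds (i.e.\ the quantity $\xi$ of \eqref{eq_parabolicembeddingparameter} satisfies $\xi<1$).

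For every chart that does not meet the bottom face of $\bL$ --- those near the front face, near the corner but away from the bottom face, and in the interior --- the corresponding model pair is either of fixed size or is $\bR^+$-scaling invariant, so that the pullback m-metric and the time-like vector field (hence all the coefficients of the pulled-back operator) are scale invariant. In these cases I would simply invoke the flat embedding \eqref{eq_embedding} together with a cutoff adapted to the \emph{fixed} pair $(V_i,U_i)$, exactly as in \S\ref{subsubsec_Sobolevembedding}; since neither the cutoff nor the domain degenerates, the resulting constant does not depend on $\eta$.

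The crux, and the step I expect to be the main obstacle, is the family of shrinking bottom-face charts $U_i = B_{R_4}\times[0,\ep_3)$ with $\ep_3 = O(\eta)$, whose closed time-face $\{t=0\}$ maps into the bottom face of $\bL$. A time cutoff here would have to make its transition over a $t$-interval of length $O(\ep_3) = O(\eta)$ and would therefore contribute a factor $O(\eta^{-r})$ to the norm, so the naive argument of \S\ref{subsubsec_Sobolevembedding} genuinely fails --- indeed, a constant function on $B_{R_4}\times[0,\ep_3)$ already shows that the embedding constant on these domains blows up as $\ep_3 \downarrow 0$. This is exactly where the hypothesis that $u$ vanishes to all orders at the bottom face is used: then $u\circ\Psi_i$ vanishes to infinite order along $\{t=0\}\cap U_i$, so its extension by zero is a genuinely $C^\infty$ function $\widetilde u$ on the fixed-past domain $B_{R_4}\times(-1,\ep_3)$, with $\|\widetilde u\|_{L^2_{k+2r,r}(B_{R_4}\times(-1,\ep_3))} = \|u\circ\Psi_i\|_{L^2_{k+2r,r}(U_i)}$. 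Because $B_{R_4}\times(-1,\ep_3)$ always contains the region $B_{R_4}\times(-1,0)$ of fixed size, every point $(x_0,t_0)\in V_i$ (so $t_0\in[0,\ep_3)$) is the forward endpoint of a \emph{backward} parabolic cylinder $B_{2c}(x_0)\times(t_0-4c^2,t_0]\ssubset B_{R_4}\times(-1,\ep_3)$ of fixed radius $c=c(R_3,R_4)$ with $4c^2<1$; note this cylinder never reaches the future face $\{t=\ep_3\}$, so nothing shrinking is involved. Applying the local (cylinder) form of Theorem~\ref{thm_sobolevembeddingparabolic} --- which follows from the stated version by the usual cutoff argument performed on the \emph{fixed} cylinder, with constant depending only on $c$ and the fixed parameters --- and taking the supremum over $(x_0,t_0)\in V_i$ gives the desired local estimate with an $\eta$-independent constant. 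Summing over the good cover, using multiplicity at most $N$, then yields the claim.

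Beyond this, I expect only routine bookkeeping: verifying that extension by zero across the bottom face is $C^\infty$ and norm-preserving (immediate from infinite-order vanishing, and similarly in the corner model coordinates where $s$ is the boundary-defining function rather than $t=r^2s$), and checking that the Hölder seminorm assembles correctly over the (possibly infinitely many, but boundedly overlapping) charts near the corner --- both handled exactly as in the unshortened case of \S\ref{subsubsec_Sobolevembedding}.
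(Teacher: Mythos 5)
Your proposal is correct and follows essentially the same route as the paper's own (very terse) proof: the paper's key observation is exactly yours, namely that infinite-order vanishing at the bottom face lets one extend $\Psi_i^*u$ by zero to a fixed-size past domain (the paper writes $B_R\times(-\infty,\eta)$), so that only \emph{spatial} cutoffs are needed and no $\eta$-dependence enters. Your write-up is substantially more detailed than the paper's --- in particular the explicit counterexample with a constant function, the clean reduction to backward parabolic cylinders of fixed radius, and the careful separation of bottom-face charts from the scaling-invariant ones --- but none of this changes the underlying mechanism, so the two arguments are the same in substance.
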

\begin{proof}
	To prove the Sobolev embedding, one needs to multiply by some cutoff functions to make the function supported away from the boundary. We need to ensure that these cutoff functions have norms of their derivatives bounded independent of shortening the time interval. The key point is that due to $u$ vanishing to all orders at the bottom face, we only need to use cutoff functions in the spatial direction -- since the function $\Psi_i^*u$ on $B_R\times [0,\eta)$ can be extended by zero to a function on $B_R\times (-\infty, \eta)$. Therefore, we obtain a constant independent of $\eta$.
\end{proof}

\section{The linearised problem near the front face}\label{sec_parabolicfredholmexpanders}
Recall the parabolic blow-up of $\bC^m\times[0,\infty)$ we constructed earlier as $\Bl$. We will work interchangeably in two sets of coordinates -- coordinates away from the corner given by $(\tau,w^i)$ for $\tau\in \llbracket 0,\infty), w^i\in \bC^m$, and coordinates in a neighbourhood of the front face given by $(\rho,\sigma^i)$ for $\rho\in\llbracket 0,\infty), \sigma^i\in \bS^{2m}_+$. Recall that these coordinates are related to the $(x^i,t)$-coordinates in the space $\bC^m\times[0,\infty)$ by the blow-down mappings 
\begin{equation}\label{eq_blowdownmappingssecfive}
(\tau,w^i) \mapsto \left(\tau w^i, \frac{1}{2}\tau^2\right),\; (\rho,\sigma^i) \mapsto \left(\rho\sigma^i, \rho^2\sigma_{2m+1}^2 \right).
\end{equation}
Also, note that $\rho$ is the boundary-defining function for the front face, given by $\rho=\sqrt{|x|^2+t}$ (thus $\rho$ is unbounded on $\Bl$, in contrast to $\bX$).  
\begin{defn}
	We define the manifold with corners and a-corners $\bE$ as the compactification of the family $\{(\sqrt{2t}E,t) : t\in (0,\infty)\} \subset \bC^m\times[0,\infty)$ in the manifold with corners and a-corners $\Bl$, in a similar way as we defined $\bL$. Thus, $\bE$ has one ordinary boundary, which we call the bottom face, and one a-boundary, which we call the front face.
\end{defn}
Similarly as $\bL$, there is a time-like vector field on $\bE$, as the unique vector field $v$ satisfying $\pi_*(v)=\pa_t$, and $v$ being orthogonal to the time-slices. Then. we similarly define the m-metric, m-volume form, m-Levi-Civita connection,relative tangent bundles, relative Levi-Civita connection, and weighted Sobolev spaces and H\"{o}lder spaces on $\bE$ as in \S \ref{subsec_sobholspaces}. We consider the corresponding linearised LMCF operator on $\bE$ given by \begin{equation}\label{eq_linearisedoperatorE}
\rho^2\pa_v-\cL,
\end{equation}
 for \[
\cL = \rho^2\Delta_{\sqrt{2t}E},
\]
noting that the extra first order linear term vanishes for this family as $H=v^\top$ for this family. Then, we can consider for integers $k,r \geq 1$, and a weight $\lambda\in \bR$, the linear mapping on $\bE$ between $\rho$-weighted Sobolev spaces as defined in \S \ref{subsubsec_weightedordinaryspaces}, 
\begin{equation}\label{eq_sobolevmappingE}
	T^2_{k+2r,r,\lambda}:= ((\rho^2\pa_v- \cL),|_{\botf}) :L^2_{k+2r,r,\lambda}(\bE) \rt L^2_{k+2r-2,r-1,\lambda}(\bE)\oplus L^2_{k+2r-1,\lambda}(\bE|_{\botf}).
\end{equation}

By the same method as earlier, we have an a-priori estimate on $\bE$ for the above mapping, \begin{equation}\label{eq_aprioriestimateE}
\|u\|_{L^2_{k+2r,r,\lambda}}\leq C_{k,r,\lambda}\left(\|(\rho^2\pa_v-\cL)u\|_{L^2_{k+2r-2,r-1,\lambda}}+\|u|_{\botf}\|_{L^2_{k+2r-1,\lambda}}+\|u\|_{L^2_\lambda}\right).
\end{equation}

The main result we will prove is the following:
\begin{theorem}\label{thm_mappingE}
\begin{enumerate}
	\item The mapping $T^2_{k+2r,r,\lambda}$ is semi-Fredholm and injective for all $\lambda \in \bR\backslash \cD_E$, where $\cD_E \subset \bR$ is a certain discrete set of rates (Corollary \ref{coro_semiFredholm}). 
	
	\item The set $\cD_E = \Re(\cC_E)$ satisfies $\cD_E\subset (-\infty, \min(2-m,0)]$.
\end{enumerate}
\end{theorem}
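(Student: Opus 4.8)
The plan is to run the Lockhart--McOwen program for this ``$b$-parabolic'' operator. Because the family $\{\sqrt{2t}E\}_{t>0}$ is parabolically self-similar up to the $O(e^{-\alpha r^2})$ corrections of Proposition~\ref{prop_expanderdecay}, the operator $\rho^2\pa_v-\cL$ is, in a collar of the front face of $\bE$, asymptotic to a model operator $P_0$ invariant under $\rho$-dilations. Conjugating $P_0$ by $\rho^\lambda$ and Mellin-transforming in $\log\rho$ yields a holomorphic family $I(\lambda)$ of elliptic operators on the cross-section of the front face; since that cross-section is a compactification of the \emph{non-compact} expander $E$ itself, ``invertibility of $I(\lambda)$'' is meant between polynomially weighted $L^2$-spaces on $E$, and the essential input here is the elliptic Fredholm theory of Lotay--Neves~\cite{LN13} for the Jacobi/drift-Laplacian on asymptotically conical Lagrangian expanders. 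Analytic Fredholm theory then shows $\lambda\mapsto I(\lambda)^{-1}$ is meromorphic with discrete pole set $\cC_E\subset\bC$, and we set $\cD_E:=\Re(\cC_E)$.

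For part (1), fix $\lambda\notin\cD_E$, so that $I(\lambda+i\xi)$ is invertible for every $\xi\in\bR$. The non-standard point is that these inverses are bounded uniformly as $|\xi|\to\infty$: since $P_0$ is not a classical parabolic operator (the $\rho^2$ factors mix the space- and time-rescalings) one cannot invoke parabolic Lockhart--McOwen, and instead argues as in Agmon--Nirenberg~\cite{AN63}, treating $P_0$ as elliptic in $(\log\rho,\xi)$ for $|\xi|$ large while using the a-priori estimate~\eqref{eq_aprioriestimateE}, localised near the front face, for $|\xi|$ bounded. Inverse Mellin transform then produces an exact inverse for $P_0$ on $\rho^\lambda$-weighted $L^2$; gluing it to the interior parabolic estimates of Theorems~\ref{thm_ffestimate} and~\ref{thm_bfestimate} on the relatively compact part of $\bE$ gives a parametrix for $T^2_{k+2r,r,\lambda}$, and hence the improved estimate
\begin{equation*}
\|u\|_{L^2_{k+2r,r,\lambda}}\le C\bigl(\|(\rho^2\pa_v-\cL)u\|_{L^2_{k+2r-2,r-1,\lambda}}+\|u|_{\botf}\|_{L^2_{k+2r-1,\lambda}}\bigr).
\end{equation*}
This makes $T^2_{k+2r,r,\lambda}$ injective with closed range, i.e.\ semi-Fredholm, which is Corollary~\ref{coro_semiFredholm}. (Injectivity can also be seen directly: $(\rho^2\pa_v-\cL)u=0$ with $u|_{\botf}=0$ is a forward heat equation with zero initial datum, and membership in the weighted space rules out inflow from the front face.)

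For part (2), one shows $I(\lambda)$ is in fact invertible whenever $\Re\lambda>\min(2-m,0)$. Decomposing $I(\lambda)$ over the eigenmodes $\nu_k\ge 0$ of $\Delta_\Sigma$ on the links of the asymptotic cone, the candidate singular rates coming from the cone end are the roots of $\mu(\mu+m-2)=\nu_k$, namely $\mu=\tfrac{2-m}{2}\pm\sqrt{(\tfrac{m-2}{2})^2+\nu_k}$; for $\nu_k>0$ these lie in $(-\infty,2-m)\cup(0,\infty)$, while $\nu_0=0$ gives exactly $0$ and $2-m$. The roots with real part $>2-m$ — the constant mode $\mu=0$ together with all the ``$+$'' roots — should be removed by the parabolic structure: the corresponding Mellin mode of the homogeneous model equation is, to leading order, independent of the time-like direction, and the forward-parabolicity of $\rho^2\pa_v-\rho^2\Delta$ together with the bottom-face datum then forces it to vanish, so it contributes to neither the kernel nor the cokernel of $I(\lambda)$. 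Since the drift term in the expander operator dominates the conical Laplacian at infinity, \cite{LN13} guarantees that the non-conical part of $E$ introduces no further singular rates. This leaves $\Re(\cC_E)\subset(-\infty,2-m]=(-\infty,\min(2-m,0)]$ for $m\ge 2$, with the remaining low-dimensional case handled directly.

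The main obstacle is the front-face model analysis underlying step (1): proving invertibility of $I(\lambda+i\xi)$ with bounds uniform in $\xi$. Two features make this harder than the usual setting — the cross-section of the front face is the non-compact expander $E$, so the cross-section problem is itself a weighted Fredholm problem (handled only by importing~\cite{LN13}), and the model operator is parabolic in a non-standard way, so neither Krylov's parabolic estimates nor parabolic Lockhart--McOwen apply off the shelf and an Agmon--Nirenberg-type argument must be built by hand. Tracking exactly how the drift and the $\pa_v$-term displace the conical indicial roots — which is what pins down the sharp threshold $\min(2-m,0)$ in part (2) — is the other delicate point.
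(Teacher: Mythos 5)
Your outline for part (1) is essentially the paper's strategy: Fourier/Mellin transform in $\log\rho$, a parametric family of operators on the cross-section of the front face, uniform invertibility for $|\xi|$ large via the a-priori estimate \eqref{eq_aprioriestimateE}, and the interior estimates on the compact part. One structural point worth making explicit: the paper compactifies the cross-section to the manifold-with-boundary $\oE$ and works with the Sobolev spaces $\chL^2_{k+2r,r}(\oE)$, whose Rellich--Kondrachov property makes the zero-index statement (Proposition \ref{prop_zeroindex}) and the discreteness of $\cC_E$ (Proposition \ref{prop_discretecritrates}) tractable; your description of the cross-section problem as ``a weighted Fredholm problem on the non-compact $E$'' hides this and would make those steps substantially harder. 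The Lotay--Neves theorem enters precisely to show that $(-P_2,|_{\pa\oE})$ is an isomorphism, which anchors the index calculation; you don't say why $I(\lambda)$ is Fredholm of index zero, which your analytic-Fredholm argument needs.

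For part (2) there is a genuine gap. The paper's $\cC_E$ is defined by nontrivial kernel of the parametric operator $(-P_z,|_{\pa\oE})$ on the compactified cross-section $\oE$, and this is \emph{not} the set of conical indicial roots $\mu=\frac{2-m}{2}\pm\sqrt{(\frac{m-2}{2})^2+\nu_k}$: those govern the asymptotic end of $E$, whereas the critical rates depend on the global geometry of $E$ and on the $z$-dependence of the coefficients of $-P_z$. Your claim that the parabolic structure ``forces the $+$ roots and $\mu=0$ to vanish'' is exactly the point that needs a proof, and the eigenmode decomposition you propose does not furnish one. The paper's actual argument is of a different character and relies on two facts you don't use: (i) by boundary regularity (Corollary \ref{coro_zerokernel}), any kernel element of $(-P_z,|_{\pa\oE})$ vanishes to all orders at $\pa\oE$, so one may weight it by powers of the boundary-defining function; (ii) conjugating the parametric operator by $s^{(\lambda+i\xi)/2}s_C^\mu$ with $\mu=\frac{m-2}{2}$ and using the expander identities $\Delta_E r^2 = 2m + 2|r^\perp|^2$ and $|\nabla_E r^2|^2 = 4r^2-4|r^\perp|^2$ shows the zeroth-order coefficient has strictly positive real part exactly when $\lambda>2-m$, after which the complex maximum principle (Lemma \ref{lem_maximumprinciple}) kills the kernel. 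The threshold $\min(2-m,0)$ comes from this weighting computation on $E$, not from the conical indicial equation — the numbers agree, but your proposed mechanism does not establish the inclusion $\cD_E\subset(-\infty,\min(2-m,0)]$.
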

Here, the sets $\cD_E,\cC_E$ are defined in \S \ref{subsec_Fredholmtheory}. As a corollary to the above, we obtain:
\begin{coro}
The mapping $T^p_{k+2r,r,\lambda}$ is semi-Fredholm and injective for $\lambda > \min(2-m,0)$.
\end{coro}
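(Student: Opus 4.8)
The plan is to read off the statement from Theorem~\ref{thm_mappingE} in two steps. The case $p=2$ is immediate: part~(2) gives $\cD_E\subset(-\infty,\min(2-m,0)]$, so every $\lambda>\min(2-m,0)$ lies in $\bR\setminus\cD_E$, and then part~(1) says $T^2_{k+2r,r,\lambda}$ is semi-Fredholm and injective. All that is left is to bootstrap from $p=2$ to general $p$, and this I would split into an injectivity argument and a semi-Fredholm estimate.

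First I would treat \textbf{injectivity}. Let $u\in L^p_{k+2r,r,\lambda}(\bE)$ with $(\rho^2\pa_v-\cL)u=0$ and $u|_{\botf}=0$. Since the forcing term vanishes identically, applying the local interior estimates of Theorems~\ref{thm_ffestimate} and~\ref{thm_bfestimate} in a good cover of $\bE$ (constructed exactly as for $\bL$ in Definition~\ref{def_goodcover}) and iterating shows $u\in L^p_{k'+2r',r',\lambda}(\bE)$ for every $k',r'\ge 0$; in particular $u$ is smooth, and the weighted parabolic Sobolev embedding of \S\ref{subsubsec_Sobolevembedding} yields pointwise bounds $|{}^{\m}\nabla^{a}({}^b\nabla_\rel^{c}u)|\le C_{a,c}\,\rho^{\lambda}$ for all $a,c\ge 0$. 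Feeding these bounds together with the exponential decay of $E$ to its asymptotic cone from Proposition~\ref{prop_expanderdecay} — which controls the geometry of $\bE$ and the coefficients of $\rho^2\pa_v-\cL$ on the non-compact ends, and in particular forces $u$ to decay faster than any power at the front face since $\cD_E$ has no point above $\min(2-m,0)$ — into the $L^2$-weighted norms, one checks that $u\in L^2_{k+2r,r,\lambda'}(\bE)$ for some $\lambda'$ with $\min(2-m,0)<\lambda'\le\lambda$; such a $\lambda'$ exists because the hypothesis $\lambda>\min(2-m,0)$ is strict and $\cD_E$ contains no point of $(\min(2-m,0),\lambda]$. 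Hence $u\in\ker T^2_{k+2r,r,\lambda'}$, which is trivial by the $p=2$ case, so $u\equiv 0$.

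It then remains to prove the \textbf{semi-Fredholm estimate} $\|u\|_{L^p_{k+2r,r,\lambda}}\le C\,\|T^p_{k+2r,r,\lambda}u\|$ for general $p$; combined with injectivity this gives closed range and trivial kernel, hence the semi-Fredholm property. The a-priori estimate~\eqref{eq_aprioriestimateE} holds with $L^2$ replaced by $L^p$, being assembled from the local interior estimates of Theorems~\ref{thm_ffestimate} and~\ref{thm_bfestimate}, which are valid for all $p$; the remaining point is to absorb the lower-order term $\|u\|_{L^p_\lambda}$, which on the non-compact $\bE$ cannot be done by Rellich compactness alone. Here I would invoke the invertibility at the non-critical weight $\lambda$ of the translation-invariant model operator on the cylindrical ends of $\bE$ near the front face and near the asymptotic cone, and glue the resulting model estimate to Rellich compactness on the interior compact piece in the usual Lockhart--McOwen manner. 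This is precisely the model-operator analysis underlying the proof of Theorem~\ref{thm_mappingE} in the case $p=2$; it rests only on the spectral gap of the model (in the spirit of Agmon--Nirenberg~\cite{AN63}) and on $L^p$ parabolic estimates, neither of which is sensitive to the exponent, so it carries over to all $p$. The hard part will be exactly this: since the reference Fredholm theory of Theorem~\ref{thm_mappingE} is set up only for $L^2$, one must re-run the model-operator analysis near the front face and the cone in the $L^p$ setting (and, for injectivity, control the decay rate of kernel elements at the front face precisely enough to land back in an $L^2$-based space) rather than quoting the $L^2$ conclusion as a black box; by contrast, the regularity bootstrap for injectivity only iterates estimates already established in \S\ref{subsec_sobholspaces} and their analogues on $\bE$, and is routine.
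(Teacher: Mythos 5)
You correctly identify the only content of this corollary as it appears in the paper: combining part~(2) of Theorem~\ref{thm_mappingE}, which gives $\cD_E\subset(-\infty,\min(2-m,0)]$, with part~(1) shows immediately that $T^2_{k+2r,r,\lambda}$ is semi-Fredholm and injective for $\lambda>\min(2-m,0)$. That two-line observation is the whole of the paper's argument; the corollary is stated with no proof beyond ``As a corollary to the above.''

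The remainder of your proposal — the bootstrap from $p=2$ to general $p$ — addresses a claim the paper almost certainly does not intend. The notation $T^p_{k+2r,r,\lambda}$ is never defined anywhere (only $T^2$ is, in~\eqref{eq_sobolevmappingE}); the proof of Theorem~\ref{thm_mappingE} rests on the Fourier transform in $\log\rho$ and Plancherel's theorem, which are intrinsically $L^2$ tools; and every subsequent use of the parabolic Fredholm theory in~\S\ref{sec_linearproblem} and~\S\ref{sec_nonlinearproblem} (e.g.\ Theorem~\ref{thm_isomorphismL}) is formulated in $L^2$. The superscript $p$ here is evidently a typo for~$2$, and no $L^p$ Fredholm theory on $\bE$ is developed or required.

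Since you felt obliged to justify the general-$p$ version, let me also note that your sketch would not go through as written without substantial new input. The injectivity argument asserts that a kernel element of $T^p_{k+2r,r,\lambda}$ lands in $L^2_{k+2r,r,\lambda'}$ for some admissible $\lambda'$, but the interior estimates and Sobolev embedding give pointwise bounds weighted by $\rho^\lambda$ near the front face, which alone do not control the $L^2$ norm: the passage from $L^p$ to $L^2$ on the non-compact manifold $\bE$ requires decay information simultaneously at the front face \emph{and} along the conical ends of each time slice (the corner/bottom region), and nothing in Proposition~\ref{prop_expanderdecay} or the regularity bootstrap as set up in~\S\ref{subsec_sobholspaces} delivers the latter for free. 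For the closed-range part you rightly observe that one would have to re-run the model analysis of~\S\ref{subsec_Fredholmtheory} without Plancherel; that is a genuine project (a Mellin/contour-integral argument with $L^p$ parametrix bounds, as in Lockhart--McOwen or Melrose), not a routine transcription. In short: your proof of the $p=2$ case matches the paper, and the rest of your effort is spent repairing what is just a typographical slip.
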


In their paper \cite{LN13}, Lotay--Neves study the deformation equation for Lagrangian expanders, and prove isomorphism theorems about the operator on certain natural Sobolev spaces. While they studied an (degenerate) elliptic operator, our work in this section is in a sense an extension and generalization of their results to the parabolic case. We describe their relevant results in \S \ref{subsec_LN}. In order to prove Theorem \ref{thm_mappingE}, it will be useful to first work with $k=0,r=1$, and then improve it to $k,r\geq 1$ by the following result:

\begin{lem}
	Suppose that the mapping $T^2_{2,1,\lambda}$ is semi-Fredholm and injective. Then, the mapping $T^2_{k+2r,r,\lambda}$ is also semi-Fredholm and injective, for $k,r\geq1$. 
\end{lem}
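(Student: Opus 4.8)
The plan is to bootstrap from the base case $k=0,r=1$ to general $k,r\geq 1$ by a standard difference-quotient / commutator argument, exploiting the (approximate) translation-invariance of the model operator near the front face. The key point is that the a-priori estimate \eqref{eq_aprioriestimateE} already holds for all $k,r\geq 1$; what needs to be upgraded is the semi-Fredholm property (closed range, finite-dimensional kernel) and injectivity. Injectivity is immediate: the kernel of $T^2_{k+2r,r,\lambda}$ is contained in the kernel of $T^2_{2,1,\lambda}$ (since $L^2_{k+2r,r,\lambda}\subset L^2_{2,1,\lambda}$ and the operator and boundary map are the same), which is trivial by hypothesis. So the content is the semi-Fredholm statement.

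First I would recall that a densely-defined operator between Banach spaces is semi-Fredholm (in the sense of finite-dimensional kernel and closed range) if and only if it admits an estimate of the form $\|u\|_{\mathrm{domain}}\leq C(\|Tu\|_{\mathrm{range}}+\|u\|_{\mathrm{aux}})$ with the inclusion of the domain into the auxiliary space being compact. For $T^2_{2,1,\lambda}$ this is the hypothesis, realised via \eqref{eq_aprioriestimateE} with the auxiliary norm $\|u\|_{L^2_\lambda}$; the compactness of $L^2_{2,1,\lambda}(\bE)\hookrightarrow L^2_\lambda(\bE)$ over the \emph{interior} region fails globally because $\bE$ is non-compact near the front face, so in fact the correct statement is that the estimate plus a suitable decomposition (interior part compact, front-face part controlled away from critical rates) yields semi-Fredholmness. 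The plan is to show that the same structural input is available for $k,r\geq 1$: namely, I would prove the elliptic-type estimate
\begin{equation*}
\|u\|_{L^2_{k+2r,r,\lambda}}\leq C\big(\|T^2_{k+2r,r,\lambda}u\|_{L^2_{k+2r-2,r-1,\lambda}\oplus L^2_{k+2r-1,\lambda}}+\|u\|_{L^2_{2,1,\lambda}}\big),
\end{equation*}
i.e. with the lower-regularity norm $\|u\|_{L^2_{2,1,\lambda}}$ as the auxiliary term. Granting this, if $T^2_{2,1,\lambda}$ is semi-Fredholm then so is $T^2_{k+2r,r,\lambda}$: a sequence $u_n$ in the kernel, bounded in $L^2_{k+2r,r,\lambda}$, is bounded in $L^2_{2,1,\lambda}$, hence (by semi-Fredholmness of the base case, whose kernel is finite-dimensional and on which all norms are equivalent) has a convergent subsequence in $L^2_{2,1,\lambda}$; the displayed estimate then upgrades this to convergence in $L^2_{k+2r,r,\lambda}$, giving finite-dimensional kernel, and an analogous argument on a complement gives closed range.

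The displayed higher-regularity estimate is obtained exactly as the a-priori estimate \eqref{eq_aprioriestimateE} was: localise on the good cover of $\bE$, where the operator $\rho^2\pa_v-\cL$ becomes a standard uniformly parabolic operator with coefficients whose $C^{k+2r-2,r-1}$-norms are uniformly bounded (by the analogue of Lemma \ref{lem_chartsoperator} and the bounds in Lemma \ref{lem_modelneighbourhood} on $\bE$), apply the interior parabolic estimates of Theorems \ref{thm_ffestimate} and \ref{thm_bfestimate} together with the trace-extension Lemma \ref{lem_extensionoftrace} on each chart, and sum using the bounded-overlap property. The interior estimate gives $\|u\|_{L^p_{k+2r,r}(Q_1)}\leq C\|h\|_{L^p_{k+2r-2,r-1}(Q_2)}+C'\|u\|_{L^p(Q_2)}$; by a standard interpolation/iteration over nested balls one may replace the crude $L^p$-term on the right by the $L^p_{2,1}$-norm, which is what globalises to the auxiliary term $\|u\|_{L^2_{2,1,\lambda}}$ above. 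The main subtlety — the part I expect to require the most care — is bookkeeping the weights and the trace term: one must check that the weighted norms behave well under the localisation (using $|{^{\m}\nabla}^j\rho^{-\lambda}|=O(\rho^{-\lambda})$ as in Remark following \S\ref{subsubsec_weightedordinaryspaces}), and that the boundary contribution $\|u|_{\botf}\|_{L^2_{k+2r-1,\lambda}}$ is genuinely controlled by $T^2_{k+2r,r,\lambda}u$ (it is its second component) rather than needing to be carried as auxiliary data. Once the estimate with auxiliary term $\|u\|_{L^2_{2,1,\lambda}}$ is in hand, the semi-Fredholm transfer is the soft functional-analytic argument sketched above and the proof is complete.
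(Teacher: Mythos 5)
Your argument is correct and rests on the same two inputs as the paper's proof, namely the higher-order a-priori estimate \eqref{eq_aprioriestimateE} and the quantitative content of semi-Fredholmness plus injectivity for $T^2_{2,1,\lambda}$, but it takes a needlessly long route. You already observe in your first paragraph that injectivity of $T^2_{k+2r,r,\lambda}$ is inherited directly, so its kernel is trivially zero; the subsequent compactness/subsequence (Peetre-type) argument to establish a finite-dimensional kernel is therefore redundant. You also do not need a fresh localisation to establish the estimate with auxiliary term $\|u\|_{L^2_{2,1,\lambda}}$: it follows immediately from \eqref{eq_aprioriestimateE} since $\|u\|_{L^2_\lambda}\leq\|u\|_{L^2_{2,1,\lambda}}$. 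The paper's proof is the short version of yours: closed range and trivial kernel of $T^2_{2,1,\lambda}$ give the lower bound $\|u\|_{L^2_\lambda}\leq\|u\|_{L^2_{2,1,\lambda}}\leq C\|T^2_{2,1,\lambda}u\|$, and substituting this into the $\|u\|_{L^2_\lambda}$ term on the right of \eqref{eq_aprioriestimateE} (using that the norms in $T^2_{2,1,\lambda}u$ are dominated by the corresponding norms in $T^2_{k+2r,r,\lambda}u$) yields the sharp estimate $\|u\|_{L^2_{k+2r,r,\lambda}}\leq C\|T^2_{k+2r,r,\lambda}u\|$ directly. That estimate gives injectivity and closed range at a stroke, with no soft functional analysis; your ``analogous argument on a complement'' for closed range is this substitution in disguise.
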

\begin{proof}
	First, we have the global a-priori estimate on $\bE$, as \[
	\|u\|_{L^2_{k+2r,r}}
	\leq C_{k,r}\left(\|(\rho^2\pa_v-\cL)u\|_{L^2_{k+2r-2,r-1}}+\|u\|_{L^2}+\|u|_{\mathrm{bf}}\|_{L^2_{k+2r-1}}\right).
	\]
	Then, knowing that $T^2_{2,1,\lambda}$ is semi-Fredholm and injective gives us the estimate \[
	\|u\|_{L^2}
	\leq C'\left(\|(\rho^2\pa_v-\cL)u\|_{L^2_{0,0}}+\|u|_{\mathrm{bf}}\|_{L^2_{2}}\right),
	\]
	combining the above two estimates, we are done. 
\end{proof}

By this lemma, it is enough to work with $k=0,r=1$, which we assume henceforth.

 \subsection{Fourier analysis}
 
To prove our result, we will invoke Fourier analysis in order to reduce the problem to analysing the isomorphism properties of a parametric operator on the front face. Note that we work with complex-valued $L^2$-spaces throughout.                                                We recall the definition we will be using here.
 
 \begin{defn}
 	Let $M$ be a smooth, compact manifold, and let $f : M\times \bR \rt \bR$ be a function in $L^2(M\times \bR)$. The \textit{Fourier transform} of $f$ is the function $\hat{f} \in L^2(M\times \bR)$, defined as:
 	\[
 	\hat{f}(p,\xi):= \frac{1}{2\pi}\int_{\bR}e^{-i\xi\cdot x}f(p,x)dx.
 	\] 
 \end{defn}
 
A basic property of the Fourier transform is:
 \[
 	\cF(\pa_x f)(p,\xi) = i\xi\hat{f}(p,\xi).
 	\]
One of the key results about the Fourier transform is that:
\begin{theorem}[Plancherel's theorem]
	The Fourier transform is an isometry on $L^2(M\times \bR)$ up to a constant.
\end{theorem}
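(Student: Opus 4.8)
The plan is to reduce the statement to the classical one-dimensional Plancherel theorem, exploiting the fact that $\cF$ as defined above acts only in the $\bR$-variable and treats the point $p\in M$ merely as a parameter, and then to integrate over the compact manifold $M$ using Fubini's theorem.

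First I would prove the identity on a dense subspace. The space $\cD$ of functions $f$ on $M\times\bR$ that are smooth and, in the $\bR$-direction, Schwartz (equivalently, since $M$ is compact, smooth with compact support) is dense in $L^2(M\times\bR)$, by the standard density statement on $\bR$ combined with compactness of $M$. For $f\in\cD$ and each fixed $p$, the function $x\mapsto f(p,x)$ is Schwartz, so the one-dimensional Fourier inversion formula together with the multiplication formula $\int_\bR \hat g\,h = \int_\bR g\,\hat h$ yields the fibrewise identity
\[
\int_\bR |\hat f(p,\xi)|^2\, d\xi \;=\; \frac{1}{2\pi}\int_\bR |f(p,x)|^2\, dx,
\]
the constant $\tfrac{1}{2\pi}$ being forced by the normalization chosen in the definition of $\cF$. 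Integrating this over $M$ against the volume measure $dV_M$ and applying Fubini's theorem — legitimate since the integrands are non-negative and measurable on the product — gives
\[
\|\hat f\|_{L^2(M\times\bR)}^2 = \int_M\!\int_\bR |\hat f(p,\xi)|^2\,d\xi\,dV_M(p) = \frac{1}{2\pi}\int_M\!\int_\bR |f(p,x)|^2\,dx\,dV_M(p) = \frac{1}{2\pi}\|f\|_{L^2(M\times\bR)}^2.
\]

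Finally I would extend the identity to all of $L^2(M\times\bR)$: since $\cF$ is bounded on the dense subspace $\cD$ it extends uniquely to a bounded operator on $L^2(M\times\bR)$, and the isometry identity, being stable under $L^2$-limits, persists on the whole space (the extension coinciding with the integral formula whenever the latter converges absolutely, e.g.\ for $f\in L^1\cap L^2$ in the $\bR$-variable). There is no serious obstacle in this argument; the only points requiring a word of care are the appeal to Fubini, which is justified by non-negativity of $|f|^2$ and $|\hat f|^2$, and the density of $\cD$ in $L^2(M\times\bR)$, which reduces to the familiar one-variable fact.
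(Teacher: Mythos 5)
The paper states Plancherel's theorem without proof, treating it as a standard fact. Your argument — reducing to the classical one-dimensional Plancherel theorem fibrewise over the compact manifold $M$, establishing the isometry on a dense subspace of smooth functions compactly supported in $\bR$, integrating over $M$ by Tonelli/Fubini (justified by non-negativity), and then extending by density — is correct and is the standard way to prove this statement. The constant $\tfrac{1}{2\pi}$ you identify matches the normalization chosen in the paper's definition of $\cF$, which carries a $\tfrac{1}{2\pi}$ prefactor, and the phrase "up to a constant" in the theorem statement is exactly accounting for this.
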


 \subsection{Fredholm theory on $\bE$}\label{subsec_Fredholmtheory}

Note that in $(\tau,w^i)$-coordinates, $\bE$ restricts to the expander $E \subset \bC^m$ at $\tau=0$. In $(\rho,\sigma^i)$-coordinates, $\bE$ restricts to a compact manifold with boundary at the front face $\bS^{2m}_+$, denoted $\oE$. We will work with the parametrisation of $\bE$ given by $\oE\times\llbracket0,\infty)\subset \bS^{2m}_+ \times \llbracket0,\infty)$ in $(\sigma^i,\rho)$-coordinates. Since the front face $\bS^{2m}_+$ in $(\rho,\sigma^i)$-coordinates is a compactification of $\bC^m$, $\oE$ is the compactification of the expander $E\subset \bC^m$ via the same identification. There is also a natural boundary-defining function on $\oE$, given by restricting the boundary-defining function $s := {\sigma^{2m+1}}^2 =  (1-\sum_{i=1}^{2m} {\sigma^i}^2)$ from $\bS^{2m}_+$. Now, we can write a diffeomorphism, \[
\Phi : \oE^\circ \rt E,
\]  
given by \begin{equation}\label{eq_phi}
\Phi: (\sigma^i) \mapsto \left( \frac{\sigma_i}{\sqrt{2}\sigma^{2m+1}}\right),
\end{equation}
for $\sigma^{2m+1} = ({1-\sum_{i=1}^{2m}{\sigma^i}^2})^{1/2}$.
We note that for a differential operator or vector field on $E$, we can pull it back via the diffeomorphism $\Phi$ to obtain a corresponding differential operator or vector field on the interior of $\overline{E}$, which may be possibly extended to the boundary. 

\subsubsection{Restrictions of quantities to the front face}
We will define restrictions of objects defined on $\bE$ to the front face $\oE$, which is a manifold with an ordinary boundary. First, we define the restriction of a metric:

\begin{defn}
	The m-metric on $\bE$ inherited from $\Bl$ defines an ordinary metric $g_{\oE}$ on $\oE$, in the following way: there is a projection map, $^{\m}T\bE\rt T\oE$ because of writing $\bE$ as $\oE\times \llbracket0,\infty)$. The inner product of two vectors in $T_x\oE$ is defined as the inner product of their pull-backs at a point, which is independent of choice of pull-back due to the translation-invariance of the metrics in $\rho$.
\end{defn}
\begin{defn}
	The relative $b$-metric on $\bE$ defines a $b$-metric $^bg_{\oE}$ on $\oE$ with $\pa\oE$ as an a-boundary, in the following way: there is a surjective projection map $^bT_\rel \bE \rt {^bT}\oE$, defined by composing the inclusion $^bT_\rel\bE\hookrightarrow {^bT}\bE$ with the projection $^{b}T\bE \rightarrow T\oE$. Then, the inner product of two vectors is the inner product of their pull-backs at a point. This is well-defined as the inner product is independent of choice of pull-back by translation-invariance of the metrics in $\rho$.
\end{defn}
We have the following relating the $b$-metric on $\oE$ to the natural metric on the expander:
\begin{lem}
	We can write the $b$-metric as \[
	^bg_{\oE}=s\Phi^*(g_E),
	\] i.e.\ a scaling of the pullback of the natural metric on $E$ along the diffeomorphism $\Phi$ as in \eqref{eq_phi}.
\end{lem}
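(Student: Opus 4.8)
The plan is to prove the identity by a direct computation in the projective coordinates $(w^1,\dots,w^{2m},\tau)$ near the front face, using the explicit form of the m-metric already recorded for $\Bl$ and inherited by $\bE$. Recall that there the blow-down map is $(w,\tau)\mapsto(z,t)=(w\tau,\tfrac12\tau^2)$, that $\bE$ is the locus $\{(w,\tau):w\in E\}\cong E\times\llbracket0,\infty)$ with front face $\oE=\{\tau=0\}$, and that the m-metric has the shape $s\bigl[\sum_i(dw^i)^2+(\text{terms involving }\tau^{-1}d\tau)\bigr]$ with $s=(\tfrac12+|w|^2)^{-1}$. First I would observe that the $b$-fibration $t=\tfrac12\tau^2$ depends only on $\tau$, so $^bdt$ annihilates exactly the spatial directions; hence $^bT_\rel\bE$ is spanned by $\{\partial_{w^i}\}$, and restricting the m-metric to this subbundle leaves $s\sum_i(dw^i)^2$ as the relative $b$-metric. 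This expression is independent of $\tau$, which is precisely the translation invariance in $\rho$ that makes the restriction to $\oE$ well defined.

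Next I would restrict to $\oE=\{\tau=0\}$ and translate into the coordinates of the statement. By construction $\oE$ is the closure of $E$ embedded in $w$-space $\mathbb{C}^m$, and the diffeomorphism $\Phi$ of \eqref{eq_phi} is exactly the identification $\sigma\mapsto w=\sigma'/(\sqrt2\,\sigma^{2m+1})$ of this $w$-coordinate with a point of $E$; consequently $\sum_i(dw^i)^2|_{\oE}=\Phi^*(g_E)$, since the natural metric $g_E$ on $E$ is the restriction of the flat metric on $\mathbb{C}^m$. Combining with the previous paragraph gives $^bg_{\oE}=s\,\Phi^*(g_E)$ on $\oE^\circ$, and since both sides are a-smooth (the right-hand side is manifestly built from a-smooth data) and agree on the dense interior, they agree on all of $\oE$, including over the a-boundary $\partial\oE$.

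I do not expect a genuine obstacle here; this is a bookkeeping lemma. The only points needing a little care are (i) checking that the $\tau^{-1}d\tau$ cross-terms and the $(\tau^{-1}d\tau)^2$-term of the m-metric drop out once we pass to $^bT_\rel\bE$ — immediate because $t$ is a function of $\tau$ alone, so these directions are transverse to the relative tangent bundle — and (ii) matching the $\sigma$-parametrisation of $\oE$ in \eqref{eq_phi} with the $w$-parametrisation in which the m-metric was written. Conceptually, the lemma records that the parabolic rescaling by $\sqrt{2t}$ in the definition of $\bE$ is exactly undone by the $\rho^{-2}$-conformal factor of the m-metric, leaving only the self-similar profile $g_E$ up to the boundary-defining-function weight $s$.
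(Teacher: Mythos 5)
Your computation is correct, and since the paper states this lemma without proof, your direct calculation in the $(w,\tau)$-projective coordinates is the natural way to fill the gap: the observation that $t=\tfrac12\tau^2$ depends only on $\tau$ (so the $\tau^{-1}d\tau$ cross-terms and $(\tau^{-1}d\tau)^2$-term of the m-metric vanish on $^bT_{\rel}\bE$), combined with the $\tau$-independence of what remains and the identification $w=\Phi(\sigma)$, gives exactly the claimed formula. One small bookkeeping caveat worth flagging: the paper uses the symbol $s$ both for the boundary-defining function $\sigma_{2m+1}^2 = t/\rho^2 = (1+2|w|^2)^{-1}$ introduced in \S\ref{subsec_Fredholmtheory} and for the quantity $(\tfrac12+|w|^2)^{-1}$ appearing in the displayed m-metric formula, and these two differ by a factor of $2$; your proof inherits the latter convention, which is internally consistent with the m-metric computation you cite, but you should be aware the two usages of $s$ in the paper are not literally equal.
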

 
Now, we define the Levi-Civita connections in these metrics. 
\begin{defn}
	We denote the ordinary Levi-Civita connection for $g_\oE$ by $\nabla_\oE$, and the $b$-Levi-Civita connection for $^bg_\oE$ by $^b\nabla_{\oE}$. 
\end{defn}
\begin{defn}
	We define the volume form on $\oE$ by $dV_{\oE}$, as the volume form for the metric $g_{\oE}$.
\end{defn}
Now, since we have a choice of boundary defining function $s$ on $\oE$, we can define an isomorphism $T\oE \rightarrow {^bT}\oE$, sending $\pa_s\rightarrow s\pa_s$. This allows us to define an isomorphism $\pi_{\oE}: {^bT}^*\oE \rt {^bT}\oE$, sending $s^{-1}ds\rt ds$. Then, we define:

\begin{defn}
	For a smooth function $u$ on $\oE$, define $\nabla_{\oE}^\alpha {^b\nabla_{\oE}^\beta} u \in \Gamma((T^*\oE)^{\alpha+\beta})$ in the following way: first, we have ${^b\nabla_{\oE}^\beta}u \in \Gamma((^bT^*\oE)^\beta)$. Then, by the above isomorphism, we can consider $\pi_{\oE}({^b\nabla_{\oE}^\beta}u)\in \Gamma((T^*\oE)^\beta)$. Finally, we define \[\nabla_{\oE}^\alpha {^b\nabla_{\oE}^\beta} u := \nabla_{\oE}^\alpha\left(\pi_{\oE}({^b\nabla_{\oE}^\beta}u)\right).
	\]
\end{defn}
Now, the following key lemma explains the above definition:
\begin{lem}
	Consider a function $u(\sigma^i,\rho) \equiv u(\sigma^i)$ in the coordinates $\oE\times \llbracket0,\infty)$ on $\bE$, i.e.\ it is invariant in $\rho$. Then, we have that \[
	|^m\nabla^\beta {^b\nabla^\alpha_\rel}u(\sigma,r)|  = |\nabla^\beta_{\oE}{^b\nabla^\alpha_\oE}u(\sigma)|.
	\]
\end{lem}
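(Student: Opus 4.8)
The plan is to derive the identity from the exact scale-invariance of the geometry on $\bE$, reducing it to the finitely many local model charts. First I would record the structural fact: since $\bE$ is the compactification of the self-similar family $\{(\sqrt{2t}E,t):t>0\}\subset\Bl$ and the m-metric $g=\rho^{-2}g_{\bC^m}+\rho^{-4}dt^2$ on $\Bl$ is invariant under $(z,t)\mapsto(cz,c^2t)$, the induced m-metric $g_\bE$ is exactly invariant under $\rho\mapsto c\rho$; hence, in the product coordinates $\bE\cong\oE\times\llbracket0,\infty)_\rho$, the metric $g_\bE$, the time-like vector field, the relative tangent bundle $^bT_\rel\bE$, the relative $b$-metric, and all the associated Levi-Civita connections are independent of $\rho$. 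By their definitions in \S\ref{subsec_Fredholmtheory}, the metric $g_\oE$, the $b$-metric $^bg_\oE$, the connections $\nabla_\oE$, $^b\nabla_\oE$, and the map $\pi_\oE$ on $\oE$ are precisely these $\rho$-invariant structures restricted along the product projection. So the content of the lemma is that an iterated mixed derivative of a $\rho$-invariant function stays $\rho$-invariant and within the restricted bundles, and is measured by the same norm there.

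I would make this precise by a double induction, on $\alpha$ first and on $\beta$ second. For the $b$-part ($\beta=0$): the product structure gives a canonical isomorphism $^bT_\rel\bE\cong{^bT}\oE$ which, by the very definition of $^bg_\oE$, is an isometry onto the $b$-metric on $\oE$; since $^b\nabla_\rel$ differentiates only in the relative directions and $^b\nabla_\oE$ is the Levi-Civita connection of $^bg_\oE$, an induction on $\alpha$ shows that $^b\nabla^\alpha_\rel u$ for $\rho$-invariant $u$ corresponds under this isomorphism to $^b\nabla^\alpha_\oE u$ with equal pointwise norms (near the corner this is cleanest in the variable $\tau=\log\rho$, in which the relation $t=\rho^2 s$ makes $\rho^{-4}dt^2$ vanish on $\ker dt$, so the relative metric is modelled on the Riemannian product $g_\Sigma\oplus d\tau^2$ and the $\tau$-factor is flat). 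Then I would use that the metric-projection inclusion $^bT^*_\rel\bE\hookrightarrow{^{\m}T^*}\bE$ is norm-preserving — if $\tilde\eta$ is the image of $\eta$, its $g_\bE$-sharp lies in $^bT_\rel\bE$ and equals the $^bg_\rel$-sharp of $\eta$ — so pushing forward before measuring with the m-metric does not change the norm, settling $\beta=0$. For the inductive step in $\beta$ I would apply the Koszul formula for the $\rho$-invariant m-metric: $^{\m}\nabla$ of a $\rho$-invariant tensor along $\oE$-directions reproduces $\nabla_\oE$ and introduces no $\rho$-dependence, so $^{\m}\nabla^\beta(^b\nabla^\alpha_\rel u)$ corresponds to $\nabla_\oE^\beta(^b\nabla_\oE^\alpha u)$ with equal norms. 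As throughout the paper, every step is checked on the local model charts of $\bE$ near the front face and near the corner.

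The step I expect to be the main obstacle is the bookkeeping at the corner of $\bE$, where the ordinary boundary $s=0$ and the a-boundary $\rho=0$ meet and the relation $t=\rho^2 s$ couples the two boundary-defining functions. There $g_\bE$ is not literally a Riemannian product — in the frame $(\sigma,r,s)$ it carries the cross term $2s((r^{-1}dr)\otimes ds+ds\otimes(r^{-1}dr))$ — and the relative direction $w=\rho\pa_\rho-2s\pa_s$ mixes the $\oE$-factor with the $\rho$-factor, so the reduction to the $\rho$-slice is not ``forget the last coordinate''. One must change to the frame in which the product structure of the relative metric is visible and then verify that $g_\oE$ (for which $\pa\oE$ is at finite distance) and $^bg_\oE$ (for which $\pa\oE$ is at infinity) are exactly the metrics making the identifications above isometric, and that the relative and m-connections restrict correctly. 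The front-face-away-from-corner model $U\times\llbracket0,\infty)$, where $g_\bE=g_0+(d\rho/\rho)^2$ up to a constant, is the easy case; once the corner is handled the inductions are routine.
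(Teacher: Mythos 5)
The base case and the structural observations are sound: the relative bundle $^bT_\rel\bE$ does restrict isometrically to $^bT\oE$, the relative Levi--Civita connections correspond, and the $\rho$-translation-invariance of everything in sight is the right organising principle. The gap is in the inductive step in $\beta$. You assert that ``$^{\m}\nabla$ of a $\rho$-invariant tensor along $\oE$-directions reproduces $\nabla_\oE$ and introduces no $\rho$-dependence,'' i.e.\ you are implicitly treating $g_\bE$ as a Riemannian product on $\oE\times\llbracket0,\infty)_\rho$. It is not. In the $(\rho,\sigma)$ splitting the induced m-metric on $\bE$ has nonzero cross terms: writing a point of $\sqrt{2t}E$ as $z=\sqrt{2t}x$ with $x\in E$, one computes $\rho\pa_\rho = z\pa_z + 2t\pa_t$ and
\[
g_\bE\bigl(\rho\pa_\rho,\,v\bigr)=\frac{2\langle v,x\rangle\,(2|x|^2-3)}{(1+2|x|^2)^3},\qquad v\in T_x E\subset T\oE,
\]
which is generically nonzero since $\langle v,x\rangle=\langle v,x^\top\rangle_{g_E}$ need not vanish. (The same phenomenon is already visible in the paper's own corner formula through the $2s\,(r^{-1}dr\otimes ds+ds\otimes r^{-1}dr)$ term.) Two consequences: (i) already at $(\alpha,\beta)=(0,1)$, the $\oE$-block of $g_\bE^{-1}$ is the inverse of the Schur complement, not of the $\oE$-block, so $|du|_{g_\bE}\neq|du|_{g_\oE}$ whenever $\langle du,\eta\rangle\neq0$ for $\eta=g_\bE(\rho\pa_\rho,\cdot)|_{T\oE}$; (ii) for $\beta\geq2$, because the $\rho$-slices are not totally geodesic, $^{\m}\nabla^\beta u$ acquires nonzero components with $\rho\pa_\rho$-slots (e.g.\ $^{\m}\nabla^2u(\rho\pa_\rho,\rho\pa_\rho)=-\tfrac12\,df(\nabla u)$ with $f=|\rho\pa_\rho|^2_{g_\bE}$), and these are simply absent from the right-hand side. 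Neither the Koszul-formula appeal nor the norm-preservation of $^bT^*_\rel\hookrightarrow{^{\m}T^*}$ (which is correct as far as it goes) addresses these extra contributions.

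The fix is not to argue for equality but for two-sided comparability. Because the cross terms $g_\bE(\rho\pa_\rho,\cdot)|_{T\oE}$ and their covariant derivatives are $\rho$-invariant and uniformly bounded on the compact $\oE$, the discrepancy between the two sides is a universal linear combination of strictly lower-order mixed derivatives, so $\sum_{\beta\leq r}\sum_{\alpha\leq k+2r-2\beta}|^{\m}\nabla^\beta{^b\nabla_\rel^\alpha}u|$ and $\sum_{\beta\leq r}\sum_{\alpha\leq k+2r-2\beta}|\nabla_\oE^\beta{^b\nabla_\oE^\alpha}u|$ are equivalent up to constants depending only on the geometry of $\oE$, and this is exactly what the subsequent Sobolev-space definitions and the interior estimate on $\oE$ require. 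If you want the literal identity, you must either verify that in the specific geometry of $\bE$ the horizontal distribution is integrable \emph{and} the fibres are geodesic (which would require $d\bigl(|\rho\pa_\rho|^2_{g_\bE}\bigr)=0$ on $\oE$ --- false here), or adopt the quotient/Riemannian-submersion reading of $g_\oE$ and then still account for the O'Neill $T$-tensor in vertical slots; as written, your proof does neither, and an explicit computation on the flat model $E=\bR^m\subset\bC^m$ already gives a counterexample to pointwise equality at $(\alpha,\beta)=(0,1)$.
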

Thus, we can define the following norm on $\oE$, as a `restriction' of the $C^{k+2r,r}$-norm on $\bE$:
\begin{defn}
	For a function $u\in C^\infty(\oE)$, we define its pointwise $\chC^{k+2r,r}$-norm as \[
	|u|_{\chC^{k+2r,r}} := \sum_{\beta\leq r}  \sum_{|\alpha|\leq k+2r-2\beta}|\nabla_\oE^\beta {^b}\nabla^\alpha_{\oE} u|
	\]
\end{defn}

Following this, we will define Sobolev spaces on $\oE$, which are in a sense a `restriction' of the Sobolev spaces on $\bE$.
\begin{defn} The Sobolev spaces $\check{L}^2_{k+2r,r}(\oE)$ on $\oE$ are defined as the spaces of functions with weak derivatives having finite norm given by:
	\begin{equation}
		\|u\|^2_{\check{L}^2_{k+2r,r}(\oE)}:=  \sum_{\beta\leq r}  \sum_{|\alpha|\leq k+2r-2\beta}\|\nabla_\oE^\beta {^b}\nabla^\alpha_{\oE} u\|^2_{L^2(\oE)},  
	\end{equation}
	where both the metric on forms and the volume form are induced from the ordinary metric $g_\oE$.
\end{defn}

Our Sobolev spaces on $\oE$ satisfy some natural properties: firstly, smooth functions on $\oE$ are dense in these spaces, and further they are Hilbert spaces. We also define standard Sobolev spaces on $\pa\oE$:
\begin{defn}
	We define the Sobolev spaces $L^2_{k+2r}(\pa\oE)$, where the metric and connection on $\pa\oE$ arise from restricting the ordinary metric and connection from $\oE$.
\end{defn}

\begin{rem}
We can write down our Sobolev norm for a function $u$ supported in a region $\Sigma\times [0,\ep)$ near the boundary of $\oE$, for better clarity on how it behaves near the boundary $s=0$. It is given by,\[
\|u\|^2_{\check{L}^2_{k+2r,r}(\oE)}=\sum_{\beta\leq r}\sum_{|\alpha|\leq k+2r-2\beta} \int_{[0,\ep)}\int_\Sigma |\nabla^{\alpha_{\Sigma}}_{\Sigma}(\pa_s^\beta(s\pa_s)^{\alpha_1}u)|^2dV_\Sigma ds, 
\]
for the multi-index $\alpha = (\alpha_1,\alpha_\Sigma)$.
\end{rem}

\subsubsection{The operator in $(\rho,\sigma^i)$-coordinates}
Now, we note that the operator $\rho^2\pa_v-\cL$ is translation-invariant in $\rho$ with the following decomposition:

\begin{lem}\label{lem_h1nonzero}
	The operator \eqref{eq_linearisedoperatorE} can be written in coordinates $\oE\times \llbracket0,\infty)$ on $\bE$ as 
	\[
	(\rho^2\pa_v-\cL)u = -h_1(\rho\pa_\rho)^2u+\rho\pa_\rho(w+ h_2)u - P_0u,
	\]
	where $P_0$ is a degenerate linear elliptic operator on $\oE$, $w$ is a smooth $b$-vector field on $\oE$ and $h_1,h_2$ are smooth functions on $\oE$. More precisely, we can write these as pull backs \[
	-P_0 = \Phi^*((r\pa_r)^\top-\Delta_E), \; w = \Phi^*(\pa_r),\; h_1 = \Phi^*(|(r\pa_r)^\top|^2_{E}),
	\]
	for $\Phi$ as in \eqref{eq_phi}.	As a result, $h_1$ is a non-zero function on $\oE$ converging to $2m$ at $\pa\oE$.
\end{lem}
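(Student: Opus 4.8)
The goal is to express the linearised operator $\rho^2\pa_v-\cL$ on $\bE$, written in the coordinates $\oE\times\llbracket0,\infty)$ (so that $\rho$ is the a-boundary-defining coordinate and $\sigma^i$ parametrise $\oE$), in the stated form, and to identify the coefficients as pullbacks along $\Phi$ of the indicated quantities on the expander $E$. The underlying principle is that everything is scale-invariant: the family $\{\sqrt{2t}E\}$ is generated by the radial dilation, and in $(\rho,\sigma^i)$-coordinates this dilation is exactly $\rho\pa_\rho$, so all the $\rho$-dependence of the operator must organise itself into powers of the vector field $\rho\pa_\rho$ with coefficients that are functions of $\sigma^i$ alone. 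The plan is therefore to carry out the change of variables carefully and read off the coefficients.

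First I would recall that on $\bC^m$ the expander $E$ evolves under LMCF as $E_t=\sqrt{2(t+\tfrac12)}E$, equivalently the family $\{(\sqrt{2t}E,t):t>0\}$ in $\bC^m\times[0,\infty)$ is invariant under the parabolic dilation $(x,t)\mapsto(cx,c^2t)$, which is precisely the action whose orbits are the lines $\rho=\text{const}$ in the blow-up $\Bl$. Using the blow-down $(\rho,\sigma^i)\mapsto(\rho\sigma^i,\rho^2\sigma_{2m+1}^2)$ and the diffeomorphism $\Phi:\oE^\circ\to E$, a point of $\bE$ with coordinates $(\rho,\sigma)$ maps to the point $\rho\sqrt{2}\,\sigma_{2m+1}\cdot\Phi(\sigma)\in\bC^m$ lying on $\sqrt{2t}E$ with $t=\rho^2\sigma_{2m+1}^2$; so $r:=|x|$ on $E$ is a function of $\sigma$ times $\rho$. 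This identifies the radial coordinate $r$ on $E$ (rescaled to the unit-speed expander) with $\rho$ up to a $\sigma$-dependent factor, and hence identifies the scaling vector field $r\pa_r$ on $E$ with $\rho\pa_\rho$ on $\bE$ (modulo directions tangent to $\oE$). Next I would compute the time-like vector field $v$ and the factor $\rho^2$: since $v$ is characterised by $\pi_*v=\pa_t$ and orthogonality to time-slices, and since $\pa_t$ on the self-similar family is, along $E_t$, the sum of its tangential part $\tfrac{1}{2t}\,(r\pa_r)^\top$-type term and its normal part (which is the mean curvature, as $H=(\vec x)^\perp$), one gets that $\rho^2 v$ is a-smooth and that $\rho^2\pa_v$ contributes both the principal $(\rho\pa_\rho)^2$ term (with coefficient $|(r\pa_r)^\top|^2_E$ pulled back, because the tangential part of $r\pa_r$ acts twice) and the first-order $\rho\pa_\rho\,w$ piece with $w=\Phi^*(\pa_r)$. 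Meanwhile $\cL=\rho^2\Delta_{\sqrt{2t}E}$ rescales, by the self-similarity and the fact that $\Delta$ scales like $r^{-2}$, to $\Phi^*(\Delta_E)$ acting in the $\sigma$-variables, i.e. contributes $-P_0$; the cancellation of the extra first-order linear term $(H-v^\perp)$ is because $H=v^\top$ on this family, exactly as noted before the statement of Theorem \ref{thm_mappingE}. Collecting terms gives the displayed formula with $-P_0=\Phi^*((r\pa_r)^\top-\Delta_E)$, $w=\Phi^*(\pa_r)$, $h_1=\Phi^*(|(r\pa_r)^\top|^2_E)$, and a smooth $b$-vector field correction $h_2$ absorbing the lower-order tangential pieces of $\rho^2\pa_v$.

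Finally I would verify the claim about $h_1$ near $\pa\oE$. Since $\pa\oE$ corresponds to $r\to\infty$ on $E$, and $E$ is asymptotically conical to a union of special Lagrangian cones with exponential decay (Proposition \ref{prop_expanderdecay}), near infinity $E$ is $C^\infty$-close to its cone $C$, on which the position vector $\vec x=r\pa_r$ is exactly tangential; hence $|(r\pa_r)^\top|^2_E\to|r\pa_r|^2_C=r^2$ in the conical metric, and after the rescaling by $s=\sigma_{2m+1}^2$ built into the $b$-metric $^bg_\oE=s\,\Phi^*(g_E)$ this limit becomes the constant $2m$ (the $2m$ being $\dim_\bR E=m$ doubled by the metric normalisation, matching the leading Laplacian coefficient $h_1\to 2m$ needed for the operator to be parabolic up to the a-boundary). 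In particular $h_1$ is a positive smooth function on $\oE$, bounded below away from zero, which is the statement that will be used to run the Fredholm argument. The main obstacle in making this rigorous is the bookkeeping of the change of frame between $^{\m}T\bE$ and the pulled-back $TE$ (keeping track of exactly which factors of $\rho$, $s$ and $\sigma_{2m+1}$ appear where) together with the verification that the residual terms genuinely assemble into a \emph{smooth} $b$-vector field $w$ and smooth functions $h_1,h_2$ on $\oE$ rather than merely a-smooth objects on $\bE$ — this is where the scale-invariance must be invoked most carefully, and where the asymptotic-conical decay of $E$ is essential to control behaviour at $\pa\oE$.
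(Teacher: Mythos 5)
The broad strategy — exploit the parabolic scale-invariance of the self-similar family and read off the $(\rho\pa_\rho)^j$-coefficients after a change of coordinates — is the same as the paper's, but two central steps of your computation are incorrect and a third is asserted rather than checked.

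First, $\rho^2\pa_v$ is a \emph{first-order} differential operator, so it cannot produce the principal $-h_1(\rho\pa_\rho)^2$ term; that term comes entirely from $\cL = \rho^2\Delta_{\sqrt{2t}E} = \tw^2\Delta_E$ (in projective coordinates $(\tau,w)$ near the front face, with $\tw^2 = \tfrac12 + |w|^2$). A $(\rho\pa_\rho)^2$-piece appears at all only because the transition $(\tau,w)\to(\rho,\sigma)$ is \emph{not} a product splitting: one has $\rho=\tau\tw(w)$, so the slices $\{\tau=\mathrm{const}\}$ do not agree with $\{\rho=\mathrm{const}\}$, and a fixed-$\tau$ derivative decomposes as $\pa_{w^i} = \tfrac{w^i}{\tw^2}\rho\pa_\rho + (\sigma\text{-part})$. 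The quadratic combination of these in $\Delta_E$ is the source of the $(\rho\pa_\rho)^2$-coefficient. Relatedly, your identification of ``$r\pa_r$ on $E$ with $\rho\pa_\rho$ on $\bE$'' is also wrong: the vector field equal to $\rho\pa_\rho$ is $\tau\pa_\tau$ (the generator of the parabolic dilation), while $(w\pa_w)^\top$ and $\Delta_E$ each acquire both $\rho\pa_\rho$- and $\sigma$-components under the coordinate change.

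Second, the claimed limit $h_1\to 2m$ at $\pa\oE$ is a guess, not a derivation. You say that after the $b$-metric rescaling ``$|(r\pa_r)^\top|^2\to r^2$ becomes $2m$'', but $sr^2\to\tfrac12$ and $s^{-1}r^2\to\infty$; neither is $2m$; and the parenthetical appeal to ``$\dim_\bR E=m$ doubled by the metric normalisation'' is not an argument. If the change-of-variables computation is pushed through, the coefficient of $-(\rho\pa_\rho)^2$ is $|w^\top|^2/\tw^2$; since the expander is exponentially close to its cone (Proposition \ref{prop_expanderdecay}), on which $w^\top=w$, this tends to $1$ at $\pa\oE$ rather than $2m$. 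The property actually needed in the sequel — that $h_1$ is not identically zero, so the conjugated operator is genuinely parabolic in the $b$-sense — must come from the explicit formula, not by positing what the answer ``should'' be for parabolicity.

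Finally, the paper organises the computation more cleanly than your proposal: rather than decomposing each of $\tau\pa_\tau$, $(w\pa_w)^\top$ and $\Delta_E$ separately in $(\rho,\sigma)$-coordinates, it conjugates the whole operator by $\rho^\lambda = \tau^\lambda\tw^\lambda$ and reads off the $(\rho\pa_\rho)^j$-coefficients as the $\lambda^j$-coefficients of the resulting polynomial. This Mellin-type device, standard in $b$-calculus, automatically packages the bookkeeping and would have prevented the misattribution of the principal term.
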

\begin{proof} Let us begin by writing down the time-like vector field in the embedding $\{\sqrt{2t}E\}_{t\in(0,\infty)}\subset\bC^m\times(0,\infty)$. At a point $(x,t)\in\sqrt{2t}E\times\{t\}$, it is given by \[
	v = \pa_t + (2t)^{-1}x^\top.
	\]
	Therefore, the operator in $(\tau,w^i)$-coordinates is given by
	\begin{equation}\label{eq_operatorffcoordinatessimple}
	\rho^2\pa_v-\cL = \tw^2(\tau\pa_\tau-w\pa_w^\top-\Delta_E).
	\end{equation}
	Now, we note the change of coordinates from $(\sigma,r)$ to $(w^i,\tau)$ as \[
	(\sigma,\rho)\mapsto (\frac{\sigma^i}{\sqrt{2}\sigma^{2m+1}}, \rho (\sigma^{2m+1})^{-1/2}).
	\]
	Then, we note that the mapping $(\sigma^i)\mapsto (w^i)$ is $\rho$-invariant. Now, we note the effect of weighting the operator \eqref{eq_operatorffcoordinatessimple} by $\rho^\lambda = \tau^\lambda \tw^\lambda$. Then, the claim follows by comparing the coefficients of $\lambda^j$ with the coefficients of $(\rho\pa_\rho)^j$for $j=0,1,2$.
\end{proof}

\subsubsection{Fourier transform of the weighted operator}

 Since the mapping \eqref{eq_sobolevmappingE} is between $\rho$-weighted spaces, it is equivalent to consider the conjugated mapping between un-weighted spaces,
\[
	T^2_{2,1,\lambda} = (\rho^{-\lambda}(\rho^2\pa_v-\cL)\rho^\lambda, |_{\botf}):L^2_{2,1}(\bE)\rt L^2_{0,0}(\bE)\oplus L^2_{2}(\bE|_{\textrm{bf}}),
\]
for the function $\tu = \rho^{-\lambda }u$. Now, we can write the operator $\rho^{-\lambda}(\rho^2\pa_v-\cL)\rho^\lambda$ in $(\rho,\sigma^i)$-coordinates as\[
\rho^{-\lambda}(\rho^2\pa_v-\cL)\rho^{\lambda} = -(\rho\pa_\rho)^2h_1+\rho\pa_\rho\otimes w+ h_2(\rho\pa_\rho)  - 2\lambda \rho\pa_\rho + \lambda w - \lambda^2h_1 +\lambda h_2 -P_0.
\]
 We consider the Fourier transform of the above operator in the variable $\log(\rho)$. This gives us for each $\xi\in \bR$, an operator on $\oE$, which we denote $-P_{\lambda+i\xi}$, and we get that \[
 -P_{\lambda+i\xi}:= -P_0 + (\lambda+i\xi) (w  +h_2) -(\lambda+i\xi)^2h_1.
 \]

Therefore, we need to study the mapping properties of the operator $(-P_z,|_{\pa\oE})$ on $\oE$ for $z = \lambda +i \xi\in \bC$. In particular, we need to determine the rates $\lambda\in\bR$ for which this operator is uniformly invertible for all $\xi \in \bR$, between the natural $L^2$-Sobolev spaces (for complex-valued functions) on $\oE$ obtained from restriction. First, we note that the operator $-P_z$ defines a well-defined continuous map, 
	\begin{equation}\label{eq_mappingsobolevff2}
		(-P_z,|_{\pa\oE}) : \check{L}^2_{2,1}(\oE) \rt \check{L}^2_{0,0}(\oE)\oplus L^2_{1}(\pa\oE).
	\end{equation}

We define the sets $\cC_E,\cD_E$ as follows:

\begin{defn}
	The \textit{set of critical rates} $\cC_E$ is defined as:
	\[
	\cC_E:=\{ z\in \bC : \textrm{for } z= \lambda+i\xi, \textrm{ the operator } (-P_{\lambda+i\xi}, |_{\pa\oE})  \textrm{ has a nonzero kernel}\},
	\]
	and $\cD_E$ is the projection of this set to $\bR$,\[
	\cD_E:= \Re(\cC_E).
	\]
\end{defn}

Now, we state the main result we will prove about the parametric operator:

\begin{theorem}\label{thm_mappingff}
	We have that:
	\begin{enumerate}[label=(\alph*)]
		\item\label{item_zeroindex} The mapping \eqref{eq_mappingsobolevff2} is Fredholm over all parameters $\lambda + i\xi \in \bC$, with zero index (Proposition \ref{prop_zeroindex}).
		 
		\item\label{item_isomorph} The set $\cC_E$ has no elements $z$ with $\Re(z)>\min(2-m,0)$ (Proposition \ref{prop_nokernel}).
		\item\label{item_sparsekernel} The set $\cC_E$ is a discrete subset of $\bC$ (Proposition \ref{prop_discretecritrates}).
		\item\label{item_finitestrip} The set $\cC_E$ is finite in any bounded real strip $\{z : a < \Re(z) < b\}$ (Proposition \ref{prop_sparsekernel}).
	\end{enumerate}
\end{theorem}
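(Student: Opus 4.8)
The plan is to establish the four assertions in the stated order, since (c) and (d) build on (a) and (b).

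\emph{Part (a).} First I would prove an a-priori estimate for the degenerate-elliptic operator $-P_z$ on the compact manifold $\oE$, of exactly the same type as the global estimates on $\bE$ and $\bL$: cover $\oE$ by interior balls and by $b$-model half-balls adapted to the structure near $\pa\oE$, apply standard interior and boundary elliptic estimates on each model, and sum them, to obtain
\[
\|u\|_{\chL^2_{2,1}(\oE)}\le C\big(\|P_z u\|_{\chL^2_{0,0}(\oE)}+\|u|_{\pa\oE}\|_{L^2_1(\pa\oE)}+\|u\|_{L^2(\oE)}\big).
\]
Together with the compact embedding $\chL^2_{2,1}(\oE)\hookrightarrow L^2(\oE)$, this gives that $(-P_z,|_{\pa\oE})$ has finite-dimensional kernel and closed range; running the same argument for the formal $L^2(\oE)$-adjoint with its complementary boundary condition controls the cokernel, so the map is Fredholm for every $z\in\bC$. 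For the index, the key point is that $-P_z-(-P_0)=z(w+h_2)-z^2h_1$ is first order, hence relatively compact with respect to the second-order operator $(-P_0,|_{\pa\oE})$; so the index is independent of $z$ and equals that of $(-P_0,|_{\pa\oE})=\Phi^*((r\pa_r)^\top-\Delta_E)$ with the trace condition at infinity. Via the diffeomorphism $\Phi$ of \eqref{eq_phi} and the conformal rescaling $^bg_{\oE}=s\,\Phi^*g_E$, this last operator is identified with the elliptic deformation operator for the expander studied by Lotay--Neves \cite{LN13}, whose index on the relevant space is zero; hence $\mathrm{ind}(-P_z,|_{\pa\oE})=0$ for all $z$.

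\emph{Part (b).} This is the heart of the theorem. Suppose $\tilde u\in\chL^2_{2,1}(\oE)$ is in the kernel of $(-P_z,|_{\pa\oE})$ with $z=\lambda+i\xi$ and $\lambda>\min(2-m,0)$. By elliptic regularity $\tilde u$ is smooth in $\oE^\circ$, and the vanishing trace forces it to decay at $\pa\oE$. I would transport the equation to the expander $E$ via $\Phi$, where $\rho^{z}\tilde u$ becomes a polyhomogeneous solution of the elliptic expander equation $((r\pa_r)^\top-\Delta_E)(\cdot)=0$ with leading radial order prescribed by $z$, and then run the integration-by-parts (Rellich-type) argument of Lotay--Neves: pair the equation with $\bar{\tilde u}$ against a cutoff toward $\pa\oE$ and integrate over $\oE$. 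The boundary term at $\pa\oE$ is controlled by the indicial roots of the asymptotic-cone operator; the only such roots with real part $\ge\min(2-m,0)$ are the pair $0$ and $2-m$ on the $\Sigma$-constant mode, all others having real part strictly below $2-m$. Since $E$ decays to its cone at the super-exponential rate $O(e^{-\alpha r^2})$ (Proposition~\ref{prop_expanderdecay}), the sub-leading corrections contribute nothing, so for $\lambda>\min(2-m,0)$ the boundary term vanishes; the remaining identity has a definite sign — from the Dirichlet term $\|\nabla\tilde u\|^2$, from the expander identity $(\overrightarrow{x})^\perp=H$ controlling the first-order part, and from the sign of the $-\Re(z^2)\int h_1|\tilde u|^2$ term — and forces $\tilde u\equiv0$. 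Combined with (a), the operator is then an isomorphism on $\{\Re z>\min(2-m,0)\}$.

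\emph{Parts (c) and (d).} Part (c) is then immediate from the analytic Fredholm theorem: $z\mapsto(-P_z,|_{\pa\oE})$ is polynomial, hence holomorphic, in $z$, takes Fredholm values of index $0$ by (a), and is invertible somewhere by (b); therefore the set where it fails to be invertible is discrete in $\bC$, and since the index is $0$ this set is exactly $\cC_E$. For part (d), fix $a<b$; by (c) it suffices to show $\cC_E\cap\{a\le\Re z\le b\}$ is bounded, i.e.\ that $(-P_{\lambda+i\xi},|_{\pa\oE})$ is invertible for $\lambda\in[a,b]$ and $|\xi|$ large. Viewing $-P_{\lambda+i\xi}$ as a second-order elliptic operator with large parameter $\xi$, the term $\xi^{2}h_1$ dominates; since $h_1\ge0$ vanishes only on the finite set where the position vector is normal to $E$, a Poincaré-type inequality bounding $\|u\|_{L^2(\oE)}$ by $\int h_1|u|^2$ plus a small multiple of $\|\nabla u\|_{L^2(\oE)}^2$ (valid on the compact $\oE$, since $h_1\to 2m>0$ at $\pa\oE$) lets one absorb lower-order terms and obtain, uniformly in $\lambda\in[a,b]$, a parameter-dependent estimate $\|u\|_{\chL^2_{2,1}(\oE)}+|\xi|\,\|u\|_{\chL^2_{1,0}(\oE)}\le C\|P_{\lambda+i\xi}u\|_{\chL^2_{0,0}(\oE)}$ for $|\xi|\ge\xi_0$; the same holds for the adjoint. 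Hence $\cC_E\cap\{a\le\Re z\le b\}\subset\{|\Im z|<\xi_0\}$, which is discrete and contained in a compact set, so finite.

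\emph{Main obstacle.} The crux is part (b): one must track the boundary terms of the integration by parts at the a-boundary $\pa\oE$ precisely and match them against the indicial roots of the asymptotic cone in order to land exactly on the threshold $\min(2-m,0)$. This hinges on the super-exponential decay of the expander, which kills all sub-leading cone contributions, and on the structural identity $(\overrightarrow{x})^\perp=H$.
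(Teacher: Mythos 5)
Your route for part (c) is a genuine simplification over the paper's: $z\mapsto(-P_z,|_{\pa\oE})$ is a quadratic (hence holomorphic) family of bounded operators on fixed Banach spaces, Fredholm of index $0$ by (a), invertible at $z=2$; the analytic Fredholm theorem then gives discreteness of the singular set $\cC_E$ directly, packaging the compactness reasoning that the paper carries out by hand in Proposition~\ref{prop_discretecritrates} (the infinite-dimensional span construction, orthogonalisation, and contradiction against compactness of $(w+h_2)\circ R^{-1}$ and $h_1R^{-1}$).

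The serious gap is in part (b). You propose an energy argument and claim the conclusion follows, in part, from ``the sign of the $-\Re(z^2)\int h_1|\tilde u|^2$ term''; but for real $z=\lambda>0$ one has $-\Re(z^2)=-\lambda^2<0$, so this term has the \emph{wrong} sign, and your sketch names no other term that manifestly dominates it. The paper's proof is a maximum-principle argument, not an energy argument: conjugating the parametric operator by $\tau^z$ yields $P'_z=-P_0+zs^{-1}$, whose potential has positive real part $\lambda/s$, killing the kernel for $\lambda>0$ by Lemma~\ref{lem_maximumprinciple} (Proposition~\ref{prop_nozerokernel}); to reach $\lambda>2-m$ the paper conjugates further by $s_C^\mu$ with $\mu=(m-2)/2$ and $0<C\ll 1$, computes the resulting potential $N$ explicitly, and checks $\Re(N)>0$. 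The threshold $2-m$ is extracted from that specific choice of $\mu$; your sketch gives no mechanism to land exactly there, and the super-exponential decay of the expander, which you invoke to kill boundary contributions, is not what drives this proposition in the paper at all.

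Two smaller points. In (a), your identification of the index anchor with the Lotay--Neves operator is off: Lemma~\ref{lem_h1nonzero} gives $-P_0=\Phi^*((r\pa_r)^\top-\Delta_E)$, but $\cK=2-(r\pa_r)^\top-\Delta_E$ corresponds to $-P_2$ after a conjugation by $s$, via $\cK(u)=(-P_2)(su)$, and the space $H^k_*(E)$ corresponds to the zero-trace subspace of $\chL^2_{k+2,1}(\oE)$, not the space itself. Bridging these gaps is exactly what Lemma~\ref{lem_p2isomorphism} does, by combining a Taylor-series construction at $\pa\oE$ with density and the Lotay--Neves isomorphism; a one-line ``$-P_0$ has the same index as $\cK$'' conceals that work. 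In (d), your large-parameter estimate hinges on a Poincar\'e-type inequality requiring that $h_1$ vanish only on a finite set, which is not established (nor needed) in the paper; the paper instead Fourier-transforms the a-priori estimate on $\bE$ in $\log\rho$, applies Plancherel to obtain a uniform-in-$\xi$ bound, and rules out critical rates with $|\xi|>2\sqrt{C_\lambda}$ by a cutoff contradiction, entirely avoiding any structural assumption on the zero set of $h_1$.
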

This will be proved in the following sections. As a corollary to \ref{item_zeroindex} and \ref{item_isomorph} above, we have that 
\begin{coro}
	The mapping \eqref{eq_mappingsobolevff2} is an isomorphism for $\Re(z)>\min(2-m,0)$.
\end{coro}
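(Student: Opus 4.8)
The plan is to deduce the statement directly from Theorem~\ref{thm_mappingff}, whose parts \ref{item_zeroindex} and \ref{item_isomorph} contain all the analytic content. First I would record the elementary observation that, by the very definition of $\cC_E$, the operator $(-P_z,|_{\pa\oE})$ in \eqref{eq_mappingsobolevff2} is injective precisely when $z\notin\cC_E$: an element of its kernel is a function $u\in\check{L}^2_{2,1}(\oE)$ with $P_zu=0$ and $u|_{\pa\oE}=0$, which is exactly a nonzero kernel element in the sense used to define $\cC_E$. Hence, for $\Re(z)>\min(2-m,0)$, part \ref{item_isomorph} of Theorem~\ref{thm_mappingff} shows that \eqref{eq_mappingsobolevff2} has trivial kernel.

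Next I would combine this with part \ref{item_zeroindex}: the map \eqref{eq_mappingsobolevff2} is Fredholm of index zero for every $z\in\bC$. For a bounded Fredholm operator between Hilbert spaces the identity $\dim\ker-\dim\operatorname{coker}=\operatorname{ind}$ forces $\dim\operatorname{coker}=\dim\ker=0$ when $\Re(z)>\min(2-m,0)$, so the operator is bijective, and the bounded inverse theorem upgrades the algebraic inverse to a bounded one, giving the asserted topological isomorphism. I would also note continuity (indeed holomorphy) of the inverse in $z$ on the half-plane $\{\Re(z)>\min(2-m,0)\}$: this follows since $z\mapsto P_z$ depends polynomially on $z$ with coefficients the fixed operators $P_0$, $w+h_2$, $h_1$ of Lemma~\ref{lem_h1nonzero}, and since the invertible bounded operators form an open set; the resulting uniformity in $\xi$ is what one needs when feeding this back into the Fourier-synthesis argument on $\bE$.

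In this form the corollary is a formality, and the genuine obstacle lies in its two inputs. Part \ref{item_zeroindex} requires proving Fredholmness of $(-P_z,|_{\pa\oE})$ --- via the $\oE$-analogue of the a-priori estimate \eqref{eq_aprioriestimateE}, a parametrix-and-compactness argument near the a-boundary $\pa\oE$ where the model operator is translation-invariant in $\log s$, and an elliptic regularity statement --- and then pinning the index to zero by a homotopy in $z$ (or a deformation to a model operator of known index), using that the index is locally constant. Part \ref{item_isomorph} is the heart of the matter: here one would start from the elliptic Fredholm theory for asymptotically conical Lagrangian expanders of Lotay--Neves~\cite{LN13} and run an integration-by-parts/energy estimate exploiting the positivity $h_1>0$ from Lemma~\ref{lem_h1nonzero} and the sign of $\Re(z)$ relative to $\min(2-m,0)$ to exclude nontrivial solutions. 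So the real work is all in Theorem~\ref{thm_mappingff}; the deduction of the corollary from it is immediate.
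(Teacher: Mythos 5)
Your proposal is correct and follows the same route as the paper: use part \ref{item_zeroindex} to get Fredholm of index zero, use part \ref{item_isomorph} to get trivial kernel, and conclude surjectivity and hence isomorphism. The extra observations on the bounded inverse theorem and continuity of the inverse in $z$ are not in the paper's short proof but are consistent with what is used later.
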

\begin{proof}
	By \ref{item_zeroindex}, the mapping is Fredholm with zero index. Therefore, it has a finite-dimensional kernel and cokernel, of the same dimension. By \ref{item_isomorph}, for $\Re(z)>\min(2-m,0)$, we do not have any critical rates, i.e.\ the dimension of the kernel is zero for $\Re(z)>\min(2-m,0)$. So, the dimension of the cokernel is zero as well, which implies the mapping is an isomorphism.
\end{proof}

\subsection{Elliptic estimates for the operator on $\oE$}
We first note a-priori estimates for the mapping \eqref{eq_mappingsobolevff2}, obtained by restricting the estimate \eqref{eq_aprioriestimateE} to $\oE$:
\begin{lem}
	For each $z\in \bC$ and $u\in \chL^2_{k+2r,r}(\oE)$, we have the estimate, 
	\begin{equation}\label{eq_interiorestimateff}
		\|u\|_{\chL^2_{k+2r,r}} \leq C_{z}(\|-P_{z}(u)\|_{\chL^2_{k+2r-2,r-1}} + \|u\|_{\chL^2}+\|u|_{\pa \oE}\|_{L^2_{k+2r-1}})
	\end{equation}
\end{lem}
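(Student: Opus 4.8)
The plan is to \emph{descend} the global a-priori estimate \eqref{eq_aprioriestimateE} from $\bE$ to its front face $\oE$. The structural fact that makes this work is that near the front face $\bE$ is the product $\oE\times\llbracket0,\infty)_\rho$, on which the conjugated operator $\rho^{-\lambda}(\rho^2\pa_v-\cL)\rho^{\lambda}$ is translation-invariant in $\log\rho$ with Fourier transform $-P_{\lambda+i\xi}$ at frequency $\xi$ (Lemma \ref{lem_h1nonzero}), together with the fact that the $\chL^2$-norms on $\oE$ were designed so that the translation-invariance identity $|{^{\m}\nabla}^\beta{^b\nabla}^\alpha_\rel u|=|\nabla^\beta_\oE{^b\nabla}^\alpha_\oE u|$ holds for $\rho$-invariant $u$. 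So a function on $\oE$ should be promoted to a function on $\bE$ that is $\rho$-invariant up to the phase $\rho^{i\operatorname{Im}z}$ and a fixed cutoff in $\log\rho$, the estimate \eqref{eq_aprioriestimateE} applied to it, and the resulting $\bE$-norms translated back into $\chL^2$-norms on $\oE$ by that identity.

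Concretely: fix $z=\lambda+i\xi$, take $u\in C^\infty(\oE)$ (these are dense in $\chL^2_{k+2r,r}(\oE)$), pick once and for all a cutoff $\chi=\chi(\log\rho)$ equal to $1$ on a unit-length interval and compactly supported, and set $U:=\chi(\log\rho)\,\rho^{i\xi}u$ on $\bE$. Writing $\mathcal{Q}:=\rho^{-\lambda}(\rho^2\pa_v-\cL)\rho^{\lambda}$, one has $\mathcal{Q}U=\chi(\log\rho)\,(-P_zu)+[\mathcal{Q},\,\chi(\log\rho)](\rho^{i\xi}u)$, and the commutator — because $\chi$ depends on $\rho$ alone and the coefficients $h_1,w,h_2$ of Lemma \ref{lem_h1nonzero} are $\rho$-invariant — is a finite sum of terms of the form $(\text{bounded smooth function on }\oE)\times(\chi'\text{ or }\chi'')(\log\rho)\,\rho^{i\xi}u$, every $\rho\pa_\rho$ occurring in the commutator having acted on $\rho^{i\xi}$ to produce the scalar $i\xi$. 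Applying the estimate \eqref{eq_aprioriestimateE} — which, in its $\lambda$-weighted form, is exactly the unweighted estimate for $\mathcal{Q}$ — to $U$: on the set $\{\chi\equiv1\}$ the left-hand side is bounded below, via the translation-invariance identity and up to intermediate-order terms absorbed via interpolation, by $c_{z,\chi}\|u\|_{\chL^2_{k+2r,r}(\oE)}$; the $\botf$-trace term restricts to $C_{z,\chi}\|u|_{\pa\oE}\|_{L^2_{k+2r-1}(\pa\oE)}$, since near the corner $\botf$ is $\pa\oE\times\llbracket0,\infty)_\rho$; and the commutator contributes only a multiple of $\|u\|_{\chL^2_{k+2r-1,r}(\oE)}+\|u\|_{\chL^2}$, again absorbed after interpolation. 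Rearranging yields \eqref{eq_interiorestimateff}.

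An equivalent and arguably cleaner route — the one suggested by the phrase \emph{restricting the estimate to} $\oE$ — is to re-run on $\oE$ the good-cover argument behind \eqref{eq_aprioriestimateE} itself: a good cover of $\bE$ near its front face restricts to a good cover of $\oE$, in whose charts the operator $(-P_z,|_{\pa\oE})$ of \eqref{eq_mappingsobolevff2} behaves like a uniformly elliptic operator with uniformly bounded coefficients away from $\pa\oE$ and, in the restricted corner-model charts near $\pa\oE$, like a parabolic operator with $s$ in the role of the distinguished variable; then Theorems \ref{thm_ffestimate} and \ref{thm_bfestimate} apply chart-by-chart (the latter producing precisely the $\|u|_{\pa\oE}\|$ trace term), and one transfers these via the comparison of norms in Lemma \ref{lem_goodcoverestimates} and sums over the cover.

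I expect the only genuine difficulty to be bookkeeping rather than analysis: verifying that the $\rho$-invariant extension (or the restricted good cover) really does convert the $\bE$-side weighted $L^2_{k+2r,r,\lambda}$-norms into the $\chL^2_{k+2r,r}(\oE)$-norms built from $^bg_{\oE}=s\,\Phi^*g_E$, that the cutoff commutator terms are of strictly lower order (so they land in the $\|u\|_{\chL^2_{k+2r-2,r-1}}$ term and, after interpolation, in $\|u\|_{\chL^2}$), and that the $\botf$-trace term of \eqref{eq_aprioriestimateE} restricts exactly to the $\pa\oE$-trace term appearing in \eqref{eq_interiorestimateff}; all of these reduce to the translation-invariance identity together with the identifications made in \S\ref{subsec_Fredholmtheory}.
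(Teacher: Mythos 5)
Your first paragraph is exactly the paper's proof: set $v=\phi(r)r^z u$ on $\bE$ (equivalently your $\rho^\lambda U=\chi(\log\rho)\rho^z u$), apply the a-priori estimate \eqref{eq_aprioriestimateE} to $v$, and unwind using $\rho$-invariance. One small slip: the commutator $[\rho^{-\lambda}(\rho^2\pa_v-\cL)\rho^\lambda,\,\chi(\log\rho)]$ applied to $\rho^{i\xi}u$ also produces the first-order term $\chi'(\log\rho)\rho^{i\xi}(w+h_2)u$ coming from the $\rho\pa_\rho\otimes(w+h_2)$ piece, not only zeroth-order terms in $u$; since $w$ is a $b$-vector field this is still of strictly lower order and is absorbed exactly as you describe, so the conclusion stands.
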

\begin{proof}
	Given a $u\in \chL^2_{k+2r,r}(\oE)$, define the function $v(\sigma,r):= \phi(r)r^zu(\sigma)$ on $\bE$, for $\phi(r)$ being a smooth, compactly supported bump function on $\bR_+$ supported on $[1/2,1]$. Then, we write the a-priori estimate on $\bE$ for $v$ to deduce the a-priori estimate on $\oE$.
\end{proof}
We recall the following functional analytic result: 
\begin{lem}\label{lem_funcana}
	Let $X,Y,Z$ be Banach spaces and $T: X\rightarrow Y$ be a bounded, linear map and $K: X \rightarrow Z$ be a bounded, linear, compact map. Suppose we have the estimate \[
	\|u\|_X \leq C(\|Tu\|_Y + \|Ku\|_Z)
	\]
	for a constant $C>0$. Then, we have that $T$ has finite dimensional kernel and closed image.
\end{lem}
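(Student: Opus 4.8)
The plan is to prove the two assertions separately, in each case using the compactness of $K$ to upgrade the a-priori estimate into genuine coercivity on a suitable subspace.

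First I would treat finite-dimensionality of $\ker T$. On $\ker T$ the hypothesis reduces to $\|u\|_X \leq C\|Ku\|_Z$, so the restriction $K|_{\ker T}$ is bounded below and hence a homeomorphism onto its image. Since $K$ is compact, the image under $K$ of the closed unit ball $B$ of $\ker T$ is precompact in $Z$; pulling back along the bounded-below map $K|_{\ker T}$ shows that $B$ itself is precompact in $X$. A Banach space whose closed unit ball is precompact is finite-dimensional (Riesz's lemma), so $\dim \ker T < \infty$.

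Next, for closedness of the image, I would use that a finite-dimensional subspace is always complemented by a continuous projection, so $X = \ker T \oplus X_1$ for a closed subspace $X_1$, and $T(X) = T(X_1)$ with $T|_{X_1}$ injective. The key step is to show that $T|_{X_1}$ is bounded below, i.e.\ that there is $c>0$ with $\|Tu\|_Y \geq c\|u\|_X$ for all $u \in X_1$. Suppose not; then there is a sequence $(u_n) \subset X_1$ with $\|u_n\|_X = 1$ and $\|Tu_n\|_Y \to 0$. By compactness of $K$ we may pass to a subsequence along which $(Ku_n)$ converges, hence is Cauchy; applying the a-priori estimate to $u_n - u_m$ then gives $\|u_n - u_m\|_X \leq C(\|T(u_n-u_m)\|_Y + \|K(u_n-u_m)\|_Z) \to 0$, so $(u_n)$ is Cauchy in $X$ and converges to some $u \in X_1$ (closed) with $\|u\|_X = 1$. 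Continuity forces $Tu = \lim Tu_n = 0$, so $u \in \ker T \cap X_1 = \{0\}$, contradicting $\|u\|_X = 1$. Once the lower bound is established, closedness of $T(X_1) = T(X)$ is immediate: if $Tu_n \to y$ with $u_n \in X_1$, the lower bound makes $(u_n)$ Cauchy, $u_n \to u \in X_1$, and $Tu = y$ by continuity.

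This argument is entirely soft and I do not expect any genuine obstacle; the only place needing a word of justification is the existence of a closed complement to $\ker T$, which holds precisely because $\ker T$ is finite-dimensional, so the projection onto it built from a dual basis is continuous.
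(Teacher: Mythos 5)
Your proof is correct, and it rests on the same core mechanism as the paper's: compactness of $K$ together with the a-priori estimate forces any sequence in $\ker T$ (resp.\ any normalized sequence with $\|Tu_n\|\to 0$) to have a Cauchy subsequence, giving the two Fredholm-type conclusions. The one structural difference is in the closed-image step: the paper passes to the quotient $X/\ker T$ and argues with a $1/2$-separated sequence of unit cosets there, while you use the fact that a finite-dimensional kernel is complemented, replace the quotient by a closed complement $X_1$, and derive a contradiction from a limit $u\in X_1\cap\ker T$ of norm one. These are functionally interchangeable, but your version is arguably a touch cleaner: the paper applies the $X$-norm estimate and the operator $K$ directly to elements of $X/\ker T$, which strictly speaking requires choosing representatives, whereas working in the complement $X_1\subset X$ lets the estimate and $K$ apply verbatim. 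The price you pay is invoking the (standard, but nontrivial) fact that finite-dimensional subspaces admit closed complements, which you correctly flag. Similarly, in the kernel step you deduce total boundedness of the closed unit ball of $\ker T$ directly and then invoke Riesz, rather than first building a $1/2$-separated unit sequence as the paper does; both are equivalent phrasings of the same idea. No gaps.
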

\begin{proof}
	\textit{Finte dimensional kernel:} Suppose not, then by Riesz's Lemma, there is an infinite sequence of linearly independent elements in the kernel, say $\{r_i\}_{i\geq 1}$, such that by scaling we have $\|r_i\|=1$ and $\|r_i-r_j\|_X\geq \frac{1}{2}$. Then, we have that for $i\neq j$,\[
	\frac{1}{2}\leq \|r_i - r_j\|_X\leq C(\|K(r_i - r_j)\|_Z ) \implies \|K(r_i - r_j)\|_Z \geq \frac{1}{2C} >0 .
	\]
	However, due to compactness of $K$, we have that there is a convergent subsequence, say $\{r_{i_j}\}_{j\geq 1}$, such that $Kr_{i_j} \rightarrow s \in Z$ as $j\rightarrow \infty$. This implies that $K(r_{i_j})$ is a Cauchy sequence in $Y$, which contradicts the above inequality.\\
	
	\textit{Closed image:} Since we have a finite dimensional kernel, we can consider the quotient mapping\[
	T : X/\textrm{ker}(T) \rightarrow Y,
	\]
	which is injective. Now, we recall that for a bounded, linear, injective map, showing closed image is equivalent to showing an inequality of the form $\|u\|_X\leq C\|Tu\|_Y$ for a constant $C>0$. Suppose this does not hold, then we can find a sequence of linearly independent elements $\{s_i\}\subset X/\textrm{ker}(T)$ such that $\|s_i\|_{X/\textrm{ker}(T)} =1$ , $\|s_i-s_j\|_{X/\textrm{ker}(T)}\geq \frac{1}{2}$ and $\|Ts_i\|_Y\rightarrow 0$ as $i\rightarrow \infty$. Therefore, we have from our estimate that \[
	\frac{1}{2} \leq \|s_i -s_j\|_{X/\textrm{ker}(T)} \leq C(\|T(s_i-s_j)\|_Y+\|K(s_i - s_j)\|_Z) \implies \|K(s_i - s_j)\|_Z \geq \frac{1}{2C}-\delta>0,
	\] 
	for $i\neq j$ and $i,j \gg 0$ so that $\|T(s_i) - T(s_j)\|<\delta$. Now, due to compactness of $K$, we can again find a convergent subsequence in $\{K(s_i)\}$, which implies it must be Cauchy in $Z$, which is a contradiction to the above inequality.
\end{proof}

Now, we note the following analogue of Rellich-Kondrachov:

\begin{lem}\label{lem_rellichkondrachovE}
	The inclusion $\chL^2_{k+2r,r} \hookrightarrow \chL^2_{k+2r-1,r-1}$ is compact.
\end{lem}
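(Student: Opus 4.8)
The plan is a localisation argument: away from $\pa\oE$ the statement is the classical Rellich--Kondrachov theorem, while near $\pa\oE$ it reduces to a uniform tail estimate that exploits the mixed ordinary/$b$ derivative structure built into $\chL^2_{k+2r,r}(\oE)$.

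First I would fix a collar $\Sigma\times[0,\ep)$ of $\pa\oE$ with $s$ the boundary-defining function, in which, up to uniformly bounded errors, $g_\oE$ is the ordinary product metric $ds^2+g_\Sigma$ (so $\pa\oE$ sits at finite distance and $dV_\oE\sim ds\,dV_\Sigma$), $^bg_\oE$ is the $b$-metric $(s^{-1}ds)^2+g_\Sigma$, and (by the boundary formula for the norm recalled above) the $\chL^2_{k+2r,r}(\oE)$-norm is equivalent to the norm built from the derivatives $\pa_s^\beta(s\pa_s)^{\alpha_1}\nabla^{\alpha_\Sigma}_\Sigma u$ with $\beta\le r$, $\alpha_1+|\alpha_\Sigma|+2\beta\le k+2r$, measured in $L^2(ds\,dV_\Sigma)$. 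Multiplying by a partition of unity subordinate to $\{\,\{s<\ep\},\ \oE\setminus(\Sigma\times[0,\ep/2))\,\}$ — each factor being a bounded operator on these Sobolev spaces — it suffices to prove compactness of the embedding separately for functions supported in the compact set $\oE\setminus(\Sigma\times[0,\ep/2))$ and for functions supported in the collar.

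For functions supported in $\oE\setminus(\Sigma\times[0,\ep/2))$ the function $s$ is bounded below, so $s\pa_s$ and $\pa_s$ are uniformly comparable and all metrics and connections are uniformly equivalent to fixed smooth ones; hence the $\chL^2_{K,R}$-norm is equivalent to the ordinary Sobolev norm $\|\cdot\|_{H^K}$ on this compact manifold-with-boundary, and the classical Rellich--Kondrachov theorem gives that a bounded sequence has a subsequence converging in $H^{k+2r-1}\cong\chL^2_{k+2r-1,r-1}$. For functions supported in the collar the key is the \emph{uniform tail estimate}
\[
\|u\|_{\chL^2_{k+2r-1,r-1}(\{s<\delta\})}\;\le\;C\,\delta^{1/2}\,\|u\|_{\chL^2_{k+2r,r}(\oE)}\qquad(0<\delta<\ep),
\]
with $C$ independent of $u$ and $\delta$. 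Granting it, for a bounded sequence $\{u_j\}$ one takes $\delta_n=1/n$, extracts (by the previous paragraph applied to $\{s\ge\delta_n\}$) a subsequence convergent in $\chL^2_{k+2r-1,r-1}(\{s\ge\delta_n\})$, and diagonalises; the resulting subsequence is Cauchy in $\chL^2_{k+2r-1,r-1}(\oE)$ — splitting each difference into its restrictions to $\{s\ge\delta_n\}$ (eventually small) and to $\{s<\delta_n\}$ (uniformly $\le 2C\delta_n^{1/2}$) — hence converges by completeness of the space.

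It remains to establish the tail estimate, which is the main obstacle. Expanding each combination $\pa_s^\beta(s\pa_s)^{\alpha_1}\nabla_\Sigma^{\alpha_\Sigma}$ into $\sum_m c\,s^{m-\beta}\pa_s^{m}\nabla_\Sigma^{\alpha_\Sigma}$ and eliminating triangularly shows that membership in $\chL^2_{k+2r,r}(\oE)$ is equivalent to a finite family of weighted $L^2(ds\,dV_\Sigma)$-bounds on the derivatives $s^{a}\pa_s^{b}\nabla_\Sigma^{c}u$; crucially, the ordinary $s$-derivatives of order $\le r$ are controlled \emph{unweighted} — one order more than in $\chL^2_{k+2r-1,r-1}(\oE)$ — while at higher orders $\chL^2_{k+2r,r}(\oE)$ carries one extra power of $s$. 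Consequently each derivative combination $Du$ occurring in the $\chL^2_{k+2r-1,r-1}(\oE)$-norm is controlled either by one further ordinary $\pa_s$-derivative from the source norm, which forces a $C^0$ bound near $\pa\oE$ by Sobolev embedding and hence $\|Du\|_{L^2(\{s<\delta\})}\lesssim\delta^{1/2}\|u\|_{\chL^2_{k+2r,r}}$, or as $s$ times a quantity bounded in $L^2$ by $\|u\|_{\chL^2_{k+2r,r}}$, giving $\|Du\|_{L^2(\{s<\delta\})}\lesssim\delta\|u\|_{\chL^2_{k+2r,r}}$; the tangential $\nabla_\Sigma$-derivatives are handled by elliptic estimates on the compact link $\Sigma$. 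Summing these finitely many contributions yields the displayed bound. The delicate part throughout is the bookkeeping of the interplay between the ordinary derivative $\pa_s$ (from $\nabla_\oE$) and the degenerate derivative $s\pa_s$ (from $^b\nabla_\oE$): it is precisely the extra ordinary $\pa_s$-regularity encoded in $\chL^2_{k+2r,r}(\oE)$, which a plain $b$-Sobolev space lacks, that prevents $L^2$-mass from escaping into $\pa\oE$ and makes the embedding compact.
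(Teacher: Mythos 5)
Your approach (direct localisation on $\oE$ with a boundary tail estimate) is genuinely different from the paper's. The paper lifts $u$ to $\tilde{u}(\sigma,\rho)=\phi(\rho)u(\sigma)$ on $\bE$, with $\phi$ a bump function supported in $\rho\in[1/2,1]$; since the $\chL^2$-norms on $\oE$ were constructed so that a $\rho$-invariant function on $\bE$ has $L^2_{k+2r,r}(\bE)$-seminorms equal to the $\chL^2_{k+2r,r}(\oE)$-seminorms of its restriction, one gets two-sided norm comparisons $\|\tilde u\|_{L^2_{k+2r,r}(\bE)}\sim\|u\|_{\chL^2_{k+2r,r}(\oE)}$, and similarly one order lower. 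Rellich--Kondrachov on the compact slab $\{1/2\le\rho\le1\}\subset\bE$ then produces a subsequence of $\tilde u_i$ convergent in $L^2_{k+2r-1,r-1}$, and the reverse comparison transports this back to $u_i$ in $\chL^2_{k+2r-1,r-1}(\oE)$. Your version avoids the lift and is more self-contained, at the cost of having to prove the degenerate boundary tail estimate on $\oE$ directly.

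The one genuine gap is precisely in that tail estimate. Your dichotomy --- every derivative $Du$ in the target norm is controlled either by a source term with one further ordinary $\pa_s$, or as $s$ times something in the source --- is not exhaustive: it misses the top-order purely tangential term $\nabla_\Sigma^{k+2r-1}u$ (and, more generally, any target term whose $\Sigma$-order is too large to appear with one more $\pa_s$ in the source). Such a term is neither one $\pa_s$ short of a source term nor does it carry a free factor of $s$. Your parenthetical appeal to ``elliptic estimates on $\Sigma$'' does not fill the hole: elliptic regularity on $\Sigma$ trades tangential orders against one another at each fixed $s$-slice but produces no smallness as $s\downarrow 0$. What does work is interpolation on $\Sigma$ slice by slice, combined with the one-derivative trace bound at a lower tangential order, e.g.
\[
\|\nabla_\Sigma^{k+2r-1}u\|_{L^2(\{s<\delta\})}^2 \;\le\; \Big(\sup_{s}\|\nabla_\Sigma^{k+2r-2}u(s)\|_{L^2(\Sigma)}\Big)\int_{\{s<\delta\}}\|\nabla_\Sigma^{k+2r}u(s)\|_{L^2(\Sigma)}\,ds,
\]
where the $\sup_s$-factor is controlled by $\|u\|_{\chL^2_{k+2r,r}}$ because $\pa_s\nabla_\Sigma^{k+2r-2}u$ lies in the source norm. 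Note this yields only $\delta^{1/4}$, not the $\delta^{1/2}$ you claimed, so the displayed tail estimate is false as written; however any positive power of $\delta$ suffices to run your diagonal extraction, so the compactness conclusion survives once the missing interpolation step is inserted.
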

\begin{proof}
	For a function $u$ defined on $\oE$, we can consider the function on $\bE$ defined as $\tu = \phi(r)u$ for $\phi(r)$ being a smooth, compactly supported bump function on $\bR_+$ supported on $[1/2,1]$. Then, under this correspondence, it is easy to see that a sequence of $u_i\in \chL^2_{k+2r,r}$ defines a sequence of functions $\tu_i \in L^2_{k+2r,r}(\bL)$ which are uniformly supported away from the front face. Therefore, by the usual Rellich-Kondrachov (which can be invoked since the functions $u_i$ are supported uniformly away from the front face), we can find a convergent subsequence in $L^2_{k+2r-1,r-1}$. Restricting this to the front face proves the claim.
\end{proof}

As a corollary to the above, using the fact that $w$ is a $b$-vector field on $\oE$ we obtain:	\begin{coro}\label{lem_compactfield}
	The mapping $u\mapsto w(u)$ from $\chL^2_{k+2r,r} \rt \chL^2_{k+2r-2,r-1}$ is compact.
\end{coro}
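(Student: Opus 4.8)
The plan is to write the map $u \mapsto w(u)$ as a composition of a compact operator with a bounded one, so that compactness is automatic. Concretely, I would factor it as
\[
\chL^2_{k+2r,r}(\oE) \xrightarrow{\ \iota\ } \chL^2_{k+2r-1,r-1}(\oE) \xrightarrow{\ w\ } \chL^2_{k+2r-2,r-1}(\oE),
\]
where $\iota$ is the inclusion and the second arrow is the action of $w$. By Lemma \ref{lem_rellichkondrachovE} the inclusion $\iota$ is compact, so it remains only to check that $w$ defines a \emph{bounded} map $\chL^2_{k+2r-1,r-1}(\oE)\to\chL^2_{k+2r-2,r-1}(\oE)$; since a bounded operator precomposed with a compact one is compact, this gives the corollary.

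For the boundedness, I would rewrite the indices as $k+2r-1=(k+1)+2(r-1)$ and $k+2r-2=k+2(r-1)$, so the assertion is that applying the $b$-vector field $w$ costs exactly one unit of the spatial-derivative budget. Since $\oE$ is compact with a single a-boundary $\pa\oE$, and Lemma \ref{lem_h1nonzero} already gives that $w$ is a smooth $b$-vector field on $\oE$ (namely $w=\Phi^*(\pa_r)$), the vector field $w$ together with all of its $^b\nabla_\oE$- and $\nabla_\oE$-covariant derivatives is uniformly bounded on $\oE$. Expanding $\nabla_\oE^\beta\,{}^b\nabla_\oE^\alpha\big(w(u)\big)$ by the Leibniz rule for $\beta\le r-1$ and $|\alpha|\le k+2(r-1)-2\beta$, and absorbing the commutator terms coming from passing $\nabla_\oE$, ${}^b\nabla_\oE$ and $w$ past one another, one obtains a finite sum of terms of the form $(\text{bounded coefficient})\cdot\nabla_\oE^{\beta'}{}^b\nabla_\oE^{\alpha'}u$ with $\beta'\le r-1$ and $|\alpha'|\le k+2(r-1)-2\beta'+1\le (k+1)+2(r-1)-2\beta'$, each of which is controlled in $L^2(\oE)$ by $\|u\|_{\chL^2_{(k+1)+2(r-1),r-1}}$. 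Summing gives the desired bound. (For $k\ge1$ one could instead use the slightly more symmetric factorization $\chL^2_{k+2r,r}\xrightarrow{\,w\,}\chL^2_{k+2r-1,r}\hookrightarrow\chL^2_{k+2r-2,r-1}$, the last inclusion being compact by Lemma \ref{lem_rellichkondrachovE} applied with $k$ replaced by $k-1$; but the factorization above works uniformly in $k\ge0$, in particular in the case $k=0$, $r=1$ that is actually used.)

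The only point requiring a little care — and the step I expect to be the main, albeit minor, obstacle — is the commutator bookkeeping near the a-boundary $\pa\oE$: one must verify that moving $w$ and the mixed derivatives $\nabla_\oE^\beta{}^b\nabla_\oE^\alpha$ past one another never produces an unbounded coefficient. This is precisely where it matters that $w$ is a genuine $b$-vector field rather than an ordinary one: near $\pa\oE=\{s=0\}$ its $\pa_s$-component vanishes to order $s$, so $[\nabla_\oE,w]$, $[{}^b\nabla_\oE,w]$ and $[\nabla_\oE,{}^b\nabla_\oE]$ are again lower-order operators with coefficients bounded in the $b$-geometry. I would record this as a short local computation in the model coordinates $\Sigma\times[0,\ep)$ near $\pa\oE$, after which the estimate above — and hence the corollary — is immediate.
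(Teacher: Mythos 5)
Your proof is correct and is the same argument the paper intends: the corollary is stated without a written-out proof, just the pointer "using the fact that $w$ is a $b$-vector field on $\oE$" together with Lemma \ref{lem_rellichkondrachovE}, and your factorization through the compact inclusion followed by the bounded first-order $b$-operator $w$ is exactly that argument made explicit. The index bookkeeping and the remark about the commutator terms near $\pa\oE$ being harmless (because $w$ is tangent to the boundary in the $b$-sense) are the right details to flag, and they do go through as you expect.
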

We also obtain that:
\begin{coro}\label{coro_closedimageE}
	The mapping \eqref{eq_mappingsobolevff2} has closed image.
\end{coro}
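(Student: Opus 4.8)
The plan is to deduce this from the abstract functional-analytic criterion recorded in Lemma \ref{lem_funcana}: a bounded linear operator $T\colon X\to Y$ which admits an estimate $\|u\|_X\le C(\|Tu\|_Y+\|Ku\|_Z)$ for some bounded compact $K\colon X\to Z$ automatically has finite-dimensional kernel and closed image. So the whole task reduces to rewriting the a-priori estimate \eqref{eq_interiorestimateff} in precisely this shape for the operator \eqref{eq_mappingsobolevff2}.

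Concretely, I would take $X=\chL^2_{2,1}(\oE)$, $Y=\chL^2_{0,0}(\oE)\oplus L^2_1(\pa\oE)$ and $T=(-P_z,\,|_{\pa\oE})$, which is exactly \eqref{eq_mappingsobolevff2}. Specialising \eqref{eq_interiorestimateff} to $k=0$, $r=1$ gives $\|u\|_{\chL^2_{2,1}}\le C_z\big(\|-P_z u\|_{\chL^2_{0,0}}+\|u\|_{\chL^2_{0,0}}+\|u|_{\pa\oE}\|_{L^2_{1}}\big)$. Here the first and third terms on the right are together controlled by $\|Tu\|_Y$ (this is where it matters that $k+2r-1=1$ when $k=0,\ r=1$, so that the boundary trace lands in exactly the space $L^2_1(\pa\oE)$ appearing in the target). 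The only remaining term, $\|u\|_{\chL^2_{0,0}}$, is the error term to be absorbed into a compact operator: set $Z=\chL^2_{0,0}(\oE)$ and let $K\colon X\hookrightarrow Z$ be the inclusion.

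It then remains to check that $K$ is compact, i.e.\ that the inclusion $\chL^2_{2,1}(\oE)\hookrightarrow\chL^2_{0,0}(\oE)$ is a compact operator. This follows from the Rellich--Kondrachov statement Lemma \ref{lem_rellichkondrachovE}: with $k=0$, $r=1$ it gives compactness of $\chL^2_{2,1}(\oE)\hookrightarrow\chL^2_{1,0}(\oE)$, and post-composing with the bounded inclusion $\chL^2_{1,0}(\oE)\hookrightarrow\chL^2_{0,0}(\oE)$ yields a compact map. With $X,Y,Z,T,K$ so chosen and the estimate $\|u\|_X\le C_z(\|Tu\|_Y+\|Ku\|_Z)$ in hand, Lemma \ref{lem_funcana} applies and gives at once that $T$ has closed image (and, as a bonus, finite-dimensional kernel, which is needed in the subsequent Fredholmness argument).

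I do not expect a genuine obstacle here: all the analytic content has already been extracted into the a-priori estimate \eqref{eq_interiorestimateff} and the compactness Lemma \ref{lem_rellichkondrachovE}, so this step is purely a matter of matching spaces. The one point requiring a little care is the bookkeeping of the boundary-trace norm, namely making sure it is exactly the factor $L^2_1(\pa\oE)$ of the codomain and not something that also needs to be absorbed compactly.
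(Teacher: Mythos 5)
Your proposal matches the paper's proof exactly: both deduce closed image from the a-priori estimate \eqref{eq_interiorestimateff}, the compactness of $\chL^2_{2,1}(\oE)\hookrightarrow\chL^2(\oE)$ (via Lemma \ref{lem_rellichkondrachovE}), and the abstract criterion Lemma \ref{lem_funcana}. The space bookkeeping you spell out is the same implicit content the paper leaves to the reader.
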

\begin{proof}
	This follows from the a-priori estimate \eqref{eq_interiorestimateff}, along with the compactness of the inclusion $\chL^2_{2,1}(\oE)\hookrightarrow \chL^2(\oE)$ and invoking Lemma \ref{lem_funcana}.
\end{proof}

Now, we have a regularity result for the kernel of \eqref{eq_mappingsobolevff2}:
\begin{lem}\label{lem_regularityE}
	Suppose that $u\in \ker(P_z,|_{\pa\oE})$. Then, we have that $u\in C^\infty(\oE)$ (note that here, we have the ordinary smooth structure on $\oE$ near the boundary).
\end{lem}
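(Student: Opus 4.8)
The statement is an interior elliptic regularity result for the kernel of the parametric operator $(-P_z, |_{\partial\oE})$, which is a degenerate elliptic operator on the compact manifold with boundary $\oE$ (degenerate in the sense that it is modelled on a $b$-operator near $\partial\oE$, since $^bg_\oE = s\,\Phi^*(g_E)$ and $h_1\to 2m$ at $\partial\oE$, cf.\ Lemma~\ref{lem_h1nonzero}). The plan is to bootstrap using the a-priori estimate \eqref{eq_interiorestimateff} restricted to $\oE$: if $u\in\ker(-P_z, |_{\partial\oE})$, then $u\in\chL^2_{2,1}(\oE)$ and $-P_z u = 0$, $u|_{\partial\oE}=0$. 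First I would establish elliptic regularity on the interior $\oE^\circ$, where (via the diffeomorphism $\Phi$ of \eqref{eq_phi}) $-P_z$ pulls back to a genuinely elliptic operator on $E$ with smooth coefficients; standard interior elliptic regularity (e.g.\ difference quotients, or repeated application of \eqref{eq_interiorestimateff} over compactly contained subdomains with $k$ increasing) gives $u\in C^\infty(\oE^\circ)$.

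The substantive part is regularity up to the boundary $\partial\oE$, where the operator degenerates. Here I would pass to the boundary coordinates: near $\partial\oE$ the manifold looks like $\Sigma\times[0,\delta)$ with boundary-defining function $s$, and the Sobolev norm (by the Remark following the definition of $\chL^2_{k+2r,r}(\oE)$) controls mixed derivatives $\nabla^{\alpha_\Sigma}_\Sigma(\partial_s^\beta(s\partial_s)^{\alpha_1}u)$. The operator $-P_z$ in these coordinates is, up to lower order, of the form $-h_1(s\partial_s)^2 + (\text{first order in }s\partial_s) - \Delta_\Sigma + \dots$ — i.e.\ a $b$-type elliptic operator in $s$ and an ordinary elliptic operator in $\Sigma$. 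The strategy is: (i) use the $\Sigma$-ellipticity together with the a-priori estimate to gain spatial ($\Sigma$-direction) derivatives, treating $s\partial_s$-derivatives as lower order data, yielding $u\in \chL^2_{k+2r,r}$ for all $k,r$; (ii) invoke the parabolic Sobolev embedding (Theorem~\ref{thm_sobolevembeddingparabolic}, in its version on $\oE$ restricted from $\bE$) to conclude $u\in\check{C}^{k+2r,r}(\oE)$ for all $k,r$, hence $u$ and all its $\nabla_\oE,{^b\nabla_\oE}$-derivatives are continuous up to $\partial\oE$; (iii) upgrade from the weak ($b$-)smoothness to ordinary smoothness at $\partial\oE$ using the indicial/Mellin structure: since $u|_{\partial\oE}=0$ and $u$ has a full $b$-conormal expansion at $s=0$ whose exponents are governed by the indicial roots of $-h_1(s\partial_s)^2 + \dots$, and since $u$ already lies in $\chL^2_{2,1}$ (which forces the leading exponent to be strictly positive), a bootstrap on the indicial equation shows the exponents are nonnegative integers, so the conormal expansion is an ordinary Taylor expansion and $u\in C^\infty(\oE)$ in the ordinary smooth structure.

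Concretely I would run the bootstrap as follows. Starting from $u\in\chL^2_{2,1}$ with $-P_z u = 0$: applying \eqref{eq_interiorestimateff} with $k,r$ increased and $-P_z u = 0$, $u|_{\partial\oE}=0$ gives $\|u\|_{\chL^2_{k+2r,r}} \le C_z(\|u\|_{\chL^2})$, so $u\in\bigcap_{k,r}\chL^2_{k+2r,r}(\oE)$; this uses that the trace term vanishes and that the only obstruction in \eqref{eq_interiorestimateff} is the lower-order $\|u\|_{\chL^2}$ term, which is already controlled. Then the $\oE$-version of the Sobolev embedding in \S\ref{subsubsec_Sobolevembedding} gives $u\in\check{C}^{k+2r,r}(\oE)$ for every $k,r$, i.e.\ $u$ is $b$-smooth on $\oE$ with vanishing boundary trace. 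Finally, to see this is ordinary smoothness: in the product coordinates near $\partial\oE$, $b$-smoothness plus $u|_{s=0}=0$ means $u = s^{1}v_1$ for a $b$-smooth $v_1$; feeding $u = s\,v_1$ back into $-P_z u = 0$ and using $-h_1(s\partial_s)^2(s v_1) = -h_1 s(s\partial_s + 1)^2 v_1$ together with $h_1\to 2m\neq 0$ shows $v_1|_{s=0}$ satisfies an elliptic equation on $\Sigma$ and $v_1$ is again $b$-smooth; iterating, $u$ has a Taylor expansion in $s$ to all orders with smooth-on-$\Sigma$ coefficients, hence $u\in C^\infty(\oE)$.

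\textbf{Main obstacle.} The delicate point is step (iii): showing that the $b$-conormal (Mellin) expansion at $\partial\oE$ has only nonnegative-integer exponents rather than general complex exponents. This is where the precise form of the indicial operator of $-P_z$ at $\partial\oE$ matters — one must check that its indicial roots are nonnegative integers, using $h_1\to 2m$, $w$ being a $b$-vector field (so it contributes no negative indicial shift), and the membership $u\in\chL^2_{2,1}$ to exclude the nonpositive roots. If instead one only wants smoothness in the $b$ (or a-smooth) sense the argument terminates at step (ii); the passage to the ordinary smooth structure genuinely requires this indicial analysis, and that is the part I expect to require the most care.
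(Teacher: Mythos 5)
Your steps (i) and (ii) track the paper's proof, which is exactly: bootstrap the a-priori estimate \eqref{eq_interiorestimateff} with $-P_z u=0$ and $u|_{\pa\oE}=0$ to obtain $u\in\bigcap_{k,r}\chL^2_{k+2r,r}(\oE)$, and then apply a Sobolev embedding. But you then misidentify what the embedding produces: you assert it lands in a $b$-type space $\check{C}^{k+2r,r}(\oE)$ and flag a further step (iii) --- an indicial/Mellin analysis at $\pa\oE$ --- as the essential remaining work. That step is superfluous, and it stems from a misreading of the spaces. By definition,
\[
\|u\|^2_{\chL^2_{k+2r,r}(\oE)}=\sum_{\beta\leq r}\sum_{|\alpha|\leq k+2r-2\beta}\|\nabla_\oE^\beta\,{^b\nabla^\alpha_\oE}u\|^2_{L^2(\oE)},
\]
where $\nabla_\oE$ is the \emph{ordinary} Levi-Civita connection of the ordinary metric $g_\oE$ on the compact manifold-with-boundary $\oE$. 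Taking $\alpha=0$ shows that $u\in\chL^2_{k+2r,r}(\oE)$ already forces $\nabla_\oE^\beta u\in L^2(\oE)$ for all $\beta\le r$, i.e.\ $u$ lies in the \emph{ordinary} Sobolev space $L^2_r(\oE)$. Letting $r\to\infty$ and applying the standard Sobolev embedding on the compact manifold with boundary gives $u\in C^\infty(\oE)$ in the ordinary smooth structure directly; this is exactly the point of the paper's remark that ``the Sobolev embedding follows since the norm $\chL^p_{k+2r,r}$ has $r$ ordinary derivatives on $\oE$.'' Your step (iii) is therefore not needed, and, as you yourself note, its crucial ingredient (that the indicial roots of the degenerate operator at $\pa\oE$ are nonnegative integers) is neither verified in your proposal nor needed anywhere in the paper; the intermediate claim that $b$-smoothness plus $u|_{\pa\oE}=0$ forces $u=s\,v_1$ with $v_1$ $b$-smooth is also not justified, since an a-smooth function vanishing at the boundary may decay like any fractional power $s^\alpha$.
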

\begin{proof}
	By the interior estimate and $u$ lying in the kernel, we have that \[
	\|u\|_{\chL^2_{k+2r,r}} \leq C_{k,r}(\|u\|_{L^2}), 
	\]
	for $k,r\geq1$ by induction.
	Now, we use the Sobolev embedding, $\chL^2_{k+2r,r}(\oE) \hookrightarrow C^{r-1,\alpha}(\oE)$ for all $r\gg 0$, to obtain the result. Note that the Sobolev embedding follows since the norm $\chL^p_{k+2r,r}$ has $r$ ordinary derivatives on $\oE$.
\end{proof}
As a corollary to this, we obtain:

\begin{coro}\label{coro_zerokernel}
	Suppose that $u\in \ker(P_z, |_{\pa\oE})$. Then, $u=O(s^\infty)$ near the boundary $s=0$ of $\oE$. 
\end{coro}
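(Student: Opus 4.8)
The plan is to prove this by a standard indicial-root / formal boundary-expansion argument, exploiting two facts that are already available: $u$ is honestly smooth up to $\pa\oE$ (Lemma \ref{lem_regularityE}), and near $\pa\oE$ the operator $-P_z$ is a regular-singular ($b$-)differential operator in the boundary-defining variable $s$, with a \emph{non-degenerate} leading $s\pa_s$-term.

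\textbf{Step 1 (reduction to vanishing Taylor coefficients).} By Lemma \ref{lem_regularityE}, $u\in C^\infty(\oE)$ in the ordinary smooth structure near $\pa\oE$, so in a collar $\pa\oE\times[0,\delta)_s$ of the boundary $u$ has an ordinary Taylor expansion $u\sim\sum_{k\geq0}s^k u_k$ modulo $O(s^\infty)$, with $u_k\in C^\infty(\pa\oE)$; since $\pa\oE$ is an a-boundary of $(\oE,{}^bg_\oE)$ there are no logarithmic terms. The boundary condition $u|_{\pa\oE}=0$ gives $u_0\equiv0$, so it suffices to show $u_k\equiv0$ for all $k\geq1$.

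\textbf{Step 2 (normal form of $-P_z$ near $\pa\oE$).} Starting from Lemma \ref{lem_h1nonzero}, which writes $-P_0=\Phi^*((r\pa_r)^\top-\Delta_E)$, $w=\Phi^*(\pa_r)$ and $h_1=\Phi^*(|(r\pa_r)^\top|^2_E)$ with $h_1\to2m$ at $\pa\oE$, and using the exponential decay of the expander $E$ to its cone $C$ from Proposition \ref{prop_expanderdecay} (so that, transported to $\oE$, $\Delta_E$ and the cone Laplacian $\Delta_C$ differ by $O(s^\infty)$ near $\pa\oE$), one checks by a coordinate computation that in the collar the operator takes the Fuchsian form
\[
-P_z = c_0\,s\pa_s + Q_z + s\,R_z(s) + O(s^\infty),
\]
where $c_0\neq0$ is a constant (it comes from $(r\pa_r)^\top$ being asymptotically the dilation field $r\pa_r$, which in the variable $s$ is $-2(1-s)\,s\pa_s$), $Q_z=(z h_2-z^2 h_1)|_{\pa\oE}$ is a zeroth-order multiplication operator on $\pa\oE$ depending polynomially on $z$, and $R_z(s)$ is a second-order $b$-differential operator (built from $\Delta_{\pa\oE}$, $s\pa_s$ and lower order) with coefficients smooth in $s$. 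The decisive point here is that the coefficient of $s\pa_s$ in the leading part does not vanish at $\pa\oE$: this is precisely the non-degeneracy recorded in Lemma \ref{lem_h1nonzero}.

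\textbf{Step 3 (indicial induction).} Substituting the expansion of Step 1 into $-P_z u=0$ and collecting the coefficient of $s^k$ yields, for each $k\geq1$, an identity $I_z(k)\,u_k=F_k(u_1,\dots,u_{k-1})$, where $I_z(\nu):=c_0\nu+(z h_2-z^2 h_1)|_{\pa\oE}$ is the indicial operator — multiplication by a function on $\pa\oE$ — and $F_k$ is a linear differential expression in the lower coefficients, arising from $s\,R_z(s)$ hitting $\sum_{j<k}s^j u_j$ (so in particular $F_1\equiv0$). One verifies from the explicit form of $h_1$ and $h_2$ on $\pa\oE$ that $I_z(k)$ is nowhere vanishing, hence invertible, for every integer $k\geq1$ and every $z\in\bC$ — equivalently, that the indicial roots of $-P_z$ at $\pa\oE$ never lie in $\bZ_{>0}$. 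Then $F_1\equiv0$ forces $u_1\equiv0$, and inductively $F_k\equiv0$ whenever $u_1=\dots=u_{k-1}=0$, so $u_k\equiv0$ for all $k\geq1$, i.e.\ $u=O(s^\infty)$ near $\pa\oE$.

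\textbf{Main obstacle.} The load-bearing points are Step 2 and the invertibility claim in Step 3. Establishing the normal form — in particular verifying that the leading $s\pa_s$-coefficient is nonzero — is where the geometry (the exponential decay of the expander, Proposition \ref{prop_expanderdecay}) genuinely enters; it is what makes $-P_z$ a \emph{first-order} operator to leading order at $\pa\oE$. The invertibility of $I_z(k)$ at all positive integers is the statement that there is no nonzero indicial root in $\bZ_{>0}$; this is exactly what prevents a nonzero kernel element from exhibiting a finite-order boundary vanishing, so if it failed the corollary itself would be false, and one must check it from the explicit expressions for $h_1,h_2$ (the non-constancy of $h_2$ on $\pa\oE$ disposing of the would-be borderline exponents). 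Equivalently, one may package Steps 2--3 as: $u$, being smooth, is polyhomogeneous with index set contained in $\bN_0$, while the equation forces its index set into $\{\text{indicial roots}\}+\bN_0$, and the two are compatible only if $u$ vanishes to infinite order.
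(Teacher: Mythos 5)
The Fuchsian normal form you write in Step~2 is not correct, and this is where the argument breaks down. You claim $-P_z = c_0\,s\pa_s + Q_z + s\,R_z(s) + O(s^\infty)$, i.e.\ that $-P_z$ is a $b$-differential operator to leading order at $\pa\oE$ with indicial operator $I_z(\nu)=c_0\nu+Q_z$. In fact $-P_z$ carries a \emph{bare} $\pa_s$-term with nonvanishing coefficient at $s=0$: it is parabolic near $\pa\oE$ with $s$ playing the role of time, not Fuchsian. To see this, restrict $\rho^2\pa_v-\cL = \tw^2\bigl(\tau\pa_\tau - (w\pa_w)^\top - \Delta_E\bigr)$ to $\rho$-invariant functions on $\oE$: the $\tau\pa_\tau$ piece drops out, and since $\tw^2=(2s)^{-1}$ while $(w\pa_w)^\top\approx -2(1-s)\,s\pa_s$ up to the $O(s^\infty)$ error from $E\to C$, the prefactor $\tw^2$ cancels the factor of $s$ in $s\pa_s$. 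One finds
\[
-P_0 \;=\; (1-s)\,\pa_s \;-\; \tfrac{1}{1-s}\,\Delta_\Sigma \;+\; O(s)\cdot\bigl(\text{second-order in $s\pa_s$, $\nabla_\Sigma$}\bigr)\;+\;O(s^\infty),
\]
so the leading-order model is the heat operator $\pa_s-\Delta_\Sigma$. Your normal form drops the $\tw^2$ weight (which is present in the formula $\rho^2\pa_v-\cL=\tw^2(\tau\pa_\tau-(w\pa_w)^\top-\Delta_E)$ but not visible in the ``pullback'' shorthand of Lemma~\ref{lem_h1nonzero}), thereby demoting $\pa_s$ to $s\pa_s$. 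Relatedly, the ``non-degeneracy recorded in Lemma~\ref{lem_h1nonzero}'' is about $h_1$, the coefficient of $(\rho\pa_\rho)^2$ --- derivatives transverse to the front face of $\bE$ --- which is an independent quantity and tells you nothing about the coefficient of $s\pa_s$ (or $\pa_s$) in $-P_0$ on $\oE$.

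This changes the recursion of Step~3 qualitatively. With a bare $\pa_s$ to leading order, the coefficient of $s^k$ in $-P_zu$, for $u=\sum_j s^j u_j$, contains $(k+1)\,u_{k+1}$; since $k+1\neq 0$ for every $k\geq 0$ and $u_0\equiv 0$ by the boundary condition, induction immediately gives $u_k\equiv 0$ for all $k$, with no indicial-root condition to verify for any $z$. That is exactly the paper's (one-line) argument. Under your Fuchsian form, the recursion reads $(c_0k+Q_z)u_k=F_k$ and hinges on $c_0k+Q_z$ being nowhere vanishing on $\pa\oE$ for all $k\geq 1$ and $z\in\bC$ --- a step you explicitly defer. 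It is not a matter of checking ``the explicit form of $h_1,h_2$'': at any $p\in\pa\oE$, $Q_z(p)=zh_2(p)-z^2h_1(p)$ is a nonconstant quadratic in $z$ (since $h_1|_{\pa\oE}\neq 0$), hence attains $-c_0k$ for some $z$; your proposed invertibility simply fails in that framework. The corollary is saved not by any algebraic miracle involving $h_1,h_2$, but by the transverse $\pa_s$-term in $-P_z$, which your normal form has lost.
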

\begin{proof}
	This follows from writing $u$ in a Taylor series expansion in $s$ and noting that all coefficients must be zero, since the operator $P_z$ contains a non-vanishing $\pa_s$-term.
\end{proof}

\subsection{The work of Lotay--Neves}\label{subsec_LN}
In the coordinates $(\tau,w^i)$ near the front face, the operator can be written as \[
\rho^2\pa_v - \cL = \tw^2(\tau\pa_\tau - (w\pa_w)^\top -\Delta_E),
\]
 where $\tw := (\frac{1}{2}+|w|^2)^{1/2}$, and $w\pa_w$ denotes the vector field $\sum w^i\pa_{w^i}$.
Now, a deformation of the expander corresponds in these coordinates to a solution for the above equation of the form $\tau^2 u(w^i)$. Plugging this in to the equation, we get the operator \[
\tw^2(2- (w\pa_w)^\top -\Delta_E)u.
\]
 Therefore, to study the deformation theory of the expander, we can study the operator on $E$ given by 

\begin{equation}\label{eq_operatorfflin}
	\cK := 2 - (r\pa_r)^\top -\Delta_E.
\end{equation}

\subsubsection{Fredholm theory for the operator \eqref{eq_operatorfflin}}\label{subsubsec_LNanalysis}
Now, we need to construct Sobolev spaces on the expander $E$ such that the map defined by \eqref{eq_operatorfflin} is an isomorphism. Such Sobolev spaces were constructed by Lotay--Neves, and they showed that \eqref{eq_operatorfflin} is an isomorphism.

They define the Sobolev spaces $H^k_*(E)$ and $H^k(E)$, where $H^k$ is the usual Hilbert space of functions having $k$ weak derivatives in $L^2$ and $H^k_*$ is defined by the norm:\[
\|u\|^2_{H^k_*(E)}:= \sum_{i=0}^k\|\nabla^iu\|^2_{L^2(E)}+\sum_{i=1}^{k-1}\|\la \nabla^iu, x^\top\ra\|^2_{L^2(E)}.
\] 
Then, $\cK$ is a well-defined continuous mapping between the Sobolev spaces, 
\begin{equation}\label{eq_ffpdesobolev}
	\cK: H^{k+2}_*(E)\rt H^k(E).
\end{equation}
Then, Lotay--Neves prove the following (they prove this for a Lagrangian expander asymptotic to the pair of planes $\Pi_\phi\sqcup \Pi_0$, but the same argument works for a general asymptotic cone which is a union of special Lagrangians):
\begin{theorem}\label{eq_fredholmff}[c.f.\ Theorem 4.3 of \cite{LN13}]
	The mapping $\cK$ between the Sobolev spaces \eqref{eq_ffpdesobolev} is an isomorphism.
\end{theorem}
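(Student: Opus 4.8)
The statement is Theorem 4.3 of \cite{LN13}, and the plan is to reproduce their argument, which goes through unchanged when the asymptotic cone is a union of special Lagrangian cones rather than a pair of planes. Write $\cL_{\mathrm{drift}}u := \Delta_E u + \la x, \nabla u\ra$ for the drift Laplacian of the expander; since $\nabla u$ is tangent to $E$ we have $\la x^\top, \nabla u\ra = \la x, \nabla u\ra$, hence $\cK = 2 - \cL_{\mathrm{drift}}$. The structural input underlying everything is that $\cL_{\mathrm{drift}}$ is formally self-adjoint and non-positive for the weighted measure $d\mu := e^{|x|^2/2}\,dV_E$: indeed $\mathrm{div}_E(e^{|x|^2/2}\,\nabla u) = e^{|x|^2/2}\,\cL_{\mathrm{drift}}u$, so $\int_E(\cL_{\mathrm{drift}}u)\,u\,d\mu = -\int_E|\nabla u|^2\,d\mu$. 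Thus $\cK$ is bounded below by $2$ in the weighted picture, and the whole issue is to transfer this positivity to the unweighted spaces $H^{k+2}_*(E)$ and $H^k(E)$ in which the isomorphism is asserted.

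First I would check that $\cK\colon H^{k+2}_*(E)\rt H^k(E)$ is bounded. This is exactly the purpose of the extra terms $\|\la\nabla^i u, x^\top\ra\|_{L^2}$ in the norm on $H^k_*$: when one differentiates $\cK u$ and commutes covariant derivatives past the drift field $x^\top$, the leading contributions are precisely terms of the form $\la\nabla^j u, x^\top\ra$, while the remainders are controlled by the uniformly bounded geometry of $E$ — here one uses that $E$ is asymptotically conical and, by Proposition \ref{prop_expanderdecay}, converges to its asymptotic cone (a union of special Lagrangian cones) with all derivatives at rate $O(e^{-\alpha r^2})$, so all curvature quantities entering the commutators are bounded.

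For injectivity, let $u\in H^2_*(E)$ satisfy $\cK u = 0$. On the conical end $\cK$ is a lower-order perturbation of the model operator $2 - r\pa_r - \Delta_C$ on $C = \Sigma\times(R,\infty)$; decomposing into eigenmodes of $\Delta_\Sigma$ and analysing the radial ODEs shows that the homogeneous equation has one solution branch growing like $r^2$ and one decaying like $e^{-r^2/2}$, so the $L^2$ constraint pins $u$ to the decaying branch and $u = O(e^{-c|x|^2})$ for some $c\in(0,1/2)$, with all derivatives. Then $u,\nabla u\in L^2(d\mu)$ and the boundary term at infinity vanishes, so $0 = \int_E(\cK u)\,u\,d\mu = \int_E\bigl(2u^2 + |\nabla u|^2\bigr)\,d\mu$ forces $u\equiv 0$. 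The same computation applied to the cokernel equation — the formal $d\mu$-adjoint of $\cK$ is $\cK$ itself — shows $\cK$ has trivial cokernel. It remains to prove $\cK$ has closed range, for which I would establish the a-priori estimate $\|u\|_{H^{k+2}_*(E)} \leq C\,\|\cK u\|_{H^k(E)}$: on a fixed compact region this is interior elliptic regularity for the uniformly elliptic operator $\cK$, and on the conical end one expands $\|\cK u\|_{L^2}^2$ and integrates by parts against $x^\top$ (a Bochner/Rellich-type identity), recovering $\|\Delta_E u\|_{L^2}^2 + \|\la x,\nabla u\ra\|_{L^2}^2 + \|\nabla u\|_{L^2}^2 + \|u\|_{L^2}^2$ on the left with the curvature cross-terms absorbed using the $O(e^{-\alpha r^2})$ decay of $E$; higher $k$ follows by differentiating and induction. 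Combined with the triviality of kernel and cokernel this gives the isomorphism.

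I expect the main obstacle to be this weighted a-priori estimate on the conical end — controlling the curvature terms thrown up by the integration by parts, which is exactly where the special Lagrangian and exponentially-decaying structure of $E$ is indispensable — together with justifying the integration by parts in the injectivity step via the decay of solutions described above.
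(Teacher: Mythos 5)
The paper does not prove this theorem; it is cited directly from Lotay--Neves (Theorem~4.3 of \cite{LN13}), with the parenthetical remark in the surrounding text that the argument extends unchanged from a pair of non-area-minimising planes to an arbitrary union of special Lagrangian cones. So there is no in-paper proof against which to compare, and your proposal is being read as a reconstruction of the cited result.

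Your reconstruction has the right skeleton. The identification $\cK = 2 - \cL_{\mathrm{drift}}$, the $d\mu$-self-adjointness and non-positivity of $\cL_{\mathrm{drift}}$ for $d\mu = e^{|x|^2/2}\,dV_E$, the role of the extra $\la\nabla^i u, x^\top\ra$ terms in the $H^k_*$ norm, the $r^2$ versus $e^{-r^2/2}$ dichotomy for the radial ODE on the cone, and the injectivity argument (decay of kernel elements plus weighted integration by parts) are all correct and are the structural pillars of the Lotay--Neves proof.

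The gap is in the surjectivity step. Your claim that ``the formal $d\mu$-adjoint of $\cK$ is $\cK$ itself shows $\cK$ has trivial cokernel'' cannot be used as stated: the cokernel of $\cK\colon H^{k+2}_*(E)\rt H^k(E)$ lives in the dual of the \emph{unweighted} $H^k(E)$, and in the unweighted $L^2(dV_E)$-pairing the formal adjoint of $\cK$ is $2 + (m + |x^\perp|^2) + \la x,\nabla\cdot\ra - \Delta_E$ (one computes $\mathrm{div}_E\, x^\top = m + |x^\perp|^2$ on an expander), which is not $\cK$. Moreover, a cokernel element in $L^2(dV_E)$ need not lie in $L^2(d\mu)$, since the Gaussian weight grows, so you cannot integrate by parts in $d\mu$ against it. The correct route to surjectivity combines: existence for compactly supported data from $d\mu$-coercivity (via Lax--Milgram or by solving Dirichlet problems on $E\cap B_R$ and sending $R\rt\infty$), the a priori estimate $\|u\|_{H^{k+2}_*}\leq C\|\cK u\|_{H^k}$ you formulate, and a density argument; alternatively, prove $\cK$ is Fredholm of index zero and invoke injectivity. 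Your closing comment correctly identifies the a priori estimate as the crux --- the missing realisation is that it is also the mechanism for surjectivity, not just closed range, and the weighted self-adjointness alone does not finish the job.
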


Now, we can write the norm $H^k(E)$ on the compactification $\oE$ as:
\[
\|u\|^2_{H^k(E)} = \sum_{j=0}^k \int_{E}|\nabla^j_Eu|^2dV_E = \sum_{j=0}^k \int_{\oE}|^b\nabla^j_{\oE}u|^2s^{-m/2-2}dV_\oE,
\]
and similarly,
\[
\|u\|^2_{H^k_*(E)} = \sum_{j=0}^k \int_\oE |^b\nabla^j_{\oE}u|^2s^{-m/2-2}dV_\oE + \sum_{j=0}^{k-2} \int_{\oE}|\pa_s(^b\nabla^j_{\oE}u)|^2s^{-m/2-2}dV_\oE.
\]
Therefore, the Sobolev spaces used in their analysis are the same as our Sobolev spaces $\chL^2_{k,1}$, up to a weight of $s^{-m/2-2}$. Due to the weight $s^{-m/2-2}$ blowing up at the boundary of $\oE$, we note that:
\begin{lem}
	If a function $u\in H^k_*(E)$, then the pull back of the function to $\oE$ has zero trace on the boundary $\pa\oE$.
\end{lem}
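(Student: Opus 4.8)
The plan is to reduce the claim to a one–dimensional computation transverse to $\pa\oE$ and to exploit that the weight $s^{-m/2-2}$ is non-integrable at $s=0$. First I would localise to a collar $\Sigma\times[0,\ep)$ of the boundary in which $s$ is the boundary-defining function and $g_\oE$ is uniformly equivalent to a product metric $g_\Sigma+ds^2$; there $^bg_\oE$ is equivalent to $g_\Sigma+s^{-2}ds^2$, so that $|{^b\nabla_\oE}u|^2\gtrsim|s\,\pa_s u|^2$. Since $s<\ep<1$ on the collar, the weights $s^{-m/2-2}$ and $s^{-m/2}$ are bounded below by positive constants, so for $k\ge1$ membership in $H^k_*(E)$ already forces $u,\pa_s u\in L^2(\Sigma\times[0,\ep))$; hence the ordinary trace $u|_{\pa\oE}\in L^2(\Sigma)$ is defined by the usual trace theorem, and we moreover keep the quantitative bound
\[
\int_{\Sigma\times[0,\ep)}\bigl(|u|^2\,s^{-m/2-2}+|\pa_s u|^2\,s^{-m/2}\bigr)\,dV_\Sigma\,ds<\infty .
\]

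Next, by Fubini I would fix a full-measure set of $\sigma\in\Sigma$ for which $s\mapsto u(\sigma,s)\in H^1_{\mathrm{loc}}((0,\ep))$ and $\int_0^\ep\bigl(|u(\sigma,s)|^2 s^{-m/2-2}+|\pa_s u(\sigma,s)|^2 s^{-m/2}\bigr)\,ds<\infty$. For such $\sigma$ I would write $u(\sigma,s)-u(\sigma,s')=\int_{s'}^{s}\pa_s u(\sigma,t)\,dt$ and apply Cauchy--Schwarz against the weight $t^{-m/2}$, getting $|u(\sigma,s)-u(\sigma,s')|\le(\int_{s'}^{s}|\pa_s u|^2 t^{-m/2}\,dt)^{1/2}(\int_{s'}^{s}t^{m/2}\,dt)^{1/2}$; as $s,s'\downarrow0$ the first factor is the tail of a convergent integral and the second is $O(s^{(m/2+1)/2})$, so $\ell(\sigma):=\lim_{s\downarrow0}u(\sigma,s)$ exists and agrees a.e.\ with $u|_{\pa\oE}(\sigma)$. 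Finally, if $\ell(\sigma)\neq0$ then $|u(\sigma,s)|\ge|\ell(\sigma)|/2$ for small $s$, which forces $\int_0^\delta|u(\sigma,s)|^2 s^{-m/2-2}\,ds=+\infty$ (using $m/2+2\ge2>1$), contradicting the displayed bound; hence $\ell\equiv0$, i.e.\ $u$ has zero trace on $\pa\oE$.

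I do not expect a genuine obstacle here: the statement is really just the observation that an $L^2$-function against the non-integrable weight $s^{-m/2-2}$ cannot carry a nonzero boundary value. The only mildly delicate point is to identify the abstract Sobolev trace of $u$ on $\pa\oE$ with the pointwise limit $\lim_{s\downarrow0}u(\sigma,s)$, which is standard since one-variable $H^1$ functions have absolutely continuous representatives and the trace operator extends restriction of continuous functions; one should also note that for $k\ge2$ the bound on $\pa_s u$ is available directly from the starred term in the $H^k_*$-norm, while for $k=1$ one uses the $b$-derivative bound as above.
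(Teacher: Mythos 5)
Your proof is correct and is the natural formalization of the paper's one-line justification, which asserts the lemma ``due to the weight $s^{-m/2-2}$ blowing up at the boundary of $\oE$'' and gives no further detail. Your argument --- slicing by Fubini, extracting $\int|\pa_s u|^2\,s^{-m/2}<\infty$ from the $b$-derivative term of the $H^1$-norm, deducing a pointwise boundary limit by the fundamental theorem of calculus and Cauchy--Schwarz, and then forcing that limit to vanish because $\int_0^\delta s^{-m/2-2}\,ds=\infty$ --- supplies exactly the steps the paper leaves implicit.
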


Now, we write the operator $\cK$ on $\oE$. For a function $u$ defined on $E$, we have,
\[
\cK(u) = \rho^{-2}(\rho^2\pa_v-\cL)(\rho^2su) = (-P_2)(su).
\]
Therefore, $\cK(u)=f$ translates to $-P_2(su) = f$ on $\oE$. So, from Theorem \ref{eq_fredholmff} we get:
\begin{lem}\label{lem_Lotay--Neves}
	Let $f$ be a smooth, compactly supported function on $\oE$, supported away from $\pa\oE$. Then, there exists a unique function $u\in L^2_{k,1}(\oE)$ such that $u|_{\pa\oE}=0$ and $(-P_2)u=f$. 
\end{lem}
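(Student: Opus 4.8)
The plan is to reduce the statement to the Lotay--Neves isomorphism of Theorem \ref{eq_fredholmff} via the identity $\cK(g) = (-P_2)(sg)$ recorded just above. Given $f$ smooth and compactly supported in $\oE^\circ$, I would first transfer it to $E$: its push-forward $\tilde f := f\circ\Phi^{-1}$ under the diffeomorphism $\Phi:\oE^\circ\rt E$ of \eqref{eq_phi} is smooth and compactly supported on $E$, hence lies in $H^k(E)$ for every $k$. By Theorem \ref{eq_fredholmff} there is then a unique $g\in H^{k+2}_*(E)$ with $\cK(g)=\tilde f$; since this holds for every $k$ and the solution is unique in, say, $H^2_*(E)$, the various solutions agree and $g$ is in fact smooth on $E$ by Sobolev embedding. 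I would then set $u := s\,g$, regarded as a function on $\oE$, so that the displayed identity gives $(-P_2)u = f$ on $\oE$.

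Next I would verify that $u$ lies in the claimed space $\chL^2_{k,1}(\oE)$ and that $u|_{\pa\oE}=0$. The boundary condition is immediate from the lemma recorded above, which says that the pull-back of any $H^k_*(E)$-function to $\oE$ has zero trace on $\pa\oE$; multiplying by the boundary-defining function $s$ only strengthens this. For the membership, I would translate the norm $\|g\|_{H^{k+2}_*(E)}$ into the $\oE$-picture, where (as recorded above) it is a $\chL^2$-type norm carrying the weight $s^{-m/2-2}$ at $\pa\oE$. Since $u=sg$, a $b$-derivative of $u$ is bounded by $s$ times ($b$-derivatives of $g$ together with $g$), while a $\partial_s$-derivative of $u$ involves $g$ and $s\partial_s$-derivatives of $g$; hence the unweighted $\chL^2_{k,1}(\oE)$-norm of $u$ is controlled by $\int_\oE s^{2}\,(\text{derivatives of }g)^2\,dV_\oE$, which is finite because $s^{2}\le C\,s^{-m/2-2}$ near $\pa\oE$ (the ratio is $s^{4+m/2}$, bounded as $s\downarrow 0$) and the two weights are comparable away from it.

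For uniqueness I would argue as follows: if $u_1,u_2\in\chL^2_{k,1}(\oE)$ both solve the problem, set $w:=u_1-u_2$, so $(-P_2)w=0$ and $w|_{\pa\oE}=0$. By Lemma \ref{lem_regularityE} (applied with $z=2$) $w$ is smooth on $\oE$, and by Corollary \ref{coro_zerokernel} it vanishes to infinite order at $\pa\oE$. Therefore $g_0 := s^{-1}w$ is a smooth function on $E$ that still vanishes to infinite order at infinity, so all the weighted integrals defining $\|g_0\|_{H^{k+2}_*(E)}$ converge and $g_0\in H^{k+2}_*(E)$. Since $\cK(g_0) = (-P_2)(s g_0) = (-P_2)w = 0$ and $\cK$ is injective by Theorem \ref{eq_fredholmff}, we get $g_0=0$, hence $w=0$.

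The main obstacle here is bookkeeping rather than anything conceptual: one must match the Lotay--Neves spaces $H^k(E)$ and $H^k_*(E)$ --- which carry the singular weight $s^{-m/2-2}$ at $\pa\oE$, and whose extra ``starred'' derivatives correspond exactly to the mixed $\nabla_\oE$-direction --- with the spaces $\chL^2_{k,1}(\oE)$ built from the restricted $b$- and $\m$-connections, and keep careful track of the extra factor of $s$ coming from $\cK(g)=(-P_2)(sg)$. Once these identifications and the weight comparison near $\pa\oE$ are made precise, the result is a direct transfer of the isomorphism of Theorem \ref{eq_fredholmff}, combined with the regularity and infinite-order vanishing already established for $\ker(-P_z,|_{\pa\oE})$.
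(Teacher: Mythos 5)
Your proposal is correct and takes essentially the same route as the paper: push $f$ forward to $E$, solve via the Lotay--Neves isomorphism of Theorem \ref{eq_fredholmff}, pull back and multiply by $s$, and use the identity $\cK(g)=(-P_2)(sg)$. The paper's own proof is terser and does not spell out the Sobolev-membership check or the uniqueness step (the latter is absorbed into the injectivity argument of Lemma \ref{lem_p2isomorphism}, which uses the same regularity, infinite-order vanishing, and push-forward-to-$E$ argument you give).
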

\begin{proof}
	We can pull back $f$ to $E$ to obtain an element in $H^k(E)$, since it has compact support. Now, we can find an element $u\in H^k_*(E)$, such that $\cK(u)=f$. Now, since $u$ vanishes quickly near infinity on $E$, we can pull it back to obtain a smooth function on $\oE$. Then, we have that $-P_2(su)=f$, which implies the claim.
\end{proof}

As a result, we can show that:

\begin{lem}\label{lem_p2isomorphism}
	The mapping $(-P_2,|_{\pa\oE})$ defined in \eqref{eq_mappingsobolevff2} is an isomorphism.
\end{lem}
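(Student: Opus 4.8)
The plan is to obtain Lemma \ref{lem_p2isomorphism} as a consequence of the Lotay--Neves isomorphism theorem (Theorem \ref{eq_fredholmff}) together with the regularity, compactness and closed-range facts already established for the family \eqref{eq_mappingsobolevff2}. Concretely, I would prove injectivity and surjectivity of $(-P_2,|_{\pa\oE}):\chL^2_{2,1}(\oE)\rt\chL^2_{0,0}(\oE)\oplus L^2_1(\pa\oE)$ separately and then invoke the open mapping theorem; the passage between the two pictures is the substitution $u\leftrightarrow su$ together with the identity $\cK(\,\cdot\,)=(-P_2)(s\,\cdot\,)$ recorded just after Theorem \ref{eq_fredholmff}.

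For injectivity, I would take $u\in\ker(-P_2,|_{\pa\oE})$. By the interior estimate and bootstrapping, $u\in C^\infty(\oE)$ (Lemma \ref{lem_regularityE}), and hence $u$ vanishes to all orders at $\pa\oE$ (Corollary \ref{coro_zerokernel}). Therefore $v:=s^{-1}u$ is again smooth on $\oE$ and $O(s^\infty)$ there, so its pull-back $\Phi^{*}v$ along the diffeomorphism $\Phi$ of \eqref{eq_phi} is a smooth function on $E$ decaying faster than any power of the defining function; since the weight $s^{-m/2-2}$ appearing in the $H^{k}_{*}(E)$-norm is only polynomial, $\Phi^{*}v\in H^{k+2}_{*}(E)$ for all $k$. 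The identity $\cK(\Phi^{*}v)=(-P_2)(sv)=(-P_2)u=0$ combined with the injectivity in Theorem \ref{eq_fredholmff} forces $\Phi^{*}v=0$, i.e.\ $u=0$.

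For surjectivity, given $(f,g)\in\chL^2_{0,0}(\oE)\oplus L^2_1(\pa\oE)$, I would first remove the boundary component: near $\pa\oE$ the manifold $\oE$ looks like $\Sigma\times[0,\ep)$ with $s$ playing the role of a (degenerate, parabolic) time variable, so a trace-extension analogue of Lemma \ref{lem_extensionoftrace} produces $w\in\chL^2_{2,1}(\oE)$ with $w|_{\pa\oE}=g$. It then remains to solve $(-P_2)u_0=f-(-P_2)w$ with $u_0|_{\pa\oE}=0$, after which $u:=w+u_0$ maps to $(f,g)$. For this I would use Lemma \ref{lem_Lotay--Neves}: every smooth function $h$ compactly supported in $\oE^\circ$ lies in the image of the zero--boundary--value restriction of $(-P_2,|_{\pa\oE})$, so $\{(h,0)\}$ is contained in the image of $(-P_2,|_{\pa\oE})$; since such $h$ are dense in $\chL^2_{0,0}(\oE)$ and the image is closed by Corollary \ref{coro_closedimageE}, we conclude $\chL^2_{0,0}(\oE)\times\{0\}$ is contained in the image, which produces the required $u_0$.

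The density and closed-range bookkeeping in the last step is routine, as is the elementary factorisation $u=sv$. The one place needing genuine care is the trace-extension across the degenerate boundary $\pa\oE$: there $s$ is simultaneously a boundary-defining function and the "time" direction of the $\chL^2_{2,1}$-structure, so one has to check that the parabolic trace and extension theory (as in Lemma \ref{lem_extensionoftrace}) indeed applies with $s$ in that role. Everything deeper — the construction of the correct Sobolev spaces $H^k_*(E),H^k(E)$ and the actual invertibility of $\cK$ on \eqref{eq_ffpdesobolev} — is imported from \cite{LN13} through Theorem \ref{eq_fredholmff} and Lemma \ref{lem_Lotay--Neves}, so no new analysis on $E$ is required here.
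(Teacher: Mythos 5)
Your injectivity argument is the paper's argument, just with the factorisation $u=sv$ and the embedding $\Phi^* v \in H^{k+2}_*(E)$ spelled out explicitly. Your surjectivity argument, however, genuinely differs. The paper first passes, via density and closed range, to smooth data $(v,g)$, then \emph{formally} solves $-P_2(u)=v,\ u|_{\pa\oE}=g$ in a Taylor series at $s=0$ --- the nonvanishing $\pa_s$-term in $-P_2$ (the same one that drives Corollary \ref{coro_zerokernel}) makes the coefficients recursively determined --- and so reduces to data $(v',0)$ with $v'$ vanishing to all orders. You instead invoke a trace-extension operator $L^2_1(\pa\oE)\to\chL^2_{2,1}(\oE)$ to peel off $g$, and reduce to $(f',0)$ with $f'\in\chL^2_{0,0}(\oE)$ arbitrary, finishing with density of $C^\infty_c(\oE^\circ)$ in $\chL^2_{0,0}(\oE)$. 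The one ingredient your route adds to the paper's toolkit is the trace-extension, and it does need care: the naive extension $w=\chi(s)g$ fails (the $\nabla^2_\Sigma w$ term in the $\chL^2_{2,1}$-norm forces $g\in L^2_2$, one derivative more than the stated $L^2_1$), and Lemma \ref{lem_extensionoftrace} as written is for the standard parabolic spaces $L^2_{k+2r,r}$, not the $\chL^2$-spaces on $\oE$, whose near-boundary structure mixes $\pa_s$, $s\pa_s$, and $\nabla_\Sigma$. A heat-equation extension in the spirit of Lemma \ref{lem_extensionoftrace}, $\pa_s w-\Delta_\Sigma w+w=0$, $w|_{s=0}=g$, does turn out to satisfy the $\chL^2_{2,1}$-estimate (the extra terms $\|(s\pa_s)^j\nabla_\Sigma^k w\|_{L^2}$ with $j\geq 1$ are in fact controlled by $\|g\|_{L^2}$ alone, since the $s$-weights give extra integrability), so your argument is correct modulo writing that out, but the paper's Taylor-series device avoids proving any new trace theory at the price of using the specific parabolic structure of $-P_2$.
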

\begin{proof}
	First, we note that the mapping has no kernel -- this follows since if it has an element $u$ in the kernel, then by Lemma \ref{lem_regularityE} and Corollary \ref{coro_zerokernel}, $u$ is a smooth function on $\oE$ vanishing to all orders at the boundary. Therefore, we can consider the function $us^{-1}$ and push it forward to $E$ to obtain an element in the kernel of $\cK$, which is a contradiction. 
	
	Now, we claim that the mapping is surjective. To show this, by Corollary \ref{coro_closedimageE} it is enough to show surjectivity onto pairs of smooth functions $v\in C^\infty(\oE)$, $g\in C^\infty(\pa\oE)$, since such pairs $(v,g)$ are dense in the space $\chL^2(\oE)\oplus L^2_1(\pa\oE)$ (note that $\chL^2(\oE)$ is the usual $L^2$-space on $\oE$ with the ordinary metric). Now, given such a pair, we can find a smooth function $u\in C^\infty (\oE)$ such that $u|_{\pa\oE}=g$ and $-P_2(u)-v$ vanishes to all orders at the boundary of $\oE$, by solving in a Taylor series expansion in $s$ at the boundary of $\oE$. Therefore, we have reduced to showing surjectivity onto pairs of functions $(v,0)$ such that $v$ vanishes to all orders at the boundary of $\oE$. Since such functions can be approximated by compactly supported functions away from $\pa\oE$, we are reduced to the case of Lemma \ref{lem_Lotay--Neves}. Thus, the claim follows. 
\end{proof}

Therefore, we have that 
\begin{prop}\label{prop_zeroindex}
	The mapping $(-P_z,|_{\pa\oE})$ has zero index for all $z\in\bC$.
\end{prop}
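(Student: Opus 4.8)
The plan is to reduce everything to the single case $z=2$, where Lemma \ref{lem_p2isomorphism} already gives that $(-P_2,|_{\pa\oE})$ is an isomorphism — in particular Fredholm of index zero. The key observation is that, for any $z\in\bC$, the operator $(-P_z,|_{\pa\oE})$ differs from $(-P_2,|_{\pa\oE})$ by a \emph{compact} operator between the fixed Banach spaces $\chL^2_{2,1}(\oE)$ and $\chL^2_{0,0}(\oE)\oplus L^2_1(\pa\oE)$; since the Fredholm index is invariant under compact perturbations, this forces $(-P_z,|_{\pa\oE})$ to be Fredholm of index zero for every $z$.

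To set this up, note first that the boundary-trace component $u\mapsto u|_{\pa\oE}$ in the mapping \eqref{eq_mappingsobolevff2} is independent of $z$, so it is enough to show that the difference of the interior parts is compact into $\chL^2_{0,0}(\oE)$. From the expression for $-P_z$ in \S\ref{subsec_Fredholmtheory} one has
\[
-P_z - (-P_2) = (z-2)(w+h_2) - (z^2-4)\,h_1,
\]
a first-order operator whose coefficients are the smooth $b$-vector field $w$ and the smooth functions $h_1,h_2$ on $\oE$ (smooth up to $\pa\oE$). For the top-order term $u\mapsto w(u)$, compactness as a map $\chL^2_{2,1}(\oE)\rt\chL^2_{0,0}(\oE)$ is exactly Corollary \ref{lem_compactfield}. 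For the zeroth-order terms $u\mapsto h_iu$, factor through the inclusion $\chL^2_{2,1}(\oE)\hookrightarrow\chL^2_{1,0}(\oE)$, which is compact by Lemma \ref{lem_rellichkondrachovE}; multiplication by a function smooth up to the boundary is bounded on $\chL^2_{1,0}(\oE)$, and $\chL^2_{1,0}(\oE)\hookrightarrow\chL^2_{0,0}(\oE)$ is bounded, so the composite is compact. Hence $-P_z-(-P_2)$, viewed as a map into $\chL^2_{0,0}(\oE)\oplus L^2_1(\pa\oE)$, is compact, and adding this compact operator to the isomorphism $(-P_2,|_{\pa\oE})$ yields a Fredholm operator of index zero. (One could alternatively invoke analytic Fredholm theory for the holomorphic family $z\mapsto(-P_z,|_{\pa\oE})$, but the compact-perturbation argument is cleaner and suffices here.)

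I do not expect a serious obstacle: essentially all the content has been front-loaded into the earlier results — the isomorphism at $z=2$ coming from Lotay--Neves via Lemma \ref{lem_p2isomorphism}, the Rellich--Kondrachov-type compactness of Lemma \ref{lem_rellichkondrachovE}, and the compactness of the $b$-vector-field term in Corollary \ref{lem_compactfield}. The one point requiring a little care is verifying that the zeroth-order multiplication operators are genuinely compact into $\chL^2_{0,0}(\oE)$ rather than merely bounded; this is where one uses that on $\oE$ the loss of a \emph{single} derivative is already compact, so that factoring $\chL^2_{2,1}(\oE)\hookrightarrow\chL^2_{1,0}(\oE)$ and then applying a bounded multiplication suffices.
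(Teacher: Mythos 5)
Your argument is essentially the one in the paper: the paper also observes that the $z$-dependent terms $z(w+h_2)-z^2h_1$ are compact perturbations, invokes compact-perturbation invariance of the index, and anchors at the isomorphism $(-P_2,|_{\pa\oE})$ from Lemma~\ref{lem_p2isomorphism}. The only difference is that you spell out the compactness of the zeroth-order multiplication terms via the factoring $\chL^2_{2,1}\hookrightarrow\chL^2_{1,0}\to\chL^2_{0,0}$, which the paper leaves implicit.
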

\begin{proof}
	Note that since the terms $zw+zh_2-z^2h_1$ in the parametric operator $-P_z$ are compact, it is sufficient to show that $(-P_z,|_{\pa\oE})$ has zero index for some $z\in \bC$. However, we know that $(-P_2,|_{\pa\oE})$ is an isomorphism from Lemma \ref{lem_p2isomorphism}. Thus, the claim follows.
\end{proof}

\subsection{No critical rates for $\Re(z)>\min(2-m,0)$}

In this section, we will prove part \ref{item_isomorph} of Theorem \ref{thm_mappingff}. The method we use is the following: by Corollary \ref{coro_zerokernel}, we have that if the operator $(-P_z,|_{\pa\oE})$ has a nontrivial element in the kernel, it must decay to all orders at the boundary. Therefore, we can consider our operator weighted by a power of the boundary-defining function $s^{(\lambda+i\xi)/2+\mu}$, for $\mu>0$ being a positive rate. If $-P_z$ has a nontrivial kernel, so would this weighted operator. Then, we can apply the maximum principle to the weighted operator to rule out a nontrivial kernel, proving that $(-P_z,|_{\pa\oE})$ has a trivial kernel at the specific rate. In particular, we use the following, which follows from considering the point at which $|u|^2$ attains its maximum.

\begin{lem}\label{lem_maximumprinciple}
	Let $\Omega$ be a bounded, smooth domain with smooth boundary. Consider a weakly elliptic partial differential operator $P$ on $\Omega$, given by  \[
	P(u):= a^{ij}\pa^2_{ij}u+b^i\pa_iu + cu,
	\]
	such that 
	\begin{enumerate} 
		\item the coefficients are smooth and bounded, with $a^{ij},b^i$ being real-valued functions, while $c$ being a complex-valued function satisfying $\Re(c)>0$ on $\Omega$. 
		\item The matrix $[a^{ij}]$ is positive definite at each interior point of the domain.
	\end{enumerate}
	Suppose that a complex-valued $C^{2}$ function $u$ satisfies $\Re(P(u)\cdot \bu)\geq 0$. Then, $|u|$ attains its maximum on the boundary of $\Omega$.
\end{lem}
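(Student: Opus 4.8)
The plan is to prove the maximum principle statement, Lemma~\ref{lem_maximumprinciple}, by a standard argument applied to the auxiliary real-valued function $v := |u|^2 = u\bar u$, adapted slightly to handle the complex coefficient $c$.

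First I would compute the action of the elliptic part on $v$. Writing $v = u\bar u$, we have $\pa_i v = (\pa_i u)\bar u + u(\pa_i\bar u)$ and $\pa^2_{ij} v = (\pa^2_{ij}u)\bar u + u(\pa^2_{ij}\bar u) + (\pa_i u)(\pa_j\bar u) + (\pa_j u)(\pa_i\bar u)$. Since $a^{ij}, b^i$ are real, contracting against $a^{ij}$ and $b^i$ and taking real parts gives
\begin{equation*}
	a^{ij}\pa^2_{ij}v + b^i\pa_i v = 2\Re\big((a^{ij}\pa^2_{ij}u + b^i\pa_i u)\bar u\big) + 2a^{ij}(\pa_i u)(\overline{\pa_j u}).
\end{equation*}
Using the operator identity $a^{ij}\pa^2_{ij}u + b^i\pa_i u = P(u) - cu$, the first term equals $2\Re(P(u)\bar u) - 2\Re(c)|u|^2$. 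By hypothesis $\Re(P(u)\bar u)\geq 0$ and $\Re(c) > 0$, while the second term $2a^{ij}(\pa_i u)(\overline{\pa_j u})\geq 0$ because $[a^{ij}]$ is positive definite (applied to the real and imaginary parts of $\nabla u$ separately). Hence at any interior point,
\begin{equation*}
	a^{ij}\pa^2_{ij}v + b^i\pa_i v \;\geq\; -2\Re(c)\,v \;=\; -2\Re(c)\,|u|^2.
\end{equation*}

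Next I would run the classical argument by contradiction: suppose $|u|$ (equivalently $v$) attains its maximum at an interior point $x_0\in\Omega$. If $v(x_0) = 0$ then $u\equiv 0$ on a neighbourhood, hence $|u|$ is also maximised on the boundary and we are done; so assume $v(x_0) > 0$. At an interior maximum of $v$ we have $\pa_i v(x_0) = 0$ and the Hessian $[\pa^2_{ij}v(x_0)]$ is negative semidefinite. Since $[a^{ij}(x_0)]$ is positive definite, the contraction $a^{ij}(x_0)\pa^2_{ij}v(x_0)\leq 0$ (trace of the product of a positive-definite and a negative-semidefinite symmetric matrix is $\leq 0$). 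Therefore the left-hand side of the displayed inequality is $\leq 0$ at $x_0$, while the right-hand side is $-2\Re(c(x_0))v(x_0) < 0$ since $\Re(c) > 0$ and $v(x_0) > 0$. This is not yet a contradiction — the inequality reads $(\text{something} \leq 0) \geq (\text{something} < 0)$ — so the strict maximum principle has to be invoked more carefully: one perturbs $v$ by $v_\ep := v + \ep\psi$ for a suitable subsolution correction $\psi$, or, more simply, notes that the operator $Lv := a^{ij}\pa^2_{ij}v + b^i\pa_i v + 2\Re(c)v$ satisfies $Lv \geq 0$ in $\Omega$ with zero-order coefficient $2\Re(c) > 0$, which is exactly the hypothesis of the Hopf/E.\ Hopf strong maximum principle for operators with positive zero-order term (see e.g.\ Gilbarg--Trudinger). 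That theorem directly yields that a nonnegative subsolution of $L$ attains its maximum on $\pa\Omega$, which is the claim. The main obstacle, and the only subtlety, is precisely this point: one cannot conclude from $a^{ij}\pa^2_{ij}v + b^iv \geq -2\Re(c)v$ alone without using that $\Re(c)$ has a sign and invoking the correct version of the maximum principle for operators with a zero-order term of favourable sign; the positivity $\Re(c)>0$ is what makes $v\geq 0$ behave as a genuine subsolution. Everything else is routine.
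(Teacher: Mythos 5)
Your reduction to $v := |u|^2$ and the algebraic identity
\[
a^{ij}\pa^2_{ij}v + b^i\pa_i v \;=\; 2\Re\bigl(P(u)\,\bu\bigr) - 2\Re(c)\,v + 2a^{ij}(\pa_i u)(\overline{\pa_j u})
\]
are correct, and this is precisely the computation the paper has in mind (the text introducing the lemma says it ``follows from considering the point at which $|u|^2$ attains its maximum''). You also correctly notice that, at an interior maximum, the naive comparison does not close when $\Re(c)>0$.

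The final step, however, is a genuine error: there is no ``Hopf maximum principle for operators with positive zero-order term.'' The classical weak and strong maximum principles (Gilbarg--Trudinger, Ch.~3) require the zero-order coefficient to be $\leq 0$, and they fail when it is positive --- this is exactly the eigenfunction regime. In fact, the lemma \emph{as written}, with $a^{ij}$ positive definite and $\Re(c)>0$, is false: take $\Omega=(0,\pi)\subset\bR$, $P=\pa_x^2+1$, $u=\sin x$; then $P(u)=0$, so $\Re(P(u)\bu)=0\geq 0$ and $\Re(c)=1>0$, yet $|u|$ attains its maximum at the interior point $\pi/2$. The hypothesis ought to read $\Re(c)<0$. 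This is consistent with the way the lemma is actually used in Propositions~\ref{prop_nozerokernel} and~\ref{prop_nokernel}: there one rules out kernel elements of the conjugated operator $P'_z=-P_0+(\lambda+i\xi)s^{-1}$ (resp.\ $-P_0+N$), whose leading part $-\Phi^*\Delta_E$ is \emph{negative} definite and whose zero-order part has \emph{positive} real part for $\lambda>0$; normalising to the lemma's convention ($a^{ij}$ positive definite) multiplies the operator by $-1$ and flips the zero-order sign to $\Re(c)<0$. Neither the perturbation $v+\ep\psi$ nor any other device can rescue the $\Re(c)>0$ version, since it is simply false.

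Once the sign is corrected, your identity closes the argument with no further machinery. Rearranging,
\[
2\Re\bigl(P(u)\bu\bigr) \;=\; a^{ij}\pa^2_{ij}v + b^i\pa_i v + 2\Re(c)\,v - 2a^{ij}(\pa_i u)(\overline{\pa_j u}).
\]
At an interior maximum $x_0$ of $v$ one has $\pa_i v(x_0)=0$, $a^{ij}\pa^2_{ij}v(x_0)\leq 0$ (trace of a positive-definite against a negative-semidefinite matrix), and $-2a^{ij}(\pa_iu)(\overline{\pa_ju})(x_0)\leq 0$, so $0\leq 2\Re(P(u)\bu)(x_0)\leq 2\Re(c)(x_0)v(x_0)$. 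Since $\Re(c)(x_0)<0$, this forces $v(x_0)=0$, hence $u\equiv 0$ and the conclusion holds trivially. So your approach is essentially the paper's, but the closing appeal to a nonexistent maximum principle must be replaced by the sign correction and this pointwise observation.
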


 \subsubsection{The operator in front face coordinates}
 Recall that the operator $\rho^2\pa_v-\cL$ in front face coordinates is given as in \S \ref{subsec_LN} by
 \begin{equation}\label{eq_operatorffcoordinates}
 	\rho^2\pa_v-\cL = \tw^2(\tau\pa_\tau - (w\pa_w)^\top -\Delta_E),
 \end{equation}
 where $\tw := (\frac{1}{2}+|w|^2)^{1/2}$, and $w\pa_w$ denotes the vector field $\sum w^i\pa_{w^i}$. Now, due to $E$ being an expander, we have (c.f.\ proof of Proposition 3.2 in \cite{LN13}) \[
 \Delta_E r^2 = 2m + 2|r^\perp|^2 , \: \: |\nabla_E r^2|^2=4r^2-4|r^\perp|^2.
 \]

\subsubsection{Weighting by $s$}

First, note that we define the boundary-defining function $s$ for $\pa\oE$ as \[
s:=\sigma_{2m+1}^2.
\]
Then, we have that $t=\rho^2s$. Now, consider the operator \[
P'_z(u) := s^{-(\lambda+i\xi)/2}P_z(s^{(\lambda+i\xi)/2}u).
\]
Due to $s$ being independent of $r$, we can equivalently write this as a restriction, \[
P'_z  = s^{-(\lambda+i\xi)/2}(r^{-\lambda+i\xi}(\rho^2\pa_v-\cL)r^{\lambda+i\xi})(s^{(\lambda+i\xi)/2})|_{\oE}.
\]
Now, this becomes \[
P'_z = \tau^{-(\lambda+i\xi)}(\rho^2\pa_v-\cL)\tau^{(\lambda+i\xi)}|_{\oE}.
\]
Since we know that the operator is given as \eqref{eq_operatorffcoordinates}, we can compute in front face coordinates to get that \[
\tau^{-(\lambda+i\xi)}(\rho^2\pa_v-\cL)\tau^{(\lambda+i\xi)} = \rho^2\pa_v-\cL + (\lambda+i\xi)s^{-1}.
\]
Therefore, since we know that the restriction of the operator $\rho^2\pa_v-\cL$ is $-P_0$, we get \[
P'_z = -P_0 + (\lambda+i\xi)s^{-1}.  
\]
 Therefore, we have:
\begin{prop}\label{prop_nozerokernel}
The parametric operator $(-P_z,|_\oE)$ has trivial kernel for $\Re(z)>0$.	
\end{prop}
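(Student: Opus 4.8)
The plan is to combine the conjugation identity just established, $P'_z = -P_0 + zs^{-1}$, with the maximum principle of Lemma \ref{lem_maximumprinciple}. The zeroth-order term of $-P_z = -P_0 + z(w+h_2) - z^2 h_1$ is $-z^2 h_1$, and since $h_1 = \Phi^*(|(r\pa_r)^\top|^2_E)\ge 0$ is a non-negative real function, $\Re(-z^2 h_1) = -(\Re(z)^2 - \Im(z)^2)h_1$ has no definite sign; the whole point of conjugating by the power $s^{-z/2}$ of the boundary-defining function is that it trades this for the term $zs^{-1}$, whose real part $\Re(z)\,s^{-1}$ is positive on the interior of $\oE$ exactly when $\Re(z) > 0$. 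A localized maximum-principle argument then forces the kernel to be trivial.

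In detail: let $z = \lambda + i\xi$ with $\lambda = \Re(z) > 0$, and suppose $u\in\ker(-P_z, |_{\pa\oE})$. By Lemma \ref{lem_regularityE}, $u \in C^\infty(\oE)$, and by Corollary \ref{coro_zerokernel}, $u$ vanishes to infinite order at $\pa\oE$, so $|u| \le C_N s^N$ near $\pa\oE$ for every $N$. Put $v := s^{-z/2}u$; this is smooth on the interior $\oE^\circ$, and since $|s^{-z/2}| = s^{-\lambda/2}$ grows only polynomially while $u = O(s^\infty)$, the function $v$ too vanishes to infinite order at $\pa\oE$. By the definition of $P'_z$ and the identity $P'_z = -P_0 + zs^{-1}$, the equation $-P_z u = 0$ is equivalent to $(-P_0 + zs^{-1})v = 0$ on $\oE^\circ$. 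Since $-P_0 = \Phi^*((r\pa_r)^\top - \Delta_E)$ (Lemma \ref{lem_h1nonzero}) is, up to an overall sign and lower-order terms, a Riemannian Laplacian, one of $\pm(-P_0 + zs^{-1})$ has positive-definite principal symbol, real principal and subprincipal coefficients, and zeroth-order coefficient $\pm zs^{-1}$ whose real part, on $\oE^\circ$, has the sign demanded in Lemma \ref{lem_maximumprinciple}; and this sign condition holds precisely because $\lambda > 0$.

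I would then run the maximum principle on an exhaustion rather than on $\oE$ itself, since the coefficient $s^{-1}$ and the metric $\Phi^* g_E$ both degenerate at $\pa\oE$. For $\delta > 0$ small, $\oE_\delta := \{s \ge \delta\}$ is a compact smooth manifold with boundary $\{s = \delta\}$ on which $-P_0 + zs^{-1}$ is uniformly elliptic with smooth bounded coefficients; since $v$ solves $(-P_0 + zs^{-1})v = 0$ there, we have $\Re\big((-P_0 + zs^{-1})(v)\cdot\bar v\big) = 0 \ge 0$, so Lemma \ref{lem_maximumprinciple} gives $\sup_{\oE_\delta}|v| = \max_{\{s=\delta\}}|v|$. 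Finally, fixing $p \in \oE^\circ$ and letting $\delta \downarrow 0$ over $\delta < s(p)$, we get $|v(p)| \le \max_{\{s=\delta\}}|v| = \delta^{-\lambda/2}\max_{\{s=\delta\}}|u| \le C_N \delta^{N-\lambda/2} \to 0$ once $N > \lambda/2$; hence $v\equiv 0$ and therefore $u\equiv 0$.

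The substantive step is the preceding conjugation computation, which converts the sign-indefinite term $-z^2 h_1$ into $zs^{-1}$ with real part of the sign of $\Re(z)$; what remains is bookkeeping, the only delicate points being (i) checking that the appropriately signed conjugated operator $\pm(-P_0 + zs^{-1})$ genuinely satisfies the hypotheses of Lemma \ref{lem_maximumprinciple} on each $\oE_\delta$ — positive-definite, real-coefficient principal part coming from $\Phi^*\Delta_E$, and the correct sign of $\Re(zs^{-1})$ when $\Re(z) > 0$ — and (ii) using the infinite-order vanishing of $u$ at $\pa\oE$ from Corollary \ref{coro_zerokernel} to kill the boundary contribution $\max_{\{s=\delta\}}|v|$ in the limit $\delta \downarrow 0$.
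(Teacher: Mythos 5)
Your proof is correct and follows the same route as the paper: conjugate by a power of $s$ so the zeroth-order coefficient becomes $zs^{-1}$ (with $\Re(zs^{-1}) = \lambda s^{-1} > 0$ on $\oE^\circ$), use Corollary \ref{coro_zerokernel} to place the conjugated function in the class to which the maximum principle applies, and conclude triviality of the kernel.

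You in fact tighten the paper's argument in two places. First, the direction of the conjugation: since $P'_z = s^{-z/2}P_z s^{z/2}$, an element $u \in \ker P_z$ corresponds to $v := s^{-z/2}u \in \ker P'_z$, and your choice of sign is the right one (the paper's write-up has a sign slip here, harmless only because $u = O(s^\infty)$ makes both $s^{\pm z/2}u$ smooth). Second, you run the maximum principle on the exhaustion $\oE_\delta = \{s\ge\delta\}$ and let $\delta\downarrow 0$, using the infinite-order vanishing to kill the boundary contribution. This is genuinely needed: Lemma \ref{lem_maximumprinciple} is stated for a bounded domain with bounded coefficients, while both the coefficient $zs^{-1}$ and the metric $\Phi^*g_E$ degenerate at $\pa\oE$, so the lemma does not apply directly on $\oE$. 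The paper leaves this step implicit; your exhaustion argument fills it cleanly.
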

\begin{proof}
	Suppose to the contrary that $u$ is an element in the kernel of $(-P_z,|_{\oE})$. By Corollary \ref{coro_zerokernel}, $u$ vanishes to all orders at the boundary of $\oE$. Therefore, $s^{(\lambda+i\xi)/2}u$ is a well-defined, smooth function on $\oE$. So, $s^{(\lambda+i\xi)/2}u$ is in the kernel of the operator $P'_z$, which vanishes at the boundary. But by the maximum principle (Lemma \ref{lem_maximumprinciple}), this implies that $s^{(\lambda+i\xi)/2}u$ is identically zero.
\end{proof}

\begin{rem}
	We didn't use any geometric facts about the expander here. The same argument would work for any other asymptotically conical Lagrangian.
\end{rem}

Now, we further weight the operator by $s_C^\mu$, for some $\mu >0$, where $s_C = (C+|w|^2)^{-1}$ for $C>0$, and show that there are no nontrivial zeroes for $\lambda >2-m$. Similarly as before, we can define the operator \[
P''_\mu(u) := s_C^{-\mu}P'_z(s_C^\mu u).  
\]
Then, we can again write this as a restriction, \[
P''_\mu = s_C^{-\mu}\tau^{-(\lambda+i\xi)}(\rho^2\pa_v-\cL)\tau^{(\lambda+i\xi)}s_C^\mu|_{\oE},
\]
which on writing in $(\tau,w^i)$-coordinates, with $\tw_C = (C+|w|^2)^{1/2}$, gives us
\begin{equation}
	\begin{split}
&s_C^{-\mu}\tau^{-(\lambda+i\xi)}(\rho^2\pa_v-\cL)\tau^{(\lambda+i\xi)}s_C^\mu \\ &= \tw^2(\tau\pa_\tau -(w\pa_w)^\top-\Delta_E) \\ &+ \tw^2\left(\lambda+i\xi + 2\mu(\tw_C^{-1}(w\pa_w)^\top \tw_C)-2\mu(2\mu+1)|\tw_C^{-1}\nabla_E\tw|^2 +2\mu \tw_C^{-1}\Delta_E\tw_C\right).
\end{split}
\end{equation}
Let us denote the function \[
N:= \tw^2\left(\lambda+i\xi + 2\mu(\tw_C^{-1}(w\pa_w)^\top \tw_C)-2\mu(2\mu+1)|\tw_C^{-1}\nabla_E\tw|^2 +2\mu \tw_C^{-1}\Delta_E\tw_C\right).
\]
Then, we have that 
\[
P''_\mu = s_C^{-\mu}\tau^{-(\lambda+i\xi)}(\rho^2\pa_v-\cL)\tau^{(\lambda+i\xi)}s_C^\mu|_{\oE} = -P_0 + N.
\]
Therefore, by the same application of the maximum principle and noting that any element in the kernel must vanish to all orders, $\Re(N)>0$ on $\oE$ implies that the kernel is trivial. Now, we compute the function $N$. We have,
\[
N = \tw^2(\lambda+i\xi)+2\mu \tw^2\tw_C^{-2}|w|^2 +(2\mu^2+4\mu)\tw^2\tw_C^{-4}|w^\perp|^2+2\mu(m-2-\mu)\tw^2\tw_C^{-2}.
\]
Now, let us pick $\mu = \frac{m-2}{2}$. Then, we get\[
\Re(N) = \tw^2\left(\frac{(\lambda+m-2)|w|^2}{|w|^2+C} + \frac{C\lambda}{|w|^2+C} + \frac{(m-2)^2}{2(|w|^2+C)}\right)+ \frac{m^2-4}{2}\tw_C^{-4}\tw^2|w^\perp|^2.
\]

Then, we get that $\Re(N)$ is uniformly bounded away from $0$ for $\lambda > 2-m$, by choosing $0<C\ll1$. Thus, we have shown:
\begin{prop}\label{prop_nokernel}
The parametric operator $(-P_z,|_{\pa\oE})$ has trivial kernel for $\Re(z) >\min(2-m,0)$.
\end{prop}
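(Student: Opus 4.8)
The plan is to assemble two maximum‑principle arguments for suitably conjugated versions of the parametric operator; one of them, covering the range $\Re(z) > 0$, is already Proposition \ref{prop_nozerokernel}, so only the window $2-m < \Re(z) \le 0$ remains to be treated. Since $\min(2-m,0) = 0$ for $m \le 1$ and $\min(2-m,0) = 2-m$ for $m \ge 2$, the two ranges together give exactly the asserted conclusion; for $m = 2$ the extra weight $s_C^\mu$ used below is trivial (since $\mu = \tfrac{m-2}{2} = 0$) and the statement reduces to Proposition \ref{prop_nozerokernel}, so the genuinely new content is confined to $m \ge 3$.

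Fix $z = \lambda + i\xi$ with $2-m < \lambda \le 0$ and suppose $u \in \ker(-P_z, |_{\pa\oE})$; the goal is $u \equiv 0$. By Corollary \ref{coro_zerokernel}, $u$ vanishes to infinite order at $\pa\oE = \{s = 0\}$. Hence, although $s^{-(\lambda+i\xi)/2}$ and $s_C^{-\mu}$ (with $s_C = (C+|w|^2)^{-1}$, which vanishes to first order at $\pa\oE$) are unbounded there, the conjugated function $v := s_C^{-\mu}\,s^{-(\lambda+i\xi)/2}u$ with $\mu = \tfrac{m-2}{2}$ is still smooth on $\oE$ and vanishes to infinite order at $\pa\oE$, and the definitions of $P'_z$ and $P''_\mu$ give $P''_\mu(v) = 0$ on $\oE$, where $P''_\mu = -P_0 + N$ with $N$ the zeroth‑order function displayed above. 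In particular $\Re\big(P''_\mu(v)\cdot \bar{v}\big) = 0 \ge 0$ identically.

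Now apply Lemma \ref{lem_maximumprinciple} to $P''_\mu$ on the bounded smooth domain $\oE^\circ$: the second‑order part $-P_0 = \Phi^*\big((r\pa_r)^\top - \Delta_E\big)$ is elliptic with smooth real coefficients, the first‑order coefficients are real, and — the one substantive point — the choice $\mu = \tfrac{m-2}{2}$ followed by $0 < C \ll 1$ makes $\Re(N) > 0$ on $\oE$ precisely when $\lambda > 2-m$, using the displayed formula for $N$ together with the expander identities $\Delta_E r^2 = 2m + 2|r^\perp|^2$ and $|\nabla_E r^2|^2 = 4r^2 - 4|r^\perp|^2$. Lemma \ref{lem_maximumprinciple} then forces $|v|$ to attain its maximum on $\pa\oE$, where $v \equiv 0$, so $v \equiv 0$ on $\oE$ and therefore $u \equiv 0$; together with Proposition \ref{prop_nozerokernel} this proves the kernel is trivial for all $\Re(z) > \min(2-m,0)$. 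The only delicate step is the positivity of $\Re(N)$: it reduces to checking that the bracket in the formula for $\Re(N)$ — built from a $\tfrac{(\lambda+m-2)|w|^2}{|w|^2+C}$ term, a term of size $O(C\lambda)$, a positive $\tfrac{(m-2)^2}{|w|^2+C}$ term, and a nonnegative $|w^\perp|^2$ term — stays strictly positive for every $w$ once $\lambda$ exceeds $2-m$ and $C$ is taken sufficiently small; the remaining points (admissibility of $v$ as a function on $\oE$, where Corollary \ref{coro_zerokernel} is needed, and matching the hypotheses of Lemma \ref{lem_maximumprinciple}) are routine.
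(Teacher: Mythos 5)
Your proposal is correct and follows essentially the same route as the paper: it combines Proposition \ref{prop_nozerokernel} with the additional conjugation by $s_C^\mu$, $\mu = \tfrac{m-2}{2}$, to push the kernel-vanishing range down to $\Re(z) > 2-m$, using Corollary \ref{coro_zerokernel} to justify that the conjugated function is admissible and the maximum principle of Lemma \ref{lem_maximumprinciple} with the computed zeroth-order term $N$. The only (welcome) differences from the paper's presentation are cosmetic --- you combine the two conjugations $s^{-z/2}$ and $s_C^{-\mu}$ in a single step rather than sequentially, and you make explicit the case analysis $m\le 1$, $m=2$, $m\ge 3$ that the paper leaves implicit (the paper writes ``$\mu>0$'' before setting $\mu=\tfrac{m-2}{2}$, which is only positive for $m\ge 3$, but this does not affect the argument since the $m\le 2$ cases reduce to Proposition \ref{prop_nozerokernel}).
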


\subsection{Discreteness of critical rates}
In this section, we will prove part \ref{item_sparsekernel} of Theorem \ref{thm_mappingff}. The proof is very similar to the proof that an elliptic operator on a compact manifold has discrete spectrum. Recall that the parametric operator is given by \[
-P_z =  -P_0 + z\otimes w  +zh_2 -z^2h_1,
\]
for $h_1,h_2$ being smooth function on $\oE$, and $w$ a $b$-vector field on $\oE$.

Suppose to the contrary that we have a collection of critical rates $z_j \rt z$, so that we have elements $u_j$ such that $(-P_{z_j},|_{\pa\oE})u_j=(0,0)$. We choose a $z_0$ which is not a critical rate (thus $(-P_{z_0},|_{\pa\oE})$ is an isomorphism) and such that $z_0\neq \pm z$; in particular, by Proposition \ref{prop_nokernel}, we can take any $z_0$ with $\Re(z_0)> |\Re(z)|$. Since the set of non-critical rates is open (by continuity of the inverse norm when it exists), there are no critical rates in a neighbourhood of $z_0$. Let us denote $Z:=\bigcup_{i\geq 1} z_i$. If $Z$ is finite, then we already have discreteness since the kernel at each critical rate is finite-dimensional by Proposition \ref{prop_zeroindex}. Therefore, we can assume $Z$ is infinite. Let us define the vector subspaces formed by the $u_i$ as $V_k:= \textrm{span} (u_1,\dots,u_k)$, and their union $V=\bigcup_{i\geq 1}V_i$. First, we show that:

\begin{lem}
	The vector space $V$ is infinite-dimensional.
\end{lem}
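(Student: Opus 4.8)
The plan is to show that the vector space $V = \bigcup_{i\geq 1} V_i$ is infinite-dimensional by a standard argument based on the fact that eigenvectors associated to distinct eigenvalues of an appropriate operator pencil are linearly independent, combined with the discreteness-forcing hypothesis that $Z = \bigcup_{i\geq 1} z_i$ is infinite. The key observation is that each $u_j$ lies in the kernel of $(-P_{z_j}, |_{\pa\oE})$, and the $z_j$ are distinct (after passing to a subsequence, since $Z$ is infinite and $z_j \to z$, we can extract infinitely many distinct values); so it suffices to prove that a finite collection $u_{j_1}, \dots, u_{j_N}$ of nonzero kernel elements belonging to \emph{distinct} rates $z_{j_1}, \dots, z_{j_N}$ is linearly independent.

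First I would set up the linear independence claim. Since $-P_z = -P_0 + z \otimes w + z h_2 - z^2 h_1$ is a \emph{quadratic} polynomial pencil in $z$ (not linear), the cleanest route is to linearise: introduce the auxiliary variable and consider the companion-type system, or alternatively argue directly as follows. Suppose $\sum_{i=1}^N c_i u_{j_i} = 0$ is a nontrivial relation with the fewest terms. Apply $-P_{z_{j_1}}$ to this relation; since $(-P_{z_{j_1}})u_{j_1} = 0$, we get $\sum_{i=2}^N c_i (-P_{z_{j_1}}) u_{j_i} = 0$. For each $i\geq 2$, write $(-P_{z_{j_1}})u_{j_i} = (-P_{z_{j_1}})u_{j_i} - (-P_{z_{j_i}})u_{j_i} = \big((z_{j_1}-z_{j_i})\otimes w + (z_{j_1}-z_{j_i})h_2 - (z_{j_1}^2 - z_{j_i}^2)h_1\big)u_{j_i} = (z_{j_1}-z_{j_i})\big(w(u_{j_i}) + h_2 u_{j_i} - (z_{j_1}+z_{j_i})h_1 u_{j_i}\big)$. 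This gives a shorter relation among the (nonzero, by genericity of the $z$'s and the maximum-principle regularity in Corollary \ref{coro_zerokernel} and Lemma \ref{lem_regularityE}) quantities, but one must be careful: these new vectors need not be kernel elements. The more robust approach is the standard pencil argument: work in the graph space, or simply invoke that the resolvent $z \mapsto (-P_z, |_{\pa\oE})^{-1}$, where it exists, is a meromorphic operator-valued function (analytic Fredholm theory, using Proposition \ref{prop_zeroindex} that the index is zero and the a-priori estimate \eqref{eq_interiorestimateff} which gives the needed compactness via Lemma \ref{lem_rellichkondrachovE}), so its poles are isolated unless it is nowhere invertible; but we know $(-P_2, |_{\pa\oE})$ is an isomorphism (Lemma \ref{lem_p2isomorphism}), so the set of critical rates is discrete — contradicting $z_j \to z$ with infinitely many distinct $z_j$. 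However, since the present lemma is being used as a \emph{step} toward discreteness, I should avoid circularity: the correct internal logic is to use the $z_j \to z$, distinctness, and a direct linear-independence computation to build $V$ infinite-dimensional, then (in the lemmas that follow) derive a contradiction with a compactness bound on $V$.

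Concretely, here is the self-contained argument I would write. Pass to a subsequence so that $z_1, z_2, \dots$ are pairwise distinct and converge to $z$, with each $u_j \neq 0$ satisfying $(-P_{z_j}, |_{\pa\oE})u_j = (0,0)$ and (by Corollary \ref{coro_zerokernel}) vanishing to infinite order at $\pa\oE$. I claim $u_1, \dots, u_k$ are linearly independent for every $k$. Argue by induction on $k$: the case $k=1$ is clear since $u_1 \neq 0$. Suppose $u_1, \dots, u_{k-1}$ are independent but $u_k = \sum_{i=1}^{k-1} c_i u_i$. Apply $(-P_{z_k})$: $0 = \sum_{i=1}^{k-1} c_i (-P_{z_k})u_i = \sum_{i=1}^{k-1} c_i (z_k - z_i)\big(w(u_i) + (h_2 - (z_k+z_i)h_1)u_i\big)$. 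Now also apply $(-P_{z_{k-1}})$ to $u_k = \sum_{i=1}^{k-1} c_i u_i$ and use $(-P_{z_{k-1}})u_{k-1} = 0$: this yields a relation among $\{u_i\}_{i\neq k-1}$ of the same type. Iterating, one produces — after finitely many steps — a relation of the form $\sum c_i' (z_k - z_i)\cdots u_i = 0$ with strictly fewer terms and nonzero coefficients (the factors $z_k - z_i$ are nonzero by distinctness), whence by the inductive hypothesis all $c_i' = 0$, forcing $c_i = 0$ for all $i$ and contradicting $u_k \neq 0$. Hence $\dim V_k = k$ for all $k$, so $V = \bigcup_k V_k$ is infinite-dimensional. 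The main obstacle here is purely bookkeeping: making the "iterate to reduce the number of terms" step precise for a quadratic pencil, since applying $-P_{z_j}$ to a kernel element of $-P_{z_i}$ produces terms involving both $w(u_i)$ and $u_i$ with $z$-dependent coefficients; the clean way to handle this rigorously is to reformulate via the linearised pencil on $\chL^2_{2,1}(\oE) \oplus \chL^2(\oE)$ (companion linearisation $z \mapsto z\,\mathrm{Id} - A$ for a suitable $A$), where distinct-eigenvalue linear independence is classical, and then transfer back. I would present the companion-linearisation version as the actual proof and relegate the direct computation to a remark.
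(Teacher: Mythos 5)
There is a genuine gap in your proposed route, and it is not just bookkeeping. Your fallback plan is the companion linearisation ``$z \mapsto z\,\mathrm{Id} - A$,'' which requires that the leading coefficient of the quadratic pencil be invertible: one writes the companion pencil as $\mathcal{A} - z\mathcal{B}$ with $\mathcal{B} = \mathrm{diag}(\mathrm{Id}, h_1)$ and must invert $\mathcal{B}$ to reach a standard eigenvalue problem. But $h_1 = \Phi^*\bigl(|(r\partial_r)^\top|_E^2\bigr)$ is only \emph{not identically zero}; it vanishes at any critical point of $|x|^2$ on $E$ (e.g.\ the ``neck'' of a JLT expander, where $x^\top = 0$), so multiplication by $h_1$ is not invertible on $\check{L}^2(\oE)$ and the normalisation fails. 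Moreover, the statement you want to extract --- that kernel elements of a polynomial pencil at distinct parameters are linearly independent --- is false in general: for $P(z) = (z-z_1)(z-z_2)\mathrm{Id}$ on $\bC^2$, the same vector lies in $\ker P(z_1)$ and $\ker P(z_2)$. Your direct induction (apply $-P_{z_k}$ to a putative dependence) also does not close, as you already noticed: it produces combinations of $w(u_i)$ and $u_i$ that are not kernel elements of any $-P_{z_\ell}$, so there is no obvious re-application that shortens the relation, and the induction hypothesis (on the $u_i$, not on these new vectors) does not apply.

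The paper's argument avoids all of this by \emph{not} attempting to prove the $u_j$ are pairwise independent (which, as above, need not be true for a quadratic pencil). Instead it assumes for contradiction that $V = \mathrm{span}(u_1,\dots,u_k)$ is $k$-dimensional, writes an arbitrary $u_j$ with $j > k$ as $\sum_i a_i u_i$, applies $P_{z_j}$ to obtain $\sum_i a_i(z_j - z_i)\bigl[(w+h_2)u_i + (z_j+z_i)h_1 u_i\bigr] = 0$, and then observes that the $k$ bracketed functions are linearly independent for all but finitely many $z_j$, because the relevant determinant is a degree-$k$ polynomial in $z_j$ whose leading coefficient involves $h_1^k$. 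For this it suffices that $h_1$ be a nonzero \emph{function}, which Lemma \ref{lem_h1nonzero} provides; the argument does not need $h_1$ bounded away from zero. Since $Z$ is infinite, one may choose such a $z_j$ with $j > k$, forcing all $a_i = 0$ and hence $u_j = 0$, a contradiction. This is the piece your proof is missing: a mechanism that leverages the structure of the pencil (through the Vandermonde-type factor $\prod(z+z_i)$ and the nonvanishing of $h_1$ as a function) rather than a black-box linear-algebra principle about eigenvectors of pencils, which does not hold at the level of generality you invoke.
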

\begin{proof}

	Suppose to the contrary, that $V$ is spanned by $k$ linearly independent solutions $\{u_1,\dots,u_k\}$. The main claim here is that there exists $j \geq 1$ such that the $k$ functions\[
	\{(w+h_2)u_i+(z_j+z_i)h_1u_i\}_{i=1}^k
	\]
	are linearly independent. Indeed, this follows by writing out projections of these functions to $V$ in terms of the basis $\{u_1,\dots,u_k\}$, and noting that the determinant of these is given by evaluating a polynomial $P(z_j)$, where $P(z) = \Pi_{i=1}^k(z+z_i)h_1^k + \{\textrm{lower order terms}\}$ with coefficients being functions. Due to the coefficient of $z^k$ being a non-zero function (as $h_1$ is non-zero by Lemma \ref{lem_h1nonzero}), this has only finitely many zeroes -- combined with $Z$ being infinite, this implies the existence of $z_j$ such that the determinant is nonzero, hence the functions are linearly independent.
	
	Then, choosing such a $z_j$, we note that since $P_{z_i}(u_i)=0$ for $i=1,\dots,k$, we have for $j>k$ \[
	P_{z_j}\left(\sum_{i=1}^ka_iu_i\right) = \sum_{i=1}^k\alpha_i((z_j-z_i)(w+h_2)(u_i)+(z_j^2-z_i^2)h_1u_i), 
	\]
	which cannot be zero by linear independence. This is a contradiction.	 
\end{proof}
Now, we extract an infinite subspace of $V$ inductively in the following way: at the first step, we choose $V'_1:= (u_1)$. At the $k^\textrm{th}$-step, having defined $V'_k$, we choose the smallest $l>k$ such that $u_l \not\in \la u_1,\dots,u_k, (w+h_2)u_1,\dots,(w+h_2)u_k\ra$. Then, we define $V'_{k+1}:=(u_l)+V'_k$. 

Then, we choose an orthogonal set of functions $w_i\in \check{L}^2_{0,0}(\oE)\oplus L^2_{2}(\pa\oE)$, such that $\la w_1,\dots,w_k\ra = V'_k$, $w_k\perp V'_{k-1}$, and normalised to $\|w_k\|_{\check{L}^2_{0,0}\oplus L^2_{2}}=1$. In particular, we can write $w_k=\sum_{i=1}^ka_iu_i$ for some $\{a_i\}$. Now, denoting $R = (-P_{z_0},|_{\pa\oE})$, we can write, \begin{equation}\begin{split}
		(-P_{z_k},|_{\pa\oE}) &= R + (z_k-z_0)(w+h_2)+(z_k^2-z_0^2)h_1 \\
		&= (z^2_k-z_0^2)R\circ((z_k^2-z_0^2)^{-1} +\frac{(z_k-z_0)}{(z_k^{2}-z_0^2)}(w+h_2)\circ R^{-1}+h_1R^{-1}\circ \textrm{Id}). 
	\end{split}
\end{equation}
Then, $(-P_{z_k},|_{\pa\oE})(u_k)=0$ implies that \begin{equation}\label{eq_infinitekernels}
	((z_k^2-z_0^2)^{-1} +\frac{1}{(z_k+z_0)}(w+h_2)\circ R^{-1}+h_1R^{-1}\circ \textrm{Id})u_k=0.
\end{equation}
Now, we note that both the mappings $h_1R^{-1}\circ \textrm{Id}$ and $(w+h_2)\circ R^{-1}$ from $\check{L}^2_{0,0}(\oE)\oplus L^2_{2}(\pa\oE)$ to itself are compact by Lemma \ref{lem_rellichkondrachovE} and Corollary \ref{lem_compactfield}, and $\frac{1}{(z_k+z_0)}$ is convergent to a finite value since $z_0\neq -z$. Hence we can find a subsequence of $w_k$ such that the sequence\[ \left(\frac{1}{(z_k+z_0)}(w+h_2)\circ R^{-1}+h_1R^{-1}\circ \textrm{Id}\right)w_k\] is convergent. From equation \eqref{eq_infinitekernels} and writing $w_k=\sum_{i=1}^ka_iu_i$, we get \[
\left(\frac{1}{(z_k+z_0)}(w+h_2)\circ R^{-1}+h_1R^{-1}\circ \textrm{Id}\right)w_k= -(z_k^2-z_0^2)^{-1}w_k +  w_k',
\]
for a function $w_k' \in \la u_1,\dots,u_{k-1}, (w+h_2)u_1,\dots,(w+h_2)u_{k-1}\ra$. Therefore for $l >k$, we have \begin{equation}\begin{split}
&\left(\frac{1}{(z_l+z_0)}(w+h_2)\circ R^{-1}+h_1R^{-1}\circ \textrm{Id}\right)w_l - \left(\frac{1}{(z_k+z_0)}(w+h_2)\circ R^{-1}+h_1R^{-1}\circ \textrm{Id}\right)w_k \\
& = -(z_l^2-z_0^2)^{-1}w_l + w''_l,
\end{split}
\end{equation}
for a function $w''_l \in \la u_1,\dots,u_{l-1},(w+h_2)u_1,\dots,(w+h_2)u_{l-1}\ra$.
Since $w_k$ is orthogonal to $w''_k$ by construction, we have that the norm of the above difference is $\geq \frac{1}{2}|z^2-z_0^2| \neq 0$ for $l$ sufficiently large, contradicting convergence. Therefore, we obtain:

\begin{prop}\label{prop_discretecritrates}
	The set of critical rates $\cC_E$ is a discrete subset of $\bC$.
\end{prop}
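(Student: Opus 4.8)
The plan is to exploit that $z\mapsto(-P_z,|_{\pa\oE})$ is a \emph{polynomial} — in particular holomorphic — family of index-zero Fredholm operators (Proposition \ref{prop_zeroindex}) which is invertible at some parameter, namely any $z_0$ with $\Re(z_0)>\min(2-m,0)$ by Proposition \ref{prop_nokernel} together with the zero-index property. For such a family the non-invertible parameters form a discrete set (analytic Fredholm theory, Gohberg--Sigal); since the index vanishes, a parameter is non-invertible exactly when the kernel is nonzero, so $\cC_E$ is discrete. I would, however, prefer a self-contained argument in the spirit of the proof that a compact operator has discrete spectrum, since the necessary compactness is already available on $\oE$.

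So, arguing by contradiction, suppose $\cC_E$ accumulates: there are distinct critical rates $z_j\to z$ with $u_j\in\ker(-P_{z_j},|_{\pa\oE})$. Fix a non-critical $z_0$ with $z_0\neq\pm z$ (take $\Re(z_0)$ large), so $R:=(-P_{z_0},|_{\pa\oE})$ is an isomorphism, and use the decomposition of Lemma \ref{lem_h1nonzero} to write $(-P_{z_j},|_{\pa\oE})=R+(z_j-z_0)(w+h_2)+(z_j^2-z_0^2)h_1$, where $w$ is the $b$-vector field and $h_1,h_2$ the smooth functions appearing there. The structural input is that $(w+h_2)\circ R^{-1}$ and $h_1\circ R^{-1}$ are compact endomorphisms of $\chL^2_{0,0}(\oE)\oplus L^2_1(\pa\oE)$: this holds because $R^{-1}$ lands in $\chL^2_{2,1}(\oE)$ and one then invokes the Rellich--Kondrachov compactness on $\oE$ (Lemma \ref{lem_rellichkondrachovE}) together with the fact that $w$ is a $b$-vector field (Corollary \ref{lem_compactfield}); closed range and finite-dimensional kernel come from the a-priori estimate \eqref{eq_interiorestimateff} via Lemma \ref{lem_funcana}.

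The first substantive step is to show the span $V$ of $\{u_j\}$ is infinite-dimensional. If instead $V$ had a basis $u_1,\dots,u_k$, then for a suitable index $j$ the functions $(w+h_2)u_i+(z_j+z_i)h_1 u_i$, $i=1,\dots,k$, would be linearly independent; the obstruction to this is that a certain determinant, viewed as a function of $z_j$, equals a polynomial $P(z)=h_1^k\prod_i(z+z_i)+(\text{lower order in }z)$ with leading coefficient the \emph{nonzero} function $h_1^k$ — here $h_1=\Phi^*(|(r\pa_r)^\top|_E^2)$ is exactly the geometric ingredient from Lemma \ref{lem_h1nonzero}. Since $P$ has only finitely many zeros and $\{z_j\}$ is infinite, some $z_j$ avoids them, and then applying $-P_{z_j}$ to an appropriate element of $V$ produces a nonzero element, contradicting $V=\ker$. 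Given $V$ infinite-dimensional, I would extract a quasi-orthonormal sequence $w_k=\sum_{i\le k}a_iu_i$ with $\|w_k\|=1$ and $w_k\perp\la u_1,\dots,u_{k-1},(w+h_2)u_1,\dots,(w+h_2)u_{k-1}\ra$, rewrite $(-P_{z_k},|_{\pa\oE})u_k=0$ after composing with $R^{-1}$ as $\big(\tfrac{1}{z_k+z_0}(w+h_2)\circ R^{-1}+h_1\circ R^{-1}\big)w_k=-(z_k^2-z_0^2)^{-1}w_k+w_k'$ with $w_k'$ in that orthogonal complement (the denominators $z_k+z_0$ stay bounded away from $0$ since $z_0\neq-z$), pass to a convergent subsequence of the left side by compactness, and then use the orthogonality to force $\|(\text{term}_l)-(\text{term}_k)\|\ge\tfrac12|z^2-z_0^2|>0$ for $l>k$ large, contradicting convergence.

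The hard part — the step requiring the most care — is the infinite-dimensionality of $V$: one must organize the linear-independence/determinant bookkeeping so that $P(z)$ genuinely has nonvanishing leading coefficient \emph{pointwise} on $\oE$, which is precisely where $h_1\not\equiv0$ (Lemma \ref{lem_h1nonzero}) is indispensable. Everything else — compactness of $(w+h_2)\circ R^{-1}$ and $h_1\circ R^{-1}$, closed range, finiteness of $\dim\ker$ at each rate, and the index-zero property — is soft functional analysis already assembled in the preceding lemmas.
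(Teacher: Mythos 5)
Your self-contained argument is, step for step, the paper's own proof: the same choice of a non-critical $z_0 \neq \pm z$, the same affine--quadratic decomposition of $-P_{z_j}$ from Lemma~\ref{lem_h1nonzero}, the same determinant bookkeeping with $h_1\not\equiv 0$ as the decisive geometric input to show the kernel span $V$ is infinite-dimensional, and the same quasi-orthogonalisation plus compactness contradiction. Your opening observation — that $z\mapsto(-P_z,|_{\pa\oE})$ is a holomorphic (indeed polynomial) family of index-zero Fredholm operators, invertible at one parameter by Proposition~\ref{prop_nokernel} and Proposition~\ref{prop_zeroindex}, so analytic Fredholm / Gohberg--Sigal theory gives discreteness of $\cC_E$ at once — is a correct and genuinely shorter route; after composing with $R^{-1}$ the family has the required ``identity plus holomorphic compact perturbation'' form thanks to Lemma~\ref{lem_rellichkondrachovE} and Corollary~\ref{lem_compactfield}, so the black-box theorem applies, trading the hands-on spectral-gap estimate for self-containment exactly as you flag.
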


\subsection{Proof of Theorem \ref{thm_mappingE}}
In this section, we will prove part \ref{item_finitestrip} of Theorem \ref{thm_mappingff} and Theorem \ref{thm_mappingE}. We first prove the following:
\begin{prop}\label{prop_sparsekernel}
	The set of critical rates in any bounded real strip $\{z : a < \Re(z) < b\}$ is finite.
\end{prop}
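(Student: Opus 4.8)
The plan is to upgrade the discreteness of $\cC_E$ (Proposition~\ref{prop_discretecritrates}) to finiteness in strips by proving a uniform invertibility estimate for the parametric operator $(-P_z,|_{\pa\oE})$ when $|\Im z|$ is large, following the parameter-elliptic technique of Agmon--Nirenberg~\cite{AN63} rather than Lockhart--McOwen. First I would argue by contradiction: if $\{a<\Re z<b\}$ contained infinitely many critical rates then, since $\cC_E$ is discrete, there would be a sequence $z_j=\lambda_j+i\xi_j\in\cC_E$ with $\lambda_j\in(a,b)$ and $|\xi_j|\to\infty$, together with nonzero $u_j$ with $(-P_{z_j},|_{\pa\oE})u_j=(0,0)$; by Lemma~\ref{lem_regularityE} and Corollary~\ref{coro_zerokernel} each $u_j$ is smooth on $\oE$ and vanishes to infinite order at $\pa\oE$. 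It therefore suffices to establish the a-priori estimate: there are $C,\Xi_0>0$ so that
\begin{equation}\label{eq_paramestimate}
|\xi|^{2}\,\|u\|_{\chL^{2}(\oE)} \;\le\; C\,\big(\|{-P_{\lambda+i\xi}}u\|_{\chL^{2}_{0,0}(\oE)} + |\xi|\,\|u|_{\pa\oE}\|_{L^{2}_{1}(\pa\oE)}\big)
\end{equation}
for all $u\in\chL^{2}_{2,1}(\oE)$, $\lambda\in[a,b]$ and $|\xi|\ge\Xi_0$; applied to $u=u_j$ (with vanishing boundary data and $-P_{z_j}u_j=0$) this forces $u_j\equiv 0$ once $|\xi_j|\ge\Xi_0$, a contradiction.

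To prove \eqref{eq_paramestimate}, recall from Lemma~\ref{lem_h1nonzero} that with $z=\lambda+i\xi$ we have $-P_z=-P_0+z(w+h_2)-z^{2}h_1$, where $-P_0=\Phi^{*}((r\pa_r)^{\top}-\Delta_E)$ is (interior) elliptic with nonnegative principal symbol, $h_2$ is bounded, $w$ is a bounded $b$-vector field on $\oE$ (so it degenerates at the a-boundary $\pa\oE$), and $h_1\ge 0$ with $h_1\to 2m$ at $\pa\oE$. The mechanism is that for large $|\xi|$ the zeroth-order coefficient $-z^{2}h_1$ has real part $(\xi^{2}-\lambda^{2})h_1\ge c\,\xi^{2}h_1$. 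Pairing $-P_z u$ against $\bar u$ in a suitable weighted $L^{2}$-pairing on $\oE$ and integrating by parts (harmless for $u$ vanishing to infinite order at $\pa\oE$), the real part yields, schematically,
\begin{equation}\label{eq_energyschematic}
0=\Re\langle -P_z u,u\rangle \;\ge\; c\,\|{}^{b}\nabla_{\oE}u\|^{2} \;-\; C\,\|u\|^{2} \;-\; |\xi|\,\bigl|\Im\langle w u,u\rangle\bigr| \;+\; (\xi^{2}-\lambda^{2})\langle h_1 u,u\rangle,
\end{equation}
and $|\xi|\,|\Im\langle wu,u\rangle|\le \tfrac{c}{2}\|{}^{b}\nabla_{\oE}u\|^{2}+C'\xi^{2}\|u\|^{2}$. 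In a fixed collar of $\pa\oE$ the first-order term $w$ is small (it is a $b$-vector field vanishing at $\pa\oE$) while $h_1$ is bounded below by $m$, so the $(\xi^{2}-\lambda^{2})h_1$ term dominates; on the complementary compact part of $\oE^{\circ}$, where $h_1$ is bounded below by a positive constant, $-P_z$ is elliptic with the parameter $\xi$ in the sense of Agmon--Nirenberg (its parameter-principal symbol $|\zeta|^{2}+\xi^{2}h_1+i\xi\langle w,\zeta\rangle$ is invertible for $(\zeta,\xi)\ne 0$), and the standard parameter-dependent interior estimate, with lower-order contributions absorbed via the Rellich compactness of Lemma~\ref{lem_rellichkondrachovE}, gives the required control there. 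Patching the collar and the interior with a cutoff and choosing $\Xi_0$ large enough produces \eqref{eq_paramestimate} with constants uniform in $\lambda\in[a,b]$.

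The main obstacle is precisely making this uniform estimate work: the first-order term $z\,w$ has a coefficient of size $|\xi|$ and so is not a genuine lower-order perturbation of the leading part, which is why both the $b$-structure of $w$ (its decay at $\pa\oE$) and the positivity of $h_1$ near $\pa\oE$ (Lemma~\ref{lem_h1nonzero}) are essential, and why the argument follows Agmon--Nirenberg~\cite{AN63} rather than Lockhart--McOwen; a subsidiary point is to handle any interior zeros of $h_1$, where one falls back on the plain interior elliptic estimate for $-P_0$ on a small fixed ball together with the coercivity coming from the neighbouring region. Granting \eqref{eq_paramestimate}, the kernel of $(-P_z,|_{\pa\oE})$ is trivial for $|\Im z|\ge\Xi_0$ inside the strip, so — combining with the finite-dimensionality of each kernel (Proposition~\ref{prop_zeroindex}) and the discreteness of $\cC_E$ — the set of critical rates in $\{a<\Re z<b\}$ is finite, which is Proposition~\ref{prop_sparsekernel}. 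The same estimate, together with the isomorphism of $(-P_z,|_{\pa\oE})$ for $\Re z>\min(2-m,0)$ (Proposition~\ref{prop_nokernel}) and the a-priori estimate~\eqref{eq_aprioriestimateE} on $\bE$, then yields Theorem~\ref{thm_mappingE}.
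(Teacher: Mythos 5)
Your proposal takes a genuinely different route from the paper. The paper's proof of Proposition~\ref{prop_sparsekernel} applies the Fourier transform in $\log\rho$ to the \emph{already-established} parabolic a-priori estimate~\eqref{eq_aprioriestimateE} on $\bE$ (which comes from the good-cover argument plus Krylov's parabolic interior estimates), and uses Plancherel's theorem to extract an $L^2_\xi$-integrated inequality with an explicit $|\xi|^2$ weight; a cutoff concentrated near a hypothetical $\xi_0$ then gives a contradiction whenever $|\xi_0|>2\sqrt{C_\lambda}$. You instead try to prove a pointwise-in-$\xi$ parameter-elliptic coercivity estimate for $(-P_z,|_{\pa\oE})$ directly on $\oE$ via an Agmon--Nirenberg energy argument. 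The end-game (rule out critical rates for $|\Im z|$ large uniformly over $\lambda\in[a,b]$, then combine with discreteness and finite-dimensionality of the kernel) is the same, but the key estimate is obtained by an entirely different mechanism. What the paper's route buys is that the uniform non-degeneracy is inherited automatically from the parabolic structure: $\rho^2\pa_v$ is a nowhere-vanishing m-vector field on $\bE$, so the good-cover estimate never degenerates, and the $|\xi|^2$ control comes out of Plancherel for free.

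However, your route has a concrete gap at the interior zeros of $h_1$. From Lemma~\ref{lem_h1nonzero}, $h_1=\Phi^*|(r\pa_r)^\top|_E^2$, and $(r\pa_r)^\top=\nabla_E(|x|^2/2)$ on $E$; since $E$ is a proper, noncompact submanifold with $|x|\to\infty$ at infinity, $|x|^2$ attains an interior minimum, at which $h_1$ vanishes. At such a point the parameter-principal symbol $|\zeta|^2_g+\xi^2h_1+i\xi\langle w,\zeta\rangle$ is zero at $(\zeta,\xi)=(0,\xi)$ with $\xi\neq 0$, so $-P_z$ is \emph{not} elliptic with parameter there in the Agmon--Nirenberg sense, and the parameter-dependent interior estimate you invoke simply fails on any neighbourhood of that point. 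Your energy inequality~\eqref{eq_energyschematic} has the same problem: the cross term $|\xi|\cdot|\Im\langle wu,u\rangle|$ costs $\eta\|{}^b\nabla_\oE u\|^2+C\eta^{-1}\xi^2\|u\|^2$ for any $\eta>0$, and absorbing $C\eta^{-1}\xi^2\|u\|^2$ requires $h_1$ bounded below by $C\eta^{-1}$ \emph{pointwise}; this cannot hold near a zero of $h_1$. The ``fallback'' of using the plain interior elliptic estimate for $-P_0$ on a small ball around the zero does not supply the missing $\xi^2$ weight (it gives a $\xi$-independent bound), so the argument does not close as written. To repair this one would need something substantially more than a local elliptic estimate --- e.g.\ a Carleman-type weighted estimate propagating the coercivity from the region where $h_1>0$ into the small ball, or a reduction back to the parabolic operator on $\bE$, which is essentially what the paper does.
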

\begin{proof}To show this, we will use the Fourier transform. Let $u$ be a smooth function with compact support away from the front face, and let us denote $\tu := \rho^{-\lambda}u$. From the a-priori estimate on $\bE$, we have the estimate \[
	\|^m\nabla_{\rho\pa_\rho}\tu\|^2_{L^2(\bE)}\leq C_\lambda(\|\rho^{-\lambda}(\rho^2\pa_v-\cL)\rho^\lambda\tu \|^2_{L^2(\bE)}+\|\tu\|^2_{L^2(\bE)}+\|\tu|_{\botf}\|^2_{L^2_{1}(\pa\bE)}).
	\]
	
	Taking the Fourier transform in the variable $\rho'=\log(\rho)$ on $(\sigma,\rho)\in\oE\times \llbracket0,\infty) = \bE$, we get by Plancherel's theorem \begin{equation}\label{eq_bigxiestimate}
		\begin{split}
		&\int_{\bR}\|\xi\hat{u}(\xi)\|^2_{L^2(\oE)}d\xi\\
		&\leq C_\lambda\left(\int_{\bR}\|-P_{\lambda+i\xi}(\hat{u}(\xi))\|^2_{L^2(\oE)}d\xi + \int_{\bR}\|\hat{u}(\xi)\|^2_{L^2(\oE)}d\xi + \int_{\bR}\|\hat{u}(\xi)|_{\pa\oE}\|^2_{L^2_1(\pa\oE)}d\xi\right),
		\end{split}
	\end{equation}
	where $\hat{u}:= \cF(\tu)$. Note that we have used \[
	\cF(^m\nabla_{\rho\pa_\rho}\tu)(\xi) = i\xi\hat{u}(\xi), \;\: \cF(\rho^{-\lambda}(\rho^2\pa_v-\cL)\rho^\lambda\tu)(\xi) = -P_{\lambda+i\xi}(\hat{u}(\xi)).
	\]
	Now, we claim that there are no critical rates for $|\xi|>2\sqrt{C_\lambda}$. Suppose to the contrary, we have a nontrivial element $u_{\xi_0}$ in the kernel of $(-P_{\lambda+i\xi_0},|_{\botf})$. Then, by multiplying with a cutoff function $\psi(\xi)$, we can extend $u_{\xi_0}$ to a nonzero function $\tu_{\xi_0}:= \psi(\xi)\cdot u_{\xi_0}$ on all of $\bE$, such that we have an estimate \[
	\int_{\bR}\|-P_{\lambda+i\xi}(\tu_{\xi_0}(\xi))\|^2_{L^2(\oE)}d\xi + \int_{\bR}\|\tu_{\xi_0}(\xi)|_{\pa\oE}\|^2_{L^2_1(\pa\oE)}d\xi < \ep\cdot \int_{\bR}\|\tu_{\xi_0}(\xi)\|^2_{L^2(\oE)}d\xi
	\]
	with $\ep$ being arbitrarily small.
	Then, since $\tu_{\xi_0}(\xi)$ has rapidly deacaying derivatives of all orders, it is the Fourier transform of a $L^2_{2,1}(\bE)$-function. Therefore, by estimate \eqref{eq_bigxiestimate} and $|\xi_0|>2\sqrt{C_\lambda}$, we arrive at a contradiction.

	So, it follows that there are no critical rates for $|\xi|>2\sqrt{C_\lambda}$. Combining the fact that critical rates are discrete (Proposition \ref{prop_discretecritrates}) and the continuity of the constant $C_\lambda$, we obtain the proposition. 
\end{proof}
Therefore, it follows that the set $\cD_E$ is discrete. Now, we have that:

\begin{lem}\label{lem_fredholmnoncritical}
	Suppose that $\lambda\in \bR \backslash \cD_E$ is not the real part of a critical rate. Then, there exists a constant $C''_\lambda$ for which \[
	\|u\|_{L^2_{\lambda}(\bE)} \leq C''_\lambda(\|(\rho^2\pa_v - \cL)u\|_{L^2_{\lambda}(\bE)}  +\|u|_{\botf}\|_{L^2_{1,\lambda}(\pa\bE)}).
	\]  
\end{lem}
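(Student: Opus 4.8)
The plan is to deduce this clean (lower-order-term-free) weighted estimate from a uniform family of estimates on the front-face cross-section $\oE$, obtained by Fourier transform in the variable $\rho' = \log\rho$, in the spirit of Lockhart--McOwen and of the proof of Proposition \ref{prop_sparsekernel}.

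\textbf{Reduction to the conjugated operator.} Writing $\tu := \rho^{-\lambda}u$, the asserted inequality is equivalent to
\[
\|\tu\|_{L^2(\bE)} \leq C''_\lambda\big(\|\rho^{-\lambda}(\rho^2\pa_v-\cL)\rho^\lambda\tu\|_{L^2(\bE)} + \|\tu|_{\botf}\|_{L^2_1(\pa\bE)}\big).
\]
By the density of smooth functions compactly supported away from the front face in $L^2_{2,1,\lambda}(\bE)$ (the analogue for $\bE$ of the density lemma of \S\ref{subsec_sobholspaces}), together with the a-priori estimate \eqref{eq_aprioriestimateE} which shows both sides depend continuously on $\tu$, it suffices to prove the inequality for such $\tu$; for these the Fourier transform in $\rho'$ converges absolutely and the manipulations below are justified.

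\textbf{Fourier transform and the parametric inverse.} Applying $\cF$ in $\rho'$ and Plancherel, using $\cF(\rho^{-\lambda}(\rho^2\pa_v-\cL)\rho^\lambda\tu)(\xi) = -P_{\lambda+i\xi}(\hat u(\xi))$ where $\hat u := \cF(\tu)$, and using that the bottom face of $\bE$ is $\pa\oE\times\llbracket0,\infty)$ so that $\cF(\tu|_{\botf})(\xi) = \hat u(\xi)|_{\pa\oE}$, the estimate follows once I establish the uniform bound
\[
\|\hat u(\xi)\|_{\chL^2(\oE)} \leq C''_\lambda\big(\|{-P_{\lambda+i\xi}}(\hat u(\xi))\|_{\chL^2(\oE)} + \|\hat u(\xi)|_{\pa\oE}\|_{L^2_1(\pa\oE)}\big), \qquad \xi\in\bR,
\]
with $C''_\lambda$ independent of $\xi$. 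For each fixed $\xi$, since $\lambda\notin\cD_E$ we have $\lambda+i\xi\notin\cC_E$, so by Proposition \ref{prop_zeroindex} the operator $(-P_{\lambda+i\xi},|_{\pa\oE})$ of \eqref{eq_mappingsobolevff2} is Fredholm of index zero with trivial kernel, hence an isomorphism; this yields such an estimate with some constant $C(\xi)$. To bound $C(\xi)$ uniformly, for $\xi$ in a bounded interval $[-\Lambda,\Lambda]$ the family $\xi\mapsto (-P_{\lambda+i\xi},|_{\pa\oE})$ is continuous (in fact analytic) in operator norm and consists of isomorphisms, so $\sup_{|\xi|\leq\Lambda}\|(-P_{\lambda+i\xi},|_{\pa\oE})^{-1}\|<\infty$; for large $|\xi|$, I revisit the computation behind \eqref{eq_bigxiestimate}: applying the a-priori estimate on $\bE$ to a function whose $\rho'$-Fourier transform is sharply concentrated near a single frequency $\xi$ shows that, for $|\xi|>2\sqrt{C_\lambda}$, the term $C_\lambda\|\hat u(\xi)\|_{\chL^2}^2$ can be absorbed into $\|\xi\hat u(\xi)\|_{\chL^2}^2$, producing $\|\hat u(\xi)\|_{\chL^2}\leq 2C_\lambda^{1/2}|\xi|^{-1}(\|{-P_{\lambda+i\xi}}\hat u(\xi)\|_{\chL^2} + \|\hat u(\xi)|_{\pa\oE}\|_{L^2_1})$, a bound with constant controlled by $C_\lambda$ uniformly in $|\xi|>2\sqrt{C_\lambda}$. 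Combining the two regimes gives a single $C''_\lambda$; integrating the uniform estimate $d\xi$ over $\bR$ and applying Plancherel in reverse recovers the estimate for $\tu$, and multiplying by $\rho^\lambda$ gives the claim.

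\textbf{Main obstacle.} The crux is precisely this uniformity: one must upgrade the qualitative statement ``no critical rates for $|\xi|$ large'' from Proposition \ref{prop_sparsekernel} to the quantitative invertibility bound with constant governed by $C_\lambda$, being careful that the absorption argument really does control $\|\hat u(\xi)\|_{\chL^2}$ (and not merely $\|\xi\hat u(\xi)\|_{\chL^2}$) and that it retains the boundary-trace term; and one must check that the trace map on $\bE$ commutes with the $\log\rho$-Fourier transform so the boundary term passes cleanly through Plancherel. The remaining ingredients --- continuity of the parametric inverse on compact $\xi$-intervals and the density reduction --- are routine.
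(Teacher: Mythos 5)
Your proposal is correct and uses the same Fourier-analytic framework as the paper, but the handling of the large-$|\xi|$ regime is genuinely different, and the paper's is cleaner. You aim for a pointwise-in-$\xi$ estimate
\[
\|\hat u(\xi)\|_{\chL^2(\oE)} \leq C''_\lambda\bigl(\|-P_{\lambda+i\xi}\hat u(\xi)\|_{\chL^2(\oE)} + \|\hat u(\xi)|_{\pa\oE}\|_{L^2_1(\pa\oE)}\bigr)
\]
for all $\xi$, and for large $|\xi|$ you extract this from the integral estimate \eqref{eq_bigxiestimate} by plugging in frequency-localized test functions and passing to a limit. That works, but it requires carefully verifying that the mollified functions lie in $L^2_{2,1}(\bE)$ and that the error terms from smearing out the frequency vanish as the cutoff narrows (essentially redoing the argument from Proposition \ref{prop_sparsekernel} quantitatively). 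The paper avoids the pointwise extraction entirely: it combines \eqref{eq_fourierestimate2} (uniform invertibility on the compact interval $|\xi|^2\leq C_\lambda+1$, by continuity of the parametric inverse --- same as your bounded-$\xi$ step) with the elementary observation that on $\{|\xi|^2>C_\lambda+1\}$ the integrand $\|\xi\hat u\|^2$ on the left of \eqref{eq_fourierestimate1} already dominates $(C_\lambda+1)\|\hat u\|^2$, so the term $C_\lambda\int\|\hat u\|^2$ on the right can be split by the threshold and absorbed directly at the level of integrals, with no frequency-localization or limiting argument. Both routes give the same constant-governed-by-$C_\lambda$ structure, but the direct integral absorption is shorter and sidesteps the justification you flag in your ``Main obstacle'' paragraph.
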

\begin{proof}
	We have from the estimate \eqref{eq_bigxiestimate} that 
	\begin{equation}\label{eq_fourierestimate1}
		\begin{split}
		&\int_{\bR}\|\xi\hat{u}(\xi)\|^2_{L^2(\oE)}d\xi\\
		&\leq C_\lambda\left(\int_{\bR}\|-P_{\lambda+i\xi}(\hat{u}(\xi))\|^2_{L^2(\oE)}d\xi + \int_{\bR}\|\hat{u}(\xi)\|^2_{L^2(\oE)}d\xi + \int_{\bR}\|\hat{u}(\xi)|_{\pa\oE}\|^2_{L^2_1(\pa\oE)}d\xi\right).
		\end{split}
	\end{equation}
	Since we assumed that $\lambda \not \in \cD_E$, due to continuity of the norm, we have that there exists a constant $C'_\lambda$ such that \begin{equation}\label{eq_fourierestimate2}
		\|\hat{u}(\xi)\|_{L^2(\oE)} \leq C'_\lambda (\|-P_{\lambda+i\xi}(\hat{u}(\xi))\|_{L^2(\oE)} + \|\hat{u}(\xi)|_{\pa\oE}\|_{L^2_1(\pa\oE)}) \textrm{ for } |\xi|^2 \leq C_\lambda+1.
	\end{equation}
	Therefore, we can use \eqref{eq_fourierestimate2} to estimate the term $\|\hat{u}(\xi)\|^2_{L^2(\oE)}$ in \eqref{eq_fourierestimate1} for $|\xi|^2\leq C_{\lambda}+1$, while the LHS itself dominates $\|\hat{u}(\xi)\|^2_{L^2(\oE)}$ for $|\xi|^2\geq C_{\lambda}+1$. So, we obtain a constant $C''_\lambda$ such that \[
	\int_{\bR}\|\hat{u}(\xi)\|^2_{L^2(\oE)}d\xi\leq C''_\lambda\left(\int_{\bR}\|-P_{\lambda+i\xi}(\hat{u}(\xi))\|^2_{L^2(\oE)}d\xi + \int_{\bR}\|\hat{u}(\xi)|_{\pa\oE}\|^2_{L^2_1(\pa\oE)}d\xi\right).
	\]
	Then, taking the inverse Fourier transform and invoking Plancherel's theorem gives us the claimed estimate. 
\end{proof}
As a corollary, we obtain:
\begin{coro}\label{coro_semiFredholm}
	The mapping $T^2_{k+2r,r,\lambda}$ has closed image and is injective for all $\lambda \in \bR\backslash \cD_E$.
\end{coro}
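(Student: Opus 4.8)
The plan is to deduce this corollary from the two estimates just established on $\bE$ — the global a-priori estimate \eqref{eq_aprioriestimateE} and Lemma \ref{lem_fredholmnoncritical} — together with the reduction lemma that allows us to restrict attention to $k=0$, $r=1$. First I would invoke that reduction lemma, so that it suffices to show $T^2_{2,1,\lambda}$ has closed image and trivial kernel for every $\lambda\in\bR\backslash\cD_E$; note that these two properties together are exactly what ``semi-Fredholm and injective'' means in this setting, so they also reprove part~(1) of Theorem \ref{thm_mappingE}.

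Next I would combine the two estimates to remove the lower-order term. Specialising \eqref{eq_aprioriestimateE} to $(k,r)=(0,1)$ bounds $\|u\|_{L^2_{2,1,\lambda}}$ by $\|(\rho^2\pa_v-\cL)u\|_{L^2_{0,0,\lambda}}+\|u|_{\botf}\|_{L^2_{1,\lambda}}+\|u\|_{L^2_\lambda}$, while Lemma \ref{lem_fredholmnoncritical} — applicable precisely because $\lambda\notin\cD_E$ — bounds the remaining term $\|u\|_{L^2_\lambda}$ by $\|(\rho^2\pa_v-\cL)u\|_{L^2_\lambda}+\|u|_{\botf}\|_{L^2_{1,\lambda}}$. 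Substituting the second into the first absorbs $\|u\|_{L^2_\lambda}$ and yields a coercive estimate
\[
\|u\|_{L^2_{2,1,\lambda}}\leq C\left(\|(\rho^2\pa_v-\cL)u\|_{L^2_{0,0,\lambda}}+\|u|_{\botf}\|_{L^2_{1,\lambda}}\right)=C\,\|T^2_{2,1,\lambda}u\|,
\]
where the right-hand side is, up to equivalence of norms on a direct sum, the target norm of $T^2_{2,1,\lambda}$. One point requiring a little care: Lemma \ref{lem_fredholmnoncritical} is proved for $u\in C^\infty_\ff$, so to apply the combined estimate to an arbitrary element of $L^2_{2,1,\lambda}(\bE)$ I would first note that $C^\infty_\ff$ is dense there (the $\bE$-analogue of the density lemma on $\bL$) and that both sides of the estimate are continuous in the $L^2_{2,1,\lambda}$-norm.

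From the coercive estimate, injectivity is immediate: if $T^2_{2,1,\lambda}u=0$ then both $(\rho^2\pa_v-\cL)u=0$ and $u|_{\botf}=0$, forcing $\|u\|_{L^2_{2,1,\lambda}}=0$. Closedness of the image follows from the standard Cauchy-sequence argument (if $T^2_{2,1,\lambda}u_n\to y$ then $(u_n)$ is Cauchy by the estimate, hence $u_n\to u$ and $T^2_{2,1,\lambda}u=y$ by continuity), or formally from Lemma \ref{lem_funcana} with the compact operator taken to be zero. The reduction lemma then upgrades the conclusion to all $k,r\geq1$. I do not expect a genuine obstacle here: the substantive analysis — the Fourier-analytic estimate of Lemma \ref{lem_fredholmnoncritical}, the discreteness of $\cC_E$, and the absence of critical rates with large $\Re(z)$ — is already in place, and this corollary is essentially bookkeeping; the only thing to double-check is that \eqref{eq_aprioriestimateE} and the density statement genuinely hold on $\bE$ in the form needed for $(k,r)=(0,1)$.
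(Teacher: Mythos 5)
Your proposal is correct and follows essentially the same route as the paper: combine the a-priori estimate \eqref{eq_aprioriestimateE} with the lower-order bound of Lemma \ref{lem_fredholmnoncritical} to absorb the $\|u\|_{L^2_\lambda}$ term, obtaining a coercive estimate from which injectivity and closed image are immediate. The only cosmetic differences are that the paper applies the a-priori estimate directly at general $(k,r)$ rather than detouring through the reduction lemma, and that you correctly flag the density of $C^\infty_\ff$ needed to pass from the Fourier-analytic estimate (proved for compactly supported smooth functions) to all of $L^2_{k+2r,r,\lambda}(\bE)$, a point the paper leaves implicit.
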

\begin{proof}
	First, we have the a-priori estimate on $\bE$,
	\[
	\|u\|_{L^2_{k+2r,r,\lambda}}
	\leq C_{\lambda}\left(\|(\rho^2\pa_v-\cL)u\|_{L^2_{k+2r-2,r-1,\lambda}}+\|u\|_{L^2_\lambda}+\|u|_{\mathrm{bf}}\|_{L^2_{k+2r,\lambda}}\right).
	\]
	Then by Lemma \ref{lem_fredholmnoncritical}, we get \[
	\|u\|_{L^2_{\lambda}} \leq C''_\lambda(\|(\rho^2\pa_v - \cL)u\|_{L^2_{\lambda}}  +\|u|_{\textrm{bf}}\|_{L^2_{k+2r,\lambda}}),
	\]
	for some constant $C''_\lambda$. Combining these, we obtain \[
	\|u\|_{L^2_{k+2r,r,\lambda}}
	\leq C'''_{\lambda}\left(\|(\rho^2\pa_v-\cL)u\|_{L^2_{k+2r-2,r-1,\lambda}}+\|u|_{\mathrm{bf}}\|_{L^2_{k+2r,\lambda}}\right),
	\]
	for a constant $C'''_\lambda$. This proves the claim.
\end{proof}

\begin{rem}
	It is in fact possible to prove that the mapping \eqref{eq_sobolevmappingE} is an \textit{isomorphism} away from the set of rates $\cD_E$. However, for our applications it is enough to know that the mapping is semi-Fredholm and injective.
\end{rem}

\section{Isomorphism of linearised operator}\label{sec_linearproblem}
Now, we will show that the mapping \eqref{eq_mapping} between the following spaces: 
\begin{equation}\label{eq_sobolevmappingl2}
(\rho^2\pa_v - \cL,|_{\botf}) : L^2_{k+2r,r,\lambda}(\bL) \rt L^2_{k+2r-2,r-1,\lambda}(\bL)\oplus L^2_{k+2r-1}(\widetilde{L_0})
\end{equation} 
is an isomorphism for the rates $\lambda > \min(2-m,0)$. 
\begin{rem}Note that for our problem, the relevant rate is $\lambda=2$.
\end{rem}
 The argument is essentially to write the operator on $\bL$ as a `gluing' of a Fredholm operator and the operator on $\bE$ via a partition of unity, and invoke Theorem \ref{thm_mappingE} to prove the mapping has closed image for $\lambda \in \bR\backslash \cD_E$. Then, we show that the mapping \eqref{eq_mapping} is injective for $\lambda>0$ via the maximum principle. Finally, due to our Lagrangians being compact, surjectivity is a trivial consequence of having a closed image. 

\subsection{Semi-Fredholmness of the operator}
First, we prove that the linearised operator is semi-Fredholm, i.e$.$ it has finite dimensional kernel and closed range on the whole of $\bL$, for $\lambda \in \bR\backslash \cD_E$. First, cover $\bL$ using two open sets $U_1 := \mathbb{L} \cap \{ 0 \leq \rho < \varepsilon \}$ and $U_2 := \mathbb{L} \cap \{ \rho > \varepsilon/2 \}$. Let $\{\phi_1,\phi_2\}$ be a partition of unity with respect to this cover of $\mathbb{L}$. Then, we have the following::

\begin{lem}\label{lem_semiFredholm2}
	For a given rate $\lambda$, suppose we have that
	\begin{equation}\label{eq_l2estimate2}
		\|u\|_{L^2_{0,0,\lambda}} \leq C(\|(\rho^2\partial_v - \mathcal{L})u\|_{L^2_{0,0,\lambda}} + \|u|_{\mathrm{bf}}\|_{L^2_{1,\lambda}} ),
	\end{equation}
	for $u$ being supported in $U_1$. Then, the mapping (\ref{eq_sobolevmappingl2}) has finite-dimensional kernel and closed range.
\end{lem}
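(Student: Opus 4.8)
The plan is to promote the hypothesised local estimate \eqref{eq_l2estimate2} into a global a-priori estimate on $\bL$ whose only error term is supported on a \emph{relatively compact} region, and then to apply the functional-analytic Lemma \ref{lem_funcana} together with the Rellich--Kondrachov theorem. Fix $W:=\bL\cap\{\rho\ge\ep/4\}$; in the compactification $\overline{\bL}$ (adding the slice $t=\ep$) this is a compact manifold with corners, bounded away from the front face, on which the weight $\rho^\lambda$ is uniformly comparable to $1$. The target estimate is
\[
\|u\|_{L^2_{k+2r,r,\lambda}(\bL)}\le C\Big(\|(\rho^2\pa_v-\cL)u\|_{L^2_{k+2r-2,r-1,\lambda}}+\|u|_{\botf}\|_{L^2_{k+2r-1}(\widetilde{L_0})}+\|u\|_{L^2_\lambda(W)}\Big).
\]

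First I would write $u=\phi_1 u+\phi_2 u$. Since $\phi_1 u$ is supported in $U_1$, the hypothesis \eqref{eq_l2estimate2} applies to it, and expanding $(\rho^2\pa_v-\cL)(\phi_1 u)=\phi_1(\rho^2\pa_v-\cL)u+[\rho^2\pa_v-\cL,\phi_1]u$ shows that the commutator — a first-order operator with coefficients supported in $\mathrm{supp}(d\phi_1)\subset\{\ep/2\le\rho\le\ep\}\subset W$ — contributes at most $C\|u\|_{L^2_{1,0,\lambda}(W)}$. For $\phi_2 u$, which is supported in $U_2$ and hence bounded away from the front face, the operator $\rho^2\pa_v-\cL$ is uniformly parabolic and weighted norms are equivalent to unweighted ones; capping off near $\{\rho=\ep/2\}$ to a closed-slice parabolic problem and invoking the standard parabolic a-priori estimate of §\ref{subsec_sobholspaces} (cf.\ \eqref{eq_basicestimate3}) controls $\|\phi_2 u\|_{L^2_{0,0,\lambda}}$ by the $(\rho^2\pa_v-\cL)$-norm, the bottom-face trace, and $\|u\|_{L^2_\lambda(W)}$, with the commutator $[\rho^2\pa_v-\cL,\phi_2]u$ again supported in $W$. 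Adding the two, and then using a standard interior parabolic estimate (Theorems \ref{thm_ffestimate}, \ref{thm_bfestimate}) on a slightly larger relatively compact region $W'$ to replace the error $\|u\|_{L^2_{1,0,\lambda}(W)}$ — which carries one spatial derivative — by $C(\|(\rho^2\pa_v-\cL)u\|_{L^2_{0,0,\lambda}}+\|u\|_{L^2_\lambda(W')})$, yields the target estimate with $k=r=0$ on the left. Feeding this into the global weighted interior estimate of §\ref{subsec_sobholspaces} (the analogue on $\bL$ of \eqref{eq_aprioriestimateE}) to absorb the resulting $\|u\|_{L^2_\lambda}$ term then gives the target estimate at the level of $L^2_{k+2r,r,\lambda}$.

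With this estimate established, I would apply Lemma \ref{lem_funcana} with $X=L^2_{k+2r,r,\lambda}(\bL)$, $Y=L^2_{k+2r-2,r-1,\lambda}(\bL)\oplus L^2_{k+2r-1}(\widetilde{L_0})$, $Z=L^2_\lambda(W)$, the bounded map $T=(\rho^2\pa_v-\cL,\,|_{\botf})$, and $K$ the restriction-to-$W$ map; $K$ is compact by Rellich--Kondrachov since $W$ is a compact manifold with corners on which the weight is bounded. Lemma \ref{lem_funcana} then yields that $T$ — that is, the mapping \eqref{eq_sobolevmappingl2} — has finite-dimensional kernel and closed range.

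The genuinely non-routine ingredient here is the hypothesised estimate \eqref{eq_l2estimate2}, whose whole purpose is to guarantee that \emph{no} error term lives near the non-compact front face (a weighted Sobolev inclusion there is never compact). Granting it, the step requiring the most care is the partition-of-unity bookkeeping: one must check that every cut-off commutator is supported in the relatively compact overlap $\{\ep/2\le\rho\le\ep\}$, and that the single spatial derivative it produces can be absorbed by an interior estimate, so that the remaining error term compactly embeds into $L^2_\lambda(W)$.
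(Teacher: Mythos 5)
Your proposal is correct and follows essentially the same route as the paper: split $u=\phi_1u+\phi_2u$ with a partition of unity, apply the hypothesis to $\phi_1u$ and the standard a-priori estimate to $\phi_2u$, observe that all residual error terms (including the commutator terms) are supported away from the front face hence compactly controlled, and conclude via Lemma~\ref{lem_funcana}. The only cosmetic difference is that you realise the compact operator as restriction to a fixed relatively compact region $W$ and absorb the first-derivative commutator term by an additional interior estimate, whereas the paper treats multiplication by a cutoff supported away from the front face directly as the compact map (Lemma~\ref{lem_RK}); both variants reach the same conclusion.
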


In order to prove this, we need the following:

\begin{lem}\label{lem_RK}
	Let $\psi$ be a smooth function supported away from the front face of $\mathbb{L}$. Then,  the mapping $\psi : L^2_{2,1,\lambda} \rightarrow L^2_{0,0,\lambda}$, sending
	\[
	u \mapsto \psi u,
	\]
	is a compact mapping.
\end{lem}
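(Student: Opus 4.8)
The statement is that multiplication by a smooth function $\psi$ supported away from the front face of $\bL$ defines a compact operator $L^2_{2,1,\lambda}(\bL) \to L^2_{0,0,\lambda}(\bL)$. The plan is to reduce to the ordinary Rellich--Kondrachov theorem by exploiting that the support of $\psi$ stays in the region where $\rho$ is bounded below and the m-metric is uniformly comparable (on compact pieces) to a standard parabolic model metric.

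First, since $\psi$ is supported in a set $\{\rho \geq c\}$ for some $c > 0$, and on such a set the weight $\rho^{-\lambda}$ is bounded above and below, the weighted norms $L^2_{2,1,\lambda}$ and $L^2_{2,1}$ are equivalent for functions supported there; so it suffices to prove compactness of $\psi : L^2_{2,1}(\bL) \to L^2_{0,0}(\bL)$ without weights. Next I would invoke the finite cover of $\bL$ by model neighbourhoods (Lemma \ref{lem_modelneighbourhood}) and, within these, a good cover as in Definition \ref{def_goodcover}; the region $\{\rho \geq c\}$ together with the support condition on $\psi$ means only finitely many of the good-cover charts $\Psi_i : U_i \to \bL$ meet $\mathrm{supp}(\psi)$, and these are of the two standard types $B_R \times (-\ep,0)$ or $B_R \times [0,\ep)$ with metric $C^0$-close to the Euclidean parabolic metric $g_0 + dt^2$ and all higher derivatives uniformly controlled. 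On each such chart, by Corollary \ref{coro_compnorms} the $\bL$-Sobolev norm is comparable to the Euclidean parabolic Sobolev norm on $U_i$.

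Given a bounded sequence $\{u_j\}$ in $L^2_{2,1}(\bL)$, I would restrict to the finitely many relevant charts, pull back via $\Psi_i$ and multiply by a cutoff adapted to $V_i \subset U_i$, obtaining sequences bounded in the Euclidean parabolic Sobolev space $L^2_{2,1}$ on a fixed bounded domain. The compactness of the inclusion $L^2_{2,1} \hookrightarrow L^2_{0,0}$ on a bounded parabolic domain (the parabolic Rellich--Kondrachov, which follows from Theorem \ref{thm_sobolevembeddingparabolic} for a strict inequality $\xi < 1$, or directly from the classical argument as used in Lemma \ref{lem_rellichkondrachovE}) then yields, after passing to a diagonal subsequence over the finite index set, a subsequence converging in $L^2_{0,0}$ on each chart. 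Patching these via the partition subordinate to the charts, and using that outside $\mathrm{supp}(\psi)$ the function $\psi u_j$ vanishes, gives convergence of $\psi u_j$ in $L^2_{0,0}(\bL)$, and transferring back through the weight equivalence gives convergence in $L^2_{0,0,\lambda}(\bL)$.

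The only mild subtlety — and the step I would be most careful about — is making sure the finitely many charts genuinely suffice: this is where the hypothesis that $\psi$ is supported \emph{away from the front face} is essential, since near the front face one would need infinitely many good-cover charts and compactness would fail (indeed, $L^2_{2,1}(\bL) \hookrightarrow L^2_{0,0}(\bL)$ is not compact globally). Everything else is bookkeeping: equivalence of weighted and unweighted norms on the support, finiteness of the relevant cover, the local parabolic Rellich--Kondrachov, and a diagonal argument.
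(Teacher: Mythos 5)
Your proof is correct and fills in, carefully and in the right order, exactly what the paper's one-line proof ("this basically follows from classical Rellich--Kondrachov, since the Sobolev spaces are non-singular as $\rho$ is bounded below on the support of $\psi$") is asserting: equivalence of weighted and unweighted norms on $\{\rho \geq c\}$, finiteness of the relevant good-cover charts there, local parabolic Rellich--Kondrachov, and a diagonal/patching argument. This is the same route the paper takes, just spelled out.
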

\begin{proof}
	This basically follows from classical Rellich-Kondrachov, since the Sobolev spaces we are dealing with are non-singular as $\rho$ is bounded below in the support of $\psi$.
\end{proof}

Now, we can prove Lemma \ref{lem_semiFredholm2}: 
\begin{proof}                               
	Let $(\phi_1,\phi_2)$ be a partition of unity corresponding to the cover $(U_1,U_2)$ of $\mathbb{L}$. The main idea is to consider the estimates for $\phi_1u$ and $\phi_2u$, and add them up, such that we can get a global estimate of $u$ by the norms of $((\rho^2\partial_v - \cL)u,u|_{\textrm{bf}})$, and the norms of compact error terms. Then, we would get closed image and finite kernel by Lemma \ref{lem_funcana}. Now, for $u\in L^2_{2,1,\lambda}$, we have by the a-priori estimate and the given hypothesis that
	\begin{equation}
		\begin{split}
			\|\phi_1u\|_{L^2_{2,1,\lambda}} 
			&\leq C(\|(\rho^2\partial_v - \mathcal{L})\phi_1u\|_{L^2_{0,0,\lambda}} + \|\phi_1u|_{\mathrm{bf}}\|_{L^2_{1,\lambda}} +\|\phi_1u\|_{L^2_{0,0,\lambda}}) \\
			& \leq C'(\|(\rho^2\partial_v - \mathcal{L})\phi_1u\|_{L^2_{0,0,\lambda}} + \|\phi_1u|_{\mathrm{bf}}\|_{L^2_{1,\lambda}})\\
			&\leq C'(\|\phi_1(\rho^2\partial_v - \mathcal{L})u\|_{L^2_{0,0,\lambda}} + \|\phi_1u|_{\botf}\|_{L^2_{1,\lambda}} + A\|{^b}\nabla_\rel \phi_1 \cdot {^b\nabla_\rel} u\|_{L^2_{{0,0,\lambda}}} \\
			&   \: \: \: \: \: \: + B\|{^b\Delta_\rel} \phi_1 u\|_{L^2_{0,0,\lambda}}),\\
		\end{split}
	\end{equation} 
	for bounded constants $A,B$. Similarly, we have from the a-priori estimate, \begin{equation}
		\begin{split}
			\|\phi_2u\|_{L^2_{2,1,\lambda}} 
			&\leq C(\|(\rho^2\partial_v - \mathcal{L})\phi_2u\|_{L^2_{0,0,\lambda}} + \|\phi_2u|_{\mathrm{bf}}\|_{L^2_{1,\lambda}} +\|\phi_2u\|_{L^2_{0,0,\lambda}}) \\
			&\leq C(\|\phi_2(\rho^2\partial_v - \mathcal{L})u\|_{L^2_{0,0,\lambda}} + \|\phi_2u|_{\botf}\|_{L^2_{1,\lambda}} + \|\phi_2u\|_{L^2_{0,0,\lambda}}\\  & \: \: \: \: \: \: + A\|{^b\nabla_\rel} \phi_2 \cdot {^b\nabla_\rel} u\|_{L^2_{{0,0,\lambda}}} + B\|{^b\Delta_\rel} \phi_2 u\|_{L^2_{0,0,\lambda}}).\\
		\end{split}
	\end{equation}
	Therefore, we have by adding the above two estimates,
	\begin{equation}
		\begin{split}
			\|u\|_{L^2_{2,1,\lambda}} 
			&\leq \|\phi_1u\|_{L^2_{2,1,\lambda}}+\|\phi_2u\|_{L^2_{2,1,\lambda}}\\
			& \leq 2C(\|(\rho^2\partial_v - \mathcal{L})u\|_{L^2_{0,0,\lambda}} + \|u|_{\mathrm{bf}}\|_{L^2_{1,\lambda}} +\|\phi_2u\|_{L^2_{0,0,\lambda}} + A\|{^b\nabla_\rel} \phi_2 \cdot {^b\nabla_\rel} u\|_{L^2_{{0,0,\lambda}}} \\
			& \: \: \: \: \: \:  + B\|{^b\Delta_\rel} \phi_2 u\|_{L^2_{0,0,\lambda}} + A\|{^b\nabla_\rel} \phi_1 \cdot {^b\nabla_\rel} u\|_{L^2_{{0,0,\lambda}}} + B\|{^b\Delta_\rel} \phi_1 u\|_{L^2_{0,0,\lambda}}).\\
		\end{split}
	\end{equation}
	Now, since each of $\phi_2,{^b\nabla_\rel}\phi_i,{^b\Delta_\rel}\phi_i$ is supported away from the front face, we have that the last five terms in the above estimate are the norm of a compact operator by Lemma \ref{lem_RK}. Therefore, by Lemma \ref{lem_funcana}, we get that (\ref{eq_sobolevmappingl2}) has finite dimensional kernel and closed range for $k=0,r=1$. The case $k,r\geq 1$ follows similarly from the a-priori estimate for the mapping.
\end{proof}

Note that there is an a-diffeomorphism $\Phi:\bB_{\bE} \rt \bL$ (where $\bB_\bE$ is a ball around the exceptional divisor of $\bE$) to a neighbourhood of the front face of $\bL$, such that it is an isometry for the $b$-metrics at the front face. The key point is that $\bE$ is the `local model at infinity' for the approximate solution $\bL$.

Therefore, it remains to prove the estimate \eqref{eq_l2estimate2} to prove semi-Fredholmness. It follows essentially from Theorem \ref{thm_mappingE}, due to there being an a-smooth diffeomorphism $\Phi: U_\bL \rt U_\bE$ that is an isometry at the front face of $\bL$. Due to this, the operators $\rho^2\pa_v-\cL$ on $\bL$ and $\bE$, as well as the Sobolev norms, differ up to a term that vanishes at the front face. So, we can make the neighbourhood smaller, and obtain an isometry. In particular, we have:
\begin{lem}
	Let $\lambda \in \bR\backslash\cD_E $ be a given non-critical rate. Then, there exists a sufficiently small neighbourhood $U_\bL$ of the front face of $\bL$, such that for a function $u\in L^2_{2,1}(\bL)$ supported on $U_\bL$, we have \[
	\|u\|_{L^2_{2,1}}\leq C(\|(\rho^2\pa_v-\cL)u\|_{L^2}+\|u|_{t=0}\|_{L^2_1}),
	\]
	for a uniform constant $C$.
\end{lem}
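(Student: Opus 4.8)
The plan is to transfer the estimate \eqref{eq_l2estimate2} from $\bE$ to $\bL$ using the a-diffeomorphism $\Phi$ described just above, exploiting the fact that $\Phi$ can be arranged to be a $b$-isometry exactly at the front face. First I would fix the non-critical rate $\lambda \in \bR\backslash \cD_E$ and recall from Corollary \ref{coro_semiFredholm} (together with Lemma \ref{lem_fredholmnoncritical}) the estimate on $\bE$ of the form $\|w\|_{L^2_{2,1,\lambda}(\bE)} \leq C(\|(\rho^2\pa_v - \cL)w\|_{L^2_{\lambda}(\bE)} + \|w|_{\botf}\|_{L^2_{2,\lambda}(\bE)})$, which in particular gives the weaker $L^2_\lambda$-estimate I need. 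Since $\lambda$ can be absorbed into the weights, it suffices to prove the unweighted version claimed in the statement, for $u$ supported near the front face. I would then push $u$ forward via $\Phi^{-1}$ to a function $\tilde u$ on a neighbourhood of the front face of $\bE$, apply the $\bE$-estimate to $\tilde u$, and pull back.

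The key step is to control the error incurred because $\Phi$ is only an isometry \emph{at} the front face, not on a whole neighbourhood. Concretely, the metrics, the m-connection, the time-like vector field $\rho^2\pa_v$, the operator $\cL$, and the induced volume forms on $\bL$ and $\bE$ all agree at the front face and hence differ by quantities that vanish there; since every geometric object in sight is a-smooth, these differences are $O(\rho)$ (more precisely, bounded by a constant times a boundary-defining function for the front face, with all m-derivatives bounded). Therefore, writing $\cL_\bL = \Phi^*\cL_\bE + E_1$ and $\rho^2\pa_v|_\bL = \Phi^*(\rho^2\pa_v|_\bE) + E_2$, the error operators $E_1, E_2$ are second- and first-order respectively with coefficients that are small in $C^0$ on a sufficiently small neighbourhood $U_\bL$ of the front face, and similarly the two Sobolev norms are comparable with a multiplicative constant arbitrarily close to $1$ on $U_\bL$. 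I would then estimate
\[
\|u\|_{L^2_{2,1}(\bL)} \leq (1+\delta)\|\tilde u\|_{L^2_{2,1}(\bE)} \leq (1+\delta)C\big(\|(\rho^2\pa_v-\cL_\bE)\tilde u\|_{L^2} + \|\tilde u|_{t=0}\|_{L^2_1}\big),
\]
and bound $\|(\rho^2\pa_v - \cL_\bE)\tilde u\|_{L^2} \leq \|(\rho^2\pa_v - \cL_\bL)u\|_{L^2} + \|(E_1+E_2)u\|_{L^2}$ where the last term is $\leq \delta'\|u\|_{L^2_{2,1}(\bL)}$ on $U_\bL$. Choosing the neighbourhood small enough that $(1+\delta)C\delta' < 1/2$, the error term is absorbed into the left-hand side, yielding the claimed estimate with a uniform constant.

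The main obstacle I anticipate is making the absorption argument genuinely uniform: one must check that the constant $C$ in the $\bE$-estimate does not degrade as the neighbourhood $U_\bL$ shrinks (it does not, since it is the fixed Fredholm constant of the global operator on all of $\bE$, not a local interior constant), and that the error operators $E_1, E_2$ really are controlled in the \emph{right} norms — in particular that differentiating the $O(\rho)$ coefficients does not spoil the smallness at the order relevant for a second-order operator acting between $L^2_{2,1}$ and $L^2$. This is where a-smoothness of all the structures and the description of the local models near the front face (\S\ref{subsubsec_localmodels}) are used: in the model coordinates the vanishing-at-the-front-face coefficients are literally $O(\rho)$ with all m-derivatives bounded, so the error terms are $O(\rho)$ as operators $L^2_{2,1}\to L^2$, which suffices. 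Once this is in place the result follows, and combined with Lemma \ref{lem_semiFredholm2} it establishes semi-Fredholmness of \eqref{eq_sobolevmappingl2}.
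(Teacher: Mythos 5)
Your proposal is correct and takes essentially the same approach as the paper: transfer the global semi-Fredholm estimate on $\bE$ to a small neighbourhood of the front face of $\bL$ via the a-diffeomorphism $\Phi$, using that the operator, norms and volume forms agree at the front face so that the error terms vanish there, then shrink the neighbourhood and absorb. The paper's own argument is much terser (it asserts the transference in two sentences and gives no explicit absorption step), so your expansion — in particular the observation that the Fredholm constant on $\bE$ is global and hence does not degrade as the neighbourhood shrinks — faithfully fills in details the paper leaves implicit.
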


Thus, we have shown:
\begin{prop}
	The mapping \eqref{eq_sobolevmappingl2} has closed image and finite-dimensional kernel for $\lambda\in \bR\backslash \cD_E$.
\end{prop}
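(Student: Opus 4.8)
The plan is to reduce to the semi-Fredholm estimate near the front face, transfer that estimate from $\bE$ to $\bL$ using Theorem \ref{thm_mappingE}, and then assemble the global statement via the partition-of-unity argument of Lemma \ref{lem_semiFredholm2}. By that lemma, it suffices to establish the estimate \eqref{eq_l2estimate2} for functions $u$ supported in the neighbourhood $U_1 = \bL \cap \{0 \le \rho < \ep\}$ of the front face; away from the front face the geometry of $\bL$ is nonsingular, so the global a-priori estimate already controls $\phi_2 u$, and the commutator terms $[\rho^2\pa_v - \cL, \phi_i]$ are supported away from the front face, hence give compact error terms by Lemma \ref{lem_RK}, so that Lemma \ref{lem_funcana} then yields finite-dimensional kernel and closed range.

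To get \eqref{eq_l2estimate2}, I would use the a-diffeomorphism $\Phi : \bB_{\bE} \rt \bL$ onto a neighbourhood of the front face of $\bL$, which is an isometry for the $b$-metrics at the front face. Since $\bE$ is the local model at infinity for the approximate solution $\bL$ (the scaled expanders $\{\sqrt{2t}E\}$ approximate the desingularisation near the singular point), the pulled-back m-metric $\Phi^* g_\bL$ agrees with $g_\bE$ to leading order at the front face, the weighted norms $L^2_{k+2r,r,\lambda}$ on the two spaces are uniformly equivalent on a small enough neighbourhood, and the operators $\rho^2\pa_v - \cL$ on $\bL$ and on $\bE$ differ by a term whose coefficients vanish at the front face. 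Applying Corollary \ref{coro_semiFredholm} on $\bE$ for $\lambda \in \bR\backslash\cD_E$ gives the estimate $\|w\|_{L^2_{2,1,\lambda}(\bE)} \le C(\|(\rho^2\pa_v-\cL)w\|_{L^2_{0,0,\lambda}(\bE)} + \|w|_{\textrm{bf}}\|_{L^2_{1,\lambda}})$; pulling back along $\Phi$ and estimating the mismatch terms, which are bounded by norms of the form $\|(\text{coefficient vanishing at ff})\cdot {^b\nabla^2_\rel} u\|_{L^2_{0,0,\lambda}}$ and lower-order analogues, one absorbs these into the left-hand side by shrinking $U_1$, i.e.\ decreasing $\ep$. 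This produces \eqref{eq_l2estimate2} for $u$ supported in $U_1$ with a uniform constant, as recorded in the lemma immediately preceding the statement.

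The case $k,r \ge 1$ follows in exactly the same way from the higher-order a-priori estimates on $\bL$ and $\bE$ together with the mapping statement of Theorem \ref{thm_mappingE} in that range. The main obstacle I anticipate is the second step: making precise, and uniform after passing to the weighted Sobolev norms, the sense in which the operator and the m-metric on $\bL$ near the front face are a genuinely small perturbation of those on $\bE$, so that the error terms can actually be absorbed rather than merely being formally lower order. This requires keeping track of how $\Phi$ interacts with the weight $\rho$ and with the relative connection $^b\nabla_\rel$, but it is of the same nature as the local-model comparisons of \S\ref{subsubsec_localmodels} and causes no essential difficulty. Surjectivity is not claimed here and is addressed separately using compactness of $\bL$.
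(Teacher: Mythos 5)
Your proposal is correct and takes essentially the same route as the paper: reduce via Lemma \ref{lem_semiFredholm2} and the partition of unity to the estimate \eqref{eq_l2estimate2} near the front face, then transfer the semi-Fredholm estimate of Corollary \ref{coro_semiFredholm} from $\bE$ to $\bL$ via the a-diffeomorphism $\Phi$ that is an isometry at the front face, absorbing the error terms (which vanish at the front face) by shrinking the neighbourhood. The paper's own treatment of this transfer step is in fact more terse than yours, and your flagging of the absorption of the mismatch terms as the place requiring care is exactly where the paper leans on the local-model comparisons.
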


\subsection{Proof of surjectivity and isomorphism}

To prove surjectivity of the mapping \eqref{eq_sobolevmappingl2}, we show surjectivity onto $C^\infty$ functions supported away from the front face, which we denote as before by $C^\infty_{\ff}$. Since such functions are dense in $L^2_{k+2r-2,r-1,\lambda}\oplus L^2_{k+2r-1,\lambda}$, we would be done as the image is closed.

Let $f,g$ be $C^\infty_{\ff}$ functions on $\mathbb{L}$ and $\{s =0\}$ respectively. We will construct a function in $L^2_{k+2r-2,r-1,\lambda}$ that maps to $(f,g)$ under \eqref{eq_sobolevmappingl2}. First, we construct a $C^\infty_\ff$ function $\tilde{u}$, supported away from the front face, such that \[
(\rho^2\partial_v - \mathcal{L})\tilde{u}=\tilde{f}, \;\; \tilde{u}|_{t=0} = g,
\]
such that $f-\tilde{f} \in \mathring{L}^2_{k+2r,r,\lambda}$, i.e. we can find a sequence of smooth functions $h_i$ supported away from $t=0$ such that $h_i \rightarrow f - \tilde{f} $ in $L^2_{k+2r,r,\lambda}$. To construct $\tilde{u}$, we will solve the operator for the function $\tilde{u}$ using a Taylor expansion at $t=0$. 

Suppose we have that $f(x,t) = \sum_{k\geq 0} q_k(x)t^k$, for $q_k(x)$ being functions supported on $L\backslash \overline{B_\ep(x)}$ for $\ep$ small. Then, if $\tilde{u} = \sum_{k\geq 0} s_k(x)t^k$, we have that \[
(\rho^2\partial_v  - \mathcal{L})\tilde{u} = \sum_{k\geq 0 } (\rho^2(k+1) s_{k+1} - \mathcal{L}s_k)t^k. 
\]
Now, note that $\mathcal{L}$ is not constant in time. Therefore, we need to further expand $\mathcal{L}(s_k)$ as a Taylor series. Doing so, we can recursively calculate the functions $s_k$ so that the Taylor series of the output matches the Taylor series of $f$, with $s_0 = g$. Further, the functions will also be uniformly supported away from the front face.  In order to construct a smooth function $\tu$ with these asymptotics, we invoke Borel's lemma, whose simplest form says that given any sequence of smooth functions at the boundary of a collar neighbourhood $U\times [0,\ep)$, there exists a smooth function on the neighbourhood such that its Taylor series expansion at the boundary has coefficients given by these functions. So, we get $C^\infty_\ff$ functions $\tu,\tilde{f}$ satisfying the conditions above.

Having constructed $\tilde{f}$, our problem reduces to showing surjectivity onto $C^\infty_\ff$ functions vanishing to sufficiently high orders at $t=0$, and boundary condition being the zero function. However, such functions can be approximated by smooth functions that are supported in $t\geq \varepsilon$ for $\varepsilon >0$. Clearly, we can solve for these functions as they are the standard Cauchy problem (i.e. without any degeneracy). Therefore, since the image is closed, we can surject onto functions vanishing to all orders, and hence onto all functions in the output space as claimed.

Therefore, we have shown the following:
\begin{theorem}\label{thm_isomorphismL}
	The linearised operator for LMCF \eqref{eq_sobolevmappingl2}, considered as a map between the weighted Sobolev spaces $L^2_{k+2r,r,\lambda}$ and $L^2_{k+2r-2,r-1,\lambda}\oplus L^2_{k+2r,\lambda}(\overline{L})$, is an isomorphism for $\lambda > 0$, and $k\geq 0, r\geq 1$.
\end{theorem}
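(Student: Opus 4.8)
The plan is to assemble three ingredients for the rates $\lambda>0$: closed image together with finite-dimensional kernel, injectivity, and surjectivity. The structural fact making $\lambda>0$ work is that, by Theorem \ref{thm_mappingE}, the critical set satisfies $\cD_E\subset(-\infty,\min(2-m,0)]\subset(-\infty,0]$, so every $\lambda>0$ is a non-critical rate.

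First I would prove semi-Fredholmness, i.e.\ closed image and finite-dimensional kernel, for all $\lambda\in\bR\setminus\cD_E$ by the partition-of-unity gluing of Lemma \ref{lem_semiFredholm2}. Cover $\bL$ by $U_1=\{0\le\rho<\ep\}$ and $U_2=\{\rho>\ep/2\}$ with subordinate partition $\{\phi_1,\phi_2\}$. On $U_2$ and on the supports of $^b\nabla_\rel\phi_i$ and $^b\Delta_\rel\phi_i$ the weighted Sobolev spaces are non-degenerate, as $\rho$ is bounded below there, so multiplication by these cutoffs defines a compact operator $L^2_{2,1,\lambda}\rt L^2_{0,0,\lambda}$ by the classical Rellich--Kondrachov theorem (Lemma \ref{lem_RK}), and the corresponding terms in the estimate for $\phi_2 u$ and for the commutators are compact errors. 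Near the front face I would transfer the semi-Fredholm estimate of Corollary \ref{coro_semiFredholm} from $\bE$ to $\bL$: the approximate solution $\bL$ is an a-smooth perturbation of the compactified family of scaled expanders $\bE$, and there is an a-diffeomorphism from a neighbourhood of the front face of $\bL$ onto a neighbourhood of the exceptional divisor of $\bE$ which is an isometry for the $b$-metrics at the front face, so the operators $\rho^2\pa_v-\cL$ and the associated norms on the two spaces differ only by terms vanishing at the front face; shrinking the neighbourhood absorbs these and yields $\|u\|_{L^2_{2,1,\lambda}}\leq C(\|(\rho^2\pa_v-\cL)u\|_{L^2_{0,0,\lambda}}+\|u|_{\botf}\|_{L^2_{1,\lambda}})$ for $u$ supported near the front face. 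Adding the two estimates and invoking Lemma \ref{lem_funcana} to absorb the compact remainders gives closed image and finite-dimensional kernel for $k=0,r=1$, and bootstrapping with the global a-priori estimate \eqref{eq_basicestimate4} extends this to all $k\geq0,r\geq1$. Since $\cD_E\subset(-\infty,0]$, this covers every $\lambda>0$. Next I would establish injectivity for $\lambda>0$ by the maximum principle: if $u$ lies in the kernel, set $\tu:=\rho^{-\lambda}u$, solving the conjugated equation $\rho^{-\lambda}(\rho^2\pa_v-\cL)\rho^\lambda\,\tu=0$ with zero trace at the bottom face; near the front face the conjugation produces, as in the computation preceding Proposition \ref{prop_nozerokernel}, a zeroth-order term with strictly positive real part for $\lambda>0$, while near the bottom face the operator is a positive multiple of the heat operator $\pa_t-\Delta$, so the parabolic maximum principle along the $b$-fibration $t:\bL\rt[0,\ep)$ (Lemma \ref{lem_maximumprinciple} and its parabolic analogue), together with the vanishing initial data and the compactness of $\bL$, forces $\tu\equiv0$.

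Finally, surjectivity: since the image is closed, it suffices to surject onto the dense subspace $C^\infty_\ff$ of smooth functions supported away from the front face. Given $(f,g)$ in this class, I would first produce a formal solution in a Taylor series at $t=0$: writing $f=\sum_k q_k(x)t^k$ and expanding the time-dependent operator $\cL$ as a series in $t$, one recursively solves for the coefficients $s_k$ of $\tu=\sum_k s_k(x)t^k$ with $s_0=g$, all uniformly supported away from the front face, and Borel's lemma yields a genuine $C^\infty_\ff$ function $\tu$ with these asymptotics. Then $\tilde f:=(\rho^2\pa_v-\cL)\tu$ agrees with $f$ to infinite order at $t=0$, so $f-\tilde f\in\mathring{L}^2_{k+2r-2,r-1,\lambda}$, and the problem reduces to surjecting onto functions vanishing to all orders at $t=0$ with zero boundary datum. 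Such functions are approximated in the target norm by functions supported in $\{t\geq\varepsilon\}$, where the equation is the non-degenerate Cauchy problem for a uniformly parabolic operator on a compact region and is solvable by standard parabolic theory; closedness of the image then upgrades this to surjectivity onto the full target, so \eqref{eq_sobolevmappingl2} is an isomorphism for $\lambda>0$ and $k\geq0,r\geq1$.

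The genuinely hard input is the closed-range estimate near the front face, which rests entirely on the parabolic Fredholm theory for asymptotically conical expanders (Theorem \ref{thm_mappingE}); within the present argument the delicate point is the faithful transfer of that estimate from the model space $\bE$ to $\bL$ across an a-diffeomorphism that is only asymptotically an isometry, which forces one to shrink the front-face neighbourhood and control all error terms in the a-smooth category.
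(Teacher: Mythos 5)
The semi-Fredholmness and surjectivity parts of your proposal match the paper's argument closely: the same partition-of-unity gluing via Lemma \ref{lem_semiFredholm2}, the same transfer of the closed-range estimate from $\bE$ to $\bL$ via the a-diffeomorphism that is asymptotically an isometry, and the same surjectivity strategy (formal Taylor series at $t=0$, Borel's lemma, reduction to functions vanishing to all orders at the bottom face, the standard non-degenerate Cauchy problem, then closedness of the image). The bootstrapping to general $k,r$ via the global a-priori estimate is also correct.

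However, your injectivity argument contains a genuine gap. You propose conjugating by $\rho^{-\lambda}$ and claim this produces, as in the computation preceding Proposition \ref{prop_nozerokernel}, a zeroth-order term with strictly positive real part. This misremembers that computation: there the operator $-P_z$ is conjugated by $\tau^{(\lambda + i\xi)}$, where $\tau = \sqrt{2t}$ is a projective coordinate, which is \emph{not} the same as the boundary-defining function $\rho = \sqrt{|z|^2 + t}$; indeed $\tau = \sqrt{2}\,\rho s^{1/2}$. The conjugation by $\tau^{-\lambda}$ gives the clean term $\lambda s^{-1}$ with $\Re(\lambda s^{-1}) > 0$, but conjugation by $\rho^{-\lambda}$ instead yields (from the formula following Lemma \ref{lem_h1nonzero}) the zeroth-order contribution $\lambda h_2 - \lambda^2 h_1$ together with first-order corrections, and since $h_1 = \Phi^*(|(r\pa_r)^\top|_E^2) \geq 0$ the term $-\lambda^2 h_1$ works against you; there is no reason for this combination to have positive real part. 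Furthermore, even if a positive zeroth-order term were present, $\tu = \rho^{-\lambda}u$ is only \emph{bounded} at the front face, not vanishing, yet the front face is contained in $\{t=0\}$; so the ``vanishing initial data'' you invoke is not available on all of $\{t=0\}$, and the parabolic maximum principle would not close.

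The paper's argument avoids both issues by not conjugating at all. Since $u$ lies in the kernel inside $L^2_{k+2r,r,\lambda}$ with $\lambda>0$, the a-priori estimates and Sobolev embedding give $u \in C^{k+2r,r,\lambda}$ for $k,r \gg 1$, i.e.\ $u = O(\rho^\lambda)$, so $u$ itself decays at the front face and vanishes on the bottom face. One then applies the ordinary parabolic maximum principle to $\pa_v u - \Delta_{L_t} u - H\tu + v^\perp\tu$ on the time interval $(\nu,\varepsilon)$ for fixed $\nu>0$, where the coefficients are smooth and bounded because $\rho \geq \sqrt{\nu}$, to obtain $\sup_{t\in(\nu,\varepsilon)}|u| \leq \sup_{t=\nu}|u|$. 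Letting $\nu \downarrow 0$, the slice $\{t=\nu\}$ accumulates on the union of the bottom face and front face, on both of which $u$ tends to zero, so $\sup_{t=\nu}|u|\to 0$ and hence $u\equiv 0$. You should replace your conjugation step with this direct decay argument.
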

\begin{proof}
	We already know that the mapping is surjective and has finite-dimensional kernel. The mapping is injective for $\lambda>0$ by an application of the maximum principle for $(\rho^2\pa_v-\cL)u = \rho^2(\pa_vu-\Delta_{L_t}u-H\tu + v^\perp \tu)$. In particular, if an element $u$ lies in the kernel at a rate $\lambda>0$, it implies that $u\in C^{k+2r,r,\lambda}$ for $k,r\gg1$ by the a-priori estimate and Sobolev embedding. Therefore, it follows that $\rho^{-\lambda} u$ is a bounded, smooth function on $\bL$, so $u$ is a bounded, smooth function on $\bL$ decaying to order $\rho^{-\lambda}$ at the front face, and $u|_{s=0}\equiv0$. Now, the maximum principle applied to $\pa_vu-\Delta_{L_t}u-H\tu+v^\perp \tu$ over a time interval $(\nu,\ep)$ tells us that the supremum of $u$ is bounded by $\sup(u|_{t=\nu})$. Letting $\nu \downarrow 0$, we know that $\sup(u|_{t=\nu}) \downarrow 0$ given the decay of $u$ at the front face and $u|_{s=0}\equiv 0$. Therefore, we obtain that $u$ is identically zero on $\bL$.    
\end{proof}

\section{Solving the nonlinear problem}\label{sec_nonlinearproblem}
In this section, we solve the nonlinear LMCF equation (\ref{1}) on potentials, and prove that the resulting solution corresponds to an a-smooth Lagrangian submanifold in $\bX$, satisfying the conditions of Theorem \ref{thm_main} as claimed. 
\subsection{The zeroth order error term}
Recall that the zeroth order error term in the LMCF P.D.E.\ \eqref{eq_potentialequation} is given by $\rho^2E$, where $E$ satisfies $\dr(E) = \dr\theta_0-\hat{\omega}(v^\perp,\_)$. Firstly, we note that:
\begin{lem}
	The relative $1$-form $d_\rel E$ is a-smooth. Moreover, we have that $d_\rel(E)=0$ at the boundaries of $\bL$.
\end{lem}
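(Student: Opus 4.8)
The plan is to verify the two assertions separately, working in the local model neighbourhoods of $\bL$ near the front face and bottom face, where everything reduces to explicit computations.

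\textbf{A-smoothness of $d_\rel E$.} First recall that $E$ was defined (up to an irrelevant locally-constant ambiguity) by $\dr(E) = \dr\theta_0 - \hat\omega(v^\perp,\_)|_\bL$, and by the Corollary after the decomposition proposition this relative $1$-form is exact, so $E$ exists as an honest function on $\bL^\circ$. The point is to see that $\dr(E)$, which a priori is only defined on the interior, extends a-smoothly across the a-boundary (front face) and the ordinary boundary (bottom face). I would argue this by unwinding the definitions of $\theta_0$ and $v^\perp$. The term $\dr\theta_0$ is a-smooth by construction: $\theta_0 = \theta(\Gamma(0))$ is the Lagrangian angle of the zero-section of $^bT^*_\rel\bL$ under the a-smooth embedding $\tpsil$, and the proposition on the Lagrangian-angle decomposition already established that the Lagrangian-angle function depends a-smoothly on its inputs, so $\dr\theta_0$ is an a-smooth section of $^bT^*_\rel\bL$. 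For the term $\hat\omega(v^\perp,\_)$, I would use that $\rho^2 v$ is a non-vanishing a-smooth section of $^{\m}T\bL$ (the proposition on the time-like vector field), that the relative symplectic form $\hat\omega$ on $\bX$ vanishes to order $\rho^2$ at the front face (noted in the subsection on the symplectic form on $\bX$), and that $v^\perp$ is the normal component of $v$ in the embedding. Concretely, in the corner model $V\times\llbracket0,\delta)\times[0,\delta)$ with $\rho = Cr$, $\rho^2\pa_v = C(1+4s^2)^{-1}(\pa_s - 2sr\pa_r)$, so pairing the a-smooth vector field $\rho^2 v$ against the relative $1$-form $\rho^{-2}\hat\omega(\_,\cdot)$ produces an a-smooth relative $1$-form; the $\rho^{-2}$ is exactly cancelled by the $\rho^2$-vanishing of $\hat\omega$. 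Since $\tpsil = \Psi\circ\rho^2$ is the a-smooth embedding and $\tpsil^*\omega_\bX = \rho^2\hat\omega$, the pullback of $\hat\omega(v^\perp,\_)$ to $\bL$ along $\tpsil$ is a-smooth. Hence $d_\rel E$ is a-smooth on all of $\bL$.

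\textbf{Vanishing at the boundaries.} For the second claim I would treat the two boundary faces separately. At the \emph{bottom face} $s=0$: the bottom face of $\bL$ is the (blow-up of the) initial Lagrangian $L_0$, and by construction $\tL_t \to L_0$ with the first-order perturbation $td\theta$; in the local model $U\times[0,\delta)$ near the bottom face the metric is $g_0 + dt^2$, the time-like vector field is $v=\pa_t$, so $v^\perp$ is precisely the normal component of $\pa_t$ under the family of embeddings, which at $t=0$ is the mean-curvature-type deformation field generating the flow, and $\hat\omega(v^\perp,\_)$ restricts at $t=0$ to $d\theta_0$ itself (this is the content of the identity $H = J\nabla\theta$ together with the normal-bundle isomorphism lemma in the Lagrangian-angle section). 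Therefore $d_\rel E = d_\rel\theta_0 - \hat\omega(v^\perp,\_)$ vanishes at $s=0$. At the \emph{front face} $\rho=0$: the front face of $\bL$ is the expander $E_\infty$ (the closure of $E$), and near it, as computed in the section on the linearised problem near the front face, one has $H = v^\top$ for the model family $\{\sqrt{2t}E\}$, equivalently the extra first-order linear term vanishes and the model operator is exactly $\rho^2\pa_v - \rho^2\Delta$; in our notation this says precisely that the combination $\dr\theta_0 - \hat\omega(v^\perp,\_)$ — which is the zeroth-order error measuring the failure of the approximate solution to solve LMCF — vanishes to leading order at $\rho=0$ because the scaled expanders \emph{exactly} solve LMCF. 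So again $d_\rel E \to 0$ at the front face.

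\textbf{Main obstacle.} I expect the genuinely delicate step to be the front-face vanishing: one must show not just that the approximate solution is ``close'' to the exact expander flow there but that the error $d_\rel E$ actually vanishes (to first order, i.e.\ as an a-smooth section) at $\rho=0$, and this requires being careful about the gluing region, where the cutoff functions $\chi_1,\chi_2$ interpolate between the scaled expander and the cone-graph $A_j + t(\theta-\theta_j)$. The decay estimate $\beta_{E^i} = O(e^{-\alpha r^2})$ from Proposition \ref{prop_expanderdecay}, together with the a-smoothness of $r^{-2}f$ established in Lemma \ref{lem_cutofffunctions}, is what makes the cutoff contribution vanish at the front face; one has to check that differentiating the cutoffs (which are $O(1)$ in the $(r,s)$-coordinates but whose supports shrink appropriately) does not spoil this. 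I would organise the computation model-by-model exactly as in the proof of Lemma \ref{lem_modelneighbourhood}, reducing in each case to an explicit local expression and reading off the order of vanishing, and I would expect the verification to be routine but notationally heavy, mirroring Proposition 6.4 and \S6 of \cite{Joy04}.
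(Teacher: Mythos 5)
Your argument for the second assertion (vanishing of $d_\rel E$ at both boundary faces) is essentially the paper's: the cutoff construction forces the approximate solution to be the graph $\Gamma(td\theta_L)$ near the bottom face and a perturbation of the exact expander family near the front face, so the error converges to zero on both.

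For the first assertion (a-smoothness) you take a genuinely different route, and it contains a gap. You decompose $d_\rel E = d_\rel\theta_0 - \hat\omega(v^\perp,\_)$ and try to check each piece is a-smooth directly, with the $\hat\omega(v^\perp,\_)$ piece handled by a scaling argument. But the object that vanishes to order $\rho^2$ at the front face is $\omega_\bX$, not $\hat\omega$; the latter is the canonical \emph{non-degenerate} relative symplectic form on $^bT^*_\rel\bL$, and $\tpsil^*\omega_\bX = \rho^2\hat\omega$, so writing $\rho^{-2}\cdot\rho^2\hat\omega$ is a tautology rather than a cancellation. More substantively, you invoke the a-smoothness of $\rho^2 v$, but what the argument actually needs is a-smoothness of the \emph{normal} component $v^\perp = v - \pa_t$, equivalently of the relative $1$-form $\omega_X(v^\perp,\_)|_{T\tL_t}$ as a section of $^bT^*_\rel\bL$. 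Since $v$ and $\pa_t$ both blow up like $\rho^{-2}$ in the $b$-geometry, the assertion that their difference after normal projection is a-smooth is precisely the content of the lemma, not a consequence of the time-like vector field proposition; it depends on the detailed geometry of the embeddings $\tL_t\hookrightarrow X$ as $t\downarrow 0$, which is exactly what remains to be controlled. The paper sidesteps this by writing $\bL = \Gamma(d_\rel f)$ over the compactified expander family $\bE$, with $f$ a-smooth and vanishing at rate $o(\rho^2)$: since $\bE$ solves LMCF exactly, $d_\rel E$ is the nonlinear part of the potential P.D.E.\ applied to $f$ (plus a smooth correction from $\tU$), and a-smoothness follows immediately from the a-smoothness and decay of $f$. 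That argument is shorter and avoids having to analyse $v^\perp$ at all; if you wanted to rescue your decomposition approach you would in effect have to redo the same comparison to $\bE$, so the paper's formulation is the more efficient one.
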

\begin{proof}
	Note that it is clear $d_\rel E$ is a smooth relative $1$-form at the bottom face of $\bL$. Therefore, it is enough to show a-smoothness near the front face. Now, we can write $\bL$ near the front face as the graph $\Gamma(d_\rel f)$ of an a-smooth $f$ function vanishing at rate $o(\rho^2)$ over the compactified family of expanders $\bE$. Since the family $\bE$ satisfies LMCF, it follows that the error for $\bL$ is given by the nonlinear part of the P.D.E.\ \eqref{eq_potentialequation} over $\bE$, plus a smooth error term arising from the embedding $\widetilde{\Upsilon}$. As a result, the error $1$-form is a-smooth due to $f$ being a-smooth and its derivatives decaying at rates faster than $\rho^2$.
	
	The second assertion follows due to the construction of approximate solution, and in particular our method of gluing construction. The choice of functions $\chi_1,\chi_2$ ensures that the approximate solution is the graph $\Gamma(td\theta_L)$ over $L$ outside of a shrinking ball, and it is a perturbation of the expander near the singularity. Therefore, $d_\rel(E)$ converges to zero on both the bottom face and front face of $\bL$. 
\end{proof}
Now, we note the following important lemma about the relative derivative on $\bL$:
\begin{lem}\label{lem_formregularity}
	Let $\alpha$ be an a-smooth, relatively exact, relative $1$-form on $\bL$ over the $b$-fibration $t:\bL\rt [0,\ep)$ such that $\alpha$ vanishes at the boundaries of $\bL$. Then, there exists an a-smooth function $f$ such that $\alpha=d_\rel f$, $f$ vanishes at the boundaries, and $f$ is unique up to adding a time dependent constant.
\end{lem}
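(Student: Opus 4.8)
The plan is to first construct an a-smooth primitive $f$ on all of $\bL$ by fibrewise integration, then to deduce $f|_{\partial\bL}=0$ (after subtracting a constant) from the hypothesis that $\alpha$ vanishes at the boundaries, and finally to read off the uniqueness clause from the elementary structure of the relative differential.

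First I would produce a primitive on the interior. Since $\alpha$ is relatively exact, on each fibre $\tL_t$ with $t\in(0,\ep)$ the restriction $\alpha|_{\tL_t}$ is an exact $1$-form. Fixing a basepoint $q$ in the common ``bulk'' region of the family (the part of $L_0$ untouched by the surgery, regarded in every $\tL_t$ and lying on the bottom face at $t=0$), set $f(p):=\int_{\gamma_p}\alpha$, where $\gamma_p$ is any path in $\tL_t$ from $q$ to $p$; this is independent of $\gamma_p$ because $\alpha|_{\tL_t}$ is exact, it is smooth on the interior $\bL^\circ$, and it satisfies $d_\rel f=\alpha$ there. Away from the front face, where $\bL$ is an ordinary manifold with corners and the bottom face is a compact ordinary boundary, $f$ is already the classical smooth primitive obtained by integration, so nothing further is needed there.

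The substance of the proof is a-smoothness near the front face and the corner. Here I would work in the projective coordinates $(w^i,\tau)$ and $(\sigma,r,s)$ of \S\ref{subsubsec_cornersmooth}, in which $\tau$, respectively $r$, is the a-boundary-defining function, and write $\alpha=\sum_i a_i\,dx^i$ with a-smooth coefficients $a_i$, using the surjection ${}^{\m}T^*\bL\twoheadrightarrow{}^bT^*_\rel\bL$ to represent the relative form. Up to an additive function of $t$ alone, $f$ is then an iterated integral of the $a_i$ in the $x^i$-directions, and the key observation is that these directions are transverse to the a-boundary: since a-smoothness of a function is characterised by decay of its m-derivatives in the a-boundary-defining coordinates, and an antiderivative in a non-a-boundary direction commutes with m-differentiation in $\tau$ or $r$ and preserves those decay estimates, the antiderivative of an a-smooth function is again a-smooth (and likewise in the mixed corner model $V\times\llbracket0,\delta)\times[0,\delta)$). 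The $t$-dependent ``constant of integration'' introduced by the basepoint is harmless, since it is constant on each fibre and a-smoothness of $\alpha$ as a section over $\bL$ (not merely fibrewise) forces the globally normalised $f$ to be a-smooth. Hence $f$ extends to an a-smooth function on $\bL$ with $d_\rel f=\alpha$.

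With an a-smooth primitive in hand, I would restrict the identity $d_\rel f=\alpha$ to each boundary face. On the bottom face, which lies in the fibre $\{t=0\}$, one has ${}^bT_\rel\bL|_{\botf}={}^bT(\botf)$, so $d_\rel f|_{\botf}$ is the $b$-differential of $f|_{\botf}$; since $\alpha|_{\botf}=0$ this differential vanishes, so $f$ is constant on the connected bottom face, and by the same argument $f$ is constant on the connected front face. As the two faces meet along the codimension-$2$ corner $\Sigma$ and $f$ is continuous, the two constants agree, say to a number $c$; replacing $f$ by $f-c$ yields an a-smooth primitive vanishing on $\partial\bL$. Finally, if $f,f'$ are two a-smooth primitives then $d_\rel(f-f')=0$, so $f-f'$ is constant on each fibre $\tL_t$, i.e.\ a function of $t$, which is a-smooth on $[0,\ep)$ because $f-f'$ is a-smooth on $\bL$; this is precisely a ``time-dependent constant''. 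I expect the main obstacle to be the a-smoothness step: one must check that neither the basepoint normalisation nor the iterated integration destroys a-smoothness in the genuinely mixed corner model, which is a coordinate computation in the spirit of \S\ref{subsubsec_cornersmooth} and \S\ref{subsec_sobholspaces}.
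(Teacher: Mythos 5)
The uniqueness argument and the boundary--normalization step at the end are fine, but the a-smoothness argument has a genuine gap, and it is exactly the point where the paper's proof has to work.

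You treat the hypothesis ``$\alpha$ vanishes at the boundaries'' as relevant only to normalizing the additive constant, and you try to prove a-smoothness of the primitive without it, by arguing that the relative directions are transverse to the a-boundary and that an antiderivative in such a direction preserves a-smoothness. That transversality claim fails near the codimension-two corner. In the corner model $V\times\llbracket0,\delta)\times[0,\delta)$ with coordinates $(\sigma,\rho,s)$ and $t=\rho^2 s$, the relative tangent bundle is spanned by $\partial_{\sigma_i}$ and the $b$-vector field $\rho\partial_\rho-2s\partial_s$. The latter is tangent to the front face, and the radial path integral from an interior basepoint to the a-boundary is an integral over an unbounded $\log\rho$-range. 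For a general a-smooth relative $1$-form $\alpha$ this integral need not converge: taking $\alpha$ proportional to the unit radial relative covector gives a bona fide a-smooth relatively exact form whose fibrewise primitive is $\log\rho$, which is unbounded and hence not a-smooth. So without an extra input, the primitive $f$ you construct may fail to be bounded, let alone a-smooth, and the later step ``restrict $d_\rel f=\alpha$ to the boundary and normalize'' cannot even be set up.

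The missing ingredient, and what the paper uses at the outset, is that a-smoothness together with $\alpha|_{\partial\bL}=0$ implies a polynomial decay $|\alpha|=O(\rho^\delta)$ for some $\delta>0$ near the front face. Feeding this into your radial integral gives $|\alpha(\partial_r\gamma_t)|=O(r^{\delta-1})$, whose integral from $R$ down to $r$ converges and gives $|f|=O(r^\delta)$; iterating with $\rho^2\partial_v$ and higher $b$-derivatives (which commute appropriately with $d_\rel$ since the time-like vector field is orthogonal to the fibres) gives the decay of all m-derivatives and hence a-smoothness and vanishing of $f$ at the boundaries. In short: you must use the decay of $\alpha$ to build $f$, not merely to normalize it afterward. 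Once you make that change, the rest of your argument (constancy of $f$ on each face, matching across the corner, and the characterization of the ambiguity as a time-dependent constant) is essentially the paper's.
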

\begin{proof}
	The uniqueness statement about $f$ is clear, since any other function $f'$ will differ from $f$ by a time-dependent constant, and since it is smooth at the ordinary boundaries of $\bL$, it is smooth in time. Therefore, it remains to prove existence.
	
	To prove existence, we consider the local models for $\bL$ and follow an inductive procedure, constructing the function $f$ on each piece and then patching them up by adding time-dependent constants. Note that such an inductive construction will give a globally well-defined function due to $\alpha$ being exact. Therefore, it is enough to construct such $f$ on local models, and in particular, for models near the front face. 
	
	Let us consider a local model $U_i$ near the front face. First, since $\alpha$ is a-smooth and vanishes near the front face, we have that $|\alpha| = O(\rho^\delta)$ for some $\delta>0$. We construct the function $f$ in the following manner: consider a smooth choice of points $\{x_t\}_{t\in [0,\ep)}$ on the time slices of $U_i$, converging to a point on the boundary. Then, we define $f_t$ on each slice to be the unique solution of $d_\rel f_t = \alpha, f_t(x_t)=0$. Clearly, this gives us a function $f:U_i\rt \bR$ which is smooth in the interior and at the boundary of $U_i$, and such that $f|_{\pa U_i} \equiv0$. Then, it remains to show that $f$ extends a-smoothly at the front faces. For each point $x_t\in U_i$ on a time-slice, we can construct a curve $\gamma_t(\rho)$ lying in the time slice, parametrised by the radial variable $\rho$, starting at $\rho=R$ and ending at $\rho=r$, such that $^b|\rho{\pa_\rho}\gamma_t|<C$, for a uniform constant $C$ independent of the point $x_t$. Then, we have $f(\gamma_t(r)) = \int_{R}^{r}d_\rel f (\pa_r(\gamma_t))dr$. Now, note that by choice of the curve, $|d_\rel f(\pa_r (\gamma_t))| =|\alpha(\pa_r\gamma_t)| \leq r^\delta\cdot{^b|}\pa_r(\gamma_t)| \leq C r^{\delta-1}$. Therefore, $f(x_t) \leq C (r^\delta-R^\delta) + \sup_{x\in U^c} f(x) $. Hence, $f$ is a bounded function on $U$. Now, we note that $\pa_v(d_\rel f)= d_\rel (\pa_v f)$ due to the time-like vector field $v$ being orthogonal to the time slices. Therefore, we get by the same method as before that $\rho^2\pa_v f = O(r^\alpha)$. Continuing similarly with higher order derivatives, we get that $f$ is a-smooth on $U_i$ and vanishes at the boundaries. This proves the claim.
\end{proof}
 Therefore, it follows that we can take the function $E$ to be an a-smooth function that vanishes at the boundaries. Now, by $a$-smoothness, this implies that $E$ decays at rate $r^\alpha$ near the front face for some $\alpha>0$, hence $E\in L^p$. Similarly, due to $E$ being a-smooth, we get $E\in L^2_{k+2r,r}$ for all $k,r\geq 1$. Hence, the error term $\rho^2E$ lies in $L^2_{k+2r,r,2}$ for all $k,r\geq 1$.

 \subsection{Solving formally near the bottom face}
Suppose we have constructed an approximate solution to LMCF as before, so that we can write the P.D.E.\ \eqref{eq_potentialequation} at the level of potentials, \[(\rho^2\pa_v-\cL)u = \rho^2E + \textrm{ higher-order nonlinear terms in }u, \]
where $E$ is the error which vanishes at the boundaries. In this subsection, we will solve the equation formally on the bottom face. To do so, we consider an ansatz, \[
u = \sum_{k=0}^\infty s^k\rho^2u_k,
\]
for $u_k : \widetilde{L_0} \rt \bR$ being a-smooth functions. Now, as we have that the (unweighted by $\rho^2$) nonlinear operator is \[
P = E + \pa_v - \rho^{-2}\cL + Q[\rho^{-2}\nabla_\rel u,\rho^{-2}\nabla^2_{\rel}u],
\]
 we can write
\begin{equation}
	\begin{split}
		P[u] \sim & E+  \sum_{k=0}^\infty (k+1)u_{k+1}s^k+ \sum_{k=0}^\infty s^k{\cL|_{L_0}}(u_{k}) \\
		& + \sum_{\substack{k=0 \\ i+j=k}}^\infty s^kP_{ij}''[\nabla_\rel u_i,\nabla_\rel u_j],
	\end{split}
\end{equation}
as $s\downarrow 0$, where $P_{ij}''$ is a partial differential operator with terms of degree $\geq1$, and a-smooth coefficients. From this, it is clear that once we fix $u_0\equiv 0$, $u_{k+1}$ is uniquely determined from $\{u_i\}_{1\leq i\leq k}$. Also, we note that $u_{k+1}$ is also a-smooth on $\widetilde{L_0}$ in the radial variable $\rho$, since the partial differential operators are a-smooth. Therefore, we have formally solved for the functions $\{u_k\}_{k\geq0 }$ on $\widetilde{L_0}$. Now, it is straightforward to extend Borel's lemma to this case to obtain an a-smooth function $\tu:\bL\rt \bR$ with the asymptotic expansion at $s=0$ as above. Therefore by considering our new approximate solution as $\Gamma(d_\rel \tu)$ over $\bL$, we have:
\begin{prop}\label{prop_bottomfacesolving}
There exists an a-smooth embedding $\bL\hookrightarrow \bX$ with the given boundary conditions, such that the error term in the nonlinear P.D.E.\ \eqref{eq_potentialequation} decays at rate $O(s^\infty)$ as $s\downarrow 0$.  
\end{prop}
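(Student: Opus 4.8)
The plan is to prove Proposition~\ref{prop_bottomfacesolving} by constructing a formal power series solution $u = \sum_{k\geq 0} s^k \rho^2 u_k$ to the nonlinear P.D.E.\ \eqref{eq_potentialequation} at the bottom face $s=0$, determining the coefficients $u_k$ recursively, and then invoking a Borel-type summation to produce a genuine a-smooth function $\tu$ on $\bL$ with that asymptotic expansion. First I would recall that near the bottom face the operator $\rho^2$ is a non-vanishing a-smooth function, so dividing the P.D.E.\ by $\rho^2$ gives the (unweighted) nonlinear operator $P = E + \pa_v - \rho^{-2}\cL + Q[\rho^{-2}\nabla_\rel u, \rho^{-2}\nabla^2_\rel u]$, and the key structural input is that near the bottom face the time-like vector field has the form $\pa_v = \pa_s + (\text{lower order in } s)$ up to a positive a-smooth factor, while $\rho^{-2}\cL$ restricts at $s=0$ to the fixed elliptic operator $\cL|_{L_0}$ on $\widetilde{L_0}$. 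Substituting the ansatz and expanding everything (including the $s$-dependence of $\cL$ and of the nonlinear coefficients) in Taylor series in $s$, one reads off, at order $s^k$, an equation of the shape $(k+1)u_{k+1} = -E_k - \cL|_{L_0}(u_k) - (\text{terms in } u_0,\dots,u_k)$, which uniquely determines $u_{k+1}$ once $u_0,\dots,u_k$ are fixed; we set $u_0 \equiv 0$, which is consistent because $E$ vanishes at the bottom face.

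The key steps in order are: (i) verify the normal form of $\pa_v$ and $\rho^{-2}\cL$ near the bottom face using the local model $U\times[0,\delta)$ from \S\ref{subsubsec_localmodels}, so that the recursion has the stated triangular structure with invertible leading coefficient $(k+1)$; (ii) check inductively that each $u_k$ is a-smooth on $\widetilde{L_0}$ in the radial variable $\rho$ --- this follows because $\cL|_{L_0}$ and the coefficients of the $P_{ij}''$ are a-smooth, and a-smooth functions are closed under the algebraic operations and under applying a-smooth differential operators, and because solving the recursion only involves multiplication by a-smooth functions and applying $\cL|_{L_0}$ (no inversion of an unbounded operator is needed, since at each order $u_{k+1}$ is given \emph{explicitly} in terms of the previous coefficients); (iii) apply Borel's lemma in the collar $\widetilde{L_0}\times[0,\ep)_s$ to obtain an a-smooth $\tu:\bL\rt\bR$ whose Taylor expansion at $s=0$ is $\sum_k s^k\rho^2 u_k$, noting that the standard Borel lemma extends to the a-smooth category because one can sum $\sum_k \psi(s/\ep_k) s^k\rho^2 u_k$ with rapidly shrinking cutoff parameters $\ep_k$ and the resulting series converges in every $C^{k+2r,r}$-norm; (iv) replace the old approximate solution $\bL$ by the graph $\Gamma(d_\rel\tu)$ over $\bL$ under $\tpsil$ --- which is again an a-smooth embedding into $\bX$ with the same boundary conditions since $\tu$ vanishes at the boundaries to the appropriate order --- and observe that by construction the new error $P[\tu]$ has vanishing Taylor series at $s=0$, hence decays as $O(s^\infty)$ as $s\downarrow 0$.

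The main obstacle I anticipate is bookkeeping the interaction between the a-smooth structure at the \emph{front} face and the ordinary Taylor expansion at the \emph{bottom} face: one must make sure that the $\rho^2$ weight and the a-smoothness in $\rho$ propagate correctly through the recursion, so that the constructed $\tu$ lies in $C^{2,0,2}$ with small norm (needed so that $\Gamma(d_\rel\tu)$ stays in the Lagrangian neighbourhood $\widetilde{U}_\bL$), and so that the Borel summation can be carried out \emph{uniformly} near the codimension-two corner where both $s\to 0$ and $\rho\to 0$. Concretely, the cutoff functions used in the Borel construction must be taken in the $s$-variable only and with the correct scaling relative to $\rho$, exactly as in the cutoff construction of Lemma~\ref{lem_cutofffunctions}; establishing that this yields an honest a-smooth function on $\bL$ (rather than merely a smooth function on the interior) is the technical heart of the argument. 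The rest --- the recursion itself and the fact that a-smoothness is preserved --- is routine given the closure properties of a-smooth functions recalled in \S\ref{sec_mancac}.
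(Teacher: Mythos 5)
Your proposal is correct and follows the paper's own argument essentially step for step: the same ansatz $u=\sum_{k\geq 0}s^k\rho^2 u_k$, the same triangular recursion in $s$-powers with invertible leading coefficient $(k+1)$, the normalisation $u_0\equiv 0$, the inductive a-smoothness of the $u_k$, Borel summation to an a-smooth $\tu$, and the replacement of the embedding by $\Gamma(d_\rel\tu)$. The added commentary (on the normal form of $\pa_v$ near the bottom face, on why no unbounded-operator inversion is needed, and on uniformity of the Borel construction near the codimension-two corner) fleshes out points the paper leaves implicit; only the attribution to Lemma~\ref{lem_cutofffunctions} is slightly off, as that lemma concerns the gluing cutoffs rather than Borel summation, but this does not affect correctness.
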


\subsection{Estimates on the nonlinear operator}
We will prove an estimate on the nonlinear part of our P.D.E., similar to Pacini \cite{Pac13}, using the following Lemma:
\begin{lem}
	Let $V$ be a normed vector space, and let $Q: V\rt\bR$ be a smooth function satisfying $Q(0)=DQ(0)=0$. Then for $x,y\in B_R(0)$, $|Q(x)-Q(y)|\leq C|x+y|\cdot|x-y|$, where $C=\sup_{x\in B_R(0)}|D^2Q(x)|$.
\end{lem}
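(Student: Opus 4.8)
The plan is to reduce the claim to the one-variable fundamental theorem of calculus applied along the segment joining $x$ and $y$. Since $B_R(0)$ is convex, the path $\gamma(t) := (1-t)y + tx$ stays in $B_R(0)$ for $t\in[0,1]$, so, using that $Q$ is $C^2$ and in particular $C^1$ in the Fr\'echet sense, I would write
\[
Q(x) - Q(y) = \int_0^1 \frac{d}{dt}Q(\gamma(t))\,dt = \int_0^1 DQ(\gamma(t))[x-y]\,dt .
\]
This already extracts a factor of $x-y$, and the remaining task is to estimate $DQ(\gamma(t))$.

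Next I would use the hypothesis $DQ(0)=0$ together with the Hessian bound. Writing $DQ(\gamma(t)) = DQ(\gamma(t)) - DQ(0) = \int_0^1 D^2Q(s\gamma(t))[\gamma(t)]\,ds$ and noting that $s\gamma(t)\in B_R(0)$ by convexity gives $|DQ(\gamma(t))| \le C\,|\gamma(t)|$ with $C = \sup_{B_R(0)}|D^2Q|$. A crude substitution $|\gamma(t)| \le (1-t)|y| + t|x|$ would only produce a bound in terms of $|x|+|y|$; to reach the symmetric factor $|x+y|$ I would symmetrise the outer integral, replacing $t$ by $1-t$ and averaging, and use the identity $\gamma(t) + \gamma(1-t) = x+y$ to re-express the paired integrand in terms of $x+y$ and $x-y$, each of the resulting Hessian coefficients being controlled by $C$. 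Combining the two displays then yields an inequality of the stated shape with constant no larger than $C$.

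There is essentially no obstacle here: this is a Banach-space avatar of an elementary calculus estimate, and the only points needing a little care are the convexity of $B_R(0)$ (so that the segments, and the dilates $s\gamma(t)$ used to estimate $DQ$, lie where the derivative bounds apply) and the Fr\'echet-differentiability conventions. In the application the lemma plays the role of a Lipschitz-with-small-constant estimate for the nonlinear remainder $Q$ in the LMCF operator \eqref{eq_potentialequation}, showing it is quadratically small on small balls, and it feeds directly into the contraction-mapping argument of the next subsection.
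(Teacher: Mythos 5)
Your first two displays --- the mean--value integral $Q(x)-Q(y)=\int_0^1 DQ(\gamma(t))[x-y]\,dt$ together with $|DQ(z)|\le C|z|$ obtained by integrating $D^2Q$ out from the origin --- are the right starting point, and the ``crude'' estimate you dismiss, namely integrating $|\gamma(t)|\le(1-t)|y|+t|x|$ to get $|Q(x)-Q(y)|\le\tfrac{C}{2}(|x|+|y|)\,|x-y|$, is in fact a complete and correct proof of the inequality the paper actually needs. The symmetrisation you propose to upgrade this to $|x+y|$ cannot work, because the stated inequality $|Q(x)-Q(y)|\le C|x+y|\cdot|x-y|$ is \emph{false}: take $V=\bR$, $Q(u)=u^3$, $x=1$, $y=-1$, $R=2$. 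Then $Q(0)=Q'(0)=0$, $\sup_{B_R}|Q''|=6R$, yet $|Q(x)-Q(y)|=2$ while $|x+y|=0$. The obstruction in your argument is concrete: after pairing $t$ with $1-t$ you must bound $DQ(a)+DQ(b)=\int_0^1\bigl(D^2Q(sa)[a]+D^2Q(sb)[b]\bigr)\,ds$ with $a=\gamma(t)$, $b=\gamma(1-t)$, and the two Hessians are evaluated at \emph{different} base points $sa$ and $sb$; the bilinear forms therefore cannot be gathered into a single one acting on $a+b=x+y$, and the cross--terms that survive are generically of size $|a|+|b|$, not $|a+b|$. (If $D^2Q$ were constant --- $Q$ a quadratic form --- the gathering would succeed, which is exactly why the counterexample must be cubic.) The $|x+y|$ in the lemma should read $|x|+|y|$; indeed, the conclusion of Theorem \ref{thm_quadratic}, which this lemma feeds, is stated with $\|u\|+\|v\|$ and not $\|u+v\|$, and $|x|+|y|$ is all the contraction argument requires. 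With that correction your argument through the first two displays is the proof; the symmetrisation paragraph should be deleted.
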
 
First, we note that the derivatives of the nonlinear term $\rho^2Q(\rho^{-2}{d_\rel u}, \rho^{-2}{^b\nabla_\rel^2u})$ can be written as the composition of derivatives on $u$ with  an a-smooth function on a sum of vector bundles having nonlinear terms of degree at least $2$ (since differentiation preserves degree of nonlinearity). Then, we choose $k,r$ large enough so that there exists a Sobolev embedding $L^2_{k+2r,r}\hookrightarrow C^{k/2+r,r/2}$, and we have a continuous mapping\[
L^2_{k+2r,r}\times L^2_{k+2r,r}\rt L^2_{k+2r,r},
\] 
i.e.\ $\|fg\|_{L^2_{k+2r,r}}\leq C\|f\|_{L^2_{k+2r,r}}\cdot\|g\|_{L^2_{k+2r,r}}$. Given this, we have the following:

\begin{theorem}\label{thm_quadratic}
	Let us consider the nonlinear term of our P.D.E.\ \eqref{eq_potentialequation} given by $\rho^{-2}Q(\rho^{-2}d_\rel u, \rho^{-2} {^b\nabla_\rel^2u})$, which we abbreviate to $Q[u]$. For $k,r$ large enough such that the Sobolev embedding holds, and for \[
	\|u\|_{L^2_{k+2r,r,\lambda}},\|v\|_{L^2_{k+2r,r,\lambda}} \leq K \ll 1,
	\] there is an estimate 
	\begin{equation}\label{eq_nonlinearestimatel2}
	\|Q[u] -Q[v]\|_{L^2_{k+2r-2,r-1,\lambda}} \leq C_K  \|u -v\|_{L^2_{k+2r,r,\lambda}}\cdot(\|u\|_{L^2_{k+2r,r,2}}+\|v\|_{L^2_{k+2r,r,2}}),
	\end{equation}
	for a constant $C_K$ depending on $K$.
\end{theorem}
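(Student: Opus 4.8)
The idea is to reduce \eqref{eq_nonlinearestimatel2} to a Moser-type estimate for a composition with the a-smooth nonlinearity (the scheme of Pacini \cite{Pac13}), the only extra work being a short bookkeeping of powers of $\rho$ which shows that the factors arrange themselves with exactly one copy of $u-v$ carrying the weight $\lambda$ and all remaining copies of $u$ or $v$ carrying the weight $2$.

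\smallskip
\noindent\textbf{Normal form and reductions.} First I would write the nonlinear term as $Q[u]=\rho^{2}\,\mathcal{Q}(\rho^{-2}\mathbf{D}u)$, where $\mathbf{D}u:=\big(\dr u,\,{^b\nabla_\rel^2 u}\big)$ is a section of $W:={^bT^*_\rel}\bL\oplus\mathrm{Sym}^2\big({^bT^*_\rel}\bL\big)$, the leading $\rho^{2}$ being the one that renders \eqref{eq_potentialequation} nonsingular, and $\mathcal{Q}\colon W\to\bR$ is fibrewise a-smooth, a-smooth in the base point, and vanishes to second order along the zero section; this is exactly the structure recorded before the statement. Since $\rho$ is bounded on $\bL$, for $\lambda\ge 2$ (the relevant case being $\lambda=2$) one has $\|w\|_{L^2_{m+2s,s,2}}\le C\|w\|_{L^2_{m+2s,s,\lambda}}$, and applying the parabolic Sobolev embedding $L^2_{k+2r,r}\hookrightarrow C^{k/2+r,\,r/2}$ (for $k,r$ large) to $\rho^{-2}u,\rho^{-2}v$ the hypothesis $\|u\|_{L^2_{k+2r,r,\lambda}},\|v\|_{L^2_{k+2r,r,\lambda}}\le K$ forces $\|\rho^{-2}\mathbf{D}u\|_{C^0},\|\rho^{-2}\mathbf{D}v\|_{C^0}\le CK$. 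Hence $\rho^{-2}\mathbf{D}u,\rho^{-2}\mathbf{D}v$ stay in a fixed ball $B_R(0)\subset W$ on which $\mathcal{Q}$ and all its derivatives are bounded, and on which the Lemma above gives the factorisation
\[
\mathcal{Q}(X)-\mathcal{Q}(Y)=(X-Y)\cdot\mathcal{R}(X,Y),\qquad \mathcal{R}(X,Y):=\int_0^1 D\mathcal{Q}\big(Y+t(X-Y)\big)\,dt,
\]
where $\mathcal{R}$ is a-smooth, bounded with all derivatives on $B_R(0)\times B_R(0)$, and satisfies $|\mathcal{R}(X,Y)|\le C\max(|X|,|Y|)$ since $D\mathcal{Q}$ vanishes at the origin.

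\smallskip
\noindent\textbf{The $\rho$-cancellation and the Moser estimate.} Setting $X=\rho^{-2}\mathbf{D}u$ and $Y=\rho^{-2}\mathbf{D}v$, so that $X-Y=\rho^{-2}\mathbf{D}(u-v)$, the point is that the leading $\rho^{2}$ cancels the $\rho^{-2}$ multiplying $\mathbf{D}(u-v)$:
\[
Q[u]-Q[v]=\rho^{2}\big(\mathcal{Q}(X)-\mathcal{Q}(Y)\big)=\mathbf{D}(u-v)\cdot\mathcal{R}(X,Y).
\]
Using the Banach-algebra property $\|fg\|_{L^2_{k+2r-2,r-1}}\le C\|f\|_{L^2_{k+2r-2,r-1}}\|g\|_{L^2_{k+2r-2,r-1}}$ (valid for $k,r$ large) and moving the weight onto the first factor, I would estimate
\[
\|Q[u]-Q[v]\|_{L^2_{k+2r-2,r-1,\lambda}}\le C\,\|\rho^{-\lambda}\mathbf{D}(u-v)\|_{L^2_{k+2r-2,r-1}}\,\|\mathcal{R}(X,Y)\|_{L^2_{k+2r-2,r-1}}.
\]
The first factor is $\le C\|u-v\|_{L^2_{k+2r,r,\lambda}}$ by the index count $\|\rho^{-a}\mathbf{D}w\|_{L^2_{k+2r-2,r-1}}\le C\|w\|_{L^2_{k+2r,r,a}}$ (the derivatives $\big({^\m\nabla}\big)^\beta\big({^b\nabla_\rel}\big)^\alpha\mathbf{D}w$ with $\beta\le r-1$, $\alpha+2\beta\le k+2r-2$ being, after the shift coming from $\mathbf{D}$, among those counted by $\|w\|_{L^2_{k+2r,r,a}}$, together with $\big|{^\m\nabla}\rho^{-a}\big|,\big|{^b\nabla_\rel}\rho^{-a}\big|=O(\rho^{-a})$). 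For the second factor I would prove, by induction on the order of differentiation, the Moser-type bound $\|\mathcal{R}(\rho^{-2}\mathbf{D}u,\rho^{-2}\mathbf{D}v)\|_{L^2_{k+2r-2,r-1}}\le C\big(\|\rho^{-2}\mathbf{D}u\|_{L^2_{k+2r-2,r-1}}+\|\rho^{-2}\mathbf{D}v\|_{L^2_{k+2r-2,r-1}}\big)$, which by the same index count is $\le C\big(\|u\|_{L^2_{k+2r,r,2}}+\|v\|_{L^2_{k+2r,r,2}}\big)$: each derivative of $\mathcal{R}(\rho^{-2}\mathbf{D}u,\rho^{-2}\mathbf{D}v)$ is, by the chain rule, a sum of bounded coefficients (fibre and base derivatives of $\mathcal{Q}$ evaluated on $B_R(0)$) times products of derivatives of $\rho^{-2}\mathbf{D}u$ and $\rho^{-2}\mathbf{D}v$ in which $\rho^{-2}$ always stays attached to $\mathbf{D}u$ or $\mathbf{D}v$ (so nothing blows up at the front face), and products of two or more such factors are absorbed by the algebra property and the smallness $\|\rho^{-2}\mathbf{D}u\|_{L^2_{k+2r-2,r-1}}+\|\rho^{-2}\mathbf{D}v\|_{L^2_{k+2r-2,r-1}}\le CK\ll 1$. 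Combining the two factors yields \eqref{eq_nonlinearestimatel2}.

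\smallskip
\noindent\textbf{Main obstacle.} There is no serious conceptual difficulty; the effort is in the Moser/Fa\`a-di-Bruno bookkeeping: verifying uniformly over all the terms produced by differentiating $\mathcal{Q}\circ(\rho^{-2}\mathbf{D}u)$ that every occurrence of the singular weight $\rho^{-2}$ remains paired with a derivative of $\mathbf{D}u$ (which decays like $\rho^{2}$ because $u$ lies in a weight-$\ge2$ space), that the coefficients $D^j\mathcal{Q}$, $D^j\mathcal{R}$ really are bounded on $\bL$ — this uses a-smoothness of $\mathcal{Q}$ together with the construction of ${^\m\nabla}$ — and that the parabolic index shifts between $L^2_{k+2r,r}$, $L^2_{k+2r-2,r-1}$ and the range in which these spaces form an algebra are compatible. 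The index lemma $\|\rho^{-a}\mathbf{D}w\|_{L^2_{k+2r-2,r-1}}\le C\|w\|_{L^2_{k+2r,r,a}}$ is the one auxiliary statement I would isolate and record separately.
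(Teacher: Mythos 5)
Your proposal is correct and arrives at the estimate by a genuinely different route from the paper's. The paper expands $\nabla_\rel^k\nabla^r Q[u]-\nabla_\rel^k\nabla^r Q[v]$ directly by the Fa\`a di Bruno chain rule, pairs the resulting differences term by term, and closes by multiplying pointwise weighted $C^{\cdot,\cdot,2}$-bounds (from the quadratic lemma preceding the theorem and Sobolev embedding) against the $L^2$-norms of the polynomial factors $P_{ij}$. You instead factor \emph{before} differentiating: with $X=\rho^{-2}\mathbf{D}u$, $Y=\rho^{-2}\mathbf{D}v$, writing $Q[u]-Q[v]=\rho^2\bigl(\mathcal{Q}(X)-\mathcal{Q}(Y)\bigr)=\mathbf{D}(u-v)\cdot\mathcal{R}(X,Y)$ with $\mathcal{R}=\int_0^1 D\mathcal{Q}(Y+t(X-Y))\,dt$ cancels the leading $\rho^{2}$ against one $\rho^{-2}$ at the outset, then the algebra property together with the index lemma $\|\rho^{-a}\mathbf{D}w\|_{L^2_{k+2r-2,r-1}}\leq C\|w\|_{L^2_{k+2r,r,a}}$ peels off $\|u-v\|_{L^2_{k+2r,r,\lambda}}$, and the remainder is a single Moser composition estimate for $\mathcal{R}$. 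This buys a cleaner structure: the ``vanishes to second order'' hypothesis is used exactly once, in $\mathcal{R}(\cdot,0,0)\equiv 0$, rather than being redistributed over the many terms of the paper's Leibniz expansion and the separate pointwise estimates for $\nabla^{i+j}Q$ and $P_{ij}$. Two points worth writing out fully rather than leaving implicit: first, $\mathcal{R}$ is a base-point-dependent bundle map, and the fact making the Moser bound $\|\mathcal{R}(X,Y)\|_{L^2_{k+2r-2,r-1}}\leq C\bigl(\|X\|_{L^2_{k+2r-2,r-1}}+\|Y\|_{L^2_{k+2r-2,r-1}}\bigr)$ \emph{linear} (not merely bounded) is that $\mathcal{R}(x,0,0)=D\mathcal{Q}(x,0)\equiv 0$ identically in the base variable $x$, so every pure base derivative of $\mathcal{R}$ also vanishes along the zero section; you flag the base-point dependence, but this is the one step I would not handwave. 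Second, you correctly observe that the statement implicitly requires $\lambda\geq 2$, and you correctly read the nonlinear term as $\rho^{2}Q(\cdots)$ rather than the $\rho^{-2}Q(\cdots)$ printed in the theorem statement — compare \eqref{eq_potentialequation} and \eqref{mapping}.
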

\begin{proof}
	First, we note that \[
	\nabla_\rel Q[u] = (\nabla \rho^{2}Q)(\rho^{-2}d_\rel u, \rho^{-2}{^b\nabla_\rel^2u})\cdot(\rho^{-2}\pa_xu, \rho^{-2}\pa_xd_\rel u, \rho^{-2}\pa_x\nabla^2_\rel u),
	\]
	and similarly \[
	\nabla_\rel^k\nabla^rQ[u] = \sum (\nabla^{i+j}\rho^2Q)(\rho^{-2}d_\rel u, \rho^{-2}{^b\nabla_\rel^2 u})\cdot P_{ij}[u],
	\]
	for $P_{ij}[u]$ being a polynomial of degree $i+j$ in the derivatives of $u$ weighted by $\rho^{-2}$, with $P_{ij}[0]=0$, and $(\nabla^{i+j}\rho^2Q)(\rho^{-2}d_\rel u, \rho^{-2}{^b\nabla_\rel^2 u})$ being an a-smooth function in $\rho^{-2}\nabla_\rel u, \rho^{-2}d_\rel u$ weighted by $\rho^2$. Now, we can write pointwise, by using the parallel transport along fibres,
	\[
	\nabla_\rel^k\nabla^rQ[u] - \nabla_\rel^k\nabla^rQ[v] = \sum ((\nabla^{i+j}\rho^2Q)[u]-(\nabla^{i+j}\rho^2Q)[v])\cdot P_{ij}[u] + \sum (\nabla^{i+j}\rho^2Q)[v] (P_{ij}[u]-P_{ij}[v]).
	\]
	Now, we note that we have the pointwise estimates with weighted norms,\[
	|Q[u]-Q[v]| \leq C|u-v|_{C^{2,0,2}}\cdot|u+v|_{C^{2,0,2}},\; |\nabla^k Q[u]-\nabla^k Q[v]| \leq C|u-v|_{C^{2,0,2}},
	\]
	\[
	|Q[u]| \leq C|u|_{C^{2,0,2}}^2,\; |\nabla Q[u]|\leq C|u|_{C^{2,0,2}}, \; |\nabla^kQ[u]|\leq C.
	\]
	Also, we have estimates on the polynomials,
	\[
	|P_{ij}[u]-P_{ij}[v]| \leq C|u-v|_{C^{k+2r-i-2j,r-j,2}},
	\]
	and for $i+j>1$, we have \[
	|P_{ij}[u] - P_{ij}[v]| \leq C|u-v|_{C^{k+2r-i-2j,r-j,2}}\cdot(|u|_{C^{k+2r-i-2j,r-j,2}}+|v|_{C^{k+2r-i-2j,r-j,2}}).
	\]
	Using these pointwise estimates and the continuity of multiplication, and noting down the weights of $\rho$, we have the claimed estimate.
\end{proof}

\begin{rem}
	It is likely possible to prove the theorem with a weaker hypothesis, that we merely have a Sobolev embedding $L^2_{k+2r,r}\hookrightarrow C^{0,0,\alpha}$.
\end{rem}

\subsection{Fixed point argument}
Consider the iteration map 
\begin{equation}\label{mapping}
	I: u\mapsto (\rho^2\partial_v-\mathcal{L})^{-1}(\rho^2Q[\rho^{-2}d_\rel u, \rho^{-2}{^b\nabla}_\rel^2u]+\rho^2E, 0|_{\botf}).
\end{equation}
For $u\in L^2_{k+2r,r,2}$ such that $\|u\|_{L^2_{k+2r,r,2}}$ is sufficiently small and $k,r$ are sufficiently large, we note that this is a well-defined mapping from $L^2_{k+2r,r,2}$ to itself. This is because both $\rho^2E$ and $\rho^2Q[u]$ lie in $L^2_{k+2r,r,2}$, and the operator $(\rho^2\pa_v-\mathcal{L})$ is invertible. Clearly, a fixed point for this mapping is a solution to the nonlinear P.D.E. Now, we will show that such a fixed point exists, while reducing the time interval of existence.
\subsubsection{Reducing the error}

In order to find a fixed point for (\ref{mapping}), we first reduce the error $E$ by multiplying it with a cutoff function:

\begin{defn}
	Define the \textit{reduced error} to be the function $E_\eta :=\chi(\eta^{-1} t)E$, where $\chi : [0,\infty)\rt\bR$ satisfies $\chi=1$ on $[0,1)$ and $\chi=0$ on $[2,\infty)$, and $\eta>0$ is a small positive constant.
\end{defn}
Note that we have $E_\eta =E$ for $t\in[0,\eta)$. Therefore, if we find a solution to the nonlinear problem with error $E_\eta$, we obtain a solution to the original problem on restricting to the time interval $[0,\eta)$. First, we show that we can make the norm of the error arbitrarily small by choosing $\eta$ small enough:
\begin{lem}
	Let $E$ vanish to all orders at the bottom face. Then, given any $\ep>0$, there exists $\eta>0$ such that $\|E_{\eta}\|_{L^2_{k+2r,r,2}} < \ep$.
\end{lem}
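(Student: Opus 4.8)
The plan is to combine two facts: $E_\eta=\chi(\eta^{-1}t)E$ is supported in the shrinking slab $\{0\le t\le 2\eta\}$, and $E$ vanishes to infinite order at the bottom face — the latter being exactly what is needed to absorb the derivatives of the temporal cutoff $\chi(\eta^{-1}t)$. First I would expand
\[
\|E_\eta\|_{L^2_{k+2r,r,2}}^2\ \asymp\ \sum_{\beta\le r}\ \sum_{|\alpha|\le k+2r-2\beta}\ \int_{\bL}\bigl|\rho^{-2}\,{}^{\m}\nabla^{\beta}\,{}^{b}\nabla_{\rel}^{\alpha}E_\eta\bigr|^2\,dV_{\bL},
\]
and localise this finite sum over the finitely many model neighbourhoods of Lemma~\ref{lem_modelneighbourhood}. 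Since $\chi(\eta^{-1}t)$ is constant on the time-slices, its relative $b$-derivatives vanish, so ${}^{b}\nabla_{\rel}$ acts only on $E$; the $\m$-derivatives act by the Leibniz rule and produce at most $r$ $\m$-derivatives of $\chi(\eta^{-1}t)$. Near the bottom face these cost a factor $O(\eta^{-r})$ (there ${}^{\m}\nabla\asymp\pa_t$ and the region is non-degenerate), whereas near the front face they carry no negative power of $\eta$ at all, because on the support of $\chi'$ one has $\eta^{-1}t=O(1)$ and $\rho^2\pa_v$ applied to $\chi(\eta^{-1}\rho^2 s)$ only brings down $\eta^{-1}\rho^2 s=\eta^{-1}t$. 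For $\eta$ small, $E_\eta$ vanishes identically on every model neighbourhood on which $t$ is bounded below, so only the bottom-face, front-face and corner models contribute.

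On a bottom-face model the metric is non-degenerate, $\rho$ is bounded, and $s\asymp t$; here I would use $E=O(s^\infty)$ quantitatively. Fix $M>r$ and write $E=t^{M}E^{(M)}$ with $E^{(M)}$ a-smooth, so that $\chi(\eta^{-1}t)\,t^{M}=\eta^{M}\psi(\eta^{-1}t)$ with $\psi(x):=x^{M}\chi(x)$ smooth and supported in $[0,2]$. Then every derivative of $E_\eta$ of order $\le r$ is a sum of terms of the form $\eta^{M-i}\cdot(\text{bounded, supported in }\{t\le 2\eta\})$, $i\le r$, and since $\{0\le t\le 2\eta\}$ has $\m$-volume $O(\eta)$ in this region the corresponding contribution to the norm is $O(\eta^{2(M-r)+1})\to 0$. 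On a front-face model with $s$ bounded below, the support $\{\rho^2 s\le 2\eta\}$ is contained in a collar $\{\rho\le C\sqrt\eta\}$ of the front face which shrinks onto it; using that $\rho^{-2}E$ extends a-smoothly and vanishes at the front face (this is built into the construction of the error term, and is why $E\in L^2_{k+2r,r,2}$ in the first place), say $\rho^{-2}\,{}^{\m}\nabla^{\beta}\,{}^{b}\nabla_{\rel}^{\alpha}E=O(\rho^{\delta})$ for some $\delta>0$, together with the fact that the $\m$-volume form contributes a factor $\rho^{-1}d\rho$ transverse to the front face, the contribution is $O\!\bigl(\int_0^{C\sqrt\eta}\rho^{2\delta}\rho^{-1}\,d\rho\bigr)=O(\eta^{\delta})\to 0$. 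The corner models are handled by a slightly more involved version of the same computation, using $E=O(s^\infty)$ in $s$ and the front-face decay in $\rho$ simultaneously. Summing the finitely many contributions gives $\|E_\eta\|_{L^2_{k+2r,r,2}}\to 0$ as $\eta\downarrow 0$, and choosing $\eta$ small accordingly proves the claim.

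The only genuine subtlety is the bottom-face bookkeeping in the second paragraph: naively, differentiating the temporal cutoff $r$ times costs $\eta^{-r}$, and it is precisely the infinite-order vanishing of $E$ at the bottom face (Proposition~\ref{prop_bottomfacesolving}) that compensates this; without it one would at best control $\|E_\eta\|$ over a fixed time interval but not make it small. Everything else — the partition of unity, the change of variables into the local models of \S\ref{subsubsec_localmodels}, and the elementary $\rho$- and $s$-integrals — is routine.
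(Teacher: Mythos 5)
Your proof is essentially correct and follows the same strategy as the paper's: exploit the infinite-order vanishing of $E$ at the bottom face to absorb the derivatives of the temporal cutoff, and conclude from the shrinking support. The paper's proof, however, is more unified in its bookkeeping, and it is worth seeing where the two diverge.

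The paper's single key observation is that the ${\m}$-derivatives of $\chi(\eta^{-1}t)$ satisfy a pointwise bound that is \emph{independent of $\eta$}, namely $|\pa_s^k\,{^b\nabla^\alpha_\rel}\chi(\eta^{-1}t)|\le C_{k,\alpha}s^{-k}$: on the support of $\chi'$ one has $\eta^{-1}t=\eta^{-1}\rho^2 s=O(1)$, so the factor $\eta^{-1}\rho^2$ produced by $\pa_s$ is $O(s^{-1})$, with no $\eta^{-1}$ at all, and $\rho\pa_\rho$ costs nothing. Combining this with $E=O(s^\infty)$, the derivatives of $E_\eta$ are dominated pointwise (uniformly in $\eta$) by fixed $L^2$-integrable functions, and the shrinking support then gives $\|E_\eta\|\to 0$ by dominated convergence. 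This handles the interior, bottom face, front face, and corner in one stroke. Your argument reaches the same $s^{-1}$ phenomenon at the corner implicitly, but you instead split into three model regimes and only fully carry out two of them; the corner, which you relegate to ``a slightly more involved version,'' is precisely where the two degeneracies interact and where the uniform $O(s^{-k})$ bound is most valuable. Your bottom-face computation via $\chi(\eta^{-1}t)\,t^M=\eta^M\psi(\eta^{-1}t)$ is a pleasant alternative to the paper's tactic and has the small advantage of producing an explicit rate $O(\eta^{2(M-r)+1})$, but it doesn't transport cleanly to the corner since $\chi$ there is a function of $\eta^{-1}r^2s$, not of $\eta^{-1}s$. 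One small overclaim: you assert $\rho^{-2}E$ extends a-smoothly and vanishes at the front face; the paper only establishes $E=O(\rho^\delta)$ for \emph{some} $\delta>0$ (not $\delta>2$), and this weaker decay, together with the shrinking collar $\{\rho\le C\sqrt\eta\}$, is already enough for the front-face integral to go to zero. I would recommend adopting the paper's uniform pointwise bound so that the corner does not need separate treatment, and keeping your quantitative $\eta^M\psi(\eta^{-1}t)$ trick if an explicit rate is wanted away from the corner.
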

\begin{proof}
	We essentially need to show that the derivatives of $E_\eta$ have uniformly bounded pointwise norms over $\eta$; then, the claim follows since reducing $\eta$ reduces the support of $E$. First, we note that due to $E$ being smooth and vanishing to all orders at the bottom face, all the m-derivatives of $E$ also vanish to all orders at the bottom face. Now, we note the m-derivatives of the cutoff function $\chi(\eta^{-1}t)$, which consist of derivatives $\pa_s$ and $\rho\pa_\rho$:\[
	\pa_s(\chi(\eta^{-1}t))= \chi'(\eta^{-1}t)\cdot \eta^{-1}\rho^2\leq Cs^{-1},
	\]
	\[
	\rho\pa_\rho(\chi(\eta^{-1}t)) = \chi'(\eta^{-1}t) \cdot 2\eta^{-1}\rho^2s \leq C,
	\]
	with the constant $C$ being independent of $\eta$.	Similarly, taking higher derivatives, we would obtain that $|\pa_s^k{^b\nabla^\alpha_\rel\chi(\eta^{-1}t)}| \leq C_{k,\alpha} s^{-k}$, for a uniform constant $C_{k,\alpha}$ independent of $\eta$. Therefore, due to $E$ and all its derivatives vanishing to all orders in $s$, the derivatives of $E_\eta$ are pointwise bounded by sums of functions of the form $C_{k,\alpha}s^{-k}{^b\nabla}^rE$. Now, we note that these functions decay to the front face at the same rate as $E$, and these are smooth functions at the bottom face. Therefore, the functions $C_{k,\alpha}s^{-k}{^b\nabla^k}E$ are integrable on $\bL$. Thus, since reducing $\eta$ makes the support of $E_\eta$ smaller, and $E_\eta$ and its derivatives are bounded by $L^2$-integrable functions, the $L^2_{k+2r,r,2}$-norm of $E_\eta$ decreases to $0$ as $\eta\downarrow 0$.
\end{proof}

 Now, we show that \eqref{mapping} is a contraction mapping in a small ball:
\begin{lem}
	There exist sufficiently small positive constants $\eta,\mu>0$ such that (\ref{mapping}) with error term $E_\eta$ is a well-defined contraction mapping $I: B_\mu \rt B_\mu$, where $B_\mu$ is the open ball of radius $\mu$ in $L^2_{k+2r,r,2}$. 
\end{lem}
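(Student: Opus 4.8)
The plan is to verify the hypotheses of the Banach fixed point theorem for $I$ on a small ball in $L^2_{k+2r,r,2}(\bL)$, using three facts already in hand: the boundedness of $(\rho^2\pa_v-\cL)^{-1}$ on the weighted spaces at rate $\lambda=2$ (Theorem \ref{thm_isomorphismL}, applicable since $2>\min(2-m,0)$), the quadratic estimate for the nonlinear term (Theorem \ref{thm_quadratic}), and the smallness of the reduced error $\|\rho^2E_\eta\|_{L^2_{k+2r,r,2}}$ for $\eta$ small (the preceding lemma). Throughout, fix $k,r$ large enough that the Sobolev embedding in the hypothesis of Theorem \ref{thm_quadratic} holds, and write $G:=(\rho^2\pa_v-\cL)^{-1}$, a bounded operator $L^2_{k+2r-2,r-1,2}(\bL)\oplus L^2_{k+2r-1}(\widetilde{L_0})\rt L^2_{k+2r,r,2}(\bL)$ with operator norm $C_0$, so that $I(u)=G\bigl(\rho^2E_\eta+Q[u],\,0|_{\botf}\bigr)$, where $Q[u]$ abbreviates the nonlinear term of \eqref{eq_potentialequation}. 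Well-definedness of $I$ as a self-map of $L^2_{k+2r,r,2}(\bL)$ is immediate once one knows $\rho^2E_\eta\in L^2_{k+2r-2,r-1,2}$ (which holds since $E$ is a-smooth and vanishes to all orders at the bottom face, as shown in \S 6.1) and $Q[u]\in L^2_{k+2r-2,r-1,2}$ (from the estimate below, using $Q[0]=0$), together with the trivial fact that the prescribed trace on $\widetilde{L_0}$ is zero.

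First I would fix the radius $\mu$. Let $C_1$ be the constant of Theorem \ref{thm_quadratic} for $K=1$; since $Q[0]=0$, that theorem gives, for all $u,v$ with $\|u\|_{L^2_{k+2r,r,2}},\|v\|_{L^2_{k+2r,r,2}}\le 1$, the estimates $\|Q[u]\|_{L^2_{k+2r-2,r-1,2}}\le C_1\|u\|_{L^2_{k+2r,r,2}}^2$ and $\|Q[u]-Q[v]\|_{L^2_{k+2r-2,r-1,2}}\le C_1\bigl(\|u\|_{L^2_{k+2r,r,2}}+\|v\|_{L^2_{k+2r,r,2}}\bigr)\|u-v\|_{L^2_{k+2r,r,2}}$. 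Choose $\mu\in(0,1)$ with $2C_0C_1\mu<\tfrac12$. Then for $u,v\in B_\mu$ one has $\|I(u)-I(v)\|_{L^2_{k+2r,r,2}}\le C_0\|Q[u]-Q[v]\|_{L^2_{k+2r-2,r-1,2}}\le 2C_0C_1\mu\|u-v\|_{L^2_{k+2r,r,2}}<\tfrac12\|u-v\|_{L^2_{k+2r,r,2}}$, so $I$ is a $\tfrac12$-contraction on $B_\mu$. Next I would fix $\eta$: by the preceding lemma choose $\eta>0$ so small that $C_0\|\rho^2E_\eta\|_{L^2_{k+2r-2,r-1,2}}<\tfrac{\mu}{2}$. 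Then for $u\in B_\mu$, $\|I(u)\|_{L^2_{k+2r,r,2}}\le C_0\bigl(\|Q[u]\|_{L^2_{k+2r-2,r-1,2}}+\|\rho^2E_\eta\|_{L^2_{k+2r-2,r-1,2}}\bigr)\le C_0C_1\mu^2+\tfrac{\mu}{2}<\tfrac{\mu}{2}+\tfrac{\mu}{2}=\mu$, using $C_0C_1\mu<\tfrac12$. Hence $I:B_\mu\rt B_\mu$ is a well-defined contraction, as claimed.

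There is no genuine obstacle here — the analytical content lies entirely in Theorems \ref{thm_isomorphismL} and \ref{thm_quadratic} and in the error lemma, and what remains is the two-line computation above. The only point that needs care is the bookkeeping of $\rho$-weights and of the orders of differentiation: one must confirm that the input $\bigl(\rho^2E_\eta+Q[u],\,0\bigr)$ genuinely lands in the domain $L^2_{k+2r-2,r-1,2}(\bL)\oplus L^2_{k+2r-1}(\widetilde{L_0})$ on which $G$ was shown to be a bounded isomorphism, and that the weight $\lambda=2$ appearing in \eqref{eq_potentialequation} after multiplying through by $\rho^2$ is compatible with the range $\lambda>\min(2-m,0)$ of Theorem \ref{thm_isomorphismL}; this is precisely why the equation was rescaled by $\rho^2$ and why the error was shown in \S 6.1 to lie in $L^2_{k+2r,r,2}$. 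Once this is observed, the Banach fixed point theorem applies to $I$ on $B_\mu$ and produces the unique small solution of the nonlinear equation.
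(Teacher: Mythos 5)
Your proof is correct and follows the same strategy as the paper: combine the bound on $(\rho^2\pa_v-\cL)^{-1}$, the quadratic estimate on $Q$, and the smallness of the reduced error $E_\eta$, then pick $\mu$ small enough to force the Lipschitz constant below $1$ and $\eta$ small enough to keep the image in $B_\mu$. The only difference is cosmetic — you target a contraction factor $\tfrac12$ explicitly, while the paper states the two inequalities $C_1(C_2\mu^2+E_\eta)<\mu$ and $2C_1C_2\mu<1$ and exhibits an explicit $\mu$ — and you spell out the well-definedness bookkeeping (error and nonlinearity land in $L^2_{k+2r-2,r-1,2}$, trace is zero) that the paper had already noted just before the lemma.
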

\begin{proof}
	Let $C_1$ be the norm of the inverse linear operator, and $C_2$ be the constant for the nonlinear estimate \eqref{eq_nonlinearestimatel2}. For the image to lie in the ball $B_\mu$, we must have that \begin{equation}\label{eq_contractioncondition1}
	C_1(C_2\mu^2+E_\eta)<\mu \iff  E_\eta < C_1^{-1}\mu-C_2\mu^2,
	\end{equation}
	while to be a contraction mapping we need \begin{equation}\label{eq_contractioncondition2}
	1>2C_1C_2\mu.
	\end{equation}
	So, we can first choose $\mu$ sufficiently  small, so that \eqref{eq_contractioncondition2} holds, and then make it smaller if necessary so that $C_1^{-1}\mu-C_2\mu^2 >0$, and finally choose $\eta$ sufficiently small so that \eqref{eq_contractioncondition1} holds. Thus for example, we can choose $\mu=\frac{1}{2(1+C_1)(1+C_2)}$, and $\eta$ small so that $E_\eta < \frac{1}{4(1+C_1)^2(1+C_2)}$.	
\end{proof}

Therefore, we obtain a weak solution to the nonlinear P.D.E.:
\begin{coro}
	There exists a weak solution to the nonlinear problem \eqref{eq_potentialequation} for a short time interval $t\in[0,\ep)$, in the space $L^2_{k+2r,r,2}$. 
\end{coro}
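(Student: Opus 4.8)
The plan is to read the statement off from the Banach fixed point theorem applied to the iteration map $I$ of \eqref{mapping}, since the three analytic ingredients it needs are already in place. First, by Proposition \ref{prop_bottomfacesolving} we may assume the approximate solution has been modified so that the zeroth order error $\rho^2 E$ vanishes to infinite order at the bottom face; hence $\rho^2 E_\eta\in L^2_{k+2r,r,2}$ for every $k,r\geq 1$, and by the preceding lemma $\|\rho^2 E_\eta\|_{L^2_{k+2r,r,2}}$ can be made as small as we wish by shrinking $\eta$. Second, by Theorem \ref{thm_isomorphismL} the linearised operator $(\rho^2\pa_v-\cL,\,\cdot|_{\botf})$ is an isomorphism at the weight $\lambda=2$ for all $k\geq 0$, $r\geq 1$; write $C_1$ for the norm of its inverse. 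Third, once $k,r$ are chosen large enough that the Sobolev embedding $L^2_{k+2r,r}\hookrightarrow C^{k/2+r,r/2}$ and the multiplication $L^2_{k+2r,r}\times L^2_{k+2r,r}\to L^2_{k+2r,r}$ hold, Theorem \ref{thm_quadratic} supplies the quadratic estimate \eqref{eq_nonlinearestimatel2} with a constant $C_2=C_2(\mu)$ on the ball of radius $\mu$. Combining these, the previous lemma produces $\mu,\eta>0$ such that $I$ restricts to a well-defined $\tfrac12$-Lipschitz self-map of the closed ball $\overline{B_\mu}\subset L^2_{k+2r,r,2}$, which is a complete metric space (the image in fact lands in a strictly smaller ball by \eqref{eq_contractioncondition1}, so passing to the closure is harmless).

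Then I would invoke the contraction mapping principle to obtain a unique fixed point $u\in\overline{B_\mu}$, i.e.\ $u=I(u)$. Unwinding the definition of $I$, this is precisely the assertion that $(\rho^2\pa_v-\cL)u = \rho^2 Q[\rho^{-2}\dr u,\rho^{-2}{^b\nabla}_\rel^2 u] + \rho^2 E_\eta$ together with $u|_{\botf}\equiv 0$, so $u$ is a weak solution of the nonlinear P.D.E.\ \eqref{eq_potentialequation} with the reduced error $E_\eta$. Since $E_\eta = E$ for $t\in[0,\eta)$, the restriction of $u$ to $\bL|_{\pi^{-1}([0,\eta))}$ solves the original equation on the time interval $[0,\eta)$; relabelling $\eta$ as $\ep$ gives the stated weak solution $u\in L^2_{k+2r,r,2}$.

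I do not expect a genuine obstacle here: the hard analytic work — the isomorphism Theorem \ref{thm_isomorphismL}, the quadratic estimate \eqref{eq_nonlinearestimatel2}, and the contraction lemma — has already been carried out. The only point requiring care is the order of quantifiers on the constants. Here $C_1$ is fixed once $\lambda=2$ and $k,r$ are chosen, $C_2$ depends only on the radius $\mu$ of the ball in which we work, while $\|\rho^2 E_\eta\|_{L^2_{k+2r,r,2}}$ is the sole remaining free parameter; so one must first pick $\mu$ small enough that $2C_1C_2\mu<1$ and $C_1^{-1}\mu-C_2\mu^2>0$, and only afterwards pick $\eta$ small enough that $C_1(C_2\mu^2+\|\rho^2E_\eta\|_{L^2_{k+2r,r,2}})<\mu$. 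The one structural fact invoked implicitly is that shrinking the time interval to $[0,\eta)$ does not worsen $C_1$ or $C_2$; this is exactly the content of the estimates with shortened time interval (Lemmas \ref{lem_apriorishort} and \ref{prop_sobolevtime}, together with the monotonicity of the constant in $\ep$ in Theorem \ref{thm_bfestimate}), so the argument is internally consistent.
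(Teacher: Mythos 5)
Your proposal is correct and follows the paper's approach: the corollary is read off directly from the preceding lemma via the Banach fixed-point theorem, and the restriction from the reduced error $E_\eta$ to the original error on $[0,\eta)$ is exactly as you describe. One small clarification on your final paragraph: for this existence step the contraction is carried out on the fixed space $L^2_{k+2r,r,2}(\bL)$ over the full interval $[0,\ep)$ (only $\|\rho^2 E_\eta\|$ shrinks as $\eta\downarrow 0$, not the underlying domain), so the uniformity of $C_1,C_2$ under shortening the time interval is not actually invoked here — that machinery (Lemmas \ref{lem_apriorishort}, \ref{prop_sobolevtime}) is reserved for the uniqueness argument later.
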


\subsection{Regularity and
	uniqueness of the solution}
To show that the weak solution we obtained has derivatives of all orders, we invoke some parabolic regularity results from the book by Ladyzhenskaya, Solonnikov and Ural'ceva \cite{LSU68}. First, we have Theorem 12.1 from Chapter 3 of \cite{LSU68}:
\begin{theorem}\label{thm_regularity}
	Let $\Omega$ be a bounded domain in $\mathbb{R}^m$, $[a^{ij}],b^i,c$ be continuous on $\Omega_T$, $[a^{ij}]$ being elliptic, and define the linear operator \[
	Lu = \partial_t u - \partial_i(a^{ij}\partial_j u)- b^i\partial_i u - cu.
	\]
	Suppose that $u$ is a weak solution of $Lu = f$. If the coefficients of $L$ and $f$ lie in $C^{k,l,\alpha}((0,T)\times \Omega)$, then $u$ lies in $C^{k+2,l+1,\alpha}(I\times \Omega')$ for $I\subset \subset (0,T)$ and $\Omega'\subset\subset \Omega$.
\end{theorem}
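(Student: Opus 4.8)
This is the classical interior parabolic Schauder regularity theorem; the plan is to prove it by the standard localize--Schauder--bootstrap scheme, essentially reproducing the argument of \cite{LSU68}, and I indicate only the main points. First I would localize: fix cylinders $Q' = I\times\Omega' \ssubset Q'' \ssubset Q = (0,T)\times\Omega$ and a smooth space--time cutoff $\psi$ equal to $1$ on $Q'$ and supported in $Q''$, so that $v := \psi u$ is a weak solution, on all of $\bR^m\times(0,T)$ after extension by zero, of a divergence-form equation $Lv = \tilde f$ with $\tilde f = \psi f + R[\psi,u]$, the remainder $R[\psi,u]$ being a commutator built from $\nabla\psi,\partial_t\psi$ and $u,\nabla u$. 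Since by the standard parabolic $L^p$ Calder\'on--Zygmund theory (as in \cite{Kry08}) a weak solution belongs locally to the parabolic Sobolev space $W^{2,1}_p$ (two spatial, one temporal weak derivative in $L^p$) for every finite $p$, Sobolev embedding puts $u$, and hence $R[\psi,u]$, into a parabolic H\"older class on $Q''$; this supplies the base regularity. With the coefficients and $f$ in $C^{\alpha,\alpha/2}$ locally, the interior Schauder estimate for divergence-form parabolic operators (\cite{LSU68}, Ch.~III) then upgrades $u$ to $C^{2+\alpha,1+\alpha/2}$ on a slightly smaller cylinder, and one rewrites the principal part in nondivergence form, $a^{ij}\partial_{ij}(\,\cdot\,) = \partial_i(a^{ij}\partial_j(\,\cdot\,)) - (\partial_i a^{ij})\partial_j(\,\cdot\,)$, once a derivative of $a^{ij}$ is available, in order to feed the induction.

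The induction on $k$ proceeds by differentiating the equation in space. For a multi-index $\beta$ with $|\beta|\le k$ the derivative $\partial_x^\beta u$ (obtained rigorously through difference quotients) solves a parabolic equation with the same principal coefficients whose right-hand side involves $\partial_x^\beta f$, the derivatives $\partial^\gamma a^{ij},\partial^\gamma b^i,\partial^\gamma c$ for $|\gamma|\le k$, and derivatives of $u$ of order at most $|\beta|+1$; by the inductive hypothesis the last already lie in $C^{\alpha,\alpha/2}$, so the Schauder estimate bounds $\partial_x^\beta u$ in $C^{2+\alpha,1+\alpha/2}$ on $Q'$, which is the full spatial part of the claim. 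Temporal regularity then follows from the equation itself, read as $\partial_t u = \partial_i(a^{ij}\partial_j u) + b^i\partial_i u + cu + f$: once $u$ has $k+2$ and the coefficients $k$ H\"older spatial derivatives, $\partial_t u$ has $k$ of them; differentiating this identity in $t$ and iterating $l+1$ times, using the $C^{k,l,\alpha}$ time regularity of the coefficients and of $f$, expresses $\partial_t^{\,l+1}u$ as a H\"older-continuous spatial differential expression in $u$, the coefficients, $f$ and their derivatives. The parabolic bookkeeping --- each $\partial_t$ costing two spatial derivatives --- is precisely what yields the index $C^{k+2,l+1,\alpha}$; a finite covering of $I\times\Omega'$ and the usual iteration on shrinking cylinders then give the interior statement.

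I expect the main obstacle to be the interaction of the divergence-form structure with the merely H\"older (not Lipschitz) regularity of the coefficients at the first step: one cannot pass directly to nondivergence form there, so the start of the bootstrap has to go through the $L^p$/De Giorgi--Nash--Moser theory before any Schauder estimate applies, and thereafter the commutator terms from the cutoff and the difference-quotient equations must be organized so that at each stage the inhomogeneity sits one H\"older class above what is needed. Keeping the mixed space--time H\"older indices consistent through the spatial and temporal bootstraps is the bookkeeping core of the argument; everything else is routine and is contained in \cite{LSU68}.
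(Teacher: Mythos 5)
The paper does not prove this theorem: it is quoted as Theorem 12.1 of Chapter III of Ladyzhenskaya--Solonnikov--Ural'ceva \cite{LSU68} and used off the shelf, so your proposal is a reconstruction of an external classical proof rather than something to compare against an argument of the paper's own.

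Your bootstrap scheme --- localize with a space--time cutoff, obtain an initial regularity gain, feed it into the parabolic Schauder estimate, differentiate in $x$ via difference quotients to run an induction on $k$, and then read temporal regularity off the equation with the parabolic $2{:}1$ counting --- is the correct classical route and is essentially what is done in \cite{LSU68}. One step, however, is not right as written. From a weak solution of a \emph{divergence-form} equation with merely continuous (or H\"older) $a^{ij}$ you cannot invoke parabolic Calder\'on--Zygmund theory to conclude $u\in W^{2,1}_p$ locally: that theory (as in \cite{Kry08}) is for nondivergence-form operators, or presupposes enough differentiability of $a^{ij}$ to rewrite in nondivergence form --- which is precisely the move you correctly say is unavailable at the first step. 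What the divergence structure actually supplies at the base stage is De~Giorgi--Nash--Moser H\"older continuity of $u$ together with $\nabla u\in L^p_{\loc}$; the divergence-form Schauder estimate then upgrades $u$ to $C^{1+\alpha,(1+\alpha)/2}_{\loc}$ when $a^{ij},b^i,c,f\in C^{\alpha,\alpha/2}$. The $C^{2+\alpha,1+\alpha/2}$ conclusion requires the rewrite $\partial_i(a^{ij}\partial_j u)=a^{ij}\partial_{ij}u+(\partial_ia^{ij})\partial_ju$, hence $\partial_i a^{ij}\in C^{\alpha}$, which is what $k\ge 1$ in the hypothesis gives you and what your commutator term correctly exploits in the induction. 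So your plan is sound once the base step is phrased this way: drop the initial $W^{2,1}_p$ claim, and let De~Giorgi--Nash plus the divergence-form $C^{1,\alpha}$ Schauder estimate do that work before the nondivergence rewrite enters. The tension you flag in your last paragraph is real, and this is its resolution.
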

We will need a version of this theorem going up to the boundary $t=0$, which follows from the previous result and Theorem 5.1 from Chapter 4 of \cite{LSU68} (c.f.\ the discussion following Theorem 12.1 of \cite{LSU68}):
\begin{prop}\label{prop_bootstrap}
	Let $\Omega$ be a bounded domain in $\mathbb{R}^m$, $[a^{ij}],b^i,c$ be continuous on $\Omega_T$, $[a^{ij}]$ being elliptic, and define the linear operator \[
	Lu = \partial_t u - \partial_i(a^{ij}\partial_j u)- b^i\partial_i u - cu.
	\]
	Suppose that $u$ is a weak solution of $Lu = f$, such that $u\in C^0(\Omega_T)$ and $u|_{t=0}\equiv0$. If the coefficients of $L$ and $f$ lie in $C^{k,l,\alpha}([0,T)\times \Omega)$, then $u$ lies in $C^{k+2,l+1,\alpha}([0,t)\times \Omega')$ for $t<T$ and $\Omega'\subset\subset \Omega$.
\end{prop}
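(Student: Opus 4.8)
The plan is to deduce the statement from the two cited regularity theorems --- the interior estimate Theorem~\ref{thm_regularity}, which already applies for $t>0$, and the initial/boundary Schauder estimate Theorem~5.1 of Chapter~4 of \cite{LSU68} --- by a localisation followed by a bootstrap on the order of regularity, exactly as in the discussion following Theorem~12.1 of \cite{LSU68}. First I would fix $t<T$ and nested domains $\Omega'\ssubset\Omega''\ssubset\Omega$, choose a time-independent cutoff $\zeta\in C^\infty_c(\Omega'')$ with $\zeta\equiv 1$ on $\Omega'$, and observe that $v:=\zeta u$ is a weak solution of an equation $Lv=\tilde f$ of the same form on $[0,t)\times\Omega''$, where $\tilde f$ is assembled from $f$ together with at most the first spatial derivatives of $u$ (with its divergence structure preserved). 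The function $v$ vanishes for $x$ near $\pa\Omega''$ and satisfies $v|_{t=0}\equiv 0$, so in a neighbourhood of the corner $\{t=0\}\cap\pa\Omega''$ one simply has $v\equiv 0$; this reduces matters to the initial--boundary value problem on a fixed cylinder with vanishing lateral and vanishing initial data, where no compatibility obstruction can arise.

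\textbf{The bootstrap.} To start the iteration I would note that, since $u\in C^0$, $u|_{t=0}\equiv 0$ and the coefficients and $f$ are at least H\"older continuous, the De~Giorgi--Nash--Moser theory of Chapter~3 of \cite{LSU68} places $v$ in a parabolic H\"older class up to $t=0$, which, together with the interior theory, promotes $\tilde f$ to a genuine H\"older right-hand side; Theorem~5.1 of Chapter~4 then yields $v\in C^{2,1,\alpha}$ locally up to $t=0$. For the inductive step, suppose $v$ has been shown to lie in some H\"older class up to $t=0$ with spatial order $<k+2$ or temporal order $<l+1$. Differentiating $Lv=\tilde f$ in a spatial direction produces a parabolic equation of the same form for $\pa_x v$, whose coefficients and right-hand side have one fewer spatial derivative of regularity but still lie in the classes required by the Schauder estimate, and whose initial trace $\pa_x v|_{t=0}$ vanishes because $v|_{t=0}\equiv 0$; applying Theorem~5.1 of Chapter~4 again raises the regularity by two spatial and one temporal order, while time derivatives are recovered algebraically from $\pa_t v=\pa_i(a^{ij}\pa_j v)+b^i\pa_i v+cv+\tilde f$ at the cost of two spatial derivatives each. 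Iterating until space-order $k+2$ and time-order $l+1$ are attained, and restricting to $\Omega'$ where $v=u$, gives the claim.

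\textbf{The main obstacle.} There is no deep difficulty here; the work is essentially bookkeeping. One must verify at each stage that the successively differentiated equations and modified right-hand sides keep their coefficients and data in precisely the H\"older classes demanded by Theorem~5.1 of Chapter~4, tracking carefully how the independent spatial and temporal orders in the space $C^{k,l,\alpha}$ interact under the parabolic scaling, and one must confirm --- which is exactly what the purely spatial cutoff is designed to guarantee --- that the vanishing of $v$ near the corner $\{t=0\}\cap\pa\Omega''$ removes any need for compatibility conditions between the initial data and $f$ at $t=0$. The only genuinely analytic input beyond the quoted Schauder theory is the base H\"older estimate for a weak solution with continuous coefficients, H\"older data and vanishing initial trace, which is classical.
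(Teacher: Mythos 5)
Your plan is consistent with the paper's own (essentially citation-only) treatment: the paper simply asserts that the statement follows from Theorem~\ref{thm_regularity} together with Theorem~5.1 of Chapter~4 of \cite{LSU68}, following the discussion after Theorem~12.1 there, and your proposal fleshes out exactly that route --- a purely spatial cutoff to reduce to the Cauchy--Dirichlet problem with vanishing initial and lateral data (so no corner compatibility conditions arise), a De~Giorgi--Nash--Moser base step to get off the ground from the $C^0$ hypothesis, and a Schauder bootstrap on differentiated equations. The one bookkeeping point worth spelling out if this were written in full is that the commutator term $(\pa_i\zeta)a^{ij}\pa_j u$ appearing in $\tilde f$ must be kept in divergence form, $-\pa_j\big((\pa_i\zeta)a^{ij}u\big)+\pa_j\big((\pa_i\zeta)a^{ij}\big)u$, so that at the base step $\tilde f$ is meaningful when one only knows $u\in C^0$; with that observation the argument closes.
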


To apply this theorem, we take our nonlinear P.D.E.\ and differentiate it to obtain linear P.D.E.s having continuous coefficients, such that they have weak solutions given by $\partial_iu$, and gain higher regularity by a bootstrapping argument.

\subsubsection{Bootstrapping}
By Sobolev embedding, the solution we have obtained in $L^2_{k+2r,r,2}$ lies in $C^{2+\delta,1+\delta/2}$ by choosing $k,r\gg 1$. Now, we will show that we can bootstrap to obtain that our solution has derivatives of all orders. Let us first note the form of our equation. We can write it as\[
E+\pa_vu -\Delta_{L_t}u+Q(\rho^{-2}d_\mathrm{rel}u,\rho^{-2}{^b\nabla^2_\mathrm{rel}u})=0,
\]
which we can take a derivative of by $^b\nabla_\mathrm{rel}^k$, and multiply it by $\rho^2$, to get the equation \begin{equation}\label{eq_bootstrap}
	\rho^2\pa_v(^b\nabla_\mathrm{rel}^ku) - \sum a_{ij}(^b\nabla_\mathrm{rel}^2u,{^b\nabla}_\mathrm{rel} u){^b\nabla_\mathrm{rel,}}^2_{ij}(^b\nabla_\mathrm{rel}^ku) +b(^b\nabla_\rel^{k-1}u,\dots,{^b\nabla}_\mathrm{rel} u, u)=0,
\end{equation}
for $a_{ij},b$ being a-smooth functions. Assuming that we have already shown $u$ has continuous relative derivatives up to order $k$, it follows that we have a weak solution $\tilde{u}:={^b\nabla^k_\mathrm{rel} u}$ of the P.D.E.\[
\rho^2\pa_v(\tilde{u}) - \sum a_{ij}(^b\nabla_\mathrm{rel}^2u,{^b\nabla}_\mathrm{rel} u){^b\nabla_\mathrm{rel,}}^2_{ij}(\tilde{u}) +b(^b\nabla_\rel^{k-1}u,\dots,{^b\nabla}_\mathrm{rel} u, u)=0,
\]
where the coefficients are in $C^0$. Now, we note that the above P.D.E. is just the linearisation of the nonlinear P.D.E. at the solution $u$. Therefore, the matrix $[a_{ij}]$ is positive-definite as well, and the equation is a linear parabolic P.D.E. Hence, by applying Proposition \ref{prop_bootstrap}, we get the existence of continuous relative derivatives of $u$ up to order $k+1$. Continuing in this fashion, we obtain by induction that all relative derivatives of $u$ are continuous. By taking further derivatives of the equation in $\pa_s$ and proceeding similarly, we get $u\in C^\infty_\loc$. Then, we can show that $u$ is a-smooth, by noting that we can do the iteration argument for any $k,r\gg1$ over a short interval of time and invoke Sobolev embedding.

\subsubsection{Uniqueness of the solution}

By the uniqueness inherent in the contraction mapping argument, our solution is unique in a small $L^2_{k+2r,r,2}$-neighbourhood of itself. However, it is also unique in the whole space $L^2_{k+2r,r,2}$ over the time interval $[0,\ep)$, provided the boundary condition $u|_{s=0}\equiv 0$. To show this, suppose there exists another solution $u\in L^2_{k+2r,r,2}$ to the nonlinear problem. Firstly, we note that by the regularity theorems of the previous section, the new solution is also smooth near the bottom face. Therefore, it follows that it must have the same asymptotic expansion near the bottom face as our solution. In particular, linearising the nonlinear P.D.E.\ at the solution we obtained, the new solution decays to all orders at the bottom face. We first note the following:

\begin{prop}
 
	Given that $f=(\rho^2\pa_v-\cL) u$ decays to all orders at the bottom face, and given any $\eta \in (0,\ep)$, we have the estimate, \[
	\|u\|_{L^2_{k+2r,r,2}} \leq C(\|f\|_{L^2_{k+2r-2,r-1,2}}),
	\]
	where the norm is taken over $\bL|_{\pi^{-1}(\llbracket0,\eta))}$, and the constant is independent of $\eta$. 
\end{prop}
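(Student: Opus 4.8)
The plan is to combine the $\eta$-uniform a priori estimate of Lemma~\ref{lem_apriorishort} with the lower-order-term-free estimate valid near the front face (which comes from the Fredholm theory on $\bE$), exploiting that $u$ vanishes at the bottom face to put an extra power of $\eta$ in front of the remaining terms and then absorb them. Throughout, $\lambda=2$, and I use, as holds in the surrounding uniqueness argument of §\ref{sec_nonlinearproblem}, that $u$ itself decays to all orders at the bottom face (it is a difference of two solutions of the nonlinear problem agreeing there to all orders), so in particular $u|_{t=0}\equiv 0$ and all derivatives $\pa_t^l\,{}^b\nabla_\rel^m u$ vanish at $t=0$ as well.

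\textbf{Reduction.} Since $f$ decays to all orders at the bottom face, every trace term $\|\pa_s^i f|_{s=0}\|$ appearing in \eqref{eq_aprioridecreasing} vanishes, and Lemma~\ref{lem_apriorishort} becomes $\|u\|_{L^2_{k+2r,r,2}}\le C(\|f\|_{L^2_{k+2r-2,r-1,2}}+\|u\|_{L^2_{0,0,2}})$ with $C$ independent of $\eta$. Thus it suffices to prove
\[
\|u\|_{L^2_{0,0,2}}\;\le\; C\,\|f\|_{L^2_{k+2r-2,r-1,2}}+C\eta\,\|u\|_{L^2_{k+2r,r,2}}
\]
with $C$ independent of $\eta$: feeding this back and absorbing the last term for $\eta$ small gives the Proposition with an $\eta$-independent constant. (For $\eta$ bounded below one may instead invoke the isomorphism of Theorem~\ref{thm_isomorphismL}, so only small $\eta$ needs treatment.) To prove the displayed bound, take a partition of unity $\{\phi_1,\phi_2\}$ subordinate to a fixed neighbourhood $U_\bL$ of the front face and its complement. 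On $U_\bL$, using the a-diffeomorphism $U_\bL\cong U_\bE$ which is an isometry for the $b$-metrics at the front face together with the front-face estimate of §\ref{sec_linearproblem} (itself a transfer of Corollary~\ref{coro_semiFredholm}), one has $\|\phi_1 u\|_{L^2_{2,1,2}}\le C\|\phi_1 f\|_{L^2_{0,0,2}}+C\|[\rho^2\pa_v-\cL,\phi_1]u\|_{L^2_{0,0,2}}$, the boundary term vanishing. The commutator is a first-order operator supported in the compact overlap region, which lies away from the front face (so $\rho$ is bounded below there); since $u$ and its derivatives vanish to all orders at $t=0$, a one-dimensional Poincar\'e inequality in the time variable gives, for any $h$ with $h|_{t=0}=0$, the bound $\|h\|_{L^2(\,[0,\eta))}\le \tfrac{\eta}{\sqrt2}\|\pa_t h\|_{L^2(\,[0,\eta))}$, whence $\|[\rho^2\pa_v-\cL,\phi_1]u\|_{L^2(\text{overlap})}\le C\eta\|u\|_{L^2_{k+2r,r,2}}$. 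Applying the same Poincar\'e inequality directly to $\phi_2 u$ on the complement (where again $\rho$ is bounded below and $u|_{t=0}\equiv0$) gives $\|\phi_2 u\|_{L^2_{0,0,2}}\le C\eta\|\pa_t u\|_{L^2}\le C\eta\|u\|_{L^2_{k+2r,r,2}}$. Adding these and discarding derivatives on the left-hand side produces the displayed bound.

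\textbf{Main obstacle.} The delicate point is that \emph{every} constant above must be genuinely independent of $\eta$. Near the bottom face this is the monotonicity of the Krylov-type constants under shrinking the time interval, already recorded in Theorem~\ref{thm_bfestimate}; near the front face it is that the estimate transferred from $\bE$ (Corollary~\ref{coro_semiFredholm}), obtained by restriction on $\bE$ and via the Fourier/Lockhart--McOwen argument on the model, is insensitive to the time cutoff, so its constant does not degenerate as $\eta\downarrow0$. Once these uniformity statements are in place, the Poincar\'e-in-time gains and the final absorption for $\eta$ small are routine; the minor regularity bookkeeping in the commutator terms (which costs one extra space derivative) is harmless since in the application $k,r$ are taken large.
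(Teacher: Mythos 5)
Your reduction via Lemma~\ref{lem_apriorishort} (trace terms drop since $f$ vanishes to all orders) matches the paper exactly, and the Poincar\'e-in-time argument for $\phi_2 u$ on the region away from the front face is sound: there $\rho$ is bounded below, $u$ and all $\pa_t^l u$ vanish at $t=0$, and the one-dimensional Poincar\'e inequality gives an $O(\eta)$ gain with an $\eta$-independent constant.

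The gap is in your treatment of $\phi_1 u$ near the front face. You invoke the estimate (transferred from $\bE$ via Corollary~\ref{coro_semiFredholm}) that for a function \emph{on $\bL$ or $\bE$} supported in a fixed neighbourhood $U_\bL=\{\rho<\varepsilon_0\}$ of the front face, one has $\|\phi_1u\|_{L^2_{2,1,2}}\le C(\|(\rho^2\pa_v-\cL)(\phi_1u)\|_{L^2_{0,0,2}}+\|\phi_1 u|_{t=0}\|_{L^2_{1,2}})$. That estimate requires the function to be globally defined on $\bL$ (time interval $[0,\ep)$) or on $\bE$ (time interval $(0,\infty)$); its proof uses the Fourier transform in $\log\rho$ over all $\rho\in(0,\infty)$. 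But $\phi_1 u$ is only defined on $\bL|_{[0,\eta)}$, and when $\eta<\varepsilon_0^2$ (the regime you say ``needs treatment''), the hypersurface $\{t=\eta\}=\{s=\eta/\rho^2\}$ slices through the support $\{\rho<\varepsilon_0\}$; $\phi_1 u$ need not vanish there, so extending by zero destroys membership in $L^2_{2,1,2}$. Repairing this by adding a time cutoff $\chi_\eta(t)$ costs a commutator $\rho^2\chi_\eta' u$ supported where $t\sim\eta$, $\rho\gtrsim\sqrt\eta$; after the $\rho^{-2}$ weighting this is $\sim\chi_\eta' u$ with $|\chi_\eta'|\sim\eta^{-1}$, and the Poincar\'e gain returns exactly $\eta$, so the net contribution is $O(\|\pa_t u\|_{L^2})$ on that region — and $\pa_t u$ is \emph{not} bounded by $\|u\|_{L^2_{k+2r,r,2}}$ there, since $\rho^2$ (rather than a constant) multiplies $\pa_t$ in the $\m$-norm and $\rho^2\sim\eta$ on the support. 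Shrinking $\varepsilon_0$ to $\sim\sqrt\eta$ makes $\phi_1 u$ extend by zero, but then the spatial commutator is supported where $\rho\sim\sqrt\eta$, so ``$\rho$ is bounded below'' fails and the absorption no longer goes through. In short, the claimed bound $\|[\rho^2\pa_v-\cL,\phi_1]u\|\le C\eta\|u\|_{L^2_{k+2r,r,2}}$ is unjustified for small $\eta$.

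The paper circumvents all of this with a different device: instead of estimating on $\bL|_{[0,\eta)}$ directly, extend $f$ to $f'\in C^\infty(\bL|_{[0,\ep)})$ with $\|f'\|$ arbitrarily close to $\|f\|$, then extend $u$ to $u'$ on the full time interval by solving the (uniformly parabolic, since $t\ge\eta>0$) initial value problem $Tu'=f'$, $u'|_{t=\eta}=u|_{t=\eta}$ forward in time; the gluing at $t=\eta$ is smooth because the equation propagates the Taylor coefficients. Then the $[0,\ep)$-estimate of Theorem~\ref{thm_isomorphismL} applies to $u'$ with its fixed $\eta$-independent constant, and restriction gives the bound for $u$ on $[0,\eta)$. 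This avoids any partition of unity and works uniformly for all $\eta\in(0,\ep)$, so your parenthetical ``(For $\eta$ bounded below one may instead invoke the isomorphism of Theorem~\ref{thm_isomorphismL}\dots)'' is in fact, once carried out properly, the whole proof.
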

\begin{proof}
	Let us work over $\bL|_{[0,\eta)}$. We have the following a-priori estimate from Lemma \ref{lem_apriorishort}, \[
	\|u\|_{L^2_{k+2r,r,2}}\leq C\left(\|f\|_{L^2_{k+2r-2,r-1,2}}+\sum_{i=1}^{r} \|\pa^i_sf|_{s=0}\|_{L^2_{k+2(r-i)-1,2}}+\|u\|_{L^2_2}\right),
	\] 
	where the constant $C$ is independent of $\eta$. The vanishing of $f$ to all orders implies that the restriction terms above vanish. Thus, we obtain an estimate \[
		\|u\|_{L^2_{k+2r,r,2}}\leq C\left(\|f\|_{L^2_{k+2r-2,r-1,2}}+\|u\|_{L^2_2}\right).
	\] Thus, we are reduced to showing the following estimate over $[0,\eta)$: \begin{equation}\label{eq_shorttimel2}
		\|u\|_{L^2_2} \leq C'(\|f\|_{L^2_{0,0,2}}+\|u|_{t=0}\|_{L^2_{1,2}}),
	\end{equation}
	such that $C'$ is independent of $\eta$, since then we can combine this with the previous estimate to conclude. To prove this, note that we know the estimate \eqref{eq_shorttimel2} holds over the time interval $[0,\ep)$ from Theorem \ref{thm_isomorphismL}. Now, for a given $u\in C^\infty(\bL|_{[0,\eta)})$, we can extend the function $f=Tu \in C^\infty(\bL|_{[0,\eta)})$ to a function $f'\in C^\infty(\bL|_{[0,\ep)})$ such that $|\|f\|_{L^2_{0,0,\lambda}}- \|f'\|_{L^2_{0,0,\lambda}}|$ is arbitrarily small and $f|_{[0,\eta)} \equiv f'|_{[0,\eta)}$. Further, we can extend $u$ to $u'$ such that we have $Tu'=f'$, by solving the initial value problem $u'|_{t=\eta}=u|_{t=\eta}$, $Tu'=f'$ on $[\ep',\ep)$. The solution $u'$ will be smooth on $[0,\ep)$ as well, due to the equation being uniformly parabolic. Therefore, from estimate \eqref{eq_shorttimel2} on $u'$, we obtain the corresponding estimate for $u$ with the same constant. This proves the estimate \eqref{eq_shorttimel2} with constant independent of $\eta$, thus proves the claim. 
\end{proof}

Now, we have that:

\begin{lem}
	Let us assume that $u$ decays to all orders at the bottom face and $\|u\|_{L^2_{k+2r,r,2}} \ll 1$. Then, we have that for any $\eta \in (0,\ep)$, the following estimate from Theorem \ref{thm_quadratic}, \[
	\|Q[u]\|_{L^2_{k+2r-2,r-1,2}}\leq C\|u\|^2_{L^2_{k+2r,r,2}},
	\]
	for a constant $C$ independent of $\eta$.
\end{lem}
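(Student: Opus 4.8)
The plan is to obtain the estimate by running the argument of Theorem~\ref{thm_quadratic} with $v=0$ — using that $Q$ has no zeroth-order or linear part, so that $Q[0]=0$ — and then checking that every constant appearing in that argument may be chosen independently of $\eta$ once we restrict to $\bL|_{\pi^{-1}(\llbracket0,\eta))}$. First I would isolate the three sources of constants in the proof of Theorem~\ref{thm_quadratic}: (i) the weighted H\"older/multiplication inequality $\|fg\|_{L^2_{k+2r,r,\lambda}}\leq C\|f\|_{L^2_{k+2r,r,\lambda'}}\|g\|_{L^2_{k+2r,r,\lambda''}}$ for our Sobolev spaces; (ii) the parabolic Sobolev embedding $L^2_{k+2r,r}\hookrightarrow C^{k/2+r,r/2}$ used to convert $L^2$-control of $u$ into the pointwise $C^{2,0,2}$-type bounds on $\rho^{-2}d_\rel u$ and $\rho^{-2}{}^b\nabla_\rel^{2}u$ that drive the quadratic estimate; and (iii) the $C^0$-norms over $\overline{\bL}$ of the a-smooth functions $\nabla^{i+j}(\rho^2Q)$ and their derivatives, evaluated on the small balls $B_K$ in the relevant sum of vector bundles.

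Sources (i) and (iii) present no difficulty. The H\"older inequality in (i) is a pointwise estimate on the integrand followed by H\"older's inequality for the integral over $\bL$; its constant is purely combinatorial and does not see the length of the time interval, and restricting the domain of integration to $\pi^{-1}(\llbracket0,\eta))$ only decreases the norms involved. The functions in (iii) are a-smooth functions on the fixed, relatively compact manifold $\overline{\bL}$, so their $C^0$-norms on $B_K$ are fixed numbers depending only on $K$, and restricting to $[0,\eta)$ again only shrinks these sup-norms. \textbf{The genuine obstacle is (ii):} the parabolic Sobolev embedding constant degrades as $\eta\downarrow0$, because the standard proof cuts the function off in time and the time-cutoffs have derivatives blowing up like negative powers of $\eta$. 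This is exactly the difficulty that Lemma~\ref{prop_sobolevtime} is designed to remove: if the function vanishes to all orders at the bottom face, one may extend $\Psi_i^*u$ and its derivatives by zero across $t=0$ and use only spatial cutoffs, whose norms are $\eta$-independent, giving a Sobolev embedding with $\eta$-independent constant.

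To invoke Lemma~\ref{prop_sobolevtime} I would verify that $u$ and all the auxiliary quantities appearing in the estimate vanish to all orders at the bottom face. By hypothesis $u$ does; since the $\m$-Levi-Civita connection and $^b\nabla_\rel$ preserve the class of functions vanishing to all orders at $s=0$, so do $d_\rel u$, ${}^b\nabla_\rel^{2}u$ and all higher relative and mixed derivatives; and multiplication by the a-smooth weights $\rho^{\pm 2}$, which are nowhere zero on the bottom face, also preserves this class, so the same holds after multiplying by any power of $\rho$. Hence Lemma~\ref{prop_sobolevtime} applies, with constant independent of $\eta$, to each quantity we need to bound pointwise.

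With all three sources controlled uniformly in $\eta$, I would then reproduce, verbatim as in the proof of Theorem~\ref{thm_quadratic}, the identities for $\nabla_\rel^k\nabla^rQ[u]$ together with the pointwise bounds $|Q[u]|\leq C|u|_{C^{2,0,2}}^{2}$, $|\nabla Q[u]|\leq C|u|_{C^{2,0,2}}$, $|\nabla^kQ[u]|\leq C$, and the corresponding bounds on the polynomials $P_{ij}[u]$ with the appropriate powers of $|u|_{C^{\cdot}}$; integrate over $\bL|_{\pi^{-1}(\llbracket0,\eta))}$ using~(i); convert the $C^{\cdot}$-norms into $L^2_{k+2r,r,2}$-norms using the $\eta$-independent embedding of Lemma~\ref{prop_sobolevtime}; keep track of the powers of $\rho$ exactly as in that proof; and discard the extra factor of $\|u\|_{L^2_{k+2r,r,2}}$, which is bounded by $K\ll1$. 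This yields $\|Q[u]\|_{L^2_{k+2r-2,r-1,2}}\leq C\|u\|^2_{L^2_{k+2r,r,2}}$ with $C$ independent of $\eta$, as required.
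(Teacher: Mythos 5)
Your proposal is correct and follows essentially the same route as the paper: reduce the uniqueness estimate to an $\eta$-independent Sobolev embedding constant, obtain that constant from Lemma~\ref{prop_sobolevtime} using the hypothesis that $u$ vanishes to all orders at the bottom face, and then rerun the proof of Theorem~\ref{thm_quadratic} verbatim. Your elaboration is more detailed than the paper's one-line argument, correctly identifying the parabolic Sobolev embedding as the only place where the time-interval length could enter (one small caveat: the multiplication estimate in your source~(i) also implicitly rests on a Sobolev embedding into $L^\infty$, so it is not purely combinatorial, but this is harmless since the same Lemma~\ref{prop_sobolevtime} covers it).
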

\begin{proof}
	First, we apply Lemma \ref{prop_sobolevtime} to obtain a uniform Sobolev estimate for $u$ with constant independent of $\eta$. Then, we follow the proof of Theorem \ref{thm_quadratic} -- the constant in the theorem does not change on shortening the time interval, since the Sobolev embedding constant does not change.
\end{proof}
Now, given our new solution $u$ to the nonlinear problem, it is stationary under the iteration mapping, i.e.\ \[
u \mapsto (\rho^2\pa_v-\cL)^{-1}(Q[u]) = u.
\]
However, we have from the previous lemmas the estimate, \[
\|(\rho^2\pa_v-\cL)^{-1}(Q[u])\|_{L^2_{k+2r,r,2}} \leq C'\|u\|^2_{L^2_{k+2r,r,2}},
\]
for a constant $C'$ independent of $\eta$. Therefore, we can reduce the time interval to $\llbracket0,\eta)$ to make $\|u\|_{L^2_{k+2r,r,2}}$ small enough, in order to conclude that $\|u|_{\llbracket0,\eta)}\|_{L^2_{k+2r,r,2}} =0$ for a sufficiently small $\eta$. This implies that the new solution must be identically zero for short time, hence identically zero on the whole of $[0,\ep)$.

\subsubsection{Uniqueness as an embedding}
Now, we want to show our solution is unique as an a-smooth embedding of the manifold with corners $\bL$ into $\bX$, such that it matches with the initial condition on the bottom face and the choice of expander on the front face.
\begin{prop}
	The embedding $\iota: \bL\hookrightarrow \bX$ for the given restrictions to the boundaries, $\iota_{\botf}:\bL|_{\botf} \hookrightarrow \bX|_{\botf}$ and $\iota_{\ff} : \bL|_{\ff} \rightarrow \bX|_{\ff}$, such that the time-slices satisfy LMCF, is unique. More precisely, given any other embedding $\iota': \bL \hookrightarrow \bX$ satisfying the same boundary conditions, we can decrease the time interval if necessary to $[0,\eta)$, such that there is an a-smooth diffeomorphism $\phi: \bL|_{[0,\eta)} \rightarrow \bL$ such that $\iota'|_{[0,\eta)} = \iota \circ \phi$.
\end{prop}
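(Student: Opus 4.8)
\emph{Proof plan.} The plan is to reduce the statement to the uniqueness of the scalar potential already established above. Suppose $\iota':\bL\hookrightarrow\bX$ is another a-smooth embedding with $\iota'|_{\botf}=\iota|_{\botf}$, $\iota'|_{\ff}=\iota|_{\ff}$, and whose time-slices $L'_t$ satisfy LMCF. I would first gauge-fix $\iota'$ so that its velocity field is normal to the time-slices, obtaining a reparametrisation $\iota'\circ\psi$; then write $\iota'\circ\psi$ as the graph $\tpsil(\Gamma(\rho^{-2}d_\rel u'))$ of an a-smooth potential $u'$ over the approximate solution $\bL$ with $u'|_{\botf}\equiv0$; then observe that $u'$ solves the nonlinear P.D.E.\ \eqref{eq_potentialequation}; and finally invoke the uniqueness of that solution to conclude $u'=u$, i.e.\ $\iota'\circ\psi=\iota$, so that $\phi:=\psi^{-1}$ does the job.

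\textbf{Gauge fixing.} Along each slice I would decompose $\iota'_*(\pa_t)=N'_t+X'_t$ into its normal and tangential parts; since $\{L'_t\}$ evolves by LMCF, $N'_t$ is the mean curvature vector, so reparametrising the domain by the flow of the time-dependent vector field $-(\iota'_t)^{-1}_*X'_t$ on the slices of $\bL$ leaves the geometric flow unchanged and yields an embedding whose velocity is purely normal. The point requiring care is that this reparametrisation is an a-smooth diffeomorphism of $\bL$ after shrinking the time interval to $[0,\eta)$: working in the local models of \S\ref{subsubsec_localmodels} and arguing as for the a-smoothness and non-vanishing of $\rho^2v$, one checks that $\rho^2(\iota')^*X'$ is an a-smooth section of $^{\m}T\bL$ vanishing at both boundary faces — it vanishes on the front face because the family $\{\sqrt{2t}E\}$ is already in the time-like gauge there (this is how $\bE$ was built), and on the bottom face because the initial data is fixed — so its flow along $\pa_t$ integrates to an a-diffeomorphism $\psi:\bL|_{[0,\eta)}\rt\bL|_{[0,\eta)}$ restricting to the identity on $\botf\cup\ff$.

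\textbf{Reduction to a potential.} By a-smoothness of $\iota'$ and the matching of boundary data, after shrinking $\eta$ once more the slices of $\iota'\circ\psi$ are $C^{2,0,2}$-close to those of $\bL$, so by Proposition \ref{prop_constructionacorners} each is the image under $\tpsil$ of a relative $1$-form $\beta'_t$; being Lagrangian and in the normal gauge forces $\beta'_t$ to be relatively closed, and, exactly as in Proposition \ref{prop_hamiltonisotopy1} and Corollary \ref{cor_classevolve}, the relative cohomology class of $\beta'_t-\rho^{-2}d_\rel u_t$ is constant in $t$ and tends to $0$ as $t\downarrow0$ (since $\iota'\circ\psi$ and $\iota$ agree on $\botf$), hence vanishes; so $\beta'_t=\rho^{-2}d_\rel u'_t$ for an a-smooth $u':\bL\rt\bR$, normalised by $u'|_{\botf}\equiv0$. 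Because \eqref{eq_potentialequation} was designed precisely to be equivalent to ``$\tpsil(\Gamma(\rho^{-2}d_\rel u_t))$ evolves by LMCF in the normal gauge'', $u'$ is an a-smooth solution of \eqref{eq_potentialequation} lying in $L^2_{k+2r,r,2}(\bL|_{[0,\eta)})$ for all $k,r$, with $u'|_{s=0}\equiv0$; moreover by Proposition \ref{prop_bottomfacesolving} it has the same Taylor expansion at $s=0$ as $u$, so $w:=u'-u$ decays to all orders at the bottom face.

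\textbf{Conclusion and main obstacle.} Linearising \eqref{eq_potentialequation} at $u$, the difference $w$ solves a uniformly parabolic equation whose nonlinear remainder obeys the estimate \eqref{eq_nonlinearestimatel2} with constants independent of $\eta$ (using Lemma \ref{prop_sobolevtime}); the shrinking-time iteration argument of the previous subsection then forces $w\equiv0$ on $\bL|_{[0,\eta)}$, so $\iota'\circ\psi=\iota$ and $\phi:=\psi^{-1}$ is the required a-diffeomorphism. I expect the main difficulty to be the gauge-fixing step: showing that the tangential reparametrisation can be realised by an a-smooth diffeomorphism of $\bL$ near the front face, with only a controlled shrinking of the time interval, since there the expander sits ``at infinity'' in the $b$-metric and one must again pass through the local-model analysis of \S\ref{subsubsec_localmodels}.
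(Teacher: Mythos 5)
Your proposal follows essentially the same reduction as the paper — write the competing embedding's image as a graph of a closed relative $1$-form over (the Lagrangian neighbourhood of) $\bL$, use the Hamiltonian isotopy argument (cf.\ Proposition \ref{prop_hamiltonisotopy1}) to show it is exact, integrate to an a-smooth potential $u'$ vanishing on $\pa\bL$ via Lemma \ref{lem_formregularity}, and invoke the shrinking-time uniqueness of solutions to \eqref{eq_potentialequation}. Two points, though, where your version diverges from what is actually needed.

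First, the gauge-fixing step you flag as the ``main obstacle'' is unnecessary and is not what the paper does. The P.D.E.\ \eqref{eq_potentialequation} is derived from the condition that the \emph{image} $\tpsil(\Gamma(\rho^{-2}d_\rel u))$ evolves by LMCF; it is a statement about the geometric flow, not about the parametrisation of the flowing family. Writing $\iota'(\bL)$ as the graph of a relative $1$-form over $\bL$ (via the implicit function theorem, as in the paper's proof) already canonically fixes a parametrisation, and the potential so obtained solves \eqref{eq_potentialequation} regardless of whether the original $\iota'$ had normal velocity. In particular, your remark that ``being Lagrangian and in the normal gauge forces $\beta'_t$ to be relatively closed'' mislocates the reason: closedness is forced by the slices being Lagrangian alone, via the Lagrangian neighbourhood theorem, independently of any choice of gauge. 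By inserting the reparametrisation $\psi$ you take on exactly the delicate a-smoothness question near the front face that the graph construction lets you avoid.

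Second, the conclusion $\iota'\circ\psi=\iota$, hence $\phi:=\psi^{-1}$, does not follow. What $u'=u$ gives is an equality of \emph{images}, $\iota'(\bL)|_t=\iota(\bL)|_t$; the graph parametrisation is a third parametrisation, distinct in general from both $\iota$ and your gauge-fixed $\iota'\circ\psi$ (the constructed $\iota$ is not assumed to be in the normal gauge). The correct conclusion, as in the paper, is that since both $\iota$ and $\iota'$ are a-smooth embeddings onto the same image, $\phi:=\iota^{-1}\circ\iota'$ is the desired a-diffeomorphism of $\bL$. This is a small repair, but as written the last line of your argument is not valid.
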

\begin{proof}
First, due to a-smoothness and the given boundary conditions, we can use an implicit function theorem to write $\bL'$ as the graph of a relative closed a-smooth $1$-form $\alpha$ over the Lagrangian neighbourhood of $\bL$ for sufficiently small time, such that the form vanishes at the boundaries. Then, $\alpha$ is exact because the Lagrangian mean curvature flow dictates the Hamiltonian isotopy classes in which the Lagrangians must lie. Then, by Lemma \ref{lem_formregularity}, there exists an a-smooth function $f:\bL\rt \bR$ such that $d_\rel f =\alpha$, and $f|_{\pa\bL}\equiv 0$. By making the time interval short enough, the uniqueness of solution to the nonlinear P.D.E.\ implies that $f\equiv 0$. Therefore, the mapping $\iota'$ is onto the image of the mapping $\iota$ for short time. Due to both being a-smooth embeddings into $\bX$, the claim follows.
\end{proof}

Therefore, we have proved the following main theorem:

\begin{theorem}\label{thm_main}
	Let $\iota: L \rt X$ be an embedding of a conically singular Lagrangian with a finite number of singular points $\{x_i\}_{i=1}^k\subset X$, such that each of its asymptotic cones $C_i$ is a union of special Lagrangian cones $C_i=\bigcup_{j=1}^n C_{ij}$ (which can have different phases). For each $i$, let $E_i$ be a Lagrangian expander which is asymptotic to the cone $C_i$, which decays at rate $-\infty$ to $C_i$ by Proposition \ref{prop_expanderdecay}. There exists a sufficiently small $\ep>0$, such that the following holds: Suppose we construct $\bX\in \mancac$ as explained before, by parabolically blowing up $X\times[0,\varepsilon)$ at the singular points $\{(x_i,0)\}_{i=1}^k$ at time $0$. Then, there exists $\bL\in\mancac$ and an a-smooth embedding $i:\bL\hookrightarrow\bX$ such that\begin{itemize}
		\item $\bL$ has an ordinary boundary, called the bottom face, such that $\bL|_{\mathrm{bf}}=\overline{\widetilde{L_0}}$, where $\overline{\widetilde{L_0}}$ denotes the closure of the radial blow-up of the initial Lagrangian at its singular points, and $i|_{\mathrm{bf}}:\overline{\widetilde{L_0}}\rt \overline{\widetilde{X}}=\bX|_{\mathrm{bf}}$ is the lift of the embedding $L\subset X$ to the blow-ups. 
		\item $\bL$ has an a-boundary, which is the disjoint union of $k$ distinct pieces, called the front faces, such that $\bL|_{\mathrm{ff}_i}=\overline{E_i}$, where $E_i$ is the expander asymptotic to the cone $C_i$ as above, and $\iota|_{\mathrm{ff}_i}: \overline{E} \rt \overline{\bC^m}\cong \bX|_{\mathrm{ff}_i}$ is the compactification of the embedding of $E$ into the $i^{th}$ front face of $\bX$.
		\item The embeddings on the time slices, $\iota|_{t}:\bL_t\rt \bX_t\cong X$  evolve by LMCF, for $t\in(0,\varepsilon)$.
	\end{itemize}
	Moreover, given the prescribed embeddings on the bottom face and the front faces, the embedding is unique up to diffeomorphism.
\end{theorem}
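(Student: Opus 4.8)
The plan is to assemble Theorem \ref{thm_main} from the analytic machinery developed in the preceding sections, treating the multi-singularity case as a trivial extension of the single-singularity case (the singular points are isolated, so everything localises). First I would record the construction of $\bX$ as the parabolic blow-up of $X\times[0,\ep)$ at the $(x_i,0)$, and of the approximate solution $\bL\in\mancac$ together with the a-smooth embedding $\tpsil:\widetilde U_\bL\rt\bX$ that is Lagrangian up to the factor $\rho^2$, exactly as in Proposition \ref{prop_constructionacorners}, applied near each $x_i$ and glued to the trivial flow $\Gamma(td\theta_L)$ away from the singularities. By Proposition \ref{prop_hamiltonisotopy1} this approximate solution lies in the correct Hamiltonian isotopy class, so the LMCF equation can be written at the level of potentials as \eqref{eq_potentialequation}. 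Then I would improve the approximate solution: by Proposition \ref{prop_bottomfacesolving} (formal solution near the bottom face) the error $E$ can be arranged to vanish to all orders at $s=0$, and by the lemma on exact relative $1$-forms the weighted error $\rho^2E$ lies in $L^2_{k+2r,r,2}$ for all $k,r\ge1$.

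Next I would invoke the two central analytic results: Theorem \ref{thm_isomorphismL}, that the linearised operator $(\rho^2\pa_v-\cL,|_{\botf})$ is an isomorphism between the weighted parabolic Sobolev spaces for $\lambda>0$ (in particular $\lambda=2$), and Theorem \ref{thm_quadratic}, the quadratic estimate on the nonlinear remainder $Q[u]$. Feeding these into the iteration map \eqref{mapping} and shrinking the time interval to $[0,\eta)$ to make the reduced error $E_\eta$ small, the Banach fixed point theorem produces a weak solution $u\in L^2_{k+2r,r,2}$ with $u|_{s=0}\equiv0$. Regularity then follows from Proposition \ref{prop_bootstrap} and a bootstrapping argument on the differentiated equations, upgraded to a-smoothness by running the iteration for arbitrary $k,r$ over short times and applying the parabolic Sobolev embedding; thus $\Gamma(\rho^{-2}d_\rel u)$ defines an a-smooth Lagrangian submanifold, i.e.\ the required $\bL\hookrightarrow\bX$ with the prescribed restrictions $\bL|_{\botf}=\overline{\widetilde{L_0}}$ and $\bL|_{\ff_i}=\overline{E_i}$ and time-slices evolving by LMCF.

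For the uniqueness clause, given another a-smooth embedding $\iota':\bL\hookrightarrow\bX$ with the same boundary data, I would use an implicit function theorem (valid since $\tpsil$ is a-smooth and both embeddings restrict to the same data on $\pa\bL$) to write $\iota'(\bL)$ for short time as the graph $\Gamma(\alpha)$ of a closed a-smooth relative $1$-form $\alpha$ over the Lagrangian neighbourhood of $\bL$, with $\alpha$ vanishing at the boundaries; exactness of $\alpha$ is forced by Corollary \ref{cor_classevolve} (the Hamiltonian isotopy class is pinned down by LMCF). By Lemma \ref{lem_formregularity} there is an a-smooth $f:\bL\rt\bR$ with $d_\rel f=\alpha$ and $f|_{\pa\bL}\equiv0$, and $f$ then solves the same nonlinear potential equation with zero bottom-face data; the uniqueness statement for the nonlinear P.D.E.\ (proved via the a-priori estimate of Lemma \ref{lem_apriorishort} with constant independent of $\eta$, together with the $\eta$-independent quadratic estimate, forcing $\|f|_{[0,\eta)}\|_{L^2_{k+2r,r,2}}=0$) gives $f\equiv0$ for sufficiently small $\eta$. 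Hence $\iota'|_{[0,\eta)}$ has the same image as $\iota$, and since both are a-smooth embeddings there is an a-smooth diffeomorphism $\phi:\bL|_{[0,\eta)}\rt\bL$ with $\iota'|_{[0,\eta)}=\iota\circ\phi$.

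The main obstacle is making the uniqueness argument genuinely time-interval-independent: the contraction-mapping uniqueness is only local in the $L^2_{k+2r,r,2}$-ball, so one must establish both the a-priori estimate and the quadratic nonlinear estimate with constants that do not deteriorate as $\eta\downarrow0$. This is exactly what Lemmas \ref{lem_apriorishort} and \ref{prop_sobolevtime} are set up to provide (the cutoff functions can be taken in the spatial variable only, since solutions with vanishing bottom-face data extend by zero), so the argument goes through, but it is the step that requires the most care. A secondary technical point is checking that the formal solution near the bottom face and the subsequent Borel-type summation are compatible with the a-smooth structure near the front face, which follows since all the partial differential operators involved have a-smooth coefficients.
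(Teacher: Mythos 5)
Your proposal reproduces the paper's own argument step for step: construct the approximate solution and Lagrangian neighbourhood as in Proposition~\ref{prop_constructionacorners}, verify the Hamiltonian isotopy class via Proposition~\ref{prop_hamiltonisotopy1}, improve the error to vanish to all orders at the bottom face by Proposition~\ref{prop_bottomfacesolving}, run the contraction mapping with the reduced error $E_\eta$ using Theorems~\ref{thm_isomorphismL} and~\ref{thm_quadratic}, bootstrap regularity via Proposition~\ref{prop_bootstrap}, and close uniqueness using the $\eta$-independent estimates of Lemmas~\ref{lem_apriorishort} and~\ref{prop_sobolevtime} together with Lemma~\ref{lem_formregularity} to pass from $1$-forms to potentials. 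This is the same route the paper takes, with the delicate point — uniformity of the constants as $\eta\downarrow0$ — correctly singled out.
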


\subsection{Asymptotics of the solution near the front face}\label{sec_polyhomogeneouscon}

In this subsection, we make the assumption that the initial Lagrangian $L_0$ has a polyhomogeneous conormal expansion with index set $\cI$ near it singularities, i.e.\ the pull-back $\Upsilon^{-1}(\iota(L))$ can be written as a graph $\Gamma(df)$ over its special Lagrangian tangent cone $C$, with a polyhomogeneous conormal function $f$  of the form \[
f \sim \sum_{(z,p)\in \cI} r^z\log(r)^pf_{z,p},
\]
for $f_{z,p}$ being smooth functions on the link $\Sigma$, and $\cI \subset \bC\times \bN_0$ being the index set for $f$ (for details about the asymptotics defined by $\sim$, see for example Grieser \cite{Gre01}). Since we assume convergence to the cone with rate $\mu >2$, we have $\Re(\cI) \subset (2,\infty)$. In our convention of index sets, the set $\cI$ is finite in any half-plane $\{z:\Re(z)<s\}\times \bN_0$, and $(z,p)\in \cI$ implies that $(z,q)\in \cI$ for $q\leq p$. We also assume that $(j,0)\in \cI$ for $j\in \bN$ and $j>2$. Then, we first note that:
\begin{lem}\label{lem_embeddingpolycon}
	Consider the approximate solution $\iota:\bL \hookrightarrow \bX$ as constructed in Proposition \ref{prop_bottomfacesolving}, such that the error term vanishes to all orders in $s$. Then, the embedding of this approximate solution into $\bX$ is also polyhomogeneous conormal with the same index set. More precisely, we can write $\tU^{-1}(\iota(\bL))$ near the front face of $\Bl$ as the graph of a polyhomogeneous conormal function over the compactification $\bE$, with the same index set $\cI$.
\end{lem}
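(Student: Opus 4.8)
The plan is to decompose the graph function of $\tU^{-1}(\iota(\bL))$ over $\bE$ near the front face of $\Bl$ into the pieces produced by the gluing construction of \S\ref{subsec_manac} together with the formal correction of Proposition \ref{prop_bottomfacesolving}, and to check that each piece is polyhomogeneous conormal with index set $\cI$, using the stability of polyhomogeneous conormal functions with a fixed index set under sums, products, multiplication by smooth cutoffs, and the differential and polynomial operations that occur in the construction.

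First I would fix coordinates. Away from the corner of $\bE$, in the projective coordinates $(w^i,\tau)$ of the front face, $\bL$ is the graph over $\bE$ of the correction alone, which there is smooth (it is built from $s$ and $\rho$, both smooth in $(w^i,\tau)$), so polyhomogeneity is immediate. Near the corner I would pass to the coordinates $(\sigma,r,s)$, with $r$ the front-face and $s$ the bottom-face boundary-defining function. By Proposition \ref{prop_expanderdecay} the scaled expanders decay to the cone $C$ at rate $-\infty$, so $\bE$ is an a-smooth perturbation of $C$ that agrees with $C$ to infinite order at $s=0$; hence re-expressing a graph over $C$ as a graph over $\bE$ alters the graphing function only by an $O(s^\infty)$ term, and it suffices to analyse \eqref{eq_cornerdefiningfunction} together with the correction. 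In \eqref{eq_cornerdefiningfunction}, the term $(1-\chi_1)\,2r^2 s\,\beta_{E_t}(\sigma,s^{-1/2})$ vanishes to infinite order at $s=0$, the cutoffs $\chi_1,\chi_2$ are smooth, and the remaining terms are built from the potential $A_j$ of $L_0$ over $C$ (polyhomogeneous conormal in $r$ with index set $\cI$, by the hypothesis on $L_0$ and Theorem \ref{thm_compatible}) and from the angle difference $\theta-\theta_j$, which is itself polyhomogeneous in $r$ since $\theta$ is determined by the polyhomogeneous $L_0$ through $e^{i\theta}\mathrm{vol}_{L_0}=\Omega|_{L_0}$, the extra factors $r^2 s$ contributing only exponents already in $\cI$ together with a positive power of $s$. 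Since sums and products of such functions keep the index set $\cI$, the graph function over $\bE$ near the corner, before adding the correction, is polyhomogeneous conormal with index set $\cI$ modulo $O(s^\infty)$.

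It then remains to treat the correction $\tu=\sum_{k\ge 0}s^k\rho^2 u_k$ from Proposition \ref{prop_bottomfacesolving}. I would prove by induction on $k$ that each $u_k$ is polyhomogeneous conormal on $\widetilde{L_0}$ in the radial variable $r$ with index set $\cI$: one has $u_0\equiv 0$; the error term $E$, by the preceding step and the properties of $E$ established before Proposition \ref{prop_bottomfacesolving}, is an a-smooth function whose Taylor coefficients in $s$ at the bottom face are polyhomogeneous in $r$ with index set $\cI$; and the recursion determines $u_{k+1}$ from the $u_i$ with $i\le k$ by applying the radial differential operators occurring in the linearisation (which act on each $r^z(\log r)^p$ by a combination of terms $r^z(\log r)^q$ with $q\le p$, once the $\rho^2$-twisting is accounted for) and the polynomial nonlinear operators $P''_{ij}$, then dividing by $k+1$ — all operations preserving polyhomogeneity with index set $\cI$. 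An asymptotic summation in the polyhomogeneous category (the polyhomogeneous refinement of Borel's lemma, as in \cite{Gre01}) then produces an a-smooth $\tu$ that is polyhomogeneous conormal with index set $\cI$ and has the prescribed expansion at $s=0$. Combining the three contributions, the graph function of $\bL$ over $\bE$ near the front face of $\Bl$ is a sum of terms each polyhomogeneous conormal with index set $\cI$, plus an $O(s^\infty)$ term, hence itself polyhomogeneous conormal with index set $\cI$, which is the assertion.

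The step I expect to be the main obstacle is the careful bookkeeping of index sets at the two non-algebraic points: (i) showing that switching the graphing submanifold from the cone $C$ to the compactified expander family $\bE$ in the transition region costs only $O(s^\infty)$ — this is exactly where the $-\infty$ decay rate of Proposition \ref{prop_expanderdecay} is indispensable — and (ii) verifying that the recursively defined correction $\tu$, and its asymptotic sum, remain within the single index set $\cI$, which requires the stability of $\cI$ (enlarged if necessary so as to be closed under the relevant shifts and under $+\bN_0$, without altering $\Re(\cI)\subset(2,\infty)$) under the differential and multiplication operations in the recursion and under asymptotic summation. Neither point is conceptually deep, but both require being precise about which operations the index set must be closed under.
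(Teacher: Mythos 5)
Your proof takes the same route as the paper's --- unpacking the gluing construction piece by piece --- but the paper's own proof is a one-sentence assertion that the claim ``follows from the construction of the approximate solution in \S\ref{sec_approxsol}'', so almost all of the actual argument you supply is detail the paper leaves implicit. Two of the points you raise are genuinely non-obvious and, in my view, not adequately covered by the paper's one-liner. First, the lemma concerns the modified approximate solution of Proposition~\ref{prop_bottomfacesolving}, which includes the Borel-type correction $\tu=\sum_{k\geq 0}s^k\rho^2u_k$; a-smoothness of a Borel summand does \emph{not} by itself give a polyhomogeneous conormal expansion at the front face, so one really does need to (i) show inductively from the recursion that each $u_k$ is polyhomogeneous in $r$ with controlled index set (using that the $s$-Taylor coefficients of the error $E$ are polyhomogeneous, which in turn comes from the first part of the argument), and (ii) invoke a polyhomogeneous refinement of Borel's lemma as in \cite{Gre01}. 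The paper's proof cites only \S\ref{sec_approxsol} and never returns to the correction, so you have filled a real gap there. Second, the re-expression of $\bL$ near the corner as a graph over $\bE$ instead of over the cone $C$ does cost a term, and that it is $O(s^\infty)$ relies precisely on the exponential decay of Proposition~\ref{prop_expanderdecay}; you are right to make this explicit.

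Two small imprecisions to tighten. The remark that $(1-\chi_1)2r^2s\beta_{E_t}(\sigma,s^{-1/2})$ ``vanishes to infinite order at $s=0$'' is true but is a statement about the bottom face; what the front-face expansion actually needs is that this term, after Taylor expanding $\chi_1$ in $r$, contributes only exponents in $\bN_{\geq 2}$ with coefficients that are smooth (and $O(s^\infty)$) in $(\sigma,s)$ --- your observation implies this but does not say it. Also, $\theta-\theta_j$ is a nonlinear function of $\nabla^2 A$, so its index set is of the form $\bigcup_{k\geq 1}k(\cI-2)$ rather than $\cI-2$; the conclusion that $r^2s(\theta-\theta_j)$ lands inside $\cI$ is exactly where the closure assumption on $\cI$ that you flag at the end (closure under $+\bN_0$ and under the additive combinations generated by the nonlinearities) is used, and this should be stated as an explicit hypothesis rather than an afterthought, since the paper's convention for index sets does not literally include it.
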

\begin{proof}
	This follows from the construction of the approximate solution in \S \ref{sec_approxsol} -- the hypothesis that $L_0$ admits a polyhomogeneous conormal expansion and choice of smooth interpolating cutoff functions implies that the compactification of the approximate solutions also admits an expansion with the same index set over the compactification of the expander. 
\end{proof}

Now, we will similarly solve the nonlinear P.D.E.\ \eqref{eq_potentialequation} to infinite order at the front face. As before in Section \ref{subsec_Fredholmtheory}, we work with coordinates near the front face given by $(r,\sigma^i)$ for $r\in[0,\infty), \sigma^i\in \oE$.

\begin{lem}
	We can write the operator on $\bL$ near the front face in the coordinates $(\sigma,r)\in \oE\times[0,\infty)$, as a perturbation of the corresponding operator on $\bE$,
	\begin{equation}\label{eq_nonlinearpde}
		\rho^2P[u]= \rho^2\nu - (\rho\pa_\rho)^2u+(\rho\pa_\rho)(w+h)u - P_0u + Q((\rho\pa_\rho+{^b\nabla}_{\oE})u,(\rho\pa_\rho+{^b\nabla}_{\oE})^2u)+\rho^2Q'[u],
	\end{equation}
	where $\nu$ denotes a smooth error function arising from the embedding $\tU|_{V}: \bE|_{V} \rt \bX$, with $V$ being a small open neighbourhood of the front face of $\bE$. Further, we can write 
	\begin{equation}\label{eq_newindexset}
	\rho^2Q'[u] = \sum_{i,j \geq 0} a_{ij}\cdot(\rho^{-2}d_\rel u)^i\otimes(\rho^{-2}{^b\nabla^2_\rel}u)^j,
	\end{equation}
	such that the coefficients $a_{ij}$ admit polyhomogeneous conormal expansions near the front face, with index set given by 
	\begin{equation}\label{eq_solutionindexset}
		\cI'= \cup_{k\geq 1} k\cI+(\bN,0),
	\end{equation}
	where $k\cI$ denotes the sum of the index set with itself $k$ times. Moreover, the weighted error $\rho^2E$ for the embedding $\iota:\bL \rt \bX$ admits an expansion with index set $\cI'$.
\end{lem}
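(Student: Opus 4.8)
## Proof Plan

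The plan is to establish this lemma by combining three ingredients: the construction of the approximate solution in \S\ref{sec_approxsol} and Proposition \ref{prop_bottomfacesolving}, the polyhomogeneity of the initial data (Lemma \ref{lem_embeddingpolycon}), and the structure of the nonlinear operator $P$ in the front-face coordinates $(\sigma,r)\in\oE\times[0,\infty)$ worked out in \S\ref{subsec_LN} and \S\ref{sec_approxsol}. First I would write down the operator on $\bL$ near the front face by pulling back along the embedding $\tU$: since $\bL$ is, near the front face, the graph of an a-smooth function (vanishing at rate $o(\rho^2)$) over the compactified family of expanders $\bE$, and since $\bE$ itself satisfies LMCF exactly (here $H=v^\top$), the operator on $\bL$ differs from the operator $\rho^2\pa_v-\cL$ on $\bE$ by (i) a smooth correction coming from the embedding $\tU|_V$ (the ambient Calabi--Yau metric differs from $\bC^m$ by $O(r^2)$ in the Darboux chart, giving the function $\nu$), and (ii) the nonlinear remainder $\rho^2Q'[u]$ coming from the graph-over-$\bE$ description. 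Using Lemma \ref{lem_h1nonzero} to identify the linear part as $-(\rho\pa_\rho)^2+(\rho\pa_\rho)(w+h)-P_0$ plus the genuinely nonlinear pieces $Q$ and $\rho^2Q'$, equation \eqref{eq_nonlinearpde} follows directly.

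The substantive content is the claim about index sets. The coefficients $a_{ij}$ in \eqref{eq_newindexset} are obtained by Taylor-expanding the a-smooth function $F:\mathrm{Sym}^2({}^bT^*_\rel\bL)\oplus\tul\to\bR$ (the Lagrangian angle) and the corresponding symplectic-form function about the approximate solution $\bL$. Since the approximate solution is polyhomogeneous conormal over $\bE$ with index set $\cI$ by Lemma \ref{lem_embeddingpolycon}, and since polyhomogeneous conormal functions are closed under multiplication (producing sums of index sets), composition with a smooth function of several variables, and differentiation by the $b$-vector fields $\rho\pa_\rho$ and ${}^b\nabla_{\oE}$, it follows that the Taylor coefficients of the nonlinear remainder inherit an index set built from sums $k\cI$ together with the smooth integer terms $(\bN,0)$ contributed by the $O(r^2)$ discrepancy of the ambient metric in the Darboux chart; this gives $\cI'=\cup_{k\geq1}k\cI+(\bN,0)$. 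I would cite Grieser \cite{Gre01} for the closure properties of polyhomogeneous conormal functions, and use the hypothesis $\Re(\cI)\subset(2,\infty)$ with $(j,0)\in\cI$ for integers $j>2$ to check that $\cI'$ is a well-defined index set (finite in each half-plane, and closed downward in the logarithmic multiplicity). Finally, the weighted error $\rho^2E$ is, by construction, precisely the value of the nonlinear operator $\rho^2 P$ evaluated on the a-smooth perturbation function $\tu$ from Proposition \ref{prop_bottomfacesolving} (which is smooth, hence trivially polyhomogeneous with integer index set), plus the smooth embedding error $\rho^2\nu$; so $\rho^2E$ is a finite sum of products of the $a_{ij}$ with polyhomogeneous functions, hence admits an expansion with index set $\cI'$.

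The main obstacle I anticipate is bookkeeping rather than a conceptual difficulty: one must check carefully that the various changes of coordinates and the passage through the Lagrangian neighbourhood embedding $\tpsil=\Psi\circ\rho^2$ — in particular the rescalings by $\rho^{-2}$ that appear when expressing graphs in the a-smooth coordinates near the corner and front face — do not spoil polyhomogeneity and do not introduce weights incompatible with $\cI'$. Concretely, one needs that $d_\rel(\rho^{-2})=O(\rho^{-2})$ (already noted in \S\ref{subsubsec_cornersmooth}) and more generally that $\rho^{-2}$ times a polyhomogeneous function with index set $\cI$ is polyhomogeneous with index set $\cI-2$, which is still contained in the range where the arguments go through because of $\Re(\cI)\subset(2,\infty)$. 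The other delicate point is ensuring that the smooth error $\nu$ from the embedding $\tU|_V$ genuinely has an \emph{integer} index set: this follows because $\tU$ is an a-smooth (indeed smooth, away from the front face in the relevant coordinates) embedding and the metric discrepancy $\Upsilon^*g_X-g_{\bC^m}=O(r^2)$ from Theorem \ref{thm3} is smooth in the Euclidean coordinates on $B_R$, hence lifts to a smooth — thus integer-polyhomogeneous — function on the blow-up. Once these verifications are in place, assembling \eqref{eq_nonlinearpde}, \eqref{eq_newindexset} and \eqref{eq_solutionindexset} is routine.
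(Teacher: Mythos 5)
Your plan follows the same route as the paper: write the operator on $\bL$ as a perturbation of the operator on the model $\bE$, exploit the fact that the nonlinear PDE over $\bE$ is a genuinely \emph{smooth} function of the jet data near its front face (so that Taylor-expanding in the fibre and radial directions yields integer polyhomogeneous coefficients), and then propagate polyhomogeneity to $\bL$ via Lemma~\ref{lem_embeddingpolycon} for the graph function of $\bL$ over $\bE$ together with the standard closure properties of polyhomogeneous conormal expansions under products, $b$-differentiation, and composition with smooth maps. The bookkeeping concerns you flag about the $\rho^{-2}$ rescaling and the hypotheses $\Re(\cI)\subset(2,\infty)$, $(j,0)\in\cI$ for integers $j>2$ are the right ones, and your identification of the linear part via Lemma~\ref{lem_h1nonzero} matches the paper.

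There is one genuine flaw, in your treatment of the ``Moreover'' clause about $\rho^2E$. You derive it by writing the error as the nonlinear operator evaluated at the perturbation $\tu$ from Proposition~\ref{prop_bottomfacesolving}, and assert parenthetically that $\tu$ is ``smooth, hence trivially polyhomogeneous with integer index set.'' This is false: $\tu$ is produced by Borel's lemma from coefficients $u_k$ that are merely a-smooth on $\widetilde{L_0}$, and a-smoothness at the front face carries no polyhomogeneous conormal expansion --- it only controls decay of iterated $b$-derivatives. So the chain ``$\rho^2E=\rho^2P[\tu]$ and $\tu$ is integer-polyhomogeneous'' does not close. The paper sidesteps $\tu$ entirely: Lemma~\ref{lem_embeddingpolycon} already asserts that the \emph{total} graph function of the modified $\bL$ over $\bE$ is polyhomogeneous conormal with index set $\cI$, so $\rho^2E$ is simply the constant fibre coefficient $\rho^2\nu+a_{00}$ in the expansion \eqref{eq_newindexset}, and $a_{00}$ has index set $\cI'$ by the computation you have already carried out for the full family $a_{ij}$. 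Since you invoke Lemma~\ref{lem_embeddingpolycon} earlier, the repair is short, but as written your argument for the last assertion has a gap. A minor point: the $(\bN,0)$ summand in $\cI'$ is contributed primarily by the integer-power Taylor expansion in the radial variable of the smooth operator coefficients on $\bE$, not only by the $O(r^2)$ ambient metric discrepancy, which is one integer contribution among several.
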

\begin{proof}
	Here, we first work over the neighbourhood $V$ of the front face of the compactification $\bE$. Then, we can write the nonlinear part of the P.D.E.\ over $V$ as a \textit{smooth} function on $(U_\bE\oplus \textrm{Sym}^2(^bT^*_\rel\bE))|_{V}$, since the embedding $\bE\hookrightarrow\bX$ is smooth instead of merely a-smooth. Then, we can Taylor expand it in the fibre directions, to get that\[
	Q[\rho^{-2}d_\rel u, \rho^{-2}{^b\nabla^2_\rel}u] = \sum_{i,j \geq 0} a_{ij}\cdot(\rho^{-2}d_\rel u)^i\otimes(\rho^{-2}{^b\nabla^2_\rel}u)^j,
	\]
	where the coefficients $a_{ij}$ are smooth $b$-tensors. Then, we Taylor expand the coefficients in the radial directions, which proves the claim for $\bE$, i.e.\ with index set $\cI = \bN_{\geq 2}$. To prove the claim for $\bL$, we can consider the nonlinear part $Q[\rho^{-2}d_\rel (u+f),\rho^{-2}{^b\nabla^2_\rel (u+f)}]$, where we write $\bL = \Gamma(d_\rel f)$ over $\bE$. From Lemma \ref{lem_embeddingpolycon}, we know the asymptotics of $f$, so the coefficients $a'_{ij}$ will have index sets as a subset of $\cI'$ as defined. This proves the expansion \eqref{eq_newindexset} for the coefficient of the operator on $\bL$. Noting that the error for the embedding $\iota$ corresponds to the term $\rho^2\nu + a_{00}$, it follows that $\rho^2E$ has index set $\cI'$. 
\end{proof}

 Now, we make the following ansatz for the solution, 
\begin{equation}\label{eq_seriesansatzff}
	u \sim \sum_{(z,p)\in \cI'} r^z\log(r)^pu_{z,p}(s) \textrm{ near the front face},
\end{equation}
for $u_{z,p}(s)$ being smooth functions on $\oE$. Here, the index set $\cI'$ is as defined earlier,\[
\cI'= \cup_{k\geq 1} k\cI+(\bN,0).
\]
Now, we find functions $u_{z,p}$ such that the corresponding function $u$ as in \eqref{eq_seriesansatzff} satisfies $P[u]\sim 0$ as $r\rt 0$, i.e.\ near the front face. First, we note that \[
(\rho\pa_\rho)(r^z\log(r)^p) = zr^z\log(r)^p + p r^z\log(r)^{p-1},\]
\[ (\rho\pa_\rho)^2(r^z\log(r)^p) = z^2r^z\log(r)^p + 2pz r^z\log(r)^{p-1} + p(p-1)r^z\log(r)^{p-2}.
\]
Therefore, we get that
\begin{equation}\begin{split}
		\left(- (\rho\pa_\rho)^2+(\rho\pa_\rho)(w+h) - P_0\right)(r^z\log(r)^pu_{z,p}) &= -r^z\log(r)^pP_z(u_{z,p}) \\
		&+ \textrm{ lower order terms in } \{ u_{z,p-1},u_{z,p-2}\}.
	\end{split}
\end{equation}
So, the P.D.E. \eqref{eq_nonlinearpde} along with the boundary condition $u|_{s=0}\equiv0$ can be written as a system of equations on $\oE$, \[
-P_{z}(u_{z,p}) = (\rho^2\nu+a_{0,0})_{z,p} + F(u_{z,p+1},u_{z,p+2},\dots) + \textrm{ nonlinear terms in }\{u_{z-k,q}\},\; u_{z,p}|_{\pa\oE} = 0,
\]
for $(z,p) \in \cI'$.

Now, the main idea is to solve for these equations as $(z,p) \in \cI'$ inductively: we introduce a partial order $\geq$ on the set $\cI'$, given by $(z_1,p_1)\geq(z_2,p_2)$ if we have $\Re(z_1)\geq\Re(z_2)$, or we have $\Re(z_1)=\Re(z_2)$ and $p_1\leq p_2$. Then, at each step of induction, we solve for the $u_{z,p}$ which has its index smallest in the partial order (note that such a $(z,p)$ may not be unique; however, we can solve for them independently). Due to the way the ordering is defined, the P.D.E.\ for $u_{z,p}$ has coefficients depending only on $u_{z',p'}$ with $(z',p')<(z,p)$. Moreover, the solutions $u_{z,p}$ are uniquely determined as the mapping $(-P_z,|_{\pa\oE})$ is an isomorphism for $\Re(z) >0$ by Proposition \ref{prop_nokernel}, and are smooth by applying the interior estimate \eqref{eq_interiorestimateff}. Therefore, we obtain a formal series solution solving the P.D.E.\ to high order at the front face, which extends to a polyhomogeneous conormal function via Borel's lemma for polyhomogeneous conormal expansions. So, we obtain:

\begin{prop}\label{lem_polyconnearff}
	There is a polyhomogeneous conormal solution $u'$ of the P.D.E.\ such that $P[u'] \sim 0$ as $r\rt 0$, and $u'|_{s=0}\equiv 0$.
\end{prop}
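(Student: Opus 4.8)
The plan is to build the solution as a formal polyhomogeneous conormal series at the front face and then to realise it as an honest function via Borel summation. First I would work in the coordinates $(\sigma,r)\in\oE\times[0,\infty)$ near the front face and make the ansatz
\[
u \sim \sum_{(z,p)\in\cI'} r^z\log(r)^p\,u_{z,p}(\sigma),
\]
with $u_{z,p}\in C^\infty(\oE)$ to be determined, subject to $u_{z,p}|_{\pa\oE}=0$, where $\cI'=\bigcup_{k\geq1}k\cI+(\bN,0)$. Plugging this into the nonlinear P.D.E.\ \eqref{eq_nonlinearpde}, using the elementary identities describing how $(\rho\pa_\rho)$ and $(\rho\pa_\rho)^2$ act on $r^z\log(r)^p$ (which shift within the $\log$-filtration at a fixed exponent $z$), and using the previous lemma to the effect that the coefficients of $\rho^2Q'[u]$ and the weighted error $\rho^2E=\rho^2\nu+a_{0,0}$ are polyhomogeneous conormal with index set inside $\cI'$, the condition $P[u]\sim0$ together with $u|_{s=0}\equiv0$ becomes a countable hierarchy of boundary value problems on $\oE$ of the form
\[
(-P_z)(u_{z,p}) = (\rho^2\nu+a_{0,0})_{z,p} + F(u_{z,p+1},u_{z,p+2},\dots) + N_{z,p}\big(\{u_{z',p'}\}\big),\qquad u_{z,p}|_{\pa\oE}=0,
\]
where $F$ collects the contributions of higher $\log$-powers at the same exponent produced by $(\rho\pa_\rho)$, $(\rho\pa_\rho)^2$, and $N_{z,p}$ collects the nonlinear contributions of $Q$ and $\rho^2Q'[u]$.

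Next I would set up the recursion solving this hierarchy. Order $\cI'$ by declaring $(z_1,p_1)\geq(z_2,p_2)$ iff $\Re(z_1)>\Re(z_2)$, or $\Re(z_1)=\Re(z_2)$ and $p_1\leq p_2$, and check this order is locally finite: $\cI$ is finite in any half-plane $\{\Re(z)<s\}$, so each $k\cI$ is too, only finitely many $k$ contribute below a given bound since $\Re(k\cI)\subset(2k,\infty)$, and adding $(\bN,0)$ preserves finiteness in half-planes; at each fixed exponent only finitely many $\log$-powers occur. The crucial structural point is that the right-hand side at index $(z,p)$ depends only on strictly smaller indices: $F$ involves $u_{z,p+1},u_{z,p+2},\dots$, which are smaller in the order, and since $\Re(\cI')\subset(2,\infty)$ every monomial of degree $\geq2$ appearing in $N_{z,p}$ has each factor of real part at least roughly $2$ below $\Re(z)$, hence strictly smaller, while the linear part of the nonlinearity is absent by construction. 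I would then solve by induction on $\geq$: at the step $(z,p)$ the data on the right is already known, and since $\Re(z)>2$, hence $\Re(z)>\min(2-m,0)$, Proposition \ref{prop_nokernel} together with the isomorphism statement following Theorem \ref{thm_mappingff} gives a unique $u_{z,p}\in\check{L}^2_{2,1}(\oE)$ with zero trace solving the equation, which the interior estimate \eqref{eq_interiorestimateff} promotes to $u_{z,p}\in C^\infty(\oE)$. I would emphasise that, because there are no critical rates with real part $>\min(2-m,0)$, inverting $-P_z$ does not shift exponents, so the recursion stays inside $\cI'$ and the ansatz is self-consistent.

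Finally I would invoke Borel's lemma for polyhomogeneous conormal expansions (as in Grieser \cite{Gre01}) to produce a polyhomogeneous conormal function $u'$, defined on a neighbourhood of the front face of $\bL$ with index set $\cI'$, whose asymptotics as $r\to0$ is the series just built; extending by a cutoff gives a polyhomogeneous conormal function on all of $\bL$. Since every coefficient over $\cI'$ of $P[u']$ vanishes by construction, $P[u']\sim0$ as $r\to0$, and since each $u_{z,p}$ vanishes on $\pa\oE$ the same holds term by term in $s$, so $u'|_{s=0}\equiv0$. I expect the main obstacle to be the index-set bookkeeping: verifying that $\cI'$ is closed under the operations generated by the recursion — nonlinear products of expansions, the action of $(\rho\pa_\rho)$, and inversion of $-P_z$ — and that the induced order is well-founded, so that the infinite hierarchy of boundary value problems can genuinely be solved one index at a time.
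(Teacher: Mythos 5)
Your argument is essentially the same as the paper's: make the ansatz $u\sim\sum_{(z,p)\in\cI'}r^z\log(r)^pu_{z,p}$ over the index set $\cI'=\cup_{k\geq1}k\cI+(\bN,0)$, reduce to a hierarchy of Dirichlet problems for $-P_z$ on $\oE$ ordered by real part of the exponent and (inversely) by $\log$-power, solve by induction using the isomorphism of $(-P_z,|_{\pa\oE})$ for $\Re(z)>\min(2-m,0)$ from Theorem~\ref{thm_mappingff}, upgrade regularity with the interior estimate~\eqref{eq_interiorestimateff}, and sum via Borel's lemma. The additional remarks you make — checking local finiteness of $\cI'$, that the nonlinear terms only feed forward from strictly smaller indices because $\Re(\cI')\subset(2,\infty)$, and that the absence of critical rates in the relevant range keeps the recursion inside $\cI'$ — are exactly the bookkeeping the paper's proof implicitly relies on.
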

Therefore, we can prove that:
 \begin{theorem}\label{thm_asymptotics}
 	 Consider the setup of Theorem \ref{thm_main}. If the initial Lagrangian is further prescribed a polyhomogeneous conormal expansion at each of the singularities  with index set $\cI$, then the solution has a (uniquely determined) polyhomogeneous conormal expansion at the front faces with index set $\cI'$.
 \end{theorem}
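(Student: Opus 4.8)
The plan is to combine the formal series solution near the front face constructed in Proposition \ref{lem_polyconnearff} with the uniqueness statement already established for the nonlinear problem. The key observation is that the polyhomogeneous conormal solution $u'$ furnished by Proposition \ref{lem_polyconnearff} solves the nonlinear P.D.E.\ \eqref{eq_potentialequation} only to infinite order at the front face, i.e.\ $P[u'] = O(\rho^\infty)$ near $\mathrm{ff}$, while also satisfying $u'|_{s=0}\equiv 0$ to infinite order at the bottom face by the formal solving in \S\ref{subsec_Fredholmtheory} (combining Proposition \ref{prop_bottomfacesolving} with Proposition \ref{lem_polyconnearff}). So $u'$ is an approximate solution whose error decays to all orders at \emph{both} boundary faces of $\bL$. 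The idea is then to run the contraction mapping argument again, but this time perturbing $u'$ rather than the original approximate solution, so that the genuine solution differs from $u'$ by something lying in $\rho^\lambda$-weighted spaces for arbitrarily large $\lambda$, hence also polyhomogeneous conormal with the prescribed index set.

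First I would set up the perturbed nonlinear equation: writing the true solution as $u = u' + w$, the equation for $w$ becomes $(\rho^2\pa_v - \cL)w = \rho^2\tilde{E}' + \rho^2\tilde{Q}[w]$, where $\tilde{E}' = -\rho^{-2}P[u']$ is the (new) error term, which now decays to all orders at $\mathrm{ff}$ and $\mathrm{bf}$, and $\tilde{Q}$ is the shifted nonlinearity (still an a-smooth function with no constant or linear part, by the same reasoning as Theorem \ref{thm_quadratic}). Since $\rho^2\tilde{E}'$ decays to all orders at the front face, it lies in $L^2_{k+2r,r,\lambda}(\bL)$ for \emph{every} $\lambda$. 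By Theorem \ref{thm_isomorphismL}, the linearised operator $(\rho^2\pa_v - \cL, |_{\botf})$ is an isomorphism between the $\lambda$-weighted spaces for all $\lambda > \min(2-m,0)$, and in particular for arbitrarily large $\lambda$. I would then repeat the fixed point argument of \S\ref{sec_nonlinearproblem} in the space $L^2_{k+2r,r,\lambda}$ (reducing the time interval via the reduced-error trick of Lemma on $E_\eta$ if necessary, with $\eta$ independent of $\lambda$ is not needed — we just need existence for some short time), invoking the quadratic estimate \eqref{eq_nonlinearestimatel2} in the $\lambda$-weighted norm, to obtain a solution $w_\lambda \in L^2_{k+2r,r,\lambda}$ with $\|w_\lambda\|_{L^2_{k+2r,r,\lambda}}$ small. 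By the uniqueness already proved (uniqueness in $L^2_{k+2r,r,2}$ with the boundary condition $u|_{s=0}\equiv 0$, from the uniqueness subsection of \S\ref{sec_nonlinearproblem}), the solution $u' + w_\lambda$ coincides with the solution $u' + w_{\lambda'}$ for any two weights $\lambda, \lambda' > \min(2-m,0)$, so $w := w_\lambda$ is independent of $\lambda$ and therefore lies in $L^2_{k+2r,r,\lambda}$ for \emph{all} $\lambda$, i.e.\ $w = O(\rho^\infty)$ at the front face (using the weighted Sobolev embedding of \S\ref{subsubsec_Sobolevembedding}).

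Finally, I would upgrade this decay to a genuine polyhomogeneous conormal expansion. Since $u = u' + w$ with $u'$ polyhomogeneous conormal with index set $\cI'$ and $w = O(\rho^\infty)$, and since $w$ satisfies the linear parabolic equation obtained by linearising the nonlinear equation at $u$ — which near the front face has coefficients that are themselves polyhomogeneous conormal with index set contained in $\cI'$ (by the lemma giving \eqref{eq_newindexset}) — a standard elliptic/parabolic regularity argument on manifolds with corners shows that $w$, being conormal of infinite order and solving such an equation, is itself polyhomogeneous conormal; but any nonzero term in its expansion would have index with real part bounded above (a critical rate or a forced rate from $\cI'$), contradicting $w = O(\rho^\infty)$ unless $w \equiv 0$ to all orders at $\mathrm{ff}$, which is exactly what $O(\rho^\infty)$ gives. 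Hence $u = u' + w$ has the same polyhomogeneous conormal expansion as $u'$ near each front face, with index set $\cI'$; the uniqueness of the expansion follows from the uniqueness of the formal solution in Proposition \ref{lem_polyconnearff}, which in turn rests on $(-P_z, |_{\pa\oE})$ being an isomorphism for $\Re(z) > \min(2-m,0)$.

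\textbf{Main obstacle.} The delicate step is justifying that the genuine solution $u$ actually inherits the full polyhomogeneous expansion of the formal solution $u'$, rather than merely agreeing with it to finite order — i.e.\ controlling the remainder $w$ simultaneously at both the front face and bottom face while keeping the time interval of existence uniform as the weight $\lambda \to \infty$. This requires care that the constants in the weighted a-priori estimate \eqref{eq_aprioriestimateE}, the isomorphism bound from Theorem \ref{thm_isomorphismL}, and the quadratic estimate \eqref{eq_nonlinearestimatel2} all behave acceptably as $\lambda$ grows (they depend on $\lambda$, but for each fixed $\lambda$ the argument closes, and uniqueness across weights then does the rest). The interaction between the weight at $\mathrm{ff}$ and the infinite-order vanishing at $\mathrm{bf}$ — which forces us to carry the bottom-face formal solution of Proposition \ref{prop_bottomfacesolving} along with the front-face one — is the part that needs the most careful bookkeeping.
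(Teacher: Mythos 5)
Your proposal is correct and matches the paper's proof: the paper's argument (compressed to two lines) is exactly to solve formally to infinite order at both boundary faces via Propositions \ref{prop_bottomfacesolving} and \ref{lem_polyconnearff}, then run the contraction mapping in $L^2_{k+2r,r,\lambda}$ for $\lambda\gg 0$, using uniqueness to identify the solutions across weights — which is precisely what you do, with the details spelled out. One small simplification: your final paragraph overcomplicates the last step, since once $w$ lies in $L^2_{k+2r,r,\lambda}$ for every $\lambda$ and every $k,r$, weighted Sobolev embedding already gives $w=O(\rho^\infty)$ with all derivatives, so $u=u'+w$ inherits the full polyhomogeneous expansion of $u'$ directly, without needing to argue that $w$ is itself polyhomogeneous conormal and then rule out terms in its expansion.
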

 
 \begin{proof}
 	This follows from first solving the problem to infinite order in both $\rho,s$ by Propositions  \ref{prop_bottomfacesolving} and \ref{lem_polyconnearff}, and performing the iteration mapping argument for $L^2_{k+2r,r,\lambda}$ with $\lambda \gg 0$ -- the error lies in this space due to vanishing at all orders at the boundaries.
 \end{proof}

As a corollary to this, we obtain:
\begin{coro}
	In the case of the problem of desingularising an immersed Lagrangian by gluing in Joyce--Lee--Tsui expanders, the resulting solution is a smooth manifold with corners. 
\end{coro}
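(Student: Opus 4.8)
The plan is to apply Theorem~\ref{thm_asymptotics} with the simplest possible index set and then observe that, at an a-boundary, a polyhomogeneous conormal expansion with non-negative integer exponents and no logarithmic terms is the same thing as ordinary smoothness. So first I would pin down the initial data in the Joyce--Lee--Tsui case. There each conical singularity $x_i$ is a transverse self-intersection point of an immersed Lagrangian, so near $x_i$ the Lagrangian is the union of two smooth embedded Lagrangian sheets $L_{i1},L_{i2}$ whose tangent planes $\Pi_{i1},\Pi_{i2}$ are transverse special Lagrangian planes, with $C_i=\Pi_{i1}\sqcup\Pi_{i2}$. Writing each sheet, via the Darboux chart of Theorem~\ref{thm3}, as the graph $\Gamma(df_{ij})$ over $\Pi_{ij}$, the condition $\Pi_{ij}=T_{x_i}L_{ij}$ forces the Hessian of $f_{ij}$ to vanish at the origin; hence $f_{ij}$ is a smooth function vanishing to order $\geq 3$ and so admits an ordinary Taylor expansion. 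Thus $L$ is polyhomogeneous conormal at its singularities with index set which may be taken to be $\cI_i=\{(j,0):j\in\bN,\ j\geq 3\}$ --- integer exponents, no logarithms, and $\Re(\cI_i)\subset(2,\infty)$ as required.

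Next, since the Joyce--Lee--Tsui expanders decay to their asymptotic cones at rate $-\infty$ by Proposition~\ref{prop_expanderdecay}, Theorem~\ref{thm_asymptotics} applies and gives that the solution has a polyhomogeneous conormal expansion at each front face with index set $\cI_i'=\bigcup_{k\geq 1}k\cI_i+(\bN,0)$. If $\cI_i\subseteq\bN\times\{0\}$ then every $k$-fold sum $k\cI_i$ lies in $\bN\times\{0\}$, and adding $(\bN,0)$ preserves this, so $\cI_i'\subseteq\bN\times\{0\}$: every exponent appearing in the expansion of the solution at the front faces is a positive integer and no logarithmic terms occur. (No logarithms are introduced in the recursive construction of the front-face expansion either, since the only resonances would occur at rates in $\cD_E\subset(-\infty,\min(2-m,0)]$, which is disjoint from $\Re(\cI_i')\subset(2,\infty)$, so the parametric operator $(-P_z,|_{\pa\oE})$ is invertible at every index that appears, by Proposition~\ref{prop_nokernel}.)

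Finally I would upgrade the smooth structure. The spaces $\Bl$ and hence $\bX$ were obtained from $\bS^{2m}_{\geq 0}\times\llbracket0,\infty)$ by placing the a-smooth structure on the $\llbracket0,\infty)$-factor; forgetting that choice, $\bX$ carries an underlying ordinary manifold-with-corners structure in which the front-face defining function $\rho$ (equivalently $\tau$ away from the corner, $r$ near it) is an ordinary smooth boundary coordinate. A function with a polyhomogeneous conormal expansion $u\sim\sum_{j\geq 0}\rho^j u_j$ with smooth coefficients is, by Taylor's theorem, smooth in the ordinary sense up to that boundary --- and near the corner this holds jointly in $r$ and the already-smooth bottom-face coordinate $s$. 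Applying this to the components of the a-smooth embedding $\iota:\bL\hookrightarrow\bX$ in the projective coordinates of Section~\ref{sec_approxsol} near the front faces shows $\iota$ is an ordinary smooth embedding there, while away from the front faces $\bL$ is already an honest smooth manifold with corners with the bottom face as an ordinary boundary. Patching the charts shows the a-smooth atlas on $\bL$ refines to an ordinary smooth-manifold-with-corners atlas, i.e.\ $\bL$ is a smooth manifold with corners.

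The point that deserves the most care, and which I expect to be the main (if minor) obstacle, is precisely this last translation: one must verify that the a-smooth embedding data genuinely upgrades to ordinary smoothness once all of its exponents are shown to be positive integers with no logs --- in particular that the $\rho^2$-twist built into the Lagrangian neighbourhood $\tpsil$ of Proposition~\ref{prop_constructionacorners}, being multiplication by the integer power $\rho^2$, does not spoil integrality, and that the joint polyhomogeneous behaviour at the codimension-two corner assembles to genuine joint ordinary smoothness in $(r,s)$ rather than merely iterated one-variable smoothness.
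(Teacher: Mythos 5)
Your proof is correct and follows essentially the same route as the paper: apply Theorem~\ref{thm_asymptotics}, observe that the immersed-Lagrangian index set lies in $\bN\times\{0\}$ so that $\cI'$ does too, and conclude ordinary smoothness at the front face. You fill in the two steps the paper compresses into ``i.e.\ it is a smooth embedding'' --- deriving $\cI\subseteq\bN_{\geq 3}\times\{0\}$ from transversality of the sheets, and the upgrade from integer, log-free polyhomogeneity at the a-boundary to honest smoothness (including the $\rho^2$-twist in $\tpsil$ and joint smoothness at the corner) --- and your $\bN_{\geq 3}$ is slightly more precise than the paper's $\bN_{\geq 2}$, though both give the same conclusion.
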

\begin{proof}
	In the course of the proof, we have shown that if the initial Lagrangian has a polyhomogeneous conormal expansion with index set given by $\cI$, then the solution can be taken to have index set $\cI'$ given as in \eqref{eq_solutionindexset}. For an immersed Lagrangian, the index set is $\bN_{\geq 2}$, therefore the index set of the solution also lies in $\bN_{\geq 2}$, i.e.\ it is a smooth embedding.
\end{proof}

\bibliographystyle{alphaurl}
\bibliography{bibliography}

\end{document}